\documentclass[12pt]{amsart}
\usepackage[left=20mm,right=20mm]{geometry}
\usepackage{graphicx} 
\usepackage{amsmath}
\usepackage{amssymb}
\usepackage{amscd}
\usepackage{amsthm}
\usepackage{needspace}
\usepackage{enumerate}
\usepackage{xcolor}
\definecolor{linkgold}{HTML}{966e09}
\usepackage[all]{xy}
\usepackage{url}
\usepackage[colorlinks=true,allcolors=linkgold,breaklinks,pagebackref]{hyperref}
\usepackage[lite,abbrev,alphabetic]{amsrefs}
\usepackage{ragged2e}
\AddToHook{env/biblist/begin}{\RaggedRight}
\usepackage{cleveref}
\usepackage{comment}
\usepackage{todonotes}
\usepackage{tikz-cd}
\usepackage{quiver}
\usepackage{enumitem}

\makeatletter
\renewcommand{\subsection}{\@startsection{subsection}{2}%
  \z@{.7\linespacing\@plus.5\linespacing}{.35\linespacing}%
  {\normalfont\bfseries\raggedright}}
\makeatother

\theoremstyle{definition}
\newtheorem{definition}{Definition}[section]
\newtheorem{example}[definition]{Example}

\theoremstyle{plain}
\newtheorem{theorem}[definition]{Theorem}
\newtheorem{proposition}[definition]{Proposition}
\newtheorem{lemma}[definition]{Lemma}
\newtheorem{corollary}[definition]{Corollary}

\newtheorem*{theorem*}{Theorem}

\theoremstyle{remark}
\newtheorem{remark}[definition]{Remark}
\newtheorem*{Ac}{Acknowledgement}
\newtheorem*{AIdisclosure}{AI disclosure}

\crefname{theorem}{Theorem}{Theorems}
\crefname{proposition}{Proposition}{Propositions}
\crefname{lemma}{Lemma}{Lemmas}
\crefname{corollary}{Corollary}{Corollaries}
\crefname{conjecture}{Conjecture}{Conjectures}
\crefname{hypothesis}{Hypothesis}{Hypotheses}
\crefname{remark}{Remark}{Remarks}
\crefname{condition}{Condition}{Conditions}
\crefname{example}{Example}{Examples}

\def\C{\mathbb{C}}
\def\R{\mathbb{R}}
\def\Q{\mathbb{Q}}
\def\Z{\mathbb{Z}}
\def\A{\mathbb{A}}

\def\GL{\mathrm{GL}}
\def\PGL{\mathrm{PGL}}
\def\SL{\mathrm{SL}}

\def\O{\textnormal{O}}
\def\SO{\mathrm{SO}}
\def\GSO{\mathrm{GSO}}

\def\SU{\textnormal{SU}}
\def\Sp{\mathrm{Sp}}
\def\GSp{\mathrm{GSp}}
\def\Spin{\mathrm{Spin}}
\def\PGSO{\mathrm{PGSO}}
\def\PGSp{\mathrm{PGSp}}
\def\PGO{\mathrm{PGO}}

\def\sl{\mathfrak{sl}}

\def\rd{\,\mathrm{d}}

\DeclareMathOperator{\diag}{diag} 

\DeclareMathOperator{\rank}{rank}
\DeclareMathOperator{\Gal}{Gal}
\DeclareMathOperator{\Res}{Res}
\DeclareMathOperator{\Ind}{Ind}
\DeclareMathOperator{\Hom}{Hom}
\DeclareMathOperator{\Aut}{Aut}
\DeclareMathOperator{\Lie}{Lie}

\DeclareMathOperator{\Cent}{Cent}
\DeclareMathOperator{\Irr}{Irr}
\DeclareMathOperator{\sgn}{sgn}
\DeclareMathOperator{\Ad}{Ad}
\DeclareMathOperator{\Std}{Std}
\DeclareMathOperator{\Ker}{Ker}
\DeclareMathOperator{\Image}{Im}
\DeclareMathOperator{\tr}{tr}
\DeclareMathOperator{\Sym}{Sym}
\DeclareMathOperator{\Int}{Int}

\DeclareMathOperator{\Out}{Out}
\DeclareMathOperator{\der}{der}
\DeclareMathOperator{\Ann}{Ann}
\DeclareMathOperator{\AV}{AV}

\newcommand{\fa}{\mathfrak{a}}

\newcommand{\fe}{\mathfrak{e}}

\newcommand{\fg}{\mathfrak{g}}

\newcommand{\fm}{\mathfrak{m}}
\newcommand{\fn}{\mathfrak{n}}

\newcommand{\bF}{\mathbb{F}}
\newcommand{\bG}{\mathbb{G}}

\newcommand{\cA}{\mathcal{A}}

\newcommand{\cD}{\mathcal{D}}
\newcommand{\cE}{\mathcal{E}}

\newcommand{\cH}{\mathcal{H}}
\newcommand{\cI}{\mathcal{I}}

\newcommand{\cL}{\mathcal{L}}
\newcommand{\cM}{\mathcal{M}}
\newcommand{\cN}{\mathcal{N}}
\newcommand{\cO}{\mathcal{O}}
\newcommand{\cP}{\mathcal{P}}

\newcommand{\cR}{\mathcal{R}}
\newcommand{\cS}{\mathcal{S}}

\newcommand{\cU}{\mathcal{U}}
\newcommand{\cV}{\mathcal{V}}

\newcommand{\cZ}{\mathcal{Z}}

\numberwithin{equation}{section}
\newcommand{\disc}{\mathrm{disc}}

\newcommand{\elliptic}{\mathrm{ell}}

\newcommand{\cusp}{\mathrm{cusp}}
\newcommand{\semi}{\mathrm{ss}}
\newcommand{\reg}{\mathrm{reg}}
\newcommand{\temp}{\mathrm{temp}}
\newcommand{\gen}{\mathrm{gen}}

\newcommand{\rG}{\mathrm{G}}

\newcommand{\abs}[1]{\lvert #1 \rvert}

\newcommand{\pr}{\mathrm{pr}}
\newcommand{\GSpin}{\mathrm{GSpin}}

\newcommand{\Norm}{\mathrm{Norm}}

\newcommand{\Trans}{\mathrm{Trans}}
\newcommand{\St}{\mathrm{St}}

\DefineSimpleKey{bib}{arxiv}
\newcommand\myurl[1]{\url{#1}}
\newcommand\arxiv[1]{available at \href{https://arxiv.org/abs/#1}{arXiv:#1}}

\BibSpec{article}{%
  +{}  {\PrintAuthors}                {author}
  +{,} { \textit}                     {title}
  +{.} { }                            {part}
  +{:} {\textit}                     {subtitle}
  +{,} {\PrintContributions}         {contribution}
  +{,} {\PrintConference}            {conference}
  +{}  {\PrintBook}                   {book}
  +{,} { }                            {booktitle}
  +{,} { }                            {journal}
  +{,} { \textbf}                     {volume}
  +{,} { \issuetext}                  {number}
  +{,} { \PrintDateB}                 {date}
  +{,} { pp.~}                       {pages}
  +{,} { available at \eprint}        {eprint}
  +{;} { \arxiv}               {arxiv}
  +{.\!\!}  {\SentenceSpace \PrintReviews} {review}
  +{,} { \PrintDOI}                   {doi}
}

\title[Endoscopy for $\rG_2$]{Endoscopic description of the local Langlands correspondence for $\rG_2$}
\author{Yugo Takanashi}
\address{Department of Mathematics, Graduate School of Science, Kyoto University, Kitashirakawa Oiwake-cho, Sakyo-ku, Kyoto 606-8502, Japan}
\email{takanashi.yugo.7s@kyoto-u.ac.jp}
\date{}

\let\oldtableofcontents\tableofcontents
\renewcommand{\tableofcontents}{%
  {\fontsize{12}{12}\selectfont\oldtableofcontents}%
}

\hypersetup{bookmarksdepth=2,bookmarksnumbered=true,bookmarksopen=false}
\makeatletter
\renewcommand{\l@part}{\@tocline{-1}{3pt}{0pt}{}{\bfseries}}
\renewcommand{\tocpart}[3]{%
  \@ifempty{#2}{#3}{\indentlabel{\makebox[5.3em][l]{#1 #2.}}#3}%
}
\AddToHook{env/bibdiv/begin}{%
  \def\toclevel@section{0}%
  \def\@tocwrite#1{\@tocwriteb\tocpart{part}}%
}
\newcommand{\majorcontentsentry}[1]{%
  \ifnum\c@tocdepth<1
    \addtocontents{toc}{\protect\contentsline{part}%
      {\protect\tocpart{}{}{#1}}{\thepage}{\@currentHref}}%
  \fi
}
\makeatother

\begin{document}

\begin{abstract}
  We prove that the $L$-packets for $p$-adic $\rG_2$ constructed by Gan and Savin satisfy the endoscopic character identities.
  In the course of the proof, we also prove twisted endoscopic character identities for functorial lifts from $\rG_2$ to $\PGSO_8$ with triality.
  As a byproduct, we also prove a global multiplicity formula for the discrete automorphic spectrum of $\rG_2$ coming from the cuspidal automorphic spectrum of $\PGL_3$.
\end{abstract}

\maketitle
\setcounter{tocdepth}{1}
\tableofcontents
\bigskip

\section*{Introduction}\label{section:introduction}
\majorcontentsentry{Introduction}

Let $F$ be a local field of characteristic $0$.
Let $G$ be a reductive group over $F$.
The local Langlands correspondence for $G$ is a conjectural finite-to-one correspondence between the set $\Pi(G)$ of isomorphism classes of smooth admissible irreducible representations of $G$ and the set $\Phi(G)$ of $L$-parameters for $G$.
In the archimedean case, it has already been established by Harish-Chandra and Langlands.
The local Langlands correspondence has been established for general linear groups by Harris--Taylor, Henniart, and Scholze, and for classical groups by Arthur, Mok, and others.

For $p$-adic fields, Harris--Khare--Thorne \cite{HarrisKhareThorne2023-LocalLanglandsParameterizationGenericSupercuspidalRepresentations} constructed a Langlands parameterization of generic supercuspidal representations of $\rG_2(F)$.
Gan and Savin established the local Langlands correspondence for $\rG_2(F)$ in \cite{GanSavin2023-LocalLanglandsConjectureG2}, using the exceptional theta correspondences studied in \cite{GanSavin2023-HoweDualityDichotomyExceptionalThetaCorrespondences}.
Aubert and Xu \cite{AubertXu2023-ExplicitLocalLanglandsCorrespondenceG2} give an explicit construction when the residue characteristic of $F$ is different from $2$ and $3$.

On the other hand, an endoscopic description of this correspondence is conjecturally the basis for classifying automorphic representations of $\rG_2$ over number fields.
In this paper, we study the endoscopic description of the local Langlands correspondence for $\rG_2$.

\subsection{The first main theorem: Endoscopic character identities for \texorpdfstring{$\rG_2$}{G2}}

Our first main theorem describes the Gan--Savin correspondence by endoscopic character identities relating $\rG_2(F)$, $\PGSO_8(F)\rtimes\theta$ for a triality automorphism $\theta$, and their elliptic endoscopic groups.
The proof is given in Section~\ref{section:canonical-extensions} and Sections~\ref{section:local-LIR-G2}--\ref{section:ECR-G2-SO4}.

\begin{theorem}[Theorem~\ref{theorem:summary}]
   Let $F$ be a non-archimedean local field of characteristic zero.
   Let $\phi^{\rG_2}$ be a tempered $L$-parameter for $\rG_2(F)$.
   Let $\Pi_{\phi^{\rG_2}}^{\mathrm{GS}}$ be the $L$-packet of $\rG_2(F)$ constructed by Gan and Savin.
   Let $\Pi_{\iota_{\rG_2}^{\PGSO_8} \circ \phi^{\rG_2}}$ denote the distinguished $L$-packet of $\PGSO_8(F)$ constructed in \cite{Takanashi2026-HiragaIchinoIkedaConjectureFormalDegrees} based on the work of Xu \cite{Xu2018-LPacketsQuasisplitGSp2nGO2n}.

   Then, we have the following results:
   \begin{enumerate}
      \item
      Each member has a canonical extension to $\PGSO_8(F) \rtimes \theta$.
      \item
      Let $\widetilde{\pi}$ be the canonical extension of $\pi \in \Pi_{\iota_{\rG_2}^{\PGSO_8} \circ \phi^{\rG_2}}$ to $\PGSO_8(F) \rtimes \theta$.
      Then we have the following endoscopic character identity between $\rG_2(F)$ and $\PGSO_8(F) \rtimes \theta$.
      \begin{align*}
         \sum_{\pi \in \Pi_{\iota_{\rG_2}^{\PGSO_8} \circ \phi^{\rG_2}}} \tr\widetilde{\pi}(f^{\PGSO_8 \rtimes \theta})
         =
          \sum_{\sigma \in \Pi^{\mathrm{GS}}_{\phi^{\rG_2}}} \dim \rho_{\sigma} \tr\sigma(f^{\PGSO_8 \rtimes \theta}_{\rG_2}),
      \end{align*}
      where $\rho_{\sigma}$ is the irreducible representation of the component group $\cS_{\phi^{\rG_2}}$ corresponding to $\sigma$ and $f^{\PGSO_8\rtimes\theta}_{\rG_2}$ is the transfer of $f^{\PGSO_8\rtimes\theta}$.
      Furthermore, the twisted endoscopic character identities between $\PGSO_8(F) \rtimes \theta$ and $\SO_4(F)$ are also satisfied.
      \item
      The $L$-packet $\Pi_{\phi^{\rG_2}}^{\mathrm{GS}}$ satisfies the endoscopic character identities for all the elliptic endoscopic groups of $\rG_2$. For any element $s \in \cS_{\phi^{\rG_2}}$, we have
      \begin{align*}
         \sum_{\sigma \in \Pi^{\mathrm{GS}}_{\phi^{\rG_2}}} \tr\rho_{\sigma}(s) \tr\sigma(f^{\rG_2})
         =
         S^{H_s}_{\phi^{H_s}}(f^{\rG_2}_{H_s}),
      \end{align*}
      where $H_s$ is the elliptic endoscopic group of $\rG_2$ associated to $s$, $\phi^{H_s}$ is the $L$-parameter for $H_s$ such that $\iota_{H_s}^{\rG_2} \circ \phi^{H_s} = \phi^{\rG_2}$, and $S^{H_s}_{\phi^{H_s}}$ is the stable character of the $L$-packet $\Pi_{\phi^{H_s}}$.
   \end{enumerate}
\end{theorem}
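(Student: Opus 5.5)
The plan is to deduce everything from the Gan--Savin construction, which realizes $\Pi^{\mathrm{GS}}_{\phi^{\rG_2}}$ through exceptional theta correspondences. These are restrictions of the minimal representation of an adjoint group of type $E_6$ or $E_7$ to dual pairs such as $\rG_2 \times \PGL_3$, $\rG_2 \times \PGSp_6$ and $\rG_2 \times \PGSO_8\rtimes S_3$. We combine this with the twisted endoscopy of $\PGSO_8 \rtimes \theta$, for which $\rG_2$ and $\SO_4$ (more precisely the groups whose duals are $\rG_2$ and $\SO_4=(\SL_2\times\SL_2)/\mu_2$ inside $\rG_2 \subset \Spin_8$) are twisted endoscopic groups.

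\textbf{Part (1).} For $\pi$ in the distinguished packet $\Pi_{\iota\circ\phi}$, the parameter is $\theta$-stable, and by the construction of \cite{Takanashi2026-HiragaIchinoIkedaConjectureFormalDegrees} each member is $\theta$-stable. So $\pi$ admits three extensions to $\PGSO_8(F)\rtimes\langle\theta\rangle$, differing by cube roots of unity. We fix one canonically. For generic members we normalize using a $\theta$-invariant Whittaker datum, requiring $\widetilde\pi(\theta)$ to fix the Whittaker functional. For the remaining members we use the theta lift: $\pi$ is obtained from some $\sigma\in\Pi^{\mathrm{GS}}_{\phi}$ via the dual pair $\rG_2\times(\PGSO_8\rtimes S_3)$ in $E_7$, and the $S_3$-action on the minimal representation induces an action on $\pi$. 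Since $\theta$ commutes with $\rG_2$, this gives the canonical extension. We then check that the two normalizations agree on generic members.

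\textbf{Part (2).} We argue by the standard global method: embed the local parameter into a global cuspidal situation, and compare twisted trace formulas for $\PGSO_8\rtimes\theta$ with stable trace formulas for $\rG_2$ and $\SO_4$. Most cases can be reduced to the case where $\phi^{\rG_2}$ is discrete or factors through a Levi. In the Levi case, the identities follow from parabolic induction and compatibility of transfer with Jacquet modules. So the real content is the discrete case. There the sum $\sum\tr\widetilde\pi$ is, by part (1) and the theta construction, the character of a $\theta$-twisted representation whose transfer to $\rG_2$ and $\SO_4$ we compute. We globalize $\phi^{\rG_2}$ to a cuspidal $\rG_2$-representation lifting to $\PGSO_8$, using Poincaré series at a supercuspidal place. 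We then apply the twisted stabilization (Mœglin--Waldspurger) and Xu's/Arthur's results for $\PGSO_8$. Finally, we peel off the contributions of $\SO_4$ and $\PGL_2$-type endoscopic terms using linear independence of characters. This yields the identity up to a constant, which is fixed by comparing generic members (Whittaker normalization plus the Shahidi/Kottwitz sign computation).

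\textbf{Part (3).} The elliptic endoscopic groups of $\rG_2$ are $\rG_2$, $\SO_4$ (for $s$ of order $2$) and $\PGL_3$ (for $s$ of order $3$). The component group $\cS_\phi$ is a subquotient of $S_3$. For $s$ of order $2$ we combine the $\PGSO_8\rtimes\theta$-to-$\SO_4$ identity from part (2) with the twisted-to-$\rG_2$ identity, using that the composite transfers are compatible. For $s$ of order $3$ we use the $\rG_2\times\PGL_3$ theta correspondence inside $E_6\rtimes\mathbb{Z}/2$, together with the twisted endoscopy of $\PGL_3\rtimes(\text{outer})$. Alternatively, we use the global multiplicity formula for representations coming from cuspidal $\PGL_3$, again determining signs via $\dim\rho_\sigma$ and the character $\tr\rho_\sigma(s)$.

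\textbf{Main obstacle.} The main obstacle is pinning down the normalization constants (roots of unity) in part (1) for nongeneric members. These constants must be compatible with the transfer factors, which is the crux of the twisted identity. I would first try to compute them via the theta lift from the $E_7$ dual pair, using the $S_3$-equivariance of the minimal representation and Gan--Savin's explicit description of the packet.
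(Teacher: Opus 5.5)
\textbf{Main gap: nothing ties the Gan--Savin packets to the transferred distribution.} The Gan--Savin packets are defined through theta correspondences, and your outline never connects them to the stable distributions the trace formula produces. The step you put in its place does not work.

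After globalizing and stabilizing, all you learn at the distinguished place is that the canonical-extension packet sum transfers to \emph{some} stable combination $\sum_{\sigma'}m(\sigma')\tr\sigma'$ of discrete series characters of $\rG_2(F)$. Nothing makes this proportional to $\sum_\sigma\dim\rho_\sigma\tr\sigma$. So ``the identity up to a constant, fixed on the generic member'' is unjustified, and for $S_3$-packets the weights $1,2,1$ cannot come from a single constant.

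The paper needs three inputs here:
\begin{enumerate}
\item non-negative integrality of the $m(\sigma')$, from a simple trace formula with Steinberg/Euler--Poincar\'e data at at least two places;
\item the elliptic norm identity $\sum m(\sigma')^2=\abs{\cS_\phi}$, or $3$ and $6$ in the $\PGL_3$-type case (Lemma~\ref{lemma:elliptic-norm-PGL3});
\item an exhaustion argument showing that every packet member actually occurs.
\end{enumerate}
The decisive use of theta is in the third input. For $\PGSp_6$-type parameters, one re-globalizes with each $\sigma'$ at the distinguished place, using classical theta non-vanishing, the Spin $L$-function criterion for the exceptional lift to an inner form of $\rG_2$, and a comparison with that inner form. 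For $\PGL_3$-type parameters, one uses the global non-vanishing of Theorem~\ref{theorem:exceptional-theta-G2-to-PGL3}, Brauer--Hasse--Noether, and the Newton-identity and $m_1^2+m_2^2=5$ arguments that produce the coefficient $2$.

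\textbf{Part (3) has parallel holes.}
\begin{itemize}
\item There is no composite transfer from $\widetilde{\PGSO_8}$ through $\rG_2$ to $\SO_4$. The function $f_{\rG_2}$ is only a stable orbital integral, so it has no $\kappa$-orbital integrals. The two twisted identities enter only globally, by determining $S^{\rG_2}_{\disc,c}$ in the $\rG_2$ trace formula. Even then, Theorem~\ref{theorem:ECR-G2-SO4} still needs sign separation at the real places, plus a second, spherical globalization with theta non-vanishing, to fix the signs.
\item Deducing the order-three identity from the global multiplicity formula is circular, since that formula is derived from the local identities.
\item For elliptic but non-discrete tempered parameters, parabolic induction does not give the identity at the elliptic $s$. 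That case requires the local intertwining relations of Theorems~\ref{theorem:local-tempered-LIR-G2} and~\ref{theorem:local-intertwining-relation-PGSO8-SO4}, each proved by its own globalization.
\end{itemize}

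\textbf{Part (1).} There is no dual pair $\rG_2\times(\PGSO_8\rtimes S_3)$ in $E_7$: the centralizer of $\rG_2$ there is $\PGSp_6$. The distinguished packet is reached by composing the exceptional lift to $\PGSp_6$ with the similitude theta lift to $\PGSO_8$.

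More importantly, the normalization you call the main obstacle disappears. Every member is invariant under all of $\Out(\PGSO_8)\cong S_3$ (Theorem~\ref{theorem:invariance-lifting-non-archimedean}). Since $H^2(S_3,\C^\times)=0$, each member extends to $\PGSO_8(F)\rtimes S_3$. Two such extensions differ by $\sgn$, which is trivial on $A_3=\langle\theta\rangle$. Hence the restriction to $\PGSO_8(F)\rtimes\theta$ is canonical, with no root-of-unity computation, and it equals the Whittaker extension on generic members.

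\textbf{Smaller slips.} The twisted elliptic endoscopic groups are $\rG_2$, $\SO_4$ and $\SL_3$; there is no $\PGL_2$-type term. Also, $\cS_\phi$ need not be a subquotient of $S_3$: for example, $(\Z/2\Z)^2$ occurs for $\SO_4$-type parameters.
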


By \cite{Takanashi2026-HiragaIchinoIkedaConjectureFormalDegrees}, this implies the Hiraga--Ichino--Ikeda conjecture for formal degrees in the Gan--Savin $L$-packets.
We also remove the exception in residual characteristic $3$ in the internal parametrization of the Gan--Savin $L$-packets. See Remark~\ref{remark:completion-LLC}.

\subsection{The second main theorem: A global application}

Let $F$ be a number field.
The elliptic endoscopic group $\PGL_3$ of $\rG_2$ has an $L$-embedding $\iota^{\rG_2}_{\PGL_3} \colon {}^L\PGL_3 \to {}^L\rG_2$.
We determine the global $A$-packets of $\rG_2$ associated to cuspidal representations of $\PGL_3(\A_F)$.

\begin{theorem}[Theorem~\ref{theorem:Arthur-multiplicity-PGL3}]
  Let $\rho$ be a cuspidal representation of $\PGL_3(\A_F)$.
  Let $\psi$ be a formal parameter with associated Hecke--Satake family $c(\psi) = \iota^{\rG_2}_{\PGL_3} \circ c(\rho)$.
  We define the global $S$-group as
  \begin{align*}
    \begin{cases}
      \cS_{\psi} = A_3 \subset S_3 & \text{if $\rho$ is not self-dual},\\
      \cS_{\psi} = S_3 & \text{if $\rho$ is self-dual}.
    \end{cases}
  \end{align*}

  Then, the discrete spectrum $\cA^2(\rG_2)_{\psi}$ of $\rG_2(\A_F)$ associated to the Hecke--Satake family $c(\psi)$ is given by
  \begin{align*}
    \cA^2(\rG_2)_{\psi}
    =
    \sum_{\sigma \in \Pi_{\psi}} m_{\mathrm{A}}(\sigma) \sigma,
  \end{align*}
  where $\Pi_{\psi} = \otimes_{v} ' \Pi_{\iota_{\PGL_3}^{\rG_2} \circ \phi_{\rho_v}}$ is the global $A$-packet of $\rG_2(\A_F)$ associated to $\psi$ and $m_{\mathrm{A}}(\sigma)$ is the multiplicity of $\sigma$ conjectured by Arthur and defined using the global $S$-group $\cS_{\psi}$.
\end{theorem}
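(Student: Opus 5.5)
The plan is to compare the stable trace formula for $\rG_2$ with the twisted trace formula for $\PGSO_8\rtimes\theta$. On the $\PGSO_8$ side, the discrete spectrum is controlled by Arthur's classification for $\SO_8$ and by the $\PGSO_8$ variant used in \cite{Takanashi2026-HiragaIchinoIkedaConjectureFormalDegrees}.

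First, the transfer of $\rho$ to $\PGSO_8$. Since $\iota_{\PGL_3}^{\rG_2}$ composed with the embedding ${}^L\rG_2\to{}^L\PGSO_8 = \Spin_8(\C)$ sends $\rho$ to the parameter $\rho\oplus\rho^\vee\oplus 1\oplus 1$ in the $8$-dimensional representation, the global parameter $\psi^{\PGSO_8}$ is a non-discrete (elliptic only after passing to $\rG_2$) parameter. Its $\theta$-stable members are determined by the Arthur classification. I would therefore set up the identity
\begin{align*}
I^{\rG_2}_{\disc,\psi}(f) = \sum_{H} \iota(\rG_2,H)\, S^H_{\disc,\psi}(f_H),
\end{align*}
sum over elliptic endoscopic groups $H\in\{\rG_2,\PGL_3,\SO_4\}$. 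Here I assume the stabilization of the (twisted) trace formula of Moeglin--Waldspurger, restricted to the Hecke--Satake family $c(\psi)$ using strong multiplicity one on $\PGL_3$ and linear independence of Hecke characters. Because $\rho$ is cuspidal on $\PGL_3$ and cannot factor through $\SO_4$ compatibly, $S^{\SO_4}_{\disc,\psi}=0$. $S^{\PGL_3}_{\disc,\psi}$ equals the sum of $\tr\rho'(f_{\PGL_3})$ over $\rho'\in\{\rho,\rho^\vee\}$ (one element when $\rho$ is self-dual), with coefficient $\iota(\rG_2,\PGL_3)=1/2$ coming from the outer automorphism.

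The key unknown is the stable term $S^{\rG_2}_{\disc,\psi}$. I would determine it by running the same analysis on the twisted side $\PGSO_8\rtimes\theta$. There the twisted discrete part for $\psi^{\PGSO_8}$ is computable: the $\theta$-stable discrete representations with this Hecke family are known from $\SO_8$ theory. The expected result is that this part vanishes, since $\psi^{\PGSO_8}$ is not discrete for $\PGSO_8$. Twisted stabilization then expresses $0$ as the sum of $S^{\rG_2}_{\disc,\psi}$ and the $\PGL_3$ contribution (an endoscopic group of $\PGSO_8\rtimes\theta$ via $\SO_4$/$\PGL_3$ data), which pins down $S^{\rG_2}_{\disc,\psi}(f)$ as an explicit multiple of $\prod_v S^{\PGL_3}$-type terms. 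If that fails, I would instead argue that $S^{\rG_2}_{\disc,\psi}$ must be a stable linear form supported on $\Pi_\psi$, and fix it by comparing at a place where $\rho_v$ is supercuspidal.

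Next, I substitute the local identities of the first main theorem (part (3)) at every place, writing $f_{H}$-characters as $\sum_\sigma \tr\rho_\sigma(s)\tr\sigma(f)$ with $s$ the image of the generator of $\cS_{\psi}$ attached to $H$. I also need the archimedean case. For cuspidal $\rho$ with tempered components this is handled by Shelstad's results, or else by restricting to test functions with a supercuspidal component. A standard orthogonality argument over $\cS_\psi$ (either $A_3$ or $S_3$) then gives $I_{\disc,\psi}(f)=\sum_\sigma m_{\mathrm A}(\sigma)\tr\sigma(f)$ with $m_{\mathrm A}(\sigma)=|\cS_\psi|^{-1}\sum_{x}\epsilon(x)\langle x,\sigma\rangle$. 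Here the sign character $\epsilon$ is trivial because the parameter is tempered. The self-dual versus non-self-dual dichotomy enters through whether $\rho\cong\rho^\vee$, which doubles the $\PGL_3$ term and enlarges $A_3$ to $S_3$. Since the components are tempered, there are no residual or Eisenstein contributions to $I_{\disc,\psi}$ beyond discrete spectrum, so the trace identity gives the multiplicity formula by linear independence of characters.

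The main obstacle is controlling $S^{\rG_2}_{\disc,\psi}$, i.e.\ carrying out the twisted-side vanishing together with the precise bookkeeping of the coefficients $\iota$ and the global transfer factors. That bookkeeping is where the groups $A_3$ versus $S_3$ are decided.
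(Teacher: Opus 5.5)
Your architecture matches the paper's: pin down $S^{\rG_2}_{\disc,c}$ from the twisted trace formula for $\widetilde{\PGSO_8}$, then insert it into the stabilization for $\rG_2$ together with the local identities. But the step you flag as the obstacle fails as proposed. The twisted discrete part does \emph{not} vanish. It is true that no discrete automorphic representation of $\PGSO_8$ has parameter $1\boxplus1\boxplus\rho\boxplus\rho^\vee$. However, $I^{\widetilde{\PGSO_8}}_{\disc,c}$ is not the discrete spectrum, and it contains proper-Levi terms. Here the $A_2$-Levi $M=(\GL_3\times\GL_1\times\GL_1)/\GL_1$ with inducing data $\rho,\rho^\vee$ contributes through the regular twisted Weyl element $\widetilde u=w_2'\rtimes\theta$, with $\abs{\det(1-\widetilde u\mid\fa_M)}=3$. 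When $\rho\cong\rho^\vee$ it also contributes through $r\widetilde u$, with determinant $1$. Counting the inducing orbit gives $I^{\widetilde{\PGSO_8}}_{\disc,c}=\frac13\widetilde J_{\widetilde u}$, resp.\ $\frac16\widetilde J_{\widetilde u}+\frac12\widetilde J_{r\widetilde u}$. The global normalizing factors are $1$. The \emph{twisted} local identities of Theorem~\ref{theorem:summary}(2) identify $\widetilde J_{\widetilde u}$ with the pullback of $\mathcal H_1={\prod_v}'\sum_{\sigma_v}\dim\rho_{\sigma_v}\tr\sigma_v$: the canonical extension of $I_P(\rho_v)$ is the distinguished packet sum, which transfers to the dimension-weighted stable sum. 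After cancelling $\frac12\widetilde J_{r\widetilde u}$ against the $\SO_4$ term, one gets $S^{\rG_2}_{\disc,c}=\abs{\cS_\psi}^{-1}\mathcal H_1$, which is exactly the identity summand of Arthur's average. Under your vanishing assumption you would instead get $S^{\rG_2}_{\disc,c}=0$. The twisted elliptic endoscopic groups are $\rG_2$, $\SO_4$, $\SL_3$ (not $\PGL_3$), and the $\SL_3$ term vanishes at this family. Consequently the coefficients of $\tr\sigma$ for $\sigma$ with $\tr\rho_\sigma(a)<0$, $a$ of order three, would come out negative.

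Two further errors affect the final count. First, $S^{\SO_4}_{\disc,c}\neq0$ when $\rho$ is self-dual. Then $\rho$ is orthogonal, so $\rho\cong\Ad(\Pi)$ for a non-dihedral cuspidal $\Pi$ of $\GL_2$. The refined $\SO_4$-parameter $(\Pi,\Pi)$, with standard lift $1\boxplus\rho$ and trivial component group, lies over $c$. It contributes $\frac12\mathcal H_t$ on the $\rG_2$ side, where $t$ is a transposition and $\frac12=3/6$ accounts for the transposition class in the $S_3$-average. The same $\SO_4$ term cancels $\frac12\widetilde J_{r\widetilde u}$ on the twisted side. Second, $\iota(\rG_2,\PGL_3)=\frac13$, not $\frac12$, so the $\PGL_3$ terms give $\frac13(\mathcal H_a+\mathcal H_{a^{-1}})$, resp.\ $\frac13\mathcal H_a$. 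Self-duality removes one $\PGL_3$ term rather than doubling it. Altogether $R^{\rG_2}_{\disc,c}=\abs{\cS_\psi}^{-1}\sum_{s\in\cS_\psi}\mathcal H_s$. Finally, deduce the vanishing of the proper-Levi terms for $\rG_2$ from the shape of the standard parameter $1\boxplus\rho\boxplus\rho^\vee$ via isobaric uniqueness. Temperedness of $\rho_v$, which you invoke, is not known.
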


This resolves the problem in \cite{GanGurevich2005-NonTemperedArthurPacketsG2}*{p.142} and gives representations in tempered $L$-packets of $\rG_2$ with unbounded multiplicities, as in \cite{GanGurevich2005-NonTemperedArthurPacketsG2}*{Section 4}.
An analogous formula for discrete generic parameters lifted from $\SO_4$ is proved in Theorem~\ref{theorem:Arthur-multiplicity-SO4}.
Further study of local and global $A$-packets of $\rG_2$ will appear in a collaborative work with Wee Teck Gan and Sug Woo Shin as announced in \cite{Gan2025-TrialityAdjointLiftingGL3}.

\begin{remark}\label{remark:weighted-fundamental-lemma-hypothesis}
  We assume the twisted weighted fundamental lemma for the unramified twisted endoscopic Levi data that occur for $\widetilde{\PGSO_8}$.
  Under this hypothesis we use the stabilization of M{\oe}glin--Waldspurger \cites{MoeglinWaldspurger2016-StabilisationFormuleTracesTordue1,MoeglinWaldspurger2016-StabilisationFormuleTracesTordue2}.
  For the weighted fundamental lemma and its reduction, see \cite{MoeglinWaldspurger2016-StabilisationFormuleTracesTordue1}*{II.4.4} and \cite{AtobeGanEtAl2024-LocalIntertwiningRelationsCoTemperedPackets}*{Section 0.4}.
  Halleck-Dub\'e proves a cohomological version for quasi-split groups in \cite{HalleckDube2026-CohomologicalWeightedFundamentalLemmaNonSplitGroups}*{Theorem 15.14}. His expanded preprint \cite{HalleckDube2026-WeightedFundamentalLemmaNonSplitGroups}*{Theorems 16.5 and 17.7} gives proofs of the Lie algebra and nonstandard weighted fundamental lemmas under the stated characteristic assumptions.
\end{remark}

\begin{Ac}
  This project began while the author was a graduate student.
  The author is grateful to Yoichi Mieda, his advisor at the time, for his guidance and encouragement.
  The author would like to express his sincere gratitude to Wee Teck Gan and Sug Woo Shin for their support throughout the preparation of this paper.
  The author is also grateful to the Korea Institute for Advanced Study (KIAS).
  The author also thanks Chonnam National University for its support during a visit hosted by Yeansu Kim.
  This work was supported by JSPS KAKENHI Grant Number JP26KJ0167. 
\end{Ac}

\begin{AIdisclosure}
  After writing the initial draft, the author used GPT-6 Astra only for proofreading and reviewing the mathematical content of this paper, and for assistance with drafting in Sections~\ref{theta-A2:sec:detection}--\ref{theta-A2:sec:regularization} and~\ref{theta-A2:sec:real-models}--\ref{theta-A2:sec:local-AV}.
  The reference material used for this drafting assistance included \cite{GanSavin2022-TwistedCompositionAlgebrasArthurPacketsTrialitySpin8} and \cite{Gan2005-MultiplicityFormulaCubicUnipotentArthurPackets} for Sections~\ref{theta-A2:sec:detection}, \ref{theta-A2:sec:real-models}, and~\ref{theta-A2:sec:local-AV}, and \cite{GanSavin2020-ExceptionalSiegelWeilFormulaPolesSpin} for Section~\ref{theta-A2:sec:regularization}.

  Of course, the author takes responsibility for the contents and details of this paper.
\end{AIdisclosure}

\part{Preliminaries}\label{part:preliminaries}

\section{General Notation and Assumptions}\label{section:general-notation}

\subsection{Reductive groups over local fields}

Let $F$ be a local field of characteristic zero.
Let $G$ be a connected reductive group defined over $F$.
We mainly follow the notation in \cite{Takanashi2026-HiragaIchinoIkedaConjectureFormalDegrees}.
Let $\widehat{G}$ denote the complex dual group of $G$ and ${}^LG$ denote the $L$-group of $G$.
Let $\Phi(G)$ denote the set of $L$-parameters for $G$ and $\Pi(G)$ (resp. $\Pi_{\temp}(G)$, $\Pi_{\disc}(G)$) denote the set of isomorphism classes of irreducible (resp. tempered irreducible, discrete irreducible) smooth representations of $G(F)$.

For a Levi subgroup or an endoscopic group $H$ of $G$, let
\begin{align*}
    \iota_{H}^{G} \colon {}^LH \to {}^LG
\end{align*}
denote the $L$-embedding associated to $H$.

If $G = \GL_n$, we often identify $\Phi(G)$ with the set of isomorphism classes of smooth semisimple $n$-dimensional representations of the Weil-Deligne group $L_F = W_F \times \SL_2(\C)$ if $F$ is a $p$-adic field and $W_F$ if $F$ is an archimedean field, where $W_F$ is the Weil group of $F$.

Write $S_d^A$ for the $d$-dimensional irreducible representation of Arthur's $\SL_2(\C)$.

\subsection{Reductive groups over global fields}

Let $F$ be a number field and $\A = \A_F$ be the ring of adeles of $F$.
Let $S_{\infty}$ (resp. $S^{\infty}$) denote the set of infinite (resp. finite) places of $F$.
Let $G$ be a connected reductive group defined over $F$.

Let $S$ be a set of places of $F$.
Let $\A_S = \A_{F, S}$ denote the restricted direct product of $F_v$ for $v \in S$.
For an irreducible admissible smooth representation $\Pi$ of $G(\A_S)$, let $\Pi_v$ denote the local component of $\Pi$ at the place $v$.

Let $\Pi$ be an automorphic representation of $G(\A)$.
Let $S$ be a finite set of places of $F$ containing $S_{\infty}$ such that $\Pi_v$ is unramified for any $v \notin S$.
Let $c(\Pi) = \{c(\Pi_v)\}_{v \notin S}$ be its Hecke--Satake family of semisimple conjugacy classes in $\widehat{G}$ defined by the Satake isomorphism.

\subsection{Orbital integrals and transfer factors}\label{section:orbital-integrals-transfer-factors}

Let $F$ be a local field of characteristic zero.
We use the orbital integrals and compatible measures of \cite{MoeglinWaldspurger2016-StabilisationFormuleTracesTordue1}*{I.2.4}.
For a strongly regular semisimple element $\gamma\in\widetilde G(F)$, put $G_\gamma=Z_G(\gamma)^\circ$ and $D^{\widetilde G}(\gamma)=\det(1-\operatorname{Ad}(\gamma)\mid\mathfrak g/\mathfrak g_\gamma)$.
We set
\begin{align*}
  I^{\widetilde G}(\gamma,f)
  =\abs{D^{\widetilde G}(\gamma)}^{1/2}
  \int_{G_\gamma(F)\backslash G(F)}f(g^{-1}\gamma g)\rd g.
\end{align*}
The measures on $G_\gamma(F)$ and $H_\delta(F)$ correspond locally under the norm map for related strongly regular elements $\gamma$ and $\delta$.
We use the Whittaker-normalized absolute Kottwitz--Shelstad factor of \cite{Kaletha2022-LocalLanglandsConjecturesDisconnectedGroups}*{Section 4.9}, with the factor $\Delta_{\mathrm{IV}}$ omitted, and denote it by $\Delta(\delta,\gamma)$.
Thus transfer is defined by
\begin{align*}
  S^H(\delta,f_H)
  =d(\theta)^{1/2}\sum_{\gamma\leftrightarrow\delta}
  \frac{\Delta(\delta,\gamma)}{[Z_G(\gamma)(F):G_\gamma(F)]}
  I^{\widetilde G}(\gamma,f),
\end{align*}
where the sum is over rational conjugacy classes and
\begin{align*}
  d(\theta)=\abs{\det(1-\theta\mid\mathfrak t/\mathfrak t^\theta)}_F.
\end{align*}
Here $\theta$ is the automorphism of a pinned maximal torus $T$ of the quasi-split inner form induced by the twisting.
In particular, $d(\theta)=\abs{3}_F$ for $\widetilde{\PGSO_8}$.

\subsection{Specific groups}

Let $F$ be any field.
Let $\rG_2$ denote the split exceptional group of type $\rG_2$ defined over $F$.
Let $\PGSO_8$ (resp. $\PGSp_6$) denote the split simple adjoint group of type $D_4$ (resp. $C_3$) defined over $F$.
Let $(T, B, \{X_{\alpha}\}_\alpha)$ denote a pinning of $\PGSO_8$ defined over $F$.
We write the set of simple roots of $\PGSO_8$ with respect to $(T, B)$ as $\Delta = \{\alpha_1, \alpha_2, \alpha_3, \alpha_4\}$, where $\alpha_2$ is the central root in the Dynkin diagram of type $D_4$, following the labeling in the book of Bourbaki \cite{Bourbaki2002-LieGroupsLieAlgebrasChapters4-6}.
Let $\theta$ denote one of the two outer automorphisms of $\PGSO_8$ of order $3$ defined over $F$ that preserve the pinning $(T, B, \{X_{\alpha}\}_\alpha)$.
These structures are defined over $\Z$.

For any classical or similitude classical group $G$, let $\Std_{G}$ denote the standard representation of ${}^LG$ in some $\GL_N$.
We also denote the $7$-dimensional standard representation of ${}^L\rG_2$ by $\Std_{\rG_2}$.

\subsection{Topological groups}
Unless otherwise stated, all the representations of topological groups are assumed to be defined on complex vector spaces.
For a locally compact group $X$, let $X^{\cD}$ denote its unitary dual.

\section{Exceptional theta correspondences for \texorpdfstring{$\rG_2$}{G2}}\label{section:exceptional-theta-correspondences}

Let $F$ be a $p$-adic field of characteristic zero and $D$ a cubic division algebra over $F$.
Let $\PGL_3^{+}$ be the semidirect product of $\PGL_3$ and $\Z/2\Z$ defined by the outer automorphism of $\PGL_3$ of order $2$ defined over $F$ and preserving the standard pinning.

The Gan--Savin correspondence \cite{GanSavin2023-LocalLanglandsConjectureG2}
\begin{align*}
  \Pi(\rG_2(F)) \to \Phi(\rG_2(F)) \colon \pi \mapsto \phi_{\pi},
\end{align*}
is compatible with parabolic induction and satisfies the generic packet conjecture.
Its relation to exceptional theta correspondences is as follows.
\begin{theorem}\label{theorem:properties-exceptional-theta}
  We have exceptional theta liftings
  \begin{itemize}
    \item from $\rG_2$ to $\PGSp_6$
          \begin{align*}
            \theta_{\rG_2}^{\PGSp_6} \colon \Pi(\rG_2(F)) \to \Pi(\PGSp_6(F)) \cup \{0\},
          \end{align*}
    \item from $\rG_2$ to $\mathrm{PD}^{\times}$
          \begin{align*}
            \theta_{\rG_2}^{\mathrm{PD}^{\times}} \colon \Pi(\rG_2(F)) \to \Pi(\mathrm{PD}^{\times}(F)) \cup \{0\},
          \end{align*}
    \item and from $\rG_2$ to $\PGL_3^{+}$
          \begin{align*}
            \theta_{\rG_2}^{\PGL_3^{+}} \colon \Pi(\rG_2(F)) \to \Pi(\PGL_3^{+}(F)) \cup \{0\}.
          \end{align*}
  \end{itemize}

  These liftings satisfy the following properties.
  \begin{enumerate}
    \item
    These liftings preserve temperedness and genericity.
    \item
    Any generic representation $\pi \in \Pi(\rG_2(F))$ lifts to a nonzero generic representation of $\PGSp_6(F)$.
    \item
    For $\pi \in \Pi(\rG_2(F))$, exactly one of $\theta_{\rG_2}^{\PGSp_6}(\pi)$ and $\theta_{\rG_2}^{\mathrm{PD}^{\times}}(\pi)$ is nonzero.
    \item
    For any $\pi \in \Pi_{\disc}(\rG_2(F))$, there exists a unique group $H \in \{\PGSp_6, \mathrm{PD}^{\times}, \PGL_3^{+}\}$ such that $\theta_{\rG_2}^H(\pi)$ is a nonzero discrete series representation.
    \item
    A representation $\sigma \in \Pi(\PGSp_6(F))$ lies in the image of $\theta_{\rG_2}^{\PGSp_6}$ if and only if the spin $L$-parameter defined by Gan--Savin contains a trivial representation of $L_F$.
    \item
    For any $\phi \in \Phi_{\disc}(\rG_2(F))$, all the elements of the $L$-packet $\Pi_{\phi}(\rG_2(F))$ lift to $\PGSp_6(F)$ if and only if the standard $L$-parameter $\Std_{\rG_2} \circ \phi$ does not contain the trivial representation of $L_F$.
    \item
    The liftings are compatible with the local Langlands correspondences, in the following sense.
      \begin{itemize}
        \item
        If $\theta_{\rG_2}^{\PGSp_6}(\pi) \neq 0$, then the standard $L$-parameter of $\pi$ is equal to the standard $L$-parameter of $\theta_{\rG_2}^{\PGSp_6}(\pi)$.
        If, in addition, $\phi_{\pi}$ is a discrete parameter of $\PGSp_6$-type, then we have the canonical isomorphism of component groups
        \begin{align*}
          \cS_{\phi_{\pi}} \cong \cS_{\iota_{\rG_2}^{\PGSp_6} \circ \phi_{\pi}}.
        \end{align*}
        Here the component group on the $\PGSp_6$ side is taken modulo $Z(\Spin_7)\cong\mu_2$. See \cite{GanSavin2023-LocalLanglandsConjectureG2}*{Lemma 2.4(ii)}.
        \item
        If $\theta_{\rG_2}^{\mathrm{PD}^{\times}}(\pi) \neq 0$, then the standard $L$-parameter of $\pi$ is equal to
        \begin{align*}
          \phi_{\theta_{\rG_2}^{\mathrm{PD}^{\times}}(\pi)} \oplus \phi_{\theta_{\rG_2}^{\mathrm{PD}^{\times}}(\pi)}^{\vee} \oplus \mathbf{1}.
        \end{align*}
        \item
        If $\theta_{\rG_2}^{\PGL_3^{+}}(\pi) \neq 0$, then the standard $L$-parameter of $\pi$ is equal to
        \begin{align*}
          \phi_{\tau} \oplus \phi_{\tau}^{\vee} \oplus \mathbf{1},
        \end{align*}
        where $\tau$ is an irreducible representation of $\PGL_3(F)$ which appears in the restriction of $\theta_{\rG_2}^{\PGL_3^{+}}(\pi)$ to $\PGL_3(F)$.
      \end{itemize}

    We also have the canonical injection for any $\phi \in \Phi(\rG_2(F))$
    \begin{align*}
      \Pi_{\phi} \hookrightarrow \cS_{\phi}^{\cD},
    \end{align*}
    which is a bijection unless the residual characteristic of $F$ is
    $3$ and $\phi$ is a discrete parameter which factors through
    ${}^L\PGL_3$.  In particular, if
    $\phi\in\Phi_{\disc}(\rG_2(F))_{\PGSp_6}$, then this map is a
    bijection.  In this case $\cS_{\phi}$ is an elementary abelian
    $2$-group, and hence
    \begin{align*}
      \abs{\Pi_{\phi}}=\abs{\cS_{\phi}}.
    \end{align*}
  \end{enumerate}
\end{theorem}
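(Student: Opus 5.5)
This theorem is a compilation of results of Gan--Savin, so I would prove it by citation, checking that each item matches the precise form in \cite{GanSavin2023-HoweDualityDichotomyExceptionalThetaCorrespondences} and \cite{GanSavin2023-LocalLanglandsConjectureG2}. The plan is to first fix the three dual pairs: $\rG_2 \times \PGSp_6$ inside $E_7$, $\rG_2 \times \mathrm{PD}^{\times}$ inside the inner form of $E_6$ attached to $D$, and $\rG_2 \times \PGL_3^{+}$ inside $E_6 \rtimes \Z/2\Z$. For each, one restricts the minimal representation and invokes Howe duality from the first paper. This defines the three maps $\theta_{\rG_2}^{H}$ with values in $\Pi(H(F)) \cup \{0\}$.

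Items (1)--(4) then follow from the first paper's analysis of these theta correspondences. Preservation of temperedness and genericity comes from the Jacquet module computations of the minimal representation and from the computation of the twisted Jacquet module for the Whittaker character; the latter also gives (2). The dichotomy (3) is the $\rG_2$ analogue of the Kudla--Rallis/Prasad dichotomy. Gan--Savin prove it with a see-saw argument together with the Frobenius pairing of the $E_7$ and $E_6^D$ minimal representations. Statement (4) is their trichotomy for discrete series, obtained by combining (3) with the classification of the nonsupercuspidal discrete series via explicit Jacquet modules.

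Items (5)--(7) come from the second paper. The image criterion (5) uses the spin parameter of $\PGSp_6$, obtained from the local Langlands correspondence for $\GSp_6$ that Gan--Savin use. The condition that this parameter contains $\mathbf{1}$ reflects the fact that the image of ${}^L\rG_2 \to \Spin_7$ is the stabilizer of a vector in the spin representation. For (6), I would read off from the construction of $\Pi_\phi$ which members come from $\mathrm{PD}^{\times}$ or $\PGL_3^{+}$. Those members exist exactly when $\Std_{\rG_2}\circ\phi$ contains $\mathbf{1}$, which is when $\phi$ factors through $\SL_3$ up to conjugacy. The compatibility statements in (7) hold essentially by construction, since Gan--Savin define $\phi_\pi$ through these lifts. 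The only additional points are the standard-parameter check, which compares similitude characters and central characters, and the component group isomorphism. For the latter, I would use $\Cent_{\Spin_7}(\phi)/\mu_2 \cong \Cent_{\rG_2}(\phi)$ when $\phi$ is of $\PGSp_6$-type, citing Lemma 2.4(ii) there.

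The main obstacle is the last claim: the injection $\Pi_\phi \hookrightarrow \cS_\phi^{\cD}$, when it is a bijection, and the resulting cardinality formula. To settle it, I would go through Gan--Savin's case analysis of $\cS_\phi \in \{1, \Z/2, (\Z/2)^2, (\Z/2)^3, S_3\}$. The exception arises only for $S_3$-type parameters factoring through ${}^L\PGL_3$ in residue characteristic $3$, where their counting of depth-zero or supercuspidal members of $\PGL_3^{+}$-lifts was incomplete. For $\phi$ of $\PGSp_6$-type, the centralizer lies in $\Spin_7/\mu_2$ and is an elementary abelian $2$-group. Every character of $\cS_\phi$ is then realised by a member of the $\PGSp_6$ packet, which is transported back via (7), or by the $\mathrm{PD}^{\times}$ side via (3). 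Since characters of an abelian group are one-dimensional, this gives $\abs{\Pi_\phi}=\abs{\cS_\phi^{\cD}}=\abs{\cS_\phi}$.
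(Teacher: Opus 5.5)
Your route, proof by citation of Gan--Savin, is the paper's route. The paper states this theorem as a summary of \cite{GanSavin2023-HoweDualityDichotomyExceptionalThetaCorrespondences} and \cite{GanSavin2023-LocalLanglandsConjectureG2}, citing Lemma 2.4(ii) there for the component groups and Proposition 3.2 for $\Pi_\phi\hookrightarrow\cS_\phi^{\cD}$, and gives no separate argument.

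The real problem is your account of the residual-characteristic-$3$ exception, which is wrong on both counts. It does not concern $S_3$-type parameters. In residual characteristic $3$ a self-dual discrete $\tau$ of $\PGL_3(F)$ must be the Steinberg representation, so $JL_D(\tau)=\mathbf1$, and the nonvanishing of its lift to $\rG_2$ is already in Gan--Savin. Nor does it concern $\PGL_3^{+}$-lifts. What Gan--Savin leave open is the nonvanishing of theta lifts from $\mathrm{PD}^{\times}$. Concretely, these are the non-generic members $\theta(JL_D(\tau))$ and $\theta(JL_D(\tau^\vee))$ of the packets with $\tau\not\cong\tau^\vee$, where $\cS_\phi=\mu_3\cong\Z/3\Z$; that group is missing from your list. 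The paper settles this case only later, by the trace-formula argument in the proof of Theorem~\ref{theorem:ECR-PGL3} (see Remark~\ref{remark:completion-LLC}).

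This also undercuts your proof of one direction of (6). Suppose $\Std_{\rG_2}\circ\phi$ contains $\mathbf1$. A member of $\Pi_\phi$ that fails to lift to $\PGSp_6$ must, by (3), be a $\mathrm{PD}^{\times}$-lift. The generic member does come from $\PGL_3^{+}$, but it still lifts to $\PGSp_6$ by (2), so it is not such a witness. Hence ``reading off'' a non-lifting member from Gan--Savin's construction fails exactly in residual characteristic $3$ with $\tau\not\cong\tau^\vee$. In that case this direction of (6) rests on Theorem~\ref{theorem:ECR-PGL3}, not on Gan--Savin.

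Your count for $\phi\in\Phi_{\disc}(\rG_2(F))_{\PGSp_6}$ also needs adjusting. No member of $\Pi_\phi$ lifts to $\mathrm{PD}^{\times}$ here, since by (7) such a lift would force $\Std_{\rG_2}\circ\phi$ to contain $\mathbf1$. So your alternative ``or by the $\mathrm{PD}^{\times}$ side via (3)'' is vacuous. What makes every character of $\cS_\phi$ occur is (5). The spin parameter of $\iota_{\rG_2}^{\PGSp_6}\circ\phi$ is $\mathbf1\oplus\Std_{\rG_2}\circ\phi$, so every member of the $\PGSp_6$-packet is a theta lift from $\rG_2$; that packet is indexed by all characters of $\cS_{\iota_{\rG_2}^{\PGSp_6}\circ\phi}$ modulo $Z(\Spin_7)$. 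The source of each such lift lies in $\Pi_\phi$ by (7) and Lemma~\ref{lemma:injectivity-G_2-to-GL7}. Howe duality, the component-group isomorphism in (7), and the fact that $\cS_\phi$ is abelian then give $\abs{\Pi_\phi}=\abs{\cS_\phi}$.
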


\begin{remark}
  The backward liftings have similar properties and are denoted, for example, by $\theta_{\PGSp_6}^{\rG_2}$.
\end{remark}

\begin{definition}
  Let $\Phi_{\disc}(\rG_2(F))_{\PGSp_6}$ (resp. $\Phi_{\disc}(\rG_2(F))_{\PGL_3^{+}}$) denote the set of discrete $L$-parameters $\phi \in \Phi_{\disc}(\rG_2(F))$ such that the standard $L$-parameter $\Std_{\rG_2}\circ\phi$ does not contain the trivial representation of $L_F$ (resp. contains the trivial representation).
\end{definition}

\begin{lemma}[\cite{Takanashi2026-HiragaIchinoIkedaConjectureFormalDegrees}*{Lemma 3.57}]\label{lemma:injectivity-G_2-to-GL7}
    The canonical map
    \begin{align*}
        \Phi(\rG_2(F)) \to \Phi(\GL_7(F)),
    \end{align*}
    induced by the standard representation of $\rG_2(\C)$ is injective.
    Hence, the morphism
    \begin{align*}
        \Phi(\rG_2(F)) \to \Phi(\GL_7(F)) \to \Phi(\GL_8(F)),
    \end{align*}
    obtained by adding the trivial representation is also injective.
\end{lemma}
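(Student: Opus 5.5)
We need to show that two $L$-parameters $\phi_1,\phi_2 \colon L_F \to \rG_2(\C)$ are $\rG_2(\C)$-conjugate whenever $\Std_{\rG_2}\circ\phi_1$ and $\Std_{\rG_2}\circ\phi_2$ are $\GL_7(\C)$-conjugate. The second assertion is then immediate: adding the trivial representation is injective on isomorphism classes of semisimple representations, since $\rho\oplus\mathbf 1\cong\rho'\oplus\mathbf 1$ implies $\rho\cong\rho'$ by Krull--Schmidt.

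For the first assertion I would pass through $\SO_7$. Since $\rG_2(\C)\subset\SO_7(\C)$ stabilizes the invariant quadratic form, $\Std\circ\phi_i$ is orthogonal. Two orthogonal representations that are isomorphic in $\GL_7$ are conjugate in $\O_7(\C)$, and since $-1\in\O_7$ is central with determinant $-1$, they are conjugate in $\SO_7(\C)$. So after conjugating we may assume $\phi_1,\phi_2$ are two lifts into $\rG_2(\C)$ of $\SO_7(\C)$-conjugate homomorphisms, and the problem becomes a statement about $\rG_2$-conjugacy inside $\SO_7$.

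The main tool is the classical fact that $\rG_2(\C)$ is a \emph{acceptable} subgroup of $\SO_7(\C)$ (Larsen, also used by Gan--Savin and Chenevier): two elements of $\rG_2$ that are $\SO_7$-conjugate are $\rG_2$-conjugate. However, acceptability only gives element-conjugacy, and for homomorphisms from a group the global conjugacy can fail; for $\rG_2\subset\SO_7$, Larsen's results and Griess's work show that global conjugacy does in fact hold, i.e.\ any two homomorphisms $\Gamma\to\rG_2(\C)$ that are conjugate in $\SO_7(\C)$ are conjugate in $\rG_2(\C)$. I would verify this in our setting directly via the transitive action of $\Spin_7$ on the $7$-sphere. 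Write $\SO_7/\rG_2$ as the variety of nondegenerate $3$-forms in the open orbit, or equivalently $\Spin_7/\rG_2\cong$ the quadric of nonisotropic spinors of fixed norm in the $8$-dimensional spin representation. If $g\phi_1g^{-1}=\phi_2$ with $g\in\SO_7$, then the two $\rG_2$-invariant spinor lines are both invariant under $\phi_2$. The image of $\phi_2$ in $\Spin_7$ (lifting $\rG_2\hookrightarrow\Spin_7$) acting on the $8$-dimensional spin representation $\cong \Std\oplus\mathbf1$ has a fixed space equal to $1+\dim(\Std\circ\phi_2)^{L_F}$. We need some element of $\Cent_{\Spin_7}(\phi_2)$ carrying the spinor $v_1=g v_0$ to $v_0$. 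Both lie in the fixed space $V^{L_F}=\C v_0\oplus W$, $W=(\Std\circ\phi_2)^{L_F}$, with same norm, and the centralizer of $\phi_2$ in $\SO_7$ contains $\SO(W)$ (acting on $W$ and trivially on the complement, which is orthogonal and nondegenerate since $\phi_2$ is semisimple). Its preimage in $\Spin_7$ acts on $\C v_0\oplus W$ as $\Spin(W)$ via the spin structure, under which $\C v_0\oplus W$ restricts to $\mathbf1+W$ ... I would check that $\Spin(\C v_0\oplus W)$-type transitivity on the norm-one sphere holds.

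The main obstacle is exactly this transitivity step: showing that the centralizer of $\phi_2$ in $\Spin_7$ acts transitively on the fixed-norm spinors in $(\text{spin})^{\phi_2}$. I expect the cleanest route is to note that the centralizer's action on $\C v_0\oplus W$ realizes the full $\SO(\C v_0\oplus W)$ (triality swaps the $8$-dimensional spin and vector representations of $\Spin_8$, so the fixed subspace behaves like a vector representation). Then transitivity on spheres of dimension at least $1$ follows, with the degenerate cases $W=0$ ($v_1=\pm v_0$, handled by $-1$) treated separately. Alternatively I would simply cite the known result that $\rG_2\subset\SO_7$ satisfies global conjugacy, as in Takanashi's earlier paper.
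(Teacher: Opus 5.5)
\paragraph{Comparison with the paper.} Your fallback is exactly the paper's proof: citing that homomorphisms into $\rG_2(\C)$ which are conjugate in $\GL_7(\C)$ are already conjugate in $\rG_2(\C)$. The paper's proof consists only of citing Larsen's Proposition~2.8 together with Griess's Theorem~1. With that citation your argument is complete. The reduction to $\SO_7(\C)$ via the central element $-1\in\O_7(\C)$ is harmless, and Krull--Schmidt cancellation correctly gives the statement for $\GL_8$.

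\paragraph{The sign character in the spinor argument.} The direct spinor verification you propose instead has a genuine gap. Lifting $g$ to $\tilde g\in\Spin_7(\C)$ only gives $\tilde g\phi_1(x)\tilde g^{-1}=\epsilon(x)\phi_2(x)$ for some character $\epsilon\colon L_F\to Z(\Spin_7)=\mu_2$. Hence $v_1=\tilde g v_0$ satisfies $\phi_2(x)v_1=\epsilon(x)v_1$ and need not lie in the fixed space.

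This does occur. Let $\phi_2$ factor through a quotient of order two, with image an involution $h\in\rG_2(\C)$. Choose $v_1$ of the same norm as $v_0$ in the nondegenerate four-dimensional $(-1)$-eigenspace of $h$ on the spin representation, and choose $\tilde g$ with $\tilde g v_0=v_1$. Then the homomorphism $\phi_1$ sending the nontrivial element to $\tilde g^{-1}(-h)\tilde g$ is $\rG_2$-valued and $\SO_7$-conjugate to $\phi_2$, yet $\epsilon\neq 1$.

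What you need in general is an element $c$ of the preimage of $\Cent_{\SO_7}(\Std\circ\phi_2)$ with $c\phi_2c^{-1}=\epsilon\phi_2$ and $cv_1=\pm v_0$. The identity component of that preimage centralizes $\phi_2$. So for $\epsilon\neq1$ such a $c$ must lie in another component; in the example it is a lift of a product of two reflections, one in each eigenspace of $h$. Your argument never addresses this.

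\paragraph{The untwisted transitivity step.} Even when $\epsilon=1$, the transitivity you defer is essentially the whole content of the lemma. Your description of the action is also not correct: $\Spin(W)$ acts on the $\phi_2$-fixed spinors through spin representations of $\Spin(W)$, not as $\mathbf 1\oplus W$. For trivial $\phi_2$ one gets $\Spin_7$ on its $8$-dimensional spin representation, not $\SO_8$.

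When $\dim W\le 1$, the group $\Spin(W)$ is finite, so transitivity has to come from the centralizer inside $\Spin(W^\perp)$. For $\dim W=1$ the image lies in $\SL_3\subset\SL_4\cong\Spin_6$, and one uses the torus $\diag(t,t,t,t^{-3})$. The appeal to triality does not supply any of this.

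Since $W$ is the trace-zero part of a composition subalgebra, $\dim W\in\{0,1,3,7\}$, so the untwisted case can be checked case by case. The twisted cases still have to be handled; otherwise you should rely on the citation, as the paper does.
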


\begin{proof}
    This follows from \cite{Larsen1994-ConjugacyElementConjugateHomomorphisms}*{Proposition 2.8} and \cite{Griess1995-BasicConjugacyTheoremsG2}*{1.1 Theorem 1}.
\end{proof}

\begin{corollary}
  The maps
  \begin{align*}
    \Phi(\rG_2(F)) &\to \Phi(\PGSp_6(F)), \\
    \Phi(\rG_2(F)) &\to \Phi(\PGSO_8(F))
  \end{align*}
  are injective.
\end{corollary}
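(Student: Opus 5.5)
The plan is to factor both maps through the injective map of Lemma~\ref{lemma:injectivity-G_2-to-GL7}, composed with the standard representation of the target group. Concretely, I will check that the diagrams
\begin{align*}
  \Phi(\rG_2(F)) \to \Phi(\PGSp_6(F)) \to \Phi(\GL_8(F)), \qquad
  \Phi(\rG_2(F)) \to \Phi(\PGSO_8(F)) \to \Phi(\GL_8(F))
\end{align*}
commute with the composite $\Phi(\rG_2(F)) \to \Phi(\GL_7(F)) \to \Phi(\GL_8(F))$, which adds the trivial representation. The second arrows in the two displays are induced by $8$-dimensional representations of $\widehat{\PGSp_6}=\Spin_7(\C)$ and $\widehat{\PGSO_8}=\Spin_8(\C)$, which I specify below. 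Once commutativity holds, the composite in each diagram is injective by Lemma~\ref{lemma:injectivity-G_2-to-GL7}. An injective composite forces its first arrow to be injective, which gives the corollary.

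The key step is choosing the $8$-dimensional representations and verifying the restrictions.
\begin{itemize}
  \item For $\Spin_7(\C)$, take the spin representation. The embedding $\rG_2(\C) \subset \Spin_7(\C)$ is the stabilizer of a nonzero spinor, so the spin representation restricts to $\Std_{\rG_2} \oplus \mathbf{1}$.
  \item For $\Spin_8(\C)$, take any one of the three $8$-dimensional representations (vector or either half-spin). The embedding $\rG_2(\C) \subset \Spin_8(\C)$ is the fixed points of triality. Each $8$-dimensional representation therefore restricts to $\Std_{\rG_2}\oplus\mathbf{1}$.
\end{itemize}
Since all groups are split, the $L$-groups are direct products with $W_F$. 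The dual embeddings $\iota_{\rG_2}^{\PGSp_6}$ and $\iota_{\rG_2}^{\PGSO_8}$ are the identity on the $W_F$-factor. Commutativity therefore holds at the level of $\widehat{G}$-conjugacy classes of parameters: conjugate parameters of $\rG_2$ map to conjugate parameters downstream, and the composites agree.

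The only point needing care is identifying the $L$-embeddings used in the paper with these standard inclusions. Any choice of conjugate embedding gives the same restriction of representations up to isomorphism, so nothing changes. Incidentally, the $\PGSO_8$ case can also be obtained by factoring through $\Spin_7(\C) \subset \Spin_8(\C)$, under which the vector representation of $\Spin_8(\C)$ restricts to $\Std_{\Spin_7}\oplus\mathbf{1}$. The direct argument above already suffices, however.
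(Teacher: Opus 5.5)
Your argument is correct and is essentially the paper's proof: compose with a representation of the dual group whose restriction to $\rG_2(\C)$ is $\Std_{\rG_2}$ or $\Std_{\rG_2}\oplus\mathbf{1}$, then invoke Lemma~\ref{lemma:injectivity-G_2-to-GL7}. The only cosmetic difference is that for $\PGSp_6$ the paper composes with the $7$-dimensional standard representation of $\Spin_7(\C)$ and uses the first half of that lemma, whereas you use the $8$-dimensional spin representation and the second half.
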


\begin{proof}
  Their composites with the standard representations into $\GL_7$ and $\GL_8$, respectively, are injective by Lemma~\ref{lemma:injectivity-G_2-to-GL7}.
\end{proof}

\section{Theta correspondence between \texorpdfstring{$\PGSp_6$}{PGSp6} and \texorpdfstring{$\PGSO_8$}{PGSO8}}

Let $\iota^{\GSO_8}_{\GSp_6}$ denote the $L$-embedding ${}^L \GSp_6 \to {}^L \GSO_8$.
We use Xu's local Langlands correspondences
\begin{align*}
        {\cL_{\GSO_{2n}}}_{/ \O_{2n} \times \GL_1}
        \colon
        \Pi_{\temp}(\GSO_{2n}(F))_{/ \mathrm{GO}_{2n}}
        \to
        \Phi_{\temp}(\GSO_{2n}(F))_{/ \O_{2n} \times \GL_1}
\end{align*}
and
\begin{align*}
        {\cL_{\GSp_{2n}}}_{/ \Sp_{2n} \times \GL_1}
        \colon
        \Pi_{\temp}(\GSp_{2n}(F))
        \to
        \Phi_{\temp}(\GSp_{2n}(F))_{/ \Sp_{2n} \times \GL_1},
\end{align*}
for $\GSO_{2n}$ and $\GSp_{2n}$, as recalled in \cite{Takanashi2026-HiragaIchinoIkedaConjectureFormalDegrees}*{Theorems 3.35 and 3.39}.

\begin{theorem}[cf. \cite{Takanashi2026-HiragaIchinoIkedaConjectureFormalDegrees}*{Theorem 3.51}]\label{theorem:similitude-theta}
    There exists a map
    \begin{align*}
        \theta^{\PGSO_8}_{\PGSp_6} \colon \Pi_{\temp}(\PGSp_{6}(F)) \to \Pi_{\temp}(\PGSO_8(F)) \cup \{ 0 \}.
    \end{align*}
    The map satisfies the following properties.

    \begin{enumerate}
    \item
    The image is equal to the disjoint union of $\Pi_{\temp}(\PGSO_8(F))_{\theta} =  \{ \pi \in \Pi_{\temp}(\PGSO_8(F)) \mid \gamma(0, \iota^{\GL_{8}}_{\SO_8} \circ \phi^{\SO_8}_{\pi}, \psi) = 0  \}$ and $\{ 0 \}$.

    \item
    The map is compatible with the local Langlands correspondence in the following sense.
    For any tempered irreducible representation $\sigma$ of $\PGSp_6(F)$ such that $\theta^{\PGSO_8}_{\PGSp_6}(\sigma) \neq 0$, we have
    \begin{align*}
           {\cL_{\mathrm{GSO}_8}}_{/ \O_8 \times \GL_1}
           (\theta^{\PGSO_8}_{\mathrm{PGSp}_6}(\sigma))
           =
           [
           \iota^{\GSO_8}_{\GSp_6}
           \circ
           {\cL_{\GSp_6}}_{/ \Sp_6 \times \GL_1} (\sigma)]_{\O_8 \times \GL_1}.
    \end{align*}

    \item
    If we set
    \begin{align*}
        \Pi_{\temp}(\PGSp_6(F))_{\theta} = \{ \sigma \in \Pi_{\temp}(\PGSp_6(F)) \mid  \theta^{\PGSO_8}_{\PGSp_6}(\sigma) \neq 0 \},
    \end{align*}
    then the map $\theta^{\PGSO_8}_{\PGSp_6}$ induces a bijection
    \begin{align*}
        \theta^{\PGSO_8}_{\PGSp_6} \colon
        \Pi_{\temp}(\PGSp_{6}(F))_{\theta} \xrightarrow{\sim} \Pi_{\temp}(\PGSO_8(F))_{\theta}.
    \end{align*}

    \item
    The map $\theta^{\PGSO_8}_{\PGSp_6}$ maps generic representations to generic representations.
    The inverse image of any generic representation is generic.
    \end{enumerate}
\end{theorem}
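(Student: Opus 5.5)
The plan is to build $\theta^{\PGSO_8}_{\PGSp_6}$ by descending the similitude theta correspondence for the dual pair $(\GSp_6,\mathrm{GO}_8)$ to the adjoint quotients, and then to read off properties (1)--(4) from the corresponding statements for isometry groups $(\Sp_6,\O_8)$ together with Xu's correspondences. This is the route of \cite{Takanashi2026-HiragaIchinoIkedaConjectureFormalDegrees}*{Theorem 3.51}. Concretely, I would take the split $8$-dimensional quadratic space $V$ of trivial discriminant and the similitude dual pair
\[
R=\{(g,h)\in\GSp_6\times\mathrm{GO}(V)\mid \nu(g)=\nu(h)\},
\]
acting on the extended Weil representation $\omega$ induced from $\Sp_6\times\O(V)$. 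The center $\{(z,z)\}$ acts trivially on $\omega$ up to a character. So representations of $\PGSp_6$, regarded as representations of $\GSp_6$ with trivial central character, lift to representations of $\mathrm{GO}_8$ with trivial central character. Restricting to $\GSO_8$ and taking an appropriate constituent gives representations of $\PGSO_8$. I would define $\theta^{\PGSO_8}_{\PGSp_6}(\sigma)$ to be the big theta lift followed by this restriction, and fix the choice of constituent by the action of $\mathrm{GO}_8/\GSO_8$. I would then check that the result is irreducible or zero, which follows from Howe duality for similitudes (Roberts' extension of Howe duality). Here $\dim V=8>6$, so we are in the stable range relevant for conservation.

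For property (2), I would reduce to the isometry case. Restricting $\sigma$ to $\Sp_6(F)$ decomposes it into an $L$-packet orbit under $\GSp_6(F)/Z\Sp_6(F)$, and Xu's parametrization is compatible with this restriction. Prasad's conjecture, proved by Atobe--Gan for $(\Sp_{2n},\O_{2n+2})$, then gives the $L$-parameter of the $\O_8$-lift as $\phi\oplus\mathbf 1$ up to the twist on the $\GL_1$-part. Translating through $\iota^{\GSO_8}_{\GSp_6}$ and Xu's $\O_8\times\GL_1$-conjugacy classes yields the displayed identity.

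For property (1), I would identify the image using the standard criterion for nonvanishing of going-down theta from $\O_8$ to $\Sp_6$. A tempered $\pi$ of $\O_8$ comes from $\Sp_6$ exactly when its parameter contains the trivial character with the required sign condition. Equivalently, $L(s,\phi_\pi)$ has a pole at $s=0$, and this is what $\gamma(0,\iota^{\GL_8}_{\SO_8}\circ\phi^{\SO_8}_\pi,\psi)=0$ expresses. Property (3), injectivity, follows from Howe duality applied in both directions. Property (4), preservation of genericity in both directions, follows from Gan--Ichino-style results that generic representations correspond under theta in the stable direction, together with the compatibility of Whittaker data under restriction from similitude to isometry groups.

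I expect the main obstacle to be the descent from $\mathrm{GO}_8$ to $\GSO_8$ and $\PGSO_8$. Theta lifts are naturally defined for $\mathrm{GO}_8$, but the statement concerns $\PGSO_8$ modulo Xu's $\O_8\times\GL_1$-equivalence, so the map must be well defined up to this ambiguity. To control this, I would use the fact that the image consists of parameters containing $\mathbf 1$. Such parameters are fixed by the outer $\O_8$-conjugation, so the relevant $\mathrm{GO}_8$-representations restrict irreducibly to $\GSO_8$, and the choice of constituent disappears.
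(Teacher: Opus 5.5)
The paper gives no proof of this statement. It is quoted from Theorem 3.51 of the companion paper, and the only hint of the method is that the paper's list of Howe duality pairs contains $(\GSp_{2n},\GSO_{2n+2})$ itself, which suggests the lift is set up directly on $\GSO_8$. Your overall plan is the natural one: similitude theta, reduction to $(\Sp_6,\O_8)$, and Prasad's conjecture as proved by Atobe--Gan. For (2) the reduction to isometry groups suffices, because the identity is only asserted after passing to $\O_8\times\GL_1$ and $\Sp_6\times\GL_1$, that is, for the standard parameter and the central character. However, your argument for (3) does not work as written.

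You define the lift through $\mathrm{GO}_8$ and then restrict. You also observe, correctly, that for parameters containing $\mathbf 1$ the relevant $\mathrm{GO}_8$-representations $\widetilde\pi$ restrict irreducibly to $\GSO_8$. That observation is exactly why Howe duality for $(\GSp_6,\mathrm{GO}_8)$, applied in either direction, does not give injectivity. The representations $\widetilde\pi$ and $\widetilde\pi\otimes\det$ are non-isomorphic but have the same restriction $\pi$. If they were the lifts of $\sigma_1\not\cong\sigma_2$, your map would be two-to-one over $\pi$.

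The missing input is the dichotomy for $\O_8$, which is what your ``required sign condition'' in (1) encodes; it has to be used in (3).
\begin{itemize}
\item By the Sun--Zhu conservation relation, the first occurrence indices of $\pi_0$ and $\pi_0\otimes\det$ in the symplectic tower add up to $2\dim V=16$. Hence at most one of them lifts to $\Sp_6$.
\item Conversely, if neither lifts, both first occurrences equal $8$. For tempered $\pi_0$ with $\phi_{\pi_0}\ni\mathbf 1$, Atobe--Gan's going-down criterion excludes this.
\end{itemize}
Transport this to similitudes by restricting to $\Sp_6\times\O_8$. ``At least one'' gives the surjectivity in (1), and ``at most one'' gives the injectivity in (3). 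This is precisely the content of Howe duality for the pair $(\GSp_6,\GSO_8)$ that the paper uses.

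Two smaller points.
\begin{itemize}
\item $(\Sp_6,\O_8)$ is the almost-equal-rank case, not the stable range, which would need $\dim V\geq 12$.
\item Irreducibility of $\Theta(\sigma)$ for tempered $\sigma$ is not a consequence of Howe duality, which only gives a unique irreducible quotient. It, and the genericity assertions in (4), come from the tempered almost-equal-rank results of Gan--Ichino and Atobe--Gan, in particular Prasad's conjecture.
\end{itemize}
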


\section{On the Howe duality property}

\begin{theorem}[\cite{GanTakeda2016-ProofHoweDualityConjecture}, \cite{BakicGanSavin2023-SimilitudeExceptionalThetaCorrespondences}, \cite{GanSavin2023-HoweDualityDichotomyExceptionalThetaCorrespondences}]
  Let $F$ be a non-archimedean local field of characteristic zero.
  The Howe duality property holds for the following dual pairs.
  \begin{itemize}
    \item $(\rG_2(F), \PGSp_6(F))$,
    \item $(\GSp_{2n}(F), \GSO_{2n+2}(F))$ for any $n \geq 1$,
    \item $(\PGL_3^+(F), \rG_2(F))$,
    \item $(\mathrm{PD}^{\times}(F), \rG_2(F))$ for any cubic division algebra $\mathrm{D}$ over $F$.
  \end{itemize}
\end{theorem}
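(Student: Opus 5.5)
This theorem collects known results, so the plan is to reduce each item to the cited literature rather than reprove Howe duality. For each dual pair, the Howe duality property means three things. First, for each irreducible $\pi$ of one member, the big theta lift $\Theta(\pi)$ (the maximal $\pi$-isotypic quotient of the minimal or Weil representation) has finite length. Second, $\Theta(\pi)$ has a unique irreducible quotient $\theta(\pi)$ whenever it is nonzero. Third, $\theta(\pi_1)\cong\theta(\pi_2)\neq 0$ implies $\pi_1\cong\pi_2$.

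For the similitude classical pair $(\GSp_{2n}(F),\GSO_{2n+2}(F))$, I would start from the isometry case, proved by Gan--Takeda \cite{GanTakeda2016-ProofHoweDualityConjecture} for all type I dual pairs including $(\Sp_{2n},\O_{2n+2})$. I would then pass to similitudes by the standard Roberts/Gan--Takeda argument. Restrict to the subgroups $\GSp_{2n}(F)^+$ and $\mathrm{GO}_{2n+2}(F)$ with matching similitude, where the induced Weil representation is compactly induced from the isometry Weil representation. Then use that restriction to the isometry groups is multiplicity free with finitely many constituents. One must also pass from $\mathrm{GO}_{2n+2}$ to $\GSO_{2n+2}$, where irreducible restriction or a sum of two conjugates is handled by Clifford theory. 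The remaining bookkeeping is recorded in \cite{BakicGanSavin2023-SimilitudeExceptionalThetaCorrespondences}, so for this item I would simply cite it.

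For the three exceptional pairs $(\rG_2,\PGSp_6)$, $(\PGL_3^+,\rG_2)$ and $(\mathrm{PD}^\times,\rG_2)$, the statement is exactly the main theorem of Gan--Savin \cite{GanSavin2023-HoweDualityDichotomyExceptionalThetaCorrespondences}. Those pairs arise inside the quaternionic $E_6$, $E_7$ (and their inner forms) via the minimal representation, and Gan--Savin prove Howe duality through the following route:
\begin{enumerate}
\item They compute Jacquet modules of the minimal representation along the Heisenberg parabolic.
\item This reduces the claim, by induction on the Bernstein component, to tempered and supercuspidal cases.
\item In those cases they use the Gan--Savin dichotomy together with doubling see-saws.
\end{enumerate}
The form $\PGL_3^+$ (rather than $\PGL_3$) is exactly the one for which they state the result, so no further Clifford-theoretic step is needed. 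The $\mathrm{PD}^\times$ case is covered there for every cubic division algebra.

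The only point needing care, and the one I expect to be the main obstacle, is matching conventions. The groups in the statement are adjoint forms ($\PGSp_6$, $\PGL_3^+$, $\mathrm{PD}^\times$), whereas the references sometimes work with $\GSp_6$ or with the isometry group. Passing to the quotient by the center is harmless here: the center acts trivially on the relevant minimal representation, since the dual pair in the simply connected form has the center lying in the kernel, so representations of the adjoint group are just representations trivial on the center. I would state this explicitly and then conclude by citation.
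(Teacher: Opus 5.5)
Your proposal matches the paper, which gives no argument of its own for this statement and simply records it by citing Gan--Takeda, Baki\'c--Gan--Savin and Gan--Savin, exactly as you propose. One caution about your optional sketch for $(\GSp_{2n},\GSO_{2n+2})$: Clifford theory alone does not descend Howe duality from $\mathrm{GO}_{2n+2}$ to $\GSO_{2n+2}$, because a restriction splitting into two conjugates would give the big theta lift two irreducible quotients; what excludes this is the conservation relation, which for a quadratic space $V$ with $\dim V=2n+2$ allows at most one of $\pi$ and $\pi\otimes\nu$ (with $\nu$ the nontrivial character of $\mathrm{GO}_{2n+2}(F)/\GSO_{2n+2}(F)$) to occur in the correspondence with $\GSp_{2n}$, so only irreducible restrictions arise.
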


\begin{theorem}[\cite{Howe1989-TranscendingClassicalInvariantTheory}, \cite{BakicGanSavin2023-SimilitudeExceptionalThetaCorrespondences}, \cite{LokeSavin2019-DualitySphericalRepresentationsExceptionalThetaCorrespondences}]
  Let $F = \R$.
  The Howe duality property holds for $(\GSp_{2n}(F), \GSO_{2n+2}(F))$ for every $n \geq 1$, and for $(\rG_2(F), \PGSp_6(F))$ and $(\PGL_3^+(F), \rG_2(F))$ on spherical representations.
  In general, any theta lifting is compatible with the formation of infinitesimal characters.
\end{theorem}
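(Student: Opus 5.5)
This statement collects known results over $F=\R$, so I would prove it by assembling citations and adding one general argument for the last sentence; no new analysis is needed.

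\emph{The similitude pairs.} For $(\GSp_{2n}(\R),\GSO_{2n+2}(\R))$ I would start from Howe's archimedean duality for the isometry pairs $(\Sp_{2n}(\R),\O_{2n+2}(\R))$ (and the other real forms occurring in the restricted Weil representation). I would then pass to similitudes using the extension and induction argument of Roberts and Gan--Takeda, in the archimedean form recorded in \cite{BakicGanSavin2023-SimilitudeExceptionalThetaCorrespondences}. The point to check is that restricting an irreducible representation of $\GSp_{2n}(\R)$ to the subgroup of elements with similitude in the image of the other side is multiplicity-free. This holds because the relevant quotient is $\R^\times/\R^{\times 2}$, which has order $2$, so Clifford theory applies directly.

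\emph{The exceptional pairs.} For $(\rG_2(\R),\PGSp_6(\R))$ and $(\PGL_3^+(\R),\rG_2(\R))$ on spherical representations I would cite \cite{LokeSavin2019-DualitySphericalRepresentationsExceptionalThetaCorrespondences}. Their argument compares the $K$-spherical vectors of the minimal representation with the spherical Hecke data, via the structure of the $K$-types of the minimal representation along the dual pair. It shows that the spherical big theta lift has a unique irreducible quotient, and that this quotient is spherical with explicitly determined parameter.

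\emph{Compatibility with infinitesimal characters.} I would give a uniform proof. Let $(G,H)$ be a dual pair inside a group $E$, with $\Pi$ the minimal (or Weil) representation of $E$. The key input is the Howe-type correspondence of centers: the actions of $\cZ(\fg)$ and $\cZ(\mathfrak h)$ on $\Pi$ factor through the annihilator quotient. By the Joseph ideal theory of Huang--Pandžić--Savin, or Howe's computation in the classical case, there is an algebra homomorphism $\gamma\colon\cZ(\mathfrak h)\to\cZ(\fg)$ such that
\begin{align*}
  z\cdot v=\gamma(z)\cdot v
\end{align*}
for all $z\in\cZ(\mathfrak h)$ and $v\in\Pi$. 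Now take any nonzero quotient $\Pi\to\pi\boxtimes\sigma$. Then $\cZ(\mathfrak h)$ acts on $\sigma$ by $\chi_\pi\circ\gamma$, so the infinitesimal character of $\sigma$ is determined by that of $\pi$ through the map on Harish-Chandra parameters dual to $\gamma$. Identifying this dual map with the expected embedding of dual groups (the $L$-embedding) is a finite computation on Harish-Chandra parameters. I would carry it out by checking it on a single family, namely principal series, for which the lift is known from the Jacquet module computations.

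\emph{Main obstacle.} The hardest step is producing $\gamma$ for the exceptional pairs, where there is no classical Fock model. My first attempt would use the fact that the annihilator of the minimal representation is the Joseph ideal, and deduce the existence of $\gamma$ from the known decomposition of $U(\mathfrak e)/J$ restricted to $\fg\oplus\mathfrak h$. Failing that, I would cite the result of Huang--Pandžić--Savin directly.
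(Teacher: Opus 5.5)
Your proposal takes essentially the paper's approach. The theorem is recorded purely by citation: Howe for the isometry pairs, Baki\'c--Gan--Savin for the passage to similitudes, and Loke--Savin for the spherical exceptional pairs. For the last sentence, the paper cites (in the proof of the following theorem) Przebinda's Theorems 1.13 and 1.19 for the classical pairs and Li's Lemma 2.1, Theorem 8.1 and Table 1 for the exceptional ones, and your centre-homomorphism argument is exactly the mechanism of those references.

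The one correction is to take the exceptional homomorphism $\gamma$ from Li, which is the reference that actually supplies it with the explicit correspondence for these split pairs, rather than from Huang--Pand\v{z}i\'c--Savin; extracting it yourself from $U(\mathfrak{e})/J$ is a genuine computation, not a formality. The identification of the dual map with an $L$-embedding via principal series that you add is not needed for this statement.
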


\begin{theorem}
  Let $F$ be a local field of characteristic zero.
  The theta liftings for the following dual pairs are compatible with the morphism on the $L$-groups for spherical representations.
  \begin{itemize}
    \item $(\rG_2(F), \PGSp_6(F))$,
    \item $(\GSp_{2n}(F), \GSO_{2n+2}(F))$ for any $n \geq 1$,
    \item $(\PGL_3^+(F), \rG_2(F))$.
  \end{itemize}
\end{theorem}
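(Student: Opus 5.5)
The statement concerns spherical (unramified) representations, so the plan is to reduce everything to a computation with Satake parameters. In each case we must show the following. Suppose $\pi$ is an unramified irreducible representation of one member of the dual pair, with Satake parameter $s_\pi$. Then its theta lift $\theta(\pi)$, which should again be unramified, has Satake parameter equal to the image of $s_\pi$ under the relevant morphism of $L$-groups. For archimedean $F$ the analogue is the compatibility of infinitesimal characters, which the preceding theorem already provides; the remaining archimedean content is the matching of spherical $K$-types, which the cited work of Loke--Savin also treats. So we concentrate on non-archimedean $F$ with unramified data, and handle the three dual pairs in parallel.

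\textbf{Step 1: the lift of a spherical representation is spherical.} Take a hyperspecial maximal compact subgroup of the ambient group ($E_7$, $\GSp\times\GSO$ similitude, or $E_6\rtimes\Z/2$) whose intersections with the two members of the dual pair are hyperspecial. The minimal (or Weil) representation has a spherical vector, and this vector generates the spherical part. Then Howe duality, as in the theorem above, forces $\theta(\pi)$ to be spherical whenever $\pi$ is spherical and $\theta(\pi)\neq 0$.

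\textbf{Step 2: identify the Satake parameters through Jacquet modules.} Write the spherical $\pi$ as the unramified subquotient of $\Ind_{B}\chi$. Compute the Jacquet module of the minimal representation along the maximal parabolic of one member of the pair that is adapted to the dual pair. For $(\rG_2,\PGSp_6)$ this is done in Gan--Savin and Savin's work; for $(\PGL_3^+,\rG_2)$ it is in Gan--Savin; for $(\GSp_{2n},\GSO_{2n+2})$ it is Kudla's filtration for the similitude Weil representation. In each case one obtains a filtration whose graded pieces are induced from smaller dual pairs tensored with characters. Frobenius reciprocity together with induction on rank then shows that $\theta(\Ind\chi)$ is a subquotient of $\Ind\chi'$. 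Here $\chi'$ is obtained from $\chi$ by the torus map $\widehat T\to\widehat T'$ underlying the $L$-morphism, with the trivial or unipotent-part characters filled in. Taking unramified subquotients and applying Step 1 gives $s_{\theta(\pi)}=\iota(s_\pi)$.

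\textbf{Expected obstacle.} The main difficulty is matching the Jacquet-module characters to the $L$-group embedding exactly, rather than only up to a Weyl group or an outer automorphism. This matters most for $(\rG_2,\PGSp_6)$, where $\iota\colon \rG_2(\C)\to\Spin_7(\C)$ goes through the spin representation. It matters as well for $\PGL_3^+$, where one must check that the component in $\Z/2$ produces $\tau\oplus\tau^\vee\oplus\mathbf 1$ and not some twist of it. I would settle this by comparing unramified $L$-functions: compute the degenerate principal series exponents, or equivalently the Hecke eigenvalues of the spherical vector in the minimal representation, and check that the standard (respectively spin) $L$-functions agree. For $\rG_2$, Lemma~\ref{lemma:injectivity-G_2-to-GL7} then identifies the parameter from its standard $L$-function. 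For the classical similitude pair I would instead cite the known unramified computation, tracking the similitude character separately.
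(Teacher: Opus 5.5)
You take a genuinely different route from the paper. The paper proves nothing new here; it assembles citations. For the two exceptional pairs over $p$-adic fields it quotes Savin--Woodbury's matching of Hecke operators acting on the spherical vector of the minimal representation, restricting $\PGL_3^+$ to $\PGL_3$. For $(\GSp_{2n},\GSO_{2n+2})$ it quotes Chenevier--Gan, Proposition 8.8 with $m=n+1$. At archimedean places it uses Li and Przebinda for infinitesimal characters, and Loke--Savin and Baki\'c--Gan--Savin for the spherical and similitude bookkeeping. Your archimedean reduction agrees with this in substance. At non-archimedean places you replace Hecke-operator matching with a Jacquet-module/principal-series argument. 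That is a legitimate strategy in principle, but as written two steps do not carry the weight you put on them.

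\textbf{Step 1.} Howe duality, i.e.\ uniqueness of the small theta lift, says nothing about $K'$-fixed vectors. What you need is that a nonzero map $\Omega\to\pi\otimes\theta(\pi)$ does not kill the spherical vector $v_0$. This follows from a generation statement such as ``$\Omega^{K}$ is generated by $v_0$ as a $G'$-module''. In the exceptional setting that is a genuine theorem, of the same nature as the Hecke-operator results the paper cites, not a formality. Under your reading the step cannot be skipped: being a subquotient of an unramified principal series does not make $\theta(\pi)$ spherical, since Steinberg-type constituents occur. If you read the statement only for pairs $\pi\otimes\pi'$ that are both spherical, Step 1 can be dropped. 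That weaker form is all the paper uses, because global lifts are built from $v_0$ at almost all places.

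\textbf{The ``expected obstacle''.} Your proposed resolution is circular as stated. You cannot compare $L(s,\theta(\pi),\cdot)$ with $L(s,\pi,\cdot)$ without already knowing the Satake parameter of $\theta(\pi)$. The non-circular option you mention is to compute Hecke eigenvalues on the spherical vector, but that is exactly the Rallis/Savin--Woodbury method, i.e.\ the paper's route. Otherwise you must actually carry out the Jacquet-module bookkeeping for each pair, and for the exceptional pairs this is not a rank induction within one family. The Levi subgroups of the ambient $E_6$ or $E_7$ bring in other dual pairs, for instance $\rG_2\times\PGL_3$ inside $E_6$ when you take the Siegel Jacquet module for $\PGSp_6$. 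So the cases are intertwined. As it stands, your proposal asserts rather than proves the exact matching of Jacquet-module characters with the $L$-morphism, which is the entire content of the statement.
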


\begin{proof}
  For the exceptional pairs over non-archimedean fields, see \cite{SavinWoodbury2015-MatchingHeckeOperatorsExceptionalDualPairs}*{Theorem 1.1}. For $\PGL_3^+$, restrict to $\PGL_3$.
  Over archimedean fields, the correspondence of infinitesimal characters is given in \cite{Li1999-CorrespondencesInfinitesimalCharactersReductiveDualPairs}*{Lemma 2.1, Theorem 8.1 and Table 1}. For spherical real representations, see also \cite{LokeSavin2019-DualitySphericalRepresentationsExceptionalThetaCorrespondences}*{Corollaries 4.5 and 4.8}.
  For the classical pair over non-archimedean fields, \cite{ChenevierGan2025-TrialityFunctoriality}*{Proposition 8.8(ii), (iii)}, with $m=n+1$, gives compatibility with the natural embedding $\GSpin_{2n+1}(\C)\hookrightarrow\GSpin_{2n+2}(\C)$. We use the normalization of \cite{ChenevierGan2025-TrialityFunctoriality}*{Section 8.1}.
  Over archimedean fields, see \cite{Przebinda1996-DualityCorrespondenceInfinitesimalCharacters}*{Theorems 1.13 and 1.19}. In this case, the passage to similitudes is as in \cite{BakicGanSavin2023-SimilitudeExceptionalThetaCorrespondences}*{Theorem 5.1 and Section 6.1}.
\end{proof}

\part{Local representation theory}\label{part:local-representation-theory}

\section{The distinguished \texorpdfstring{$L$-packets}{L-packets} for \texorpdfstring{$\PGSO_8(F)$}{PGSO8(F)}}

Let $F$ be a local field of characteristic zero.
First suppose that $F = \R$.
\begin{lemma}\label{lemma:lifting-discrete-parameter-archimedean-G2-twisted-PGSO8}
  Let $\phi^{\rG_2}$ be a discrete $L$-parameter for $\rG_2(\R)$.
  Then, the lift $\iota_{\rG_2}^{\PGSO_8} \circ \phi^{\rG_2}$ is a discrete $L$-parameter for $\PGSO_8(\R)$.
\end{lemma}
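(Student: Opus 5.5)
The plan is to verify discreteness on the level of Lie algebras. Recall that an $L$-parameter $\phi$ for $\PGSO_8(\R)$ is discrete if and only if the centralizer $Z_{\Spin_8(\C)}(\Image \phi)$ is finite. Since this centralizer is an algebraic group, it suffices to show that its Lie algebra $\mathfrak{so}_8^{\Image\phi}$ is zero. Put $\phi = \iota_{\rG_2}^{\PGSO_8}\circ\phi^{\rG_2}$. Under $\rG_2(\C)\subset\Spin_8(\C)$ (the fixed points of the dual triality), the adjoint representation decomposes as
\begin{align*}
  \mathfrak{so}_8 = \fg_2 \oplus V_7 \oplus V_7,
\end{align*}
where $V_7$ is the $7$-dimensional standard representation. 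Hence the fixed space of $\Image\phi$ is
\begin{align*}
  \fg_2^{\Image\phi^{\rG_2}} \oplus \bigl(\Std_{\rG_2}\circ\phi^{\rG_2}\bigr)^{W_\R}\oplus\bigl(\Std_{\rG_2}\circ\phi^{\rG_2}\bigr)^{W_\R}.
\end{align*}
The first summand vanishes because $\phi^{\rG_2}$ is discrete. The proof therefore reduces to showing that $\Std_{\rG_2}\circ\phi^{\rG_2}$ has no nonzero $W_\R$-invariant vector.

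To check this, I will use the standard description of real discrete parameters. Since $\rG_2$ is split and $w_0=-1$, after conjugation we may assume the following. The restriction $\phi^{\rG_2}|_{\C^\times}$ lands in a maximal torus $\widehat T\subset \rG_2(\C)$ and is given by $z\mapsto z^{\lambda}\bar z^{-\lambda}$ for some $\rG_2$-regular $\lambda$ (up to the usual half-integrality shift). The element $\phi^{\rG_2}(j)$ normalizes $\widehat T$ and acts on it by $-1$.

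The weights of $V_7$ on $\widehat T$ are the six short roots together with $0$. For each pair $\pm\mu$ of short roots we have $\langle\lambda,\mu^\vee\rangle\neq 0$ by regularity, and $j$ swaps the two weight spaces. So each such pair spans an irreducible $2$-dimensional representation $\Ind_{\C^\times}^{W_\R}(z^{a}\bar z^{-a})$ with $a\neq0$, which has no invariants. Thus
\begin{align*}
  \Std_{\rG_2}\circ\phi^{\rG_2} = \bigoplus_{i=1}^{3}\Ind_{\C^\times}^{W_\R}\chi_{a_i}\ \oplus\ \varepsilon,
\end{align*}
where $\varepsilon$ is a character of $W_\R$ trivial on $\C^\times$, namely $\varepsilon=\mathbf 1$ or $\sgn$.

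The step I expect to be the main obstacle is showing $\varepsilon=\sgn$, equivalently that a representative $n$ of $w_0=-1$ in $\rG_2(\C)$ acts by $-1$ on the zero weight space of $V_7$. My first attempt will use determinants: $\det(\Std_{\rG_2}\circ\phi^{\rG_2})=\mathbf 1$ since the image lies in $\SO_7$. Then $\varepsilon$ equals the product of the determinants of the three induced pieces, each of which is $\sgn\cdot\chi_{a_i}|_{\R^\times}$. I will compute $\chi_{a_i}|_{\R^\times}$ from the parity constraints on the $a_i$ coming from $\phi^{\rG_2}(j)^2=\phi^{\rG_2}(-1)$. If this bookkeeping becomes awkward, the alternative is a direct check in $\SO_7$. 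Take $n$ in the $\rG_2$ torus normalizer inside the $\SO_7$ normalizer. Then $n$ acts on each pair $\{\mu,-\mu\}$ by an anti-diagonal block of determinant $-1$, so $\det n=1$ forces $-1$ on the zero weight line. One should also confirm that twisting $n$ by an element of $\widehat T$ does not change this, which holds since $\widehat T$ acts trivially on the zero weight space.

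With $\varepsilon=\sgn$, there are no invariants, so $\mathfrak{so}_8^{\Image\phi}=0$ and $\phi$ is discrete.
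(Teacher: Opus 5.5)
Your proof is correct, but it is organized differently from the paper's.

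The paper splits $\mathfrak{so}_8=\mathfrak{so}_7\oplus V_7$ under $\Spin_7(\C)$ and takes the vanishing of the $\mathfrak{so}_7$-invariants from Gross--Savin, i.e.\ discreteness of the $\PGSp_6$-lift, which it describes as implicit there. It then excludes a $W_\R$-fixed vector in $\Std_{\rG_2}\circ\phi^{\rG_2}$: such a vector would force $\phi^{\rG_2}$ into its stabilizer $\SL_3(\C)$, and no discrete parameter can factor through that.

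You instead restrict all the way to $\rG_2(\C)$, where $\mathfrak{so}_8=\mathfrak{g}_2\oplus V_7\oplus V_7$. The lemma then reduces to the single fact $(\Std_{\rG_2}\circ\phi^{\rG_2})^{W_\R}=0$, which you prove by an explicit weight computation. Your direct $\SO_7$ argument already settles the sign $\varepsilon=\sgn$: a representative of $w_0=-1$ has determinant $1$ but determinant $-1$ on each of the three root-pair planes, and $\widehat T$ acts trivially on the zero-weight line.

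Your route is self-contained. Since also $\mathfrak{so}_7=\mathfrak{g}_2\oplus V_7$, it reproves the Gross--Savin input instead of citing it. It also yields the explicit shape $\sgn\oplus\bigoplus_{i=1}^{3}\Ind_{\C^\times}^{W_\R}\chi_{a_i}$ of the standard parameter, which the paper itself uses later at real places (e.g.\ in the proof of Lemma~\ref{lemma:globalization-SO4}).

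The paper's stabilizer argument is shorter and avoids the structure theory of real discrete parameters. Its costs are the citation and a tacit choice of a nonisotropic fixed vector; that choice is possible because the invariants of a semisimple orthogonal representation form a nondegenerate subspace.

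If you prefer the determinant bookkeeping, it closes because the short roots can be signed so that $\mu_1+\mu_2+\mu_3=0$. This gives $\prod_i\chi_{a_i}|_{\R^\times}=\sgn^{2(a_1+a_2+a_3)}=\mathbf{1}$, hence $\prod_i\det\Ind\chi_{a_i}=\sgn^{3}=\sgn$. Note that the exponent on the $\mu$-weight space is $a=\langle\lambda,\mu\rangle$ for the cocharacter $\lambda$ of $\widehat T$, not $\langle\lambda,\mu^\vee\rangle$ as you wrote.
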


\begin{proof}
  The result that the lift of a discrete $L$-parameter for $\rG_2(\R)$ to $\PGSp_6(\R)$ is also discrete is implicit in \cite{GrossSavin1998-MotivesGaloisGroupTypeG2Exceptional}*{Theorem 3.5, Corollary 3.9}.
  Thus, it suffices to show that the standard $L$-parameter of the lift $\iota_{\rG_2}^{\PGSp_6} \circ \phi^{\rG_2}$ does not contain a trivial representation of $W_{\R}$.
  If it does, then the standard $L$-parameter of $\phi^{\rG_2}$ also contains a trivial representation of $W_{\R}$ since the standard representation of $\rG_2(\C)$ is contained in the standard representation of $\Spin_7(\C)$.
  This implies that the parameter $\phi^{\rG_2}$ factors through $\SL_3(\C) \subset \rG_2(\C)$ and this contradicts the assumption that $\iota_{\rG_2}^{\PGSp_6} \circ \phi^{\rG_2}$ is discrete.
\end{proof}

\begin{lemma}\label{lemma:invariance-lifting-archimedean}
  Let $\phi^{\rG_2}$ be a discrete $L$-parameter for $\rG_2(\R)$.
  Then, every representation in the lifted $L$-packet
  \[
    \Pi_{\iota_{\rG_2}^{\PGSO_8} \circ \phi^{\rG_2}}(\PGSO_8(\R))
  \]
  is invariant under the outer automorphisms of $\PGSO_8(\R)$.
\end{lemma}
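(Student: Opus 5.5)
By Lemma~\ref{lemma:lifting-discrete-parameter-archimedean-G2-twisted-PGSO8}, $\phi := \iota_{\rG_2}^{\PGSO_8} \circ \phi^{\rG_2}$ is a discrete parameter for $\PGSO_8(\R)$. Hence $\Pi_{\phi}$ is a discrete series $L$-packet, namely the set of discrete series of $\PGSO_8(\R)$ with the infinitesimal character $\lambda$ determined by $\phi|_{\C^\times}$. My plan is to exploit that such a packet is determined by Harish-Chandra parameters. The members of $\Pi_\phi$ are in bijection with the $W(G,T)$-orbits of regular characters of a compact Cartan $T \subset \PGSO_8(\R)$ lying in the $W(G_\C,T_\C)$-orbit of $\lambda$. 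Equivalently, they correspond to the Weyl chambers for $\Delta(\mathfrak g,\mathfrak t)$ modulo the compact Weyl group $W_K$.

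\textbf{Step 1: the outer automorphisms preserve $\Pi_\phi$ as a set.} An outer automorphism $\tau$ of $\PGSO_8$ acts on parameters through the dual automorphism $\widehat\tau$ of $\widehat{\PGSO_8}=\Spin_8(\C)$. Since $\phi$ factors through $\rG_2(\C)=\Spin_8(\C)^{\widehat\theta}$, the parameter $\widehat\theta\circ\phi$ equals $\phi$. For an outer automorphism of order $2$, the parameter $\phi$ is $\Spin_8(\C)$-conjugate to a $\widehat\tau$-fixed one, because $\rG_2(\C)$ lies in some $\Spin_7(\C)$ fixed by a diagram involution. Thus the whole group $\Out(\PGSO_8)\cong S_3$ fixes $[\phi]$. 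By the compatibility of Langlands' archimedean correspondence with automorphisms, $\tau$ permutes $\Pi_\phi$.

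\textbf{Step 2: each member is fixed.} Realize $\tau$ by an automorphism of $\PGSO_8(\R)$ stabilizing a compact Cartan $T$ and a Cartan involution, which is possible after conjugation by $G(\R)$. Then $\tau$ acts on $\mathfrak t^*$ preserving the root system. It sends the discrete series with Harish-Chandra parameter $\lambda'$ to the one with parameter $\tau\lambda'$. We must show that $\tau\lambda' \in W_K\lambda'$ for every $\lambda'$ in the packet.

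The key input is that $\lambda$ is $\theta$-fixed up to the Weyl group: since $\phi$ comes from $\rG_2$, we may choose the dominant representative $\lambda$ to lie in the $\theta$-fixed subspace $\mathfrak t^{*\theta}$, which is a Cartan of $\mathfrak g_2$. The main obstacle is to compare the actions of $\tau$ and of $W_K$ on the chambers modulo $W_K$. My first attempt would be to use the fact that the relevant real forms are $\PGSO(p,q)$ with $p+q=8$ even, and that $W_K$ together with the inner Weyl elements normalizes things appropriately. The concrete strategy is to show that the automorphism group $\Aut(G(\R))$ acts on $W_K\backslash W$ through elements that can be represented as $\tau = w\circ \mathrm{Ad}(k)$ with $w$ preserving each $W_K$-coset.

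Alternatively, and probably more cleanly, I would argue by counting plus character theory. By \cite{GrossSavin1998-MotivesGaloisGroupTypeG2Exceptional}, the $\rG_2(\R)$ discrete series lift via the exceptional theta correspondences to $\PGSp_6(\R)$, and then via Theorem~\ref{theorem:similitude-theta}-type lifts to $\PGSO_8(\R)$. This realizes members of $\Pi_\phi$ as theta lifts that are visibly stable under the triality, since the dual pair $\rG_2\times \PGSO_8\rtimes S_3$-type structures are triality-equivariant. Any member not obtained this way would have to be handled by the Harish-Chandra parameter computation above.

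Finally, the Harish-Chandra parameter check reduces to a finite combinatorial verification for $D_4$. I would carry this out case by case for the real forms $\PGSO(8,0)$, $\PGSO(6,2)$, $\PGSO(4,4)$, and $\PGSO^*$, and this verification is the step I expect to be hardest.
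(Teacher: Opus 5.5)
\textbf{There is a genuine gap: Step~2, the entire content of the lemma, is left open.}

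Your Step~1 is fine. In fact no conjugation is needed: $\rG_2(\C)$ is fixed by the whole pinned $S_3$ acting on $\Spin_8(\C)$, so $\phi=\iota_{\rG_2}^{\PGSO_8}\circ\phi^{\rG_2}$ is literally fixed by every pinned automorphism. Neither of your fallbacks for Step~2 closes it.

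\emph{The theta route does not see triality.} The similitude theta correspondence between $\PGSp_6$ and $\PGSO_8$ is naturally equivariant only for the order-two outer automorphism coming from $\O_8$. Invariance under $\theta$ is precisely the nontrivial point; this is why the $p$-adic analogue, Theorem~\ref{theorem:invariance-lifting-non-archimedean}, needs separate input. Moreover, over $\R$ the available Howe duality for $(\rG_2,\PGSp_6)$ covers only spherical representations, not discrete series.

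\emph{The Harish-Chandra check is never carried out, and it is aimed at the wrong objects.} Only the split form $\PGSO_8(\R)$ is at issue, and in any case $\mathfrak{so}(6,2)\cong\mathfrak{so}^*(8)$, so two of your four forms coincide. The comparison of $\tau\lambda'$ with $W_K\lambda'$, which you yourself flag as the main obstacle, remains unresolved.

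\textbf{The missing idea} is to replace the chamber combinatorics by the component group together with a Whittaker base point.
\begin{itemize}
\item Since $\phi|_{\C^\times}$ is regular, its centralizer is a maximal torus $\widehat T\subset\Spin_8(\C)$, on which $\phi(j)$ acts by inversion. Hence $\cS_\phi=\widehat T[2]/Z(\Spin_8(\C))\cong Q/2P$, where $Q,P$ are the root and weight lattices of $D_4$.
\item From $2\omega_1-2\omega_3=\alpha_1-\alpha_3$ and $2\omega_1-2\omega_4=\alpha_1-\alpha_4$, the three outer simple roots have the same class in $Q/2P$. This group is generated by $[\alpha_1]$ and $[\alpha_2]$.
\item The diagram automorphisms permute the outer nodes and fix $\alpha_2$, so they act trivially on $\cS_\phi$.
\item Shelstad's Whittaker-normalized (Tate--Nakayama) parametrization $\Pi_\phi\hookrightarrow\cS_\phi^{\cD}$ is natural under automorphisms. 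Pinned automorphisms preserve the Whittaker datum, hence fix the generic member. Their action on the packet is therefore dual to their trivial action on $\cS_\phi$, and every member is fixed.
\end{itemize}
In your Harish-Chandra language, this is exactly the computation of the $\tau$-action on $W(G(\R),T)\backslash W$ after transporting it, via the generic base point, into $H^1(\R,T)$. That group is Tate--Nakayama dual to $\widehat T[2]$. Without a base point and this duality, there is no uniform way to decide whether $\tau\lambda'\in W_K\lambda'$.
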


\begin{proof}
  Put $\psi=\iota_{\rG_2}^{\PGSO_8}\circ\phi^{\rG_2}$.
  By the preceding lemma, $\psi$ is discrete.
  Its restriction to $\C^\times\subset W_{\R}$ has centralizer a maximal torus
  $\widehat T\subset\Spin_8(\C)$, and $\psi(j)$ acts on $\widehat T$ by inversion.
  Consequently,
  \begin{align*}
    \cS_\psi=\widehat T[2]/Z(\Spin_8(\C)).
  \end{align*}
  Let $Q$ and $P$ be the root and weight lattices of $D_4$.
  Using the simply laced identification with the coroot and coweight lattices, we have
  \begin{align*}
    \widehat T[2]=Q/2Q,\quad
    Z(\Spin_8(\C))=2P/2Q,\quad
    \cS_\psi=Q/2P.
  \end{align*}
  Number the simple roots as in Section~\ref{section:general-notation}, so that
  $\alpha_2$ is the central node.
  If $\omega_1,\omega_3$ are the corresponding fundamental weights, then
  \begin{align*}
    2\omega_1&=2\alpha_1+2\alpha_2+\alpha_3+\alpha_4,\\
    2\omega_3&=\alpha_1+2\alpha_2+2\alpha_3+\alpha_4.
  \end{align*}
  Thus $[\alpha_1]=[\alpha_3]=[\alpha_4]$ in $Q/2P$, which is generated by
  $[\alpha_1]$ and $[\alpha_2]$.
  The diagram automorphism group $S_3$ permutes the three outer nodes and fixes
  $\alpha_2$, so its action on $\cS_\psi$ is trivial.
  The image of $\psi$ is fixed by these automorphisms, and the pinned automorphisms
  preserve the Whittaker datum.
  The Tate--Nakayama description of the packet pairing in
  \cite{Shelstad2008-TemperedEndoscopyRealGroupsIII}*{Section 8, Corollaries 11.1 and 11.6}
  is natural under these automorphisms. Since they fix the generic base point,
  their action on enhancements is dual to their action on $\cS_\psi$.
  Thus every representation in the packet is fixed.
\end{proof}

Now suppose that $F$ is $p$-adic.
\begin{theorem}[\cite{Takanashi2026-HiragaIchinoIkedaConjectureFormalDegrees}*{Proposition 6.6}]\label{theorem:invariance-lifting-non-archimedean}
  Let $\phi^{\rG_2}$ be a tempered $L$-parameter for $\rG_2(F)$.
  Let $\sigma$ be a representation in the $L$-packet $\Pi_{\phi^{\rG_2}}(\rG_2(F))$ such that $\theta_{\PGSp_6}^{\PGSO_8} \circ \theta_{\rG_2}^{\PGSp_6}(\sigma) \neq 0$.
  Then, the $L$-packet constructed by Xu containing $\theta_{\PGSp_6}^{\PGSO_8} \circ \theta_{\rG_2}^{\PGSp_6}(\sigma)$ consists of representations which are invariant under all the outer automorphisms of $\PGSO_8(F)$.
\end{theorem}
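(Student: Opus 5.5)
Our plan is to reduce the statement to two facts: the lifted $L$-parameter is fixed by the outer automorphisms, and the induced action on the relevant component group is trivial. This mirrors the archimedean Lemma~\ref{lemma:invariance-lifting-archimedean}.

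First, set $\pi=\theta_{\PGSp_6}^{\PGSO_8}\circ\theta_{\rG_2}^{\PGSp_6}(\sigma)$. By Theorem~\ref{theorem:properties-exceptional-theta}(7) and Theorem~\ref{theorem:similitude-theta}(2), the Xu parameter of $\pi$ is the $\O_8\times\GL_1$-orbit of $\psi=\iota_{\rG_2}^{\PGSO_8}\circ\phi^{\rG_2}$. The image of $\psi$ lies in $\rG_2(\C)=\Spin_8(\C)^{S_3}$, where the $S_3$ acts through pinned automorphisms. So $\psi$ is fixed by every outer automorphism of the dual group, and the $S_3$-orbit of the packet is the packet itself. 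Hence the outer automorphisms of $\PGSO_8(F)$ permute the members of the Xu packet.

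It remains to show that this permutation is trivial. The pinned automorphisms preserve the Whittaker datum, so they fix the unique generic member; this uses genericity preservation from Theorem~\ref{theorem:similitude-theta}(4) and the generic packet conjecture. We would then use that Xu's parametrization, after descending from $\GSO_8$ to $\PGSO_8$, is equivariant for automorphisms preserving the Whittaker datum. With this equivariance, the action on the packet is dual to the action of $S_3$ on $\cS_\psi$, which is a quotient of $\pi_0(\Cent_{\Spin_8}(\psi))$ by the center. So it suffices to show that $S_3$ acts trivially on $\cS_\psi$.

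To check this, we would compute $\Cent(\psi)$ in $\Spin_8(\C)$. We decompose $\Std_{\SO_8}\circ\psi=\Std_{\rG_2}\circ\phi^{\rG_2}\oplus\mathbf 1$ and, more importantly, use the triality-symmetric description: the three $8$-dimensional representations (vector and two half-spins) all restrict to $7\oplus1$ on $\rG_2$. The component group is then generated by elements $\pm1$ on isotypic pieces of $\Std_{\rG_2}\circ\phi^{\rG_2}$, lifted to $\Spin_8$. Triality permutes the three $8$-dimensional models but acts compatibly on these $2$-torsion elements, in the same way that the outer nodes become identified in $Q/2P$ in the archimedean computation. The expected conclusion is that the $S_3$ action on $\cS_\psi$ is trivial, so each member of the packet is fixed.

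The main obstacle is the equivariance of Xu's correspondence under triality. Xu's construction works with $\GSO_8$ modulo $\O_8$, so it only sees the order-two outer automorphism directly. Invariance under the order-three automorphism $\theta$ is not built into the construction. We would first handle the order-two part directly. For $\theta$ itself, we would characterize the members of the packet by their twisted or ordinary endoscopic character relations with $\SO_4$- and $\PGSp_6$-type endoscopic data, and observe that $\theta$ permutes these data compatibly. If this route is too heavy, the fallback is to work case by case on the shape of $\phi^{\rG_2}$ and use that $\cS_\psi$ is elementary abelian $2$-torsion and small by Theorem~\ref{theorem:properties-exceptional-theta}. The aim in that case would be to exhibit each member of the packet as the unique one with a given character on a $\theta$-stable subgroup.
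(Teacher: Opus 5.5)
Your reduction carries the archimedean argument of Lemma~\ref{lemma:invariance-lifting-archimedean} over to $p$-adic fields. The ingredient that makes that argument work over $\R$ is the naturality of Shelstad's packet pairing under automorphisms preserving the Whittaker datum. That ingredient is not part of Xu's construction, and your proposal supplies no substitute; as you concede, you assume it at the decisive step.

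Already the sentence ``the $S_3$-orbit of the packet is the packet itself'' presupposes that the parameter of $\pi^\theta$ is $\widehat\theta\circ\phi_\pi$. Deciding which packet contains $\pi^\theta$ amounts essentially to knowing its standard eight-dimensional parameter, which under the equivariance you want would be a half-spin parameter of $\pi$. Xu's construction starts from Arthur's packets for $\SO_8$ and works modulo $\O_8\times\GL_1$, so it says nothing about this. Moreover, the passage from $\SO_8$-data to $\PGSO_8$-data is only controlled up to twists by $\chi_{\omega,1}$; this is exactly how \cite{Xu2025-GlobalLPacketsQuasisplitGSp2nGO2n} enters Proposition~\ref{proposition:invariance-of-global-generic-theta-lift-to-PGSO_8}. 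But $\theta$ carries these twists to twists by $\chi_{\omega,2}$ and $\chi_{\omega,3}$. So the identity $\widehat\theta\circ\psi=\psi$ does not by itself say anything about Xu's packets.

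As a result, you have shown neither of the two things you need. First, you have not shown that $\pi^\theta$ lies in the same packet, and your observation that the generic member is fixed depends on this. Second, you have not shown that $\theta$ acts on the packet through its action on $\cS_\psi$. That would require equivariance of the bijection $\Pi_\psi\to\cS_\psi^{\cD}$ itself, not only of the packets.

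The remedies you sketch do not supply the missing input.
\begin{enumerate}
\item Ordinary endoscopic identities, even for $\theta$-stable data, constrain $\{\pi^\theta\}$ only after one knows which stable packet character $\sum_\pi\tr\pi^\theta$ is. That is the same problem again.
\item Twisted identities of $\PGSp_6$-type could at best be available for the $\O_8$-involution. Transporting them by $\theta$ gives identities for the conjugate involutions, and those are not known.
\item The $\SO_4$-type identities concern the triality-twisted space itself. In this paper they come only after the canonical extensions of Corollary~\ref{corollary:G2-canonical-extension}, which rest on the present statement, so that route is circular.
\item The case-by-case fallback presupposes how $\theta$ transforms the pairing.
\end{enumerate}

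The component-group half is the easy part and is fine in spirit. However, the $Q/2P$ computation is specific to real discrete parameters, where the centralizer is $\widehat T[2]$. For $p$-adic $\phi^{\rG_2}$ the clean argument is that every class of $\cS_\psi$ has a representative in $\Cent_{\rG_2(\C)}(\phi^{\rG_2})$, which $\widehat\theta$ fixes pointwise. For example, this follows from $\cS_{\phi^{\rG_2}}\cong\cS_\psi$ as in Remark~\ref{remark:isomorphic-S-group-PGSO8-for-parameters-of-SO4-type}.

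The paper does not prove this statement itself; it quotes it from \cite{Takanashi2026-HiragaIchinoIkedaConjectureFormalDegrees}*{Proposition 6.6}. Any complete argument has to control $\pi^\theta$ through inputs that see the outer automorphisms directly, rather than through Xu's parametrization. Examples are the theta-correspondence characterization in Theorem~\ref{theorem:characterization-distinguished-L-packets}, whose hypotheses are symmetric under $\pi\mapsto s_1\pi$, or a global argument as in Proposition~\ref{proposition:invariance-of-global-generic-theta-lift-to-PGSO_8}. Your proposal contains no such ingredient.
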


\begin{definition}
   Let $F$ be a local field of characteristic zero.
   Let $\phi^{\rG_2}$ be an $L$-parameter for $\rG_2(F)$.
   In any case, we also call the $L$-packet a distinguished $L$-packet for $\PGSO_8(F)$ and let $\Pi_{\iota_{\rG_2}^{\PGSO_8} \circ \phi^{\rG_2}}(\PGSO_8(F))$ denote the distinguished $L$-packet for $\PGSO_8(F)$.
   We often write it as $\Pi_{\iota_{\rG_2}^{\PGSO_8} \circ \phi^{\rG_2}}$ if the context is clear.
\end{definition}

\begin{theorem}[\cite{Takanashi2026-HiragaIchinoIkedaConjectureFormalDegrees}*{Proposition 3.58, pp.~25--26}]\label{theorem:characterization-distinguished-L-packets}
  Let $F$ be a non-archimedean local field of characteristic zero, let $\psi$ be a non-trivial additive character of $F$, and let $s_1$ be a pinning-preserving outer automorphism of $\PGSO_8$ of order two exchanging the standard representation and a half-spin representation.
  Let $\pi$ be a tempered irreducible representation of $\PGSO_8(F)$ such that
  \begin{align*}
    \gamma(0, \Std_{\PGSO_8} \circ \phi_{\pi}^{\PGSO_8}, \psi) &= 0, \\
    \gamma(0, \Std_{\PGSO_8} \circ \phi_{s_1\pi}^{\PGSO_8}, \psi) &= 0.
  \end{align*}
  Then, there exists a unique tempered irreducible representation $\pi^{\rG_2}$ of $\rG_2(F)$ such that
  \begin{align*}
    \theta_{\PGSp_6}^{\PGSO_8} \circ \theta_{\rG_2}^{\PGSp_6}(\pi^{\rG_2}) = \pi.
  \end{align*}
  The resulting map to the set of tempered $L$-parameters for $\rG_2(F)$ is surjective, and each fiber is a distinguished Xu $L$-packet.
\end{theorem}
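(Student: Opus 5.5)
The plan is to pass through the two theta correspondences already available, in two steps.

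First, the hypothesis $\gamma(0,\Std_{\PGSO_8}\circ\phi_\pi^{\PGSO_8},\psi)=0$ says exactly that $\pi\in\Pi_{\temp}(\PGSO_8(F))_\theta$. By Theorem~\ref{theorem:similitude-theta}(1) and (3) there is a unique $\sigma\in\Pi_{\temp}(\PGSp_6(F))_\theta$ with $\theta^{\PGSO_8}_{\PGSp_6}(\sigma)=\pi$. By Theorem~\ref{theorem:similitude-theta}(2), the $\GSp_6$-parameter of $\sigma$ composed with $\iota^{\GSO_8}_{\GSp_6}$ is the parameter of $\pi$, up to $\O_8\times\GL_1$.

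Second, we must show that $\sigma$ lies in the image of $\theta^{\PGSp_6}_{\rG_2}$. By Theorem~\ref{theorem:properties-exceptional-theta}(5), this amounts to showing that the spin parameter of $\sigma$ contains the trivial representation of $L_F$. Here the second hypothesis enters.
\begin{itemize}
\item The spin representation of $\Spin_7(\C)\subset\Spin_8(\C)$ is the restriction of a half-spin representation of $\Spin_8(\C)$.
\item The outer automorphism $s_1$ exchanges the standard and this half-spin representation.
\item Hence $\Std\circ\phi^{\PGSO_8}_{s_1\pi}$ is the half-spin parameter of $\pi$, which equals the spin parameter of $\sigma$.
\end{itemize}
The vanishing $\gamma(0,\cdot,\psi)=0$ for a tempered parameter forces the trivial representation to occur, because the Deligne--Langlands $\gamma$-factor at $s=0$ has a zero exactly from $L(1,\phi^\vee)^{-1}$-type poles, i.e.\ from trivial constituents. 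One must check that the choice of half-spin representation is consistent with the normalization of $\theta^{\PGSO_8}_{\PGSp_6}$, using the $\O_8$-ambiguity. Then Howe duality for $(\rG_2,\PGSp_6)$ and temperedness preservation (Theorem~\ref{theorem:properties-exceptional-theta}(1)) give a tempered $\pi^{\rG_2}$ with $\theta^{\PGSp_6}_{\rG_2}(\pi^{\rG_2})=\sigma$. Uniqueness follows from injectivity of both theta maps: Howe duality on both sides, together with the bijection in Theorem~\ref{theorem:similitude-theta}(3).

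For the map $\pi\mapsto\phi_{\pi^{\rG_2}}$, the two compatibility statements show that the standard parameter of $\pi$ is $\Std_{\rG_2}\circ\phi_{\pi^{\rG_2}}\oplus\mathbf 1$. By Lemma~\ref{lemma:injectivity-G_2-to-GL7} and its corollary, this determines $\phi_{\pi^{\rG_2}}$, and $\phi_\pi=\iota^{\PGSO_8}_{\rG_2}\circ\phi_{\pi^{\rG_2}}$ up to conjugacy.

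For surjectivity, take a tempered $\phi^{\rG_2}$ and choose $\sigma'\in\Pi^{\mathrm{GS}}_{\phi^{\rG_2}}$ lifting nontrivially to $\PGSp_6$. The generic member works, by Theorem~\ref{theorem:properties-exceptional-theta}(2). We then need its image to be nonzero in $\PGSO_8$. The standard parameter of the image contains $\mathbf 1$ from $\Std_{\rG_2}\oplus\mathbf 1$, so the first gamma condition holds, and the image is in the image of Theorem~\ref{theorem:similitude-theta}(1). This needs care: one must show $\theta^{\PGSO_8}_{\PGSp_6}(\sigma')\ne0$, which I would get from genericity plus the description of the image.

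For the fibers, the fiber over $\phi^{\rG_2}$ consists of $\pi$ with $\phi_\pi=\iota\circ\phi^{\rG_2}$ satisfying both gamma conditions. This is contained in Xu's packet for $\iota\circ\phi^{\rG_2}$. Conversely, every member of that packet satisfies both conditions, since its parameter is $s_1$-invariant and contains $\mathbf 1$ in both the standard and half-spin representations; the invariance is Theorem~\ref{theorem:invariance-lifting-non-archimedean}. Thus each fiber is exactly a distinguished Xu packet.

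The main obstacle is the identification of $\Std\circ\phi_{s_1\pi}$ with the spin parameter of $\sigma$, compatibly with Xu's normalization modulo $\O_8$. I would try first to settle it by comparing Satake parameters on unramified data together with the $\GSpin$ compatibility of Chenevier--Gan.
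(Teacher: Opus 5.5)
Your architecture is the natural one. The first $\gamma$-condition and Theorem~\ref{theorem:similitude-theta} give a unique tempered $\sigma$ with $\theta^{\PGSO_8}_{\PGSp_6}(\sigma)=\pi$. The second condition is meant to feed Theorem~\ref{theorem:properties-exceptional-theta}(5). Howe duality gives uniqueness, and Lemma~\ref{lemma:injectivity-G_2-to-GL7} recovers $\phi_{\pi^{\rG_2}}$. For surjectivity and the fibers, Theorem~\ref{theorem:invariance-lifting-non-archimedean} gives $s_1\pi\cong\pi$, so there the second condition follows from the first. Three small repairs are needed:
\begin{itemize}
\item Nonvanishing of the split $\PGSO_8$-lift of the generic member comes from genericity, not from the description of the image.
\item Several Xu packets can share one standard parameter (they differ by $\chi_{\omega,1}$-twists). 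Lemma~\ref{lemma:triality-quadratic-twist} is what shows that the images of one Gan--Savin packet land in a single Xu packet.
\item For tempered $\phi$, the zero of $\gamma(0,\phi,\psi)$ comes from the pole of $L(s,\phi)$ at $s=0$, not from $L(1,\phi^\vee)$, which is finite and nonzero.
\end{itemize}

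The genuine gap is the step you flag yourself, and it is not a normalization check: it is the whole content of the second hypothesis. Your chain ``$\Std\circ\phi_{s_1\pi}$ is the half-spin parameter of $\pi$, which is the spin parameter of $\sigma$'' makes two assumptions. It assumes $\phi_\pi$ is a well-defined $\Spin_8(\C)$-valued parameter equal to $\iota\circ\phi_\sigma$. It also assumes $\phi_{s_1\pi}=\widehat{s_1}\circ\phi_\pi$. Neither is available. Xu's correspondence, and hence Theorem~\ref{theorem:similitude-theta}(2), only sees the image of the parameter in $\SO_8\times\GL_1$ modulo $\O_8\times\GL_1$. So it does not define a half-spin parameter of $\pi$ at all, and it carries no compatibility with $s_1$.

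Concretely, twist the $\Spin_7$-valued parameter of $\sigma$ by a quadratic character $\omega$ through $Z(\Spin_7)\cong\mu_2$. Dually this is $\sigma\mapsto\sigma\otimes(\omega\circ\lambda)$, which changes $\pi$ to $\pi\otimes\chi_{\omega,1}$. This twist does not change the standard parameters of $\sigma$ or $\pi$, but it tensors the spin parameter with $\omega$. Deciding which twist is ``the'' spin parameter is exactly what the second hypothesis has to do. Your argument assumes, rather than proves, that $\Std\circ\phi_{s_1\pi}$ makes this choice compatibly with the Gan--Savin normalization in Theorem~\ref{theorem:properties-exceptional-theta}(5).

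What is needed is a genuine local input. For $\sigma$ with nonzero split lift $\pi$, the Gan--Savin spin parameter of $\sigma$ must be identified with $\Std\circ\phi_{s_1\pi}$. Equivalently, $\sigma$ lifts to $\rG_2$ if and only if $s_1\pi$ lifts to $\PGSp_6$. This has to be extracted from how that spin parameter is constructed.

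Your proposed repair cannot supply this. Comparing Satake parameters only proves the identity for unramified $\pi$. To reach arbitrary tempered, e.g.\ supercuspidal, $\pi$ you would have to globalize $\pi$. You would then use the global triality identity of Chenevier--Gan together with Xu's global classification to pin down the local parameter of $s_1\Pi_v$ at every place. Even then, you would still need to know that the Gan--Savin spin parameter at the ramified place agrees with the one singled out by that global construction.
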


\section{Canonical extensions to \texorpdfstring{$\widetilde{\PGSO_8}(F)$}{widetilde{PGSO8(F)}}}\label{section:canonical-extensions}

We fix a pinning $(T, B, \{X_{\alpha}\}_\alpha)$ of $\PGSO_8$ defined over $F$.
Let $\PGSO_8^{+}(F)$ denote the semidirect product of $\PGSO_8(F)$ and the automorphism group $\Aut_{\mathrm{pin}}(\PGSO_8)$ of $\PGSO_8(F)$ defined over $F$ which preserves the pinning $(T, B, \{X_{\alpha}\}_\alpha)$.
We set $\widetilde{\PGSO_8} = \PGSO_8 \rtimes \theta$.

\begin{lemma}
  We have
  \begin{align*}
    H^2(S_3, \C^{\times}) = 0.
  \end{align*}
\end{lemma}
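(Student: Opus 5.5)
The Schur multiplier $H^2(S_3,\C^{\times})$ is a finite abelian group, since $S_3$ is finite and $\C^{\times}$ is divisible. Its $p$-primary part embeds into $H^2(P,\C^{\times})$ for a Sylow $p$-subgroup $P$, because restriction followed by corestriction is multiplication by the index $[S_3:P]$, which is prime to $p$. So it suffices to treat the primes $p=2$ and $p=3$. The Sylow subgroups are $\Z/2\Z$ and $\Z/3\Z$, both cyclic.

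For a cyclic group $C_n$ acting trivially on $\C^{\times}$, the standard periodic resolution gives
\begin{align*}
  H^2(C_n,\C^{\times}) \cong (\C^{\times})^{C_n}/N(\C^{\times}) = \C^{\times}/(\C^{\times})^n .
\end{align*}
This quotient is trivial because $\C^{\times}$ is divisible: every element has an $n$-th root. Hence both Sylow contributions vanish, and therefore $H^2(S_3,\C^{\times})=0$.

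Alternatively, one can use the isomorphism $H^2(G,\C^{\times})\cong H^3(G,\Z)$, which comes from the exponential sequence $0\to\Z\to\C\to\C^{\times}\to 0$ together with the vanishing of the cohomology of the uniquely divisible group $\C$ in positive degrees. Then one quotes the integral cohomology of $S_3$, which is $H^3(S_3,\Z)=0$; this is seen from the same Sylow argument.

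There is no real obstacle here. The only point needing care is to cite correctly the restriction–corestriction injectivity on $p$-primary components. In the paper, the lemma will be used to say that any projective action of $\Aut_{\mathrm{pin}}(\PGSO_8)\cong S_3$ on an invariant representation lifts to a genuine action. That is the input for constructing the extensions to $\PGSO_8^{+}(F)$, and hence the canonical extensions to $\widetilde{\PGSO_8}(F)$.
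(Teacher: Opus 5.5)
Your proof is correct, but it takes a different route from the paper's. The paper invokes the Lyndon--Hochschild--Serre spectral sequence for $1\to A_3\to S_3\to S_3/A_3\to 1$.

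Making that route explicit means checking the three terms of total degree two:
\begin{itemize}
\item $E_2^{0,2}=H^2(A_3,\C^{\times})^{\Z/2\Z}$ and $E_2^{2,0}=H^2(\Z/2\Z,\C^{\times})$ vanish by exactly the cyclic-group computation you give, $\C^{\times}/(\C^{\times})^n=1$.
\item The mixed term $E_2^{1,1}=H^1\bigl(\Z/2\Z,\Hom(A_3,\C^{\times})\bigr)\cong H^1(\Z/2\Z,\Z/3\Z)$ vanishes because it is killed by both $2$ and $3$.
\end{itemize}

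Your restriction--corestriction argument never meets the mixed term or the action of $S_3/A_3$ on $H^1(A_3,\C^{\times})$. It also proves a more general statement: any finite group whose Sylow subgroups are all cyclic has trivial Schur multiplier. The spectral-sequence route instead exploits the split extension with coprime orders. It avoids the transfer but needs one extra (easy) vanishing.

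Your alternative via $H^2(G,\C^{\times})\cong H^3(G,\Z)$ is also fine. Either argument suffices for the intended use, namely lifting the projective $\Aut_{\mathrm{pin}}(\PGSO_8)$-action on an invariant representation to a genuine action.
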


\begin{proof}
  This follows from the Lyndon-Hochschild-Serre spectral sequence associated to the short exact sequence
  \begin{align*}
    1 \to A_3 \to S_3 \to S_3/A_3 \to 1
  \end{align*}
\end{proof}

\begin{corollary}
  Let $F$ be a local field of characteristic zero.
  Let $\pi$ be an irreducible representation of $\PGSO_8(F)$ which is invariant under the outer automorphisms of $\PGSO_8(F)$.
  Then, there exists an extension of $\pi$ to a representation of $\PGSO_8^{+}(F)$.
\end{corollary}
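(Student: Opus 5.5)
We use the standard Clifford-theoretic obstruction argument, combining invariance of $\pi$ with the vanishing $H^2(S_3,\C^\times)=0$ from the preceding lemma. Identify $\Aut_{\mathrm{pin}}(\PGSO_8)$ with the group $S_3$ of diagram automorphisms of $D_4$. These automorphisms are defined over $F$ because $\PGSO_8$ is split and the pinning is defined over $\Z$. Thus $\PGSO_8^{+}(F)=\PGSO_8(F)\rtimes S_3$.

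\textbf{Step 1: intertwiners.} Let $(\pi,V)$ be irreducible and invariant under all outer automorphisms. For each $s\in S_3$, the twist ${}^s\pi=\pi\circ s^{-1}$ is isomorphic to $\pi$. Inner automorphisms preserve isomorphism classes, and every automorphism of $\PGSO_8(F)$ that is defined over $F$ differs from a pinned one by an inner one. So it suffices to use the pinned elements $s\in S_3$. Choose $A_s\in\mathrm{GL}(V)$ with
\begin{align*}
A_s\,\pi(g)\,A_s^{-1}=\pi(s(g)) \quad\text{for all } g\in\PGSO_8(F).
\end{align*}
By Schur's lemma for smooth irreducible representations (valid since $\PGSO_8(F)$ has countable basis and $V$ has countable dimension), $A_s$ is unique up to a scalar in $\C^\times$.

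\textbf{Step 2: the cocycle.} By uniqueness, $A_sA_t=c(s,t)A_{st}$ for some $c(s,t)\in\C^\times$. Associativity of composition shows that $c$ is a $2$-cocycle of $S_3$ with values in $\C^\times$ (trivial action). Rescaling the $A_s$ changes $c$ by a coboundary. Since $H^2(S_3,\C^\times)=0$ by the lemma, we can rescale so that $c\equiv1$.

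\textbf{Step 3: the extension.} With $c\equiv1$, the assignment $(g,s)\mapsto \pi(g)A_s$ is a homomorphism from $\PGSO_8(F)\rtimes S_3$ to $\mathrm{GL}(V)$; the relation in Step 1 is exactly the semidirect product law. The result is smooth, since $S_3$ is finite and the restriction to $\PGSO_8(F)$ is $\pi$. It is therefore a representation of $\PGSO_8^{+}(F)$ extending $\pi$.

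For $F=\R$, the same argument applies with $V$ replaced by the Harish-Chandra module, or the smooth Fréchet globalization, using Schur's lemma in that setting. The main, and minor, obstacle is justifying Schur's lemma in the relevant category; all other steps are formal. The extension is unique up to twisting by characters of $S_3$. The canonical choice is a separate matter, handled later via Whittaker normalization.
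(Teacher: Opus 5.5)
Your proposal is correct and is exactly the argument the paper intends: the corollary is stated immediately after the lemma $H^2(S_3,\C^{\times})=0$, so the extension comes from rescaling the Schur intertwiners $A_s$ until the resulting $2$-cocycle of $S_3$ is trivial. The paper writes out no further details, and your Steps 1--3, including the remarks on Schur's lemma and the archimedean case, are the standard Clifford-theoretic argument it leaves implicit.
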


\begin{lemma}
  Let $F$ be a local field of characteristic zero.
  If $\pi$ is an irreducible representation of $\PGSO_8(F)$ which is invariant under the outer automorphisms of $\PGSO_8(F)$, then the restriction of any extension of $\pi$ to $\PGSO_8^{+}(F)$ to $\widetilde{\PGSO_8}(F)$ defines the same representation of $\widetilde{\PGSO_8}(F)$.
\end{lemma}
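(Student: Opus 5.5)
Since $\pi$ is irreducible and extends to $\PGSO_8^{+}(F)$ by the preceding corollary, I will compare two arbitrary extensions $\Pi_1,\Pi_2$ of $\pi$ to $\PGSO_8^{+}(F)=\PGSO_8(F)\rtimes S_3$. The plan is to show that they differ by a character of $S_3$, and that every such character is trivial on $\theta$. Restricting to $\widetilde{\PGSO_8}(F)=\PGSO_8(F)\rtimes\theta$ then gives the same operator $\Pi_1(\theta)=\Pi_2(\theta)$ on the common space of $\pi$.

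For each $s\in S_3$ the operators $\Pi_1(s)$ and $\Pi_2(s)$ both intertwine $\pi$ with $\pi\circ\mathrm{Ad}(s)^{-1}$. Because $\pi$ is irreducible, Schur's lemma gives a scalar $\chi(s)\in\C^\times$ with $\Pi_2(s)=\chi(s)\Pi_1(s)$. Both $\Pi_i$ are homomorphisms on $\PGSO_8^{+}(F)$ and agree with $\pi$ on $\PGSO_8(F)$, so $\chi$ is a homomorphism $S_3\to\C^\times$. In cohomological terms, $\chi$ lies in $H^1(S_3,\C^\times)=\Hom(S_3,\C^\times)$.

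The characters of $S_3$ are the trivial character and the sign character. Both factor through $S_3/A_3$, and since $\theta$ has order $3$ it lies in $A_3=[S_3,S_3]$. Hence $\chi(\theta)=1$, which gives $\Pi_2(\theta)=\Pi_1(\theta)$. As a result the restrictions of $\Pi_1$ and $\Pi_2$ to $\widetilde{\PGSO_8}(F)$, which consists of $\PGSO_8(F)$ together with the elements $g\theta$, coincide as actual representations, and not merely up to isomorphism. If "same representation" is meant on the subgroup $\PGSO_8(F)\rtimes\langle\theta\rangle$, the same argument applies to $\theta^2$ as well.

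I do not expect a serious obstacle. The two points that need care are the application of Schur's lemma to smooth irreducible representations, which requires $\pi$ admissible, and this holds in our setting; and the fact that the identification of the underlying spaces of $\Pi_1$ and $\Pi_2$ with that of $\pi$ is fixed, so the statement is one of literal equality.
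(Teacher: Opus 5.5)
Your proposal is correct and is essentially the paper's argument: two extensions differ by a character of $S_3$ by Schur's lemma, and both characters of $S_3$ are trivial on $A_3\ni\theta$. The paper's one-line proof ($\sgn|_{A_3}=1$) simply leaves the Schur's lemma step implicit.
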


\begin{proof}
  The sign character of $S_3$ satisfies $\sgn|_{A_3} = 1$.
\end{proof}

\begin{definition}
  Let $F$ be a local field of characteristic zero.
  Let $\pi$ be an irreducible representation of $\PGSO_8(F)$ which is invariant under the outer automorphisms of $\PGSO_8(F)$.
  We call the restriction to $\widetilde{\PGSO_8}(F)$ of any extension of $\pi$ to $\PGSO_8^{+}(F)$ the canonical extension, denoted by $\widetilde{\pi}$.
\end{definition}

\begin{corollary}\label{corollary:G2-canonical-extension}
  Let $\phi^{\rG_2}$ be an $L$-parameter for $\rG_2(F)$.
  Then, any representation in the distinguished $L$-packet $\Pi_{\iota_{\rG_2}^{\PGSO_8} \circ \phi^{\rG_2}}(\PGSO_8(F))$ has a canonical extension to a representation of $\widetilde{\PGSO_8}(F)$.
  If the representation $\pi$ is generic (resp. unramified), then the canonical extension $\widetilde{\pi}$ is equal to the Whittaker extension (resp. unramified extension) of $\pi$ to $\widetilde{\PGSO_8}(F)$.
\end{corollary}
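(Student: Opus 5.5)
The proof has two parts. First, every member of the distinguished packet is invariant under all outer automorphisms, so the canonical extension exists. Second, in the generic and unramified cases that extension agrees with the Whittaker and unramified extensions.

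\emph{Invariance.} If $F=\R$ and $\phi^{\rG_2}$ is discrete, Lemma~\ref{lemma:invariance-lifting-archimedean} gives invariance directly. If $F$ is $p$-adic and $\phi^{\rG_2}$ is tempered, Theorem~\ref{theorem:characterization-distinguished-L-packets} shows that every member of the packet has the form $\theta_{\PGSp_6}^{\PGSO_8}\circ\theta_{\rG_2}^{\PGSp_6}(\sigma)\neq 0$, so Theorem~\ref{theorem:invariance-lifting-non-archimedean} applies. For non-discrete real parameters and non-tempered parameters, I would reduce to the discrete or tempered case through the Langlands classification and parabolic induction. Write $\phi^{\rG_2}$ as coming from a Levi $M$ of $\rG_2$; its image in $\PGSO_8$ is a $\theta$-stable Levi. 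The packet members are Langlands quotients (or constituents) of representations induced from $\theta$-stable parabolics with inducing data in distinguished packets of smaller groups. Invariance then follows from invariance of the inducing data under the pinned automorphisms, which fix the standard Borel and hence the standard $\theta$-stable parabolics. With invariance established, the Corollary preceding Lemma on restrictions and that Lemma show that the canonical extension $\widetilde{\pi}$ is well defined.

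\emph{Comparison with the Whittaker and unramified extensions.} Let $\pi$ be generic, and let $\lambda$ be a Whittaker functional for the Whittaker datum determined by the pinning $(T,B,\{X_\alpha\})$. Choose an extension $\pi^+$ to $\PGSO_8^+(F)$. Since $\Aut_{\mathrm{pin}}(\PGSO_8)\cong S_3$ preserves the generic character, each $\pi^+(s)$ maps $\lambda$ to a multiple of $\lambda$, by uniqueness of Whittaker models. This defines a character $\chi$ of $S_3$, which is either trivial or $\sgn$. In either case $\chi$ is trivial on $A_3$, so $\pi^+(\theta)$ fixes $\lambda$. This is exactly the normalization of the Whittaker extension.

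The unramified case is the same argument. The pinning is defined over $\Z$, so $S_3$ preserves the hyperspecial subgroup $K=\PGSO_8(\mathcal{O}_F)$ and acts on the one-dimensional space $\pi^K$ by a character of $S_3$. That character is trivial on $\theta$, so $\widetilde{\pi}(\theta)$ acts trivially on $\pi^K$, which is the defining property of the unramified extension.

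\emph{Main obstacle.} The comparison with the Whittaker and unramified extensions is formal. The real work is the invariance statement outside the cases covered by Lemma~\ref{lemma:invariance-lifting-archimedean} and Theorem~\ref{theorem:invariance-lifting-non-archimedean}. For non-tempered $\phi^{\rG_2}$, one must check that the pinned automorphisms permute standard modules compatibly with the packet structure. My first approach would be to use the uniqueness of Langlands quotients together with the fact that the distinguished packet is obtained from tempered packets of the $\theta$-stable Levi by the Langlands classification.
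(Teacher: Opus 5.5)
Your argument is correct and is essentially the paper's. Invariance comes from Lemma~\ref{lemma:invariance-lifting-archimedean} and Theorem~\ref{theorem:invariance-lifting-non-archimedean}; the latter already covers every member of the Xu packet, so the detour through Theorem~\ref{theorem:characterization-distinguished-L-packets} is unnecessary. Existence and well-definedness come from $H^2(S_3,\C^\times)=0$ and $\sgn|_{A_3}=1$, and the Whittaker and unramified comparisons are exactly your observation that $S_3$ acts on the Whittaker line or the spherical line through a character, which is trivial on $\theta\in A_3$.

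Your proposed reduction for non-tempered or non-discrete parameters is not needed, since the paper defines the distinguished packet only in the real discrete and $p$-adic tempered situations covered by those two results. If you did pursue it, note that $\theta$-invariance of the inducing data only shows that $\theta$ permutes the constituents of a reducible induced representation. You would still need a separate argument, as in Lemma~\ref{lemma:invariance-lifting-archimedean} or Lemma~\ref{lemma:description-induction-from-A2-Levi}, that $\theta$ fixes each constituent.
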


\begin{corollary}\label{corollary:canonical-extension-global}
  Let $F$ be a number field.
  Let $\Pi^{+}$ be a smooth admissible representation of $\PGSO_8(\A_F) \rtimes \Aut_{\mathrm{pin}}(\PGSO_8)$ such that the restriction of $\Pi^{+}$ to $\PGSO_8(\A_F)$ is irreducible.
  Then, the restriction of each local component $\Pi^{+}_v$ to $\widetilde{\PGSO_8}(F_v)$ is the canonical extension of the restriction of $\Pi^{+}_v$ to $\PGSO_8(F_v)$.

  In particular, let $\Pi \subset \cA(\PGSO_8(\A_F))$ be an automorphic representation of $\PGSO_8(\A_F)$ which is invariant under the canonical action of $\Aut_{\mathrm{pin}}(\PGSO_8)(F)$.
  Then, each local component $\Pi_v$ has a canonical extension to a representation of $\widetilde{\PGSO_8}(F_v)$ and the restricted tensor product of the action of $\theta$ coincides with the canonical action of $\theta$ on $\Pi$.
\end{corollary}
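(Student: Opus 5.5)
The argument is purely local and formal: the statement reduces to the uniqueness results just proved. Write $\Pi^{+}=\otimes'_v \Pi^{+}_v$. Since $\Aut_{\mathrm{pin}}(\PGSO_8)\cong S_3$ is a finite constant group acting componentwise, $\PGSO_8(\A_F)\rtimes S_3$ is a restricted product of the local groups $\PGSO_8^{+}(F_v)$ with the diagonal copy of $S_3$. This is not a product of the local semidirect products, so some care is needed.

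First, restrict $\Pi^{+}$ to $\PGSO_8(\A_F)$. It is irreducible and admissible, so it factors as $\otimes'_v\pi_v$ with each $\pi_v$ irreducible. For $s\in S_3$, the operator $\Pi^{+}(s)$ intertwines $\Pi$ with $\Pi\circ s^{-1}$. Hence $\Pi\circ s\cong\Pi$, and by uniqueness of the tensor factorization (Flath), $\pi_v\circ s\cong\pi_v$ for every $v$. Thus each $\pi_v$ is invariant under the outer automorphisms.

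Second, fix intertwiners $A_v(s)\colon\pi_v\to\pi_v\circ s^{-1}$, normalized to fix the chosen spherical vectors at almost all $v$. Their tensor product $\otimes_v A_v(s)$ intertwines $\Pi$ with $\Pi\circ s^{-1}$. By Schur's lemma, $\Pi^{+}(s)=c(s)\otimes_v A_v(s)$ for a scalar $c(s)$. Now use the preceding lemmas: $H^2(S_3,\C^\times)=0$, so each $A_v$ can be rescaled into an honest extension $\pi_v^{+}$ of $\pi_v$ to $\PGSO_8^{+}(F_v)$. This rescaling preserves the spherical normalization, because the unramified extension already fixes the spherical vector at almost all places. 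The local extensions are unique up to a character of $S_3$, that is, up to $\sgn$. Then $\Pi^{+}$ and $\otimes'_v\pi_v^{+}$ differ by a character $\chi$ of $S_3$, trivial at almost all places in the appropriate sense. We can therefore absorb $\chi$ into a single place $v_0$, obtaining $\Pi^{+}_{v_0}:=\pi^{+}_{v_0}\otimes\chi$ and $\Pi^{+}_v=\pi_v^{+}$ elsewhere.

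Here "local component $\Pi^{+}_v$" has to be understood as such a local extension, which is only determined up to $\sgn$. This is the one subtle point. By the lemma that $\sgn|_{A_3}=1$, the restriction to $\widetilde{\PGSO_8}(F_v)$ does not depend on this ambiguity, and by definition it is the canonical extension $\widetilde{\pi_v}$. So the first claim follows.

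For the automorphic statement, invariance of $\Pi\subset\cA(\PGSO_8(\A_F))$ under the action $\varphi\mapsto\varphi\circ s^{-1}$ of $\Aut_{\mathrm{pin}}(\PGSO_8)(F)$ defines a genuine representation $\Pi^{+}$ of $\PGSO_8(\A_F)\rtimes S_3$. It is a genuine representation because composition of functions is strictly associative, so no cocycle arises. Applying the first part, the canonical action of $\theta$ on $\Pi$ equals $\otimes_v\widetilde{\pi_v}(\theta)$. The only thing to check is that the scalar is exactly $1$, and not merely a cube root of unity. This is exactly what restricting to $A_3$ buys: the discrepancy between $\Pi^{+}$ and $\otimes_v\pi_v^{+}$ is a character of $S_3$, and any such character is trivial on $\theta\in A_3$. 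I expect this bookkeeping of scalars to be the main point needing care; everything else is formal.
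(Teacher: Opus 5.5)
Your argument is correct and is the reasoning the paper leaves implicit: the corollary is stated without proof, as a direct consequence of $H^2(S_3,\C^\times)=0$ and $\sgn|_{A_3}=1$. Your Schur's-lemma step shows that $\Pi^{+}$ and $\otimes'_v\pi_v^{+}$ differ by a character of $S_3$, so they agree on $\theta\in A_3$ exactly and not merely up to a cube root of unity; this is the bookkeeping needed to make that deduction rigorous.
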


\section{Preliminaries on relative Weyl groups}\label{section:relative-Weyl-groups}

\subsection{A method of their computation}

We recall the Weyl-group normalizer computations in \cite{Takanashi2026-HiragaIchinoIkedaConjectureFormalDegrees}*{Section 8.2}.
Let $\Phi = (\Phi, \Phi^{\vee}, V)$ be a root system.
Let $\Phi_1$ be a root subsystem of $\Phi$.
We denote the Weyl group of $\Phi$ (resp. $\Phi_1$) by $W$ (resp. $W_1$).
We fix a set of simple roots $I_1$ of $\Phi_1$.

We compute $N_{W}(W_1) = \Norm_{W}(W_1)$ following \cite{Carter1972-ConjugacyClassesWeylGroup}.

\begin{lemma}[\cite{Carter1972-ConjugacyClassesWeylGroup}*{Proposition 28}]\label{lemma:Weyl-group-normalizer}
    Let $\Phi_2$ be the root subsystem in $\Phi_1^{\perp} =  \bigcap_{\alpha \in \Phi_1} \Ker(\alpha^{\vee})$ defined as $\Phi_1^{\perp} \cap \Phi$.
    \begin{enumerate}
        \item
        The Weyl group $W_2$ of $\Phi_2$ is contained in $N_{W}(W_1)$.
        The element $w \in W$ is in $W_2$ if and only if it fixes the roots in $\Phi_1$.
        \item
        The product $W_1 \times W_2$ is a normal subgroup of $N_{W}(W_1)$.
        The quotient $N_{W}(W_1)/(W_1 \times W_2)$ is isomorphic to the group of automorphisms $\Aut_{W}(I_1)$ of the Dynkin diagram of $\Phi_1$ induced by $W$.
    \end{enumerate}
\end{lemma}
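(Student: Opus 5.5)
The plan is the standard argument via stabilizers of $I_1$, as in Carter. For (1), let $w \in W$ fix every root of $\Phi_1$ pointwise. Then $w$ fixes pointwise the subspace $U = \mathrm{span}(\Phi_1)$. By Steinberg's theorem (or Chevalley's), the pointwise stabilizer in $W$ of any subset of $V$ is the parabolic subgroup generated by the reflections it contains. The reflections $s_\beta$ fixing $U$ pointwise are exactly those with $\beta \perp \Phi_1$, that is, $\beta \in \Phi_2$. Hence the pointwise stabilizer of $\Phi_1$ is $W_2$. Conversely, each $s_\beta$ with $\beta \in \Phi_2$ fixes $\Phi_1$ pointwise, and so it commutes with every $s_\alpha$ for $\alpha \in \Phi_1$. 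Therefore $W_2$ centralizes $W_1$, and in particular $W_2 \subset N_W(W_1)$.

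For (2), first I will show that $N_W(W_1)$ is exactly the setwise stabilizer of $\Phi_1$ in $W$. One inclusion is clear: if $w\Phi_1 = \Phi_1$, then $w s_\alpha w^{-1} = s_{w\alpha}$ lies in $W_1$. For the other, suppose $w$ normalizes $W_1$. Then $w$ permutes the set of reflections in $W_1$. The reflections of $W$ lying in $W_1$ are exactly the $s_\alpha$ with $\alpha \in \Phi_1$: such a reflection's root must lie in $\mathrm{span}(\Phi_1)\cap\Phi$, and one uses that $\Phi_1$ is the root system of $W_1$ acting on $U$. This is the one place where closedness of the subsystem $\Phi_1$ is relevant. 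I would phrase it as: the reflections of the reflection group $W_1|_U$ are those of its root system $\Phi_1$. Hence $w\Phi_1 = \Phi_1$.

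The main step follows. Since $W_1$ acts simply transitively on the simple systems of $\Phi_1$, every $w \in N_W(W_1)$ can be written uniquely as $w = w_1 w'$ with $w_1 \in W_1$ and $w' I_1 = I_1$. So $N_W(W_1) = W_1 \rtimes \mathrm{Stab}_W(I_1)$. Restricting to $I_1$ gives a homomorphism $\mathrm{Stab}_W(I_1) \to \Aut(I_1)$. Its image consists of Dynkin diagram automorphisms, because $W$ preserves the inner product, and by definition this image is $\Aut_W(I_1)$. Its kernel fixes $I_1$, hence $\Phi_1$, pointwise, so by (1) the kernel is $W_2$ (note $W_2 \subset \mathrm{Stab}_W(I_1)$). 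Consequently $W_1 \times W_2$ is the kernel of the composite $N_W(W_1) \to \mathrm{Stab}_W(I_1) \to \Aut_W(I_1)$. The product is direct because the two groups commute and $W_1 \cap W_2$ fixes both $U$ and $U^{\perp}$. Being a kernel, $W_1\times W_2$ is normal, and the quotient is $\Aut_W(I_1)$.

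The step I expect to be the main obstacle is the identification of the reflections in $W_1$ with the $\Phi_1$-reflections, i.e.\ that $N_W(W_1)$ stabilizes $\Phi_1$. If this causes trouble, I would simply define things via the stabilizer of $\Phi_1$, as Carter effectively does, and cite \cite{Carter1972-ConjugacyClassesWeylGroup}*{Proposition 28} for that point.
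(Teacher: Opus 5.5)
Your argument is correct for reduced $\Phi$, which covers every use in the paper. It is the standard proof of Carter's Proposition~28, which the paper only quotes: $N_W(W_1)=\mathrm{Stab}_W(\Phi_1)=W_1\rtimes\mathrm{Stab}_W(I_1)$, and Steinberg's theorem identifies the kernel of $\mathrm{Stab}_W(I_1)\to\Aut_W(I_1)$ with $W_2$.

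One correction to your commentary concerns the step $s_\beta\in W_1\Rightarrow\beta\in\Phi_1$. It does not use closedness of $\Phi_1$. It uses that the reflections of $W_1|_U$ are exactly the $s_\alpha$ with $\alpha\in\Phi_1$, together with reducedness of $\Phi$, so that $\beta\parallel\alpha$ forces $\beta=\pm\alpha$. Without reducedness the lemma itself fails: for $\Phi_1=\{\pm e_1,\pm 2e_2\}\subset BC_2$ one gets $[N_W(W_1):W_1\times W_2]=[W:W_1]=2$, while $\Aut_W(I_1)=1$.
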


 \begin{lemma}\label{lemma:computation-normalizer}
     We fix a set of simple roots $I_2$ of $\Phi_2$.
     Then, the normalizer $N_{W}(I_1, I_2) = \Norm_{W}(I_1) \cap \Norm_{W}(I_2)$ of the subsets $I_1$ and $I_2$ in $W$ gives a splitting of the map
     \begin{align*}
          N_{W}(W_1) \twoheadrightarrow  N_{W}(W_1)/(W_1 \times W_2).
     \end{align*}
     Thus, we have an isomorphism
     \begin{align*}
        W_2 \rtimes N_{W}(I_1, I_2) \xrightarrow{\sim} N_{W}(W_1)/W_1
     \end{align*}
     as subquotients of $W$.
 \end{lemma}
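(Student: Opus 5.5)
The plan is to use the description of $N_W(W_1)$ from Lemma~\ref{lemma:Weyl-group-normalizer} together with the simple transitivity of Weyl groups on bases. First I will check that $N_W(I_1,I_2)$ is contained in $N_W(W_1)$. An element $w$ with $wI_1=I_1$ maps $\Phi_1$ to $\Phi_1$, because $\Phi_1=W_1I_1$. Hence $wW_1w^{-1}=W_1$. Next I will check that $N_W(I_1,I_2)\cap(W_1\times W_2)=1$. Write $w=w_1w_2$ with $w_i\in W_i$. Since $W_2$ fixes $\Phi_1$ pointwise by Lemma~\ref{lemma:Weyl-group-normalizer}(1), $w_1$ preserves $I_1$. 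Simple transitivity of $W_1$ on bases of $\Phi_1$ then gives $w_1=1$. Symmetrically, $W_1$ fixes $\Phi_2\subset\Phi_1^\perp$ pointwise, so $w_2$ preserves $I_2$ and hence $w_2=1$.

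For surjectivity onto the quotient, take $n\in N_W(W_1)$. Then $n\Phi_1=\Phi_1$, so $nI_1$ is a base of $\Phi_1$, and there is a unique $w_1\in W_1$ with $w_1nI_1=I_1$. The element $w_1n$ preserves $\Phi_1$, so it preserves $\Phi_1^\perp$ and therefore $\Phi_2=\Phi_1^\perp\cap\Phi$. Thus $w_1nI_2$ is a base of $\Phi_2$, and there is a unique $w_2\in W_2$ with $w_2w_1nI_2=I_2$. Since $w_2$ fixes $\Phi_1$ pointwise, $w_2w_1n$ still preserves $I_1$. So $w_2w_1n\in N_W(I_1,I_2)$, which shows $N_W(W_1)=(W_1\times W_2)\,N_W(I_1,I_2)$. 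Combined with the trivial intersection, $N_W(I_1,I_2)$ maps isomorphically onto $N_W(W_1)/(W_1\times W_2)$, and this gives the splitting.

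It remains to identify $N_W(W_1)/W_1$. Normality of $W_1\times W_2$ from Lemma~\ref{lemma:Weyl-group-normalizer}(2) gives that $W_1$ and $W_2$ are each normalized by $N_W(I_1,I_2)$. For $W_1$ this is immediate. For $W_2$, an element of $N_W(W_1)$ preserves $\Phi_2$. Since $W_1\cap W_2=1$, we get
\begin{align*}
N_W(W_1)/W_1 \cong (W_2\times W_1)N_W(I_1,I_2)/W_1 .
\end{align*}
The image of $W_2\,N_W(I_1,I_2)$ in this quotient is therefore $W_2\rtimes N_W(I_1,I_2)$. The product is semidirect because $W_2\cap N_W(I_1,I_2)=1$ and $N_W(I_1,I_2)$ normalizes $W_2$. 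The main point needing care is that $W_1$ and $W_2$ commute and intersect trivially inside $W$. This follows because they act on orthogonal subspaces: $W_1$ acts trivially on $\Phi_1^\perp$, and $W_2$ fixes the span of $\Phi_1$.
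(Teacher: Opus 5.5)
Your argument is correct. The paper gives no separate proof of this lemma; it is recalled, together with Lemma~\ref{lemma:Weyl-group-normalizer}, from the earlier Weyl-group computations and Carter. Your three steps are exactly the verification left implicit there:
\begin{enumerate}
\item containment of $N_W(I_1,I_2)$ in $N_W(W_1)$;
\item trivial intersection with $W_1\times W_2$, via simple transitivity of $W_1$ and $W_2$ on bases;
\item surjectivity, by correcting successively with $W_1$ and then $W_2$.
\end{enumerate}

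Two small points each deserve an added sentence.

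First, the step ``$n\in N_W(W_1)$, hence $n\Phi_1=\Phi_1$'' is not automatic. Normalizing $W_1$ only directly gives that $n$ permutes the reflections of $W_1$. You need that these reflections are exactly the $s_\alpha$ with $\alpha\in\Phi_1$. A reflection $s_\gamma\in W_1$ fixes $\Phi_1^\perp$ pointwise, so $\gamma$ lies in the span of $\Phi_1$ and is proportional to a root of $\Phi_1$. Combined with $n s_\alpha n^{-1}=s_{n\alpha}$, this gives $n\alpha\in\Phi_1$. This uses that $\Phi$ is reduced, or that $\Phi_1$ contains every root of $\Phi$ proportional to one of its roots. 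The latter holds for the Levi subsystems of Remark~\ref{remark:levi-weyl}. For a general subsystem of a non-reduced $\Phi$ the statement can actually fail.

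Second, in the last paragraph two justifications need adjusting. The normality of $W_2$ under $N_W(I_1,I_2)$ comes from $N_W(W_1)$ preserving $\Phi_2$, as you then say, not from normality of the product $W_1\times W_2$. The injectivity of $W_2N_W(I_1,I_2)\to N_W(W_1)/W_1$ amounts to $W_1\cap W_2N_W(I_1,I_2)=1$. This follows from your trivial-intersection step together with $W_1\cap W_2=1$, and should be stated explicitly.
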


\begin{remark}\label{remark:levi-weyl}
    Let $G$ be a reductive group over $F$.
    Let $(P_0, A_0)$ be a minimal parabolic pair of $G$ over $F$.
    For a Levi subgroup $M$ containing $A_0$, the roots $\Phi(M, A_0)$ form a subsystem of $\Phi(G, A_0)$, and there is a canonical isomorphism
    \begin{align*}
        N_{W(G, A_0)}(W(M, A_0))/W(M, A_0) \cong W(G, M).
    \end{align*}
    Thus, we can apply Lemmas \ref{lemma:Weyl-group-normalizer} and
    \ref{lemma:computation-normalizer} to this situation.
\end{remark}

\subsection{Compatibility with the dual setting}

\begin{lemma}\label{lemma:tits-lift-and-dual}
  Let $F$ be a non-archimedean local field of characteristic zero, and $G$ be a quasi-split connected reductive group over $F$.
   Let $\mathrm{pin}_{G}$ (resp. $\mathrm{pin}_{\widehat{G}}$) be an $F$-pinning of $G$ (resp. $\widehat{G}$).
  Let $I, J$ be subsets of the set $\Delta$ of simple roots of $G$.
  Let $w$ be an element of the Weyl group of $G$ such that $w(I) = J$.
  Let $\widehat{w}$ be the corresponding element in $W(\widehat{G}, \widehat{T}) \cong W(G, T)$.
  Let $\widetilde{w}$ (resp. $\widetilde{\widehat{w}}$) be the Tits lift of $w$ (resp. $\widehat{w}$).

  Then, we have
  \begin{align*}
    \begin{cases}
      \Ad(\widetilde{w}) X_{\alpha} = X_{w(\alpha)} \\
      \Ad(\widetilde{\widehat{w}}) X_{\alpha^{\vee}} = X_{\widehat{w}(\alpha^{\vee})}
    \end{cases}
  \end{align*}
  for any $\alpha$ which restricts to an element of $I$.
  Thus, we have
  \begin{align*}
    \widehat{\Ad(\widetilde{w})} = \Ad(\widetilde{\widehat{w}})^{-1} \colon M_{J}^{\vee} \xrightarrow{\sim} M_{I}^{\vee}.
  \end{align*}
\end{lemma}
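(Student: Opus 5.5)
The plan is to reduce both identities to rank-one computations with simple roots, using the defining property of the Tits lift and the way the pinnings on $G$ and $\widehat{G}$ correspond.

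First recall the construction. For a simple root $\beta$ (or, when $G$ is only quasi-split, a simple relative root, with $X_\beta$ the corresponding pinned vector), set $n_\beta=\exp(X_\beta)\exp(-X_{-\beta})\exp(X_\beta)$, where $X_{-\beta}$ is normalized against $X_\beta$ by the pinning. For a reduced expression $w=s_{\beta_1}\cdots s_{\beta_k}$, put $\widetilde w=n_{\beta_1}\cdots n_{\beta_k}$; this is independent of the reduced expression by Tits' theorem. The same recipe applied to $\mathrm{pin}_{\widehat G}$ defines $\widetilde{\widehat w}$.

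Second, prove the first displayed identity for $\alpha\in I$ by induction on $\ell(w)$. Since $w(I)=J\subset\Delta$, we can choose a reduced expression adapted to $I$. Concretely, for $\alpha\in I$ with $w(\alpha)$ simple, there is a standard factorization of $w$ into elements $w_{i+1}w_i^{-1}$. Each factor is of the form $w_0^{L_{K\cup\{\beta\}}}w_0^{L_K}$ and carries a subset $K$ of simple roots to another subset of simple roots. This reduces the claim to the case where $G$ is replaced by a Levi subgroup of semisimple rank $|K|+1$ and $w=w_0^{K\cup\{\beta\}}w_0^{K}$. For the longest element of a Levi, the classical fact is $\Ad(\widetilde{w_0})X_\alpha=X_{-w_0\alpha}$ up to the sign $(-1)^{\langle 2\rho^\vee,\alpha\rangle}$-type correction; in the simply laced normalization, $\Ad(\widetilde{w_0})X_\alpha = -X_{w_0\alpha}\cdot(\ldots)$. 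Composing two such longest elements makes the signs cancel, giving $\Ad(\widetilde w)X_\alpha=X_{w(\alpha)}$. Over a quasi-split $F$, we work over $\bar F$ with the $\Gamma$-stable pinning. Since $w$ is $\Gamma$-fixed, both sides are Galois-compatible, so the statement descends. The same argument applied to $\widehat G$, with the pinning $\{X_{\alpha^\vee}\}$, gives the second identity.

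Finally, deduce the dual statement. The isomorphism $\Ad(\widetilde w)\colon M_I\to M_J$ preserves the induced pinnings, so its dual is the unique pinned isomorphism $M_J^\vee\to M_I^\vee$ inducing the transposed map on based root data. The transposed map of $w$ on coroots, going from $J^\vee$ to $I^\vee$, is $w^{-1}=\widehat w^{-1}$. By the second identity, $\Ad(\widetilde{\widehat w})^{-1}$ is a pinned isomorphism $M_J^\vee\to M_I^\vee$ realizing exactly this map. Uniqueness of pinned isomorphisms with a given based root datum isomorphism then forces equality.

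I expect the main obstacle to be the sign bookkeeping in the longest-element step: showing that the sign contributions cancel exactly in $w_0^{K\cup\{\beta\}}w_0^K$. To handle it, I would first try computing in $\SL_2$ together with the identity $\widetilde{w_0}=\widetilde{w_0^K}\,\widetilde{w'}$, and check separately that the dual signs behave identically.
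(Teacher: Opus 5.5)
The paper proves the two displayed identities by citing Springer, Proposition~9.3.5, and treats the last assertion as an immediate consequence. Your deduction of that last assertion is sound and matches what ``Thus'' means: once $\Ad(\widetilde w)\colon M_I\to M_J$ is known to be pinned, its dual is the unique pinned isomorphism $M_J^\vee\to M_I^\vee$ inducing $\widehat w^{-1}$, and the second identity shows that $\Ad(\widetilde{\widehat w})^{-1}$ is such a map.

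There is a gap, however, in your proof of the two identities themselves, which is the real content of the lemma. It sits at the base case of your reduction.
\begin{itemize}
\item Your ``classical fact'' is misstated. $\Ad(\widetilde{w_0})X_\alpha$ lies in the negative root space $\mathfrak g_{w_0\alpha}$, so it is never a multiple of $X_{-w_0\alpha}$. The correct statement is $\Ad(\widetilde{w_0})X_{\pm\gamma}=-X_{\pm w_0\gamma}$ for every simple $\gamma$, in every type. Your proposed factor $(-1)^{\langle2\rho^\vee,\alpha\rangle}$ is identically $1$.
\item The product $w_0^{K\cup\{\beta\}}w_0^K$ is not length-additive, so its Tits lift is not the product of the two lifts. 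The length-additive factorization is $w_0^{K\cup\{\beta\}}=w\,w_0^K$, which gives $\widetilde w=\widetilde{w_0^{K\cup\{\beta\}}}\,(\widetilde{w_0^K})^{-1}$.
\item With these corrections the two signs do cancel. But the longest-element fact is normally derived from the very statement you are after: write $w_0=(w_0s_\gamma)s_\gamma$, where $w_0s_\gamma$ sends $\gamma$ to the simple root $-w_0\gamma$. Invoking it is therefore circular unless you prove it separately, and that is exactly the step you leave as ``the main obstacle.''
\end{itemize}

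The missing idea is a direct argument for each $\alpha$ separately, which also makes the factorization of $w$ into pieces $w_0^{K\cup\{\beta\}}w_0^K$ unnecessary.
\begin{itemize}
\item For $\alpha\in I$ put $\beta=w(\alpha)$. Then $ws_\alpha=s_\beta w$. Both products are length-additive: $\ell(ws_\alpha)=\ell(w)+1$ because $w\alpha>0$, and $\ell(s_\beta w)=\ell(w)+1$ because $w^{-1}\beta>0$. Multiplicativity of Tits lifts therefore gives $\widetilde w\,n_\alpha\,\widetilde w^{-1}=n_\beta$.
\item Write $\Ad(\widetilde w)X_{\pm\alpha}=c^{\pm1}X_{\pm\beta}$. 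The exponents are forced by $[X_\alpha,X_{-\alpha}]=d\alpha^\vee(1)\mapsto d\beta^\vee(1)$.
\item Conjugating $n_\alpha=\exp(X_\alpha)\exp(-X_{-\alpha})\exp(X_\alpha)$ by $\widetilde w$ gives the image of $\bigl(\begin{smallmatrix}0&c\\-c^{-1}&0\end{smallmatrix}\bigr)$ under the root homomorphism $\SL_2\to G$ attached to $\beta$.
\item In the simply connected cover of the derived group this homomorphism is injective, so $c=1$, and the identity then descends to $G$. This step is genuinely needed: on $\mathfrak g$ alone, or in an adjoint group, you only obtain $c=\pm1$.
\end{itemize}
The same argument in $\widehat G$, using $\widehat w(\alpha^\vee)=\beta^\vee$, gives the second identity. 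Your reduction to $\bar F$ with a $\Gamma$-stable pinning then handles the quasi-split case.
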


\begin{proof}
  This follows from \cite{Springer1998-LinearAlgebraicGroups}*{Proposition 9.3.5}.
\end{proof}

\section{Normalized intertwining operators for \texorpdfstring{$\rG_2(F)$}{G2(F)}}

Let $F$ be a local field of characteristic zero.
The following lemma follows from the representation theory of $\sl_2$.

\begin{lemma}\label{lemma:dual-nilradicals-G2}
  Let $(B, T)$ be a Borel pair of $\rG_2$ and $\alpha, \beta$ be the short root and the long root, respectively.
  Let $P_{s} = P_{\alpha} = M_s N_s$ be the short root parabolic subgroup of $\rG_2$ and $P_{l} = P_{\beta} = M_l N_l$ be the long root parabolic subgroup of $\rG_2$.
  \begin{enumerate}
    \item
    The action of $\widehat{M_s} = \widehat{M}_{\alpha^{\vee}}$ on the Lie algebra $\widehat{N}_{\alpha^{\vee}}$ is isomorphic as a representation of $\GL_2$ to $\Std_{\GL_2} \oplus \det \oplus \Std_{\GL_2} \otimes \det$.
    \item
    The action of $\widehat{M_l} = \widehat{M}_{\beta^{\vee}}$ on the Lie algebra $\widehat{N}_{\beta^{\vee}}$ is isomorphic as a representation of $\GL_2$ to $\Sym^3(\Std_{\GL_2}) \otimes {\det}^{-1} \oplus \det$.
  \end{enumerate}
\end{lemma}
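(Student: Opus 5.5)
The plan is to work entirely on the dual side with the root system of $\widehat{\rG_2}=\rG_2(\C)$, whose roots are the coroots of $\rG_2$. Under duality, the short root $\alpha$ of $\rG_2$ gives the long coroot $\alpha^{\vee}$, and the long root $\beta$ gives the short coroot $\beta^{\vee}$. Thus $\widehat{M_s}=\widehat{M}_{\alpha^{\vee}}$ is the Levi subgroup of $\rG_2(\C)$ attached to a long root, and $\widehat{M_l}=\widehat{M}_{\beta^{\vee}}$ is the Levi attached to a short root. Both Levis are isomorphic to $\GL_2$. In each case I will decompose the nilradical $\widehat{\fn}$ under $\widehat{M}$ by organizing the positive roots outside the Levi into $\alpha^{\vee}$-strings (respectively $\beta^{\vee}$-strings). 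Each string spans an irreducible $\sl_2$-module, and I then determine the action of the central torus of $\widehat{M}$, which fixes the determinant twist.

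For part (1), the positive roots of $\rG_2(\C)$ outside $\{\alpha^{\vee}\}$ are five. Write $a=\alpha^{\vee}$ (long) and $b=\beta^{\vee}$ (short), with positive roots $a,b,a+b,a+2b,a+3b,2a+3b$.
\begin{itemize}
  \item The $a$-strings are $\{b,a+b\}$, $\{a+2b\}$ and $\{a+3b,2a+3b\}$.
  \item This gives dimensions $2,1,2$, matching $\Std\oplus\det\oplus\Std\otimes\det$.
  \item To identify the central characters, I use the grading by the coefficient of $b$, which takes the values $1,2,3$ on the three strings.
  \item Identify $\widehat{M}\cong\GL_2$ so that the cocharacter of the center grading $b$-coefficients corresponds to $\det$ up to normalization.
  \item Then check on highest weights that the three pieces have the stated $\det$-powers: weight $1$ gives $\Std$, weight $2$ gives $\det$, and weight $3$ gives $\Std\otimes\det$.
\end{itemize}
The consistency check is that the $\det$-power of $\Std^{\otimes k}$ pieces matches: the $b$-coefficient $2$ one-dimensional piece equals $\det$, which is $\wedge^2$ of the weight-$1$ piece. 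This reflects the bracket $[b,a+b]\subset(a+2b)$, and the bracket $[\,\text{weight }1,\text{weight }2\,]\to\text{weight }3$ forces weight $3$ to be $\Std\otimes\det$.

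For part (2), the positive roots outside $\{b\}$ are $a,a+b,a+2b,a+3b,2a+3b$.
\begin{itemize}
  \item The $b$-strings are $\{a,a+b,a+2b,a+3b\}$ of dimension $4$ and $\{2a+3b\}$ of dimension $1$.
  \item The $4$-dimensional string is $\Sym^3$ as an $\sl_2$-module, with grading $1$ by $a$-coefficient, and the $1$-dimensional string has grading $2$.
  \item Choose the identification $\widehat{M}\cong\GL_2$ so that $\GL_2$ acts on the $4$-dimensional piece via $\Sym^3\otimes\det^{-1}$. This representation has central character $z\mapsto z^{3-2}=z$, consistent with grading $1$.
  \item The bracket $\wedge^2$ of this piece onto the $1$-dimensional piece then has central character $z^2$, which equals $\det$ since $\det(z)=z^2$.
  \item Independently, the invariant alternating form on $\Sym^3$ transforms by $\det^3$, so $\wedge^2(\Sym^3\otimes\det^{-1})\supset\det^{3}\otimes\det^{-2}=\det$. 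This confirms the $\det$ summand.
\end{itemize}

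The main obstacle is purely normalization: the isomorphism $\widehat{M}\cong\GL_2$ is only canonical up to the automorphism $g\mapsto {}^tg^{-1}\det(g)^k$. I will fix it through the Levi of $\GL_2$ type, namely the standard one in the paper's conventions, and check that in that normalization the central cocharacter matches the one used to define $\det$. If the twist comes out off, I will adjust by the outer automorphism of $\GL_2$ given by $g\mapsto g\det(g)^{-1}$ on $\Std$. The dimensions and $\sl_2$-structure are unaffected by such twists, so the statement holds for the appropriate identification.
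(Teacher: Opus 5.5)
Your proposal is correct and is essentially the argument the paper has in mind (the paper only remarks that the lemma follows from the representation theory of $\sl_2$): the root strings give the $\SL_2$-types $2,1,2$ and $4,1$, and the connected center, acting through the grading by the coefficient of the other simple root, fixes the $\det$-twists. One point should be stated explicitly, namely why a suitable identification $\widehat{M}\cong\GL_2$ exists: since $X^*(\widehat T)=\Z a\oplus\Z b$ is the root lattice, the restriction of $b$ (resp.\ $a$) generates $X^*(Z(\widehat M))\cong\Z$, so the center acts on the degree-one piece by $z^{\pm1}$ rather than by some other odd power, and the outer automorphism $g\mapsto g\det(g)^{-1}$ then fixes the sign.
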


\begin{lemma}
    As representations of $\GL_2$, we have
    \begin{align*}
      \Sym^3(\Std_{\GL_2}) \otimes {\det}^{-1} \oplus \Std_{\GL_2} = \Ad_{\GL_2} \otimes \Std_{\GL_2}.
    \end{align*}
\end{lemma}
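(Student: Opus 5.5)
The plan is to compare both sides through their characters on the diagonal torus $\{\diag(a,b)\}$ of $\GL_2$, or equivalently through weights. Let $\chi_1,\chi_2$ be the two weights of $\Std_{\GL_2}$. Since a representation of the connected reductive group $\GL_2$ over $\C$ is semisimple, it is determined up to isomorphism by its character. So it suffices to check that the two sides have the same multiset of weights.

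\textbf{Step 1: the right-hand side.} The adjoint representation $\Ad_{\GL_2}$ has weights $\chi_1-\chi_2$, $0$, $0$, $\chi_2-\chi_1$. Tensoring with $\Std_{\GL_2}$ adds $\chi_1$ or $\chi_2$ to each of these, which gives the eight weights
\begin{align*}
  2\chi_1-\chi_2,\ \chi_1,\ \chi_2,\ \chi_1,\ \chi_1,\ \chi_2,\ \chi_2,\ 2\chi_2-\chi_1 .
\end{align*}

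\textbf{Step 2: the left-hand side.} The representation $\Sym^3(\Std_{\GL_2})$ has weights $3\chi_1$, $2\chi_1+\chi_2$, $\chi_1+2\chi_2$, $3\chi_2$. Twisting by ${\det}^{-1}$ subtracts $\chi_1+\chi_2$ from each, giving $2\chi_1-\chi_2$, $\chi_1$, $\chi_2$, $2\chi_2-\chi_1$. Adding the weights $\chi_1,\chi_2$ of $\Std_{\GL_2}$ yields exactly the multiset found in Step 1.

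\textbf{Step 3: structural cross-check.} The identity can also be seen by restricting to $\SL_2$ and using Clebsch--Gordan, $V_2\otimes V_1 = V_3\oplus V_1$. The central characters then match: the center acts by $z\mapsto z$ on both sides, since $\Sym^3\otimes{\det}^{-1}$ has central character $z^{3-2}=z$.

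There is no genuine obstacle here. The only point needing care is the normalization of the twist by ${\det}^{-1}$, which is the convention used in Lemma~\ref{lemma:dual-nilradicals-G2}(2), and this is what makes the central characters agree.
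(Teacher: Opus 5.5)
\textbf{There is a gap: the weight multisets in Steps 1 and 2 do not agree.} In Step 1 you take $\Ad_{\GL_2}$ to be the adjoint action on $\mathfrak{gl}_2$, with the four weights $\chi_1-\chi_2,0,0,\chi_2-\chi_1$. This produces eight weights, with $\chi_1$ and $\chi_2$ each occurring three times. Step 2 produces only six weights, $2\chi_1-\chi_2,\chi_1,\chi_1,\chi_2,\chi_2,2\chi_2-\chi_1$, since the left-hand side has dimension $4+2=6$. So the sentence ``yields exactly the multiset found in Step 1'' is false.

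Under your reading of $\Ad$ the lemma is actually false. One instead gets \[ \Ad_{\mathfrak{gl}_2}\otimes\Std_{\GL_2}\cong\Sym^3(\Std_{\GL_2})\otimes{\det}^{-1}\oplus\Std_{\GL_2}\oplus\Std_{\GL_2}. \] Your Step 3 silently uses the other reading, since the $V_2$ in Clebsch--Gordan is three-dimensional. The proposal is therefore internally inconsistent, and a dimension count would have caught this.

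\textbf{The fix.} Take $\Ad_{\GL_2}$ to be the three-dimensional adjoint representation on $\mathfrak{sl}_2$, i.e.\ $\mathfrak{gl}_2$ modulo its center, equivalently $\Sym^2(\Std_{\GL_2})\otimes{\det}^{-1}$. This is the paper's convention throughout. For example, in Lemma~\ref{G2-elliptic-GL2-Levi-representations}(2) the sum $\phi_\sigma\oplus\phi_\sigma\oplus\Ad(\phi_\sigma)$ is seven-dimensional. Likewise, $\Ad(\Pi_1)$ has degree three, and Example~\ref{example:normalizing-factor-G2}(2) uses this lemma in the form $\gamma_A(s,\pi,\Sym^3\otimes{\det}^{-1},\psi_F)=\gamma_A(s,\pi,\Std\otimes\Ad,\psi_F)/\gamma_A(s,\pi,\psi_F)$.

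With this convention, the weights of $\Ad_{\GL_2}$ are $\chi_1-\chi_2,0,\chi_2-\chi_1$. Tensoring with $\Std_{\GL_2}$ then gives exactly the six weights of Step 2, and your semisimplicity and character argument concludes. The central-character check is unaffected. Once corrected, your argument is the standard $\mathfrak{sl}_2$ Clebsch--Gordan computation; the paper states the lemma without a written proof, as a consequence of that representation theory.
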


\begin{definition}
  Let $G$ be a reductive group over $F$.
  Let $M, M'$ be standard Levi subgroups of $G$ and $w$ be an element in the relative Weyl group $W(M, M')$.
  Let $P$ (resp. $P'$) be a parabolic subgroup of $G$ with Levi component $M$ (resp. $M'$).
  Let $\psi \in \Psi(M)$ be an $A$-parameter for $M$ and $\pi \in \Pi_{\psi}$ (if defined).
  Let $\lambda \in \mathfrak{a}_M^*$.
  An intertwining operator $J_P(w, \pi_{\lambda}) \colon I_P(\pi_{\lambda}) \to I_{P'}(w\pi_{\lambda})$ for $\pi_{\lambda}$ is defined as a meromorphic continuation of the integral
  \begin{align*}
    (J_P(w, \pi_{\lambda})f_{\lambda})(g) = \int_{(\widetilde{w}N\widetilde{w}^{-1} \cap N') \backslash N'} f_{\lambda}(\widetilde{w}^{-1}ug) du
  \end{align*}

  The normalized intertwining operator $R_P(w, \pi_{\lambda}, \psi)$ is defined by
  \begin{align*}
    R_P(w, \pi_{\lambda}, \psi) = r_P(w, \psi_{\lambda})^{-1} J_P(w, \pi_{\lambda}),
  \end{align*}
  with
  \begin{align*}
    r_P(w, \psi_{\lambda}) = \lambda(w) \gamma_{A}(0, \psi_{\lambda}, \rho^{\vee}_{w^{-1}P' | P}, \psi_F)^{-1}.
  \end{align*}
  We recall the notation from \cite{AtobeGanEtAl2024-LocalIntertwiningRelationsCoTemperedPackets}*{Section 1.7}.
  \begin{enumerate}
    \item
    The $L$-factor and $\epsilon$-factor are defined by
    \begin{align*}
    L(s, \psi, \rho) = L(s, \rho \circ \phi_{\psi}), \quad \epsilon(s, \psi, \rho, \psi_F) = \epsilon(s, \rho \circ \phi_{\psi}, \psi_F),
    \end{align*}
    for a representation $\rho$ of $\widehat{M}$.
    Arthur's $\gamma$-factor is defined by
    \begin{align*}
      \gamma_{A}(s, \psi, \rho, \psi_F)
      = \epsilon(s, \psi, \rho, \psi_F) \frac{L(1+s, \psi, \rho)}{L(s, \psi, \rho)}.
    \end{align*}

    \item
    The representation $\rho_{w^{-1}P' | P}$ is defined by
    \begin{align*}
      \rho_{w^{-1}P' | P} = \Ad(\widetilde{w})^{-1} \widehat{\fn'} / (\Ad(\widetilde{w})^{-1} \widehat{\fn'}  \cap \widehat{\fn}),
    \end{align*}
    with $\widehat{\fn}$ (resp. $\widehat{\fn'}$) being the Lie algebra of $\widehat{N}$ (resp. $\widehat{N'}$).

    \item
    We only use the case $\lambda(w) = 1$. For the general definition, see \cite{AtobeGanEtAl2024-LocalIntertwiningRelationsCoTemperedPackets}*{Section 1.7}.
  \end{enumerate}

\end{definition}

\begin{proposition}\label{proposition:normalized-intertwining-composition}
  Assume that $G=\rG_2$ and $\psi$ is tempered.
  With Whittaker normalization, we have
  \begin{align*}
    R_{P'}(w', w\pi, w\psi) R_P(w, \pi, \psi) = R_P(w'w, \pi, \psi)
  \end{align*}
  for any $w \in W(M, M'), w' \in W(M', M'')$.
\end{proposition}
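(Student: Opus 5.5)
The plan is to separate the identity into two parts: a cocycle relation for the unnormalized operators, and a matching cocycle relation for the normalizing factors.

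\textbf{Unnormalized operators.} The operators $J_P(w,\pi_\lambda)$ are defined with Tits lifts $\widetilde{w}$ attached to the fixed pinning. For reduced decompositions with $\ell(w'w)=\ell(w')+\ell(w)$, the standard Bruhat-cell computation gives
\begin{align*}
  J_{P'}(w',w\pi_\lambda)J_P(w,\pi_\lambda)=J_P(w'w,\pi_\lambda)
\end{align*}
as meromorphic families in $\lambda$. Here we use that $\widetilde{w'w}=\widetilde{w'}\,\widetilde{w}$ when lengths add; this follows from Lemma~\ref{lemma:tits-lift-and-dual} together with Springer's results. When lengths do not add, the composite picks up rank-one factors $J_{\bar P|P}J_{P|\bar P}$. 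These are scalars given by Plancherel measures $\mu(\pi_\lambda)^{-1}$, possibly with a twist by $\widetilde{w}^2\in T$ acting through the central character of $\pi$.

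\textbf{Normalizing factors.} The factors $r_P(w,\psi_\lambda)$ must satisfy the same cocycle relation. The additivity we need is
\begin{align*}
  \rho^\vee_{(w'w)^{-1}P''|P}=\rho^\vee_{w^{-1}P'|P}\oplus \Ad(\widetilde w)^{-1}\rho^\vee_{w'^{-1}P''|P'} .
\end{align*}
In the length-additive case this is an identity of representations of $\widehat M$, using the dual Tits lift compatibility of Lemma~\ref{lemma:tits-lift-and-dual}. Then $\gamma_A$ is multiplicative in $\rho$, so the $r$-factors multiply. In the non-additive case there are two ingredients:
\begin{itemize}
  \item the Langlands--Shahidi identity expressing the Plancherel measure as $\gamma_A(0,\rho)\gamma_A(0,\rho^\vee)$, with the relevant $\epsilon$-factor products reducing to $\omega(-1)$-type signs;
  \item the fact that these signs cancel against the action of $\widetilde w^2$.
\end{itemize}

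For $\rG_2$ everything is explicit. The relevant Levis are $T$, $M_s$, $M_l$ and $\rG_2$ itself. By Remark~\ref{remark:levi-weyl} and Lemma~\ref{lemma:Weyl-group-normalizer}, $W(M_s)$ and $W(M_l)$ are of order $2$, and $W(T)$ is dihedral of order $12$. The adjoint actions on the nilradicals are given by Lemma~\ref{lemma:dual-nilradicals-G2} and the subsequent identity $\Sym^3\otimes{\det}^{-1}\oplus\Std=\Ad\otimes\Std$. With these, one checks the cocycle relation by reduction to simple reflections, writing each $w$ as a reduced word in rank-one reflections, as in Shahidi's and Arthur's proof. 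Since we only use $\lambda(w)=1$, no extra sign enters.

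\textbf{Main obstacle.} The hardest step is the rank-one inversion $w^2=1$ for the nontrivial element of $W(M_s)$ or $W(M_l)$. There we must verify
\begin{align*}
  R_P(w^{-1},w\pi)R_P(w,\pi)=1 .
\end{align*}
This needs the Plancherel measure for $\rG_2$ with maximal Levi $\GL_2$ expressed through these $\gamma$-factors, which is Shahidi's theorem for generic $\pi$, extended to tempered $\pi$ via the Gan--Savin parametrization and compatibility with parabolic induction. It also needs the sign $\epsilon(1/2,\rho)\epsilon(1/2,\rho^\vee)=\det\rho(-1)$, checked against the central character contribution of $\widetilde w^2$. Whittaker normalization is used precisely so that these signs match.

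I would first treat generic $\pi$ by Shahidi's local coefficient. I would then pass to general tempered $\pi$ in the same $L$-packet. This step uses two facts: the Plancherel measure is constant on $L$-packets, being determined by the parameter, and the normalizing factors depend only on $\psi$.
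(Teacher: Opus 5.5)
Your outline is the standard Shahidi--Arthur argument. It is essentially a re-derivation of Arthur's Theorem~2.5.1(a), which the paper invokes directly. The structure is sound, but you place the difficulty in the wrong spot and gloss over the one point that actually has to be verified for $\rG_2$.

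\textbf{The step you call the main obstacle is vacuous.} That step passes from generic to arbitrary tempered $\pi$ via the Gan--Savin parametrization and constancy of Plancherel measures on $L$-packets. But in $R_P(w,\pi,\psi)$ the representation $\pi$ lives on the Levi $M$, not on $\rG_2$. If $M=\rG_2$ the identity is trivial. Otherwise $M$ is a torus or is isomorphic to $\GL_2$, so every tempered $\pi$ is generic and its $L$-packet is a singleton. This is the first line of the paper's proof, and Gan--Savin packets play no role. (A minor point: multiplicativity of Tits lifts when lengths add is a standard property of the Tits section. It is not the content of Lemma~\ref{lemma:tits-lift-and-dual}.)

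\textbf{The point that needs justification.} You must show that the factors $r_P(w,\psi_\lambda)$ agree with the Langlands--Shahidi factors appearing in Shahidi's local coefficients and Plancherel formula. The $r_P(w,\psi_\lambda)$ are built from the Artin factors $\gamma_A(0,\psi_\lambda,\rho^\vee_{w^{-1}P'|P},\psi_F)$. Your phrase ``expressed through these $\gamma$-factors, which is Shahidi's theorem'' assumes this agreement silently.
\begin{itemize}
\item For $M_s$, the constituents in Lemma~\ref{lemma:dual-nilradicals-G2} give only standard and Tate factors, so there is no difficulty.
\item For $M_l$, the constituent $\Sym^3\otimes{\det}^{-1}$ is the real issue. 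In Shahidi's third-symmetric-power paper, the relevant factor is a ratio of the $\GL_2\times\GL_3$ Rankin--Selberg factor for $\pi\times\Ad\pi$ and the standard factor of $\pi$. It equals the Artin factor by the local Langlands correspondence for general linear groups together with $\Sym^3\otimes{\det}^{-1}\oplus\Std=\Ad\otimes\Std$. You list that identity only as input to an explicit cocycle check, but its real role is this matching.
\item The archimedean case, which your proposal does not address, uses Shahidi's theorem identifying local coefficients with Artin factors for real groups.
\end{itemize}
Once these identifications are in place, the rest of your argument is exactly the content of Arthur's theorem and need not be recomputed. This covers the length-additive case, the rank-one inversion, and the sign bookkeeping with $\widetilde w^2$.
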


\begin{proof}
  If $M=G$, the assertion is trivial. Otherwise, $M$ is a torus or is isomorphic to $\GL_2$, so $\pi$ is generic. For non-archimedean $F$, the adjoint-cube normalization is given by \cite{Shahidi1989-ThirdSymmetricPowerLFunctionsGL2}*{Propositions 2.2 and 6.2}. The factors defined there as ratios of Rankin--Selberg and standard factors \cite{Shahidi1989-ThirdSymmetricPowerLFunctionsGL2}*{Section 1} agree with the Artin factors by compatibility of the local Langlands correspondence and the above decomposition of $\Ad_{\GL_2}\otimes\Std_{\GL_2}$. The remaining factors are standard or Tate factors. For archimedean $F$, the agreement follows from \cite{Shahidi1985-LocalCoefficientsArtinFactorsRealGroups}*{Theorem 3.1}. The assertion therefore follows from \cite{Arthur2013-EndoscopicClassificationRepresentations}*{Theorem 2.5.1(a), pp.~112--113}.
\end{proof}

\begin{example}\label{example:normalizing-factor-G2}
  We set $G = \rG_2$.
  Let $(B, T)$ be a Borel pair of $\rG_2$ and $\alpha, \beta$ be the short root and the long root, respectively.
  Let $P_{s} = P_{\alpha} = M_s N_s$ be the short root parabolic subgroup of $\rG_2$ and $P_{l} = P_{\beta} = M_l N_l$ be the long root parabolic subgroup of $\rG_2$.
  Lemma~\ref{lemma:dual-nilradicals-G2} gives the following normalizing factors.
  \begin{enumerate}
    \item
    Let $w\in W(G, M_s)$ be the nontrivial relative Weyl element, represented in $W(G,T)$ by $w_0^G w_0^{M_s}$, where $w_0^G$ and $w_0^{M_s}$ are the longest elements of $W(G,T)$ and $W(M_s,T)$, respectively.

    Then, we have
    \begin{align*}
      r_P(w, \psi_{\lambda})
      = \gamma_{A}(0, \pi_{\lambda}, \widehat{\overline{\fn}}^{\vee}, \psi_F)^{-1} = \gamma_{A}(0, \pi_{\lambda}, \widehat{\fn}, \psi_F)^{-1}
    \end{align*}
    and
    \begin{align*}
      \gamma_{A}(s, \pi, \widehat{\overline{\fn}}^{\vee}, \psi_F)
      =
      \gamma_{A}(s, \pi, \psi_F) \gamma_{A}(s, \omega_{\pi}, \psi_F)  \gamma_{A}(s, \pi \otimes \omega_{\pi}, \psi_F).
    \end{align*}

    \item
    Let $w\in W(G, M_l)$ be the nontrivial relative Weyl element, represented in $W(G,T)$ by $w_0^G w_0^{M_l}$, where $w_0^{M_l}$ is the longest element of $W(M_l,T)$.

    Then, we have
    \begin{align*}
      r_P(w, \psi_{\lambda})
      &= \gamma_{A}(0, \pi_{\lambda}, \widehat{\overline{\fn}}^{\vee}, \psi_F)^{-1} = \gamma_{A}(0, \pi_{\lambda}, \widehat{\fn}, \psi_F)^{-1}
    \end{align*}
    and
    \begin{align*}
      \gamma_{A}(s, \pi, \widehat{\overline{\fn}}^{\vee}, \psi_F)
      =
      \gamma_{A}(s, \Sym^3(\pi) \otimes \omega_{\pi}^{-1}, \psi_F) \gamma_{A}(s, \omega_{\pi}, \psi_F).
    \end{align*}
    We also have
    \begin{align*}
      \gamma_{A}(s, \pi, \Sym^3 \otimes {\det}^{-1}, \psi_F)
      = 
      \frac{\gamma_{A}(s, \pi, \Std \otimes \Ad, \psi_F)}
      {\gamma_{A}(s, \pi, \psi_F)}.
    \end{align*}
  \end{enumerate}
\end{example}

\begin{lemma}
  Let $F$ be a local field of characteristic zero.
  We set $G = \rG_2$ and let $P$ be a proper parabolic subgroup of $G$ with a Levi component $M$.
  Let $w$ be a non-trivial element in the relative Weyl group $W(M)$.
  Let $\pi$ be an irreducible tempered representation of $M$ such that $w \pi \cong \pi$.
  Put $\rho_w = \rho_{w^{-1}P \mid P}^{\vee}$.
  Then, we have
  \begin{align*}
    \lim_{s \to 0} \frac{\gamma_A(s, \pi, \rho_w, \psi_F)}{\gamma(s, \pi, \rho_w, \psi_F)} = 1.
  \end{align*}
\end{lemma}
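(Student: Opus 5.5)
The plan is to reduce the statement to an equality of two $L$-values at $s=1$ and then prove that equality by showing the $L$-parameter $\rho_w \circ \phi_\pi$ is self-dual. By definition $\gamma_A(s,\pi,\rho_w,\psi_F)=\epsilon(s,\rho_w\circ\phi_\pi,\psi_F)\,L(1+s,\rho_w\circ\phi_\pi)/L(s,\rho_w\circ\phi_\pi)$. The Langlands--Shahidi/Artin $\gamma$-factor $\gamma(s,\pi,\rho_w,\psi_F)$ has the same $\epsilon$ and the same denominator $L(s,\rho_w\circ\phi_\pi)$, but its numerator is $L(1-s,\rho_w^\vee\circ\phi_\pi)$. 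Hence
\begin{align*}
  \frac{\gamma_A(s,\pi,\rho_w,\psi_F)}{\gamma(s,\pi,\rho_w,\psi_F)}
  =\frac{L(1+s,\rho_w\circ\phi_\pi)}{L(1-s,\rho_w^\vee\circ\phi_\pi)}.
\end{align*}
Since $\pi$ is tempered, $\phi_\pi$ is bounded. Therefore both $L$-functions are holomorphic and nonvanishing in a neighbourhood of $s=0$, and the limit is $L(1,\rho_w\circ\phi_\pi)/L(1,\rho_w^\vee\circ\phi_\pi)$. It remains to show that $\rho_w\circ\phi_\pi\cong(\rho_w\circ\phi_\pi)^\vee$ as representations of $L_F$ (resp. $W_F$), which gives equality of the two values. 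This works uniformly for archimedean and non-archimedean $F$.

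First consider the case $M\cong\GL_2$, i.e.\ $M=M_s$ or $M_l$. Then $W(M)$ has a unique nontrivial element $w=w_0^Gw_0^M$ (Example~\ref{example:normalizing-factor-G2}), which acts on $\fa_M$ by $-1$. Hence $w^{-1}Pw=\overline P$ and $\rho_{w^{-1}P\mid P}=\widehat{\overline{\fn}}$, so $\rho_w=\widehat{\overline{\fn}}^{\vee}\cong\widehat{\fn}$ via the Killing form. Its contragredient is $\widehat{\overline{\fn}}$, which is identified with $\widehat{\fn}$ twisted by $\Ad(\widetilde{\widehat w})$; here Lemma~\ref{lemma:tits-lift-and-dual} says the Tits lift realises $w$ on the dual side compatibly with the pinning. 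Consequently $(\rho_w\circ\phi_\pi)^\vee\cong\rho_w\circ(w\phi_\pi)$. The hypothesis $w\pi\cong\pi$ together with the compatibility of the local Langlands correspondence for $\GL_2$ with the automorphism $w$ of $M$ (for $M_s$ it is $\pi\mapsto\pi^\vee\otimes\omega_\pi$-type twisting, and similarly for $M_l$) gives $w\phi_\pi\cong\phi_\pi$. One can also verify this directly from the explicit decompositions in Example~\ref{example:normalizing-factor-G2}. For instance, for $M_s$ the parameter is $\phi\oplus\omega\oplus\phi\otimes\omega$, and $w\pi\cong\pi$ forces $\omega_\pi$ to be quadratic and $\pi^\vee\cong\pi\otimes\omega_\pi$. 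The summands are then permuted under duality: $\phi^\vee\cong\phi\otimes\omega$ and $\omega^\vee=\omega$.

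Second, consider the case $M=T$, where $W(M)=W(\rG_2,T)$ and $\pi=\chi$ is a unitary character. Then $\rho_w$ is the sum of the root lines $\widehat{\fg}_{\alpha^\vee}$ over the set $S_w=\{\alpha>0: w^{-1}\alpha<0\}$, with appropriate sign conventions, so $\rho_w\circ\phi_\chi=\bigoplus_{\alpha\in S_w}\chi\circ\alpha^\vee$. The contragredient is $\bigoplus_{\alpha\in S_w}\chi\circ(-\alpha^\vee)$. Using $w\chi=\chi$, the bijection $\alpha\mapsto-w^{-1}\alpha$ from $S_w$ onto $S_{w^{-1}}$ gives $\chi\circ(-\alpha^\vee)=\chi\circ(w^{-1}\alpha)^\vee$ up to sign bookkeeping. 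This identifies the contragredient with $\rho_{w^{-1}}\circ\phi_\chi$. If $w$ is an involution, $S_{w^{-1}}=S_w$ and we are done. If $w$ has order $3$ or $6$ (a rotation), then $\chi$ is fixed by the whole cyclic group $\langle w\rangle$. In that case I would check directly, with the six positive roots of $\rG_2$, that the multisets $\{\chi\circ\alpha^\vee\}_{\alpha\in S_w}$ and $\{\chi\circ\alpha^\vee\}_{\alpha\in S_{w^{-1}}}$ agree; $\langle w\rangle$-invariance of $\chi$ makes $\chi\circ\alpha^\vee$ constant on $\langle w\rangle$-orbits of roots, which is enough.

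I expect the main obstacle to be this bookkeeping: matching the sign and orientation conventions for $\rho_{w^{-1}P\mid P}$ and the Tits lifts, and especially handling the torus case for non-involutive $w$ in a uniform way rather than by inspection. If the direct check becomes messy, the fallback is to use the cocycle property of both $\gamma_A$ and $\gamma$, as in Proposition~\ref{proposition:normalized-intertwining-composition}, to factor $w$ into reflections fixing $\chi$ where possible, and otherwise to do the finite case check.
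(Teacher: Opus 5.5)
Your proposal is correct. The reduction is the paper's: the quotient equals $L(1+s,V_w)/L(1-s,V_w^{\vee})$ with $V_w=\rho_w\circ\phi_\pi$, temperedness makes both factors holomorphic and nonzero at $s=0$, and everything reduces to showing $V_w\cong V_w^{\vee}$. For the two $\GL_2$ Levis your argument is also the paper's, in the special case $\Ad(\widetilde w)^{-1}\widehat{\fn}\cap\widehat{\fn}=0$.

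One small slip: $w$ acts on $M\cong\GL_2$ by $\pi\mapsto\pi^{\vee}$, as in Lemma~\ref{lemma:G2-Levi-Weyl-invariance}. The map $\pi\mapsto\pi^{\vee}\otimes\omega_\pi$ you wrote is the identity on isomorphism classes. Your explicit check for $M_s$ only uses that $\omega_\pi$ is quadratic, so nothing breaks.

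You differ from the paper only at the torus. There you split into involutions and rotations and leave a finite check for rotations, asserting that $\langle w\rangle$-invariance of $f(\gamma)=\chi\circ\gamma^{\vee}$ ``is enough.'' That claim is true, but it is exactly the step that needs an argument.

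The paper gives a uniform one, valid for every $M$ and every $w\in W(M)$:
\begin{itemize}
\item $\widehat{\fn}$ and $\Ad(\widetilde w)^{-1}\widehat{\fn}$ have isomorphic restrictions along $\phi_\pi$, since $w\phi_\pi\cong\phi_\pi$;
\item cancel the common submodule $\widehat{\fn}\cap\Ad(\widetilde w)^{-1}\widehat{\fn}$;
\item the Killing form makes the two remaining quotients dual.
\end{itemize}

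In your torus notation this reads as follows. Since $f(w\gamma)=f(\gamma)$, the multisets $\{f(\gamma)\}_{\gamma\in\Phi^+}$ and $\{f(\gamma)\}_{\gamma\in w\Phi^+}$ agree. We have $\Phi^+=(\Phi^+\cap w\Phi^+)\sqcup S_w$ and $w\Phi^+=(\Phi^+\cap w\Phi^+)\sqcup(-S_w)$. Removing the common part gives
\[
\{f(\gamma)\}_{\gamma\in S_w}=\{f(\gamma)^{-1}\}_{\gamma\in S_w},
\]
which is the required self-duality, with no distinction by the order of $w$.

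Your finite check can be completed for $\rG_2$; for instance, $w$ of order $6$ forces $\chi=\mathbf1$ because $\det(1-w)=1$. The cancellation argument simply removes that bookkeeping and the convention-tracking you flagged as the main obstacle.
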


\begin{proof}
  Let $\phi_{\pi}$ be the parameter of $\pi$ and put
  $V_w = \rho_w \circ \phi_{\pi}$.
  Since $M$ is a torus or isomorphic to $\GL_2$, the condition
  $w\pi \cong \pi$ implies $w\phi_{\pi} \cong \phi_{\pi}$.
  Put $\widehat{\fn}_w = \Ad(\widetilde{w})^{-1}\widehat{\fn}$ and
  $\widehat{\fn}_{\cap} = \widehat{\fn}\cap\widehat{\fn}_w$.
  The two nilradicals have isomorphic restrictions along $\phi_{\pi}$,
  while the Killing form identifies their quotients by $\widehat{\fn}_{\cap}$ as dual
  $\widehat{M}$-modules.  Cancelling the common submodule shows that
  $V_w \cong V_w^{\vee}$.  Hence
  \begin{align*}
    \lim_{s \to 0}
    \frac{\gamma_A(s,V_w,\psi_F)}{\gamma(s,V_w,\psi_F)}
    = \lim_{s \to 0}\frac{L(1+s,V_w)}{L(1-s,V_w^{\vee})}=1,
  \end{align*}
  since temperedness implies that $L(s,V_w)$ is holomorphic and
  nonzero at $s=1$.
\end{proof}

\begin{corollary}
  If $w\pi = \pi$, then the operator $R_P(w, \pi, \psi)$ preserves the Whittaker functional on $I_P(\pi)$ if $\pi$ is tempered.
\end{corollary}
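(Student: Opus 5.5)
The plan is to reduce to Shahidi's theorem that the Langlands--Shahidi normalized intertwining operator built from local coefficients and $\gamma$-factors preserves Whittaker functionals. Since $M$ is a torus or isomorphic to $\GL_2$, the tempered representation $\pi$ is generic, so $I_P(\pi)$ carries a nonzero Whittaker functional $\Omega$, unique up to scalar, attached to the fixed Whittaker datum. The unnormalized operator $J_P(w,\pi_\lambda)$, with $\widetilde w$ the Tits lift as in Lemma~\ref{lemma:tits-lift-and-dual}, satisfies
\begin{align*}
  \Omega_{w\pi_\lambda}\circ J_P(w,\pi_\lambda)=C(\lambda)\,\Omega_{\pi_\lambda},
\end{align*}
where $C(\lambda)$ is Shahidi's local coefficient. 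Shahidi's local coefficient formula (\cite{Shahidi1989-ThirdSymmetricPowerLFunctionsGL2} for the adjoint-cube case on the long root parabolic, the standard Rankin--Selberg and Tate factors otherwise, and \cite{Shahidi1985-LocalCoefficientsArtinFactorsRealGroups} for archimedean $F$) expresses $C(\lambda)^{-1}$ as the product of Shahidi $\gamma$-factors $\gamma(0,\pi_\lambda,\rho_w^{\vee}\text{-constituents},\psi_F)$, up to the usual $\lambda(w)$-type constant, which is $1$ for the Whittaker-compatible Tits lift. As in the proof of Proposition~\ref{proposition:normalized-intertwining-composition}, these factors agree with the Artin factors attached through the decompositions of Lemma~\ref{lemma:dual-nilradicals-G2} and Example~\ref{example:normalizing-factor-G2}.

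The second step compares this with the normalization used in the paper. By definition, $R_P(w,\pi_\lambda,\psi)=r_P(w,\psi_\lambda)^{-1}J_P(w,\pi_\lambda)$, and $r_P$ is built from Arthur's $\gamma_A$ instead of Shahidi's $\gamma$. This gives
\begin{align*}
  \Omega_{w\pi_\lambda}\circ R_P(w,\pi_\lambda,\psi)=\frac{\gamma_A(0,\pi_\lambda,\rho_w,\psi_F)}{\gamma(0,\pi_\lambda,\rho_w,\psi_F)}\,\Omega_{\pi_\lambda}.
\end{align*}
Here one has to track carefully whether $\rho_w$ or $\rho_w^\vee$ appears. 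The preceding lemma shows $V_w\cong V_w^\vee$, so the two choices coincide.

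Both sides are meromorphic in $\lambda$. At $\lambda=0$ the operator $R_P(w,\pi,\psi)$ is holomorphic because $\pi$ is tempered, and $w\pi\cong\pi$ identifies $\Omega_{w\pi}$ with $\Omega_\pi$ under the chosen identification of $I_{P}(w\pi)$ with $I_P(\pi)$ (the Whittaker-normalized one). So it suffices to take $\lambda=s\tilde\varpi\to0$ along a line. By the preceding lemma the scalar ratio tends to $1$, so $\Omega_\pi\circ R_P(w,\pi,\psi)=\Omega_\pi$, which is the claim.

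The main obstacle is bookkeeping the constants: the sign or $\epsilon$ discrepancies between the Tits-lift measure normalization, Shahidi's choice of representative and Haar measure on $N'$, and the $\lambda(w)$ factor. I expect these to be trivial for split $\rG_2$ with Tits lifts and self-dual measures relative to $\psi_F$, as in \cite{Arthur2013-EndoscopicClassificationRepresentations}*{Section 2.5}. I would verify this by checking rank one (the two $\SL_2$ cases) and then invoking the multiplicativity in Proposition~\ref{proposition:normalized-intertwining-composition}.
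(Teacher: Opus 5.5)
Your proposal is correct and matches the paper's proof. The paper states the corollary without a separate argument, as an immediate consequence of the preceding lemma: the Shahidi normalization preserves the Whittaker functional (with Shahidi's factors identified with Artin factors as in Proposition~\ref{proposition:normalized-intertwining-composition}), Arthur's normalization differs from it by the ratio $\gamma_A/\gamma$, and that ratio tends to $1$ at $s=0$. The constants you flag are handled by the paper's convention $\lambda(w)=1$ for split $\rG_2$ with Tits representatives, so your proposed rank-one check is not needed.
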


\section{Elliptic triplets for \texorpdfstring{$\rG_2(F)$}{G2(F)}}

Set $G = \rG_2$ and fix a Borel pair $(B, T)$ of $G$ and let $\Delta = \{\alpha, \beta\}$ be the set of simple roots with respect to $(B, T)$, where $\alpha$ is the short root and $\beta$ is the long root.
We set $M_s = M_{\alpha}$ and $M_l = M_{\beta}$.

\subsection{Elliptic triplets for proper Levi subgroups of \texorpdfstring{$\rG_2$}{G2}}

Since every proper Levi subgroup of $\rG_2$ is isomorphic to $\GL_2$ or $\GL_1^2$, we have the following lemma.
\begin{lemma}
   Let $M$ be a proper Levi subgroup of $\rG_2$.
   Then, any elliptic triplet $(L, \sigma_L, r)$ is given by $L = M$, $\sigma_L$ is a discrete series representation of $M(F)$, and $r$ is the trivial element in $R_{\sigma_L}$.
\end{lemma}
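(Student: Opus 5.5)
The plan is to unwind Arthur's definition of elliptic triplets and use the fact that the proper Levi subgroups of $\rG_2$ are small. An elliptic triplet for $M$ is a triple $(L,\sigma_L,r)$ with the following properties: $L$ is a Levi subgroup of $M$, $\sigma_L$ is a discrete series (square-integrable modulo center) representation of $L(F)$, and $r$ lies in the $R$-group $R_{\sigma_L}$ of $I^M_{P_L}(\sigma_L)$. Ellipticity means that the regular part $W_{\sigma_L,r,\reg}$ is nonempty, i.e.\ $r$ has a lift $w$ in $W(M,L)_{\sigma_L}$ with $\mathfrak{a}_L^{w}=\mathfrak{a}_M$.

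First, by the list recalled in the statement, $M\cong\GL_2$ (namely $M_s$ or $M_l$) or $M\cong\GL_1^2=T$. If $M=T$, its only Levi subgroup is $T$ itself, every irreducible representation of $T(F)$ is a character and hence discrete series, and $R_{\sigma_L}$ is trivial. This case is immediate.

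Second, suppose $M\cong\GL_2$. The Levi subgroups are $L=M$ and $L=T$. For $L=M$, the group $W(M,M)$ is trivial, so $r=1$ and $\sigma_L$ must be a discrete series of $M(F)$, which is the asserted form. For $L=T$ one needs a nontrivial $r$, and I will rule this out: by the theory of $R$-groups for $\GL_n$ (Bernstein--Zelevinsky irreducibility of unitary principal series, equivalently Harish-Chandra's commuting algebra theorem together with the Knapp--Stein theorem), $R_{\chi}$ is trivial for every unitary character $\chi$ of $T(F)$. Hence the only candidate is $r=1$. But $\mathfrak{a}_T^{1}=\mathfrak{a}_T\neq\mathfrak{a}_M$, so $W_{\chi,1,\reg}$ is empty and the triplet is not elliptic. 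The real case uses the same argument, since the $R$-groups for $\GL_n(\R)$ are trivial as well.

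The only point needing some care is this last step: it relies on the triviality of $R$-groups for $\GL_2$, and it requires setting up the regularity condition so that the identity element is excluded whenever $L\neq M$. Once the definition of elliptic triplet is fixed as above, both facts are standard, so I do not expect a genuine obstacle.
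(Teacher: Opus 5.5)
Your overall route is the paper's: its proof is the one-line observation that every proper Levi subgroup of $\rG_2$ is $\GL_2$ or $\GL_1^2$, so the relevant $R$-groups are trivial. However, the step ruling out $L=T$ fails as you have set it up. You define ellipticity as the existence of a lift $w\in W(M,L)_{\sigma_L}$ of $r$ with $\mathfrak{a}_L^{w}=\mathfrak{a}_M$. That is only the condition for a \emph{discrete} triplet, and the lifts of $r$ range over the whole coset $W^0_{\sigma_L}r$, not just the identity.

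Take $M\cong\GL_2$, $L=T$ and $\chi=\chi_1\otimes\chi_1$. Then $W_\chi=\{1,s\}$. Since $R_\chi=1$ (your own input from irreducibility of unitary principal series), you get $W^0_\chi=W_\chi=\{1,s\}$. The unique element $r=1$ therefore has the lift $s$, which acts by $-1$ on the line $\mathfrak{a}_T/\mathfrak{a}_M$ and so is regular. Your claim that $W_{\chi,1,\reg}$ is empty is thus false here. Under your stated definition, $(T,\chi_1\otimes\chi_1,1)$ would even be a counterexample to the lemma.

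The missing ingredient is the second half of the definition the paper uses (M{\oe}glin--Waldspurger, 2.9--2.11): an elliptic triplet must be discrete \emph{and} satisfy $W^0_{\sigma_L}=\{1\}$. Equivalently, in Arthur's convention, $r$ is realized in $W_{\sigma_L}$ as the element stabilizing the positive chamber of the Plancherel root system, and this element itself must be regular. That forces $W^0_{\sigma_L}=1$, since otherwise $r$ fixes the nonzero half-sum of the positive roots.

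With the correct definition, the case $L=T$ splits cleanly. If $\chi_1\neq\chi_2$, then $W_\chi=1$, so the only lift of $r$ is $1$, which is not regular on $\mathfrak{a}_T/\mathfrak{a}_M$. If $\chi_1=\chi_2$, then $W^0_\chi\neq\{1\}$. Either way there is no elliptic triplet with $L\subsetneq M$, and the rest of your argument (the torus case and the case $L=M$) goes through unchanged.
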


\subsection{Classification of regular elements in the Weyl group for \texorpdfstring{$\rG_2$}{G2}}

\begin{lemma}\label{lemma:regular-weyl-G2-non-toric-Levi}
   Let $M$ be a proper Levi subgroup of $\rG_2$ which is not a torus.
   Then, the unique regular element in $W(\rG_2, M)$ is given by the class of the element $w_0^G w_0^M$ of the Weyl group of $G$.
\end{lemma}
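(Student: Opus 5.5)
The plan is to make the relative Weyl group explicit with Lemma~\ref{lemma:computation-normalizer} and Remark~\ref{remark:levi-weyl}, and then test regularity directly on $\fa_M$. Let $M$ be $M_s$ or $M_l$; its root system $\Phi_1$ is $\{\pm\alpha\}$ or $\{\pm\beta\}$. The orthogonal complement $\Phi_1^{\perp}\cap\Phi$ consists of the roots orthogonal to $\alpha$ (resp.\ $\beta$). In $\rG_2$ every root has a unique orthogonal partner up to sign: the long root $3\alpha+2\beta$ for $\alpha$, and the short root $2\alpha+\beta$ for $\beta$. So $W_2\cong\Z/2\Z$. The diagram automorphism group of the rank-one system $I_1$ is trivial, so $N_{W}(I_1,I_2)$ contributes nothing. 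Lemma~\ref{lemma:Weyl-group-normalizer} then gives
\begin{align*}
  W(\rG_2,M)\cong N_W(W_1)/W_1\cong W_2\cong\Z/2\Z .
\end{align*}
Its nontrivial element acts on the line $\fa_M=\Ker(\alpha^\vee)$ (resp.\ $\Ker(\beta^\vee)$) by $-1$. It is represented by the reflection in the orthogonal root, which negates that line.

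Next I check that $w_0^G w_0^M$ represents this nontrivial class. The longest element $w_0^G$ of $W(\rG_2)$ is $-1$, since $\rG_2$ has no diagram automorphism acting nontrivially via $-w_0$. The element $w_0^M$ is the simple reflection $s_\alpha$ (resp.\ $s_\beta$), so $w_0^G w_0^M=-s_\alpha$. This element fixes $\alpha$ and hence normalizes $W_1$. On $\fa_M$ it acts by $-1$, because $s_\alpha$ is trivial there. Therefore $w_0^G w_0^M$ lies in $N_W(W_1)$ and projects to the nontrivial element of $W(\rG_2,M)$. In fact it equals the reflection in the orthogonal root.

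Finally, I verify regularity in the usual sense, namely that $w$ has no nonzero fixed vectors on $\fa_M/\fa_G$, where $\fa_G=0$. The trivial element fixes all of $\fa_M\neq0$, so it is not regular. The nontrivial element acts by $-1$, so it is regular. Hence the regular element is unique and is the class of $w_0^G w_0^M$.

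The only delicate point is the first step: making sure that no extra element of $W(\rG_2,T)$ normalizes $W_1$ outside $W_1\times W_2$. This comes down to checking that $\Aut_W(I_1)$ is trivial for a single node, which is immediate. Alternatively, one can count: $N_W(\langle s_\alpha\rangle)$ is the centralizer of $s_\alpha$ in the dihedral group of order $12$, which has order $4$, so the quotient $N_W(W_1)/W_1$ has order $2$.
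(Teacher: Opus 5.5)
Your proof is correct and follows the route the paper intends. The paper states this lemma without proof, but it sets up exactly this computation: Lemma~\ref{lemma:Weyl-group-normalizer} via Remark~\ref{remark:levi-weyl}, finding the roots orthogonal to $I$, as it does for the $\PGSO_8$ Levi subgroups; you then note that $w_0^Gw_0^M=-s_\alpha$ (resp.\ $-s_\beta$) acts by $-1$ on the line $\fa_M$. The only slip is notational: $\Ker(\alpha^\vee)$ is really $\fa_M^*$, not $\fa_M$, and this changes nothing.
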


\begin{lemma}\label{lemma:regular-weyl-G2-toric-Levi}
   The regular elements in $W(\rG_2, T)$ are given by the classes of the following elements.
   \begin{align*}
      \begin{cases}
         s_{\alpha}s_{\beta}, \\
         s_{\beta}s_{\alpha}, \\
         s_{\alpha}s_{\beta}s_{\alpha}s_{\beta}, \\
         s_{\beta}s_{\alpha}s_{\beta}s_{\alpha}, \\
         s_{\alpha}s_{\beta}s_{\alpha}s_{\beta}s_{\alpha}s_{\beta}.
      \end{cases}
   \end{align*}
\end{lemma}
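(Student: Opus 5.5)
The plan is to make the notion of regularity explicit and reduce the statement to the elementary geometry of the dihedral group $W(\rG_2,T)$ acting on $\fa_T\otimes\R\cong\R^2$. Here, as in the preceding lemma, a regular element of $W(\rG_2,T)$ means one with $\fa_T^{w}=\fa_G=0$, i.e.\ $\det(1-w\mid\fa_T)\neq 0$. This is the notion relevant for elliptic triplets with $L=T$ and trivial $\sigma_L$-stabilizer condition.

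First, I will recall that $W(\rG_2,T)$ is the dihedral group of order $12$ generated by $s_\alpha,s_\beta$ with $(s_\alpha s_\beta)^6=1$. Its action on the real plane $\fa_T$ is the standard faithful one. The reflecting lines of $s_\alpha$ and $s_\beta$ meet at angle $\pi/6$, because $\alpha$ and $\beta$ meet at angle $5\pi/6$. Hence $c=s_\alpha s_\beta$ acts as a rotation by $\pm\pi/3$. The twelve elements therefore split into:
\begin{itemize}
\item the six rotations $c^k$ for $0\le k\le 5$;
\item the six reflections $c^k s_\alpha$.
\end{itemize}

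Second, I will check regularity on each type. A reflection fixes its mirror line pointwise, so it has eigenvalue $1$ and is not regular. The identity is not regular. A rotation by $k\pi/3$ with $1\le k\le 5$ has eigenvalues $e^{\pm i k\pi/3}\neq 1$, so it is regular. Thus there are exactly five regular elements, namely $c,c^2,c^3,c^4,c^5$.

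Third, I will rewrite these as the words in the statement:
\begin{align*}
  c &= s_\alpha s_\beta, &
  c^5 = c^{-1} &= s_\beta s_\alpha,\\
  c^2 &= s_\alpha s_\beta s_\alpha s_\beta, &
  c^4 = c^{-2} &= s_\beta s_\alpha s_\beta s_\alpha,\\
  c^3 &= s_\alpha s_\beta s_\alpha s_\beta s_\alpha s_\beta = w_0^G = -1. & &
\end{align*}
The identities for $c^{-1}$ and $c^{-2}$ use $s_\alpha^2=s_\beta^2=1$. This gives precisely the listed elements.

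There is essentially no obstacle. The only points needing care are the following.
\begin{itemize}
\item Confirm that the paper's "regular" means the absence of fixed vectors on $\fa_T/\fa_G$, with $\fa_G=0$ since $\rG_2$ is semisimple, in accordance with Arthur's definition for elliptic triplets.
\item Note that "classes" here refers to the elements themselves, since $W(\rG_2,T)=W(\rG_2,T)/W(T,T)$. If conjugacy classes were intended, one would additionally observe that $c$ and $c^{-1}$, and $c^2$ and $c^{-2}$, are conjugate via $s_\alpha$. That does not affect the list of elements.
\end{itemize}
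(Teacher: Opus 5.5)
Your proposal is correct and follows the paper's proof. Both identify $W(\rG_2,T)$ as the dihedral group of order twelve, observe that the five nontrivial powers of $c=s_\alpha s_\beta$ have no nonzero fixed vector while every reflection fixes a line, and read off the listed words. The only difference is cosmetic: the paper shows that $c$ is a rotation of order six from its explicit matrix in the basis $(\alpha,\beta)$, which gives $c^3=-1$, whereas you use the angle $\pi/6$ between the mirror lines.
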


\begin{proof}
  Put $c=s_\alpha s_\beta$. In the basis $(\alpha,\beta)$, we have
  \begin{align*}
    c=\begin{pmatrix}2&-3\\1&-1\end{pmatrix},\quad c^3=-1.
  \end{align*}
  The Weyl group is dihedral of order twelve. Its five nontrivial rotations
  $c,c^2,c^3,c^4,c^5$ have no nonzero fixed vector, while every reflection fixes a line.
\end{proof}

\subsection{Discrete series representations of Levi subgroups fixed by regular elements in the Weyl group}

\begin{lemma}\label{lemma:G2-Levi-Weyl-invariance}
   Let $M$ be a proper Levi subgroup of $\rG_2$ which is not a torus.
   Then, any discrete series representation $\sigma$ of $M(F)$ is fixed by the unique regular element in $W(\rG_2, M)$ if and only if $\sigma$ is self-dual.

   If $M = M_{\alpha}$, then we have $W_0(\sigma) = \{1\}$ if and only if $\omega_{\sigma} \neq \mathbf{1}$.
   If $M = M_{\beta}$, then we have $W_0(\sigma) = \{1\}$ if and only if $\omega_{\sigma} \neq \mathbf{1}$ and $\Sym^3 \phi_{\sigma} \otimes \det \phi_{\sigma}^{-1}$ does not contain the trivial representation.
\end{lemma}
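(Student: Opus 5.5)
Identify $M\cong\GL_2$ and compute how the nontrivial element $w$ of $W(\rG_2,M)$ acts on $M$. By Lemma~\ref{lemma:regular-weyl-G2-non-toric-Levi}, $w$ is represented by $w_0^Gw_0^M$. Since $w_0^G=-1$ on the character lattice of $T$ (as $c^3=-1$ in the proof of Lemma~\ref{lemma:regular-weyl-G2-toric-Levi}), $w$ acts on $X^*(T)$ by $-w_0^M$. Hence on $M\cong\GL_2$ it induces an automorphism which, up to inner automorphisms, is $g\mapsto {}^tg^{-1}$. More invariantly, it acts on $X^*(M)\cong\Z\det$ by $-1$ and preserves the derived group, inducing the Chevalley involution there. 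For $\GL_2$ the contragredient of an irreducible $\sigma$ is $\sigma\circ({}^t\cdot^{-1})$ (Gelfand--Kazhdan). So $w\sigma\cong\sigma^\vee$, and $w$ fixes $\sigma$ if and only if $\sigma$ is self-dual. For $\GL_2$, this is equivalent to $\sigma\cong\sigma\otimes\omega_\sigma^{-1}$, that is, $\omega_\sigma^2=\mathbf 1$ together with the corresponding twist condition. To be safe we check which identification of $M$ with $\GL_2$ is used, because the isomorphism $M_\alpha\cong\GL_2$ versus $M_\beta\cong\GL_2$ may differ by a twist by $\det$. The induced map on $M^{\mathrm{ab}}$ is still $-1$, and the conclusion does not depend on this choice.

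For the statement on $W_0(\sigma)$, use the Knapp--Stein description. $W_0(\sigma)$ is generated by reflections $w_a$ for reduced roots $a$ of $A_M$ with $w_a\sigma\cong\sigma$ and $\mu_a(\sigma)=0$. Here $W(\rG_2,M)$ has order two, so $W_0(\sigma)$ is trivial exactly when either $w\sigma\not\cong\sigma$ (trivially true) or the Plancherel measure $\mu(\sigma_\lambda)$ does not vanish at $\lambda=0$. By Harish-Chandra and Shahidi, $\mu$ at $0$ vanishes iff the normalizing factor $r_P(w,\sigma_\lambda)$ has a pole at $\lambda=0$, equivalently iff $L(s,\sigma,\rho)$ has a pole at $s=0$ for some irreducible constituent $\rho$ of $\widehat{\fn}$ coming from the rank-one pieces. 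We will use Example~\ref{example:normalizing-factor-G2} and Proposition~\ref{proposition:normalized-intertwining-composition}: the Whittaker-normalized operators are holomorphic at $0$ and the ratio $\gamma_A/\gamma$ tends to $1$.

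Next read off the poles. For $M_\alpha$, the factors are $\gamma_A(s,\sigma)\gamma_A(s,\omega_\sigma)\gamma_A(s,\sigma\otimes\omega_\sigma)$, with $\sigma$ a discrete series of $\GL_2$. Then $L(s,\sigma)$ and $L(s,\sigma\otimes\omega_\sigma)$ are holomorphic at $s=0$ (no constituent of $\phi_\sigma$ or $\phi_\sigma\otimes\omega_\sigma$ is trivial on $L_F$ for irreducible discrete $\phi_\sigma$), while $L(s,\omega_\sigma)$ has a pole at $0$ iff $\omega_\sigma=\mathbf 1$. So $\mu(\sigma)=0$ iff $\omega_\sigma=\mathbf 1$. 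For $M_\beta$, the factors are $\gamma_A(s,\Sym^3\phi_\sigma\otimes\det^{-1})\gamma_A(s,\omega_\sigma)$, and a pole at $0$ occurs iff $\omega_\sigma=\mathbf1$ or $\Sym^3\phi_\sigma\otimes\det\phi_\sigma^{-1}$ contains the trivial representation. The order of pole should be counted via the Langlands quotient/Artin $L$-function of the tempered parameter, and only the trivial summand contributes a pole at $0$. One must check that poles cannot cancel against zeros of $L(1+s,\cdot)$. Such cancellation is impossible because tempered $L$-factors are holomorphic and nonzero at $\Re s>0$.

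The main obstacle is justifying that $\mu(\sigma)=0$ (i.e.\ $w\in W_0(\sigma)$) exactly when the normalizing factor has a pole, for both archimedean and $p$-adic $F$. We would cite Shahidi's identification of the Plancherel measure with $\gamma(s,\sigma,\rho,\psi)\gamma(-s,\sigma^\vee,\rho,\bar\psi)$ for generic $\sigma$, as in \cite{Shahidi1989-ThirdSymmetricPowerLFunctionsGL2} and \cite{Shahidi1985-LocalCoefficientsArtinFactorsRealGroups}, together with the agreement of the factors with Artin factors established in the proof of Proposition~\ref{proposition:normalized-intertwining-composition}. With this identification, the vanishing at $s=0$ is governed by the $L(s,\cdot)L(-s,\cdot)$ poles computed above. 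Note also that $\omega_\sigma=\mathbf1$ forces $w\sigma\cong\sigma$, so the criterion is consistent with the first part.
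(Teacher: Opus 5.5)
Your proposal is correct and follows essentially the same route as the paper: $w=w_0^Gw_0^M$ acts on $M\cong\GL_2$ by inverse transpose up to an inner automorphism, so $w\sigma\cong\sigma^\vee$, and in relative rank one $W_0(\sigma)\neq\{1\}$ exactly when the Plancherel measure vanishes, which by the normalizing factors of Example~\ref{example:normalizing-factor-G2} happens exactly when $\widehat{\fn}\circ\phi_\sigma$ contains $\mathbf{1}$. The only cosmetic difference is that the paper obtains the Plancherel formula from Proposition~\ref{proposition:normalized-intertwining-composition} and unitarity, whereas you cite Shahidi's formula directly; the input is the same.
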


\begin{proof}
  The element $w=w_0^G w_0^M$ acts on $M\cong\GL_2$ by inverse transpose
  up to an inner automorphism, so $w\sigma\cong\sigma^\vee$.
  Put $V=\widehat{\fn}\circ\phi_\sigma$.
  In relative rank one, $W_0(\sigma)$ is generated by the reflection exactly
  when the Plancherel measure vanishes.
  By Proposition~\ref{proposition:normalized-intertwining-composition}, unitarity,
  and Example~\ref{example:normalizing-factor-G2}, this measure is a positive
  constant times $|\gamma_A(0,V,\psi_F)|^2$.
  Since $V$ is tempered, $L(1,V)$ is finite and nonzero, and $L(s,V)$ has a pole
  at $s=0$ exactly when $V$ contains $\mathbf1$.
  Thus the measure vanishes exactly in this case.
  The discrete parameter $\phi_\sigma$ and its twist
  $\phi_\sigma\otimes\det\phi_\sigma$ contain no trivial summand.
  The two decompositions in Lemma~\ref{lemma:dual-nilradicals-G2}
  therefore give the assertions.
\end{proof}

\begin{lemma}\label{lemma:quadratic-character-invariant-discrete-series}
  Let $F$ be a local field of characteristic zero.
  Let $\pi$ be a self-dual discrete series representation of $\GL_2(F)$ with a non-trivial quadratic central character $\omega_{\pi}$.
  Let $K/F$ be a quadratic extension corresponding to $\omega_{\pi}$.
  Then, $\pi$ is of the form $\mathrm{AI}_{K/F}(\chi)$ for a character $\chi$ of $K^{\times}$ such that $\chi_{\restriction_{F^{\times}}} = 1$.
\end{lemma}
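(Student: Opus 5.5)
We work entirely on the Galois side, using the local Langlands correspondence for $\GL_2(F)$. Let $\phi=\phi_\pi\colon L_F\to\GL_2(\C)$ be the $L$-parameter of $\pi$. Since $\pi$ is a discrete series representation, $\phi$ is either irreducible as a representation of $W_F$ (the supercuspidal case), or of the form $\chi\otimes S_2$ with $\chi$ a character of $W_F$ (the twisted Steinberg case). The hypotheses translate as follows. Self-duality of $\pi$ gives $\phi\cong\phi^\vee$. The central character corresponds to the determinant, so $\det\phi=\omega_\pi=\omega_{K/F}\neq\mathbf 1$ is quadratic.

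First we exclude the Steinberg case. If $\phi=\chi\otimes S_2$, then $\det\phi=\chi^2$, and $\phi\cong\phi^\vee=\chi^{-1}\otimes S_2$ forces $\chi^2=\mathbf 1$. Hence $\omega_\pi=\mathbf 1$, which contradicts the hypothesis. Therefore $\phi$ is an irreducible $2$-dimensional representation of $W_F$ that is trivial on $\SL_2(\C)$.

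Next we show $\phi$ is induced from $K$. Self-duality gives $\phi\otimes\det\phi^{-1}\cong\phi^\vee\cong\phi$, using the $2$-dimensional identity $\phi^\vee\cong\phi\otimes\det\phi^{-1}$. Since $\det\phi^{-1}=\omega_{K/F}$, we get $\phi\cong\phi\otimes\omega_{K/F}$. The standard Clifford-theory argument then applies: $\Hom_{W_K}(\phi|_{W_K},\phi|_{W_K})=\Hom_{W_F}(\phi,\phi\otimes\Ind_{W_K}^{W_F}\mathbf 1)$ is $2$-dimensional. So $\phi|_{W_K}$ is reducible, say $\phi|_{W_K}\supset\chi$, and Frobenius reciprocity gives $\phi\cong\Ind_{W_K}^{W_F}\chi$. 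Irreducibility of $\phi$ forces $\chi\neq\chi^c$, where $c$ generates $\Gal(K/F)$. Under the local Langlands correspondence this says $\pi=\mathrm{AI}_{K/F}(\chi)$, where we view $\chi$ as a character of $K^\times$ by class field theory.

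The main step is to pin down $\chi|_{F^\times}$. By the determinant formula for induced representations, $\det\Ind_{W_K}^{W_F}\chi=\omega_{K/F}\cdot(\chi\circ\mathrm{transfer})$, and the transfer $W_F^{ab}\to W_K^{ab}$ corresponds to the inclusion $F^\times\hookrightarrow K^\times$. So $\omega_\pi=\omega_{K/F}\cdot\chi|_{F^\times}$. Combined with $\omega_\pi=\omega_{K/F}$, this gives $\chi|_{F^\times}=\mathbf 1$.

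As a consistency check, self-duality also follows directly: $\chi|_{F^\times}=\mathbf 1$ means $\chi^c=\chi^{-1}$, and $\Ind\chi^{-1}=\Ind\chi^c=\Ind\chi$.

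The point needing the most care is the convention for the determinant of an induced representation; this is where the factor $\omega_{K/F}$ enters. Note also that when $\omega_\pi$ is quadratic it may happen that $\phi\cong\phi\otimes\omega'$ for other quadratic characters $\omega'$, i.e.\ $\phi$ is dihedral with respect to several fields. Our argument specifically uses the field $K$ cut out by $\omega_\pi$, so this causes no ambiguity.
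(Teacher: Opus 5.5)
Your proof is correct and follows the same route as the paper. The paper argues that $\pi\cong\pi^\vee\cong\pi\otimes\omega_\pi$ with $\omega_\pi\neq\mathbf 1$, so $\pi$ is dihedral with respect to $K$. The central character identity $\omega_\pi=\omega_{K/F}\cdot\chi_{\restriction_{F^\times}}$ then forces $\chi_{\restriction_{F^\times}}=1$. On the Galois side, you spell out three steps that the paper's one-line proof leaves implicit: the exclusion of the twisted Steinberg case, the Clifford-theory step, and the formula for the determinant of an induced representation.
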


\begin{proof}
  We have $\pi \cong \pi^{\vee} \cong \pi \otimes \omega_{\pi}$ and $\omega_{\pi} \neq 1$.
  Thus, $\pi$ is dihedral and we have $\pi = \mathrm{AI}_{K/F}(\chi)$ for a character $\chi$ of $K^{\times}$ such that $\chi_{\restriction_{F^{\times}}} = 1$.
\end{proof}

\begin{lemma}\label{lemma:symmetric-cube-dihedral}
  Let $F$ be a local field of characteristic zero.
  Let $\pi = \mathrm{AI}_{K/F}(\chi)$ be a dihedral discrete series representation of $\GL_2(F)$, where $K/F$ is quadratic and $\chi$ is a character of $K^{\times}$ such that $\chi_{\restriction_{F^{\times}}} = 1$.
  Then,
  \begin{align*}
    \Sym^3(\phi_{\pi}) \otimes \det \phi_{\pi}^{-1}
    =
    \mathrm{AI}_{K/F}(\chi^3) \oplus \mathrm{AI}_{K/F}(\chi).
  \end{align*}
\end{lemma}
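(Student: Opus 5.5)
Identify $\phi_\pi$ with the induced representation $\mathrm{Ind}_{W_K}^{W_F}\chi$ of $W_F$, where $\chi$ is viewed as a character of $W_K$ via class field theory. Then compute $\Sym^3$ and $\det$ by restricting to $W_K$ and using Mackey theory. Let $c$ denote the nontrivial element of $\Gal(K/F)$ and put $\chi^c = \chi\circ c$. The hypothesis $\chi_{\restriction_{F^\times}}=1$ gives $\chi\cdot\chi^c = \chi\circ N_{K/F} = 1$, so $\chi^c = \chi^{-1}$. Consequently
\begin{align*}
  \phi_\pi|_{W_K} = \chi \oplus \chi^{-1}.
\end{align*}

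First I compute the determinant. For a monomial induced representation one has $\det \mathrm{Ind}_{W_K}^{W_F}\chi = \omega_{K/F}\cdot \chi_{\restriction_{F^\times}}$, where $\omega_{K/F}$ is the quadratic character attached to $K/F$. Here this reduces to $\det\phi_\pi = \omega_{K/F}$, which is quadratic, so $\det\phi_\pi^{-1}=\omega_{K/F}$ as well.

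Next I compute the symmetric cube on restriction to $W_K$:
\begin{align*}
  \Sym^3(\phi_\pi)|_{W_K} = \chi^3 \oplus \chi \oplus \chi^{-1} \oplus \chi^{-3}.
\end{align*}
The summands form two $c$-orbits, $\{\chi^3,\chi^{-3}\}$ and $\{\chi,\chi^{-1}\}$. I would realize $\Sym^3$ explicitly on monomials $e_1^a e_2^b$ with $a+b=3$, where $e_1,e_2$ is the induced basis and a lift $\tilde c$ of $c$ swaps the two lines. Then $\tilde c$ permutes $e_1^3 \leftrightarrow e_2^3$ and $e_1^2e_2 \leftrightarrow e_1e_2^2$, up to scalars coming from $\tilde c^2 \in W_K$. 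Each two-dimensional span is therefore $W_F$-stable and induced from its $W_K$-line, which gives
\begin{align*}
  \Sym^3\phi_\pi = \mathrm{Ind}(\chi^3) \oplus \mathrm{Ind}(\chi)\otimes\eta
\end{align*}
for some character $\eta$ of $W_F/W_K$, that is, $\eta\in\{1,\omega_{K/F}\}$.

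The main obstacle is pinning down this twist, and doing the same bookkeeping for the first summand. My plan is to avoid the scalar computation altogether by using Frobenius reciprocity together with the identity $\mathrm{Ind}(\mu)\otimes\omega_{K/F} \cong \mathrm{Ind}(\mu)$. This identity holds because $\omega_{K/F}$ is trivial on $W_K$. It means any twist by $\omega_{K/F}$ is invisible on the induced pieces, whether it comes from $\eta$ or from tensoring with $\det\phi_\pi^{-1}=\omega_{K/F}$. Hence
\begin{align*}
  \Sym^3(\phi_\pi)\otimes\det\phi_\pi^{-1} \cong \mathrm{AI}_{K/F}(\chi^3)\oplus\mathrm{AI}_{K/F}(\chi).
\end{align*}

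Finally, a remark on degenerate cases. If $\chi^3 = \chi^{-3}$, the two-dimensional span for $\{\chi^3,\chi^{-3}\}$ may decompose further. Even then it is still isomorphic to $\mathrm{Ind}(\chi^3)$, because that induced representation is reducible exactly in this case, so the identity holds as stated in general.
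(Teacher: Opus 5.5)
Your proposal is correct and follows essentially the same route as the paper: you use the induced basis $e_1,e_2$, note that $\det\phi_\pi=\omega_{K/F}$, split $\Sym^3\phi_\pi$ into the stable spans $\langle e_1^3,e_2^3\rangle$ and $\langle e_1^2e_2,e_1e_2^2\rangle$, and conclude because twisting an induced representation by $\omega_{K/F}$ does not change it. The twist $\eta$ you worry about never actually arises, since a span of the form $L\oplus\tilde c L$ with $L$ stable under $W_K$ is already isomorphic to $\mathrm{Ind}_{W_K}^{W_F}L$; this also settles the degenerate case $\chi^6=1$ without a separate remark.
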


\begin{proof}
  Put $V=\phi_\pi$ and choose an induced basis $e_1,e_2$.
  Since $\chi_{\restriction_{F^\times}}=1$, the group $W_K$ acts by
  $\chi,\chi^{-1}$, a representative of $W_F/W_K$ exchanges the basis vectors,
  and $\det V=\omega_{K/F}$.
  The stable subspaces
  \begin{align*}
    \langle e_1^3,e_2^3\rangle,\quad
    \langle e_1^2e_2,e_1e_2^2\rangle
  \end{align*}
  give $\Sym^3 V=\mathrm{AI}_{K/F}(\chi^3)\oplus\mathrm{AI}_{K/F}(\chi)$.
  Tensoring either induced representation by $\omega_{K/F}$ preserves it,
  which proves the formula.
\end{proof}

\begin{lemma}\label{G2-elliptic-GL2-Levi-representations}
  Let $F$ be a local field of characteristic zero.
  We set $G = \rG_2$ and let $P$ be a proper parabolic subgroup of $G$ with a Levi component $M$.
  Let $w$ be a non-trivial element in the relative Weyl group $W(M)$.
  Let $\sigma$ be a discrete series representation of $M$ such that $w \sigma \cong \sigma$.
  \begin{enumerate}
    \item
    If $P$ is a short-root parabolic subgroup of $G$, then the $R$-group $R^{G}(\sigma)$ is non-trivial and $W_0(M)_{\sigma} = 1$ if and only if $\sigma=\mathrm{AI}_{K/F}(\chi)$ for a quadratic extension $K/F$ and a character $\chi$ with $\chi_{\restriction_{F^\times}}=1$.
    In this case, there exists a unique discrete series parameter $\phi^{\SO_4}(\sigma)$ of $\SO_4$ such that we have
    \begin{align*}
        \Std_{\SO_4} \circ \phi^{\SO_4}(\sigma) \oplus \wedge^2_+ \circ \phi^{\SO_4}(\sigma)
        =
        \phi_{\sigma} \oplus \phi_{\sigma} \oplus \omega_{K/F} \oplus \omega_{K/F} \oplus \mathbf{1}.
    \end{align*}

    \item
    If $P$ is a long-root parabolic subgroup of $G$, then the $R$-group $R^{G}(\sigma)$ is non-trivial and $W_0(M)_{\sigma} = 1$ if and only if $\sigma = \mathrm{AI}_{K/F}(\chi)$ is dihedral with respect to a quadratic extension $K$ of $F$ such that $\chi_{\restriction_{F^{\times}}} = 1$ and $\chi^3 \neq 1$.
    In this case, there exists a unique discrete series parameter $\phi^{\SO_4}(\sigma)$ of $\SO_4$ such that we have
    \begin{align*}
        \Std_{\SO_4} \circ \phi^{\SO_4}(\sigma) \oplus \wedge^2_+ \circ \phi^{\SO_4}(\sigma)
        =
        \phi_{\sigma} \oplus \phi_{\sigma} \oplus \Ad(\phi_{\sigma}).
    \end{align*}
  \end{enumerate}
\end{lemma}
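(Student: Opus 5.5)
The plan is to handle the characterization of $\sigma$ first, by reducing everything to Lemma~\ref{lemma:G2-Levi-Weyl-invariance}, and then to construct $\phi^{\SO_4}(\sigma)$ inside $\widehat{\rG_2}=\rG_2(\C)$.

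\emph{Reduction.} Since $M$ is not a torus, $M\cong\GL_2$ and $W(M)=\{1,w\}$ with $w$ the class of $w_0^Gw_0^M$ (Lemma~\ref{lemma:regular-weyl-G2-non-toric-Levi}). By Lemma~\ref{lemma:G2-Levi-Weyl-invariance}, $w\sigma\cong\sigma$ means exactly that $\sigma$ is self-dual, so $W(\sigma)=\{1,w\}$. Since $R^G(\sigma)=W(\sigma)/W_0(\sigma)$, the two conditions ``$R^G(\sigma)\neq1$'' and ``$W_0(M)_\sigma=1$'' are both equivalent to $W_0(\sigma)=1$. The second half of Lemma~\ref{lemma:G2-Levi-Weyl-invariance} then reduces us to the following criteria:
\begin{itemize}
\item short root: $\omega_\sigma\neq\mathbf 1$;
\item long root: additionally $\Sym^3\phi_\sigma\otimes\det\phi_\sigma^{-1}$ contains no trivial summand.
\end{itemize}

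\emph{Characterizing $\sigma$.} Self-duality forces $\omega_\sigma^2=1$. So $\omega_\sigma\neq\mathbf 1$ says $\omega_\sigma$ is a nontrivial quadratic character. Lemma~\ref{lemma:quadratic-character-invariant-discrete-series} then gives $\sigma=\mathrm{AI}_{K/F}(\chi)$ with $\chi_{\restriction_{F^\times}}=1$.

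Conversely, suppose $\chi_{\restriction_{F^\times}}=1$. Then $\chi^{-1}=\chi^c$, where $c$ denotes the Galois conjugation of $K/F$. Hence $\mathrm{AI}_{K/F}(\chi)$ is self-dual and has central character $\omega_{K/F}\neq\mathbf 1$. This settles the short-root case.

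For the long-root case, Lemma~\ref{lemma:symmetric-cube-dihedral} gives
\[
\Sym^3\phi_\sigma\otimes\det\phi_\sigma^{-1}=\mathrm{AI}_{K/F}(\chi^3)\oplus\mathrm{AI}_{K/F}(\chi).
\]
The summand $\mathrm{AI}_{K/F}(\chi)=\phi_\sigma$ is irreducible and nontrivial. By Frobenius reciprocity, $\mathrm{AI}_{K/F}(\chi^3)$ contains $\mathbf 1$ iff $\chi^3=\mathbf 1$. So the extra condition is exactly $\chi^3\neq1$.

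\emph{Constructing $\phi^{\SO_4}(\sigma)$.} Let $s\in\rG_2(\C)$ be an element of order $2$ with $Z_{\rG_2(\C)}(s)=\SO_4(\C)=(\SL_2\times\SL_2)/\mu_2$, the dual of the elliptic endoscopic group $\SO_4$. Under this subgroup, $\Std_{\rG_2}$ restricts to $\Std_{\SO_4}\oplus\wedge^2_+$.

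The $R$-group element $w$ lifts, via Lemma~\ref{lemma:tits-lift-and-dual}, to an element $\widetilde{\widehat w}$ of $\rG_2(\C)$. I would try to show that it can be modified by an element of $Z(\widehat M)$ to an element $s$ of order two centralizing $\phi:=\iota_M^{\rG_2}\circ\phi_\sigma$. The ingredient is that $\widetilde{\widehat w}$ normalizes $\widehat M=\GL_2(\C)$ and acts on it by inverse transpose up to inner automorphism, and that $\phi_\sigma\cong\phi_\sigma^\vee$. Then $\phi$ factors through $Z(s)=\SO_4(\C)$, which produces $\phi^{\SO_4}(\sigma)$.

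\emph{Identifying the composite and discreteness.} Lemma~\ref{lemma:dual-nilradicals-G2} gives
\[
\Std_{\rG_2}\circ\phi=\phi_\sigma\oplus\phi_\sigma^\vee\oplus\bigl(\widehat{\fn}/[\widehat{\fn},\widehat{\fn}]\text{-part}\bigr),
\]
and using self-duality of $\phi_\sigma$ this equals the stated right-hand sides.
\begin{itemize}
\item Short root: $\omega_\sigma=\omega_{K/F}$ and $\phi_\sigma\otimes\omega_\sigma\cong\phi_\sigma$.
\item Long root: $\Sym^3\otimes\det^{-1}\oplus\Std=\Ad\otimes\Std$, and $\Ad\otimes\phi_\sigma$ contributes $\Ad(\phi_\sigma)$ together with the remaining copies.
\end{itemize}
For discreteness, the centralizer of $\phi^{\SO_4}(\sigma)$ in $\SO_4(\C)$ is contained in $Z_{\rG_2(\C)}(\phi)$. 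Its identity component is $Z(\widehat M)^{\phi}$-type, and it is finite precisely because $W_0(\sigma)=1$; by the Plancherel argument in Lemma~\ref{lemma:G2-Levi-Weyl-invariance}, no torus survives.

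\emph{Uniqueness and the main obstacle.} Uniqueness would follow from Lemma~\ref{lemma:injectivity-G_2-to-GL7}: two such $\SO_4$-parameters become $\rG_2(\C)$-conjugate. One then has to pass from $\rG_2(\C)$-conjugacy to $\SO_4(\C)$-conjugacy, using that the relevant fusion is controlled by $N(\SO_4(\C))=\SO_4(\C)$ in $\rG_2(\C)$.

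This identification is the step I expect to be hardest. I would attack it by computing the eigen-decomposition of $s$ on $\Std_{\rG_2}$ (a $4$-dimensional $(-1)$-eigenspace carrying $\Std_{\SO_4}$ and a $3$-dimensional $(+1)$-eigenspace carrying $\wedge^2_+$) directly for the explicit $\phi_\sigma=\mathrm{AI}_{K/F}(\chi)$.
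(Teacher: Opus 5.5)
Your reduction and your characterization of $\sigma$ are correct and match the paper's argument. The paper's proof consists only of the normalizing-factor criterion of Example~\ref{example:normalizing-factor-G2}, packaged in Lemma~\ref{lemma:G2-Levi-Weyl-invariance}, together with $R^{G}(\sigma)=W(M)_{\sigma}/W_0(M)_{\sigma}$ and $\abs{W(M)_\sigma}=2$. It does not spell out the $\SO_4$-parameters; those appear explicitly in Lemma~\ref{lemma:SO4-dihedral-case-local}.

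The gap is in your construction of $\phi^{\SO_4}(\sigma)$, which you leave unproved, and the mechanism you propose cannot work. Multiplying a Tits lift by $Z(\widehat M)$ changes neither its conjugation action on $\widehat M$ nor its square. Once some $n\in S_\phi$ lies over $w$ (the correction must come from $\widehat M$), the only remaining freedom is $S_\phi\cap\widehat M=Z_{\widehat M}(\phi_\sigma)=Z(\widehat M)$. Since $n$ inverts $Z(\widehat M)$ and $n^2$ centralizes it, $n^2$ lies in $Z(\widehat M)[2]$, and $(nb)^2=n^2$ for every $b\in Z(\widehat M)$. So an involution over $w$ exists if and only if $n^2=1$.

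Proving $n^2=1$ is exactly where $\omega_\sigma\neq\mathbf 1$ must enter, and your sketch never uses it there. The $\phi_\sigma$-isotypic part of $\Std_{\rG_2}\circ\phi$ is $\phi_\sigma\otimes M$ with $\dim M=2$. The induced form on $M$ is symmetric because $\phi_\sigma$ is orthogonal ($\det\phi_\sigma=\omega_{K/F}\neq\mathbf 1$). The group $Z(\widehat M)$ acts on $M$ through $\SO(M)$, with its element of order two acting by $-1$, while $n$ acts on $M$ by an element of $\O(M)$ inverting $\SO(M)$, i.e.\ a reflection. Hence $n^2=1$. For symplectic $\phi_\sigma$ one would instead get an element of $\SL(M)$ inverting a torus, which has order four.

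Separately, your computation of $\Std_{\rG_2}\circ\phi$ is misattributed. Lemma~\ref{lemma:dual-nilradicals-G2} describes $\widehat M$ acting on the five-dimensional $\widehat{\fn}$; read literally, your formula has dimension $6$ or $8$. What you need is the branching of the seven-dimensional representation, whose weights are the short roots of $\rG_2(\C)$ and $0$. It gives $\Std\oplus\Std^\vee\oplus\det\oplus\det^{-1}\oplus\mathbf 1$ for $M=M_s$ and $\Std\oplus\Std^\vee\oplus\Ad$ for $M=M_l$. The identity involving $\Ad\otimes\Std$ plays no role.

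For uniqueness, Lemma~\ref{lemma:injectivity-G_2-to-GL7} only yields some $g\in\rG_2(\C)$ conjugating one composite into the other. The fact $N_{\rG_2(\C)}(\SO_4(\C))=\SO_4(\C)$ does not let you move $g$ into $Z(s)$; you need $g^{-1}sg$ and $s$ to be conjugate inside $S_\phi$. This is true, but none of it is in your plan. The condition $W_0(M)_\sigma=1$ means $\widehat{\fn}^{\phi}=0$, so $\Lie S_\phi=\Lie Z(\widehat M)$ and $S_\phi=Z(\widehat M)\rtimes\langle s\rangle\cong\O_2(\C)$. The coset $sZ(\widehat M)$ is a single conjugacy class, and the only other involution lies in $Z(\widehat M)$ and gives a non-discrete lift. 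The same Lie-algebra fact, with $s$ acting by $-1$ on $\Lie Z(\widehat M)$, is what actually proves discreteness.

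The simplest repair stays on the $\SO_4$-side, where a parameter is a pair $(\rho_1,\rho_2)$ with $\det\rho_1=\det\rho_2$, $\Std_{\SO_4}=\rho_1\otimes\rho_2^\vee$ and $\wedge^2_+=\Ad\rho_1$. Write $\chi=\mu/\mu^c$ by Hilbert 90 and put $\rho_1=\mathrm{AI}_{K/F}(\mu)$. Take $\rho_2=\rho_1$ for $M=M_s$, and $\rho_2=\mathrm{AI}_{K/F}((\mu^c)^2\mu^{-1})$ for $M=M_l$, which is irreducible since $\chi^3\neq\mathbf 1$. Then $\wedge^2_+=\mathrm{AI}_{K/F}(\chi)\oplus\omega_{K/F}$ in both cases. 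The representation $\Std_{\SO_4}$ is the multiplicity-free orthogonal sum $\mathrm{AI}_{K/F}(\chi)\oplus\omega_{K/F}\oplus\mathbf 1$, resp.\ $\mathrm{AI}_{K/F}(\chi)\oplus\mathrm{AI}_{K/F}(\chi^2)$. The required seven-dimensional identity follows from $\Ad\phi_\sigma=\mathrm{AI}_{K/F}(\chi^2)\oplus\omega_{K/F}$.

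For uniqueness, argue as in Lemma~\ref{G2-elliptic-toral-representations}. Multiplicity-freeness of $\Std_{\SO_4}$ puts exactly one copy of $\phi_\sigma$ into each of $\Std_{\SO_4}$ and $\wedge^2_+$. The condition $\det\wedge^2_+=\mathbf 1$ then forces the remaining line of $\wedge^2_+$ to be $\omega_{K/F}$, and $\Std_{\SO_4}$ fixes the $\O_4$-class. This is a single $\SO_4$-class whenever $\Std_{\SO_4}$ has a one-dimensional summand. In the remaining case ($M=M_l$, $\chi^4\neq\mathbf 1$), the outer twist replaces $\wedge^2_+$ by $\wedge^2_-=\mathrm{AI}_{K/F}(\chi^3)\oplus\omega_{K/F}\neq\wedge^2_+$, so the prescribed $\wedge^2_+$ pins down the class.
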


\begin{proof}
  This follows from Example~\ref{example:normalizing-factor-G2} and the fact that
  \begin{align*}
    R^{G}(\sigma) = W(M)_{\sigma}/W_0(M)_{\sigma},
  \end{align*}
  where $W(M)_{\sigma}$ is the stabilizer of $\sigma$ in $W(M)$ and $W_0(M)_{\sigma}$ is the subgroup of $W(M)_{\sigma}$ consisting of elements $w$ such that the normalized intertwining operator $R_P(w, \sigma, \psi_F)$ is a scalar. The condition that $R_P(w, \sigma, \psi_F)$ is a scalar is also equivalent to the condition that $\gamma_{A}(0, \sigma, \widehat{\fn}, \psi_F) = 0$.
\end{proof}

\begin{lemma}\label{G2-elliptic-toral-representations}
  Let $F$ be a local field of characteristic zero.
  We set $G = \rG_2$ and let $B = TU$ be a Borel subgroup of $G$ with a Levi component $T$.
  \begin{enumerate}
    \item
    The set of regular elements in $W(G, T)$ is given by
    \begin{align*}
      \{s_{\alpha} s_{\beta}, (s_{\alpha} s_{\beta})^2, s_{\beta} s_{\alpha}, (s_{\beta} s_{\alpha})^2, (s_{\alpha} s_{\beta})^3 \}.
    \end{align*}
    The characteristic polynomial of each element is given as follows.
    \begin{enumerate}
      \item
      For $s_{\alpha} s_{\beta}$ and $s_{\beta} s_{\alpha}$, the characteristic polynomial is $x^2 - x + 1$.
      \item
      For $(s_{\alpha} s_{\beta})^2$ and $(s_{\beta} s_{\alpha})^2$, the characteristic polynomial is $x^2 + x + 1$.
      \item
      For $(s_{\alpha} s_{\beta})^3$, the characteristic polynomial is $(x+1)^2$.
    \end{enumerate}
    \item
    Let $\sigma$ be a character of $T$ and let $w$ be a regular element in $W(G, T)$ such that $w \sigma \cong \sigma$.
    \begin{itemize}
      \item
      In case $(a)$, we have $\sigma = \mathbf{1}$ and $W(T)_{\sigma} = W_0(T)_{\sigma} = W(G, T)$.
      \item
      In case $(b)$, $\sigma$ is given by a cubic character of $F^{\times}$ and we have $W_0(T)_{\sigma} \neq 1$.
      \item
      In case $(c)$, we have $\sigma^2=1$, so
      $\sigma=(\omega_1\circ\alpha)(\omega_2\circ\beta)$ for
      quadratic characters $\omega_1,\omega_2$, possibly trivial.
      We have $W_0(T)_{\sigma}=1$ if and only if $\omega_1,\omega_2$
      are independent.  In this case they determine a biquadratic
      extension $L/F$, and $R^{G}(\sigma)=\Z/2\Z$.
    \end{itemize}
    \item
    In case $(c)$, assume that $W_0(T)_{\sigma}=1$, and let
    $\omega_1, \omega_2, \omega_3$ be the three non-trivial quadratic characters of $F^{\times}$ corresponding to the three quadratic extensions of $F$ contained in $L$.
    Then, there exists a unique discrete series parameter $\phi^{\SO_4}(\sigma)$ of $\SO_4$ such that we have
    \begin{align*}
        \Std_{\SO_4} \circ \phi^{\SO_4}(\sigma) \oplus \wedge^2_+ \circ \phi^{\SO_4}(\sigma)
        =
        \omega_1^{\oplus 2} \oplus \omega_2^{\oplus 2} \oplus \omega_3^{\oplus 2} \oplus \mathbf{1}.
    \end{align*}
    We have
    \begin{align*}
      \Std_{\SO_4} \circ \phi^{\SO_4}(\sigma) = \omega_1 \oplus \omega_2 \oplus \omega_3 \oplus \mathbf{1}
    \end{align*}
    and
    \begin{align*}
      \wedge^2_+ \circ \phi^{\SO_4}(\sigma) = \omega_1 \oplus \omega_2 \oplus \omega_3.
    \end{align*}
  \end{enumerate}
\end{lemma}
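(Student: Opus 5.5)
Part (1) is a direct computation in the dihedral group $W(G,T)$ of order twelve, in the same spirit as Lemma~\ref{lemma:regular-weyl-G2-toric-Levi}. With $c=s_\alpha s_\beta$ written in the basis $(\alpha,\beta)$ as there, we have $\tr c=1$ and $\det c=1$, which gives $x^2-x+1$. The powers are then handled through eigenvalues. Since the eigenvalues of $c$ are primitive sixth roots of unity $\zeta^{\pm1}$, those of $c^2$ are primitive cube roots (characteristic polynomial $x^2+x+1$), and $c^3=-1$ gives $(x+1)^2$. The element $s_\beta s_\alpha=c^{-1}$ has the same characteristic polynomial as $c$, and $(s_\beta s_\alpha)^2=c^{-2}=c^4$. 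So the five regular elements are exactly the list of Lemma~\ref{lemma:regular-weyl-G2-toric-Levi}, sorted by characteristic polynomial.

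For part (2) I work on the cocharacter side. A character $\sigma$ of $T(F)\cong (F^\times)^2$ corresponds to a homomorphism $X_*(T)\to \Hom(F^\times,\C^\times)$, and $w\sigma=\sigma$ says that $\sigma$ kills the image of $(1-w)X_*(T)$. The quotient $X_*(T)/(1-w)X_*(T)$ is finite of order $\lvert\det(1-w)\rvert$, and $\det(1-w)$ is the characteristic polynomial evaluated at $1$: this gives $1$, $3$ and $4$ in cases (a), (b), (c).
\begin{itemize}
  \item In case (a) the quotient is trivial, so $\sigma=\mathbf{1}$. The reflections $s_\alpha,s_\beta$ then lie in $W_0$, because $\gamma_A$ for the trivial character of a rank-one Levi vanishes ($L(s,\mathbf{1})$ has a pole at $0$). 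Hence $W_0(T)_\sigma=W(G,T)$.
  \item In case (b) the quotient is $\Z/3$, so $\sigma$ factors through a cubic character. I then check that some root $\gamma$ has $\sigma\circ\gamma^\vee=\mathbf{1}$; a short root should work, since $c^2$ fixes the long-root subsystem $A_2$ in the appropriate sense. This puts $s_\gamma$ in $W_0(T)_\sigma$, so $W_0(T)_\sigma\neq 1$.
  \item In case (c), $w=-1$ forces $\sigma^2=1$, and I write $\sigma$ through its values on a basis as in the statement. For each of the six positive roots $\gamma$, the reflection $s_\gamma$ lies in $W_0$ exactly when $\sigma\circ\gamma^\vee=\mathbf{1}$. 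Write $\sigma\circ\gamma^\vee=\omega_1^{a}\omega_2^{b}$ with $(a,b)$ the reduction mod $2$ of the pairing data. The coroots of $\rG_2$ reduce mod $2$ to all three nonzero classes of $(\Z/2)^2$; this is the key parity check. Consequently some $\sigma\circ\gamma^\vee$ is trivial unless $\omega_1$, $\omega_2$, $\omega_1\omega_2$ are all nontrivial, that is, unless $\omega_1,\omega_2$ are independent.
\end{itemize}
When they are independent, $W_0(T)_\sigma$ contains no reflection and is therefore trivial, since $W_0$ is generated by such reflections by Harish-Chandra/Knapp--Stein. The stabilizer $W(T)_\sigma$ contains $-1$, so $R^G(\sigma)\supseteq\Z/2$. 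To get equality I use the same parity bookkeeping: no reflection fixes $\sigma$, and a rotation other than $\pm1$ fixing $\sigma$ would force triviality as in (a) or (b). Hence $W(T)_\sigma=\{\pm1\}$ and $R^G(\sigma)=\Z/2\Z$.

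For part (3) I choose $\phi^{\SO_4}(\sigma)$ with image in $\widehat{\SO_4}=\SO_4(\C)$, diagonal with entries $\omega_1,\omega_2,\omega_3,\mathbf{1}$, using $\omega_1\omega_2\omega_3=1$ for the determinant. It is discrete because its four characters are distinct. I then compute $\wedge^2_+$ via $\SO_4(\C)=(\SL_2\times\SL_2)/\mu_2$. Equivalently, $\wedge^2$ of $\omega_1\oplus\omega_2\oplus\omega_3\oplus\mathbf{1}$ is $(\omega_1\oplus\omega_2\oplus\omega_3)^{\oplus 2}$, split between $\wedge^2_\pm$, and each $\wedge^2_\pm$ is spanned by $e_i\wedge e_j\pm\ast(e_i\wedge e_j)$, which pairs $\omega_i\omega_j$ with $\omega_k$. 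Therefore $\wedge^2_+$ is $\omega_1\oplus\omega_2\oplus\omega_3$, and the displayed sum follows.

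For uniqueness, the sum determines the multiset of characters. The two summands of $\Std\oplus\wedge^2_+$ are then forced, because $\Std_{\SO_4}$ must have determinant $1$ and contain $\mathbf{1}$ exactly once. Finally, Lemma~\ref{lemma:injectivity-G_2-to-GL7}-style injectivity for $\SO_4$ parameters modulo $\O_4$-conjugacy pins down $\phi^{\SO_4}(\sigma)$. The main obstacle I expect is the parity bookkeeping in (2)(c): the identification of $\sigma\circ\gamma^\vee$ for all six roots, and the verification that no non-central rotation stabilizes $\sigma$.
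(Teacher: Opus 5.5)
Parts (1) and (2) are correct and track the paper's proof closely. The paper also reads off the characteristic polynomials from the matrix of $c=s_\alpha s_\beta$. It likewise uses that $W_0(T)_\sigma$ is generated by the $s_\gamma$ with $\sigma\circ\gamma^\vee=\mathbf 1$, and it finds $\omega_1,\omega_2,\omega_1\omega_2$ each twice among the six positive coroots. Two of your steps are harmless variants. You use the quotient $X_*(T)/(1-w)X_*(T)$ of order $\abs{\det(1-w)}$ instead of explicit coordinates. You also eliminate the reflections and $c^{\pm1},c^{\pm2}$ directly, where the paper notes that $W(G,T)$ acts on $X^*(T)/2X^*(T)$ through $\GL_2(\mathbb F_2)$ with kernel $\{1,c^3\}$. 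Two small details can be pinned down. The claim $s_\gamma\sigma=\sigma$ iff $\sigma\circ\gamma^\vee=\mathbf1$ holds because every root is primitive in $X^*(T)$. In (b), the short root $\alpha$ works, since $\sigma=\chi\circ\beta$ gives $\sigma\circ\alpha^\vee=\chi^{-3}=\mathbf1$.

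The gap is in the uniqueness part of (3).

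First, determinant one does not force $\Std_{\SO_4}\circ\phi$ to contain $\mathbf 1$. The non-discrete parameter with $\Std_{\SO_4}\circ\phi=\omega_1^{\oplus2}\oplus\omega_2^{\oplus2}$ has $\wedge^2_+\circ\phi=\mathbf1\oplus\omega_3^{\oplus2}$, which gives the same seven-dimensional sum. What actually forces $\Std_{\SO_4}\circ\phi=\mathbf1\oplus\omega_1\oplus\omega_2\oplus\omega_3$ is discreteness. A discrete parameter has a multiplicity-free standard representation, and only four distinct characters are available.

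Second, and more seriously, the standard representation determines $\phi$ only up to $\O_4(\C)$-conjugacy, whereas parameters for $\SO_4$ are $\SO_4(\C)$-classes. An $\O_4(\C)$-class can split into two $\SO_4(\C)$-classes exchanged by the outer automorphism, which interchanges $\wedge^2_+$ and $\wedge^2_-$. Here $\wedge^2_+\circ\phi=\wedge^2_-\circ\phi$, so both potential classes satisfy the seven-dimensional identity. Your appeal to Lemma~\ref{lemma:injectivity-G_2-to-GL7}-style injectivity does not address this. The missing step, which the paper supplies, is that the reflection in the trivial line centralizes $\phi$ and has determinant $-1$; more generally, any sign matrix of determinant $-1$ in the eigenbasis does. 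Hence the $\O_4(\C)$-class of $\mathbf1\oplus\omega_1\oplus\omega_2\oplus\omega_3$ is a single $\SO_4(\C)$-class.
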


\begin{proof}
  Put $c=s_\alpha s_\beta$ and use the matrix in the proof of
  Lemma~\ref{lemma:regular-weyl-G2-toric-Levi}.
  Its powers give the characteristic polynomials in (1).
  Write $\sigma=(\chi_1\circ\alpha)(\chi_2\circ\beta)$.
  In these coordinates,
  \begin{align*}
    c\sigma=(\chi_1^2\chi_2^{-3},\chi_1\chi_2^{-1}),\quad
    c^2\sigma=(\chi_1\chi_2^{-3},\chi_1\chi_2^{-2}).
  \end{align*}
  Hence $c\sigma=\sigma$ forces $\chi_1=\chi_2=1$, and
  $c^2\sigma=\sigma$ forces $\chi_1=1$ and $\chi_2^3=1$.
  As in the proof of Lemma~\ref{lemma:G2-Levi-Weyl-invariance}, the rank-one
  normalizing factors show that $W_0(T)_\sigma$ is generated by the reflections
  $s_\gamma$ such that $\sigma\circ\gamma^\vee=1$.
  This proves (a), and in (b) we have
  $\sigma\circ\alpha^\vee=\chi_2^{-3}=1$.

  Since $c^3=-1$, case (c) gives $\chi_i^2=1$.
  The characters $\sigma\circ\gamma^\vee$ for the six positive roots are
  $\omega_1,\omega_2,\omega_1\omega_2$, each occurring twice.
  Thus $W_0(T)_\sigma=1$ exactly when $\omega_1,\omega_2$ are independent.
  The action of $W(G,T)$ on $X^*(T)/2X^*(T)$ has image
  $\GL_2(\mathbb F_2)$ and kernel $\{1,c^3\}$.
  Independence therefore gives $W(T)_\sigma=\{1,c^3\}$ and
  $R^G(\sigma)\cong\Z/2\Z$.

  Finally, $V=\mathbf1\oplus\omega_1\oplus\omega_2\oplus\omega_3$ is orthogonal,
  has determinant one, and has finite centralizer in $\SO_4$.
  It defines a discrete parameter, and both halves of $\wedge^2 V$ are
  $\omega_1\oplus\omega_2\oplus\omega_3$.
  Any discrete parameter satisfying the stated seven-dimensional identity
  has a multiplicity-free standard representation, which must therefore be $V$.
  Reflection in the trivial line centralizes $V$ and has determinant $-1$,
  so its $\O_4$-conjugacy class is a single $\SO_4$-conjugacy class.
  This proves uniqueness.
\end{proof}

\begin{remark}\label{remark:LIR-and-ellipticity}
  Let $(M, \sigma, w)$ be a triple appearing in the previous lemmas such that $w \sigma \cong \sigma$,  $R^{G}(\sigma)$ is non-trivial and $W_0(M)_{\sigma}$ is trivial.
  Then, the datum $(M, \sigma, w)$ is elliptic.
  Furthermore, in all such cases, we have obtained a unique discrete series parameter $\phi^{\SO_4}(\sigma)$ of $\SO_4$.
\end{remark}

\section{Elliptic triplets for \texorpdfstring{$\widetilde{\PGSO_8}(F)$}{widetilde{PGSO8(F)}}}

\begin{definition}[\cite{MoeglinWaldspurger2018-FormuleTracesLocaleTordue}*{2.9, 2.11}]
  Let $F$ be a local field of characteristic zero.
  Let $\widetilde{G} = (G, \widetilde{G})$ be a reductive space over $F$.
  An essential triplet for $(G, \widetilde{G})$ is a triplet $(M, \sigma, \widetilde{r})$ where
  \begin{itemize}
    \item
    $M$ is a Levi subgroup of $G$,
    \item
    $\sigma$ is an irreducible discrete series representation of $M(F)$ such that $W^{\widetilde{G}}(\sigma) \neq \emptyset$, and
    \item
    $\widetilde{r} \in R^{\widetilde{G}}(\sigma)$ is an element of the $R$-group of $\sigma$ in $\widetilde{G}$ that is essential in the sense of \cite{MoeglinWaldspurger2018-FormuleTracesLocaleTordue}*{2.9}.
  \end{itemize}
  An essential triplet $(M, \sigma, \widetilde{r})$ is said to be discrete if we have
  \begin{align*}
    \widetilde{r} \cap W^{\widetilde{G}}(\sigma)_{\reg} \neq \emptyset.
  \end{align*}
  A discrete essential triplet $(M, \sigma, \widetilde{r})$ is said to be elliptic if we have
  \begin{align*}
    W_0^{G}(\sigma) = \{1\}.
  \end{align*}

  Let $E(\widetilde{G})$ (resp. $E_{\disc}(\widetilde{G})$, $E_{\elliptic}(\widetilde{G})$) denote the set of essential triplets (resp. discrete essential triplets, elliptic triplets) for $\widetilde{G}$ modulo the conjugation by $G$.
\end{definition}

\begin{lemma}[\cite{MoeglinWaldspurger2018-FormuleTracesLocaleTordue}*{2.11, Lemme}]
  Let $F$ be a local field of characteristic zero.
  Let $(M, \sigma, \widetilde{r})$ be an essential triplet for $(G, \widetilde{G})$.
  The following conditions are equivalent.
  \begin{enumerate}
    \item
    There exists a Levi subspace $\widetilde{L} = (L, \widetilde{L})$ of $\widetilde{G}$ such that $M \subset L \subsetneq G$ and $\widetilde{r} \in R^{\widetilde{L}}(\sigma)$.
    \item
    We have
    \begin{align*}
      W_0^{G}(\sigma) \neq \{1\},
    \end{align*}
    or $W_0^{G}(\sigma) = \{1\}$ and
    \begin{align*}
      \widetilde{r} \not \in W^{\widetilde{G}}(\sigma)_{\reg}.
    \end{align*}
    \item
    The triplet $(M, \sigma, \widetilde{r})$ is not elliptic.
  \end{enumerate}
\end{lemma}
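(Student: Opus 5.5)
The plan is to prove the cycle of implications $(1)\Rightarrow(3)\Rightarrow(2)\Rightarrow(1)$. The main tool is the standard description of the twisted $R$-group. We have $W^{\widetilde G}(\sigma)=W_0^G(\sigma)\rtimes$-torsor, and $R^{\widetilde G}(\sigma)=W^{\widetilde G}(\sigma)/W_0^G(\sigma)$. Here $W_0^G(\sigma)$ is the Weyl group of the root subsystem $\Sigma_\sigma\subset\Sigma(A_M)$ of reduced roots whose rank-one Plancherel measure vanishes. Throughout, work in $\fa_M$. A Levi subspace $\widetilde L\supset M$ corresponds to a subspace $\fa_L\subset\fa_M$ (the fixed space of its twisted Weyl elements), and $\widetilde r\in R^{\widetilde L}(\sigma)$ means that some, equivalently every, representative $w\in\widetilde r$ can be taken inside $W^{\widetilde L}(\sigma)$, so that $w$ fixes $\fa_L$ pointwise.

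For $(2)\Rightarrow(1)$, split into the two cases. If $\widetilde r\notin W^{\widetilde G}(\sigma)_{\reg}$, pick $w\in\widetilde r$. Its fixed space $\fa_M^{w}$ is strictly larger than $\fa_{\widetilde G}$, and it determines a proper Levi $L$ with $\fa_L=\fa_M^w$ that is $w$-stable. The twisted space $\widetilde L$ is then defined by $w$, and $\widetilde r\in R^{\widetilde L}(\sigma)$. If instead $W_0^G(\sigma)\ne\{1\}$, use the essential condition on $\widetilde r$ (from \cite{MoeglinWaldspurger2018-FormuleTracesLocaleTordue}*{2.9}), which says the action of $\widetilde r$ on the $\widetilde r$-invariants is compatible with the positive chamber of $\Sigma_\sigma$. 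Choose the representative $w\in\widetilde r$ that preserves the positive system of $\Sigma_\sigma$ (the $R$-group splitting). Then $w$ permutes the simple roots of $\Sigma_\sigma$, so it fixes the nonzero vector $\sum_{\beta>0}\beta^\vee$ modulo $\fa_{\widetilde G}$. Hence $w$ is not regular, and we are back in the first case. The hard point is to argue that $w$ still lies in $\widetilde r$ and that the resulting Levi is proper. I expect this to need the precise form of the essential condition; that step is the main obstacle, and I would follow the Mœglin--Waldspurger argument for the untwisted case.

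For $(1)\Rightarrow(3)$, suppose such an $\widetilde L$ exists. If $W_0^G(\sigma)\ne\{1\}$, the triplet is not elliptic by definition. Otherwise every element of $\widetilde r$, being in $W^{\widetilde L}(\sigma)$, fixes $\fa_L\supsetneq\fa_{\widetilde G}$, so $\widetilde r\cap W^{\widetilde G}(\sigma)_{\reg}=\emptyset$. The triplet is therefore not discrete, and a fortiori not elliptic.

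For $(3)\Rightarrow(2)$, non-ellipticity means that either $W_0^G(\sigma)\ne\{1\}$, or the triplet fails to be discrete. Failure of discreteness means $\widetilde r\cap W^{\widetilde G}(\sigma)_{\reg}=\emptyset$. When $W_0^G(\sigma)=\{1\}$, the coset $\widetilde r$ is a single element, so this says exactly $\widetilde r\notin W^{\widetilde G}(\sigma)_{\reg}$. This gives (2) and closes the cycle.
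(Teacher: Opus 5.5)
Your cycle $(1)\Rightarrow(3)\Rightarrow(2)\Rightarrow(1)$ is the standard argument. The paper gives no proof of its own and only cites \cite{MoeglinWaldspurger2018-FormuleTracesLocaleTordue}*{2.11}. Your argument is correct, and the step you single out as the main obstacle is not one: the essential condition is not used anywhere.

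Take a representative $\gamma\in\widetilde G(F)$ of any $w'\in\widetilde r$. Since $w'\sigma\cong\sigma$, transporting the rank-one Plancherel measures by $\Ad(\gamma)$ shows that $w'$ permutes $\Sigma_\sigma$. Hence $w'\Sigma_\sigma^+$ is again a positive system. Let $u\in W_0^G(\sigma)=W(\Sigma_\sigma)$ be the unique element with $uw'\Sigma_\sigma^+=\Sigma_\sigma^+$. Then $w=uw'$ lies in $W_0^G(\sigma)w'=\widetilde r$, which is the membership you were unsure of.

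Properness is equally immediate. The vector $H_0=\sum_{\beta\in\Sigma_\sigma^+}\beta^\vee$ is a nonzero element of $\fa_M^G$ fixed by $w$, so $H_0\in\fa_M^w\setminus\fa_G$. Any root of $A_M$ that is nonzero on $H_0$ is then excluded from the Levi cut out by $\fa_M^w$.

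Two points should be stated more carefully.

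First, in the non-regular case what you actually need is $\fa_M^w\not\subset\fa_G$. This follows from $\fa_M^w\cap\fa_G=\fa_G^{\theta}=\fa_{\widetilde G}$, since $w$ acts on $\fa_G$ through the twist. Let $A'\subset A_M$ be the split subtorus with $X_*(A')\otimes\R=\fa_M^w$, and put $L=\Cent_G(A')$. Then $\fa_{\widetilde L}=\fa_M^w$, but $\fa_L$ may be strictly larger, so your claim $\fa_L=\fa_M^w$ needs this correction. To obtain a genuine Levi subspace, take $P=P_H$ for a generic $H\in\fa_M^w$. A representative of $w$ then normalizes both $P$ and $L$, and hence lies in $\widetilde L=\Norm_{\widetilde P}(L)$.

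Second, in $(1)\Rightarrow(3)$, elements of $W^{\widetilde L}(\sigma)$ fix $\fa_{\widetilde L}$ pointwise, not $\fa_L$. So you need $\fa_{\widetilde L}\supsetneq\fa_{\widetilde G}$ whenever $L\ne G$. This holds because the twist preserves $P$, hence the open chamber of $P$ in $\fa_L^G$, and acts on $\fa_L^G$ with finite order. Averaging a chamber element over this finite action gives a nonzero fixed vector in $\fa_L^G$.
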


Henceforth in this section, set $G = \PGSO_8$ and $\widetilde{G} = \widetilde{\PGSO_8}$, with an $F$-pinning $(T, B, \{X_{\alpha}\}_\alpha)$ preserved by an automorphism $\theta$ of order $3$.
Let $\cL_{G}$ be the set of conjugacy (associate) classes of Levi subgroups of $G$.

\begin{remark}\label{remark:Whittaker-normalization-Levi}
   The proper Levi subgroups of $\PGSO_8$ and $\rG_2$ are products of general linear groups modulo connected split central tori.
   For an irreducible tempered representation $\sigma$ of such a Levi $M$, fix a nonzero Whittaker functional $\lambda_M$.
   An isomorphism $A_a$ implementing an automorphism $a$ that preserves the Whittaker datum is uniquely normalized by
   \begin{align*}
     \lambda_M\circ A_a=\lambda_M.
   \end{align*}
   Permutations of the general linear factors preserve the product Whittaker functional, and this normalization descends through the central quotient.
   We use the normalized intertwining operators of \cite{Arthur2013-EndoscopicClassificationRepresentations}*{(2.3.27), Theorem 2.5.1}.
   At a self-intertwining point they preserve the induced Whittaker functional.
   Their multiplicativity and compatibility with induction in stages identify the resulting operators without an additional scalar.
   These normalizations apply at real places as well, and the essentiality condition is automatic for the triples considered here.
\end{remark}

\begin{lemma}\label{lemma:Levi-conjugacy-class-D_4}
  Let $\Delta = \{ \alpha_1, \alpha_2, \alpha_3, \alpha_4 \}$ be the set of simple roots of $\PGSO_8$.
  For $I \subset \Delta$, let $M_I$ be the standard Levi subgroup of $\PGSO_8$ corresponding to $I$.
  A complete set of representatives of $\cL_{\PGSO_8}$ is given by the following.
  \[
    \renewcommand{\arraystretch}{1.15}
    \begin{array}{@{}r@{\quad}l@{\quad}l@{\quad}l@{}}
      (1) & T & & \\
      (2) & M_{\alpha_2} & & \\
      (3) & M_{\alpha_1, \alpha_2} & & \\
      (4) & M_{\alpha_1, \alpha_3, \alpha_4} & & \\
      (5) & \PGSO_8 & & \\
      (6) & M_{\alpha_1, \alpha_3} & M_{\alpha_3, \alpha_4} & M_{\alpha_1, \alpha_4} \\
      (7) & M_{\alpha_1, \alpha_2, \alpha_3} & M_{\alpha_2, \alpha_3, \alpha_4} & M_{\alpha_1, \alpha_2, \alpha_4}
    \end{array}
  \]
  The entries in each row are pairwise non-conjugate and permuted by $\theta$.
\end{lemma}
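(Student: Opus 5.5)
The plan is to reduce to subsets of $\Delta$. Every Levi subgroup of the split group $\PGSO_8$ is conjugate to a standard Levi $M_I$ with $I\subset\Delta$. Two standard Levis $M_I$ and $M_J$ are conjugate if and only if $I$ and $J$ are $W$-conjugate, that is, $w(I)=J$ for some $w\in W(D_4)$. By Remark~\ref{remark:levi-weyl} and Lemma~\ref{lemma:Weyl-group-normalizer}, it is enough to classify the $W$-orbits of subsets of the simple roots of $D_4$. For this I would use the standard criterion: $I$ and $J$ are $W$-conjugate if and only if they lie in the same equivalence class generated by the elementary moves $I\mapsto w_0^{I\cup\{\gamma\}}w_0^{I}(I)$, with $\gamma\notin I$.

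First I sort the $16$ subsets by their isomorphism type as root systems. There are the following types.
\begin{itemize}
\item $\emptyset$.
\item $A_1$: four subsets.
\item $A_1\times A_1$: the pairs of outer nodes, three subsets.
\item $A_2$: the three subsets $\{\alpha_2,\alpha_i\}$ with $\alpha_i$ outer.
\item $A_1^3$: the single subset $\{\alpha_1,\alpha_3,\alpha_4\}$.
\item $A_3$: three subsets.
\item $D_4$: the whole of $\Delta$.
\end{itemize}

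Next I find the identifications forced by the elementary moves.
\begin{enumerate}
\item All four $A_1$'s are conjugate, since all roots of a simply laced irreducible system are $W$-conjugate. This gives row (2).
\item The three $A_2$'s are conjugate to one another. Inside the $A_3$ spanned by $\{\alpha_i,\alpha_2,\alpha_j\}$, the element $w_0$ maps $\{\alpha_i,\alpha_2\}$ to $\{\alpha_2,\alpha_j\}$. This gives row (3).
\item The $A_1^3$ subset is alone in its type, giving row (4). Likewise $T$ and $\PGSO_8$ give rows (1) and (5).
\end{enumerate}

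The main obstacle is showing non-conjugacy within rows (6) and (7). For this I would use an invariant. In $\PGSO_8$, the Levis of type $A_1\times A_1$ and $A_3$ are distinguished by which of the three $8$-dimensional representations ($\Std$, $\mathrm{spin}_\pm$) of $\widehat G=\Spin_8(\C)$ restricts with a given shape. Concretely, $M_{\alpha_3,\alpha_4}$ is the Levi $\GL_1\times\GSO_4$-type, the stabilizer of an isotropic line in the standard representation, while the other two correspond to the half-spin representations.

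A cleaner, purely combinatorial invariant is also available. Consider the image of the coroot lattice $\Z I^\vee$ in $P^\vee/Q^\vee\cong(\Z/2)^2$, or equivalently which fundamental coweight $\omega_k^\vee$ (for $k=1,3,4$) is orthogonal to $I$ up to $W$. For $I=\{\alpha_i,\alpha_j\}$ with $i,j$ outer, the subspace $\Z\alpha_i^\vee+\Z\alpha_j^\vee$ contains a vector of the form $2\omega_k^\vee$ modulo $2Q^\vee$ exactly for a specific $k$. This is the computation already used in Lemma~\ref{lemma:invariance-lifting-archimedean}: $2\omega_1\equiv\alpha_3+\alpha_4$ mod $2Q$. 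Hence the class of $\sum_{\gamma\in I}\gamma$ in $Q/2P$, or better in $Q/2Q$ modulo the $W$-action, distinguishes the three subsets. Since $W$ acts trivially on $P/Q$, the associated element of $P/Q$ is a $W$-invariant, and it takes three different nonzero values on the three subsets.

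For row (7), I would apply the same invariant to the orthogonal complement. Alternatively, I would note that the $A_3$ subset missing $\alpha_k$ has its centralizer determined by $\omega_k$, and $\omega_k$ modulo $Q$ is $W$-invariant and distinct for $k=1,3,4$.

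Finally, $\theta$ cyclically permutes $\alpha_1,\alpha_3,\alpha_4$ and fixes $\alpha_2$. It therefore permutes the entries of rows (6) and (7) cyclically and fixes each singleton row up to conjugacy.

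Completeness follows by counting: the $16$ subsets are exhausted by the listed orbits.
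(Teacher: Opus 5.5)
The paper states this lemma without proof, treating it as standard, so there is no argument to compare with. Your outline is sound. You reduce to $W$-orbits of subsets of $\Delta$, separate by root-system type, merge the $A_1$'s and the $A_2$'s, and read off the $\theta$-action. The fact needed for the reduction is that standard Levis $M_I,M_J$ of the split group are conjugate iff $w(I)=J$ for some $w\in W$; the remark and lemma you cite concern normalizers, not this.

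Note that the step you call the main obstacle is already settled by the criterion you quote. The move $I\mapsto w_0^{I\cup\{\gamma\}}w_0^{I}(I)$ replaces $I$ by its image under the opposition involution $-w_0^{I\cup\{\gamma\}}$ of $I\cup\{\gamma\}$.
\begin{itemize}
\item For a pair of outer nodes, $I\cup\{\gamma\}$ has type $A_3$, whose involution swaps the two ends and so fixes $I$, or type $A_1^3$, where it is trivial.
\item For an $A_3$ subset, $I\cup\{\gamma\}=\Delta$, and $-w_0=1$ on $D_4$.
\end{itemize}
Each such $I$ is therefore alone in its equivalence class, and non-conjugacy in rows (6) and (7) follows with no invariant. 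For the $A_2$'s, $w_0^{A_3}$ alone sends $\{\alpha_i,\alpha_2\}$ to $\{-\alpha_j,-\alpha_2\}$. This is harmless for Levi subgroups, but as a map of subsets you need $w_0^{A_3}w_0^{\{\alpha_i,\alpha_2\}}$.

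If you keep the invariant route, discard two of your candidates.
\begin{itemize}
\item The image of $\Z I^\vee$ in $P^\vee/Q^\vee$ is zero, since $\Z I^\vee\subset Q^\vee$.
\item The class of $\sum_{\gamma\in I}\gamma$ in $Q/2P$ is zero for all three pairs. This is precisely because $\alpha_3+\alpha_4\equiv2\omega_1$, $\alpha_1+\alpha_4\equiv2\omega_3$, and $\alpha_1+\alpha_3\equiv2\omega_4$ modulo $2Q$. Moreover, $W$ does not act trivially on $Q/2P$; for example, $s_{\alpha_1}\alpha_2=\alpha_1+\alpha_2$.
\end{itemize}
What does work is $\tfrac12\sum_{\gamma\in I}\gamma\in P$ taken modulo $Q$. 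If $w(I)=J$, then $w$ carries it to $\tfrac12\sum_{\gamma\in J}\gamma$, and $W$ acts trivially on $P/Q$. The values on the three pairs are $[\omega_1],[\omega_3],[\omega_4]$.

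For row (7), spell out the $\omega_k$ argument. From $w(I)=J$ you get $w(P\cap I^\perp)=P\cap J^\perp$, so $w\omega_k=\pm\omega_{k'}$. The sign is irrelevant because $P/Q$ is $2$-torsion.

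Finally, your geometric heuristic misidentifies the Levi. $M_{\alpha_3,\alpha_4}$ corresponds to $\GL_1^2\times\SO_4$ in $\SO_8$, whereas the stabilizer of an isotropic line has Levi $M_{\alpha_2,\alpha_3,\alpha_4}$.
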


\begin{remark}\label{remark:explicit-description-theta-invariant-Levi}
  We have
  \begin{align*}
    M_{\alpha_2} \cong (\GL_2 \times \GL_1^3) / \GL_1,
  \end{align*}
  where the embedding $\GL_1 \to \GL_2 \times \GL_1^3$ is given by
  \begin{align*}
    t \mapsto (t 1_{\GL_2}, t, t, t).
  \end{align*}
  Indeed, the coweights $\varpi_{\alpha_1}^{\vee}, \varpi_{\alpha_3}^{\vee}, \varpi_{\alpha_4}^{\vee}$ give a basis of $X_*(Z(M))$, and these cocharacters together with $M^{\der}_{\alpha_2} \cong \SL_2$ give an isomorphism
  \begin{align*}
      M_{\alpha_2} = \SL_2 \times \GL_1^3 / \Delta \mu_2 \cong (\GL_2 \times \GL_1^3) / \GL_1.
  \end{align*}
  If we write an element of the maximal torus of $M_{\alpha_2}$ as the class of $t = (\diag(s_1, s_2), t_1, t_2, t_3) \in \GL_2 \times \GL_1^3$, then we have 
  \begin{align*}
    \alpha_1(t) = t_1 s_1^{-1}, \quad
    \alpha_2(t) = s_1 s_2^{-1}, \quad
    \alpha_3(t) = t_2 s_1^{-1}, \quad
    \alpha_4(t) = t_3 s_1^{-1}.
  \end{align*}
  Thus, the embedding $M_{\alpha_2} \hookrightarrow \PGSO_8$ is induced from the map 
  \begin{align*}
    (g, t_1, t_2, t_3) \mapsto 
    \diag(t_1, g, t_2^{-1} \det(g), t_3^{-1} \det(g), t_2^{-1} t_3^{-1} \det(g)g, t_1^{-1} t_2^{-1} t_3^{-1} \det(g)^2).
  \end{align*} 

  Similarly, we have
   \begin{align*}
     M_{\alpha_1, \alpha_3, \alpha_4} \cong (\GL_2^3 \times \GL_1) / \GL_1^3,
   \end{align*}
   where the embedding $\GL_1^3 \to \GL_2^3 \times \GL_1$ is given by
  \begin{align*}
    (t_1, t_2, t_3) \mapsto (t_1 1_{\GL_2}, t_2 1_{\GL_2}, t_3 1_{\GL_2}, t_1 t_2 t_3).
  \end{align*}

  The action of $\theta$ on $M_{\alpha_1, \alpha_3, \alpha_4}$ is
  given by the permutation of the three $\GL_2$-factors.  Its action on
  $M_{\alpha_2}$ is given by the permutation of the three $\GL_1$-factors.
\end{remark}

\begin{lemma}
   Let $M = M_I$ be a proper standard Levi subgroup of $G$. Then, $\Norm_{\widetilde{G}}(M)(F) = \emptyset$ if and only if $M_I$ is in the sixth or seventh row of Lemma~\ref{lemma:Levi-conjugacy-class-D_4}.
\end{lemma}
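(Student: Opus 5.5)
Write $\widetilde{G}(F)=G(F)\theta$, so that $\Norm_{\widetilde{G}}(M)(F)$ is the set of elements $g\theta$ with $g\theta(M)g^{-1}=M$. This set is nonempty exactly when $\theta(M)$ is $G(F)$-conjugate to $M$. For standard Levi subgroups of a split group, $M_I$ and $M_J$ are $G(F)$-conjugate if and only if $J=w(I)$ for some $w\in W$, and $\theta(M_I)=M_{\theta(I)}$ because $\theta$ preserves the pinning. So the plan is to reduce the statement to the combinatorial question of whether $\theta(I)$ is $W$-conjugate to $I$, and then read off the answer from the classification in Lemma~\ref{lemma:Levi-conjugacy-class-D_4}.

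First we verify the reduction. If $g\theta(M_I)g^{-1}=M_I$, then $M_{\theta(I)}$ and $M_I$ are conjugate standard Levi subgroups. By the standard fact (via Bruhat decomposition, conjugacy of Levi subgroups of parabolics), this forces $\theta(I)\in W\cdot I$, that is, $\theta(M_I)$ lies in the same class of $\cL_{G}$ as $M_I$. Conversely, if $\theta(I)=w(I)$, then a Tits lift $\widetilde w\in G(F)$ satisfies $\widetilde w^{-1}\theta(M_I)\widetilde w=M_I$. Hence $\widetilde w^{-1}\theta$ lies in $\Norm_{\widetilde G}(M_I)(F)$, which is therefore nonempty.

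Next we run through the rows of Lemma~\ref{lemma:Levi-conjugacy-class-D_4}. In rows (1)--(5) the chosen representative $I$ is itself $\theta$-stable: $\emptyset$, $\{\alpha_2\}$, $\{\alpha_1,\alpha_3,\alpha_4\}$ and $\Delta$ are stable because $\theta$ cyclically permutes $\alpha_1,\alpha_3,\alpha_4$ and fixes $\alpha_2$. Row (3) is the exception, since $\{\alpha_1,\alpha_2\}$ is not $\theta$-stable. That row is listed alone, though, meaning $M_{\alpha_2,\alpha_3}$ and $M_{\alpha_2,\alpha_4}$ are conjugate to $M_{\alpha_1,\alpha_2}$. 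Explicitly, all three are $A_2$-type Levi subgroups that are $W$-conjugate: the $A_2$ subsystems of $D_4$ form one $W$-orbit, and one can write down an explicit $w$. So in rows (1)--(5) the normalizer is nonempty. In rows (6) and (7), the lemma states that the three entries are pairwise non-conjugate and permuted by $\theta$. Hence $\theta(M)$ is not conjugate to $M$, and the normalizer is empty.

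The main point to check carefully is the claim, used implicitly in Lemma~\ref{lemma:Levi-conjugacy-class-D_4}, that the entries in rows (6) and (7) are pairwise non-conjugate under $W$, while the three $A_2$-Levi subgroups in row (3) are conjugate. I would verify the first via invariants of the central tori, for instance by noting which minuscule representation (standard or one of the two half-spin representations) of $\widehat G=\Spin_8$ the dual Levi contains with a specific restriction. Equivalently, $M_{\alpha_1,\alpha_3}\cong\GL_2\times\GL_2$-type subsystems $A_1\times A_1$ come in three $W$-orbits distinguished by the triality. For row (3), I would exhibit the Weyl element $s_{\alpha_2}s_{\alpha_1}s_{\alpha_3}s_{\alpha_2}$, or a similar element, sending $\{\alpha_1,\alpha_2\}$ to $\{\alpha_2,\alpha_3\}$ up to sign.
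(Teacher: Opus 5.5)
Your proof is correct and is the intended argument. The paper states this lemma without proof, as an immediate consequence of the last sentence of Lemma~\ref{lemma:Levi-conjugacy-class-D_4}, via exactly your reduction $\Norm_{\widetilde G}(M_I)(F)\neq\emptyset\iff\theta(I)\in W\cdot I$. One minor slip in your optional check: $s_{\alpha_2}s_{\alpha_1}s_{\alpha_3}s_{\alpha_2}$ sends $\alpha_2$ to $-(\alpha_1+\alpha_2+\alpha_3)$, so it does not carry $\{\alpha_1,\alpha_2\}$ to $\{\pm\alpha_2,\pm\alpha_3\}$. Use $s_{\alpha_1}s_{\alpha_2}s_{\alpha_3}$ instead, which maps $\alpha_1\mapsto\alpha_2$ and $\alpha_2\mapsto\alpha_3$.
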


\subsection{Classification of elliptic triplets for proper Levi subspaces of \texorpdfstring{$\widetilde{\PGSO_8}$}{twisted PGSO8}}

For any standard Levi subgroup $M$, let $w_0^M$ be the longest element of the Weyl group $W(M, T)$.

\begin{lemma}
  If $(L, \sigma_L, \widetilde{r}^M_L)$ is an elliptic triplet for $(M, \widetilde{M})$ for a $\theta$-stable standard Levi subgroup $M \neq G$, then $L=M$, $\sigma_L$ is a $\theta$-stable discrete series representation of $M(F)$, and $\widetilde{r}^M_L$ is given by the class of $\theta$.
\end{lemma}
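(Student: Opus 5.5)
The plan is to use the equivalence of \cite{MoeglinWaldspurger2018-FormuleTracesLocaleTordue}*{2.11, Lemme} recalled above, together with the fact that every proper Levi subgroup of $G=\PGSO_8$ is a product of general linear groups modulo a split central torus (Remark~\ref{remark:Whittaker-normalization-Levi}). By Lemma~\ref{lemma:Levi-conjugacy-class-D_4} and the lemma on $\Norm_{\widetilde G}(M)$, the proper standard Levi subgroups $M$ with $\widetilde M\neq\emptyset$ are, up to the relevant conjugacy, $T$, $M_{\alpha_2}$ and $M_{\alpha_1,\alpha_3,\alpha_4}$. On each of these, $\theta$ acts by permuting $\GL$-factors, as described in Remark~\ref{remark:explicit-description-theta-invariant-Levi}.

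The key input is the following. For a Levi $L\subset M$ and a discrete series $\sigma_L$ of $L(F)$, the $R$-group $R^M(\sigma_L)$ is trivial, because parabolic induction of discrete series of Levi subgroups of $\GL_n$ is irreducible, and this descends through the central quotient. Hence $W^M(\sigma_L)=W^M_0(\sigma_L)$. Ellipticity requires $W^M_0(\sigma_L)=1$, and so it forces $W^M(\sigma_L)=1$. Consequently $W^{\widetilde M}(\sigma_L)$, being a torsor under $W^M(\sigma_L)$, consists of a single element $w\theta$, and $\widetilde r$ is its class.

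It then remains to show that if $L\subsetneq M$, this element $w\theta$ is not regular, that is, it fixes a nonzero vector of $\fa_L^M$. Since $(w\theta)^3\in W^M(\sigma_L)=1$, the element $w\theta$ has order dividing $3$. I will argue case by case.

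For $M=T$ there is nothing to prove: $L=T=M$.

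For $M=M_{\alpha_2}$, the only proper choice is $L=T$. The space $\fa_T^M$ is the line spanned by $\alpha_2^\vee$, on which $\theta$ acts trivially. Thus $w\theta$ is regular only if $w=s_{\alpha_2}$. But then $(w\theta)^3=s_{\alpha_2}\neq1$ stabilizes $\sigma_L$, contradicting $W^M(\sigma_L)=1$.

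For $M=M_{\alpha_1,\alpha_3,\alpha_4}$, the Levi $L$ splits some of the three $\GL_2$-factors. Writing $w=(\varepsilon_1,\varepsilon_2,\varepsilon_3)\in(\Z/2\Z)^3$, one computes $(w\theta)^3=(\varepsilon_1\varepsilon_2\varepsilon_3)^{(3)}$, the corresponding diagonal element. This element is trivial only when $\varepsilon_1\varepsilon_2\varepsilon_3=1$ in the relevant factors. In that case $v+w\theta v+(w\theta)^2v$ is a nonzero fixed vector: take $v$ to be a coroot direction of a split factor and check nonvanishing directly, since the three terms lie in distinct or aligned coordinate lines with the same sign. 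If $L$ is not of the form $\theta$-stable (only some factors split), then $w\theta$ cannot normalize $L$ at all, so $W^{\widetilde M}(\sigma_L)=\emptyset$.

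Once $L=M$ is established, the triplet reduces to $(M,\sigma,\widetilde r)$ with $W^{\widetilde M}(\sigma)\ni\theta$, which means $\sigma$ is $\theta$-stable, and with $R^{\widetilde M}(\sigma)$ equal to the single class of $\theta$. I expect the main obstacle to be the bookkeeping in the $M_{\alpha_1,\alpha_3,\alpha_4}$ case. There one must handle the non-standard intermediate Levis of $M$ and verify nonvanishing of the averaged vector, so I would first reduce to $L=T$ by the torsor argument and treat the intermediate $L$ via their $\theta$-orbits.
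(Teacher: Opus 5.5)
Your proof is correct. Its skeleton matches the paper's: first force $L=T$ because the simple-root set of $L$ must be $\theta$-stable, then contradict $W^M(\sigma_T)=1$ (trivial $R$-groups for $\GL$-type Levis plus $W_0^M=1$, which the paper also uses implicitly) using a regular twisted element. The key step, however, is argued differently.

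\begin{itemize}
\item \textbf{The paper's route.} It writes the regular elements $w_0^{M_{\alpha_2}}\rtimes\theta$, resp.\ $s_{\alpha_i}\rtimes\theta$ and $s_{\alpha_1}s_{\alpha_3}s_{\alpha_4}\rtimes\theta$, in fundamental-coweight coordinates. It solves for the characters $\sigma_T=(\chi_1,\dots,\chi_4)$ that they fix, for example $\chi_1=\chi_3=\chi_4$ and $\chi_2=\chi_1^2$. It then checks by hand that such a $\sigma_T$ is also fixed by the reflection $s_{\alpha_2}$, resp.\ $s_{\alpha_1}$.

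\item \textbf{Your route.} You note that $(w\theta)^3=w\,\theta(w)\,\theta^2(w)$ automatically lies in $W^M(\sigma_L)$. So $W^M(\sigma_L)=1$ forces $w\theta$ to have order dividing $3$, and you check that no such element is regular on $\fa_T^M$. This needs no character computation. It also explains why the paper's computation works: the regular twisted elements are exactly those whose cube is nontrivial, namely $s_{\alpha_2}$, resp.\ $s_{\alpha_1}s_{\alpha_3}s_{\alpha_4}$.
\end{itemize}

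Your argument can be shortened further. Since $\dim\fa_T^M\in\{1,3\}$ is odd, and a real linear map of order dividing $3$ on an odd-dimensional space has a nonzero fixed vector, both cases follow at once. You then need neither the explicit averaged vector nor the sign bookkeeping. In exchange, the paper's route records explicitly which characters are fixed by the regular elements.

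One small slip: the condition $\widetilde M\neq\emptyset$ does not single out your three Levi subgroups. For example, $M_{\alpha_1,\alpha_2}$ is normalized by $w'_2\rtimes\theta$. The correct description is the standard Levi subgroups whose simple-root set is $\theta$-stable, which is exactly what the lemma assumes, so the proof is unaffected.
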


\begin{proof}
   If $M = T$, this is trivial.
   Suppose that $M \neq T$ and $L \subsetneq M$. The subset of the simple roots of $M$ defining $L$ must be stable under the action induced by $\theta$. For $M=M_{\alpha_2}$ or $M=M_{\alpha_1,\alpha_3,\alpha_4}$, the only proper such subset is the empty set. Hence, after conjugation in $M$, we may assume that $L=T$.
   Then, $\sigma_L$ must satisfy $W^{M}(\sigma_L) = 1$, i.e., $\sigma_L$ is a regular character of $T(F)$.

    Assume that $I = \{\alpha_2\}$.
    A basis for $X_*(T)$ is given by $\{\varpi_{\alpha_i}^{\vee}\}_{i=1}^{4}$.
    We have
    \begin{align*}
      s_{\alpha_2} \varpi_{\alpha_2}^{\vee} = - \varpi_{\alpha_2}^{\vee} +  \sum_{i=1, 3, 4} \varpi_{\alpha_i}^{\vee}.
    \end{align*}
    Thus, the action of a regular element $w_0^{M_{\alpha_2}} \rtimes \theta$ can be identified with the automorphism
    \begin{align*}
      (t_1, t_2, t_3, t_4) \mapsto (t_2 t_4, t_2^{-1}, t_2 t_1, t_2 t_3),
    \end{align*}
    for $\bG_m^4$.
    If $\sigma_T$ is given by $(\chi_1, \chi_2, \chi_3, \chi_4)$, then we have
    \begin{align*}
      \chi_1 = \chi_3 = \chi_4,
      \quad
      \chi_2 = \chi_2^{-1}\chi_1\chi_3\chi_4.
    \end{align*}
    In this case, $\sigma_T$ is also fixed by $w_0^{M_{\alpha_2}}$ and $\sigma_T$ is not regular.

    Assume that $I = \{\alpha_1, \alpha_3, \alpha_4\}$.
    The twisted Weyl element $w_0^{M_{I}} \rtimes \theta$ is a regular element.
    We have
    \begin{align*}
    s_{\alpha_i} \varpi_{\alpha_i}^{\vee} = - \varpi_{\alpha_i}^{\vee} + \varpi_{\alpha_2}^{\vee},
    \end{align*}
    for $i=1,3,4$.
    Thus, a regular element is either one of $s_{\alpha_i} \rtimes \theta$ for $i=1, 3, 4$ or $s_{\alpha_1}s_{\alpha_3}s_{\alpha_4} \rtimes \theta$.
    For example, for $s_{\alpha_1} \rtimes \theta$, we obtain the condition
    \begin{align*}
    \chi_1 = \chi_3 = \chi_4, \chi_2 = \chi_1^2,
    \end{align*}
    on $\sigma_T$ and this is fixed by $s_{\alpha_1}$.
    We also obtain the same condition for $s_{\alpha_1}s_{\alpha_3}s_{\alpha_4} \rtimes \theta$.
    In this case, $\sigma_T$ is not regular.
    Thus, $L\subsetneq M$ is impossible, and hence $L=M$. The remaining assertions follow directly from the definition of an elliptic triplet and the fact that the relevant class in $R^{\widetilde M}(\sigma_L)$ is represented by $\theta$.
\end{proof}

\subsection{Classification of regular elements for Weyl sets for \texorpdfstring{$\widetilde{\PGSO_8}$}{twisted PGSO8}}

We classify the regular elements in $W(G, M) \rtimes \theta$ for $\theta$-stable Levi subgroups $M$ of $G = \PGSO_8$ over any field $F$ of characteristic zero, using Lemma~\ref{lemma:Weyl-group-normalizer}.

\begin{lemma}
   For $I = \{\alpha_2\}$, the group $W(G, M_I)$ is generated by the classes of
   \[
      w'_1 = s_{\alpha_1 + \alpha_2 + \alpha_3}, \quad
      w'_2 = s_{\alpha_3 + \alpha_2 + \alpha_4}, \quad
      w'_3 = s_{\alpha_1 + \alpha_2 + \alpha_4}.
   \]
   The three roots $\alpha_1 + \alpha_2 + \alpha_3, \alpha_1 + \alpha_2 + \alpha_4, \alpha_2 + \alpha_3 + \alpha_4$ are mutually orthogonal.
   For $I = \{\alpha_1, \alpha_3, \alpha_4\}$, the group $W(G, M_I)$ is generated by the class of $s_{\alpha_1 + 2 \alpha_2 + \alpha_3 + \alpha_4}$.
\end{lemma}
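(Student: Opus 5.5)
The plan is to apply Lemma~\ref{lemma:Weyl-group-normalizer} and Lemma~\ref{lemma:computation-normalizer} together with Remark~\ref{remark:levi-weyl}. These identify $W(G,M_I)$ with $N_W(W_I)/W_I \cong W_2 \rtimes N_W(I_1,I_2)$, where $\Phi_1=\Phi_I$ and $\Phi_2=\Phi_I^{\perp}\cap\Phi$. In each case we compute $\Phi_2$ and check that $N_W(I_1,I_2)$ contributes nothing beyond $W_2$. The generators of $W(G,M_I)$ are then the reflections in the simple roots of $\Phi_2$.

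\textbf{Case $I=\{\alpha_2\}$.} We must find all positive roots of $D_4$ orthogonal to $\alpha_2$. In the simply laced setting, $\langle\gamma,\alpha_2^\vee\rangle=0$ can be read off from coefficients via the Cartan matrix: for $\gamma=\sum c_i\alpha_i$, the pairing is $2c_2-c_1-c_3-c_4$. Running through the twelve positive roots, the solutions are exactly
\begin{align*}
\alpha_1+\alpha_2+\alpha_3,\quad \alpha_1+\alpha_2+\alpha_4,\quad \alpha_2+\alpha_3+\alpha_4.
\end{align*}
Pairwise inner products are computed the same way. For instance,
\begin{align*}
(\alpha_1+\alpha_2+\alpha_3,\ \alpha_1+\alpha_2+\alpha_4)=2+2-1-1-1+0-1+0+0+\dots,
\end{align*}
and a direct Cartan-matrix evaluation gives $0$. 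Hence $\Phi_2$ is of type $A_1^3$, $W_2\cong(\Z/2)^3$, and $W_2$ is generated by $w'_1,w'_2,w'_3$.

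It remains to see that $\Aut_W(I_1)$ is trivial. Since $\Phi_1=A_1$ and $I_1=\{\alpha_2\}$, the diagram $I_1$ has no nontrivial automorphism, so $N_W(W_1)=W_1\times W_2$. The quotient by $W_1$ is therefore $W_2$.

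\textbf{Case $I=\{\alpha_1,\alpha_3,\alpha_4\}$.} A root orthogonal to $\alpha_1,\alpha_3,\alpha_4$ must satisfy $2c_i=c_2$ for $i=1,3,4$. Among positive roots, only the highest root $\alpha_1+2\alpha_2+\alpha_3+\alpha_4$ does, so $\Phi_2=A_1$.

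The main obstacle is $\Aut_W(I_1)$. The diagram $A_1^3$ has symmetry group $S_3$, and we need that no element of $W$ induces a nontrivial permutation of $\{\alpha_1,\alpha_3,\alpha_4\}$. The plan is to use the dual Levi $M_I\cong(\GL_2^3\times\GL_1)/\GL_1^3$ from Remark~\ref{remark:explicit-description-theta-invariant-Levi}. The three $\GL_2$ factors are distinguished by which of the three eight-dimensional representations (standard and the two half-spins) their $\SL_2$ acts nontrivially on. Weyl group elements are inner, so they preserve these representations, and hence they cannot permute the factors; only triality can.

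Concretely, an inner element preserves the restriction of the weights of $\Std$ to $Z(M_I)$, whereas a permutation of $\{\alpha_1,\alpha_3,\alpha_4\}$ is realised only by a diagram automorphism. Equivalently, any $w\in W$ with $w(I)=I$ acts on the fundamental coweights $\varpi^\vee_{\alpha_1},\varpi^\vee_{\alpha_3},\varpi^\vee_{\alpha_4}$ modulo the coroot lattice, which are the three distinct nonzero elements of $P^\vee/Q^\vee$. Since $W$ acts trivially on $P^\vee/Q^\vee$, $w$ cannot permute them. Thus $\Aut_W(I_1)=1$, and $W(G,M_I)\cong W_2$ is generated by $s_{\alpha_1+2\alpha_2+\alpha_3+\alpha_4}$.

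This $P^\vee/Q^\vee$ argument applies equally to the first case, and the remaining steps are routine coefficient checks.
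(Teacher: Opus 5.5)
Your route is the paper's: its proof is the single remark that, by Lemma~\ref{lemma:Weyl-group-normalizer}, it suffices to find the roots orthogonal to $I$. Your computations of $\Phi_2$ are correct: three mutually orthogonal roots, namely $e_1-e_4$, $e_1+e_4$, $e_2+e_3$, in the first case, and $\pm(\alpha_1+2\alpha_2+\alpha_3+\alpha_4)$ in the second.

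You are more careful than the paper in noticing that for $I=\{\alpha_1,\alpha_3,\alpha_4\}$ one must also show $\Aut_W(I_1)=1$. However, the step you rely on is misstated. An element $w$ with $w(I)=I$ need not permute the fundamental coweights. For example, the reflection in the highest root $\beta=e_1+e_2$ fixes $I$ pointwise, yet sends $\varpi^\vee_{\alpha_1}=e_1$ to $-e_2$. What does hold is the following. If $w(\alpha_j)=\alpha_{\sigma(j)}$ for $j\in\{1,3,4\}$, then $w\varpi^\vee_{\alpha_i}$ and $\varpi^\vee_{\alpha_{\sigma(i)}}$ have the same pairings with $\alpha_1,\alpha_3,\alpha_4$. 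Hence they differ by an integer multiple of $\varpi^\vee_{\alpha_2}$. You then need the extra fact
\[
\varpi^\vee_{\alpha_2}=\alpha_1^\vee+2\alpha_2^\vee+\alpha_3^\vee+\alpha_4^\vee\in Q^\vee
\]
to conclude that $w\varpi^\vee_{\alpha_i}\equiv\varpi^\vee_{\alpha_{\sigma(i)}}\pmod{Q^\vee}$. Only then does triviality of the $W$-action on $P^\vee/Q^\vee$ force $\sigma=1$. With this line inserted, your argument is complete.

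Your representation-theoretic heuristic is also inaccurate as stated. Every root $\SL_2$ of $D_4$ acts nontrivially on all three eight-dimensional representations. What actually distinguishes the factor attached to $\alpha_i$, for $\{i,j,k\}=\{1,3,4\}$, is the restriction pattern. The eight-dimensional representation whose highest weight is the fundamental weight of $\alpha_i$ restricts to the three $\SL_2$ factors as $2V_i\oplus V_j\boxtimes V_k$, where $V_i$ is the standard representation of the $i$th factor. With that correction, the inner-automorphism argument also works.

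The most elementary check is in coordinates $\alpha_1=e_1-e_2$, $\alpha_3=e_3-e_4$, $\alpha_4=e_3+e_4$. If a signed permutation $w$ moved $\alpha_1$, then $w(e_3)=\tfrac12(w\alpha_3+w\alpha_4)$ would be $\tfrac12(\alpha_1+\alpha_3)$ or $\tfrac12(\alpha_1+\alpha_4)$, which is not of the form $\pm e_k$. Any $w$ swapping $\alpha_3$ and $\alpha_4$ while fixing $\alpha_1$ has an odd number of sign changes, so it lies in $W(B_4)$ but not in $W(D_4)$.
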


\begin{proof}
   By Lemma \ref{lemma:Weyl-group-normalizer}, it suffices to compute the roots in $D_4$ which are orthogonal to $I$.
\end{proof}

\begin{corollary}\label{corollary:regular-elements-theta-stable-cases}
   In the setting of the lemma above, the regular elements in $W(G \rtimes \theta, M_I) = W(G, M_I) \rtimes \theta$ are as follows.
   \[
      \renewcommand{\arraystretch}{1.3}
      \begin{array}{@{}l@{\quad}l@{}}
         I & \text{Regular elements} \\ \hline
         \{\alpha_2\} & w'_1 \rtimes \theta, \quad w'_2 \rtimes \theta, \quad w'_3 \rtimes \theta, \quad w'_1 w'_2 w'_3 \rtimes \theta \\
         \{\alpha_1, \alpha_3, \alpha_4\} & s_{\alpha_1 + 2 \alpha_2 + \alpha_3 + \alpha_4} \rtimes \theta
      \end{array}
   \]
\end{corollary}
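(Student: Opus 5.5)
The plan is to compute directly on $\fa_{M_I}=X_*(Z(M_I))\otimes\R$, which has dimension $3$ for $I=\{\alpha_2\}$ and $1$ for $I=\{\alpha_1,\alpha_3,\alpha_4\}$. An element $w\rtimes\theta$ of $W(G,M_I)\rtimes\theta$ is regular exactly when it has no nonzero fixed vector on $\fa_{M_I}$, that is, when it acts on $\fa_{M_I}$ without eigenvalue $1$. Modulo $\fa_G=0$ (the group is adjoint), this is the condition to check. By the preceding lemma, $W(G,M_I)$ is realized by elements of $W_2$ orthogonal to $I$, since $\Aut_W(I)$ is trivial for $\{\alpha_2\}$ and is realized through $W_2$ by Lemma~\ref{lemma:computation-normalizer}. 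We may therefore let each element act on the orthogonal complement of $I$, identified with $\fa_{M_I}$.

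For $I=\{\alpha_1,\alpha_3,\alpha_4\}$, $\fa_{M_I}$ is spanned by $\varpi_{\alpha_2}^{\vee}$, which $\theta$ fixes. The trivial element composed with $\theta$ therefore fixes a line and is not regular. The reflection $s_{\alpha_1+2\alpha_2+\alpha_3+\alpha_4}$ in the highest root acts by $-1$ on this line, since the highest root pairs nontrivially with $\varpi_{\alpha_2}^\vee$ and is orthogonal to $I$. So $s\rtimes\theta$ acts by $-1$ and is regular. This settles the second row.

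For $I=\{\alpha_2\}$, take the basis of $\fa_{M_I}$ given by the coroots of the three mutually orthogonal roots $\beta_1=\alpha_1+\alpha_2+\alpha_3$, $\beta_2=\alpha_2+\alpha_3+\alpha_4$ and $\beta_3=\alpha_1+\alpha_2+\alpha_4$. These lie in $I^\perp$, and they form a basis of it because $\dim=3$. In this basis $W(G,M_I)\cong(\Z/2)^3$ acts by sign changes. Since $\theta$ cyclically permutes $\alpha_1,\alpha_3,\alpha_4$ and fixes $\alpha_2$, it permutes the $\beta_j$ cyclically. Thus $w\rtimes\theta$ acts as $\diag(\epsilon_1,\epsilon_2,\epsilon_3)$ times a $3$-cycle permutation matrix. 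The cube of such a map is the scalar $\epsilon_1\epsilon_2\epsilon_3$. If this product is $-1$, a fixed vector $v$ would satisfy $v=-v$, so there is none and the element is regular. If the product is $+1$, the eigenvalues are the cube roots of unity, so $1$ occurs and the element is not regular. The regular elements are therefore those with an odd number of sign changes: the three single reflections $w'_j\rtimes\theta$ and the product $w'_1w'_2w'_3\rtimes\theta$, as claimed.

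The main point to verify carefully is that the reflections $w'_j$ represent the classes in $W(G,M_I)$ and act on $\fa_{M_I}$ as the stated sign changes. For this we use Lemma~\ref{lemma:Weyl-group-normalizer}, which says $W(G,M_I)=W_2$ here, together with the explicit permutation action of $\theta$ on the $\beta_j$ obtained by applying $\alpha_1\mapsto\alpha_3\mapsto\alpha_4\mapsto\alpha_1$.
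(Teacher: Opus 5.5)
Your proposal is correct and is essentially the computation the paper leaves implicit: using the generators of $W(G,M_I)$ from the preceding lemma, one computes the action of $w\rtimes\theta$ on $\fa_{M_I}$. In the second case this action is $\pm1$ on the line $\R\varpi_{\alpha_2}^{\vee}$, and in the first it is a signed $3$-cycle on the coroots $\beta_1^{\vee},\beta_2^{\vee},\beta_3^{\vee}$. One step should be stated explicitly: a signed $3$-cycle has characteristic polynomial $x^3-\epsilon_1\epsilon_2\epsilon_3$, and this is what justifies your claim that the eigenvalue $1$ occurs when $\epsilon_1\epsilon_2\epsilon_3=1$.
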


\begin{remark}\label{remark:conjugacy-of-regular-elements-dual-long-root}
  For $I = \{ \alpha_2 \}$, conjugation by $W(G, M_{I})$ gives
  \begin{align*}
   \begin{cases}
      \Ad(w_1')(w_1' \rtimes \theta) = w_2' \rtimes \theta, \\
      \Ad(w_2')(w_2' \rtimes \theta) = w_3' \rtimes \theta, \\
      \Ad(w_2')(w_1' \rtimes \theta) = w_1'w_2'w_3' \rtimes \theta, \\
   \end{cases}
  \end{align*}
  Thus all four regular elements are $W(G, M_I)$-conjugate.
\end{remark}

\begin{lemma}\label{lemma:regular-element-not-theta-stable-case}
  In the case of $I = \{\alpha_1, \alpha_2\}$, the group $W(G, M_{I})$ is generated by the element
  \begin{align*}
     w'_1 = w_0 w_0^{M_I},
  \end{align*}
  where $w_0$ is the longest element in the Weyl group of $G$ and $w_0^{M_I}$ is the longest element in the Weyl group of $M_I$.
  If we set $w'_2 = s_4 s_2 s_3 s_1 s_2 s_4$, then
  \begin{align*}
     w'_2 (\alpha_3) = \alpha_1, \\
     w'_2 (\alpha_2) = \alpha_2,
  \end{align*}
  Thus, the map $w'_2 \rtimes \theta$ preserves $\alpha_1$ and $\alpha_2$.

  The characteristic polynomial of $w'_2 \rtimes \theta$ on the space $\fa_{M_I}^{*}$ is given by
  \begin{align*}
     X^2 + X + 1.
  \end{align*}
  Similarly, the characteristic polynomial of $w'_1 w'_2 \rtimes \theta$ on $\fa_{M_I}^{*}$ is given by
  \begin{align*}
     X^2 - X + 1.
  \end{align*}
\end{lemma}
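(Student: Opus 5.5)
The plan is a direct computation in the root lattice of $D_4$ with Bourbaki labelling, $\alpha_2$ the central node. Throughout I take $\theta$ to act on simple roots by $\alpha_1\mapsto\alpha_3\mapsto\alpha_4\mapsto\alpha_1$, fixing $\alpha_2$; the other choice of $\theta$ is handled by relabelling.

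\textbf{Step 1: generator of $W(G,M_I)$.} I will apply Lemma~\ref{lemma:Weyl-group-normalizer} with $\Phi_1=\{\pm\alpha_1,\pm\alpha_2,\pm(\alpha_1+\alpha_2)\}$, of type $A_2$. I compute the roots orthogonal to $\alpha_1$ and $\alpha_2$; these are $\pm(\alpha_1+2\alpha_2+\alpha_3+\alpha_4)$ and $\pm(\alpha_3-\alpha_4)$. Since $\alpha_3-\alpha_4$ is not a root, only the first pair survives, so $\Phi_2$ is of type $A_1$ with reflection $s_{\alpha_1+2\alpha_2+\alpha_3+\alpha_4}$.

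Next I check which diagram automorphisms of $I$ are induced by $W$. The element $w_0=-1$ in $D_4$, so $w_0w_0^{M_I}$ sends $\alpha_1\mapsto\alpha_2$ and $\alpha_2\mapsto\alpha_1$; this is the nontrivial automorphism of $I$. On $\fa_{M_I}^*$, which is two-dimensional, it acts by $-1$ composed with the action of $w_0^{M_I}$. I expect $W(G,M_I)$ to be cyclic of order two generated by $w_1'$. To confirm this, I will verify that $s_{\alpha_1+2\alpha_2+\alpha_3+\alpha_4}$ and $w_1'$ give the same class modulo $W_1$, for instance by checking their actions on $\fa_{M_I}^*$ spanned by $\varpi_3,\varpi_4$. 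Both should act by $-1$ on $\fa_{M_I}^*$, because $w_0w_0^{M_I}$ acts as $-w_0^{M_I}$ and $w_0^{M_I}$ fixes $\varpi_3,\varpi_4$. This lets me conclude that $W(G,M_I)=\{1,[w_1']\}$.

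\textbf{Step 2: the identities for $w_2'$.} I will apply the simple reflections in $w_2'=s_4s_2s_3s_1s_2s_4$ one at a time, rightmost first, to $\alpha_3$ and then to $\alpha_2$. This is a finite computation using Cartan integers $\langle\alpha_i,\alpha_2^\vee\rangle=-1$ for $i=1,3,4$. Granting $\theta(\alpha_1)=\alpha_3$ and $\theta(\alpha_2)=\alpha_2$, the composite $w_2'\circ\theta$ then sends $\alpha_1\mapsto\alpha_1$ and $\alpha_2\mapsto\alpha_2$. Hence it normalizes $M_I$ and induces the identity on $I$.

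\textbf{Step 3: characteristic polynomials.} The space $\fa_{M_I}^*$ has basis $\varpi_3,\varpi_4$, the fundamental weights orthogonal to the coroots of $I$. I will write $w_2'\theta$ in this basis, using $\theta(\varpi_3)=\varpi_4$ and $\theta(\varpi_4)=\varpi_1$. The element $\varpi_1$ must then be moved by $w_2'$ back into $\fa_{M_I}^*$; this can only happen modulo the span of the roots in $I$, so I project orthogonally onto $\fa_{M_I}^*$.

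The main obstacle is carrying out this projection cleanly. To do it, I will express $\varpi_1,\varpi_3,\varpi_4$ in the $\varepsilon$-coordinates of $D_4$ and apply $w_2'$ explicitly as a signed permutation. Because $(w_2'\theta)^3$ preserves $\fa_{M_I}^*$ and the composite has finite order with no fixed vector, I expect a $2\times2$ integer matrix of trace $-1$ and determinant $1$. That gives characteristic polynomial $X^2+X+1$.

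For $w_1'w_2'\theta$, I compose with $w_1'$, which by Step 1 acts as $-1$ on $\fa_{M_I}^*$. The matrix is therefore negated: the trace becomes $1$ and the determinant stays $1$. This gives characteristic polynomial $X^2-X+1$.
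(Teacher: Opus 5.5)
Step~1 rests on a false computation. The root $\alpha_1+2\alpha_2+\alpha_3+\alpha_4=e_1+e_2$ is the highest root. It is orthogonal to $\alpha_1,\alpha_3,\alpha_4$ but not to $\alpha_2$, since $\langle e_1+e_2,e_2-e_3\rangle=1$; it belongs to the case $I=\{\alpha_1,\alpha_3,\alpha_4\}$, not this one. In fact no root of $D_4$ is orthogonal to both $\alpha_1=e_1-e_2$ and $\alpha_2=e_2-e_3$. Such a root would need equal coefficients on $e_1,e_2,e_3$, whereas $\pm e_i\pm e_j$ has exactly two nonzero coordinates. Hence $\Phi_2=\emptyset$.

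Your version is also internally inconsistent. With $W_2$ of order $2$ and $\Aut_W(I)$ nontrivial, Lemma~\ref{lemma:Weyl-group-normalizer} would give $\abs{W(G,M_I)}=4$, not $2$. The planned check that $s_{\alpha_1+2\alpha_2+\alpha_3+\alpha_4}$ and $w_1'$ agree modulo $W_1$ cannot succeed: elements of $W_2$ fix $I$ pointwise, whereas $w_1'$ induces the swap $\alpha_1\leftrightarrow\alpha_2$. Concretely, $s_{e_1+e_2}$ sends $\alpha_2$ to $-(e_1+e_3)\notin\Phi_1$, so it does not normalize $W_1$. It also moves $\varpi_3$ off the plane $\fa_{M_I}^*=\{c_1=c_2=c_3\}$, so the ``both act by $-1$'' test fails too.

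The repair is immediate. Since $W_2=1$, we get $W(G,M_I)\cong\Aut_W(I)\subset\Aut(A_2)\cong\Z/2\Z$, and $w_1'$ realizes the nontrivial automorphism because $w_0=-1$.

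Steps~2 and~3 are correct, with two remarks.

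First, in Step~3 no projection is needed, and the expectation that $w_2'\varpi_1$ lands in $\fa_{M_I}^*$ only modulo the roots of $I$ is mistaken. The map $w_2'\theta$ is an isometry fixing $\alpha_1$ and $\alpha_2$, so it preserves $\{\alpha_1,\alpha_2\}^\perp=\fa_{M_I}^*$ exactly. Explicitly, $w_2'$ is the signed permutation $e_1\mapsto-e_4$, $e_2\mapsto-e_3$, $e_3\mapsto-e_2$, $e_4\mapsto-e_1$. Hence $w_2'\theta(\varpi_3)=w_2'(\varpi_4)=-\varpi_4$ and $w_2'\theta(\varpi_4)=w_2'(\varpi_1)=-e_4=\varpi_3-\varpi_4$. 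This matrix has trace $-1$ and determinant $1$. Your negation argument then correctly gives $X^2-X+1$ for $w_1'w_2'\rtimes\theta$.

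Second, the identity $w_2'\theta(\alpha_1)=\alpha_1$ genuinely requires $\theta(\alpha_1)=\alpha_3$, because $w_2'(\alpha_4)=-(e_1+e_2)$. For the other triality, relabelling must also replace $w_2'$ by $s_3s_2s_4s_1s_2s_3$.
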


\begin{remark}
  Let $M'$ be the Levi subgroup of $\GSO_8$ corresponding to $I = \{\alpha_1, \alpha_2\}$.
  We have the isomorphism
  \begin{align*}
     \GL_3 \times \GL_1 \times \GL_1 \xrightarrow{\sim} M'_{\alpha_1, \alpha_2} \colon (g, a, b) \mapsto \diag(g, a, b, ab g^{\vee}).
  \end{align*}
  The center $\GL_1$ is embedded into $\GL_3 \times \GL_1 \times \GL_1$ by $t \mapsto (t 1_{\GL_3}, t, t)$, and the quotient $(\GL_3 \times \GL_1 \times \GL_1) / \GL_1$ is isomorphic to $M_I$.
  The action of $w'_2 \rtimes \theta$ on $M_I$ is given by
  \begin{align*}
    (g, a, b) \mapsto (g, \det(g)/ab, a).
  \end{align*}

  Similarly, the action of $w'_1 w'_2 \rtimes \theta$ on $M_I$ is given by
  \begin{align*}
    (g, a, b) \mapsto (g^{\vee}, ab/\det(g), a^{-1}).
  \end{align*}
\end{remark}

For $I = \emptyset$, we use the following fact.
\begin{lemma}
  We have
  \begin{align*}
    W(F_4) = W(D_4) \rtimes S_3.
  \end{align*}
\end{lemma}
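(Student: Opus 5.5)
The plan is to realize $W(D_4)\rtimes S_3$ inside $W(F_4)$ through the standard embedding of the $D_4$ root system into the $F_4$ root system. In the usual coordinates on $\R^4$, the $F_4$ roots are $\pm e_i\pm e_j$ (long), $\pm e_i$ and $\tfrac12(\pm e_1\pm e_2\pm e_3\pm e_4)$ (short). The long roots form a $D_4$ subsystem $\Phi_l$.

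First, $W(D_4)=W(\Phi_l)$ is normal in $W(F_4)$. Every $w\in W(F_4)$ preserves root lengths, so it permutes $\Phi_l$. Conjugating a reflection $s_\gamma$ with $\gamma\in\Phi_l$ gives $s_{w\gamma}$ with $w\gamma\in\Phi_l$. Since $W(\Phi_l)$ is generated by such reflections, it is normal.

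Next, we obtain the quotient map. Restriction gives a homomorphism $W(F_4)\to\Aut(\Phi_l)$. Its kernel is trivial, since $\Phi_l$ spans $\R^4$. We have $\Aut(\Phi_l)=W(D_4)\rtimes\Aut(\text{Dynkin diagram})=W(D_4)\rtimes S_3$. Composing with the projection gives $W(F_4)\to S_3$ with kernel $W(D_4)$.

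The key step is surjectivity onto $S_3$. Comparing orders, $|W(F_4)|=1152=192\cdot 6=|W(D_4)|\cdot|S_3|$. Hence $W(F_4)/W(D_4)$ has order $6$ and injects into $S_3$, so the map is an isomorphism.

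Finally, we produce a splitting. Fix the simple system $\Delta_l$ of $\Phi_l$ given by the positive long roots with respect to a positive system of $F_4$. Let $N$ be the stabilizer of $\Delta_l$ in $W(F_4)$. Since $W(D_4)$ acts simply transitively on simple systems of $\Phi_l$, we get $N\cap W(D_4)=1$ and $W(F_4)=W(D_4)N$. Therefore $N\cong S_3$ and $W(F_4)=W(D_4)\rtimes N$, with $N$ acting by the diagram automorphisms. This is compatible with the pinned action of $\theta$ used in the paper.

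The only nontrivial input is the order count $1152$ together with $|W(D_4)|=192$. Alternatively, one can check surjectivity directly: the reflections in the short roots $e_4$ and $\tfrac12(e_1+e_2+e_3+e_4)$ induce a transposition and a $3$-cycle on the outer nodes. I expect no real obstacle here.
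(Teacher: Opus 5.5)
The paper states this lemma as a standard fact and gives no proof, so there is no argument to compare yours against. Your proof is correct and complete:
\begin{itemize}
\item the injection $W(F_4)\hookrightarrow\Aut(\Phi_l)$ is valid;
\item the kernel of $W(F_4)\to\Aut(\Phi_l)/W(\Phi_l)\cong S_3$ is $W(D_4)$, using that $W(D_4)\subset W(F_4)$ and that restriction to $\Phi_l$ is injective;
\item the order count $1152=192\cdot 6$ gives surjectivity;
\item the stabilizer of a simple system of $\Phi_l$ splits the sequence.
\end{itemize}
That stabilizer permutes the basis $\Delta_l$ while preserving inner products, so it acts on $\R\Phi_l$ exactly by the pinned diagram automorphisms. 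This is what the paper needs for the elements $w\rtimes\theta$.

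You can shorten the last step. Since $W(F_4)$ injects into $\Aut(\Phi_l)$ and both groups have order $1152$, restriction is already an isomorphism $W(F_4)\cong\Aut(\Phi_l)=W(D_4)\rtimes\Aut(\text{Dynkin})$.

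Your optional direct check needs a correction. A reflection has order $2$, so its image in $S_3$ cannot be a $3$-cycle. Take $\alpha_1=e_1-e_2$, $\alpha_2=e_2-e_3$, $\alpha_3=e_3-e_4$, $\alpha_4=e_3+e_4$.
\begin{itemize}
\item The reflection $s_{e_4}$ preserves $\{\alpha_1,\dots,\alpha_4\}$ and induces $(\alpha_3\,\alpha_4)$.
\item Put $\gamma=\frac12(e_1+e_2+e_3+e_4)$. Then $s_\gamma$ fixes $\alpha_1,\alpha_2,\alpha_3$ and sends $\alpha_4$ to $-e_1-e_2$. Compose with the element of $W(D_4)$ given by $e_1\mapsto -e_4$, $e_2\mapsto -e_3$, $e_3\mapsto -e_2$, $e_4\mapsto -e_1$; it has four sign changes, so it lies in $W(D_4)$. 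The composite preserves the base and induces $(\alpha_1\,\alpha_3)$.
\end{itemize}
These two distinct transpositions generate $S_3$, and their product is a $3$-cycle. So the check works once you phrase it as two transpositions rather than a transposition and a $3$-cycle.
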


Together with \cite{Carter1972-ConjugacyClassesWeylGroup}, this gives the following classification.

\begin{lemma}\label{lemma:classification-regular-conjugacy-classes-Weyl-elements}
    For $w \in W(D_4)$ with $u = w \rtimes \theta$ regular, the characteristic polynomial, Carter type \cite{Carter1972-ConjugacyClassesWeylGroup}, $W(D_4)$-conjugacy class size, and character group are given by
    \[
      \renewcommand{\arraystretch}{1.3}
      \begin{array}{@{}c@{\quad}l@{\quad}c@{\quad}c@{\quad}c@{}}
          & \text{Characteristic polynomial} & \text{Carter type} & \text{Class size} & X^*(\widehat{T}^{u}) \\ \hline
        (a) & x^4 - x^2 + 1 & F_4 & 48 & 0 \\
        (b) & (x^2 - x + 1)^2 & F_4(a_1) & 8 & 0 \\
        (c) & (x^2 + x + 1)^2 & A_2 + \widetilde{A_2} & 8 & (\mathbb{Z}/3\mathbb{Z})^{\oplus 2} \\
        (d) & (x + 1)^2(x^2 - x + 1) & C_3 + A_1 & 16 & (\mathbb{Z}/2\mathbb{Z})^{\oplus 2}
      \end{array}
    \]
    The characteristic polynomial determines the $W(D_4)$-conjugacy class of $u$, and
    \[
      X^*(\widehat{T}^{u}) \cong X^*(\widehat{T})/(1-u)X^*(\widehat{T}).
    \]
\end{lemma}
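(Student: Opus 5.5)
We reduce everything to explicit linear algebra on $X_*(T)\cong\Z^4$ (equivalently on $X^*(\widehat T)$, since $D_4$ is simply laced and self-dual), using the identification $W(F_4)=W(D_4)\rtimes S_3$ from the preceding lemma. The elements $u=w\rtimes\theta$ with $w\in W(D_4)$ form the coset $W(D_4)\theta$ inside $W(F_4)$. Regularity of $u$ on $\fa_T^*$ means that $u$ has no nonzero fixed vector, i.e.\ $1$ is not an eigenvalue. Since $u^3\in W(D_4)$, the order of $u$ is divisible by $3$. The conjugacy classes of $W(F_4)$ are listed by Carter \cite{Carter1972-ConjugacyClassesWeylGroup} together with their characteristic polynomials. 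From that list we keep the classes lying in the coset $W(D_4)\theta$ (not in $W(D_4)$ and not in $W(D_4)\theta^{-1}\cdot$ odd part) whose characteristic polynomial has no factor $x-1$. The expected survivors are exactly the Carter types $F_4$, $F_4(a_1)$, $A_2+\widetilde{A_2}$ and $C_3+A_1$, with the characteristic polynomials in the table. To decide which classes meet the coset, I would use the sign/$S_3$-quotient map $W(F_4)\to S_3$: a class lies in $W(D_4)\theta\cup W(D_4)\theta^{-1}$ exactly when its image is a $3$-cycle. This can be tested on a representative, e.g.\ a Coxeter element $s_1s_2s_3s_4$ of $F_4$.

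Next we count $W(D_4)$-classes inside the coset. Every $W(F_4)$-class $C$ meeting $W(D_4)\theta$ satisfies $|C\cap W(D_4)\theta|=|C|/2$, since $C$ is split evenly between $\theta$ and $\theta^{-1}$ by conjugation by an element of $S_3$ inverting $\theta$. We then check that $C\cap W(D_4)\theta$ is a single $W(D_4)$-orbit. By orbit–stabilizer, this amounts to showing $Z_{W(F_4)}(u)\subset W(D_4)\langle\theta\rangle$, i.e.\ that the centralizer of $u$ maps into $A_3$. Carter's centralizer orders ($|C|$ equals $1152/|Z|$) then give the class sizes $48,8,8,16$. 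As a sanity check, their sum is $80$, and the remaining elements of the coset ($192-80=112$) are exactly those with eigenvalue $1$. Consequently, within the regular part of the coset the characteristic polynomial determines the $W(D_4)$-class, because the four polynomials are distinct.

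For the last column, $\widehat T^{u}$ is a diagonalizable group, and its character group is the coinvariants $X^*(\widehat T)_u=X^*(\widehat T)/(1-u)X^*(\widehat T)$. This is the standard duality and gives the displayed isomorphism. Since $u$ is regular, $1-u$ is injective, so this group is finite of order $|\det(1-u)|=|P(1)|$, where $P$ is the characteristic polynomial. The values are $1,1,9,4$ respectively. This immediately gives $0$ in cases (a) and (b). In case (c), we need the group to be $(\Z/3)^2$ rather than $\Z/9$. I would compute on an explicit representative, for instance $u=c\rtimes\theta$ with $(1-u)$ written as an integral matrix in the basis of simple roots, and take its Smith normal form. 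Alternatively, note that $u$ has order $3$ in case (c), so $1+u+u^2=0$ on the lattice. Then $(1-u)^2=-3u$, hence $3$ annihilates the cokernel, and the group is $(\Z/3)^2$. In case (d), $u^6=1$ and $u^3$ acts as $-1$ on the $(x+1)^2$ part. A Smith normal form computation on a representative (e.g.\ $s_{\alpha_1+2\alpha_2+\alpha_3+\alpha_4}$-type elements from Corollary~\ref{corollary:regular-elements-theta-stable-cases} composed with a $\theta$-Coxeter element) should give $(\Z/2)^2$ rather than $\Z/4$.

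The main obstacle is the bookkeeping of matching Carter's $F_4$ class list to the coset $W(D_4)\theta$ and proving that each intersection is a single $W(D_4)$-orbit, which requires the centralizer computation. If this gets unwieldy, I would instead brute-force it: enumerate the $192$ elements $w\theta$ as $4\times4$ integer matrices, compute their characteristic polynomials, and verify the orbit decomposition under $W(D_4)$-conjugation directly.
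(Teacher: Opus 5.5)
Your proposal is correct and takes essentially the paper's route. Like the paper, you restrict Carter's classification of $W(F_4)=W(D_4)\rtimes S_3$ to the coset $W(D_4)\theta$ and compute the character group as the cokernel of $1-u$; your case (c) annihilator argument is exactly the paper's.

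Two steps are easier than you expect. First, any $g$ with $gug^{-1}=u$ maps into $C_{S_3}(\theta)=A_3$, so the single-orbit claim needs no centralizer computation. Second, in case (d) one has $u^3=-1$ on the whole lattice, not just on the $(x+1)^2$ part. Hence $2=(1-u)(1+u+u^2)$ kills the cokernel, as in the paper's $\bF_2$-module argument, and no Smith normal form is needed.
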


\begin{proof}
    For the character groups, the module $X^*(\widehat{T})/(1 - w \rtimes \theta)X^*(\widehat{T})$ is a module over $\Z[x]/(x^2 + x + 1, x-1) \cong \bF_3$ or $\Z[x]/(x^3 + 1, x-1) \cong \bF_2$.
    As we can see the cardinality of each module from the characteristic polynomial, we obtain the result.
\end{proof}

\begin{remark}\label{remark:explicit-description-regular-Weyl-elements}
    We give an example of $w \rtimes \theta$ in each case above.
    \begin{enumerate}[label=(\alph*), align=left]
        \item
        $s_4 s_2 s_3 s_1 s_2 s_4 s_1 s_2 s_3 s_1 \rtimes \theta$
        \item
        $s_1s_3s_4s_2 \rtimes \theta$
        \item
        $s_3s_2s_4s_1s_2s_3s_1s_2 \rtimes \theta$.
        \item
        $w_0 \rtimes \theta$
    \end{enumerate}

    In the cases (a) and (b), the character $W_F \to \widehat{T}^{w \rtimes \theta}$ must be trivial and the group $W_0(\sigma) = W(G, T) \neq \{ 1 \}$.
\end{remark}

\subsection{Discrete series representations of Levi subgroups fixed by regular elements in the Weyl set}

The following classification uses Remark~\ref{remark:explicit-description-theta-invariant-Levi} and the $R$-group computation of \cite{Takanashi2026-HiragaIchinoIkedaConjectureFormalDegrees}*{Section 9.1}.

\begin{lemma}\label{lemma:weyl-fixed-representation-long-dual-root}
   Let $\sigma = \sigma' \otimes \chi_1 \otimes \chi_2 \otimes \chi_3$ be a discrete series representation of $M_{\alpha_2}(F) = (\GL_2(F) \times \GL_1(F)^3) / \GL_1(F)$, where $\sigma'$ is a discrete series representation of $\GL_2(F)$ and $\chi_i$ are characters of $\GL_1(F)$.

   The representation $\sigma$ is $w'_1 w'_2 w'_3 \rtimes \theta$-invariant if and only if $\chi_1 = \chi_2 = \chi_3$, $ \chi_1^2 = \mathbf{1}$ and $\sigma' \otimes \chi_1 \cong \sigma'$.
   In this case, we have $\omega_{\sigma'} = \chi_1$ and $\sigma' = \sigma'^{\vee}$.
   We have $W_0(\sigma) = \{1\}$ if and only if $\chi_1 \neq \mathbf{1}$.
\end{lemma}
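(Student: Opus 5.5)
I will compute the action of $w'_1w'_2w'_3\rtimes\theta$ on $M_{\alpha_2}$ explicitly, using the coordinates of Remark~\ref{remark:explicit-description-theta-invariant-Levi}. Then I read off the invariance condition, and finally compute $W_0(\sigma)$ from rank-one Plancherel measures.

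\textbf{The twisted action.} The three reflections $w'_i$ are in the mutually orthogonal roots $\gamma_1=\alpha_1+\alpha_2+\alpha_3$, $\gamma_2=\alpha_2+\alpha_3+\alpha_4$, $\gamma_3=\alpha_1+\alpha_2+\alpha_4$, and all are orthogonal to $\alpha_2$. Their product therefore acts by $-1$ on the span of $\gamma_1,\gamma_2,\gamma_3$ and fixes $\alpha_2$. Using the formulas for $\alpha_1,\alpha_3,\alpha_4$ in the coordinates $(\diag(s_1,s_2),t_1,t_2,t_3)$, I will check that $w'_1w'_2w'_3$ sends
\begin{align*}
(g,t_1,t_2,t_3)\mapsto (g,\ \det(g)t_2^{-1}t_3^{-1}\cdots)
\end{align*}
up to central renormalization. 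More invariantly, it acts on $M_{\alpha_2}\cong(\GL_2\times\GL_1^3)/\GL_1$ by $(g,t_1,t_2,t_3)\mapsto(g^{\vee}\cdot c, \dots)$, i.e.\ it sends $\sigma'\otimes\chi_1\otimes\chi_2\otimes\chi_3$ to something of the form $\sigma'^{\vee}\otimes(\text{product of }\chi_j^{-1}\text{ and }\omega_{\sigma'})$. The central character of $\sigma$ is trivial on the diagonal $\GL_1$, which gives $\omega_{\sigma'}\chi_1\chi_2\chi_3=1$. Composing with $\theta$, which cyclically permutes $\chi_1,\chi_2,\chi_3$ by Remark~\ref{remark:explicit-description-theta-invariant-Levi}, I expect the transformed representation to be $\sigma'\otimes\chi^{-1}\otimes\chi_{\theta(1)}^{-1}\cdots$ after twisting $\sigma'$ by a character. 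I will fix the exact shape by evaluating on the maximal torus, using $\alpha_i(t)$ and $s_{\gamma}(x)=x-\langle x,\gamma^\vee\rangle\gamma$.

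\textbf{Reading off the conditions.} Invariance then gives equalities among the $\chi_i$ forced by the 3-cycle, hence $\chi_1=\chi_2=\chi_3$. Combined with the inversion, it also gives $\chi_1^2=\mathbf 1$ and $\sigma'\cong\sigma'^{\vee}\otimes(\text{twist})$. Using $\omega_{\sigma'}=\chi_1^{-3}=\chi_1$ and $\sigma'^{\vee}\cong\sigma'\otimes\omega_{\sigma'}^{-1}$ for $\GL_2$, this should reduce to $\sigma'\otimes\chi_1\cong\sigma'$. The identity $\omega_{\sigma'}=\chi_1$ then yields $\sigma'^{\vee}\cong\sigma'\otimes\chi_1\cong\sigma'$. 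For the converse, I run the same computation backwards.

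\textbf{The $W_0$ statement.} As in Lemma~\ref{lemma:G2-Levi-Weyl-invariance}, $W_0(\sigma)$ is generated by the reflections $w'_i$ whose rank-one Plancherel measure vanishes. For the Levi $M_{\alpha_2}\subset M_{\alpha_2,\gamma_i}$ (type $A_1\times A_1$ or $A_2$), I would compute the dual nilradical. I expect that the relevant $\gamma_A$-factor is the Rankin--Selberg factor of $\sigma'\times\chi_j$-twist against another $\GL_1$ factor, and that it contributes $L(s,\chi_j\chi_k^{-1})$ or $L(s,\phi_{\sigma'}\otimes\chi_j\otimes\ldots)$. Since $\phi_{\sigma'}$ is irreducible, only the abelian piece can have a pole. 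Concretely, the reflection $s_{\gamma_i}$ lies in $W_0$ exactly when $\sigma\circ\gamma_i^\vee$ restricted to the centre is trivial, which under invariance equals $\chi_1$, or $\chi_1\chi_2$-type products, which equal $\chi_1$ when $\chi_1^2=1$. Thus $W_0(\sigma)=\{1\}$ if and only if $\chi_1\neq\mathbf 1$.

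The main obstacle is the careful bookkeeping of the explicit action of $w'_1w'_2w'_3\rtimes\theta$ in the $(\GL_2\times\GL_1^3)/\GL_1$ coordinates. The central quotient and the $\det(g)$ twists are easy to get wrong, so I would verify the result on $T$ against the root formulas before passing to $\sigma'$.
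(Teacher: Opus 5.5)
Your proposal is correct and follows essentially the route the paper indicates: read off the twisted action from the coordinates of Remark~\ref{remark:explicit-description-theta-invariant-Levi}, then decide $W_0(\sigma)$ by rank-one Plancherel measures, which the paper takes from the $R$-group computation of \cite{Takanashi2026-HiragaIchinoIkedaConjectureFormalDegrees}*{Section 9.1}. For the bookkeeping you flag, modulo the diagonal $\GL_1$ one has $w'_1w'_2w'_3\colon(g,t_1,t_2,t_3)\mapsto(g,\det(g)t_1^{-1},\det(g)t_2^{-1},\det(g)t_3^{-1})$ (not the mixed expression in your first display), so $w'_1w'_2w'_3\,\sigma\cong\sigma'^{\vee}\otimes\chi_1^{-1}\otimes\chi_2^{-1}\otimes\chi_3^{-1}$; moreover each relevant rank-one Levi (roots $\pm\alpha_2,\pm\gamma_i$) is of type $A_1\times A_1$, with Plancherel character $\omega_\sigma\circ\gamma_1^\vee=\chi_1\chi_2\chi_3^{-1}$ and its permutations, each equal to $\chi_1$ in the invariant case.
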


\begin{lemma}\label{lemma:weyl-fixed-representation-short-dual-root}
    Let $\sigma = \sigma_1 \otimes \sigma_2 \otimes \sigma_3 \otimes \chi$ be a discrete series representation of
    $M_{\alpha_1, \alpha_3, \alpha_4}(F) = (\GL_2(F)^3 \times \GL_1(F)) / \GL_1^3(F)$, where $\sigma_i$ are discrete series representations of $\GL_2(F)$ and $\chi$ is a character of $\GL_1(F)$.

    Then, the representation $\sigma$ is $s_{\alpha_1 + 2 \alpha_2 + \alpha_3 + \alpha_4} \rtimes \theta$-invariant if and only if $\sigma_1 \cong \sigma_2 \cong \sigma_3 \cong \sigma_1^{\vee}$ and $\chi^2 = 1$.
    In this case, we have $\omega_{\sigma_1} = \chi$.
    We have $W_0(\sigma) = \{1\}$ if and only if there are a quadratic extension $K/F$ and a character $\eta$ of $K^\times$ such that $\chi=\omega_{K/F}$, $\sigma_1=\mathrm{AI}_{K/F}(\eta)$, $\eta_{\restriction_{F^\times}}=1$, and $\eta^3\neq1$.
\end{lemma}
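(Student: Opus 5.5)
The proof has two parts: first the invariance condition, then the criterion for $W_0(\sigma)=\{1\}$.

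\textbf{Invariance.} I would compute explicitly how $w=s_{\alpha_1+2\alpha_2+\alpha_3+\alpha_4}\rtimes\theta$ acts on $M_{\alpha_1,\alpha_3,\alpha_4}\cong(\GL_2^3\times\GL_1)/\GL_1^3$, using the description in Remark~\ref{remark:explicit-description-theta-invariant-Levi}.

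By that remark, $\theta$ cyclically permutes the three $\GL_2$-factors. The root $\beta=\alpha_1+2\alpha_2+\alpha_3+\alpha_4$ is the highest root, and it is orthogonal to $\alpha_1,\alpha_3,\alpha_4$. So $s_\beta$ acts on $M_I$ by the longest-element-type automorphism of the whole group composed with $w_0^{M_I}$. In terms of the factors, this should send each $g_i\mapsto g_i^{\vee}\cdot(\text{central twist})$ and $\chi\mapsto\chi^{-1}$, up to inner automorphisms.

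To pin down the central twist, I would evaluate on the maximal torus, using the coweight basis $\varpi^\vee_{\alpha_i}$ exactly as in the earlier proof for $I=\{\alpha_2\}$. In that basis, $s_\beta\varpi^\vee_{\alpha_2}=\varpi^\vee_{\alpha_2}-\beta^\vee$, and $s_\beta$ fixes $\varpi^\vee_{\alpha_i}$ for $i=1,3,4$ modulo the coroots of $M$. Pulling back $\sigma=\sigma_1\otimes\sigma_2\otimes\sigma_3\otimes\chi$ then gives an invariance condition of the form
\[
\sigma_1\cong\sigma_3^\vee\otimes(\ast),\quad \sigma_2\cong\sigma_1^\vee\otimes(\ast),\quad \sigma_3\cong\sigma_2^\vee\otimes(\ast),\quad \chi=\chi^{-1}.
\]
The quotient by $\GL_1^3$ forces $\omega_{\sigma_i}=\chi$, and this makes the twists cancel. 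Iterating three times shows $\sigma_1\cong\sigma_1^{\vee}$, hence all three factors are isomorphic and self-dual, and $\chi^2=1$. The converse is the same computation read backwards.

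\textbf{The $W_0$ criterion.} This is the main point, and it is modelled on Lemma~\ref{lemma:G2-Levi-Weyl-invariance} and Lemma~\ref{G2-elliptic-GL2-Levi-representations}(2). The relative Weyl group $W(G,M_I)$ has order two, generated by $s_\beta$. So $W_0(\sigma)$ is trivial exactly when the Plancherel measure for $s_\beta$ does not vanish, i.e.\ when $\rho\circ\phi_\sigma$ contains no trivial summand, where $\rho$ is the action of $\widehat{M_I}$ on $\widehat{\fn}$.

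In $\Spin_8$, the dual of this maximal parabolic (the one complementary to the central node) has $\widehat{\fn}=V_1\oplus V_2$ with $\dim V_1=8$ and $\dim V_2=1$. Here $V_1$ is the triple tensor product $\Std\boxtimes\Std\boxtimes\Std$ of the three $\GL_2$-factors, twisted by the central character, and $V_2$ is a character. With $\sigma_i=\sigma_1$ this yields
\[
\widehat{\fn}\circ\phi_\sigma=\phi_{\sigma_1}^{\otimes 3}\otimes\chi^{-1}\oplus\chi.
\]
Using $\phi^{\otimes 2}=\Sym^2\phi\oplus\det\phi$, we get $\phi^{\otimes3}\otimes\det^{-1}=\Sym^3\phi\otimes\det^{-1}\oplus\phi\oplus\phi$. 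The summand $\phi$ is irreducible of dimension two, so it contributes no trivial summand. The condition therefore becomes: $\chi\ne1$ and $\Sym^3\phi_{\sigma_1}\otimes\det^{-1}$ contains no trivial summand. This is exactly the long-root $\rG_2$ condition.

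From there I would conclude as before. Since $\sigma_1$ is self-dual with nontrivial central character $\chi$, Lemma~\ref{lemma:quadratic-character-invariant-discrete-series} gives $\sigma_1=\mathrm{AI}_{K/F}(\eta)$ with $\eta|_{F^\times}=1$ and $\chi=\omega_{K/F}$. Lemma~\ref{lemma:symmetric-cube-dihedral} then gives
\[
\Sym^3\phi_{\sigma_1}\otimes\det\phi_{\sigma_1}^{-1}=\mathrm{AI}_{K/F}(\eta^3)\oplus\mathrm{AI}_{K/F}(\eta).
\]
The second summand is irreducible because $\sigma_1$ is discrete series. The first contains $\mathbf 1$ exactly when $\eta^3=1$, since $\eta^3$ is trivial on $F^\times$.

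\textbf{Expected obstacle.} The hardest step is getting the precise identification of $\widehat{\fn}$ as a $\widehat{M_I}$-module, including the central twists, and confirming $\omega_{\sigma_1}=\chi$ through the quotient. I would check both using the weights of $\widehat{\fn}$: these are the positive roots of $D_4$ with $\alpha_2$-coefficient $1$ (eight roots) or $2$ (one root).
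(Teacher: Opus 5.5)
Your proposal is correct and follows the paper's route. The invariance comes from the explicit description of $M_{\alpha_1,\alpha_3,\alpha_4}$, and the $W_0$ criterion comes from testing $(\phi_{\sigma_1}^{\otimes 3}\otimes\chi^{-1})\oplus\chi$ for a trivial summand; once $\chi\neq 1$ this equals $\mathrm{AI}_{K/F}(\eta^3)\oplus\phi_{\sigma_1}^{\oplus 3}\oplus\chi$.

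There are two harmless slips. First, your coweight formulas are wrong: for $\beta=\alpha_1+2\alpha_2+\alpha_3+\alpha_4$ one has $s_\beta\varpi_{\alpha_2}^\vee=-\varpi_{\alpha_2}^\vee$ and $s_\beta\varpi_{\alpha_i}^\vee=\varpi_{\alpha_i}^\vee-\varpi_{\alpha_2}^\vee$ for $i=1,3,4$. A cleaner route is $s_\beta=w_0^Gw_0^M$ with $w_0^G=-1$, so $s_\beta$ acts on $M$ as inversion on $T$ up to $W(M)$, i.e.\ by $\sigma_i\mapsto\sigma_i^\vee$ and $\chi\mapsto\chi^{-1}$. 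Second, triviality on the embedded $\GL_1^3$ gives $\omega_{\sigma_i}=\chi^{-1}$, which equals $\chi$ only once $\chi^2=1$ is known.
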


\begin{proof}
  The invariance conditions follow from Remark~\ref{remark:explicit-description-theta-invariant-Levi}.
  If $\chi=1$, the adjoint action on the dual nilradical contains a trivial summand, so $W_0(\sigma)\neq1$.
  Otherwise, self-duality and $\omega_{\sigma_1}=\chi$ give $\sigma_1=\mathrm{AI}_{K/F}(\eta)$ with $\chi=\omega_{K/F}$ and $\eta_{\restriction_{F^\times}}=1$.
  Writing $\rho=\phi_{\sigma_1}$, the representation on the dual nilradical is
  \begin{align*}
    (\rho^{\otimes3}\otimes\chi^{-1})\oplus\chi
    =\mathrm{AI}_{K/F}(\eta^3)\oplus\rho^{\oplus3}\oplus\chi.
  \end{align*}
  It contains the trivial representation exactly when $\eta^3=1$, which gives the stated condition on $W_0(\sigma)$.
\end{proof}

\begin{remark}\label{remark:standard-parameters-of-theta-stable-cases}
   We continue the notation in Lemma~\ref{lemma:weyl-fixed-representation-long-dual-root} and Lemma~\ref{lemma:weyl-fixed-representation-short-dual-root}.
   In the first case, the $8$-dimensional standard parameter of the representation $i_{P_{\alpha_2}}^{G}(\sigma)$ is given by
   \begin{align*}
     \phi_{\sigma'} \oplus \chi_1 \oplus \mathbf{1} \oplus \mathbf{1} \oplus \chi_1^{-1} \oplus \phi_{\sigma'}^{\vee}
   \end{align*}
   with $\chi_1 = \chi_1^{-1}$ and $\sigma' = \sigma'^{\vee}$.

   Similarly, in the second case, put $\sigma' = \sigma_1$.
   The $8$-dimensional standard parameter of the representation $i_{P_{\alpha_1, \alpha_3, \alpha_4}}^{G}(\sigma)$ is given by
   \begin{align*}
      \phi_{\sigma'} \oplus \Ad(\phi_{\sigma'}) \oplus \mathbf{1} \oplus  \phi_{\sigma'}^{\vee}
   \end{align*}
   with $\sigma' = \sigma'^{\vee}$.
\end{remark}

\begin{lemma}\label{lemma:standard-parameters-of-theta-unstable-cases}
  A discrete series representation $\sigma = \sigma' \otimes \chi_1 \otimes \chi_2$ of $M_{\alpha_1, \alpha_2}(F) = (\GL_3(F) \times \GL_1(F) \times \GL_1(F))/\GL_1(F)$ is $w'_1w'_2 \rtimes \theta$-invariant if and only if $\chi_1 = \chi_2 = \mathbf{1}$ and $\sigma' \cong \sigma'^{\vee}$.
  Similarly, a discrete series representation $\sigma = \sigma' \otimes \chi_1 \otimes \chi_2$ of $M_{\alpha_1, \alpha_2}(F)$ is $w'_2 \rtimes \theta$-invariant if and only if $\chi_1 = \chi_2^{-1}$, $\chi_1^3 = 1$, and $\sigma' \otimes \chi_1 \cong \sigma'$.
  We also have $\omega_{\sigma'} = \mathbf{1}$.
  In both cases, we have $W_0(\sigma) = \{1\}$.
\end{lemma}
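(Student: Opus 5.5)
The plan is to work directly with the explicit actions recorded in the preceding remark. A representation of $M_{\alpha_1,\alpha_2}(F)=(\GL_3(F)\times\GL_1(F)\times\GL_1(F))/\GL_1(F)$ is the same as a representation $\sigma'\otimes\chi_1\otimes\chi_2$ of the cover that is trivial on the central $\GL_1$. Since the central $\GL_1$ is embedded by $t\mapsto(t1_{\GL_3},t,t)$, this triviality means
\begin{align*}
  \omega_{\sigma'}\chi_1\chi_2=\mathbf 1 .
\end{align*}
So I may test invariance of $\sigma$ under an automorphism $u$ by comparing $\sigma\circ u$ with $\sigma$ as representations of $\GL_3\times\GL_1^2$, factor by factor. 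For an irreducible tensor product, two such representations are isomorphic exactly when the factors match.

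\emph{The case $w'_2\rtimes\theta$.} Here the action is $(g,a,b)\mapsto(g,\det(g)/ab,a)$, so
\begin{align*}
  \sigma\circ u=(\sigma'\otimes\chi_1\circ\det)\otimes\chi_1^{-1}\chi_2\otimes\chi_1^{-1}.
\end{align*}
Matching factors gives $\sigma'\otimes\chi_1\cong\sigma'$, $\chi_2=\chi_1^{-1}$ and $\chi_1^{-1}\chi_2=\chi_1$. Together these give $\chi_1^3=\mathbf 1$ and $\chi_2=\chi_1^{-1}$, and the converse is immediate. The central condition then yields $\omega_{\sigma'}=\mathbf 1$.

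\emph{The case $w'_1w'_2\rtimes\theta$.} Here the action is $(g,a,b)\mapsto(g^{\vee},ab/\det g,a^{-1})$, so
\begin{align*}
  \sigma\circ u=(\sigma'^{\vee}\otimes\chi_1^{-1}\circ\det)\otimes\chi_1\chi_2^{-1}\otimes\chi_1 .
\end{align*}
Matching the $\GL_1$-factors gives $\chi_2=\chi_1$ and $\chi_1\chi_2^{-1}=\chi_1$, hence $\chi_1=\chi_2=\mathbf 1$. Matching the $\GL_3$-factor then gives $\sigma'\cong\sigma'^{\vee}$. Again the central condition gives $\omega_{\sigma'}=\mathbf 1$.

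\emph{Showing $W_0(\sigma)=\{1\}$.} By Lemma~\ref{lemma:regular-element-not-theta-stable-case}, $W(G,M_I)$ has order two, generated by $w'_1$. So it suffices to show that, whenever $w'_1\sigma\cong\sigma$, the normalized operator $R_P(w'_1,\sigma)$ is not a scalar. As in the proof of Lemma~\ref{lemma:G2-Levi-Weyl-invariance}, this reduces to the dual nilradical $\widehat{\fn}$, composed with $\phi_\sigma$, not containing the trivial representation of $L_F$.

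The dual Levi is of type $\GL_3$ (times a torus) inside $\Spin_8(\C)$, and $\widehat{\fn}$ has dimension $12-3=9$. I expect it to decompose as a sum of two twists of the standard representation of $\GL_3$ and one twist of $\wedge^2\Std_{\GL_3}$. This is the model $\Std_3\otimes\Std_{\SO_2}\oplus\wedge^2\Std_3$ for the Siegel-type Levi $\GL_3\times\SO_2\subset\SO_8$, transported by triality.

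Since $\sigma'$ is a discrete series of $\GL_3$, $\phi_{\sigma'}$ is irreducible of dimension $3$. Hence every character twist of $\phi_{\sigma'}$, and also $\wedge^2\phi_{\sigma'}\cong\phi_{\sigma'}^{\vee}\otimes\omega_{\sigma'}$, is irreducible of dimension $3$ and contains no trivial summand. Therefore $\gamma_A(0,\sigma,\widehat{\fn},\psi_F)\neq0$, the Plancherel measure is nonzero, and $W_0(\sigma)=\{1\}$ in both cases.

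The step I expect to be the main obstacle is pinning down the precise $\GL_3\times\GL_1^2$-decomposition of $\widehat{\fn}$, namely which characters of the two $\GL_1$-factors occur. The argument above only needs the three-dimensional constituents to be irreducible, so the exact characters do not affect the conclusion. The remaining point to double-check is the direction of the action ($\sigma\circ u$ versus $\sigma\circ u^{-1}$). This is harmless, since $u$ has finite order modulo inner automorphisms and invariance under $u$ is equivalent to invariance under $u^{-1}$.
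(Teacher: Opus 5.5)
Your proposal is correct and follows the intended argument. The invariance conditions and $\omega_{\sigma'}=\mathbf 1$ come directly from pulling $\sigma$ back along the explicit actions in the preceding remark, together with the central condition $\omega_{\sigma'}\chi_1\chi_2=\mathbf 1$, and your dual-nilradical check (three irreducible three-dimensional constituents, so no trivial summand) gives $W_0(\sigma)=\{1\}$. Two small caveats: the Plancherel argument you cite from the $\rG_2$ lemma is stated for relative rank one, so here it has to be used in relative rank two, where the Plancherel density is a product of rank-one factors; and in fact $W_0(\sigma)=\{1\}$ is automatic, because $w'_1=w_0w_0^{M_I}$ acts by $-1$ on the two-dimensional $\fa_{M_I}$, so $W(G,M_I)$ contains no reflection.
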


\begin{remark}\label{remark:standard-parameters-of-theta-unstable-cases}
  In the setting of the previous lemma, the $8$-dimensional standard parameter of the representation $i_{P_I}^{G}(\sigma)$ is given by
  \begin{align*}
    \chi_1^{-1} \oplus \phi_{\sigma'} \oplus \phi_{\sigma'}^{\vee} \oplus \chi_1.
  \end{align*}
\end{remark}

\begin{lemma}\label{lemma:cubic-character-invariant-discrete-series}
  Let $\sigma$ be a $w'_2 \rtimes \theta$-invariant discrete series representation as in Lemma~\ref{lemma:standard-parameters-of-theta-unstable-cases}, and assume that $\chi_1\neq\mathbf{1}$.
  Let $E/F$ be a cubic extension corresponding to the character $\chi_1$.
  Then, there exists a character $\eta$ of $E^{\times}$ such that $\sigma' = \mathrm{AI}_{E/F}(\eta/\eta^{a})$, where $a$ is a generator of $\Gal(E/F)$.
\end{lemma}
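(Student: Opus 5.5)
We work on the Galois side and use the invariance $\sigma'\otimes\chi_1\cong\sigma'$ from Lemma~\ref{lemma:standard-parameters-of-theta-unstable-cases}, where $\chi_1$ is a nontrivial cubic character. Let $\rho=\phi_{\sigma'}$ be the $3$-dimensional parameter of $\sigma'$. By the local Langlands correspondence for $\GL_3$, which is compatible with twisting, we have $\rho\otimes\chi_1\cong\rho$. Since $\sigma'$ is a discrete series representation, $\rho$ is indecomposable as a representation of $L_F$.

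The first step is to show that $\rho$ is irreducible as a representation of $W_F$, i.e. trivial on $\SL_2(\C)$. Write $\rho=\tau\otimes S_d$ with $\tau$ irreducible of dimension $3/d$. If $d=3$, then $\tau$ is a character with $\tau\chi_1\cong\tau$, which is impossible because $\chi_1\neq\mathbf 1$. Hence $d=1$ and $\rho=\tau$ is irreducible.

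The second step applies Clifford theory, or Schur's lemma. Let $E/F$ be the cyclic cubic extension cut out by $\chi_1$. Choose a nonzero isomorphism $A\colon\rho\to\rho\otimes\chi_1$. Then $A^3$ is a scalar, so after rescaling $A$ has order $3$. Restrict $\rho$ to $W_E=\ker\chi_1$. The operator $A$ commutes with $\rho|_{W_E}$, and its eigenspaces are permuted by $W_F/W_E$. Their dimensions must be equal, so each eigenspace is a line. This gives $\rho|_{W_E}=\mu\oplus\mu^a\oplus\mu^{a^2}$ and $\rho\cong\Ind_{W_E}^{W_F}\mu$ for a character $\mu$ of $E^\times$, i.e. $\sigma'=\mathrm{AI}_{E/F}(\mu)$. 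The character $\mu$ is not $\Gal(E/F)$-invariant, by irreducibility.

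The third step, which I expect to be the main obstacle, is to show that $\mu$ has the form $\eta/\eta^a$. The input here is $\omega_{\sigma'}=\mathbf 1$ from Lemma~\ref{lemma:standard-parameters-of-theta-unstable-cases}. For induction from a cyclic cubic extension,
\[
\det\Ind_{W_E}^{W_F}\mu=\omega_{E/F}\cdot\mu|_{F^\times}=\mu|_{F^\times},
\]
where $\omega_{E/F}$ is the sign of the regular permutation action of $\Z/3\Z$, which is trivial. So $\omega_{\sigma'}=\mathbf1$ gives $\mu|_{F^\times}=\mathbf 1$. Under local class field theory, restriction of characters to $F^\times$ corresponds to composing with the transfer $W_F^{ab}\to W_E^{ab}$, so the condition says $\mu\circ N$ is trivial in the appropriate sense. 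We need the exactness of
\[
\Hom(E^\times,\C^\times)\xrightarrow{\eta\mapsto\eta/\eta^a}\Hom(E^\times,\C^\times)\xrightarrow{\mathrm{res}}\Hom(F^\times,\C^\times),
\]
which is a Hilbert~90 type statement for the characters of $E^\times$. Dually, it reduces to the statement that the kernel of the norm $N_{E/F}\colon E^\times\to E^\times$ equals $\{x/x^a\}$, which is Hilbert~90 for the cyclic extension $E/F$. One then extends by divisibility of $\C^\times$, using that the characters are continuous and that $(1-a)E^\times$ is closed in $E^\times$. I would carry this out carefully:
\begin{enumerate}
\item check that $\mu|_{F^\times}$ is the composition of $\mu$ with the inclusion $F^\times\subset E^\times$;
\item observe that a character killing $N_{E/F}$-trivial elements factors as $\mu=\nu\circ(1-a)$ for a character $\nu$ of $(1-a)E^\times$;
\item extend $\nu$ to $\eta$ on $E^\times$ using divisibility of $\C^\times$ and closedness of $(1-a)E^\times$.
\end{enumerate}
We then take $\eta$ to be this extension, so that $\mu=\eta/\eta^a$ and $\sigma'=\mathrm{AI}_{E/F}(\eta/\eta^a)$.
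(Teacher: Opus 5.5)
Your Steps 1 and 2 are the paper's argument. The paper's proof only says to argue as in Lemma~\ref{lemma:quadratic-character-invariant-discrete-series}: the self-twist $\sigma'\otimes\chi_1\cong\sigma'$ forces $\sigma'=\mathrm{AI}_{E/F}(\mu)$, and $\omega_{\sigma'}=\mathbf 1$ forces $\mu|_{F^\times}=\mathbf 1$. Your computation of $\det\Ind_{W_E}^{W_F}\mu$ is correct, since the sign of a $3$-cycle is trivial. The paper leaves the passage from $\mu|_{F^\times}=\mathbf 1$ to $\mu=\eta/\eta^a$ implicit, so your Step 3 fills in that step. Its overall shape is right: factor $\mu$ through $\delta(x)=x/a(x)$, then extend. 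However, the justification you wrote for it is wrong and should be replaced.

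\textbf{What goes wrong in Step 3.} The sequence dual to $\Hom(E^\times,\C^\times)\xrightarrow{\delta^*}\Hom(E^\times,\C^\times)\xrightarrow{\mathrm{res}}\Hom(F^\times,\C^\times)$ is $F^\times\hookrightarrow E^\times\xrightarrow{\delta}E^\times$. Its exactness is just $\ker\delta=(E^\times)^{\Gal(E/F)}=F^\times$, not Hilbert~90. Hilbert~90, $\ker N_{E/F}=\delta(E^\times)$, dualizes instead to ``Galois-invariant characters factor through the norm'', which is not what you need. Your item~(2) also has the wrong hypothesis. A character killing $\ker N_{E/F}=\delta(E^\times)$ is $\Gal(E/F)$-invariant, and your $\mu$ is not. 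Moreover, ``$\mu\circ N_{E/F}$ trivial'' is strictly weaker than $\mu|_{F^\times}=\mathbf 1$.

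\textbf{The correct step.}
\begin{enumerate}
\item Since $\mu$ kills $\ker\delta=F^\times$, it defines $\nu$ on $\delta(E^\times)$ with $\mu=\nu\circ\delta$.
\item The induced bijection $E^\times/F^\times\to\delta(E^\times)$ is a homeomorphism, because $E^\times/F^\times$ is compact. Hence $\nu$ is continuous, and $\delta(E^\times)$ is a compact subgroup of $\mathcal O_E^\times$. Hilbert~90 identifies it with the norm-one group, but this is not needed.
\item So $\nu$ is unitary and extends to $\mathcal O_E^\times$ by Pontryagin duality. It then extends to $E^\times$ by prescribing its value on a uniformizer. Divisibility of $\C^\times$ alone does not control continuity.
\end{enumerate}
This yields $\mu=\eta\circ\delta=\eta/\eta^a$, with $\eta^a=\eta\circ a$, as required.
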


\begin{proof}
  Argue as in Lemma~\ref{lemma:quadratic-character-invariant-discrete-series}.
\end{proof}

The representatives in Remark~\ref{remark:explicit-description-regular-Weyl-elements}(c)--(d) give the following formulas.

\begin{lemma}\label{lemma:description-of-centralizer-of-regular-elements-2}
    In case $(c)$, any element of the group $\widehat{T}^{w \rtimes \theta}$ can be written as
    \begin{align*}
        t &= \alpha_1^{\vee}(\zeta) \alpha_2^{\vee}(\xi) \alpha_3^{\vee}(\zeta^{-1}) \alpha_4^{\vee}(\zeta^{-1}\xi^{-1}) \\
          &= (\alpha_1^{\vee} - \alpha_3^{\vee} - \alpha_4^{\vee})(\zeta)
             (\alpha_2^{\vee} - \alpha_4^{\vee})(\xi)
    \end{align*}
    for $\zeta, \xi \in \mu_3(\C)$.
    Writing $e_i^{\vee}$ for the $i$-th coordinate cocharacter of $\GL_8$, its image under $\Std_{\PGSO_8}$ is
    \begin{align*}
        \Std_{\PGSO_8}(t)
        &= (e_1^{\vee} - e_2^{\vee} + e_3^{\vee} - e_6^{\vee} + e_7^{\vee} - e_8^{\vee})(\zeta)
          \cdot (e_2^{\vee} + e_3^{\vee} - e_4^{\vee} + e_5^{\vee} - e_6^{\vee} - e_7^{\vee})(\xi) \\
        &= \diag(\zeta,\zeta^{-1}\xi,\zeta\xi,\xi^{-1},\xi,\zeta^{-1}\xi^{-1},\zeta\xi^{-1},\zeta^{-1}).
    \end{align*}
\end{lemma}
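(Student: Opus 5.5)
The plan is to compute directly with the representative $u = w\rtimes\theta$, $w=s_3s_2s_4s_1s_2s_3s_1s_2$, from Remark~\ref{remark:explicit-description-regular-Weyl-elements}(c), acting on the coroot lattice $X_*(\widehat{T})=\bigoplus_i \Z\alpha_i^{\vee}$. This lattice is the coroot lattice of $\Spin_8$, since $\widehat{\PGSO_8}=\Spin_8$. We write a general element as $t=\prod_i\alpha_i^{\vee}(z_i)$ and impose $u(t)=t$.

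\textbf{Step 1: the $u$-invariants are $3$-torsion.} By Lemma~\ref{lemma:classification-regular-conjugacy-classes-Weyl-elements}, $X^*(\widehat{T}^{u})\cong X^*(\widehat T)/(1-u)X^*(\widehat T)\cong(\Z/3\Z)^2$. Hence $\widehat{T}^u$ is a finite group of order $9$ and exponent $3$, so every $z_i$ may be taken in $\mu_3$ up to the relations below.

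\textbf{Step 2: compute the action of $u$.} First compute $u$ on the basis $\alpha_1^{\vee},\dots,\alpha_4^{\vee}$. We take $\theta$ to be the pinned triality $\alpha_1\mapsto\alpha_3\mapsto\alpha_4\mapsto\alpha_1$, which fixes $\alpha_2$; the opposite orientation changes only signs and gives the same group after relabelling. Then apply the word $w$ one reflection at a time via $s_j(\alpha_i^{\vee})=\alpha_i^{\vee}-\langle\alpha_j,\alpha_i^{\vee}\rangle\alpha_j^{\vee}$. This yields an integral $4\times4$ matrix $A$.

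\textbf{Step 3: solve for the fixed points and match the parametrization.} The fixed points are the solutions $(z_i)$ of $\prod_i z_i^{(A-1)_{ij}}=1$. Reducing $A-1$ mod $3$, with exponents in $\mu_3$, we verify directly that
$$(\alpha_1^{\vee}-\alpha_3^{\vee}-\alpha_4^{\vee})\otimes\zeta \quad\text{and}\quad (\alpha_2^{\vee}-\alpha_4^{\vee})\otimes\xi$$
are fixed by $u$. Since they are independent mod $3$, they span a subgroup of order $9$. By Step 1 this subgroup is all of $\widehat T^{u}$. This gives the first displayed form; expanding the product gives $\alpha_1^\vee(\zeta)\alpha_2^\vee(\xi)\alpha_3^\vee(\zeta^{-1})\alpha_4^\vee(\zeta^{-1}\xi^{-1})$.

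\textbf{Step 4: apply $\Std_{\PGSO_8}$.} We use the standard realization of $\SO_8$ with torus $\diag(x_1,\dots,x_4,x_4^{-1},\dots,x_1^{-1})$. In these coordinates $\alpha_1^\vee=e_1^\vee-e_2^\vee-e_7^\vee+e_8^\vee$ (written as $e_1^\vee-e_2^\vee+e_7^\vee-e_8^\vee$ in $\GL_8$ with the sign convention of the dual index), and similarly for $\alpha_2^\vee$. The spin-type nodes $\alpha_3^\vee,\alpha_4^\vee$ correspond to $e_3\mp e_4$ with their mirror images. The two cocharacters are then linear combinations, and we compute them mechanically. Evaluating at $(\zeta,\xi)$ and using $\zeta^3=\xi^3=1$ to simplify exponents gives the diagonal matrix in the statement. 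As a check, the entries must pair as $x_i x_{9-i}=1$, and they do: $\zeta\cdot\zeta^{-1}$, $\zeta^{-1}\xi\cdot\zeta\xi^{-1}$, and so on.

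\textbf{Main obstacle.} The hard part is conventions: which of the two trialities is $\theta$, and the Bourbaki labelling of $\alpha_3,\alpha_4$ versus $e_3\pm e_4$ in the $\GL_8$ embedding. A wrong choice permutes or inverts the entries. We would fix the convention so that $\Std(\alpha_4^\vee)$ carries the $e_4^\vee$-sign that is consistent with the stated formula, and then confirm the final matrix lies in $\SO_8$ and is $u$-fixed.
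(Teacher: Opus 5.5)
Your proof is correct and is essentially the paper's own argument. Both are a direct lattice computation with the explicit case $(c)$ representative: the paper works on the character side (relations in $X^*(\widehat T)/(1-u)X^*(\widehat T)$), and you work dually (fixed vectors of $u$ in $X_*(\widehat T)\otimes\mu_3$ plus the count $\abs{\widehat T^u}=9$). Indeed $w$ sends $(\alpha_1^\vee,\alpha_2^\vee,\alpha_3^\vee,\alpha_4^\vee)$ to $(\alpha_2^\vee,\,-\alpha_2^\vee-\alpha_4^\vee,\,-\alpha_1^\vee-\alpha_2^\vee-\alpha_3^\vee,\,\alpha_1^\vee+\alpha_2^\vee+\alpha_4^\vee)$, so with your $\theta$ and $u=w\circ\theta$ both generators are fixed modulo $3$.

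Three corrections:
\begin{itemize}
\item The orientation of $\theta$ is not a harmless relabelling. For $\alpha_1\mapsto\alpha_4\mapsto\alpha_3\mapsto\alpha_1$ the same word gives $\tr(u)=0$, i.e.\ type $(a)$ with trivial fixed group.
\item Using $\theta\circ w$ in place of $w\circ\theta$ replaces the fixed group by its $\theta$-translate, which is a different subgroup. So both conventions are forced by the statement.
\item Of your two formulas for $\alpha_1^\vee$, only $e_1^\vee-e_2^\vee+e_7^\vee-e_8^\vee$ lies in $\SO_8$.
\end{itemize}
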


\begin{lemma}\label{lemma:description-of-centralizer-of-regular-elements-1}
    In case $(d)$, any element of the group $\widehat{T}^{w \rtimes \theta}$ can be written as
    \begin{align*}
        t = \alpha_2^{\vee}(\zeta) (\alpha_1^{\vee} + \alpha_3^{\vee} + \alpha_4^{\vee})(\xi)
    \end{align*}
    with $\zeta, \xi \in \mu_2(\C)$.
    Its image under $\Std_{\PGSO_8}$ is
    \begin{align*}
        \Std_{\PGSO_8}(t)
        &= (e_2^{\vee} - e_3^{\vee} + e_6^{\vee} - e_7^{\vee})(\zeta)
          \cdot (e_1^{\vee} - e_2^{\vee} + e_7^{\vee} - e_8^{\vee})(\xi) \\
        &= \diag(\xi,\zeta\xi,\zeta,1,1,\zeta,\zeta\xi,\xi).
    \end{align*}
\end{lemma}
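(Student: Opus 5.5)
The plan is to compute directly with the explicit representative $w \rtimes \theta = w_0 \rtimes \theta$ from Remark~\ref{remark:explicit-description-regular-Weyl-elements}(d). Since $w_0 = -1$ on $X_*(\widehat{T})$ for $D_4$, the element $u = w_0 \rtimes \theta$ acts on $X_*(\widehat T)$ by $-\theta$. Here $\theta$ permutes $\alpha_1^\vee \to \alpha_3^\vee \to \alpha_4^\vee$ cyclically and fixes $\alpha_2^\vee$; the cyclic order depends on the choice of $\theta$ but does not affect the result. Thus an element $t \in \widehat T$ is $u$-fixed if and only if $\theta(t) = t^{-1}$.

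First, I will compute the fixed points. I write $t = \prod_i \alpha_i^\vee(x_i)$, using the simple coroots $\alpha_i^\vee$ as a basis of $X_*(\widehat T)$. This is legitimate because $\widehat{\PGSO_8} = \Spin_8$ is simply connected, so the coroots form a $\Z$-basis. The condition $\theta(t)=t^{-1}$ becomes
\begin{align*}
x_2 = x_2^{-1},\qquad x_3 = x_1^{-1},\quad x_4 = x_3^{-1},\quad x_1 = x_4^{-1}.
\end{align*}
From the last three equations, $x_3 = x_1^{-1}$, $x_4 = x_1$, and $x_1 = x_1^{-1}$. So $x_1, x_2 \in \mu_2$ with $x_3 = x_4 = x_1$. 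Putting $\zeta = x_2$ and $\xi = x_1$ gives exactly
\begin{align*}
t = \alpha_2^\vee(\zeta)\,(\alpha_1^\vee+\alpha_3^\vee+\alpha_4^\vee)(\xi),
\end{align*}
and every such $t$ is $u$-fixed. As a consistency check, this gives $\widehat T^{u}\cong(\Z/2\Z)^2$, matching the last column of Lemma~\ref{lemma:classification-regular-conjugacy-classes-Weyl-elements}.

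Second, I will compute the image under $\Std_{\PGSO_8}$, that is, the $8$-dimensional representation of $\Spin_8$. In the coordinates of the diagonal torus of $\GL_8$, consistent with Remark~\ref{remark:explicit-description-theta-invariant-Levi}, the Bourbaki simple coroots for $\SO_8$ are $\alpha_1^\vee = e_1^\vee - e_2^\vee + e_7^\vee - e_8^\vee$ and $\alpha_2^\vee = e_2^\vee - e_3^\vee + e_6^\vee - e_7^\vee$, together with $\alpha_3^\vee,\alpha_4^\vee$ built from $e_3,e_4,e_5,e_6$.

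The key observation is that for $\xi\in\mu_2$ the three cocharacters $\alpha_1^\vee,\alpha_3^\vee,\alpha_4^\vee$ evaluated at $\xi$ are congruent modulo $2X_*$ in $\Spin_8$. This follows from the relations $2\omega_1 \equiv \alpha_3+\alpha_4$ and similar ones used in the proof of Lemma~\ref{lemma:invariance-lifting-archimedean}. Concretely, $(\alpha_1^\vee+\alpha_3^\vee+\alpha_4^\vee)(\xi)$ differs from $\alpha_1^\vee(\xi)$ by an element of $\alpha_1^\vee(\xi)\alpha_3^\vee(\xi)\alpha_4^\vee(\xi)\alpha_1^\vee(\xi)^{-1}$. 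One checks that this element maps to $1$ in $\SO_8$ precisely because $\alpha_3^\vee+\alpha_4^\vee \equiv 0$ on the standard weights modulo $2$, given the standard formulas $\alpha_3^\vee = e_3^\vee-e_4^\vee+e_5^\vee-e_6^\vee$ and $\alpha_4^\vee = e_3^\vee+e_4^\vee-e_5^\vee-e_6^\vee$ up to the labelling convention. Substituting these coordinates gives the first product formula, and multiplying out the diagonal entries gives
\begin{align*}
\diag(\xi,\zeta\xi,\zeta,1,1,\zeta,\zeta\xi,\xi).
\end{align*}

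The main obstacle will be bookkeeping: fixing the exact coordinates of $\alpha_3^\vee$ and $\alpha_4^\vee$ in $\GL_8$ compatibly with the embedding in Remark~\ref{remark:explicit-description-theta-invariant-Levi} and with Lemma~\ref{lemma:description-of-centralizer-of-regular-elements-2}. The check that will settle this is that $\alpha_3^\vee(\xi)\alpha_4^\vee(\xi)$ is trivial on $e_3,\dots,e_6$ for $\xi=\pm1$. It holds because the two coroots agree on $e_3$ and $e_6$ and have opposite signs on $e_4$ and $e_5$, so at $\xi=\pm1$ every entry is $\xi^{\pm2}=1$.
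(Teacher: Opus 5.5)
Your proof is correct and follows essentially the paper's route. The paper reaches the same conditions $t_1=t_3=t_4\in\mu_2$ and $t_2\in\mu_2$, phrased through the quotient $X^*(\widehat T)/(1-u)X^*(\widehat T)$, while you solve $\theta(t)=t^{-1}$ directly in coroot coordinates; your explicit identity $\alpha_3^\vee+\alpha_4^\vee=2e_3^\vee-2e_6^\vee$ also supplies the check of the $\Std_{\PGSO_8}$-image that the paper leaves implicit.

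One small correction to your heuristic remark. The elements $\alpha_1^\vee(\xi)$, $\alpha_3^\vee(\xi)$, $\alpha_4^\vee(\xi)$ are not congruent modulo $2X_*$, since they are distinct in $X_*/2X_*$; they agree only modulo the center of $\Spin_8$. What your argument actually needs, and what your concrete computation correctly verifies, is that $(\alpha_3^\vee+\alpha_4^\vee)(\xi)$ lies in the kernel of $\Spin_8\to\SO_8$.
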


We prove Lemma~\ref{lemma:description-of-centralizer-of-regular-elements-1}, the other case being similar.

\begin{proof}[Proof of Lemma \ref{lemma:description-of-centralizer-of-regular-elements-1}]
    We compute the character group of $\widehat{T}^{w \rtimes \theta}$.
    The images of characters $\varpi_1 + \varpi_3$, $\varpi_3 + \varpi_4$ are zero in the character group.
    The element $w \rtimes \theta$ acts by $-1$ on the images of $\varpi_2$ and $\varpi_1 + \varpi_3 + \varpi_4$.
    Thus, we must have $t_1 = t_3 = t_4 \in \mu_2$ and $t_2 \in \mu_2$.
\end{proof}

\begin{remark}\label{remark:torus-regular-invariant-rep}
   In both cases, if a representation $\sigma$ of $T$ has $L$-parameter given by a surjective homomorphism
   \begin{align*}
      W_F \to \widehat{T}^{w \rtimes \theta} \hookrightarrow \widehat{T},
   \end{align*}
   then the group $W_0(\sigma)$ is trivial.
\end{remark}

\section{Elliptic triplets and elliptic endoscopic data for \texorpdfstring{$\rG_2$}{G2} and \texorpdfstring{$\widetilde{\PGSO_8}$}{twisted PGSO8}}\label{section:elliptic-triplets-endoscopic-data}

\begin{proposition}\label{proposition:elliptic-triplets-elliptic-transfer-G2}
  Let $F$ be a local field of characteristic zero.
  Let $\tau = (M, \sigma, r)$ be an elliptic triplet for $\rG_2(F)$ with $M$ a proper Levi subgroup of $\rG_2$.
  Then, there exists a unique discrete series parameter $\phi^H(\tau)$ of $H = \SO_4$ such that $\iota_M^{\rG_2}\circ\phi^M(\sigma)=\iota_H^{\rG_2}\circ\phi^H(\tau)$.
\end{proposition}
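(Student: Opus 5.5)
The plan is to reduce to the case analysis of the preceding section and then pass from seven-dimensional identities to identities of $L$-parameters into $\rG_2(\C)$. An elliptic triplet $(M,\sigma,r)$ with $M\neq\rG_2$ has, by the lemma on elliptic triplets for proper Levi subgroups and Remark~\ref{remark:LIR-and-ellipticity}, $\sigma$ a discrete series of $M(F)$ fixed by a regular element $w$ of $W(\rG_2,M)$, with $W_0(M)_\sigma=1$ and $R^{G}(\sigma)\neq 1$. There are three cases: $M=M_s$, $M=M_l$, and $M=T$ in case (c) of Lemma~\ref{G2-elliptic-toral-representations}. Cases (a) and (b) of that lemma are excluded because there $W_0(T)_\sigma\neq1$.

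In each case, Lemmas~\ref{G2-elliptic-GL2-Levi-representations} and \ref{G2-elliptic-toral-representations} already give a unique discrete parameter $\phi^{\SO_4}(\sigma)$. It satisfies
\begin{align*}
\Std_{\SO_4}\circ\phi^{\SO_4}(\sigma)\oplus\wedge^2_+\circ\phi^{\SO_4}(\sigma)=\Std_{\rG_2}\circ\iota_M^{\rG_2}\circ\phi^M(\sigma).
\end{align*}
To verify this equality, note first that the right-hand side is computed by restricting the $7$-dimensional representation of $\rG_2(\C)$ to $\widehat M=\GL_2$ (or $\widehat T$). This restriction is $\Std\oplus\Std^\vee\oplus\mathbf 1$ for the short-root Levi, and $\Std\oplus\Ad$-type for the long-root Levi, up to the twists fixed by Lemma~\ref{lemma:dual-nilradicals-G2}. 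Second, the left-hand side is exactly $\Std_{\rG_2}\circ\iota_{\SO_4}^{\rG_2}$, since the endoscopic $\SO_4$ has dual group $\SL_2\times\SL_2/\mu_2\subset\rG_2(\C)$ (the long and short root $\SL_2$'s). Under this subgroup, the $7$-dimensional representation decomposes as $(\Std\boxtimes\Sym^2)\oplus(\mathbf 1\boxtimes\Sym^2)$, which matches $\Std_{\SO_4}\oplus\wedge^2_+$ after suitable labelling.

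The next step upgrades the equality of $7$-dimensional parameters to the required equality $\iota_M^{\rG_2}\circ\phi^M(\sigma)=\iota_H^{\rG_2}\circ\phi^H(\tau)$ in $\Phi(\rG_2)$. Lemma~\ref{lemma:injectivity-G_2-to-GL7} says $\Phi(\rG_2)\to\Phi(\GL_7)$ is injective, so $\rG_2(\C)$-conjugacy classes of parameters are detected by their $7$-dimensional standard representation. Hence the two $L$-parameters into $\rG_2(\C)$ coincide up to conjugacy. This settles existence.

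Uniqueness runs the same argument backwards. Suppose $\phi^H$ is a discrete $\SO_4$-parameter with $\iota_H^{\rG_2}\circ\phi^H=\iota_M^{\rG_2}\circ\phi^M(\sigma)$. Applying $\Std_{\rG_2}$ shows that $\phi^H$ satisfies the seven-dimensional identity, and the uniqueness statements in the two lemmas then force $\phi^H=\phi^{\SO_4}(\sigma)$. The parameter is understood up to $\SO_4$-conjugacy, with the outer $\O_4$-ambiguity handled as in the proof of Lemma~\ref{G2-elliptic-toral-representations}.

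The main obstacle is the identification of $\Std_{\rG_2}\circ\iota_{\SO_4}^{\rG_2}$ with $\Std_{\SO_4}\oplus\wedge^2_+$. This requires checking that the choice of $\wedge^2_+$ versus $\wedge^2_-$, and the correct matching of the long-root and short-root $\SL_2$ factors, agree with the conventions used implicitly in Lemmas~\ref{G2-elliptic-GL2-Levi-representations} and \ref{G2-elliptic-toral-representations}. I would fix this by a weight computation on $\widehat T$: the short roots of $\rG_2$ together with zero give the $7$ weights, and one checks which $\SL_2$ factor contributes the $3$-dimensional summand. After that, the remaining verifications are routine restrictions of representations.
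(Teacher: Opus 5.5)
Your proposal is correct and follows the paper's proof, which simply invokes Lemmas~\ref{G2-elliptic-GL2-Levi-representations} and~\ref{G2-elliptic-toral-representations} together with Remark~\ref{remark:LIR-and-ellipticity}; your explicit passage from the seven-dimensional identity to equality in $\Phi(\rG_2(F))$ via Lemma~\ref{lemma:injectivity-G_2-to-GL7}, for both existence and uniqueness, is exactly the step the paper leaves implicit. Your stated decompositions are dimensionally wrong and should be corrected: $\Std_{\rG_2}$ restricts to $\SO_4(\C)$ as $(\Std\boxtimes\Std)\oplus(\mathbf 1\boxtimes\Sym^2)=\Std_{\SO_4}\oplus\wedge^2_+$ (not $\Std\boxtimes\Sym^2$), and to the dual Levis of $M_s$ and $M_l$ as $\Std\oplus\Std^\vee\oplus\det\oplus\det^{-1}\oplus\mathbf 1$ and $\Std\oplus\Std^\vee\oplus\Ad$ respectively, as in Lemma~\ref{lemma:discrete-part-G2}.
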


\begin{proof}
  This follows from Lemmas~\ref{G2-elliptic-GL2-Levi-representations}, \ref{G2-elliptic-toral-representations} and Remark~\ref{remark:LIR-and-ellipticity}.
\end{proof}

\begin{proposition}\label{proposition:elliptic-triplets-elliptic-transfer-twisted-PGSO8}
  Let $\tau = (M, \sigma, \widetilde{r})$ be an elliptic triplet for $(G, \widetilde{G})$ with $M$ a proper Levi subgroup of $G$.
  Then, there exists a unique elliptic endoscopic group $H \neq \rG_2$ of $\widetilde{G}$ and a discrete series parameter $\phi^H(\tau)$ such that the $L$-parameter of $i^G_P(\sigma)$ is given by the composition of $\phi^H(\tau)$ and the embedding $\iota^{G}_{H} \colon \widehat{H} \hookrightarrow \widehat{G}$.
\end{proposition}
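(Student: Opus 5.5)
The plan is a case-by-case check over the classification of elliptic triplets already obtained, with an endoscopic group read off from the standard parameter in each case. By the lemma on elliptic triplets for proper Levi subspaces, and by the rows of Lemma~\ref{lemma:Levi-conjugacy-class-D_4} together with the lemma identifying when $\Norm_{\widetilde G}(M)(F)$ is empty, every elliptic triplet $(M,\sigma,\widetilde r)$ with $M \neq G$ falls into one of the following cases.

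\begin{enumerate}
  \item[(i)] $M=M_{\alpha_2}$, with $\widetilde r$ given by the (conjugate) regular elements of Corollary~\ref{corollary:regular-elements-theta-stable-cases}.
  \item[(ii)] $M=M_{\alpha_1,\alpha_3,\alpha_4}$.
  \item[(iii)] $M=M_{\alpha_1,\alpha_2}$, with $w'_2\rtimes\theta$ or $w'_1w'_2\rtimes\theta$.
  \item[(iv)] $M=T$, with $u$ of type (c) or (d) of Lemma~\ref{lemma:classification-regular-conjugacy-classes-Weyl-elements}.
\end{enumerate}

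Types (a) and (b) for $M=T$ are excluded by Remark~\ref{remark:explicit-description-regular-Weyl-elements}, because there $W_0(\sigma)\neq 1$. Remark~\ref{remark:conjugacy-of-regular-elements-dual-long-root} reduces case (i) to the single element $w'_1w'_2w'_3\rtimes\theta$.

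In each case, I use Lemmas~\ref{lemma:weyl-fixed-representation-long-dual-root}, \ref{lemma:weyl-fixed-representation-short-dual-root}, \ref{lemma:standard-parameters-of-theta-unstable-cases} and \ref{lemma:cubic-character-invariant-discrete-series}, together with Remark~\ref{remark:torus-regular-invariant-rep}, to describe $\sigma$ explicitly. Remarks~\ref{remark:standard-parameters-of-theta-stable-cases} and \ref{remark:standard-parameters-of-theta-unstable-cases} then give the $8$-dimensional standard parameter of $i_P^G(\sigma)$. The elliptic twisted endoscopic groups of $\widetilde{\PGSO_8}$ are $\rG_2$, $\SO_4$ (centralizer of an element of order $2$ in $\widehat G^\theta$) and $\PGL_3$ (order $3$). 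Following the pattern of the $\rG_2$ computation, I match:
\begin{itemize}
  \item cases (i), (ii) and (iv)(d) to $H=\SO_4$;
  \item cases (iii) and (iv)(c) to $H=\PGL_3$.
\end{itemize}

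Concretely, the matchings are as follows.
\begin{itemize}
  \item In case (i), $\sigma'=\mathrm{AI}_{K/F}(\chi)$ with $\chi_1=\omega_{K/F}$. I take the $\SO_4$-parameter whose standard part is $\phi_{\sigma'}\oplus\phi_{\sigma'}$, up to the $\wedge^2_+$ normalization used in Lemma~\ref{G2-elliptic-GL2-Levi-representations}(1).
  \item In case (ii), I take the parameter of Lemma~\ref{G2-elliptic-GL2-Levi-representations}(2).
  \item In case (iv)(d), Lemma~\ref{lemma:description-of-centralizer-of-regular-elements-1} shows the parameter is $\mathbf 1\oplus\omega_1\oplus\omega_2\oplus\omega_3$ as in Lemma~\ref{G2-elliptic-toral-representations}(3).
  \item In case (iii), the parameter is $\phi_{\sigma'}$ for $\PGL_3$, discrete because $\sigma'$ is discrete series. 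The $8$-dimensional parameter $\chi_1^{-1}\oplus\phi_{\sigma'}\oplus\phi_{\sigma'}^\vee\oplus\chi_1$ must then be recognized as $\Std\circ\iota_{\PGL_3}\circ\phi$.
  \item In case (iv)(c), Lemma~\ref{lemma:description-of-centralizer-of-regular-elements-2} gives a surjection $W_F\to\mu_3^2$. This yields a $\PGL_3$-parameter $\mathrm{AI}_{E/F}(\eta)$ with $E/F$ cubic and $\eta$ cubic, which is irreducible and hence discrete.
\end{itemize}
In each case I verify $\iota^G_H\circ\phi^H=\phi_{i_P^G(\sigma)}$ on $\widehat{\PGSO_8}=\Spin_8(\C)$, using the injectivity of the standard representation modulo outer conjugacy where available.

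It remains to check that $H\neq\rG_2$ and that $H$ is unique. If the parameter factored through $\rG_2$ as a discrete parameter, its centralizer in $\widehat G^\theta$ would be finite. In our cases, however, the parameter is obtained from the proper Levi $\widehat M$, and $\widehat{A}_M^{\theta}$ contains a nontrivial torus only if $M$ is not elliptic. So I plan to argue differently. An elliptic triplet with $M\neq G$ gives an element of order $2$ or $3$ in the twisted centralizer $S_\phi\subset \widehat G\theta$ that is not $\theta$-conjugate to $\theta$ itself. Equivalently, $\cS_\phi$ for the twisted space is strictly larger than the quasi-split $\rG_2$-contribution, and the triplet singles out the $\SO_4$ or $\PGL_3$ class via the order of the semisimple part of the regular element, namely $2$ in the characteristic polynomials $(x+1)^2(\cdots)$ and $3$ in $x^2+x+1$. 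Uniqueness of $\phi^H$ then follows as in the uniqueness part of Lemma~\ref{G2-elliptic-toral-representations}: the standard (or $3$-dimensional) representations are multiplicity-free, and $\O_4$- and $\SO_4$-conjugacy agree.

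The main obstacle I expect is this identification of the endoscopic class. One has to show that the twisted element $s\theta$ attached to $\widetilde r$, rather than merely the parameter, determines $H$, and that it is not the $\rG_2$ class. I would first try to compute the element of $\widehat T\theta$ directly from the representatives in Remark~\ref{remark:explicit-description-regular-Weyl-elements} and Lemma~\ref{lemma:regular-element-not-theta-stable-case}, using Lemma~\ref{lemma:tits-lift-and-dual}, and then read off its centralizer type.
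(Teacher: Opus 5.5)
Your strategy (run through the classification of elliptic triplets and read $H$ off the $8$-dimensional standard parameter) is the same as the paper's. However, the order-three part rests on the wrong endoscopic group, so your matchings in cases (iii) and (iv)(c) fail.

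\emph{The third group.} The third elliptic twisted endoscopic group of $\widetilde{\PGSO_8}$ is $\SL_3$, not $\PGL_3$ (Theorem~\ref{theorem:stabilization-twisted-PGSO8}, Remark~\ref{remark:killing-SL3-PGL3-contribution}). Its dual $\PGL_3(\C)$ is the identity component of the centralizer in $\Spin_8(\C)$ of a triality $s\widehat\theta$ not conjugate to $\widehat\theta$. It acts on the $8$-dimensional standard representation by $\Ad$ on $\mathfrak{sl}_3$. What you describe, the centralizer of an order-three element of $\widehat G^{\widehat\theta}=\rG_2(\C)$, is $\SL_3(\C)\subset\rG_2(\C)$, the dual of the endoscopic group $\PGL_3$ of $\rG_2$. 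A parameter through it factors through $\rG_2$ and has $8$-dimensional standard representation $\mathbf1\oplus\phi\oplus\phi^\vee\oplus\mathbf1$. This never equals $\chi_1^{-1}\oplus\phi_{\sigma'}\oplus\phi_{\sigma'}^\vee\oplus\chi_1$ with $\chi_1\neq\mathbf1$. Nor does it equal the sum of the eight nontrivial characters of $\mu_3^2$ from Lemma~\ref{lemma:description-of-centralizer-of-regular-elements-2}. Neither of these contains $\mathbf1$, so these cases cannot pass through $\rG_2(\C)$ at all, which also excludes $\SO_4$.

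\emph{The correct matching.} The missing input is the adjoint lifting, which the paper takes from the local version of \cite{Gan2025-TrialityAdjointLiftingGL3}*{Theorem 8.2}. Write $\sigma'=\mathrm{AI}_{E/F}(\eta/\eta^{a})$ as in Lemma~\ref{lemma:cubic-character-invariant-discrete-series}, and let $\phi^H$ be the image of $\rho=\mathrm{AI}_{E/F}(\eta)$ in $\PGL_3(\C)$. Then $\Ad\rho=\chi_1\oplus\chi_1^{-1}\oplus\phi_{\sigma'}\oplus\phi_{\sigma'}^\vee$. The parameter $\phi^H$ is discrete because its centralizer embeds in the finite group of characters $\chi$ with $\rho\otimes\chi\cong\rho$. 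So $\phi_{\sigma'}$ itself is not the $H$-parameter. In case (c), $\phi^H$ is the surjection $W_F\to\mu_3^2$ followed by the clock-and-shift embedding $\mu_3^2\hookrightarrow\PGL_3(\C)$, whose adjoint representation is exactly that sum of eight characters.

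\emph{The split of case (iii).} Your criterion for $H\neq\rG_2$, reading the class off the order of the twisted Weyl element, is false, and case (iii) must be redone.
\begin{enumerate}
\item For $w_1'w_2'\rtimes\theta$ one has $\chi_1=\chi_2=\mathbf1$ and $\sigma'\cong\sigma'^\vee$. The parameter $\mathbf1^{\oplus2}\oplus\phi_{\sigma'}^{\oplus2}$ goes through $\SO_4$ via $\phi^{\SO_4}(\sigma')$, with standard part $\phi_{\sigma'}\oplus\mathbf1$, as in the proof of Theorem~\ref{theorem:ECR-S3}.
\item For $w_2'\rtimes\theta$ with $\chi_1=\mathbf1$, the parameter is $\iota^{\PGSO_8}_{\rG_2}\circ\iota^{\rG_2}_{\PGL_3}\circ\phi_{\sigma'}$. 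If $\sigma'$ is not self-dual, no elliptic endoscopic group other than $\rG_2$ carries it discretely. A discrete $\SL_3$-parameter cannot have $\mathbf1$ in its adjoint, and $\mathbf1\oplus\phi_{\sigma'}\oplus\phi_{\sigma'}^\vee$ has no self-dual $4$-dimensional summand to serve as $\Std_{\SO_4}$. The paper itself uses this triplet for the $\rG_2$-packet of $\PGL_3$-type in Lemma~\ref{lemma:elliptic-norm-PGL3}. This subcase must therefore be set aside, not argued by an order count.
\end{enumerate}

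\emph{Uniqueness in the remaining cases.} This needs no computation of $s\widehat\theta$. Every parameter through $\rG_2(\C)$, including those from $\SO_4$, has $\mathbf1$ in its $8$-dimensional standard representation, while $\Ad$ of a discrete $\SL_3$-parameter does not. By Lemma~\ref{lemma:injectivity-G_2-to-GL7}, the only $\rG_2$-parameter with the standard representation occurring in cases (i), (ii) and (iv)(d) is the Levi one, which is not discrete.

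\emph{Case (i).} The $\SO_4$-parameter has standard part $\mathbf1\oplus\omega_{K/F}\oplus\phi_{\sigma'}$ and $\wedge^2_+=\omega_{K/F}\oplus\phi_{\sigma'}$. Your $\phi_{\sigma'}\oplus\phi_{\sigma'}$ is not multiplicity-free, so it cannot be the standard part of a discrete parameter.
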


\begin{proof}
  The proof follows Proposition~\ref{proposition:elliptic-triplets-elliptic-transfer-G2}, using the preceding results and Lemma~\ref{lemma:classification-regular-conjugacy-classes-Weyl-elements}.
  For Lemma~\ref{lemma:standard-parameters-of-theta-unstable-cases} and case $(c)$ of Lemma~\ref{lemma:classification-regular-conjugacy-classes-Weyl-elements}, use the local version of \cite{Gan2025-TrialityAdjointLiftingGL3}*{Theorem 8.2}.
\end{proof}

\part{Local harmonic analysis}\label{part:local-harmonic-analysis}

\section{Preliminaries on endoscopic transfer}\label{section:endoscopic-transfer}

Let $F$ be a local field of characteristic zero.
Let $(G, \widetilde{G})$ consist of a reductive group over $F$ and its twisted space.
For each endoscopic datum $(G^{\fe}, \widetilde{G}^{\fe}, s, \xi)$ for $(G, \widetilde{G})$, we have the transfer map
\begin{align*}
  \Trans_{G^{\fe}}^{\widetilde{G}} \colon \cI(\widetilde{G}) \to \cS(G^{\fe})^{\Aut(\fe)}.
\end{align*}
For $f^{\widetilde{G}} \in \cI(\widetilde{G})$, we write $$f^{\widetilde{G}}_{G^{\fe}} = \Trans_{G^{\fe}}^{\widetilde{G}}(f^{\widetilde{G}}).$$
We also have the parabolic descent map
\begin{align*}
  \cI(\widetilde{G}) \to \cI(\widetilde{M})^{W(G, \widetilde{M})}: f^{\widetilde{G}} \mapsto f^{\widetilde{G}}_{\widetilde{M}}
\end{align*}
for any twisted Levi space $\widetilde{M}$ of $\widetilde{G}$.

\subsection{Parabolic descent commutes with endoscopic transfer}

\begin{lemma}\label{lemma:parabolic-descent-commutes-with-transfer}
  Let $(G, \widetilde{G} = G \rtimes \theta)$ be a setting for twisted endoscopy.
  Let $G^{\fe}$ be a relevant elliptic endoscopic datum for
  $(G,\widetilde G)$, let $M^{\fe}$ be a relevant Levi subgroup of
  $G^{\fe}$, and let $\widetilde M$ be the associated twisted Levi space
  of $\widetilde G$.  Give the Levi datum $(M^{\fe},\widetilde M)$ the
  auxiliary data and transfer-factor normalization obtained by restriction
  from those for $(G^{\fe},\widetilde G)$, and use compatible Haar measures.
  Let $f^{\widetilde{G}}$ be a test function on $\widetilde{G}$ and $f^{\widetilde{G}}_{G^{\fe}}$ be a transfer of $f^{\widetilde{G}}$ to $G^{\fe}$.
  Then, the parabolic descent of $f^{\widetilde{G}}$ to $\widetilde{M}$ transfers to the parabolic descent of $f^{\widetilde{G}}_{G^{\fe}}$ to $M^{\fe}$.
\end{lemma}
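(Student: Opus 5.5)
We will verify the identity by comparing stable orbital integrals at strongly regular elements. Since both sides are determined by their values on strongly regular semisimple elements, it suffices to check the identity at each strongly regular $\delta\in M^{\fe}(F)$ that is $\widetilde G$-regular; the remaining elements are handled by density and the local integrability and continuity of germ expansions. Concretely, we must show
\begin{align*}
  S^{M^{\fe}}\bigl(\delta,(f^{\widetilde G}_{G^{\fe}})_{M^{\fe}}\bigr)
  =d(\theta)^{1/2}\sum_{\gamma\leftrightarrow\delta}
  \frac{\Delta_{M}(\delta,\gamma)}{[Z_M(\gamma)(F):M_\gamma(F)]}
  I^{\widetilde M}\bigl(\gamma,f^{\widetilde G}_{\widetilde M}\bigr),
\end{align*}
where $\Delta_M$ is the restricted transfer factor.

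The first step is the descent formula for orbital integrals. For $\gamma\in\widetilde M(F)$ that is $\widetilde G$-strongly regular, the twisted centralizers satisfy $G_\gamma=M_\gamma$, because $\gamma$ is regular and its centralizer torus is contained in $M$. With compatible measures and the normalization $\abs{D^{\widetilde G}}^{1/2}$ of Section~\ref{section:orbital-integrals-transfer-factors}, the Iwasawa decomposition gives
\begin{align*}
  I^{\widetilde M}(\gamma,f^{\widetilde G}_{\widetilde M})=I^{\widetilde G}(\gamma,f^{\widetilde G}).
\end{align*}
The Jacobian $\abs{D^{\widetilde G}(\gamma)/D^{\widetilde M}(\gamma)}^{1/2}$ is exactly absorbed by the $\delta_P^{1/2}$-normalized constant term. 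The same computation on $G^{\fe}$ gives $S^{M^{\fe}}(\delta,(f_{G^{\fe}})_{M^{\fe}})=S^{G^{\fe}}(\delta,f_{G^{\fe}})$. On this side we also sum over stable classes, using that stable conjugacy in $M^{\fe}$ of $G^{\fe}$-regular elements embeds into stable conjugacy in $G^{\fe}$.

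The second step compares the two index sets and the factors. We will show that the $G(F)$-conjugacy classes of $\gamma\in\widetilde G(F)$ related to $\delta$ are precisely the $G(F)$-saturations of the $M(F)$-classes in $\widetilde M(F)$ related to $\delta$. Any $\gamma'\in\widetilde G$ related to $\delta$ has a centralizer torus that transfers to a torus of $M$, and can therefore be conjugated into $\widetilde M$; this uses the standard fact that a torus of $G$ whose split part contains $A_{M}$ lies in a conjugate of $M$. Distinct $M(F)$-classes that become $G(F)$-conjugate are identified via the action of $W(G,\widetilde M)$, and the $W(G,\widetilde M)$-invariance of $f_{\widetilde M}$ makes this regrouping harmless. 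The factor $[Z(\gamma)(F):G_\gamma(F)]$ restricts correctly. Finally, with the restricted auxiliary data, $\Delta_M(\delta,\gamma)=\Delta(\delta,\gamma)$; this is the standard compatibility of Kottwitz--Shelstad factors with Levi subgroups, cf. \cite{MoeglinWaldspurger2016-StabilisationFormuleTracesTordue1}*{I.2 and I.3}. The constant $d(\theta)$ is the same on both sides because it depends only on the pinned torus.

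The main obstacle is this last point, the Levi compatibility of the Whittaker-normalized transfer factors, together with the index bookkeeping when $M^{\fe}$ is a Levi of $G^{\fe}$ but several $M(F)$-classes fuse. We would cite Moeglin--Waldspurger's construction of Levi endoscopic data and the relation $\Delta_{M}=\Delta_{G}$ on $\widetilde G$-regular pairs, checking separately that the Whittaker normalization of \cite{Kaletha2022-LocalLanglandsConjecturesDisconnectedGroups}*{Section 4.9} restricts to the Whittaker normalization for the Levi. This holds because the Whittaker datum restricts to $M$ and the $\epsilon$-factor term factorizes along $\widehat{\fn}$. The remaining possibility is that $M^{\fe}$ corresponds to several non-conjugate endoscopic data for $\widetilde M$; this is dealt with by summing over the $\Aut(\fe)$-orbits, which matches the $\Aut(\fe)$-invariance built into $\cS(G^{\fe})^{\Aut(\fe)}$.
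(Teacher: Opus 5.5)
Your route differs from the paper's. The paper does no orbital-integral computation. It takes the Levi datum and its auxiliary data from \cite{MoeglinWaldspurger2016-StabilisationFormuleTracesTordue1}*{I.3.4}, uses the canonical descent maps on measure lines of I.3.1 there, and reads off $(f^{\widetilde G}_{G^{\fe}})_{M^{\fe}}=f^{\widetilde M}_{M^{\fe}}$ from I.4.11, formula (4). Your direct verification is essentially the mechanism behind that citation. It consists of descent of orbital integrals using $G_\gamma=M_\gamma$, its stable analogue on $G^{\fe}$, matching of rational classes, and $\Delta_M=\Delta$. It buys transparency, whereas the citation absorbs the measure-line, auxiliary-data and $d(\theta)$ bookkeeping. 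Two of your concerns are moot under the stated hypotheses. The Levi transfer factor is by assumption the restricted one, so you need not check that Kaletha's Whittaker normalization restricts. The Levi datum is determined by $\fe$ and $M^{\fe}$, so no summation over $\Aut(\fe)$-orbits is involved.

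One step is justified incorrectly and must be replaced. You claim that distinct $M(F)$-classes related to $\delta$ which become $G(F)$-conjugate are harmless because $f^{\widetilde G}_{\widetilde M}$ is $W(G,\widetilde M)$-invariant. Suppose two such classes $\gamma_1,\gamma_2$ did fuse. The $\widetilde M$-side would then contain $I^{\widetilde G}(\gamma_1,f^{\widetilde G})$ twice, weighted by $\Delta_M(\delta,\gamma_1)$ and $\Delta_M(\delta,\gamma_2)$, against a single term on the $\widetilde G$-side. Invariance of the descended function is already used in the descent formula and cannot remove such a discrepancy.

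What is true is that no fusion occurs. For strongly regular $\gamma\in\widetilde M(F)$ the group $Z_G(\gamma)$ is abelian and contains $A_{\widetilde M}$. Hence $Z_G(\gamma)\subset Z_G(A_{\widetilde M})=M$, i.e.\ $Z_G(\gamma)=Z_M(\gamma)$; this is also what makes your index factors agree. Fix one $\gamma\in\widetilde M(F)$ related to $\delta$ through the Levi datum; your torus argument supplies it when the $\widetilde G$-side is nonzero. The $M(F)$-classes in $\widetilde M(F)$ related to $\delta$ are parametrized by $\ker\bigl(H^1(F,Z_M(\gamma))\to H^1(F,M)\bigr)$. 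The $G(F)$-classes in $\widetilde G(F)$ related to $\delta$ are parametrized, compatibly, by $\ker\bigl(H^1(F,Z_G(\gamma))\to H^1(F,G)\bigr)$. These kernels coincide because $H^1(F,M)\to H^1(F,G)$ is injective for the Levi component of an $F$-parabolic.

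Thus $G(F)$-saturation is a bijection between the two index sets. The $W(G,\widetilde M)$-translates of $\gamma$ lie in the same $G(F)$-classes but are not related to $\delta$ through the Levi datum, so they never occur in the $\widetilde M$-side sum. This is the twisted counterpart of the stable-side fact you already invoke on $G^{\fe}$.
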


\begin{proof}
  Use the canonical descent maps on measure lines of
  \cite{MoeglinWaldspurger2016-StabilisationFormuleTracesTordue1}*{I.3.1, pp. 56--57}.
  The Levi datum and auxiliary data restrict by
  \cite{MoeglinWaldspurger2016-StabilisationFormuleTracesTordue1}*{I.3.4, pp. 62--63},
  and \cite{MoeglinWaldspurger2016-StabilisationFormuleTracesTordue1}*{I.4.11, condition (2), formula (4), pp. 95--96} gives in $\cS(M^{\fe})$
  \begin{align*}
    (f^{\widetilde{G}}_{G^{\fe}})_{M^{\fe}}
    =f^{\widetilde M}_{M^{\fe}},
  \end{align*}
  where $f^{\widetilde M} = (f^{\widetilde{G}})_{\widetilde M}$.
\end{proof}

\subsection{Endoscopic character relations for the Steinberg representations}

We use the following twisted Steinberg character formula. Compare
\cite{AtobeGanEtAl2024-LocalIntertwiningRelationsCoTemperedPackets}*{Proposition B.5.1}.

\begin{lemma}\label{lemma:twisted-character-formula-for-steinberg}
   Let $F$ be a non-archimedean local field of characteristic zero, and $G$ be a quasi-split connected reductive group over $F$.
   Let $\mathrm{pin}_{G}$ be an $F$-pinning of $G$.
   Let $\theta$ be a finite-order pinning-preserving automorphism of $G$ over $F$.
   Let $P_0=M_0N_0$ be the minimal parabolic subgroup determined by the pinning.
   Let $\widetilde{\St_{G}}$ be the Whittaker extension of the Steinberg representation $\St_{G}$ of $G(F)$ to $G \rtimes \theta$ with respect to $\mathrm{pin}_{G}$.
   Then, we have
   \begin{align*}
       \Theta_{\widetilde{\St_{G}}}(f)
       = \sum_{P,\theta(P)=P}(-1)^{\dim(A_{M_0}/A_M)^{\theta}}
       \Theta_{i_{P\rtimes\theta}^{G\rtimes\theta}
       (\widetilde{\delta_P^{-1/2}})}(f),
   \end{align*}
   for $f\in C_c^{\infty}(G(F)\rtimes\theta)$, where $P=MN$ runs over
   the $\theta$-stable standard parabolic subgroups and
   $\widetilde{\delta_P^{-1/2}}(\theta)=1$.
\end{lemma}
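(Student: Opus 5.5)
We would follow the untwisted proof, the Casselman resolution of the Steinberg representation, and carry $\theta$ through each term. The starting point is the exact sequence of smooth $G(F)$-modules
\begin{align*}
  0 \to \St_G \to i_{P_0}^{G}(\delta_{P_0}^{-1/2}) \to \bigoplus_{|\Delta\setminus I|=\mathrm{rk}-1} i_{P_I}^G(\delta_{P_I}^{-1/2}) \to \cdots \to \bigoplus_{|I|=|\Delta|-1} i_{P_I}^G(\delta_{P_I}^{-1/2}) \to \mathbf{1} \to 0,
\end{align*}
with terms indexed by standard parabolics $P_I$ (equivalently $C^\infty(G/P_I)$). Equivalently, $\St_G$ has the Euler characteristic expression $\sum_{I\subset\Delta}(-1)^{|I|} i_{P_I}^G(\delta_{P_I}^{-1/2})$ in the Grothendieck group. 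Since $\theta$ preserves the pinning, it permutes the standard parabolics via its action on $\Delta$. Define the action of $\theta$ on each term by $f\mapsto f\circ\theta^{-1}$, which is the natural action on $C^\infty(G/P)$. With this action the differentials, which are sums of restriction maps with signs, are $\theta$-equivariant up to the signs coming from the chosen orientation of each $\Delta\setminus I$. So the complex becomes a complex of $G\rtimes\langle\theta\rangle$-modules once those orientation signs are incorporated.

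Next we take the twisted trace of this complex. By the alternating-sum principle for twisted characters of finite exact complexes of admissible modules of finite length, $\Theta_{\widetilde{\St_G}}$ equals the alternating sum of the twisted traces of the terms. The terms that $\theta$ permutes nontrivially are direct sums over $\theta$-orbits of length greater than one. On such an orbit, $\theta$ acts on $\bigoplus_{P\in\text{orbit}}$ by a permutation with no fixed summand, so its twisted trace vanishes. Only the $\theta$-stable $P$ survive. For a $\theta$-stable $P=P_I$, the term is $i_{P\rtimes\theta}^{G\rtimes\theta}(\widetilde{\delta_P^{-1/2}})$ multiplied by the sign through which $\theta$ acts on the orientation line $\wedge^{\mathrm{top}}\mathbb{C}[\Delta\setminus I]$. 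Identifying $\mathbb{C}[\Delta\setminus I]$ with $\fa_{M}^{*}/\fa_{M_0}^{*}$ shows that this sign, combined with $(-1)^{|\Delta\setminus I|}$, yields the global exponent $(-1)^{\dim(A_{M_0}/A_M)^\theta}$. This is the standard computation: the determinant of a permutation times $(-1)^{\#\text{points}}$ equals $(-1)^{\#\text{orbits}}$, and $\#\text{orbits}$ is the dimension of the fixed space.

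Finally we must check the normalization: the extension of $\St_G$ obtained from the resolution must be the Whittaker extension $\widetilde{\St_G}$, and not a twist of it by a character of $\langle\theta\rangle$. To do this, we fix the extension on the last term $\mathbf{1}$ with $\theta$ acting trivially, and check compatibility at $P_0$. There $i_{P_0}^G(\delta_{P_0}^{-1/2})$ carries the Jacquet-integral Whittaker functional for the pinning-determined generic character, which $\theta$ preserves, and $\St_G$ embeds as its unique generic subquotient. The action $f\mapsto f\circ\theta^{-1}$ fixes this functional, because $\theta$ fixes the Tits representative of $w_0$ and the pinning character. Hence the induced action on $\St_G$ preserves the Whittaker functional, so it is the Whittaker extension. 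We expect the main obstacle to be this sign and normalization bookkeeping: one must verify that the orientation signs in the differentials combine exactly with the action of $\theta$ on $\St_G\subset i_{P_0}^G$ so that no stray sign $\pm1$ or cube root of unity appears. We would compare directly with \cite{AtobeGanEtAl2024-LocalIntertwiningRelationsCoTemperedPackets}*{Proposition B.5.1}, which handles the order-two case, and adapt its argument.
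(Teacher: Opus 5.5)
Your strategy is the paper's: twisted Euler characteristic of the resolution with orientation lines, vanishing of the non-$\theta$-stable orbits, and Whittaker normalization from the open cell. But the sign bookkeeping, which is the whole content of this lemma, does not close as you set it up.

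You orient the $P_I$-term by $\wedge^{\mathrm{top}}\C[\Delta\setminus I]$. Then the $P_0$-term carries the line $\wedge^{\mathrm{top}}\C[\Delta]$, so the equivariant action on it is $\det(\theta\mid\C[\Delta])$ times $f\mapsto f\circ\theta^{-1}$. You cannot impose both the trivial action on $\mathbf 1$ and the natural action on the $P_0$-term unless $\det(\theta\mid\C[\Delta])=1$.

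Already for the pinned involution of $\PGL_3$ this fails. In $0\to\mathbf 1\to C^\infty(P_{\{\alpha_1\}}\backslash G)\oplus C^\infty(P_{\{\alpha_2\}}\backslash G)\to C^\infty(P_0\backslash G)$ the maps are $c\mapsto(c,-c)$ and $(f_1,f_2)\mapsto f_1+f_2$, and $\theta$ swaps the two middle summands. So the natural action on the middle and right terms forces $\theta$ to act by $-1$ on $\mathbf 1$, and your normalization step produces the wrong extension of $\St_G$.

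Likewise, the coefficient you claim, $(-1)^{|\Delta\setminus I|}\det(\theta\mid\C[\Delta\setminus I])=(-1)^{|(\Delta\setminus I)/\langle\theta\rangle|}$, is not $(-1)^{\dim(A_{M_0}/A_M)^{\theta}}$. The space $\fa_{M_0}/\fa_M$ is spanned by the coroots in $I$, so the exponent is $|I/\langle\theta\rangle|$, and the two differ by $(-1)^{|\Delta/\langle\theta\rangle|}$. Your ``$\fa_M^*/\fa_{M_0}^*$'' is not a quotient, since $\fa_M^*\subset\fa_{M_0}^*$. You also switch the Euler sign from $(-1)^{|I|}$ to $(-1)^{|\Delta\setminus I|}$ midway.

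The fix is the paper's convention: twist the natural action on $C^\infty(P_I\backslash G)=i_{P_I}^G(\delta_{P_I}^{-1/2})$ by $\det(\theta\mid\R I)$. This makes the inclusion differentials equivariant, and the $P_0$-term then carries exactly $f\mapsto f\circ\theta^{-1}$. The induced action on the quotient $\St_G$ therefore preserves the open-cell Whittaker functional, which kills the images of the larger terms because $\psi$ is nontrivial on each simple root group. Each $\theta$-stable term then enters with $(-1)^{|I|}\det(\theta\mid\R I)=(-1)^{|I/\langle\theta\rangle|}$.

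Keeping your orientation also works, with Euler sign $(-1)^{|I|}$, provided you carry the global sign $\det(\theta\mid\C[\Delta])$ through both the extension on $\St_G$ and the coefficients. Note also that with $\delta_P^{-1/2}$ the trivial representation is the subrepresentation and $\St_G$ the quotient of the $P_0$-term; your exact sequence is the contragredient one.

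For triality on $D_4$ all these global signs are $+1$: a $3$-cycle has determinant $1$, $|\Delta/\langle\theta\rangle|=2$, and $|\Delta|=4$. That is why nothing goes wrong in the paper's application, but the lemma is stated for arbitrary finite-order $\theta$.
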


\begin{proof}
   Apply $\theta$ to the standard resolution of $\St_G$ by the
   representations $i_P^G(\delta_P^{-1/2})$ with their orientation lines.
   If $I$ is the set of relative simple roots of $M$, the coefficient
   of a $\theta$-stable term in the alternating trace is
   \begin{align*}
     (-1)^{|I|}\det(\theta\mid\R I)
     =(-1)^{|I/\langle\theta\rangle|}
     =(-1)^{\dim(A_{M_0}/A_M)^{\theta}}.
   \end{align*}
   The other terms have zero trace on $G(F)\rtimes\theta$.
   The action on the Steinberg quotient preserves the Whittaker
   functional inherited from the open Bruhat cell, hence is the
   Whittaker extension.
\end{proof}

Let $H$ be the principal endoscopic group of $G$ with respect to $\theta$ with the $L$-group ${}^LH = G^{\vee, \theta, \circ} \rtimes W_F$.
Also, we have the parabolic subgroup $P_H$ of $H$ corresponding to $P$, and the extension
\[
  \widetilde{\delta_{P}^{-\frac{1}{2}}}
\]
transfers to $\delta_{P_H}^{-\frac{1}{2}}$.
Thus, we have the following endoscopic character relation for the Steinberg representation by Lemma~\ref{lemma:parabolic-descent-commutes-with-transfer}.

\begin{lemma}\label{lemma:ecr-for-steinberg}
  The extension $\widetilde{\St_{G}}$ of the Steinberg representation $\St_{G}$ of $G(F)$ to $G \rtimes \theta$ with respect to $\mathrm{pin}_{G}$ transfers to $\St_H$.
\end{lemma}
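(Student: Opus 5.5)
The plan is to combine the twisted Steinberg character formula of Lemma~\ref{lemma:twisted-character-formula-for-steinberg} with the compatibility of parabolic descent and transfer in Lemma~\ref{lemma:parabolic-descent-commutes-with-transfer}. On the endoscopic side we use the classical (untwisted) alternating-sum formula for $\St_H$. Concretely, for $f \in C_c^\infty(G(F)\rtimes\theta)$, Lemma~\ref{lemma:twisted-character-formula-for-steinberg} gives
\begin{align*}
  \Theta_{\widetilde{\St_G}}(f)=\sum_{P=\theta(P)}(-1)^{\dim(A_{M_0}/A_M)^{\theta}}\,\Theta_{i_{P\rtimes\theta}^{G\rtimes\theta}(\widetilde{\delta_P^{-1/2}})}(f).
\end{align*}
On $H$ (quasi-split, with ${}^LH=\widehat G^{\theta,\circ}\rtimes W_F$) we have
\begin{align*}
  \Theta_{\St_H}(f_H)=\sum_{P_H}(-1)^{\dim A_{M_{0,H}}/A_{M_H}}\,\Theta_{i_{P_H}^H(\delta_{P_H}^{-1/2})}(f_H).
\end{align*}

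The first step is to set up a bijection $P\mapsto P_H$ between $\theta$-stable standard parabolic subgroups of $G$ and standard parabolic subgroups of $H$. On the dual side, $\theta$-orbits of simple roots of $\widehat G$ correspond to the simple roots of $\widehat G^{\theta,\circ}$, so $\theta$-stable subsets $I\subset\Delta$ correspond to subsets of $\Delta_H$. Under this correspondence $A_M^\theta$ matches $A_{M_H}$, and $|I/\langle\theta\rangle|=|I_H|$, so the signs agree. Each twisted Levi $\widetilde M=M\rtimes\theta$ then has $M_H$ as its principal endoscopic Levi datum, with transfer factors restricted from those of $(H,\widetilde G)$.

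The second step is to compare the terms in the two sums. By the twisted van Dijk formula, the character of an induced representation of a twisted space is the character of the inducing representation evaluated on the parabolic descent. Hence
\begin{align*}
  \Theta_{i_{P\rtimes\theta}^{G\rtimes\theta}(\widetilde{\delta_P^{-1/2}})}(f)=\Theta_{\widetilde{\delta_P^{-1/2}}}(f_{\widetilde M}),
\end{align*}
and the analogous identity holds on $H$. By Lemma~\ref{lemma:parabolic-descent-commutes-with-transfer}, $(f_H)_{M_H}=(f_{\widetilde M})_{M_H}$. It therefore suffices to show that the one-dimensional twisted representation $\widetilde{\delta_P^{-1/2}}$ of $\widetilde M$, normalized by value $1$ at $\theta$, transfers to the character $\delta_{P_H}^{-1/2}$ of $M_H$. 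This is the assertion made just before the lemma. To justify it, I would note that $\delta_P^{-1/2}$ is a $\theta$-fixed unramified character whose parameter is the central cocharacter $2\rho$-type element. This element lies in $\widehat{Z}_{\widehat M}^{\theta}$ and restricts to the modulus character of $P_H$, because $\rho_{\widehat N}$ restricted to $\widehat T^{\theta,\circ}$ is $\rho_{\widehat N_H}$; this uses that $\theta$ permutes the positive roots of $\widehat N$. For the twisted torus and character case, the transfer identity reduces to the fact that the Whittaker-normalized transfer factor on $\widetilde M$ is trivial on the relevant elements. Alternatively, one can twist the transfer of the trivial character of $\widetilde M$ by a character of $M(F)/M_{\der}$; the trivial character transfers to the trivial character of $M_H$ with our normalization, including the factor $d(\theta)^{1/2}$.

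The main obstacle is this normalization, i.e.\ getting the constant in the transfer to be exactly $1$ rather than some scalar. Two ingredients interact here: the Whittaker normalization of $\widetilde{\delta_P^{-1/2}}$ via $\widetilde{\delta_P^{-1/2}}(\theta)=1$, and the Kaletha-normalized transfer factors together with $d(\theta)^{1/2}$. My first attempt would be to check the scalar on $M=G$ for the trivial representation, where it amounts to matching stable orbital integrals of $\mathbf 1$ at the identity. I would then propagate to Levis via the restriction of transfer factors. Summing over $P$ with matching signs then yields $\Theta_{\widetilde{\St_G}}(f)=\Theta_{\St_H}(f_H)$.
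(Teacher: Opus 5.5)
This is the paper's argument: it too combines Lemma~\ref{lemma:twisted-character-formula-for-steinberg} with Lemma~\ref{lemma:parabolic-descent-commutes-with-transfer} and the assertion, stated without proof just before the lemma, that $\widetilde{\delta_P^{-1/2}}$ transfers to $\delta_{P_H}^{-1/2}$, with the signs $(-1)^{|I/\langle\theta\rangle|}=(-1)^{|I_H|}$ matching term by term. Your extra justification of that assertion needs two corrections: the identity you need is $2\rho_N=2\rho_{N_H}$ in $X^*(T)^{\theta}=X^*(T_H)$ (each root of $H$ is the sum of a $\theta$-orbit of roots of $G$), not $\rho_{\widehat N}|_{\widehat T^{\theta,\circ}}=\rho_{\widehat N_H}$, which fails for triality because each three-element orbit of roots of $\widehat G$ restricts to a single short root of $\widehat H$; and matching parameters does not by itself give coefficient $1$ --- for $P=G$ it is the statement that the trivial character of $\widetilde G$ transfers to $\mathbf{1}_H$, which your ``scalar check at the identity'' does not establish and which the paper also takes for granted.
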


\section{K-spaces}

Let $F=\R$. We recall $K$-spaces from \cite{MoeglinWaldspurger2016-StabilisationFormuleTracesTordue1}*{I.1.11}.

\begin{definition}
    We call the following datum a $K$-datum over $\R$.
    \begin{itemize}
        \item A finite family $(G_p)_{p \in \Pi}$ of connected reductive groups over $\R$.
        \item A finite family $(\widetilde{G}_p)_{p \in \Pi}$ of twisted spaces over $\R$ under $G_p$.
        \item A finite family $(\phi_{p, q}, \widetilde{\phi}_{p, q}, \nabla_{p, q})_{p, q \in \Pi}$ consisting of the following data.
        \begin{itemize}
            \item Isomorphisms $\phi_{p, q}: G_q \to G_p$ defined over $\C$ and isomorphisms $\widetilde{\phi}_{p, q}: \widetilde{G}_q \to \widetilde{G}_p$ defined over $\C$ which are compatible with $\phi_{p, q}$.
            \item A cocycle $\nabla_{p, q}: \Gamma_{\R} \to G_{p, sc}$.
        \end{itemize}
        These data must satisfy the following conditions.
            \begin{enumerate}
                \item $\phi_{p, q} \circ \sigma(\phi_{p, q})^{-1} = \Int(\nabla_{p, q}(\sigma))$ and $\widetilde{\phi}_{p, q} \circ \sigma(\widetilde{\phi}_{p, q})^{-1} = \Int(\nabla_{p, q}(\sigma))$.
                \item $\phi_{p, q} \circ \phi_{q, r} = \phi_{p, r}$ and $\widetilde{\phi}_{p, q} \circ \widetilde{\phi}_{q, r} = \widetilde{\phi}_{p, r}$.
                \item 
                $\nabla_{p, r}(\sigma) = \phi_{p, q}(\nabla_{q, r}(\sigma)) \nabla_{p, q}(\sigma)$.
                \item $\widetilde{G}_p(\R) \neq \emptyset$ for all $p \in \Pi$.
                \item The family $(\nabla_{p, q})_{q \in \Pi}$ is mapped bijectively to the set 
                \begin{align*}
                    \pi(H^1(\Gamma_{\R}, G_{p, sc})) \cap H^1(\Gamma_{\R}, G_{p})^{\theta}. 
                \end{align*}
                Here, $\pi: H^1(\Gamma_{\R}, G_{p, sc}) \to H^1(\Gamma_{\R}, G_{p})$ is the natural map and $\theta$ is the automorphism of $G_p$ induced by $\widetilde{G}_p$.
            \end{enumerate}
    \end{itemize}    

    In this situation, we set 
    \begin{align*}
        KG = \bigsqcup_{p \in \Pi} G_p, \quad K\widetilde{G} = \bigsqcup_{p \in \Pi} \widetilde{G}_p,
    \end{align*}
    and we call $(KG, K\widetilde{G})$ a $K$-space over $\R$ associated to the $K$-datum $(G_p, \widetilde{G}_p)_{p \in \Pi}$.
\end{definition}

For a twisted reductive space $\widetilde{G}$ over $\R$, the associated $K$-space $(KG, K\widetilde{G})$ consists of its inner twists indexed by $$\pi(H^1(\Gamma_{\R}, G_{sc})) \cap H^1(\Gamma_{\R}, G)^{\theta},$$ with $\widetilde{G}$ as base point. This also defines $\cE(K\widetilde{G})$. See \cite{MoeglinWaldspurger2016-StabilisationFormuleTracesTordue1}*{I.1.11}.

Let $F$ be a number field and $S_\R$ its set of real places.
Global $K$-spaces are defined in \cite{MoeglinWaldspurger2016-StabilisationFormuleTracesTordue2}*{VI.1.16}, whose lemma gives the Hasse principle
\begin{align*}
    \pi(H^1(F, G_{sc})) \cap H^1(F, G)^{\theta} = \prod_{v \in S_{\R}} \bigl(\pi(H^1(F_v, G_{sc})) \cap H^1(F_v, G)^{\theta}\bigr).
\end{align*}

\section{Certain surjectivity results}

\begin{theorem}\label{theorem:cuspidal-surjective}
    The restriction of the transfer map 
    \begin{align*}
        \Trans^{K\widetilde{\PGSO_8}} 
        \colon
        \cI_{\cusp}(K\widetilde{\PGSO_8}(\R)) \to \cS_{\cusp}(\rG_2(\R)) 
        \oplus
        \cS_{\cusp}(\SO_4(\R))
    \end{align*}
    to the subspace of cuspidal functions is surjective.
\end{theorem}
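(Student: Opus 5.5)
We use the spectral (Paley--Wiener) description of cuspidal functions over $\R$: the space $\cS_{\cusp}(H(\R))$ of stable cuspidal invariant distributions is spanned by the stable discrete series characters $S^H_{\phi}$ for discrete parameters $\phi$ of $H$. The cuspidal part of $\cI(K\widetilde{\PGSO_8}(\R))$ is described dually by the twisted elliptic representations of the $K$-space, that is, by the elliptic triplets of Moeglin--Waldspurger. By Lemma~\ref{lemma:lifting-discrete-parameter-archimedean-G2-twisted-PGSO8}, every discrete parameter of $\rG_2(\R)$ lifts to a discrete parameter $\psi$ of $\PGSO_8(\R)$. By Lemma~\ref{lemma:invariance-lifting-archimedean}, each member of $\Pi_\psi$ is $\theta$-stable and has a canonical extension. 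The same statements hold for discrete parameters of $\SO_4(\R)$ pushed through $\iota_{\SO_4}$. We therefore reduce surjectivity to a triangularity (duality) statement: for each target discrete parameter $\phi^H$ ($H=\rG_2$ or $\SO_4$), we seek a twisted elliptic virtual character on $K\widetilde{\PGSO_8}(\R)$ whose transfer to $H$ is $S^H_{\phi^H}$ plus terms supported on other parameters. Its transfer to the other endoscopic group should vanish.

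The steps are as follows. First, we invoke Shelstad's twisted real endoscopic character identities (Mezo, Shelstad for twisted real groups, compatible with Whittaker normalization as in Lemma~\ref{lemma:invariance-lifting-archimedean}). These give, for $\psi=\iota_H^{\PGSO_8}\circ\phi^H$ discrete,
\begin{align*}
\sum_{\pi\in\Pi_\psi(K\PGSO_8)}\langle s_H,\pi\rangle\,\tr\widetilde\pi(f)=S^H_{\phi^H}(f_H).
\end{align*}
Here $s_H$ is the semisimple element of $\cS_\psi\rtimes\theta$ attached to $H$. Second, we use that $\theta$ acts trivially on $\cS_\psi$ (computed in Lemma~\ref{lemma:invariance-lifting-archimedean}), so $\cS_\psi\rtimes\theta$ is abelian. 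The twisted characters $\tr\widetilde\pi$, $\pi\in\Pi_\psi$, are then linearly independent elliptic distributions. Fourier inversion over the finite group $\cS_\psi$ isolates, for each twisted endoscopic element $s$, the combination whose transfer is the stable character on the corresponding $H_s$. Third, we check that the map $(H,\phi^H)\mapsto\psi$ is injective and that a given $\psi$ corresponds to at most one pair. For $\rG_2$ this follows from Lemma~\ref{lemma:injectivity-G_2-to-GL7}. For $\SO_4$ it follows by comparing standard parameters, since those from $\SO_4$ contain $\mathbf 1$ in $\Std\circ\psi$ while the $\rG_2$-discrete ones of $\PGSp_6$-type do not, and otherwise the component groups distinguish them. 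Fourth, we pick a test function $f$ in $\cI_{\cusp}$ whose elliptic characters are any prescribed finitely supported function. This is possible by the twisted trace Paley--Wiener theorem of Delorme--Mezo on $K$-spaces, which realizes elliptic twisted characters by cuspidal functions. We then conclude by linearity.

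The main obstacle is the second and third steps. We must ensure that the twisted elliptic characters of the $K$-space separate the $\rG_2$ and $\SO_4$ contributions, namely that the endoscopic signs $\langle s,\cdot\rangle$ for the two elliptic data give independent functionals on $\Pi_\psi$, and that no discrete parameter of $\PGSO_8$ factors through both groups. We would handle this first by direct computation of $\cS_\psi=Q/2P$ with the Shelstad pairing, as in Lemma~\ref{lemma:invariance-lifting-archimedean}. The relevant twisted elements $s\rtimes\theta$ for $\rG_2$ and $\SO_4$ then differ by a nontrivial element of $\cS_\psi$, and character orthogonality gives independence.
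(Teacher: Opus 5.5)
Your plan has a genuine gap on the $\SO_4$ side. The paper does not argue spectrally here: it quotes the general cuspidal transfer isomorphism of Moeglin--Waldspurger (I.4.11), which is proved geometrically and covers all elliptic data at once.

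You assert that, as for $\rG_2$, every discrete parameter of $\SO_4(\R)$ pushed through $\iota_{\SO_4}$ is a discrete parameter of $\PGSO_8(\R)$. Then every $\SO_4$ target would come from your Fourier inversion over $\cS_\psi$ with $\psi=\iota_{\rG_2}^{\PGSO_8}\circ\phi^{\rG_2}$ discrete. This assertion is false. Put $I_k=\Ind_{W_\C}^{W_\R}(z/|z|)^k$ and $I_0=\mathbf 1\oplus\operatorname{sgn}$. A discrete parameter of $\SO_4(\R)$ is given by a pair $(I_m,I_n)$ with $m,n\geq1$ and $m\equiv n\pmod 2$. The eight-dimensional standard parameter of its lift is
\begin{align*}
  \mathbf 1\oplus\operatorname{sgn}\oplus I_{m+n}\oplus I_{\abs{n-m}}\oplus I_{2m}.
\end{align*}
For $n=m$ or $n=3m$ this has repeated summands, so the lift is not discrete. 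These are exactly the real families with singular infinitesimal characters $(n,n,0,0)$ and $(2n,n,n,0)$ treated in Theorem~\ref{theorem:local-intertwining-relation-PGSO8-SO4}. Their stable characters are nonzero functionals on $\cS_{\cusp}(\SO_4(\R))$. Their transfers to the $K$-space have singular infinitesimal character, so they are supported on the elliptic triplets with $M=M_{\alpha_2}$ or $M=M_{\alpha_1,\alpha_3,\alpha_4}$, not on twisted discrete-series characters.

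Consequently, any cuspidal $f$ built as you propose, with elliptic spectral data supported on discrete-series packets, satisfies $S^{\SO_4}_{\phi}(f_{\SO_4})=0$ for these $\phi$. Their stable pseudo-coefficients are never reached. Closing this requires the twisted real intertwining relations for those proper-Levi triples. In the paper these are proved later, and that proof invokes the Moeglin--Waldspurger cuspidal isomorphism itself, so they cannot be used here without circularity.

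Two further points need repair.

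First, your third step and final paragraph claim that a given $\psi$ corresponds to at most one pair, and that no discrete parameter factors through both $\rG_2$ and $\SO_4$. Over $\R$, every discrete $\phi^{\rG_2}$ has $\cS_{\phi^{\rG_2}}\cong(\Z/2\Z)^2$ and factors through $\SO_4$ via each of its three involutions. The dichotomy you invoke, trivial summand versus $\PGSp_6$-type, is a $p$-adic one. What your inversion actually needs is that the matrix $(\langle s,\pi\rangle)$ on the $K$-packet is invertible and that the resulting $(H_s,\phi^{H_s})$ are pairwise inequivalent. Both of these do hold.

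Second, prescribing finitely supported elliptic spectral data and concluding ``by linearity'' only reaches the span of stable pseudo-coefficients. Unless you work with $K$-finite functions, you also need a Paley--Wiener or closed-range argument to reach general elements of $\cS_{\cusp}$.
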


\begin{proof}
  See \cite{MoeglinWaldspurger2016-StabilisationFormuleTracesTordue1}*{I.4.11, Proposition}.
\end{proof}

\begin{theorem}\label{theorem:PGSO8-to-G2-surjective}
   The transfer map
   \begin{align*}
      \Trans_{\rG_2}^{\widetilde{\PGSO_8}} \colon \cI(\widetilde{\PGSO_8}(\R)) \to \cS(\rG_2(\R))
   \end{align*}
   is surjective.
\end{theorem}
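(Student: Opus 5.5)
We plan to prove the surjectivity of $\Trans_{\rG_2}^{\widetilde{\PGSO_8}}$ onto $\cS(\rG_2(\R))$ by induction on Levi subgroups, combining the cuspidal surjectivity of Theorem~\ref{theorem:cuspidal-surjective} with the compatibility of transfer and parabolic descent in Lemma~\ref{lemma:parabolic-descent-commutes-with-transfer}. First we recall the standard filtration of $\cS(\rG_2(\R))$ (and of $\cI(\widetilde{\PGSO_8}(\R))$) by the Levi subgroups from which the stable distributions are induced. A stable invariant function on $\rG_2(\R)$ is then determined, modulo cuspidal ones, by its parabolic descents to the proper Levi subgroups $M^{\fe}\in\{T_{\rG_2},\GL_2^{s},\GL_2^{l}\}$ of $\rG_2$. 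This can be phrased via the Paley--Wiener description of the trace Paley--Wiener spaces (Renard, Delorme--Mezo for the twisted case), or equivalently via Harish-Chandra's description of stable orbital integrals on each Cartan subgroup together with jump relations.

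Second, we match the Levi subgroups. Each Levi $M^{\fe}$ of $\rG_2$ corresponds to a $\theta$-stable twisted Levi space $\widetilde M$ of $\widetilde{\PGSO_8}$: $T_{\rG_2}\leftrightarrow \widetilde T$, the short-root $\GL_2$ $\leftrightarrow \widetilde{M_{\alpha_1,\alpha_3,\alpha_4}}$, and the long-root $\GL_2$ $\leftrightarrow \widetilde{M_{\alpha_2}}$, using Remark~\ref{remark:explicit-description-theta-invariant-Levi}. In each case $\rG_2$ restricted to the Levi, i.e.\ $M^{\fe}$, is the principal twisted endoscopic group of $\widetilde M$. For $\widetilde M$ of the form (product of $\GL$'s permuted by $\theta$), the transfer to $M^{\fe}$ is essentially base change along the diagonal/norm map $\GL_2^3\rtimes\theta\to\GL_2$ (resp.\ $\GL_1^3\rtimes\theta\to\GL_1$). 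We expect this to be surjective onto $\cS(M^{\fe}(\R))^{W}$ by an elementary computation: the twisted orbital integral of $f_1\otimes f_2\otimes f_3$ equals the orbital integral of the convolution $f_1*f_2*f_3$ up to the constant $d(\theta)^{1/2}$. In addition, the averaging over $W(\rG_2,M^{\fe})$ versus $W(G,\widetilde M)$ can be arranged, since the relative Weyl sets were computed in Corollary~\ref{corollary:regular-elements-theta-stable-cases}.

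Third, we run the induction. Given $g\in\cS(\rG_2(\R))$, we choose for the minimal Levi, and then for each $\GL_2$-Levi, functions $f_{\widetilde M}$ whose transfers match the descents $g_{M^{\fe}}$. We lift these to $\widetilde{\PGSO_8}$ through a section of parabolic descent, using the surjectivity of descent onto $W$-invariant functions on the image (Paley--Wiener). The difference between $g$ and the transfer of this lift is then cuspidal, by Lemma~\ref{lemma:parabolic-descent-commutes-with-transfer}. We correct it using Theorem~\ref{theorem:cuspidal-surjective}, taking the component at the base point of the $K$-space and noting that its image projects to $\cS_{\cusp}(\rG_2(\R))$. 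We also need to ensure that the contribution from the other inner forms in $K\widetilde{\PGSO_8}$ can be discarded. Here we would verify that $\pi(H^1(\R,\Spin_8))\cap H^1(\R,\PGSO_8)^\theta$ allows a function supported on the quasi-split member alone, or else cut off by $\SO_4$-components, which do not affect the $\rG_2$ projection.

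The main obstacle is the second and third steps at the non-cuspidal level. There one must check that the Levi descents of a general stable function on $\rG_2(\R)$ lie in the image of transfer from the twisted Levi spaces, including compatibility of $W$-invariance and of the matching of measures. One must also check that simultaneous prescription on all Levis is consistent, in particular for the jump conditions across the walls between the two $\GL_2$ Cartans. To handle this we would first try the spectral side: invoke the twisted trace Paley--Wiener theorem and show that every stable tempered character of $\rG_2(\R)$ is a transfer of a twisted character of $\widetilde{\PGSO_8}(\R)$. That last claim would follow from Lemma~\ref{lemma:invariance-lifting-archimedean} and the injectivity of $\Phi(\rG_2)\to\Phi(\PGSO_8)$, so surjectivity reduces to the injectivity of the dual map on stable tempered virtual characters.
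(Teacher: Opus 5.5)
There is a genuine gap, and it is in the cuspidal step, which is the heart of the matter. Your Levi-by-Levi induction does the job that the paper delegates to \cite{Gan2025-TrialityAdjointLiftingGL3}*{Proposition 4.7}, and as a reduction it is a reasonable substitute. However, you have swapped the Levi matching. The short-root $\GL_2$ of $\rG_2$ corresponds to $\widetilde{M_{\alpha_2}}$, and the long-root one corresponds to $\widetilde{M_{\alpha_1,\alpha_3,\alpha_4}}$; compare the global data in the proof of Theorem~\ref{theorem:local-tempered-LIR-G2} and Remark~\ref{remark:standard-parameters-of-theta-stable-cases}. So your computation with a convolution of three $\GL_2$-factors belongs to $M_l$, not $M_s$.

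The real problem is the passage from Theorem~\ref{theorem:cuspidal-surjective} to the quasi-split space. That theorem concerns the $K$-space. Over $\R$ this $K$-space genuinely has a second member, the compact form, besides the split one. What the theorem gives is surjectivity of the transfer on $\cI_{\cusp}(\widetilde G_{\mathrm{split}})\oplus\cI_{\cusp}(\widetilde G_{\mathrm{cpt}})$. It does not say that the split summand alone maps onto $\cS_{\cusp}(\rG_2(\R))$.

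This is not a Galois-cohomological question, since functions supported on the split member are of course allowed. It is a question about the transfer itself. Dually, you must exclude the possibility that a stable discrete-series character $S_\phi$ of $\rG_2(\R)$ pulls back to a distribution living only on the compact form. The cuspidal isomorphism of \cite{MoeglinWaldspurger2016-StabilisationFormuleTracesTordue1}*{I.4.11} gives no control over this. Cutting off by the $\SO_4$-components is also irrelevant, because the obstruction is the compact form's contribution to the $\rG_2$-component.

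Your fallback names the right statement: $f\mapsto S_\phi(f_{\rG_2})$ should be a nonzero twisted character on the split form. Once that is known, the pseudo-coefficients of \cite{MoeglinWaldspurger2016-StabilisationFormuleTracesTordue1}*{IV.2.2} put every stable pseudo-coefficient of a discrete parameter in the image. This is exactly the paper's reduction, and it is better than speaking of injectivity of the adjoint, which by itself only gives dense image.

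Your justification of the fallback does not work, however. Lemma~\ref{lemma:invariance-lifting-archimedean} only says that the members of the lifted packet are $\theta$-stable, so the twisted packet character $\Theta$ is defined. Injectivity of $\Phi(\rG_2)\to\Phi(\PGSO_8)$ only says that distinct $\phi$ give distinct packets. Neither gives any identity of distributions $S_\phi(f_{\rG_2})=\Theta(f)$, nor even nonvanishing of $S_\phi(f_{\rG_2})$ on the split form. That identity is a real twisted endoscopic character relation, and the paper obtains it from the main theorem of \cite{KalethaMezo2026-RefinedLocalLanglandsConjectureDiscreteLParameters}. Without that input, or an equivalent real twisted spectral-transfer result, your argument does not close.
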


\begin{proof}
   By \cite{Gan2025-TrialityAdjointLiftingGL3}*{Proposition 4.7}, it suffices to show that the transfer map
   \begin{align*}
      \Trans_{\rG_2}^{\widetilde{\PGSO_8}} \colon \cI_{\cusp}(\widetilde{\PGSO_8}(\R)) \to \cS_{\cusp}(\rG_2(\R))
   \end{align*}
   is surjective.
   It suffices to show that a stable pseudo-coefficient of any discrete $L$-parameter for $\rG_2(\R)$ is in the image of the transfer map.
   By the real spectral--geometric correspondence for elliptic characters and cuspidal functions in \cite{MoeglinWaldspurger2016-StabilisationFormuleTracesTordue1}*{IV.2.2}, it suffices to show that the stable character of any discrete $L$-parameter for $\rG_2(\R)$ transfers to a character of $\widetilde{\PGSO_8}(\R)$.
   This follows from the main theorem of \cite{KalethaMezo2026-RefinedLocalLanglandsConjectureDiscreteLParameters}.
\end{proof}

\begin{remark}
   The Kaletha--Mezo extension to $\widetilde{\PGSO_8}(\R)$ agrees with the canonical extension by uniqueness.
\end{remark}

\section{Spectral transfer versus geometric transfer}

Let $F$ be a local field of characteristic zero and let $\widetilde G$ be $\widetilde{\PGSO_8}$ or $\rG_2$.
We use the elliptic inner products on cuspidal functions and elliptic tempered distributions defined in \cite{MoeglinWaldspurger2016-StabilisationFormuleTracesTordue1}*{I.4.17} and \cite{MoeglinWaldspurger2018-FormuleTracesLocaleTordue}*{Section 7.3}.
The character inner product is independent of Haar measures.
For $F=\R$, these spaces are taken on the associated $K$-spaces, and the elliptic inner products are the sums over their components.

\begin{proposition}[\cite{MoeglinWaldspurger2016-StabilisationFormuleTracesTordue1}*{I.4.17}]\label{proposition:elliptic-inner-product-transfer}
  Let $f^{\widetilde G}\in\cI_{\cusp}(\widetilde G(F))$ and let $f^{\widetilde G}_H\in\cS_{\cusp}(H(F))$ be its transfer for each elliptic endoscopic group $H$.
  Then
  \begin{align*}
    (f^{\widetilde G},f^{\widetilde G})_{\mathrm{ell}}
    =\sum_H c(\widetilde G,H)
    (f^{\widetilde G}_H,f^{\widetilde G}_H)_{\mathrm{ell}},
  \end{align*}
  where
  \begin{align*}
    c(\widetilde{\PGSO_8},H)
    &=\begin{cases}
      1 & \text{if $H=\rG_2$ or $\SL_3$},\\
      1/2 & \text{if $H=\SO_4$},
    \end{cases}\\
    c(\rG_2,H)
    &=\begin{cases}
      1 & \text{if $H=\rG_2$},\\
      1/2 & \text{if $H=\SO_4$},\\
      1/3 & \text{if $H=\PGL_3$}.
    \end{cases}
  \end{align*}
  Let $\Lambda^{\widetilde G}$ be an elliptic tempered distribution and let $\Sigma^H$ be stable elliptic tempered distributions, invariant under automorphisms of the corresponding endoscopic data, such that
  \begin{align*}
    \Lambda^{\widetilde G}(f^{\widetilde G})
    =\sum_H\Sigma^H(f^{\widetilde G}_H)
  \end{align*}
  for every $f^{\widetilde G}\in\cI_{\cusp}(\widetilde G(F))$.
  Then
  \begin{align*}
    (\Lambda^{\widetilde G},\Lambda^{\widetilde G})_{\mathrm{ell}}
    =\sum_H c(\widetilde G,H)^{-1}
    (\Sigma^H,\Sigma^H)_{\mathrm{ell}}.
  \end{align*}
\end{proposition}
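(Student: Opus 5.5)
The plan is to derive both assertions from the Mœglin--Waldspurger stabilization of the elliptic inner product on cuspidal functions. The second assertion then follows by spectral--geometric duality. First I would recall that for $f\in\cI_{\cusp}(\widetilde G(F))$ the elliptic inner product is given by the Weyl-type integral over elliptic regular semisimple classes:
\begin{align*}
(f,f)_{\mathrm{ell}}=\sum_{T}\abs{W(G,T)^{\theta}}^{-1}\int_{T(F)/\sim}\abs{I^{\widetilde G}(\gamma,f)}^2\,d\gamma .
\end{align*}
Transfer identifies stable orbital integrals on each $H$ with $\kappa$-orbital integrals on $\widetilde G$, using the transfer factors and the normalization of Section~\ref{section:orbital-integrals-transfer-factors}, including the factor $d(\theta)^{1/2}$.

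Next I would invert this by Fourier analysis on the finite abelian groups $\mathfrak E(T)$ (or their twisted analogue $H^1$ of the $\theta$-coinvariant torus). For each stable elliptic class, the squared norms of the $\kappa$-orbital integrals, summed over characters $\kappa$, reproduce the sum over the rational classes in the stable class. Grouping the characters $\kappa$ by the endoscopic datum $H$ they define gives a weight equal to the ratio of Weyl-group and outer-automorphism factors. This yields exactly the constant $c(\widetilde G,H)=\abs{\Out(\mathfrak e)}^{-1}$ times the measure normalization. For the numerical values, I would use that $\Out$ of the datum is trivial for $\rG_2$ and for $\SL_3$ in the twisted case. It has order $2$ for $\SO_4$, since the outer automorphism of $\SO_4$ is realised in $\widehat{\rG_2}$ or $\widehat{\PGSO_8}^\theta$. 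For $\PGL_3$ inside $\rG_2$ it has order $3$, coming from $Z(\SL_3)=\mu_3$ acting on $s$. This is the bookkeeping carried out in \cite{MoeglinWaldspurger2016-StabilisationFormuleTracesTordue1}*{I.4.17}, and I would simply specialise it. In the real case I would sum over the $K$-space components to absorb the inner forms.

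For the distribution statement, I would use the cuspidal surjectivity of Theorem~\ref{theorem:cuspidal-surjective}, together with its non-archimedean analogue from I.4.11. This lets me choose $f$ whose transfers $f_H$ are arbitrary $\Aut$-invariant stable cuspidal functions. The elliptic inner product is nondegenerate and pairs elliptic distributions with cuspidal functions (MW18, 7.3). So I represent $\Lambda^{\widetilde G}$ by its dual cuspidal function $f_\Lambda$, and each $\Sigma^H$ by its dual $\varphi_H$. Comparing $\Lambda(f)=\sum_H\Sigma^H(f_H)$ with the isometry identity, polarised, shows that $f_\Lambda$ transfers to $c(\widetilde G,H)^{-1}\varphi_H$. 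Applying the first identity to $f_\Lambda$ then gives $\sum_H c\cdot c^{-2}(\varphi_H,\varphi_H)$, which is the claim.

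The main obstacle is the precise constants. This means tracking $\abs{\Out(\mathfrak e)}$, the factor $d(\theta)^{1/2}$, and the measure compatibilities so that $c(\widetilde{\PGSO_8},\SO_4)=1/2$ and $c(\rG_2,\PGL_3)=1/3$ come out exactly. I would verify these first on a test case, such as Steinberg via Lemma~\ref{lemma:ecr-for-steinberg}, where both sides are computable.
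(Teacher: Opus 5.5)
Your route is the paper's. You take the isometry and its constants from M{\oe}glin--Waldspurger I.4.17, and you get the distribution identity by adjointness, using that transfer is an isomorphism on cuspidal spaces (I.4.11) and the pseudo-coefficient duality between elliptic tempered distributions and cuspidal functions. Your polarization step is correct: the dual function $f_\Lambda$ has transfers $c(\widetilde G,H)^{-1}\varphi_H$, hence $(\Lambda,\Lambda)_{\mathrm{ell}}=\sum_H c(\widetilde G,H)^{-1}(\varphi_H,\varphi_H)_{\mathrm{ell}}$. The $\Aut(\fe)$-invariance of $\Sigma^H$ is what puts $\varphi_H$ in the image of transfer.

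What is wrong is your account of where the constants come from. They are not $\lvert\Out(\fe)\rvert^{-1}$, and your values of $\Out(\fe)$ are incorrect.
\begin{itemize}
\item For $\SO_4$, the simple factors of $\widehat H=\SO_4(\C)\subset\rG_2(\C)$ are a long-root $\SL_2$ and a short-root $\SL_2$. No element of $\rG_2(\C)$ can interchange them. They are not even conjugate in $\Spin_8(\C)$, since on the $8$-dimensional representation they act as $2V_2\oplus4V_1$ and $V_3\oplus2V_2\oplus V_1$. So the factor swap is not realized for the twisted $\SO_4$ datum either.
\item For $\PGL_3$, the group $Z(\SL_3)=\mu_3$ lies in $\widehat H$ and acts trivially, so it contributes nothing to $\Out(\fe)=\Aut(\fe)/\widehat H$. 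The outer involution normalizing $\SL_3(\C)$ sends $s$ to $s^{-1}$, so it is not an automorphism of the datum.
\end{itemize}
In all six cases the stated constant is simply $\lvert Z(\widehat H)\rvert^{-1}$. In the $\rG_2$ cases this is the central factor $\lvert Z(\widehat H)^\Gamma/Z(\widehat G)^\Gamma\rvert^{-1}$ of Arthur's $\iota(G,H)$ with trivial $\Out(\fe)$. Here $Z(\widehat{\SO_4})=\mu_2$ and $Z(\SL_3(\C))=\mu_3$, while $Z(\rG_2(\C))$, $Z(\PGL_3(\C))$ and $Z(\Spin_8(\C))^{\widehat\theta}$ are trivial. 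These are the same coefficients as in Theorem~\ref{theorem:stabilization-twisted-PGSO8}.

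This matters downstream. Transfer lands in $\Aut(\fe)$-invariants, so a nontrivial $\Out(\fe)$ of the kind you describe would force $I_\tau=I_{\tau^\vee}$. It would also make the $\wedge^2_+$-refinement of $\SO_4$ parameters invisible. Both contradict Theorem~\ref{theorem:ECR-PGL3} and the refined packets used later. Either cite the constants from I.4.17 as the paper does, or redo the Fourier inversion over $\mathfrak E(T)$ keeping the factor $\lvert Z(\widehat H)^\Gamma/Z(\widehat G)^\Gamma\rvert$ in the count of characters $\kappa$ attached to a given $H$.
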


\begin{proof}
  The first identity and constants are given in \cite{MoeglinWaldspurger2016-StabilisationFormuleTracesTordue1}*{I.4.17, Proposition and p.~110}.
  Transfer is an isomorphism on the cuspidal spaces by \cite{MoeglinWaldspurger2016-StabilisationFormuleTracesTordue1}*{I.4.11}.
  The second identity follows by adjointness under the canonical antilinear isometries between elliptic tempered distributions and cuspidal functions supplied by pseudo-coefficients in \cite{MoeglinWaldspurger2018-FormuleTracesLocaleTordue}*{Sections 7.2--7.3} and \cite{MoeglinWaldspurger2016-StabilisationFormuleTracesTordue1}*{IV.2.2}.
\end{proof}

\begin{corollary}\label{corollary:discreteness-of-discrete-packet}
  Let $F$ be a local field of characteristic zero.
  Let $S^{\rG_2}$ be an elliptic stable linear form on $\rG_2(F)$ which is a linear combination of irreducible tempered characters of $\rG_2(F)$.
  Then, it is a linear combination of discrete series characters of $\rG_2(F)$.
\end{corollary}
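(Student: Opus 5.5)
Write $S^{\rG_2}=\sum_{\pi} c_{\pi}\Theta_{\pi}$ as a finite combination of irreducible tempered characters. The plan is to split off the discrete part and show that the remaining part has zero elliptic norm. Since $S^{\rG_2}$ is elliptic, it then has to be a combination of discrete series characters.

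First, decompose each tempered $\pi$ according to its inducing datum. Every irreducible tempered representation of $\rG_2(F)$ is a constituent of some $i_P^G(\sigma)$ with $\sigma$ a discrete series of a Levi $M$. The elliptic part of the character of such a representation is governed by the elliptic triplets $(M,\sigma,r)$. By the lemmas on elliptic triplets for proper Levi subgroups of $\rG_2$, together with Proposition~\ref{proposition:elliptic-triplets-elliptic-transfer-G2}, the elliptic triplets with $M\neq G$ are exactly those attached to $\SO_4$-discrete parameters $\phi^H(\tau)$. Accordingly, write
\begin{align*}
  S^{\rG_2}=\Lambda_{\disc}+\Lambda',
\end{align*}
where $\Lambda_{\disc}$ is a combination of discrete series characters and $\Lambda'$ is a combination of the elliptic distributions $\Theta_\tau$ attached to elliptic triplets $\tau$ with proper $M$. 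We may replace $S^{\rG_2}$ by its elliptic projection, since it is already elliptic; non-elliptic tempered characters contribute nothing elliptically.

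Next, use orthogonality of the elliptic inner product of \cite{MoeglinWaldspurger2018-FormuleTracesLocaleTordue}*{Section 7.3}. The elliptic distributions attached to non-conjugate elliptic triplets are orthogonal. In particular, $\Lambda'$ is orthogonal to all discrete series characters, and $(\Theta_\tau,\Theta_\tau)_{\mathrm{ell}}>0$ for each $\tau$.

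The key step is to show $\Lambda'=0$ using stability. Apply Proposition~\ref{proposition:elliptic-inner-product-transfer} with $\widetilde G=\rG_2$. A stable elliptic distribution on $\rG_2$ satisfies a trivial endoscopic identity: $\Lambda^{\rG_2}=S^{\rG_2}$ is matched by $\Sigma^{\rG_2}=S^{\rG_2}$ and $\Sigma^{\SO_4}=\Sigma^{\PGL_3}=0$. Conversely, each $\Theta_\tau$ with $M$ proper should, via Proposition~\ref{proposition:elliptic-triplets-elliptic-transfer-G2}, correspond on the endoscopic side to the stable discrete character of $\phi^H(\tau)$ on $H=\SO_4$, and it carries no $\rG_2$-stable component. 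Concretely, I would pair $S^{\rG_2}$ against a pseudo-coefficient $f_\tau$ of $\Theta_\tau$. Its transfer to $\rG_2$ (that is, its stable part) is then the transfer of an elliptic virtual character that is orthogonal to all discrete series and whose endoscopic image is concentrated on $\SO_4$. Stability of $S^{\rG_2}$ gives $S^{\rG_2}(f_\tau)=S^{\rG_2}((f_\tau)^{\mathrm{st}})$, and the norm identity of Proposition~\ref{proposition:elliptic-inner-product-transfer} forces $(\Lambda',\Lambda')_{\mathrm{ell}}$ to vanish. Alternatively, the norm identity applied to $S^{\rG_2}$ alone shows that its elliptic norm equals its stable norm. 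Comparing with the orthogonal decomposition $\|\Lambda_{\disc}\|^2+\|\Lambda'\|^2$ then requires showing that the $\Theta_\tau$ span a space meeting the stable distributions trivially.

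I expect this last point to be the main obstacle: proving that no nonzero combination of the $\Theta_\tau$ with proper $M$ is stable. I would handle it case by case from Lemmas~\ref{G2-elliptic-GL2-Levi-representations} and \ref{G2-elliptic-toral-representations}. In each case $R^G(\sigma)=\Z/2\Z$, and $\Theta_\tau$ is the difference of the two constituents of $i_P^G(\sigma)$. Via the $\SO_4$ parameter, this difference is an $s$-twisted (unstable) character whose stable transfer to $\rG_2$ vanishes. Since distinct triplets give distinct $\SO_4$ parameters, a stable combination must have all coefficients zero. Hence $\Lambda'=0$, and $S^{\rG_2}=\Lambda_{\disc}$.
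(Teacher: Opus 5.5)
Your architecture is the paper's. You write the elliptic form as $\Lambda_{\disc}+\Lambda'$, with $\Lambda'=\sum_\tau a_\tau\Theta_\tau$ running over elliptic triplets $\tau=(M,\sigma,w)$ with $M$ proper, and then try to kill $\Lambda'$ with the elliptic inner product. But the step that carries the content is not proved. You need each $\Theta_\tau=\tr(R_P(w,\sigma,\psi_F)I_P(\sigma))$ to be (a multiple of) the transfer of the stable character of $\phi^{\SO_4}(\sigma)$. Only then does Proposition~\ref{proposition:elliptic-inner-product-transfer} give $(S^{\rG_2},\Theta_\tau)_{\mathrm{ell}}=0$.

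Proposition~\ref{proposition:elliptic-triplets-elliptic-transfer-G2} and Lemmas~\ref{G2-elliptic-GL2-Levi-representations} and~\ref{G2-elliptic-toral-representations} do not supply this. They compute $R$-groups and match $L$-parameters, $\iota_M^{\rG_2}\circ\phi^M(\sigma)=\iota_H^{\rG_2}\circ\phi^H(\tau)$. They say nothing about the values of $\Theta_\tau$ on elliptic elements. Because $\Theta_\tau$ involves a self-intertwining operator, it is not an induced character, and its elliptic values are not given by the induced-character formula. A priori, in the decomposition of Theorem~\ref{theorem:elliptic-character-transfer}, $\Theta_\tau$ could have a nonzero $\rG_2$-stable component.

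Your sentence that the difference ``is an $s$-twisted (unstable) character whose stable transfer to $\rG_2$ vanishes'' restates the conclusion rather than proving it. Your alternative via the norm identity for $S^{\rG_2}$ alone is a tautology when $H=\rG_2$, and it reduces to the same point. The missing input is exactly the local intertwining relation, Theorem~\ref{theorem:local-tempered-LIR-G2}. The paper proves it by a global trace-formula comparison with $\widetilde{\PGSO_8}$ (globalizing $\sigma$ and separating at auxiliary real places), and its proof of the corollary rests on it for $p$-adic $F$. For $F=\R$ the paper instead uses Shelstad's description of stable tempered characters, and over $\C$ the elliptic tempered space is zero.

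There is also a logical slip in how you frame the obstacle. Showing that no nonzero combination of the $\Theta_\tau$ is stable, or that their span meets the stable forms trivially, would not suffice. Only $\Lambda_{\disc}+\Lambda'$ is assumed stable, not $\Lambda'$ itself. The criterion you need is that each $\Theta_\tau$ is elliptically orthogonal to every stable form, i.e.\ lies in the span of transfers from $\SO_4$ and $\PGL_3$.

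Granting Theorem~\ref{theorem:local-tempered-LIR-G2}, that criterion holds, and you need neither the case-by-case analysis nor the distinctness of the $\SO_4$ parameters. The $\Theta_\tau$ are mutually orthogonal, orthogonal to the discrete series, and of positive norm. So $(S^{\rG_2},\Theta_\tau)_{\mathrm{ell}}=0$ forces $a_\tau=0$ for every $\tau$, which is the paper's argument.
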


\begin{proof}
  For $F=\R$, the assertion follows from the description of stable tempered packet characters as parabolic inductions in \cite{Shelstad1979-CharactersInnerFormsQuasiSplitGroup}*{Section 3, Lemma 3.1} and the stable elliptic decomposition in \cite{MoeglinWaldspurger2016-StabilisationFormuleTracesTordue1}*{IV.2.2}.
  Over $\C$ the elliptic tempered space is zero.
  Suppose that $F$ is non-archimedean.
  Proposition~\ref{proposition:elliptic-inner-product-transfer} makes $S^{\rG_2}$ orthogonal to transfers of stable discrete series characters of $\SO_4$, and ellipticity makes it orthogonal to distributions induced from proper Levi subgroups.
  By Theorem~\ref{theorem:local-tempered-LIR-G2}, proved below, it is therefore orthogonal to all non-discrete-series elliptic characters, proving the assertion.
\end{proof}

\section{Elliptic abstract spectral transfer}\label{section:elliptic-spectral-transfer}

\begin{theorem}[\cite{MoeglinWaldspurger2016-StabilisationFormuleTracesTordue2}*{XI.4, Th\'eor\`eme}]\label{theorem:elliptic-character-transfer}
   Let $F$ be a non-archimedean local field of characteristic zero.
   Let $\widetilde{G} = (G, \widetilde{G})$ be a twisted space over a connected reductive group.
   All distributions below are finite $\C$-linear combinations of tempered characters.
   \begin{enumerate}
      \item
      Every elliptic tempered distribution $I^{\widetilde{G}}$ of $\widetilde{G}(F)$ admits a unique family of $\Aut(\fe)$-invariant stable elliptic tempered distributions $\{S^{\widetilde{G}^{\fe}}\}$, indexed by equivalence classes of elliptic endoscopic data $\widetilde{G}^{\fe}$ of $\widetilde{G}$, such that
      \begin{align*}
         I^{\widetilde{G}}(f^{\widetilde{G}})
         =
         \sum_{\widetilde{G}^{\fe}}
         S^{\widetilde{G}^{\fe}}(f^{\widetilde{G}}_{\widetilde{G}^{\fe}}),
      \end{align*}
      for any function $f^{\widetilde{G}} \in \cI(\widetilde{G}(F))$.
      \item
      Conversely, every stable elliptic tempered distribution $S^{\widetilde{G}^{\fe}}$ of an elliptic endoscopic group $\widetilde{G}^{\fe}$ of $\widetilde{G}$ admits a unique elliptic tempered distribution $I^{\widetilde{G}}$ of $\widetilde{G}(F)$ such that
      \begin{align*}
         I^{\widetilde{G}}(f^{\widetilde{G}})
         =
         S^{\widetilde{G}^{\fe}}(f^{\widetilde{G}}_{\widetilde{G}^{\fe}}),
      \end{align*}
      for any function $f^{\widetilde{G}} \in \cI(\widetilde{G}(F))$.
   \end{enumerate}
\end{theorem}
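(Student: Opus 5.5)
This statement is quoted from Mœglin--Waldspurger, so the plan is to reconstruct their argument from the geometric stabilization together with the elliptic inner product of Proposition~\ref{proposition:elliptic-inner-product-transfer}. We work on the spaces of cuspidal functions. By \cite{MoeglinWaldspurger2016-StabilisationFormuleTracesTordue1}*{I.4.11}, the sum of transfer maps
\[
\cI_{\cusp}(\widetilde G(F)) \to \bigoplus_{\fe} \cS_{\cusp}(G^{\fe}(F))^{\Aut(\fe)}
\]
is an isomorphism, with $\fe$ running over equivalence classes of elliptic endoscopic data.

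For (1), let $I^{\widetilde G}$ be elliptic. Its restriction to cuspidal functions is a linear form on $\cI_{\cusp}(\widetilde G(F))$. Composing with the inverse of the isomorphism above gives a family of linear forms on the spaces $\cS_{\cusp}(G^{\fe}(F))^{\Aut(\fe)}$. Each such form determines an $\Aut(\fe)$-invariant stable elliptic distribution $S^{\fe}$, extended by zero on the non-elliptic part, that is, on functions whose stable orbital integrals vanish on the elliptic set. The key steps are then as follows.

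First, we show that the identity $I^{\widetilde G}(f)=\sum_{\fe} S^{\fe}(f_{\fe})$ holds for all $f$ and not only cuspidal ones. Both sides are elliptic, so they factor through the projection $\cI\to\cI_{\cusp}$ given by restricting orbital integrals to the elliptic set. Transfer is compatible with this projection, because it is defined pointwise through orbital integrals over matching elements.

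Second, we show that each $S^{\fe}$ is a finite combination of tempered characters. Here we use the spectral description of elliptic distributions: the space of elliptic tempered virtual characters is spanned by the characters $\Theta_{\tau}$ attached to elliptic triplets $\tau$. By \cite{MoeglinWaldspurger2018-FormuleTracesLocaleTordue}*{Section 7}, the pairing $(\cdot,\cdot)_{\mathrm{ell}}$ identifies this space with the dual of $\cI_{\cusp}$ via pseudo-coefficients. The same statement holds stably on each $G^{\fe}$. Transfer is an isometry up to the constants $c(\widetilde G,G^{\fe})$, so the adjoint of the cuspidal transfer isomorphism carries the span of tempered elliptic characters onto the span of stable tempered elliptic characters, provided the dimensions match.

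Uniqueness in (1) is immediate from the surjectivity of the cuspidal transfer, combined with the fact that an elliptic stable distribution is determined by its values on cuspidal functions.

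For (2), we run the same argument in reverse. Given $S^{\fe}$, the linear form $f\mapsto S^{\fe}(f_{\fe})$ on $\cI_{\cusp}$ is represented by a unique elliptic element $I^{\widetilde G}$ under the elliptic pairing. We then check that it is a combination of tempered characters by the same dimension comparison, and that the identity holds on all of $\cI$ by the reduction to cuspidal functions used in the first step.

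The main obstacle is the second step: showing that the transfer of stable tempered elliptic distributions lands in the span of tempered characters, and not merely in the space of elliptic invariant distributions. For this I would first establish an equality of dimensions by counting elliptic triplets on the $\widetilde G$ side against the stable elliptic parameters of all $G^{\fe}$, using the local trace formula of \cite{MoeglinWaldspurger2018-FormuleTracesLocaleTordue}. Then injectivity plus the isometry yields a bijection. This counting is exactly the content of \cite{MoeglinWaldspurger2016-StabilisationFormuleTracesTordue2}*{XI.4}, which I would invoke rather than redo.
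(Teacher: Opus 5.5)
You prove uniqueness as the paper does, via the cuspidal transfer isomorphism (the paper cites \cite{MoeglinWaldspurger2016-StabilisationFormuleTracesTordue1}*{I.4.11} and \cite{MoeglinWaldspurger2016-StabilisationFormuleTracesTordue2}*{XI.4.4}). For existence the paper simply cites \cite{MoeglinWaldspurger2016-StabilisationFormuleTracesTordue2}*{XI.4, Th\'eor\`eme}. The reconstruction you build around that citation contains a false step.

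You assert that elliptic tempered distributions factor through the map sending $f$ to its orbital integrals on the elliptic set. You then define $S^{\fe}$ on all of $\cS(G^{\fe}(F))$ by extending by zero on functions whose elliptic stable orbital integrals vanish. But elliptic characters are not supported on the elliptic set. Already for $\widetilde G=G=\PGL_2$, the Steinberg character $\Theta_{\St}=\Theta_{i_B^G(\delta_B^{1/2})}-\Theta_{\mathbf1}$ does not vanish on the regular split set. So a test function supported near a regular split element has zero elliptic orbital integrals but nonzero trace on $\St$. Since $\Theta_{\St}$ is even stable, the stable version of your claim fails too.

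Consequences of this failure:
\begin{itemize}
\item The $S^{\fe}$ you construct are not combinations of tempered characters, which contradicts your second step.
\item The identity is not extended from $\cI_{\cusp}$ to $\cI$.
\item The splitting of $\cI(\widetilde G(F))$ that elliptic characters actually respect is spectral: by trace Paley--Wiener, replace $f$ by a cuspidal function with the same traces on all elliptic triplets. Compatibility of that splitting with transfer does not follow from transfer being defined through orbital integrals. It says that $f\mapsto S^{\fe}(f_{\fe})$ kills every $f$ annihilated by all elliptic triplet characters. In other words, transfer carries stable elliptic tempered distributions to elliptic ones, which is part of what is being proved.
\end{itemize}

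Your second step cannot be done by counting either. The spaces of (stable) elliptic tempered distributions and of cuspidal functions are infinite-dimensional, so an injective isometry need not be onto. There is no finite-dimensional decomposition, known in advance to be respected by transfer, in which to compare dimensions. The adjoint of the cuspidal transfer isomorphism acts on full algebraic duals, and nothing formal forces it to send the span of elliptic characters into the span of stable elliptic characters.

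The same problem sits in your part (2). Not every linear form on $\cI_{\cusp}(\widetilde G(F))$ is represented by an elliptic tempered distribution, so ``represented by a unique elliptic element under the elliptic pairing'' is exactly what needs proof.

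The real inputs are a spectral description of $\cS_{\cusp}(G^{\fe}(F))$ by stable elliptic tempered characters, together with the ellipticity statement above. Both are supplied by the cited theorem of M{\oe}glin--Waldspurger. So drop the reconstruction and take existence in both parts, valid on all of $\cI(\widetilde G(F))$, directly from \cite{MoeglinWaldspurger2016-StabilisationFormuleTracesTordue2}*{XI.4}, as the paper does.
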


\begin{proof}
   Existence follows from the cited theorem. Uniqueness follows from the cuspidal transfer isomorphism of \cite{MoeglinWaldspurger2016-StabilisationFormuleTracesTordue1}*{I.4.11} and \cite{MoeglinWaldspurger2016-StabilisationFormuleTracesTordue2}*{XI.4.4}.
\end{proof}

\part{Global representation theory}\label{part:global-representation-theory}

\section{Cuspidal automorphic representations on \texorpdfstring{$\PGL_3^{+}$}{PGL3+}}\label{section:cuspidal-PGL3-plus}

Let $F$ be a number field and $\A_F$ be the ring of adeles of $F$.
We set $\PGL_3^{+} = \PGL_3 \rtimes \Z/2\Z$, where $\Z/2\Z$ acts on $\PGL_3$ by the outer automorphism
\begin{align*}
  \iota \colon g \mapsto J {}^t g^{-1} J^{-1},
\end{align*}
where
\begin{align*}
  J = \begin{pmatrix}
    0 & 0 & 1 \\
    0 & -1 & 0 \\
    1 & 0 & 0
  \end{pmatrix}.
\end{align*}
We take the standard Borel pair $(B, T)$ of $\PGL_3$ and the pinning $\mathrm{pin}_{\PGL_3}$, which correspond to the standard basis of $F^3$.

\begin{lemma}[Clifford theory, cf. \cite{Kaletha2022-LocalLanglandsConjecturesDisconnectedGroups}*{Section 4.6}]
  Let $F$ be a local field of characteristic zero.
  \begin{enumerate}
    \item Let $\tau$ be an irreducible admissible representation of $\PGL_3(F)$. It extends to an irreducible admissible representation of $\PGL_3^{+}(F)$ if and only if $\tau$ is self-dual.
    In that case it has exactly two extensions.
    If $\tau$ is generic, let $\tau^{+}(+1)$ be the unique extension for which the action of $\iota$ preserves a Whittaker functional, and let $\tau^{+}(-1)$ be its twist by the non-trivial character of $\PGL_3^{+}(F)/\PGL_3(F)$.
    \item
    For any irreducible admissible representation $\tau^+$ of $\PGL_3^{+}(F)$, we have the following dichotomy.
    \begin{itemize}
      \item If the restriction of $\tau^+$ to $\PGL_3(F)$ is irreducible, then $\tau^+$ is one of the two extensions of a self-dual representation $\tau$ of $\PGL_3(F)$.
      \item If the restriction of $\tau^+$ to $\PGL_3(F)$ is reducible, then $\tau^+ \cong \Ind_{\PGL_3(F)}^{\PGL_3^{+}(F)} \tau$ for some non-self-dual representation $\tau$ of the group $\PGL_3(F)$. Let $\tau^{+}(+1)$ denote this representation of $\PGL_3^{+}(F)$.
    \end{itemize}
  \end{enumerate}
\end{lemma}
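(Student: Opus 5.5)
This is Clifford theory for the index-two extension $\PGL_3(F) \subset \PGL_3^{+}(F)$, using that $\iota$ acts on representations of $\PGL_3(F)$ by $\tau \mapsto \tau \circ \iota \cong \tau^{\vee}$. I would argue in the following order.

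\emph{Step 1: $\tau\circ\iota\cong\tau^\vee$.} By the Gelfand--Kazhdan theorem, $g\mapsto {}^tg^{-1}$ sends an irreducible admissible $\tau$ to a representation isomorphic to $\tau^\vee$. Since $\iota$ differs from this map by the inner automorphism $\Int(J)$, we get $\tau\circ\iota\cong\tau^\vee$. Hence $\tau$ is $\iota$-stable if and only if it is self-dual.

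\emph{Step 2: the extension criterion in (1).} If $\tau$ extends to $\PGL_3^{+}(F)$, the operator by which $\iota$ acts is an isomorphism $\tau\circ\iota\cong\tau$, so $\tau$ is self-dual by Step 1.

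Conversely, suppose $\tau$ is self-dual and pick an isomorphism $A\colon \tau\to\tau\circ\iota$. Then $A^2$ intertwines $\tau$ with itself, because $\iota^2=1$. By Schur's lemma (valid for irreducible admissible representations) $A^2=c$ for a scalar $c$. Rescaling $A$ by $c^{-1/2}$ gives $A^2=1$, and this defines an extension.

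Any two extensions differ by a scalar on $\iota$ whose square is $1$. So there are exactly two, and they differ by the nontrivial character of $\Z/2\Z$.

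\emph{Step 3: the Whittaker normalization in (1).} The choice of $J$ makes $\iota$ preserve the standard pinning. It therefore preserves the generic character $\psi_N$ of the unipotent radical $N$: $\iota(N)=N$ and $\psi_N\circ\iota=\psi_N$.

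Let $\lambda$ be the (unique up to scalar) Whittaker functional on a generic self-dual $\tau$. Then $\lambda\circ A$ is again a $(N,\psi_N)$-Whittaker functional, so by multiplicity one $\lambda\circ A=\pm\lambda$. Exactly one of the two extensions has sign $+1$; call it $\tau^{+}(+1)$, and call the other $\tau^{+}(-1)$.

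\emph{Step 4: the dichotomy in (2).} Let $\tau^{+}$ be irreducible admissible. Since $\PGL_3(F)$ has index two, the restriction $\tau^{+}|_{\PGL_3(F)}$ has finite length. Choose an irreducible subrepresentation $\tau$ (it exists by admissibility and finite generation). Then $\tau+\tau^{+}(\iota)\tau$ is invariant, hence equals everything.

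If the restriction is irreducible, $\tau^{+}$ is an extension of $\tau$, and $\tau$ is self-dual by Step 2.

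If the restriction is reducible, it equals $\tau\oplus\tau^{+}(\iota)\tau$ with $\tau^{+}(\iota)\tau\cong\tau\circ\iota\cong\tau^\vee$. If $\tau$ were self-dual, $\tau$ would extend, and then $\Ind\tau\cong\tau^{+}(+1)\oplus\tau^{+}(-1)$. By Frobenius reciprocity $\tau^{+}$ would be one of these two extensions, which restrict irreducibly — a contradiction. So $\tau$ is not self-dual, and Frobenius reciprocity gives a nonzero map $\Ind_{\PGL_3(F)}^{\PGL_3^{+}(F)}\tau\to\tau^{+}$. When $\tau\not\cong\tau\circ\iota$, Mackey theory shows this induced representation is irreducible, so the map is an isomorphism.

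The main obstacle is Step 3. It requires checking that $\iota$, with this particular sign pattern in $J$, really fixes the standard pinning, so that $\psi_N$ is $\iota$-invariant.
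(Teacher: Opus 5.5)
Your proposal is correct and is the standard index-two Clifford theory that the paper invokes by citing Kaletha, Section 4.6; the paper does not write out a proof. The step you flag as the main obstacle is a one-line check: $\iota(1+xE_{12}+yE_{23}+zE_{13})=1+yE_{12}+xE_{23}+(xy-z)E_{13}$, so $u\mapsto\psi_F(u_{12}+u_{23})$ is $\iota$-invariant, as the paper also records. For archimedean $F$, you should justify Step 1 by the character argument ($g$ is conjugate to ${}^tg$ in $\GL_3(F)$, and irreducible characters determine representations) rather than by the $p$-adic Gelfand--Kazhdan theorem.
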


\begin{definition}
  The representation $\tau^{+}$ is said to be generic if its restriction contains a generic representation $\tau$ of $\PGL_3(F)$ and, when $\tau$ is self-dual, $\tau^{+} \cong \tau^{+}(+1)$.
\end{definition}

Strong multiplicity one for $\PGL_3$ gives the following classification.

\begin{proposition}\label{proposition:cuspidal-PGL3-plus}
  Let $\Pi^{+}$ be an irreducible cuspidal automorphic representation of $\PGL_3^+(\A_F)$.
  Let $S$ be a finite set of places of $F$ such that $\Pi_v^{+}$ is unramified for any place $v \notin S$.
  Then, we can find a cuspidal automorphic representation $\Pi$ of $\PGL_3(\A_F)$ that satisfies the following conditions.
  \begin{itemize}
    \item $\Pi_v$ is unramified for any place $v \notin S$ and $\Pi^{+}_v = \Pi_v^{+}(+1)$.
    \item $\Pi_v^{+} \cong \Ind_{\PGL_3(F_v)}^{\PGL_3^{+}(F_v)} \Pi_v$ for any place $v \in S$ such that $\Pi_v$ is not self-dual.
    \item $\Pi_v^{+} \cong \Pi_v^{+}(\epsilon_v)$ for some $\epsilon_v \in \{\pm 1\}$ for any place $v \in S$ such that $\Pi_v$ is self-dual.
    \item
    Furthermore, let $S_{sd}$ (resp. $S_{nsd}$) be the set of places $v \in S$ such that $\Pi_v$ is self-dual (resp. non-self-dual).

    Then, we have
    \begin{align*}
      \prod_{v \in S_{sd}} \epsilon_v = 1
    \end{align*}
    if $\Pi$ is self-dual.
    \end{itemize}
  We can form a colimit of such data in $S$ and consider $\Pi$ up to duality.

  Conversely, if we have an element in the colimit, we can uniquely construct an irreducible cuspidal automorphic representation $\Pi^{+}$ of $\PGL_3^{+}(\A_F)$ which is unramified outside $S$.
\end{proposition}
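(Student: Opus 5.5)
We prove Proposition~\ref{proposition:cuspidal-PGL3-plus} by restricting $\Pi^{+}$ to $\PGL_3(\A_F)$, applying global Clifford theory to the index-two subgroup, and then reading off the local components with the preceding local Clifford lemma. For the converse we build $\Pi^{+}$ as an induced or extended space of cusp forms.

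\textbf{Restriction.} Let $V\subset \cA_{\cusp}(\PGL_3^{+})$ realize $\Pi^{+}$. Restriction of functions to $\PGL_3(\A_F)$ is injective on $V$, since $\PGL_3(F)\backslash\PGL_3(\A_F)$ meets every connected component of the quotient after translating by $\iota$. Its image consists of cusp forms on $\PGL_3$. By multiplicity one for $\PGL_3$, this image is either a single cuspidal $\Pi$ with $\Pi\circ\iota\cong\Pi$, or a sum $\Pi\oplus\Pi^{\iota}$ with $\Pi^{\iota}\cong\Pi^{\vee}\not\cong\Pi$. Note that $\Pi\circ\iota\cong\Pi^{\vee}$ because $\iota$ is the transpose-inverse.

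In both cases, the local component at $v$ is $\Pi_v$ or its extension. The local lemma then gives the three listed shapes.
\begin{itemize}
\item At places $v\notin S$, the spherical vector forces the extension fixing it, which is $\Pi_v^{+}(+1)$ since the unramified and Whittaker extensions agree.
\item At places where $\Pi_v$ is not self-dual, we get the induced representation.
\item At places where $\Pi_v$ is self-dual, we get $\Pi_v^{+}(\epsilon_v)$.
\end{itemize}
When $\Pi$ is not self-dual, the choice of $\Pi$ versus $\Pi^{\vee}$ is why we consider $\Pi$ up to duality.

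\textbf{Sign condition.} This is the main point. Assume $\Pi$ is self-dual, and let $W$ be the global Whittaker functional (first Fourier coefficient along the standard unipotent) on $\Pi$. The automorphic action of $\iota$ is $\phi\mapsto\phi\circ\iota$, and $\iota$ preserves the pinning and the unipotent radical with its generic character up to $F$-rational data. Hence $W(\phi\circ\iota)=W(\phi)$, where the sign of $J$ needs to be checked against the character.

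Factor $W=\prod_v W_v$. The local operators realizing $\iota$ on $\Pi_v$ then multiply $W_v$ by $\epsilon_v$, so the product of the $\epsilon_v$ is $1$. Since $\epsilon_v=+1$ outside $S$, this gives $\prod_{v\in S_{sd}}\epsilon_v=1$. The step I expect to need care is verifying that $J$ with its middle $-1$ genuinely fixes the standard generic character, so no global sign appears. I would check this directly on simple root subgroups.

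\textbf{Converse.} Given data with $\Pi$ not self-dual, take $\Ind_{\PGL_3(\A_F)}^{\PGL_3^{+}(\A_F)}$ of the cusp forms in $\Pi$. Concretely, this is the space of functions $\phi$ on $\PGL_3^{+}(F)\backslash\PGL_3^{+}(\A_F)$ whose restrictions lie in $\Pi\oplus\Pi^{\vee}$. Its irreducible constituents are determined by local components, and a choice at each place in $S$ is matched by the relevant summand.

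When $\Pi$ is self-dual, the space of $\Pi$ is $\iota$-stable. The global action of $\iota$ decomposes the cusp-form space into extensions of the form $\otimes_v\Pi_v^{+}(\epsilon_v)$ with $\prod\epsilon_v=1$, by the Whittaker computation above. The twist by $\prod_v\sgn_v$ accounts for the other parity, and it is realized by the nontrivial automorphic character of $\PGL_3^{+}(\A_F)/\PGL_3(\A_F)\PGL_3^{+}(F)$.

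Uniqueness follows from strong multiplicity one for $\PGL_3$ together with the injectivity of restriction shown in the first step.
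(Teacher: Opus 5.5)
Your strategy matches the paper's, which defers to the proof of Harris--Soudry--Taylor, Proposition~2: restrict to $\PGL_3$, apply Clifford theory, and use the Whittaker functional for the sign. Your Whittaker argument for $\prod_{v\in S_{sd}}\epsilon_v=1$ is fine, including the check that $\iota$ preserves $\psi(u_{12}+u_{23})$. The gap is in the global setup.

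Since $\Z/2\Z$ is a constant group scheme, $\PGL_3^+(\A_F)=\PGL_3(\A_F)\rtimes\prod_v\{\pm1\}$, while $\PGL_3^+(F)$ contributes only the diagonal $\Delta\{\pm1\}$. Hence $\PGL_3^+(F)\backslash\PGL_3^+(\A_F)$ fibres over the infinite compact group $C_F=\Delta\{\pm1\}\backslash\prod_v\{\pm1\}$, not over a two-element set. This is the group integrated over in the proof of Theorem~\ref{theorem:theta-PGL3-to-G2}. Restriction to the fibre $\PGL_3(F)\backslash\PGL_3(\A_F)$ is therefore only $\PGL_3(\A_F)\rtimes\Delta\{\pm1\}$-equivariant.

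For self-dual $\Pi$ the restriction is still injective, because $\Pi^+|_{\PGL_3(\A_F)}\cong\Pi$ is irreducible, so your sign argument survives. For $\Pi\not\cong\Pi^\vee$ it fails. Strong multiplicity one gives infinitely many $v$ with $\Pi_v\not\cong\Pi_v^\vee$, and at each of them $\Pi_v^+=\Ind\Pi_v$. For two such places $v_1,v_2$, the restriction of $\Pi^+$ contains the type $\Pi_{v_1}\otimes\Pi_{v_2}^\vee$, which does not occur in $\Pi\oplus\Pi^\vee$. So restriction is not injective and $\Pi^+$ does not restrict to $\Pi\oplus\Pi^\vee$. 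Your uniqueness argument rests on that injectivity, so it does not go through.

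Your parity remark is also wrong. The characters of $C_F$ are $\otimes_v\chi_v$ with almost all $\chi_v$ trivial and $\prod_v\chi_v(-1)=1$; the product $\prod_v\sgn_v$ is not even continuous. Twisting by such characters never changes the parity of $\prod_v\epsilon_v$. If the \emph{other parity} were automorphic, the sign condition you just proved would be false.

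The missing tool is Frobenius reciprocity along $C_F$. Write $V_\Pi\subset\cA_{\cusp}(\PGL_3)$ for the space of $\Pi$. An embedding of $\Pi^+$ into cusp forms on $\PGL_3^+$ lying over $\{\Pi,\Pi^\vee\}$ is the same as a nonzero $\PGL_3(\A_F)\rtimes\Delta\{\pm1\}$-map $\ell\colon\Pi^+\to V_\Pi+V_{\Pi^\vee}$. The correspondence is $X\mapsto F_X$ with $F_X(g\eta)=\ell(\Pi^+(\eta)X)(g)$ for $g\in\PGL_3(\A_F)$ and $\eta\in\prod_v\{\pm1\}$. So the multiplicity of $\Pi^+$ equals the dimension of this Hom space.
\begin{itemize}
\item If $\Pi\not\cong\Pi^\vee$, the dimension is $\dim\Hom_{\PGL_3(\A_F)}(\Pi^+,\Pi)=1$. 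It is spanned by $\otimes_v p_v$, where $p_v$ is the projection $\Ind\Pi_v\to\Pi_v$ at non-self-dual places and the identification $\Pi_v^+(\epsilon_v)|_{\PGL_3(F_v)}\cong\Pi_v$ at self-dual places. This holds for every choice of signs, with no parity condition.
\item If $\Pi\cong\Pi^\vee$, the dimension is $1$ or $0$ according as $\prod_v\epsilon_v=1$ or not, by your Whittaker computation.
\end{itemize}
This gives both existence and uniqueness in the converse.
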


\begin{proof}
  Apply the proof of \cite{HarrisSoudryTaylor1993-AdicRepresentationsAssociatedModularFormsImaginary}*{Proposition 2}.
\end{proof}

\begin{definition}
  Let $F$ be a number field and $\Pi$ be a cuspidal automorphic representation of $\PGL_3(\A_F)$.
  For a finite set of places $S$ of $F$ such that $\Pi_v$ is unramified for any place $v \notin S$ and a family of signs $\epsilon = (\epsilon_v)_{v \in S} \in \prod_{v \in S}\{\pm 1\}$ such that $\prod_{v \in S_{sd}} \epsilon_v = 1$ if $\Pi$ is self-dual, let $\Pi^{+}(\epsilon)$ denote the corresponding representation of $\PGL_3^{+}(\A_F)$.

  We call the representation $\Pi^{+}(\epsilon)$ generic if $\epsilon_v = 1$ at every place $v \in S$ for which $\Pi_v$ is self-dual. Equivalently, each self-dual local extension is Whittaker-normalized.
\end{definition}

\begin{theorem}\label{theorem:theta-PGL3-to-G2}
  Let $F$ be a number field and let $\Pi$ be a cuspidal automorphic representation of $\PGL_3(\A_F)$.
  Let $\Pi^+$ be its generic extension to $\PGL_3^+(\A_F)$.
  Then, the exceptional theta space $\Theta(\Pi^+)$ is contained in the cuspidal spectrum of $\rG_2(\A_F)$, and its projection to the globally generic cuspidal spectrum is non-zero.
\end{theorem}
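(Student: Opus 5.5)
The plan is to follow the standard tower/Rallis-type argument for the exceptional theta correspondence attached to the dual pair $\PGL_3^+\times\rG_2$ inside the adjoint group of type $E_6$ (quasi-split form), using the minimal automorphic representation $\Pi_{\min}$. For $\varphi\in\Pi^+$ and $\Theta\in\Pi_{\min}$ we form the theta lift
\begin{align*}
  \theta(\varphi,\Theta)(g)=\int_{\PGL_3(F)\backslash\PGL_3(\A_F)}\varphi(h)\,\Theta(hg)\,\rd h ,
\end{align*}
extended to $\PGL_3^+$ by averaging over the component group. This integral converges absolutely because $\varphi$ is cuspidal, hence rapidly decreasing, and $\Theta$ has moderate growth. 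We will prove three things: the image is cuspidal, its global Whittaker coefficient on $\rG_2$ is nonzero, and hence $\Theta(\Pi^+)$ meets the globally generic spectrum.

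\textbf{Cuspidality.} Compute the constant terms of $\theta(\varphi,\Theta)$ along the two maximal parabolics $P_s$ and $P_l$ of $\rG_2$. Unfold $\Theta$ along the corresponding unipotent radicals using the known Fourier expansion of the minimal representation: only the trivial and the rank-one orbits contribute. Each resulting term is then an integral of $\varphi$ against a function that is left invariant under the unipotent radical of a proper parabolic of $\PGL_3$. That term therefore factors through a constant term of $\varphi$, which vanishes by cuspidality. The analogous step appears in Gan--Gurevich--Jiang and in Gan--Savin's treatment of the $\PGL_3\times\rG_2$ pair, and I would mirror it. Square-integrability, hence discreteness, then follows from cuspidality.

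\textbf{Genericity.} This is the main obstacle. Compute the $(N_{\rG_2},\psi)$-Whittaker coefficient of $\theta(\varphi,\Theta)$ by unfolding the Fourier--Jacobi/Heisenberg expansion of $\Theta$ along the Heisenberg parabolic of $E_6$. The expected outcome is an identity of the form
\begin{align*}
  W_{\theta(\varphi,\Theta)}(1)=\int_{N'(\A_F)\backslash \PGL_3(\A_F)} W_{\varphi}(h)\,(\omega(h)\Theta')(1)\,\rd h,
\end{align*}
where $W_\varphi$ is the Whittaker coefficient of $\varphi$ for the pinning-compatible character, and $\Theta'$ is a vector in an explicit model of the local minimal representation, namely a Schr\"odinger-type model. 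The generic extension $\Pi^+$ is used here: the $\iota$-component of the averaged lift has to contribute with the same sign as the identity component, otherwise the two terms cancel. The Whittaker normalization $\epsilon_v=+1$ guarantees this, because $\iota$ preserves the local Whittaker functionals. The expression is Eulerian, so non-vanishing reduces to local non-vanishing. At unramified places one checks that the unramified local integral equals $1$. At the remaining places one uses the local fact that the local theta lift of a generic $\Pi_v^+(+1)$ is generic, as in Theorem~\ref{theorem:properties-exceptional-theta}(1), to choose data with nonzero local integral.

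\textbf{Conclusion.} A nonzero Whittaker coefficient means that the projection of $\Theta(\Pi^+)$ to the globally generic cuspidal spectrum is nonzero. If the Fourier expansion step proves hard to execute directly, the fallback I would try first is to quote the analogous global computation of Gan--Savin (twisted composition algebras and triality) with $E$ the split cubic algebra, and reduce to it by restriction.
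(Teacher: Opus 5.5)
Your overall route is the paper's: cuspidality is quoted from Gan--Savin's constant-term computation for the pair $\PGL_3\times\rG_2$, and genericity comes from their unfolding of the $\rG_2$ Whittaker coefficient, with an extra integral over $C_F=\Delta\{\pm1\}\backslash\prod_v\{\pm1\}$. The gap is in your non-cancellation step. After unfolding, the coefficient is
\begin{align*}
  \int_{U(\A_F)\backslash\PGL_3(\A_F)}\int_{C_F}
  f^+_{U,\psi_U}(h\dot\eta)\,K_\phi(h\dot\eta)\,\rd\eta\,\rd h,
\end{align*}
where $K_\phi$ is built from the Fourier coefficient of the minimal form at a rank-one point $X\in\cV_2$. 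Whittaker-normalizing $\Pi^+$ controls only the first factor. The second factor depends on how the adelic component group acts on the minimal representation. Automorphy determines the extension of $\Omega$ to $\cH^+(\A_F)$ only up to local signs whose product is one. Suppose that at a place where $\Pi_v$ is self-dual the local involution acts by $-1$ on the line $\C\ell_{X,v}$. Then $K_\phi(\dot\eta h)=\chi(\eta)K_\phi(h)$ for a nontrivial character $\chi$ of $C_F$. At that place the Whittaker coefficient of $\Pi^+$ does not depend on $\eta_v$, so the $C_F$-integral kills the Whittaker coefficient of the lift of the generic extension. Your claim that $\epsilon_v=+1$ on the $\PGL_3$ side suffices is therefore false until the extension of the minimal representation is fixed, and the statement itself depends on that normalization.

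The missing step is the one the paper makes first. Take $X=(1,x_0,x_0^{\#},0)$ with $x_0=E_{12}-E_{23}$. This point is fixed by the Jordan involution attached to $\iota$ and matches the pinned character $\psi_U(u)=\psi(u_{12}+u_{23})$. Local uniqueness for the derived stabilizer of $X$ factors $L_X(\phi)=\vartheta(\phi)_X(1)$ as $\bigotimes_v\ell_{X,v}$. Renormalize each local involution on $\Omega_v$ so that it fixes $\ell_{X,v}$. The product of the renormalizing signs is $1$, because $L_X$ is invariant under the rational involution, so $\vartheta$ remains $\cH^+(F)\cdot\cH(\A_F)$-equivariant. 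Only then does $K_\phi(\dot\eta h)=K_\phi(h)$ hold. The change of variables $h\mapsto\dot\eta h\dot\eta^{-1}$ then gives $\int W^+_{f^+}(h)K_\phi(h)\,\rd h$. Now take $F^+(g\eta)=f(g)+f(\iota(g))$, where $f$ has nonzero Whittaker value and its components at self-dual places are projected to the $+1$-eigenspace of the Whittaker-normalized involution; then $W^+_{F^+}(1)=2W_f(1)\neq0$. Your local non-vanishing step is also weaker than you suggest. Theorem~\ref{theorem:properties-exceptional-theta}(1) is only $p$-adic. Local genericity of $\theta(\Pi^+_v)$ does not by itself show that your particular local integral is nonzero. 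The paper avoids an Euler product altogether. The unramified support calculation reduces to a finite set of places containing the archimedean ones. A Schwartz convolution localized near $x_0$ in the rank-two orbit then makes the remaining integral nonzero, uniformly at real and $p$-adic places.
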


\begin{proposition}
  Let $F$ be a totally real number field and $\Sigma$ be a cuspidal automorphic representation of $\rG_2(\A_F)$.
  Suppose that $L^S(s,\Sigma,\Std_{\rG_2})$ has a pole at $s=1$ for all sufficiently large finite sets $S$ of places of $F$.
  Then, the exceptional theta lift of $\Sigma$ to $\PGL_3^+(\A_F)$ or some inner form $\mathrm{PD}^{\times}(\A_F)$ is non-zero.
\end{proposition}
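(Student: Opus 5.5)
Our plan is the standard route via the Rallis inner product, or equivalently the tower/Siegel--Weil approach for the exceptional dual pairs $\PGL_3^{+}\times\rG_2$ and $\mathrm{PD}^{\times}\times\rG_2$ inside $E_6$-type groups, in the style of \cite{GanSavin2020-ExceptionalSiegelWeilFormulaPolesSpin} and \cite{Gan2005-MultiplicityFormulaCubicUnipotentArthurPackets}. For a cubic étale/twisted composition datum, equivalently a cubic Jordan algebra $J$ of rank $3$ (either $M_3$ or a division algebra $D$), take the dual pair $\rG_2\times \mathrm{Aut}(J)$ in the group $H_J$ of type $E_6$ (quasi-split or an inner form). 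Its minimal representation $\Pi_{\min}$ has an automorphic realization $\theta\colon\Pi_{\min}\to\cA(H_J)$. For $\varphi\in\Sigma$ and $\phi\in\Pi_{\min}$, we form the global theta lift $\theta(\phi,\varphi)(h)=\int_{[\rG_2]}\theta(\phi)(gh)\overline{\varphi(g)}\,dg$ on $\mathrm{Aut}(J)(\A_F)$. We must show that this lift is nonzero for some choice of $J$.

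The key step is an exceptional Siegel--Weil identity combined with a doubling argument. Seesaw duality with the pair $\rG_2\times\rG_2\subset \rG_2\times\mathrm{Aut}(\mathbb O)$, or better with the $(\rG_2\times\rG_2)\times\mathrm{Aut}(J)$ seesaw inside a larger group, lets us write the Petersson norm $\langle\theta(\phi,\varphi),\theta(\phi,\varphi)\rangle$ as an integral of $\varphi\otimes\overline{\varphi}$ against the restriction of a theta function. That theta function is identified by a regularized Siegel--Weil formula with a residue, or special value, of a degenerate Eisenstein series. Unfolding the Eisenstein series gives a doubling zeta integral whose unramified computation yields $L^S(s,\Sigma,\Std_{\rG_2})$ times harmless factors. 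Section~\ref{theta-A2:sec:regularization} presumably supplies the regularization. The pole of the standard $L$-function at $s=1$ then forces the residue of the doubling integral to be nonzero for suitable local data. Nonvanishing of local zeta integrals at finite places is handled by choosing test vectors. At real places we use the local models referred to in Sections~\ref{theta-A2:sec:real-models}--\ref{theta-A2:sec:local-AV}. Since $F$ is totally real, these models are explicit, and an associated-variety argument should show that the local theta lift to some inner form $\mathrm{Aut}(J_v)$ is nonzero.

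Next we decompose the residue. The residual Eisenstein term is a sum over the Jordan algebras $J$, a Siegel--Weil sum over inner forms, so the total residue equals $\sum_J c_J\|\theta_J(\phi_J,\varphi)\|^2$. Nonvanishing of the left side forces some $\theta_J(\Sigma)\neq0$. Local–global compatibility with the Hasse principle selects a global $J$, namely $M_3$ or a cubic division algebra $D$ with the right local invariants, which gives $\PGL_3^{+}$ or $\mathrm{PD}^{\times}$.

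We expect the main obstacle to be the regularized Siegel--Weil step: identifying the residue at the relevant point with a sum of theta integrals for $\mathrm{Aut}(J)$, including the convergence and regularization issues and the archimedean constants. Here we would first try to adapt the regularization of \cite{GanSavin2020-ExceptionalSiegelWeilFormulaPolesSpin} to the $E_6$ setting, using truncation with a Hecke operator that kills the constant term. A secondary difficulty is proving the archimedean nonvanishing of local zeta integrals, where total reality is used.
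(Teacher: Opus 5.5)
There is a genuine gap in your key step: the doubling see-saw you invoke does not exist for this dual pair.

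The Rallis inner product works classically for two reasons. First, $\Sp(W)\times\Sp(W)\subset\Sp(W\oplus W^-)$. Second, the Weil representation attached to $V\otimes(W\oplus W^-)$ restricts to $\omega\boxtimes\overline{\omega}$. Together these turn $\|\theta(\phi,\varphi)\|^2$ into $\varphi\otimes\overline{\varphi}$ integrated against a theta integral over $\mathrm{O}(V)$ for a \emph{doubled dual pair}, and Siegel--Weil applies to that.

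For $\rG_2\times\Aut(J)$ inside a group of type $E_6$ there is no analogue of this. You do not exhibit any group containing $(\rG_2\times\rG_2)\times\Delta\Aut(J)$ whose minimal representation restricts to $\Omega_J\boxtimes\overline{\Omega_J}$, and minimal representations do not have this doubling property. Consequently the kernel
$(g_1,g_2)\mapsto\int_{[\Aut(J)]}\vartheta_J(h\phi)(g_1)\overline{\vartheta_J(h\phi)(g_2)}\,dh$
is not a theta integral for any dual pair you exhibit. No Siegel--Weil formula identifies it with an Eisenstein series whose unfolding produces $L^S(s,\Sigma,\Std_{\rG_2})$.

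The inclusion $\rG_2\times\rG_2\subset\rG_2\times\Aut(\mathbb O)$ is not a see-saw of this kind, and you never name the doubled group or its Eisenstein series. So the decomposition of the residue as $\sum_J c_J\|\theta_J\|^2$ has nothing to rest on. Your use of real places is also misdirected: the real-place input in the paper is not a local nonvanishing statement for theta lifts.

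The missing idea is to work linearly with a period rather than a norm.

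\begin{enumerate}
\item Choose an \'etale cubic algebra $E$ such that $\Sigma$ has a nonzero $E$-th Fourier coefficient along the Heisenberg radical. Segal's integral \eqref{theta-A2:eq:segal-integral} pairs $\overline{f}$ with a Heisenberg-parabolic degenerate Eisenstein series on $G_E=\Spin_8^E\supset\rG_2$ and yields $L^S(s+\tfrac12,\Sigma,\Std_{\rG_2})$. The pole therefore gives a residue $R$ at $s=1/2$ with $\int_{[\rG_2]}R\overline{f}\neq0$.
\item The exceptional Siegel--Weil statement (Proposition~\ref{theorem:exceptional-siegel-weil}) writes $R$ as a finite sum of theta lifts of the \emph{trivial} representation of $H_C=\Aut_E(C)$, through the dual pair $H_C\times G_E$ in type $E_6$. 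When $E$ is not a field, these lifts are regularized by a Hecke operator killing the $D_5$ constant terms.
\item That identification rests on multiplicity one of $\pi_E$ in $\cA(G_E)$. This is proved by detecting a nondegenerate cube coefficient via associated varieties at a real place, and that is where total reality enters.
\item The see-saw $\rG_2\subset G_E$, $H_C\subset\Aut(J)$ (Proposition~\ref{theta-A2:prop:paired-seesaw}) rewrites $\int_{[\rG_2]}\theta_C^{\reg}(\phi,\mathbf 1)\overline{f}$ as the $H_C$-period of $\theta_J(D\phi,f)$. Hence some $\theta_J(\Sigma)\neq0$, with $J=M_3(F)^+$ or $D^+$ according to which summand pairs nontrivially with $f$.
\item In the division case one descends to the connected group $\mathrm{PD}^\times$, using compactness of $H_C(\A_F)/T_C(\A_F)$.
\end{enumerate}

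No positivity, inner-product formula, or doubling construction is needed.
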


\begin{proof}
  See Theorem~\ref{theorem:exceptional-theta-G2-to-PGL3}.
\end{proof}

\begin{remark}
  Let $F$ be a totally real number field.
  Let $\Pi$ be a cuspidal automorphic representation of $\PGL_3(\A_F)$.

  If $\Sigma$ is contained in the cuspidal spectrum of $\rG_2(\A_F)$ with the Satake parameter of the form $\iota^{\rG_2}_{\PGL_3} \circ c(\Pi)$, then $\Sigma$ is an exceptional theta lift of some cuspidal automorphic representation of $\PGL_3^{+}$ or an inner form of $\PGL_3$ over $F$.

  If $\Sigma$ lifts to $\mathrm{PD}^{\times}$ for a division algebra $D$ over $F$, its lift is $JL_{D}(\Pi)$ or $JL_{D}(\Pi^{\vee})$.
  Indeed, its standard Satake parameter is that of
  \begin{align*}
    \Pi  \boxplus \Pi^{\vee} \boxplus \mathbf{1}.
  \end{align*}
  In particular,
  \begin{align*}
    L^S(s,\Sigma,\Std_{\rG_2})
    =\zeta_F^S(s)L^S(s,\Pi)L^S(s,\Pi^{\vee})
  \end{align*}
  has a simple pole at $s=1$ for every sufficiently large $S$.
  Thus the hypothesis of Theorem~\ref{theorem:exceptional-theta-G2-to-PGL3} holds.
\end{remark}

\section{Certain triviality results}\label{section:quadratic-twists}

Let
\begin{align*}
  q\colon \GSO_8 \longrightarrow \PGSO_8
  \quad\text{and}\quad
  \lambda\colon \GSO_8 \longrightarrow \GL_1
\end{align*}
be the quotient by the scalar center and the similitude character,
respectively.  Let $k$ be a local field or a number field of
characteristic zero and let $\omega$ be a quadratic character of
$k^{\times}$ or $\A_k^{\times}/k^{\times}$, respectively.  Since
$\lambda(z1_8)=z^2$, the character $\omega\circ\lambda$ is trivial on
the scalar center and therefore descends to a character
$\chi_{\omega,1}$ of $\PGSO_8(k)$ or $\PGSO_8(\A_k)$, respectively.
Let $\theta$ be the triality automorphism of $\PGSO_8$ and set
\begin{align*}
  \chi_{\omega,2}=\chi_{\omega,1}^{\theta^{-1}},
  \quad
  \chi_{\omega,3}=\chi_{\omega,2}^{\theta^{-1}}.
\end{align*}
Here, for any automorphism $\alpha$, the superscript denotes pullback.
$\chi^{\alpha}=\chi\circ\alpha$, and similarly
$\pi^{\alpha}=\pi\circ\alpha$ for representations.  A calculation in
the $D_4$ root datum gives
\begin{align*}
  \chi_{\omega,1}\chi_{\omega,2}\chi_{\omega,3}=1.
\end{align*}
If $\omega$ is non-trivial, these are the three non-trivial elements of
a two-dimensional $\bF_2$-space on which $\theta$ acts transitively.  We
refer to twists by the $\chi_{\omega,i}$ as the triality quadratic
twists of representations of $\PGSO_8$.

\begin{lemma}\label{lemma:triality-quadratic-twist}
  Let $F$ be a local field.
  Let $\pi, \pi'$ be irreducible representations of $\PGSO_8(F)$ which are stable under the triality automorphism $\theta$.
  If
  \begin{align*}
    \pi' \cong \pi\otimes\chi_{\omega,i}
  \end{align*}
  for a quadratic character $\omega$ of $F^{\times}$ and some
  $i\in\{1,2,3\}$, then $\pi'\cong\pi$.
\end{lemma}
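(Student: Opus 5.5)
The plan is to use the triality symmetry of the three twisting characters together with the relation $\chi_{\omega,1}\chi_{\omega,2}\chi_{\omega,3}=1$.

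First I will record how $\theta$ acts on the twists. By definition $\chi_{\omega,i+1}=\chi_{\omega,i}^{\theta^{-1}}$ (indices mod $3$). Hence, for a $\theta$-stable $\pi$,
\begin{align*}
  (\pi\otimes\chi_{\omega,i})^{\theta^{-1}}
  \cong \pi^{\theta^{-1}}\otimes\chi_{\omega,i}^{\theta^{-1}}
  \cong \pi\otimes\chi_{\omega,i+1}.
\end{align*}
Now suppose $\pi'\cong\pi\otimes\chi_{\omega,i}$ with $\pi'$ also $\theta$-stable. Applying $\theta^{-1}$ to this isomorphism gives
\begin{align*}
  \pi\otimes\chi_{\omega,i}\cong\pi'\cong\pi'^{\theta^{-1}}\cong\pi\otimes\chi_{\omega,i+1}.
\end{align*}
Repeating the argument, $\pi'$ is isomorphic to $\pi\otimes\chi_{\omega,j}$ for every $j\in\{1,2,3\}$.

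From $\pi\otimes\chi_{\omega,i}\cong\pi\otimes\chi_{\omega,i+1}$ I get $\pi\cong\pi\otimes\chi_{\omega,i}^{-1}\chi_{\omega,i+1}$. All the $\chi_{\omega,j}$ are quadratic, so $\chi_{\omega,i}^{-1}=\chi_{\omega,i}$. Using $\chi_{\omega,1}\chi_{\omega,2}\chi_{\omega,3}=1$, the product $\chi_{\omega,i}\chi_{\omega,i+1}$ equals $\chi_{\omega,i+2}$. Therefore
\begin{align*}
  \pi\cong\pi\otimes\chi_{\omega,i+2}.
\end{align*}
Since $i$ was arbitrary, this gives $\pi\cong\pi\otimes\chi_{\omega,j}$ for every $j$. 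In particular, taking $j=i$ yields $\pi'\cong\pi\otimes\chi_{\omega,i}\cong\pi$, which is the claim.

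The only point that needs care is the relation $\chi_{\omega,1}\chi_{\omega,2}\chi_{\omega,3}=1$, which the paper asserts via a computation in the $D_4$ root datum. If I wanted to double-check it, I would identify the quadratic characters of $\PGSO_8(F)$ that factor through $\PGSO_8(F)/\mathrm{image}(\Spin_8(F))$ with a quotient of $H^1(F,Z(\Spin_8))$. There $Z(\Spin_8)\cong\mu_2^2$, and triality acts by a $3$-cycle on its three nontrivial characters, whose sum is zero. The trivial case $\omega=1$ needs no argument. I would also check that the pullback convention $(\pi\otimes\chi)^{\alpha}=\pi^{\alpha}\otimes\chi^{\alpha}$ is applied consistently; this is a direct verification.
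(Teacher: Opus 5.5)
Your proof is correct and is essentially the paper's argument. The paper phrases it as follows: the self-twists of $\pi$ inside $\{1,\chi_{\omega,1},\chi_{\omega,2},\chi_{\omega,3}\}$ form a $\theta$-stable subgroup, $\theta$-stability of $\pi'$ puts a nontrivial element in it, and transitivity of triality on the three nontrivial characters then gives the whole group. Your explicit identification of that nontrivial self-twist as $\chi_{\omega,i+2}$, via $\chi_{\omega,1}\chi_{\omega,2}\chi_{\omega,3}=1$, is the same computation unwound, and it has the small advantage of not needing that the three characters are distinct and nontrivial.
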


\begin{proof}
  We may assume that $\omega$ is non-trivial.  Let
  $\mathcal X_{\omega}=\{1,\chi_{\omega,1},\chi_{\omega,2},
  \chi_{\omega,3}\}$ and let $S$ be the subgroup of
  $\mathcal X_{\omega}$ consisting of the self-twists of $\pi$.
  Since $\pi$ is $\theta$-stable, the subgroup $S$ is $\theta$-stable.
  The $\theta$-stability of $\pi'$ gives
  \begin{align*}
    \pi\otimes\chi_{\omega,i}
    \cong
    \pi\otimes\chi_{\omega,i}^{\theta},
  \end{align*}
  so $S$ contains a non-trivial element.  Triality acts transitively on
  the three non-trivial elements of $\mathcal X_{\omega}$. Hence
  $S=\mathcal X_{\omega}$.  In particular,
  $\pi\otimes\chi_{\omega,i}\cong\pi$, which proves the assertion.
\end{proof}

\begin{lemma}\label{lemma:triality-quadratic-twist-global}
  Let $F$ be a number field.
  Let $\pi$ be a discrete automorphic representation of $\PGSO_8(\A_F)$ which is stable under the triality automorphism $\theta$.
  If
  \begin{align*}
    \pi\cong\pi\otimes\chi_{\omega,1}
  \end{align*}
  for a quadratic Hecke character $\omega$, then the $8$-dimensional
  standard lift satisfies
  \begin{align*}
    \mathrm{Lift}(\pi)
    \cong
    \mathrm{Lift}(\pi)\otimes(\omega\circ\det).
  \end{align*}
\end{lemma}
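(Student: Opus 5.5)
The plan is to compare Hecke--Satake families at almost all places and then apply strong multiplicity one for isobaric representations of $\GL_8$.

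First fix the meaning of $\mathrm{Lift}(\pi)$. It is the isobaric automorphic representation of $\GL_8(\A_F)$ attached to $\pi$ through the standard representation $\Std_{\PGSO_8}\colon\Spin_8(\C)\to\SO_8(\C)\subset\GL_8(\C)$. By Arthur's classification, applied after pulling back to $\GSO_8$ or $\SO_8$, this lift is characterized by its Satake parameters $\Std\circ c(\pi_v)$ at almost all places $v$. For $\GL_8$, strong multiplicity one for isobaric sums (Jacquet--Shalika) means an isobaric representation is determined by its Satake parameters outside a finite set. So it suffices to show that, for almost all places $v$,
\begin{align*}
  \Std\circ c(\pi_v\otimes\chi_{\omega,1,v}) = \bigl(\Std\circ c(\pi_v)\bigr)\cdot\omega_v(\varpi_v).
\end{align*}
Combined with $\pi\cong\pi\otimes\chi_{\omega,1}$, this gives equality of the Satake families of $\mathrm{Lift}(\pi)$ and $\mathrm{Lift}(\pi)\otimes(\omega\circ\det)$ almost everywhere, and hence the isomorphism.

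The key step is the local identity at places $v$ where $\pi_v$ and $\omega_v$ are unramified. There $\chi_{\omega,1,v}=\omega_v\circ\lambda$ is an unramified character of $\PGSO_8(F_v)$. Its Langlands parameter is a central element $z_v\in Z(\Spin_8(\C))$: it is the image of $\omega_v(\varpi_v)$ under the cocharacter of $Z(\widehat{\PGSO_8})$ dual to $\lambda$. Twisting by the character multiplies the Satake parameter by $z_v$. So we need $\Std(z)=-1$ when $z$ is the nontrivial central element dual to the similitude character.

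Next we check that this $z$ is the element of $Z(\Spin_8)\cong(\Z/2)^2$ lying in the kernel of one half-spin representation, rather than in the kernel of $\Std$. The similitude $\lambda$ on $\GSO_8$ restricts to $z\mapsto z^2$ on the center. Dually, the corresponding central element of $\GSpin_8(\C)$ maps to the $-1$ scalar of $\GSO_8$ on the standard representation, since the standard representation of the dual $\GSpin_8$ has similitude factor matching $\lambda$. Concretely, $\lambda$ corresponds to the coweight-lattice element $\varpi_1$ (or the corresponding fundamental weight) modulo the root lattice. The induced central element of $\Spin_8(\C)$ pairs nontrivially with the standard weight $\varpi_1$, giving $\Std(z)=-1$, and trivially with one of $\varpi_3,\varpi_4$.

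I expect the main obstacle to be identifying which of the three central elements corresponds to $\chi_{\omega,1}$ as opposed to $\chi_{\omega,2}$ or $\chi_{\omega,3}$. For the other two, $\Std(z)$ would be $\diag(\pm1)$ with mixed signs and would not be a scalar. To settle this I would compute in coordinates, using the explicit $D_4$ labeling of Section~\ref{section:general-notation}, the embedding of Remark~\ref{remark:explicit-description-theta-invariant-Levi}, and the torus formula $\Std_{\PGSO_8}$ from Lemma~\ref{lemma:description-of-centralizer-of-regular-elements-1}. I would evaluate $\lambda$ on $X_*(T)$ and read off the dual element in $\widehat{T}[2]$. Once that is done, the almost-everywhere identity follows and strong multiplicity one completes the proof.
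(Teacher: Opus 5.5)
There is a genuine gap, and it sits at exactly the step you flagged as the main obstacle: you have predicted the wrong answer. The character $\chi_{\omega,1}$ is the descent of $\omega\circ\lambda$, and $\lambda$ is trivial on $\SO_8$. So $\chi_{\omega,1}$ is trivial on the image of $\SO_8(F_v)$ in $\PGSO_8(F_v)$. Dually, it corresponds to the generator $z_0$ of $\ker(\Spin_8(\C)\to\SO_8(\C))$, the kernel of the isogeny dual to $\SO_8\to\PGSO_8$. This element satisfies $\Std(z_0)=1$.

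The same conclusion follows in your $\GSpin_8(\C)=(\GL_1\times\Spin_8(\C))/\langle(-1,z_0)\rangle$ picture. The cocharacter dual to $\lambda$ sends $-1$ to the class of $(-1,1)$, which equals the class of $(1,z_0)$. The $8$-dimensional representation on which this $\GL_1$ acts by the matching character is a half-spin representation. The vector representation extends only with $\GL_1$ acting by an even power.

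Consequently $\Std\circ c(\pi_v\otimes\chi_{\omega,1,v})=\Std\circ c(\pi_v)$ at every unramified place, and not the twist by $\omega_v(\varpi_v)$ that your argument needs. The self-twist $\pi\cong\pi\otimes\chi_{\omega,1}$ constrains only the half-spin lifts, and says nothing about $\mathrm{Lift}(\pi)$. Your side remark is also wrong. By Schur's lemma, each central element acts on the irreducible vector representation by a scalar: $z_0\mapsto 1$, and $z_\pm\mapsto -1$ for the other two nontrivial central elements (the kernels of the half-spin representations). No coordinate computation will produce "mixed signs".

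A warning sign is that your argument never uses the hypothesis that $\pi$ is $\theta$-stable, and that hypothesis is the missing idea. The paper argues
\[
  \pi\cong\pi^{\theta^{-1}}\cong(\pi\otimes\chi_{\omega,1})^{\theta^{-1}}\cong\pi\otimes\chi_{\omega,2}.
\]
Here $\chi_{\omega,2}=\chi_{\omega,1}^{\theta^{-1}}$ corresponds to $\widehat\theta^{\pm1}(z_0)\in\{z_+,z_-\}$, which acts by $-1$ on the standard representation. Replace $\chi_{\omega,1}$ by $\chi_{\omega,2}$ and redo your unramified computation. It then gives $\Std\circ c(\pi_v\otimes\chi_{\omega_v,2})=\omega_v(\varpi_v)\,\Std\circ c(\pi_v)$, and strong multiplicity one for isobaric representations of $\GL_8$ finishes the proof as you planned.
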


\begin{proof}
  The $\theta$-stability of $\pi$ gives
  \begin{align*}
    \pi
    \cong
    \pi^{\theta^{-1}}
    \cong
    (\pi\otimes\chi_{\omega,1})^{\theta^{-1}}
    \cong
    \pi\otimes\chi_{\omega,2}.
  \end{align*}
  At every unramified place, the $D_4$ root datum shows that the standard
  lift of $\pi_v\otimes\chi_{\omega_v,2}$ is
  $\mathrm{Lift}(\pi_v)\otimes(\omega_v\circ\det)$.  The asserted global
  isomorphism now follows from strong multiplicity one for isobaric
  automorphic representations of general linear groups.
\end{proof}

\begin{corollary}\label{corollary:no-quadratic-self-twist-PGSO8}
  We continue to use the notation in the previous lemma.
  Assume that
  \begin{align*}
    \mathrm{Lift}(\pi)=\mathbf{1}\boxplus\tau_7
  \end{align*}
  for a cuspidal automorphic representation $\tau_7$ of
  $\GL_7(\A_F)$.  Then no triality quadratic twist attached to a
  non-trivial quadratic Hecke character stabilizes $\pi$.
\end{corollary}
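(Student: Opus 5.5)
Suppose, for contradiction, that $\pi\cong\pi\otimes\chi_{\omega,i}$ for some $i\in\{1,2,3\}$ and some non-trivial quadratic Hecke character $\omega$. The argument has three steps.

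\emph{Reduction to $i=1$.} Since $\pi$ is $\theta$-stable, pulling back the self-twist by a power of $\theta$ moves it to any other index. For example, if $\pi\cong\pi\otimes\chi_{\omega,2}$, then applying $\theta$ gives $\pi\cong\pi^{\theta}\cong\pi\otimes\chi_{\omega,2}^{\theta}=\pi\otimes\chi_{\omega,1}$, using $\chi_{\omega,2}^{\theta}=\chi_{\omega,1}$. The case $i=3$ is handled the same way. This is the same observation as in Lemma~\ref{lemma:triality-quadratic-twist}: the subgroup of self-twists inside $\{1,\chi_{\omega,1},\chi_{\omega,2},\chi_{\omega,3}\}$ is $\theta$-stable, and $\theta$ permutes the three non-trivial elements transitively. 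So we may assume $\pi\cong\pi\otimes\chi_{\omega,1}$.

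\emph{Passage to $\GL_8$.} Lemma~\ref{lemma:triality-quadratic-twist-global} now gives
\begin{align*}
  \mathbf{1}\boxplus\tau_7 \cong (\mathbf{1}\boxplus\tau_7)\otimes(\omega\circ\det) = \omega\boxplus(\tau_7\otimes\omega).
\end{align*}
Isobaric decompositions are unique, by strong multiplicity one together with the Jacquet--Shalika classification. So the multisets of cuspidal constituents must agree: $\{\mathbf{1},\tau_7\}=\{\omega,\tau_7\otimes\omega\}$, with the constituents matched by degree. Matching the $\GL_1$-constituents gives $\mathbf{1}\cong\omega$, which contradicts the non-triviality of $\omega$. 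Hence no triality quadratic twist by a non-trivial $\omega$ stabilizes $\pi$.

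\emph{Main obstacle.} The step needing care is the reduction to $i=1$, since Lemma~\ref{lemma:triality-quadratic-twist-global} is stated only for $\chi_{\omega,1}$. One has to track the direction of the pullbacks precisely, from the definitions $\chi_{\omega,2}=\chi_{\omega,1}^{\theta^{-1}}$ and $\chi_{\omega,3}=\chi_{\omega,2}^{\theta^{-1}}$, to confirm that the self-twist really transfers to $\chi_{\omega,1}$. Since $\theta$ has order $3$, every index is reached either way. The $\GL_8$ comparison itself is immediate once uniqueness of isobaric decomposition is invoked.
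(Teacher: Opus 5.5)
Your proposal is correct and follows the paper's proof. You reduce to $\chi_{\omega,1}$ using triality stability, apply Lemma~\ref{lemma:triality-quadratic-twist-global} to get $\mathbf{1}\boxplus\tau_7\cong\omega\boxplus(\tau_7\otimes\omega)$, and compare the degree-one summands using uniqueness of the isobaric decomposition; your check that $\chi_{\omega,2}^{\theta}=\chi_{\omega,1}$ just spells out the paper's one-line ``by triality'' reduction.
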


\begin{proof}
  It suffices, by triality, to consider $\chi_{\omega,1}$.  If this twist
  stabilizes $\pi$, Lemma~\ref{lemma:triality-quadratic-twist-global}
  gives
  \begin{align*}
    \mathbf{1}\boxplus\tau_7
    \cong
    \omega\boxplus(\tau_7\otimes\omega).
  \end{align*}
  The uniqueness of the isobaric decomposition, applied to the
  degree-one summands, gives $\omega=\mathbf{1}$.
\end{proof}

\begin{corollary}
  Under the preceding corollary's hypotheses, if the $A$-parameter of $\pi$ is generic, its discrete multiplicity is $m_{\disc}(\pi)=1$.
\end{corollary}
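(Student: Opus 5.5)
The plan is to use the multiplicity formula for discrete automorphic representations of a group of the form $\PGSO_8$ (equivalently, of the twisted-endoscopic image in $\GL_8$) attached to a generic elliptic $A$-parameter. I will work with $\SO_8$-type parameters via the standard lift and Arthur's classification (as extended to $\PGSO_8$ / similitude groups by Xu's work used earlier). Here the parameter is $\psi=\mathbf{1}\boxplus\tau_7$, and I will read $m_{\disc}(\pi)$ off the global component group $\cS_\psi$ together with the group of characters $\chi$ of $\PGSO_8(\A_F)/\PGSO_8(F)$ with $\pi\otimes\chi\cong\pi$. In Xu's multiplicity formula for similitude groups, $m_{\disc}(\pi)$ equals the sum over $\pi$-compatible characters of $\cS_\psi$, normalized by a factor coming from the group $Y(\pi)$ of self-twists by characters trivial on the image of the derived group. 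Concretely, one gets a formula of the shape
\begin{align*}
  m_{\disc}(\pi)=m_\psi\,\abs{\cS_\psi}^{-1}\sum_{x\in\cS_\psi}\epsilon_\psi(x)\langle x,\pi\rangle
\end{align*}
weighted by $\abs{Y(\pi)/\alpha(\cS_\psi)}$, or something of this shape.

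The first step is to compute $\cS_\psi$. Since $\tau_7$ is cuspidal on $\GL_7$ and $\mathbf{1}$ is a character, $\psi$ is an elliptic orthogonal parameter with two simple summands, of dimensions $1$ and $7$. The centralizer in $\O_8$ is $\{\pm1\}\times\{\pm1\}$. After passing to $\SO_8$, dividing by the center, and then to the spin/$\PGSO_8$ dual $\Spin_8$ side, the component group is at most $\Z/2\Z$ and comes from the element acting by $-1$ on one summand; modulo the center it becomes trivial. Hence $\cS_\psi$ is trivial (modulo $Z(\widehat G)$). The second step is that $\psi$ is generic, so $\epsilon_\psi\equiv1$ and $m_\psi=1$. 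Because $\psi$ has two non-isomorphic summands of different dimension, $m_\psi=1$ holds for $\O_8$ versus $\SO_8$ conjugacy as well: the odd-dimensional summand allows an $\O_8$ element of determinant $-1$ in the centralizer. The pairing character is therefore automatically trivial, and the alternating sum equals $1$.

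The third step, which I expect to be the main obstacle, is controlling the self-twist factor: in the similitude/adjoint setting, multiplicity can exceed one exactly when $\pi\cong\pi\otimes\chi$ for nontrivial automorphic characters $\chi$ of $\PGSO_8$. I will first reduce to characters factoring through $\PGSO_8(\A_F)\to H^1$-type abelianizations, which are generated by the triality quadratic twists $\chi_{\omega,i}$ for quadratic Hecke characters $\omega$. This is the point I would verify carefully, using that the cocenter of $\GSO_8$ modulo scalars is $\mu_2\times\mu_2$ in the dual. Then Corollary~\ref{corollary:no-quadratic-self-twist-PGSO8} shows that no such nontrivial twist stabilizes $\pi$, so $Y(\pi)$ is trivial and the formula collapses to $m_{\disc}(\pi)=1$. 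If the Xu-type formula is not directly available for $\PGSO_8$, I will instead argue by restriction to $\SO_8$ together with Arthur's multiplicity-one for generic parameters of $\SO_8$, using the trivial self-twist group to show that the restriction of $\pi$ to $\SO_8(\A_F)$-components does not produce extra copies.
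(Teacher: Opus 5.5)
Your proposal is correct and is essentially the argument the paper uses (compare the last paragraph of the proof of Proposition~\ref{proposition:invariance-of-global-generic-theta-lift-to-PGSO_8}): Xu's multiplicity formula (his Theorem~1.1), with $\cS_\psi$ trivial modulo the center, $m_\psi=1$ because of the odd-dimensional summand $\mathbf{1}$, and trivial self-twist group by the preceding corollary. One small correction: the twist group in Xu's formula consists only of the characters $\omega\circ\lambda=\chi_{\omega,1}$, which is exactly what the preceding corollary excludes, so your wider reduction to the whole group generated by all $\chi_{\omega,i}$ is unnecessary; moreover, products such as $\chi_{\omega,1}\chi_{\omega',2}$ with $\omega,\omega'$ independent are not single triality twists, so the corollary as stated would not cover them.
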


\section{A globalization result for generic representations for \texorpdfstring{$\rG_2(F)$}{G2(F)}}

\begin{lemma}\label{corollary:globalization-generic-supercuspidal-representation}
  Let $F$ be a number field, let $v_0,w$ be distinct finite places of $F$, and let $S$ be a finite set of finite places disjoint from $\{v_0,w\}$.
  Fix a global Whittaker character $\psi_N$ for $\rG_2$ which is unramified at finite places outside $S\cup\{v_0,w\}$.
  Let $\sigma$ be a generic supercuspidal representation of $\rG_2(F_{v_0})$.
  For $v\in S$, let $\pi_v$ be a generic discrete series representation of $\rG_2(F_v)$.

  Then there exists a globally generic cuspidal automorphic representation $\Sigma$ of $\rG_2(\A_F)$ such that $\Sigma_{v_0}\cong\sigma$, $\Sigma_v\cong\pi_v$ for $v\in S$, $\Sigma_w\cong\St_{\rG_2(F_w)}$, and $\Sigma_v$ is unramified at every finite place outside $S\cup\{v_0,w\}$.
\end{lemma}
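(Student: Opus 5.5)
We plan to globalize via Poincaré series built from matrix coefficients, with the Whittaker-period method of Shahidi and Vignéras, in the form used by Prasad--Ramakrishnan and refined by Shin and by Gan--Lomelí.

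\emph{Test function.} Take $f=\otimes_v f_v$ on $\rG_2(\A_F)$ as follows.
\begin{itemize}
\item At $v_0$, $f_{v_0}$ is a matrix coefficient of $\sigma$, chosen so that its $\psi_{N,v_0}$-Whittaker integral is nonzero. This is possible because $\sigma$ is generic.
\item At $w$, $f_w$ is a matrix coefficient of a suitable supercuspidal of $\rG_2(F_w)$.
\item At $v\in S$, $f_v$ is a pseudo-coefficient of the generic discrete series $\pi_v$, or an approximation of it by a Bernstein-block projector. We twist $f_v$ so that $\psi_{N,v}$-Whittaker integrals against it isolate $\pi_v$.
\item Outside $S\cup\{v_0,w\}$ at finite places, $f_v=\mathbf 1_{K_v}$.
\item At infinity, $f_\infty$ is a suitable smooth compactly supported function.
\end{itemize}

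\emph{Poincaré series.} Form
\[
P_f(g)=\sum_{\gamma\in N(F)\backslash \rG_2(F)}\int_{N(\A_F)}f(\gamma^{-1}ug)\,\overline{\psi_N(u)}\,du .
\]
The supercuspidal component at $v_0$ makes $P_f$ compactly supported modulo the centre, so $P_f$ is a cusp form. Its $\psi_N$-Whittaker coefficient at the identity equals the orbital integral of $f$ along the diagonal $N\times N$-double cosets with character $\psi_N$. Using the Bruhat decomposition, this reduces to a finite sum over relevant Weyl elements. By shrinking the support of $f_{v_0}$ near the identity (after translating), only the trivial cell survives, and the local integrals factor as a product of nonzero local Whittaker integrals. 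Hence some cuspidal constituent $\Sigma$ of the space generated by $P_f$ is globally generic.

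\emph{Local components.} Expanding the projection of $P_f$ spectrally, every constituent $\Sigma$ satisfies $\Sigma_v(f_v)\ne0$ for each $v$. This forces:
\begin{itemize}
\item $\Sigma_{v_0}\cong\sigma$, since a supercuspidal matrix coefficient acts as a projector onto $\sigma$;
\item $\Sigma_v$ is unramified outside $S\cup\{v_0,w\}$;
\item $\Sigma_v\cong\pi_v$ for $v\in S$, provided the Whittaker nonvanishing is arranged jointly with the pseudo-coefficients.
\end{itemize}
Local uniqueness of the generic member in a Bernstein block is what makes the pseudo-coefficient detect exactly $\pi_v$ among generic tempered constituents. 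Genericity of $\Sigma$ comes from the nonvanishing of its Whittaker coefficient.

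\emph{The place $w$.} The Steinberg condition is not automatic from a supercuspidal matrix coefficient, so $f_w$ must be changed. Following Shin's use of the Euler--Poincaré/Kottwitz function, take $f_w$ to be an Iwahori-level projector onto the Steinberg type, namely the Iwahori characteristic function twisted by the sign character. Then $\Sigma_w(f_w)\ne0$ implies $\Sigma_w$ has Iwahori fixed vectors on the sign isotypic part. Globally generic representations are generic at every place, and the only generic representation with a nonzero sign-isotypic Iwahori vector is $\St_{\rG_2(F_w)}$, which pins down $\Sigma_w$.

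\emph{Main obstacle.} The hardest step is making the Whittaker coefficient of $P_f$ nonzero while keeping the projector conditions at $S$ and $w$. Nonsupercuspidal test functions at those places do not have compactly supported Whittaker integrals, so the Bruhat-cell truncation must be done entirely at $v_0$. At $S$ and $w$ one then needs local nonvanishing of the Whittaker integral against the chosen projectors. For the Steinberg place I would use the explicit Iwahori Whittaker function of $\St$, which is supported on the big cell. For $v\in S$ I would use the fact that a generic discrete series has a Whittaker functional that does not vanish on the image of its pseudo-coefficient.
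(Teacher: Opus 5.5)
Your Poincar\'e series construction is fine at $v_0$, since a supercuspidal matrix coefficient really is a projector. The Whittaker nonvanishing you single out as the main obstacle is also the standard part.

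\textbf{The gap: the test functions at $S$ and $w$ do not pin down $\Sigma_v$.} For a discrete series $\pi_v$ that is not supercuspidal, no $f_v\in C_c^\infty(\rG_2(F_v))$ can isolate $\pi_v$. Write $\pi_v$ as the generic subrepresentation of some $I_P(\rho\otimes\chi_0)$ with $\rho$ supercuspidal. In the compact picture, and using the entire Jacquet integral, $\chi\mapsto\lambda_\chi\bigl(I_P(\rho\otimes\chi)(f_v)x\bigr)$ is a polynomial in the unramified character $\chi$. So if it is nonzero at $\chi_0$, it is nonzero on a Zariski-dense open set. That set contains irreducible generic tempered representations; a pseudo-coefficient has zero \emph{trace} on them, but not zero \emph{operator}. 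Where they exist, it also contains unitary complementary series accumulating at $\pi_v$. Uniqueness of the generic constituent holds in a standard module, not in a Bernstein block.

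The same problem occurs at $w$. The sign-twisted Iwahori function only detects the Steinberg $K$-type of $\rG_2(\mathbb{F}_q)$. Every unramified principal series contains that $K$-type, including the irreducible generic ones. The sign character of the full affine Iwahori--Hecke algebra cannot be cut out by a single test function either, by the same polynomial argument.

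So, even if you let the test functions vary, the Poincar\'e series method only shows that $\bigotimes_{v\in S}\pi_v\otimes\St_{\rG_2(F_w)}$ is \emph{weakly contained} in the globally generic cuspidal spectrum with component $\sigma$ at $v_0$ and unramified components elsewhere. That is exactly the first step of the paper's proof.

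\textbf{The missing idea: an isolation argument that upgrades weak containment to occurrence.} At $w$, this is the fact that $\St_{\rG_2(F_w)}$ is isolated in the unitary dual (Gan--Savin). This lets one restrict to the summands with Steinberg component at $w$.

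At $v\in S$, a generic discrete series is in general not isolated in the unitary dual. One therefore needs a uniform bound $c<\tfrac12$ on the real Langlands exponents of the local components in this spectrum. The paper obtains it by theta lifting to $\PGSp_6$:
\begin{itemize}
\item the lift is nonzero and globally generic (Harris--Khare--Thorne);
\item the lower lift to $\PGL_3$ vanishes because $\St_{\rG_2(F_w)}$ has zero local lift;
\item the lower lift to $\SO_5$ vanishes by genericity;
\item so the tower property makes the $\PGSp_6$-lift cuspidal, and its standard $\GL_7$ transfer gives the bound.
\end{itemize}
The cuspidal supports of generic discrete series of $\rG_2$ and of its Levi subgroups have half-integral exponents. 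By continuity of cuspidal support, a limiting Langlands twist is then a half-integral vector of norm at most $c<\tfrac12$, hence zero. The limiting induction is therefore tempered, and discreteness of $\pi_v$ forces its Levi to be $\rG_2$. Thus $\pi_v$ is isolated in that spectrum.

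In particular, the Steinberg place $w$ is not a side condition. It is the device that makes the $\PGSp_6$-lift cuspidal and so supplies the Ramanujan-type bound. Your proposal has no substitute for this step.
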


\begin{proof}
  Put $G=\rG_2$ and $H=\PGSp_6$.
  By \cite{IchinoLapidMao2017-FormalDegreesSquareIntegrableRepresentationsOdd}*{Proposition A.4} and the Whittaker--Poincar\'e series argument of \cite{GanIchino2018-ShimuraWaldspurgerCorrespondenceMp2n}*{proof of Proposition A.2},
  $\bigotimes_{v\in S}\pi_v\otimes\St_{G(F_w)}$ is weakly contained in the globally generic cuspidal spectrum with component $\sigma$ at $v_0$ and unramified components at finite places outside $S\cup\{v_0,w\}$.
  Here we take hyperspecial test functions at those finite places and leave the archimedean factors unrestricted.
  Since $\St_{G(F_w)}$ is isolated in the unitary dual by \cite{GanSavin2023-LocalLanglandsConjectureG2}*{Proposition 11.3}, we may restrict to the summands with component $\St_{G(F_w)}$ at $w$.

  Their global theta lifts to $H$ are nonzero and globally generic by \cite{HarrisKhareThorne2023-LocalLanglandsParameterizationGenericSupercuspidalRepresentations}*{Theorem A.9}.
  The lower lift to $\PGL_3$ vanishes because $\St_{G(F_w)}$ has zero local lift, by \cite{GanSavin2023-HoweDualityDichotomyExceptionalThetaCorrespondences}*{Theorem 8.2}.
  The lower lift to $\SO_5$ vanishes by \cite{GinzburgRallisSoudry1997-TowerThetaCorrespondencesG2}*{Theorem 4.1(3)}, since genericity excludes the $\SL_3$- and $\SU_3$-periods by \cite{GrossSavin1998-MotivesGaloisGroupTypeG2Exceptional}*{Lemma 4.10}.
  Thus the tower property \cite{GinzburgRallisSoudry1997-TowerThetaCorrespondencesG2}*{Theorem 3.1} makes the lifts to $H$ cuspidal.

  Argue as in \cite{IchinoLapidMao2017-FormalDegreesSquareIntegrableRepresentationsOdd}*{proof of Lemma A.2}.
  Use determinant coordinates on the maximal Levi subgroups of $G$ and the basis $(2\alpha+\beta,\alpha+\beta)$ of $X^*(T)$ for a split maximal torus $T$, where $\alpha$ is short and $\beta$ is long.
  By \cite{GanSavin2023-HoweDualityDichotomyExceptionalThetaCorrespondences}*{Section 3.1 and Propositions 3.1--3.2}, the real part of the central character of the cuspidal support of a generic discrete series representation lies in $\frac12\Z$, or in $\frac12\Z^2$ for toral support, modulo the relative Weyl group.
  The same holds for discrete series representations of the proper Levi subgroups.

  The standard parameter compatibility in \cite{GanSavin2023-LocalLanglandsConjectureG2}*{Main Theorem (iii), (iv)} and \cite{IchinoLapidMao2017-FormalDegreesSquareIntegrableRepresentationsOdd}*{Proposition A.5} give a uniform bound $c<\frac12$ for the real Langlands twists in this global spectrum.
  In the toral coordinates above, the bound is
  \begin{align*}
    h(s_1,s_2):=\max\{\abs{s_1},\abs{s_2},\abs{s_1+s_2}\}\leq c,
  \end{align*}
  since the standard seven-dimensional representation has weights $0,\pm s_1,\pm s_2,\pm(s_1+s_2)$.
  This norm is Weyl invariant, and the determinant twists of $M_s$ and $M_l$ give $(s,s)$ and $(s,0)$, respectively.
  As in the proof of \cite{IchinoLapidMao2017-FormalDegreesSquareIntegrableRepresentationsOdd}*{Lemma A.2}, continuity of cuspidal support makes the limiting real Langlands twist a difference of half-integral vectors.
  The bound $c<\frac12$ forces this difference to be zero.
  The limiting induction is therefore tempered, and discreteness forces its Levi subgroup to be $G$.
  Thus each $\pi_v$ is isolated in the indicated spectrum, and $\bigotimes_{v\in S}\pi_v$ must occur.
\end{proof}

\begin{proposition}\label{proposition:invariance-of-global-generic-theta-lift-to-PGSO_8}
  Let $F$ be a number field.
  Let $\Sigma$ be a globally generic cuspidal automorphic representation of $\rG_2(\A_F)$.
  Let $\tau$ be a globally generic irreducible constituent of the global exceptional theta lift of $\Sigma$ to $\PGSp_6$, and let $\Pi$ be a globally generic irreducible constituent of its global similitude theta lift to $\PGSO_8$.
  Assume that there exist finite places $v_1, v_2$ of $F$ such that $\theta_{\rG_2}^{\PGSp_6}(\Sigma_{v_1})$ is supercuspidal and $\Pi_{v_2}$ is isomorphic to the Steinberg representation of $\PGSO_8(F_{v_2})$.

  Then $\Pi$ is cuspidal and stable under $\Out(\PGSO_8)(F)$.
  We also have $m_{\cusp}(\Pi) = 1$.
\end{proposition}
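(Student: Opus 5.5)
The plan has three parts, taken in order: cuspidality first, then stability under $\Out(\PGSO_8)(F)\cong S_3$, then multiplicity one.

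\textbf{Cuspidality.} We follow the tower argument used in the proof of Lemma~\ref{corollary:globalization-generic-supercuspidal-representation}. For the similitude pair $(\PGSp_6,\PGSO_8)$, the global theta lift of $\tau$ to $\PGSO_8$ is the first occurrence in the Witt tower above the smaller orthogonal similitude groups. It is therefore cuspidal once the lift of $\tau$ to $\PGSO_6$ (and lower) vanishes. That lower lift does vanish: its local component at $v_1$ would be a nonzero theta lift of the supercuspidal $\theta_{\rG_2}^{\PGSp_6}(\Sigma_{v_1})$. By the compatibility of Theorem~\ref{theorem:similitude-theta}(2) and the parameter description of Theorem~\ref{theorem:properties-exceptional-theta}(7), that parameter factors through ${}^L\rG_2$ and contains no summand compatible with a lift to a smaller orthogonal group. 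If this local argument is too weak, I would instead use the Steinberg component at $v_2$. The local theta lift of $\St$ to the smaller group is zero, since $\St_{\PGSO_8}$ corresponds to $\St_{\PGSp_6}$, whose lower lifts vanish. Either route kills all lower lifts, so $\Pi$ lies in the cuspidal spectrum.

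\textbf{Stability under $\Out(\PGSO_8)(F)$.} For $a\in S_3$, the twist $\Pi^a$ is again cuspidal and globally generic, because pinned automorphisms preserve the Whittaker datum. We then compare Satake parameters. At almost all places $\Pi_v=\theta^{\PGSO_8}_{\PGSp_6}\circ\theta^{\PGSp_6}_{\rG_2}(\Sigma_v)$ has parameter $\iota^{\PGSO_8}_{\rG_2}\circ c(\Sigma_v)$ by the unramified compatibility theorem, and this parameter is $S_3$-fixed. Hence $\Pi^a_v\cong\Pi_v$ for almost all $v$. What remains is to upgrade this almost-everywhere isomorphism to $\Pi^a\cong\Pi$.

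This upgrade is where I expect the main obstacle, since strong multiplicity one fails for $\PGSO_8$. I would handle it by passing through the standard lift $\mathrm{Lift}(\Pi)$ to $\GL_8$. The Steinberg component at $v_2$ forces $\mathrm{Lift}(\Pi)=\mathbf 1\boxplus\tau_7$ with $\tau_7$ cuspidal, since the local lift $\St_7\oplus\mathbf 1$ is essentially irreducible modulo the trivial summand. The same holds for the standard lifts of $\Pi^a$, including the spin lifts. So $\Pi$ and $\Pi^a$ lie in the same global generic packet, with the same near-equivalence class and the same $8$-dimensional standard and spin lifts. Arthur's classification for $\mathrm{SO}_8$, transported to $\PGSO_8$ via Xu's work, then shows that globally generic members of a generic packet are determined up to twist by the characters $\chi_{\omega,i}$ together with the outer $\O_8$-action. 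We rule out the twists by Corollary~\ref{corollary:no-quadratic-self-twist-PGSO8}. We rule out the $\O_8$-ambiguity by using all three triality-conjugate standard lifts simultaneously; equivalently, each local component lies in a distinguished Xu $L$-packet, which is $S_3$-stable by Theorem~\ref{theorem:invariance-lifting-non-archimedean} and Lemma~\ref{lemma:invariance-lifting-archimedean}, and the generic member of such a packet is unique. Combining these gives $\Pi^a_v\cong\Pi_v$ at every place, hence $\Pi^a\cong\Pi$ as abstract representations.

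\textbf{Multiplicity one.} The generic parameter $\mathbf 1\boxplus\tau_7$ gives $m_{\disc}(\Pi)=1$ by the corollary following Corollary~\ref{corollary:no-quadratic-self-twist-PGSO8}. Since the cuspidal multiplicity is at most the discrete multiplicity, we get $m_{\cusp}(\Pi)=1$. Because $\Pi$ occurs with multiplicity one and $\Pi^a\cong\Pi$, the automorphic realization of $\Pi$ is preserved by the canonical action of $\Out(\PGSO_8)(F)$.
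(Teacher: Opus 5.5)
Your cuspidality step via the Steinberg component at $v_2$ and your final multiplicity-one step agree with the paper. Your first cuspidality route through $v_1$ is not justified as stated; in the paper $v_1$ only serves to make $\tau$ cuspidal. The triality-stability step, however, has a genuine gap. As you say, Xu's classification, applied to the inflations of $\Pi$ and $\Pi^{\theta}$ to $\GSO_8$, gives $\Pi^{\theta}\cong\Pi\otimes\chi_{\omega,1}$ for some quadratic Hecke character $\omega$. In fact only $\chi_{\omega,1}$ is invisible on the $\SO_8$-parameter. The $\O_8$-ambiguity is harmless, because $\Pi$, as a theta lift from $\PGSp_6$, is invariant under $\O_8(F)\setminus\SO_8(F)$. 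You then ``rule out the twists'' by Corollary~\ref{corollary:no-quadratic-self-twist-PGSO8}. But that corollary, like Lemma~\ref{lemma:triality-quadratic-twist-global} on which it rests, assumes the representation is already $\theta$-stable, which is exactly what you are proving. Moreover, $\Pi^{\theta}\cong\Pi\otimes\chi_{\omega,1}$ is not a self-twist of $\Pi$. The missing idea is to apply $\theta^{-1}$, which gives $\Pi\cong\Pi^{\theta^{-1}}\otimes\chi_{\omega,2}$. The Satake parameters are triality-fixed, so by strong multiplicity one for $\GL_8$, $\Pi$ and $\Pi^{\theta^{-1}}$ have the same standard lift $\mathbf 1\boxplus\mathcal A_7$. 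The unramified $D_4$ calculation shows that twisting by $\chi_{\omega,2}$ twists the standard lift by $\omega$. Hence $\mathbf 1\boxplus\mathcal A_7\cong\omega\boxplus(\mathcal A_7\otimes\omega)$, and comparing degree-one summands gives $\omega=\mathbf 1$. This uses only equality of standard lifts, not of the representations themselves, so it is not circular.

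Your alternative local route would bypass Xu's theorem altogether if it held at every place, but it does not. Theorem~\ref{theorem:invariance-lifting-non-archimedean} needs $\Sigma_v$ tempered, and Lemma~\ref{lemma:invariance-lifting-archimedean} needs a discrete parameter of $\rG_2(\R)$. For an arbitrary globally generic cuspidal $\Sigma$, neither is known at all ramified finite places, since Ramanujan is not available, nor at all real places. So ``$\Pi^a_v\cong\Pi_v$ at every place'' is not established. At real places you could instead use naturality of the real Langlands correspondence and uniqueness of the generic member. At a non-tempered ramified finite place, however, there is no available compatibility of Xu's packets with triality, which is why the paper argues globally. Comparing ``all three triality-conjugate standard lifts'' also only compares near-equivalence classes. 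It cannot distinguish local components, so it does not remove the $\O_8$-ambiguity either.
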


\begin{proof}
  The existence of $\tau$ follows from \cite{HarrisKhareThorne2023-LocalLanglandsParameterizationGenericSupercuspidalRepresentations}*{Appendix}, and its supercuspidal component at $v_1$ makes it cuspidal.
  The Whittaker coefficient calculation in \cite{GinzburgRallisSoudry1997-PeriodsPolesSymplecticOrthogonalThetaLifts}*{p.110, after Proposition 2.7}, with $n=3$, shows that its classical theta lift to $\SO_8$ is nonzero and globally generic. The same holds for the similitude lift $\Pi$.
  At $v_2$, local theta correspondence gives $\tau_{v_2}\cong\St_{\PGSp_6}$. This representation has zero theta lift to the split $\PGO_6$, so the tower property makes $\Pi$ cuspidal, as in \cite{GanSavin2023-LocalLanglandsConjectureG2}*{proof of Lemma 12.5}.
  The representation $\Pi$ is invariant under $\O_8(F) \setminus \SO_8(F)$.

  The cuspidal representation $\Pi^{\theta}$ has the same Satake parameter as $\Pi$ almost everywhere, hence the same Arthur lift.
  Inflate $\Pi$ and $\Pi^{\theta}$ along
  $q\colon\GSO_8\to\PGSO_8$.  Applying
  \cite{Xu2025-GlobalLPacketsQuasisplitGSp2nGO2n}*{Theorem 1.1} with
  trivial central character, we obtain
  \begin{align*}
    \Pi^{\theta}
    \cong
    \Pi\otimes\chi_{\omega,1}
  \end{align*}
  for a quadratic Hecke character $\omega$ of
  $\A_F^{\times}/F^{\times}$.  Here $\chi_{\omega,1}$ is the character
  of $\PGSO_8(\A_F)$ obtained by descending $\omega\circ\lambda$ from
  $\GSO_8(\A_F)$, as defined above.

  Assume that $\omega$ is non-trivial.
  Applying $\theta^{-1}$ to the displayed isomorphism gives
  \begin{align*}
    \Pi
    \cong
    \Pi^{\theta^{-1}}\otimes\chi_{\omega,2}.
  \end{align*}
  Let $\mathcal A=\mathrm{Lift}(\Pi)$ be the $8$-dimensional standard
  lift, and let $\mathcal A_7$ be Arthur's standard $\GL_7$-transfer of
  $\tau$.  Compatibility of the two theta
  lifts at the unramified places, followed by strong multiplicity one,
  gives the isobaric decomposition
  \begin{align*}
    \mathcal A=\mathbf{1}\boxplus\mathcal A_7.
  \end{align*}
  Moreover, $\mathcal A_{7,v_2}\cong\St_{\GL_7}$, so $\mathcal A_7$ is
  cuspidal.  The representations $\Pi$ and $\Pi^{\theta^{-1}}$ have
  the same standard lift.  The unramified calculation for
  $\chi_{\omega,2}$ therefore gives
  \begin{align*}
    \mathcal A
    \cong
    \mathcal A\otimes(\omega\circ\det)
    =
    \omega\boxplus(\mathcal A_7\otimes\omega).
  \end{align*}
  By uniqueness of the isobaric decomposition, comparison of the
  degree-one summands gives $\omega=\mathbf{1}$, a contradiction.
  Hence $\Pi^{\theta} = \Pi$. Since $\theta$ and $\O_8(F) \setminus \SO_8(F)$ generate $\Out(\PGSO_8)(F)$, the representation $\Pi$ is stable under this group.

  Corollary~\ref{corollary:no-quadratic-self-twist-PGSO8} excludes non-trivial triality quadratic self-twists of $\Pi$, so \cite{Xu2025-GlobalLPacketsQuasisplitGSp2nGO2n}*{Theorem 1.1} gives its cuspidal multiplicity one.
\end{proof}

\begin{corollary}\label{corollary:summary-global-invariance-of-generic-theta-lift-to-PGSO_8}
  We continue to use the notation in Proposition~\ref{proposition:invariance-of-global-generic-theta-lift-to-PGSO_8}.
  Let $\phi$ be the global $L$-parameter of $\Pi$.
  In this situation, the global $L$-packet $\Pi_{\phi}$ consists of automorphic representations of $\PGSO_8(\A_F)$ which are stable under the action of $\Out(\PGSO_8)(F)$.
  Also, each local $L$-packet $\Pi_{\phi_v}$ is a distinguished Xu $L$-packet $\Pi^X_{\iota_{\rG_2}^{\PGSO_8} \phi^{\rG_2}_v}$ for some generic parameter $\phi^{\rG_2}_v$ of $\rG_2$.
  For any finite place $v$ of $F$, the parameter $\phi^{\rG_2}_v$ is the $L$-parameter of $\Sigma_v$.

  Furthermore, we have
  \begin{align*}
    I_{\disc, \phi}^{\widetilde{\PGSO_8}} = \sum_{\pi \in \Pi_{\phi}} \tr \widetilde{ \pi},
  \end{align*}
  where $\widetilde{\pi}$ is the extension of $\pi$ to $\widetilde{\PGSO_8}(\A_F)$ defined by $\theta \in \widetilde{\PGSO_8}(F)$.
  These representations are the restricted tensor products of the local extensions of $\pi_v$ to $\widetilde{\PGSO_8}(F_v)$ defined by $\theta \in \widetilde{\PGSO_8}(F_v)$.
\end{corollary}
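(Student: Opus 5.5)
We will prove the three assertions in the order stated: stability of the global packet, identification of the local packets, and the formula for the $\phi$-part of the twisted discrete spectrum.

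\textbf{Stability of the global packet.} By Proposition~\ref{proposition:invariance-of-global-generic-theta-lift-to-PGSO_8}, $\Pi$ is cuspidal, $\Out(\PGSO_8)(F)$-stable, and has $m_{\cusp}(\Pi)=1$. Its global parameter $\phi$ has standard lift $\mathbf{1}\boxplus\mathcal A_7$ with $\mathcal A_7$ cuspidal. Xu's global theory \cite{Xu2025-GlobalLPacketsQuasisplitGSp2nGO2n}*{Theorem 1.1} then describes $\Pi_{\phi}$ as restricted tensor products of members of the local packets $\Pi_{\phi_v}$, cut out by a global character condition on $\cS_\phi$.

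\textbf{Local packets and the $\rG_2$-parameters.} At a finite place $v$, $\Pi_v$ is the local similitude theta lift of $\tau_v=\theta_{\rG_2}^{\PGSp_6}(\Sigma_v)$. By Theorem~\ref{theorem:similitude-theta}(2) and Theorem~\ref{theorem:properties-exceptional-theta}, the parameter of $\Pi_v$ is $\iota_{\rG_2}^{\PGSO_8}\circ\phi_{\Sigma_v}$. Since $\Sigma_v$ is generic, it is tempered, so $\phi^{\rG_2}_v:=\phi_{\Sigma_v}$ is a generic parameter of $\rG_2$. Theorem~\ref{theorem:characterization-distinguished-L-packets} and Theorem~\ref{theorem:invariance-lifting-non-archimedean} then show that $\Pi_{\phi_v}$ is the distinguished Xu packet and consists of $\Out$-stable representations. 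Injectivity (Lemma~\ref{lemma:injectivity-G_2-to-GL7}) makes $\phi^{\rG_2}_v$ unique. At archimedean places, $\phi_v$ comes from a $\rG_2$-parameter through compatibility of infinitesimal characters with the unramified/archimedean theta correspondence. We then use Lemma~\ref{lemma:invariance-lifting-archimedean}, together with its analogue for non-discrete parameters via parabolic induction from $\theta$-stable Levis. Every global member is a tensor product of $\Out$-stable local members, and its automorphy is governed by a condition on $\cS_\phi$ that is itself $\Out$-invariant. Hence each member of $\Pi_\phi$ is automorphic and $\Out(\PGSO_8)(F)$-stable.

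\textbf{The twisted trace formula.} Corollary~\ref{corollary:no-quadratic-self-twist-PGSO8} rules out quadratic self-twists, so each $\pi\in\Pi_\phi$ occurs with multiplicity one in the discrete spectrum, and that spectrum is cuspidal by the Steinberg component at $v_2$. Each such $\pi$ is therefore an irreducible $\Out(F)$-stable cuspidal automorphic representation, so the space of $\pi$ is preserved by the rational action of $\Aut_{\mathrm{pin}}(\PGSO_8)(F)\cong S_3$. It thus carries a representation $\Pi^+$ of $\PGSO_8(\A_F)\rtimes S_3$. The twisted discrete term $I^{\widetilde{\PGSO_8}}_{\disc,\phi}$ is the trace of $R(f)\circ\theta$ on the $\phi$-isotypic part of the discrete spectrum; no contributions from proper Levi subgroups arise, since $\phi$ is cuspidal-type and the Levi terms require $\phi$ to factor through a proper Levi. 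Hence it equals $\sum_{\pi}\tr\widetilde\pi$, where $\widetilde\pi$ is the action of $\theta\in\widetilde{\PGSO_8}(F)$. Corollary~\ref{corollary:canonical-extension-global} identifies this global action with the restricted tensor product of the local canonical extensions.

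\textbf{Expected main obstacle.} The hardest step is the archimedean identification of $\phi_v$ with a lift of a $\rG_2$-parameter, together with $\Out$-stability of the full local archimedean packet when $\phi_v$ is not discrete. I would first handle it using the infinitesimal character statement and the known parabolic-induction structure of real tempered packets, reducing to Lemma~\ref{lemma:invariance-lifting-archimedean} on the Levi.
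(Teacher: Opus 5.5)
Your outline uses the same ingredients the paper relies on. The paper records this corollary without a separate argument, as a consequence of Proposition~\ref{proposition:invariance-of-global-generic-theta-lift-to-PGSO_8}, Xu's global Theorem~1.1, Corollary~\ref{corollary:no-quadratic-self-twist-PGSO8}, the local invariance results, and Corollary~\ref{corollary:canonical-extension-global}. Your treatment of the twisted discrete term is fine. However, there are genuine gaps in the local and automorphy steps.

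\textbf{The local step at finite places.} It rests on a false claim: a generic representation need not be tempered. Nothing excludes a non-tempered $\Sigma_v$, for instance at the unramified places produced by Lemma~\ref{corollary:globalization-generic-supercuspidal-representation}; Ramanujan is known neither for $\rG_2$ nor for $\mathcal A_7$. Theorems~\ref{theorem:similitude-theta}, \ref{theorem:characterization-distinguished-L-packets} and~\ref{theorem:invariance-lifting-non-archimedean} are all statements about tempered representations. So your finite-place argument covers only tempered $\Sigma_v$.

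\textbf{The archimedean step.} This fails more seriously. Compatibility with infinitesimal characters cannot show that $\phi_v$ factors through $\rG_2(\C)$, because the infinitesimal character does not determine an $L$-parameter. Moreover, real Howe duality for $(\rG_2,\PGSp_6)$ is available only for spherical representations, so you cannot even identify $\tau_v$ with a local lift of $\Sigma_v$.

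\textbf{A repair covering both places.} Work from the global $\Out(\PGSO_8)(F)$-stability of $\Pi$ that the proposition already provides, as the paper does in the proof of Theorem~\ref{theorem:Arthur-multiplicity-SO4}.
\begin{itemize}
\item We have $\Pi\cong s_1\Pi$, with $s_1$ as in Theorem~\ref{theorem:characterization-distinguished-L-packets}, and the standard lift of $\Pi$ is $\mathbf 1\boxplus\mathcal A_7$. Local--global compatibility for $\SO_8$ (equation (2.18) of Xu's global paper) then shows that at \emph{every} place both the standard and a half-spin representation of $\phi_v$ equal $\mathbf 1\oplus\mathcal A_{7,v}$.
\item Fixed spaces of a semisimple parameter in an orthogonal representation are nondegenerate. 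So $\phi_v$ fixes a nonisotropic vector and a nonisotropic half-spinor, and their joint stabilizer in $\Spin_8(\C)$ is a conjugate of $\rG_2(\C)$.
\item At finite $v$, Theorem~\ref{theorem:properties-exceptional-theta}(7), which needs no temperedness, together with Lemma~\ref{lemma:injectivity-G_2-to-GL7}, gives $\phi^{\rG_2}_v=\phi_{\Sigma_v}$.
\item Non-tempered packets are Langlands quotients of $\theta$-stable tempered data on a Levi (Xu's Proposition~3.11). Their $\Out$-stability therefore reduces to that of the tempered inducing data.
\end{itemize}

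\textbf{Automorphy of the members.} $\Out$-invariance of the global condition on $\cS_\phi$ does not show that every member of $\Pi_\phi$ is automorphic. What you need is that the condition is vacuous. Because $\mathcal A_7$ is cuspidal, the centralizer of $\mathbf 1\boxplus\mathcal A_7$ in $\SO_8(\C)$ is $\{\pm1\}$, so $\cS_\phi=1$. With Corollary~\ref{corollary:no-quadratic-self-twist-PGSO8}, Xu's formula then places every member of $\Pi_\phi$ in the discrete spectrum with multiplicity one.
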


\section{Lifting from \texorpdfstring{$\SO_4$}{SO4} to \texorpdfstring{$\rG_2$}{G2}}\label{section:lifting-SO4-G2}

An orthogonal pair of long and short roots gives the embedding
\begin{align*}
   \SO_4 = \SL_{2, l} \times \SL_{2, s} / \Delta \mu_2 \to \rG_2.
\end{align*}
The composition with $\Std_{\rG_2}$ is given by $\Std_{\SO_4} \oplus \Ad_{\SL_{2, s}}$.
Let $\wedge^2_{+}=\Ad_{\SL_{2,s}}$ as a representation of $\SO_4$.
\begin{lemma}
  The map
  \begin{align*}
    \Psi(\SO_4)
    &\hookrightarrow \Psi(\PGL_4) \times \Psi(\PGL_3) \colon \psi \mapsto (\Std_{\SO_4}(\psi), \wedge^2_{+}(\psi))
  \end{align*}
  from the refined $A$-parameters of Appendix~\ref{section:refinement-SO4} is injective, and its image is contained
  in the set of pairs of elliptic orthogonal $A$-parameters of
  $\PGL_4$ and $\PGL_3$.
\end{lemma}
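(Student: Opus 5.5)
We work on the dual side, where $\widehat{\SO_4}=\SO_4(\C)=(\SL_2(\C)\times\SL_2(\C))/\Delta\mu_2$. The plan reduces injectivity to a statement about pairs of $\SL_2$-valued parameters and then uses the fact that a $2$-dimensional parameter is recovered from its adjoint up to a character twist.

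Write a refined $A$-parameter $\psi$ of $\SO_4$ locally, or formally via Hecke--Satake families globally, as a homomorphism into $\SO_4(\C)$ up to the conjugacy specified in Appendix~\ref{section:refinement-SO4}. Lift it projectively to a pair $(\psi_1,\psi_2)$ of $\GL_2(\C)$-valued parameters with $\det\psi_1=\det\psi_2$. This lifting is possible locally because $H^2(W_F,\C^\times)=0$, and globally by the analogous Tate lifting for Hecke families. The lift is unique up to simultaneous twist by a character $\chi$, with $(\psi_1,\psi_2)\mapsto(\psi_1\chi,\psi_2\chi)$. In these terms
\begin{align*}
  \Std_{\SO_4}(\psi)=\psi_1\otimes\psi_2^{\vee},\qquad \wedge^2_{+}(\psi)=\Ad(\psi_2),
\end{align*}
where the labeling puts $\SL_{2,s}$ second. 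Both are orthogonal, since they preserve a symmetric form, and they have trivial determinant. The reason is that the images lie in $\SO_4(\C)$ and $\SO_3(\C)$, so they define parameters of $\PGL_4$ and $\PGL_3$.

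\textbf{Injectivity.} Suppose $\psi$ and $\psi'$ have the same image. From $\Ad\psi_2\cong\Ad\psi_2'$ as $\SO_3(\C)=\PGL_2(\C)$-valued parameters, we get $\psi_2'\cong\psi_2\otimes\mu$ for some character $\mu$. Here we use that $\SO_3$-valued parameters determine their $\PGL_2$-valued parameters up to conjugacy: the standard representation of $\SO_3$ is faithful, and $\SO_3$-conjugacy is detected by $\GL_3$-conjugacy for orthogonal representations. After retwisting the pair by $\mu^{-1}$, we may assume $\psi_2'=\psi_2$. Then $\psi_1\otimes\psi_2^\vee\cong\psi_1'\otimes\psi_2^\vee$ with $\det\psi_1=\det\psi_1'$.

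\textbf{Recovering $\psi_1$.} This is the step I expect to be the main obstacle. Tensoring with $\psi_2$ gives
\begin{align*}
  \psi_1\otimes(\mathbf{1}\oplus\Ad\psi_2)\cong\psi_1'\otimes(\mathbf{1}\oplus\Ad\psi_2).
\end{align*}
This alone does not immediately force $\psi_1\cong\psi_1'$, because $\psi_1'=\psi_1\otimes\eta$ with $\psi_2\cong\psi_2\otimes\eta$ is a potential ambiguity when $\psi_2$ is dihedral. In that case I would use the refinement of Appendix~\ref{section:refinement-SO4}. I expect it to identify precisely such pairs, namely the $\O_4$-versus-$\SO_4$ and twist ambiguity, so that the refined parameter set quotients out by them. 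I would check that the remaining ambiguity $\eta$ satisfies $\eta^2=1$ and is realized by the central element of $\SO_4(\C)$-type symmetries built into the refinement.

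For the non-dihedral case I would compare irreducible constituents, using Clebsch--Gordan for the $\SL_2$ factor carrying the Arthur $\SL_2$. A multiplicity or dimension count should then show that $\psi_1'\otimes\psi_1^\vee$ contains $\mathbf{1}$, which forces $\psi_1'\cong\psi_1$.

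\textbf{Image.} Ellipticity means the image has finite centralizer modulo center. For elliptic $\psi$, the centralizer of $\psi$ in $\SO_4(\C)$ is finite. The centralizers of the images in $\PGL_4$ and $\PGL_3$ are contained in the $\O_4$- and $\O_3$-centralizers of multiplicity-free orthogonal summands. Equivalently, each image decomposes as a multiplicity-free sum of irreducible orthogonal $A$-parameters, which gives the elliptic orthogonal condition. I would verify the multiplicity-freeness by noting that a repeated summand in $\psi_1\otimes\psi_2^\vee$ or $\Ad\psi_2$ produces a torus in the centralizer of $\psi$, contradicting ellipticity.
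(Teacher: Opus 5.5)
There is a genuine gap in the injectivity part, at exactly the step you flag, and the proposal does not get past it.

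Note first that $\Psi(\SO_4)$ here is a global object. It is the set of pairs $(\Pi_1,\Pi_2)$ of discrete automorphic representations of $\GL_2(\A_F)$ with $\omega_{\Pi_1}=\omega_{\Pi_2}$, modulo simultaneous twist by a Hecke character. It is \emph{not} taken modulo the outer automorphism of $\SO_4$; swapping the factors is precisely what $\wedge^2_+$ detects. So your hope that the refinement absorbs an ``$\O_4$-versus-$\SO_4$'' or ``central element'' ambiguity has no basis.

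The ambiguity you single out, $\psi_1'=\psi_1\otimes\eta$ with $\psi_2\otimes\eta\cong\psi_2$, is harmless, since it is itself a simultaneous twist. What must be proved is that every solution of $\psi_1\otimes\psi_2^{\vee}\cong\psi_1'\otimes\psi_2^{\vee}$ with equal determinants has this form up to simultaneous twist. In the dihedral case you have not shown even that $\psi_1'$ is a twist of $\psi_1$. The non-dihedral ``Clebsch--Gordan/multiplicity count'' is likewise only an expectation.

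Separately, you derive $\Ad\psi_2\cong\Ad\psi_2'\Rightarrow\psi_2'\cong\psi_2\otimes\mu$ from local conjugacy in $\SO_3(\C)$. That only yields placewise twists. Globally you need Ramakrishnan's theorem to obtain a single Hecke character.

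The missing idea is that the two transfers determine \emph{both} adjoint lifts. Because the central characters agree, $\wedge^2\Std_{\SO_4}(\psi)=\Ad(\Pi_1)\boxplus\Ad(\Pi_2)$. Knowing $\Std_{\SO_4}(\psi)$ and one adjoint therefore gives the other, by uniqueness of isobaric decomposition. The argument then runs as follows:
\begin{enumerate}
\item Ramakrishnan gives $\Pi_i'\cong\Pi_i\otimes\chi_i\circ\det$ for $i=1,2$.
\item Equality of central characters forces $\chi_1^2=\chi_2^2$, so $\chi_1\chi_2^{-1}$ is a quadratic self-twist of $\Pi_1\boxtimes\Pi_2^{\vee}$.
\item The Lapid--Mao comparison shows that the two twists can be chosen equal.
\end{enumerate}
This is Theorem~\ref{theorem:SO4-simultaneous-twist}, and it is how the paper proves injectivity for cuspidal pairs. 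The non-generic pairs, where one factor is of the form $\chi\boxtimes S_2^A$, are read off from their explicit descriptions in Appendix~\ref{section:refinement-SO4}; you do not treat them.

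For the image, your centralizer argument is sound once $\psi$ is known to be elliptic. That has to be checked, not assumed. It follows from irreducibility of the $\GL_2$ factors, or from the explicit list, e.g.\ Lemma~\ref{lemma:computation-rankin-selberg-induced} in the common dihedral case.
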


\begin{proof}
  If $\psi$ is represented by $(\Pi_1,\Pi_2)$, then
  \begin{align*}
    \wedge^2\Std_{\SO_4}(\psi)
    =\Ad(\Pi_1)\boxplus\Ad(\Pi_2),\quad
    \wedge^2_+(\psi)=\Ad(\Pi_1).
  \end{align*}
  Thus the two transfers determine both adjoint parameters.
  For cuspidal $\Pi_1,\Pi_2$, injectivity follows from
  Theorem~\ref{theorem:SO4-simultaneous-twist}.
  The non-generic cases follow directly from their explicit descriptions in Appendix~\ref{section:refinement-SO4}.
  The same descriptions show that both transfers are elliptic and
  orthogonal.
\end{proof}

Let $\Psi_{\mathrm{orth}, \mathrm{ell}}(\PGL_4)$ (resp. $\Psi_{\mathrm{orth}, \mathrm{ell}}(\PGL_3)$) be the set of elliptic orthogonal $A$-parameters of $\PGL_4$ (resp. $\PGL_3$).

The lemma reduces the fibers of
\begin{align*}
  \Psi(\SO_4) \to \Psi(\PGL_7) \colon \psi \mapsto \Std_{\SO_4}(\psi) \boxplus \wedge^2_{+}(\psi)
\end{align*}
to those of
\begin{align}\label{equation:lifting-PGL4-PGL3-PGL7}
  \Psi_{\mathrm{orth}, \mathrm{ell}}(\PGL_4) \times \Psi_{\mathrm{orth}, \mathrm{ell}}(\PGL_3)
  \to
  \Psi(\PGL_7)
  \colon (\phi_4, \phi_3) \mapsto \phi_4 \boxplus \phi_3.
\end{align}

\begin{remark}
   The fiber over $\Std_{\SO_4} \psi^{\SO_4} \boxplus \wedge^2_{+} \psi^{\SO_4}$ is a singleton if $\Std_{\SO_4} \psi^{\SO_4}$ is simple, that is, of the form
   \begin{align*}
      \phi_d \boxtimes S_e^A
   \end{align*}
   for a cuspidal automorphic representation $\phi_d$ of $\GL_d(\A_F)$ and an integer $e \geq 1$ such that $de = 4$.

   If $\psi^{\SO_4}$ is non-generic, so is $\Std_{\SO_4} \psi^{\SO_4} \boxplus \wedge^2_{+} \psi^{\SO_4}$.
\end{remark}

\subsection{The computation of lifting from \texorpdfstring{$\SO_4$}{SO4} to \texorpdfstring{$\rG_2$}{G2}}

\begin{lemma}\label{lemma:SO4-dihedral-case}
    Let $\pi_i = \mathrm{AI}_{K/F}(\chi_i)$ be cuspidal representations of $\GL_2(\A_F)$ with $\omega_{\pi_1} = \omega_{\pi_2}$, where $K/F$ is a common quadratic extension and $\chi_i$ are characters of $K^{\times}\backslash\A_K^{\times}$.
    Let $\sigma$ be the nontrivial element of $\Gal(K/F)$.
    Then the following hold.
    \begin{enumerate}
      \item
      We have
      $(\chi_1/\chi_2) \restriction_{F^{\times} \backslash \A_F^{\times}} = 1$
      and
      $(\chi_1/\chi_2)^{\sigma} = (\chi_1/\chi_2)^{-1}$.
      Furthermore, we have
      \begin{align*}
        \pi_1 \boxtimes \pi_2^{\vee} = \mathrm{AI}_{K/F}(\chi_1/\chi_2) \boxplus \mathrm{AI}_{K/F}(\chi_1/\chi_2^{\sigma})
      \end{align*}
      and
      \begin{align*}
        \Ad(\pi_1) = \mathrm{AI}_{K/F}(\chi_1/\chi_1^{\sigma}) \boxplus \omega_{K/F}.
      \end{align*}
      \item
      The isobaric sum
      \begin{align*}
        \pi_1 \boxtimes \pi_2^{\vee} \boxplus \Ad(\pi_1) =  \mathrm{AI}_{K/F}(\chi_1/\chi_2) \boxplus \mathrm{AI}_{K/F}(\chi_1/\chi_2^{\sigma}) \boxplus \mathrm{AI}_{K/F}(\chi_1/\chi_1^{\sigma}) \boxplus \omega_{K/F}
      \end{align*}
      is a multiplicity-free isobaric sum if and only if $\mathrm{AI}_{K/F}(\chi_2) \neq \mathrm{AI}_{K/F}(\chi_1)$ and $\mathrm{AI}_{K/F}(\chi_2) \neq \mathrm{AI}_{K/F}(\chi_1^2/\chi_1^{\sigma})$.
      \item
      We have
      \begin{align*}
        \mathrm{AI}_{K/F}((\chi_1^{\sigma}/\chi_2)^{-1}) = \mathrm{AI}_{K/F}(\chi_1/\chi_2^{\sigma}).
      \end{align*}
      \item
      We set $\alpha_1 =  \chi_1/\chi_2$, $\alpha_2 = (\chi_1^{\sigma}/\chi_2)^{-1}$, and $\alpha_3 = \chi_1^{\sigma}/\chi_1$.
      Then, we have $\alpha_i \restriction_{F^{\times} \backslash \A_F^{\times}} = 1$ for $i = 1, 2, 3$ and $\alpha_1 \alpha_2 \alpha_3 = 1$.
      \item
      Conversely, if $\alpha_1, \alpha_2, \alpha_3$ are characters of $K^{\times} \backslash \A_K^{\times}$ such that $\alpha_i \restriction_{F^{\times} \backslash \A_F^{\times}} = 1$ for $i = 1, 2, 3$ and $\alpha_1 \alpha_2 \alpha_3 = 1$, then there exist characters $\chi_1, \chi_2$ of $K^{\times} \backslash \A_K^{\times}$ such that $\alpha_1 =  \chi_1/\chi_2$, $\alpha_2 = (\chi_1^{\sigma}/\chi_2)^{-1}$ and $\alpha_3 = \chi_1^{\sigma}/\chi_1$.
    \end{enumerate}
\end{lemma}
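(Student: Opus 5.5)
The plan is to work throughout with the Galois side: via global class field theory, identify the automorphic induction $\mathrm{AI}_{K/F}(\chi)$ with the induced representation $\Ind_{W_K}^{W_F}\chi$ (or use formal parameters). Every identity is checked on the induced two-dimensional representations, and strong multiplicity one for isobaric sums transports it back to the automorphic side.

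For (1), compare central characters. The central character of $\mathrm{AI}_{K/F}(\chi_i)$ is $\omega_{K/F}\cdot\chi_i\restriction_{\A_F^\times}$. The hypothesis $\omega_{\pi_1}=\omega_{\pi_2}$ therefore gives $(\chi_1/\chi_2)\restriction_{\A_F^\times}=1$. A character of $\A_K^\times$ trivial on $\A_F^\times$ satisfies $\alpha\alpha^\sigma=\alpha\circ N_{K/F}=1$, so $\alpha^\sigma=\alpha^{-1}$. Next, apply Mackey's formula:
\begin{align*}
\Ind\chi_1\otimes(\Ind\chi_2)^\vee=\Ind(\chi_1\cdot\Ind\chi_2^{-1}|_{W_K})=\Ind(\chi_1/\chi_2)\oplus\Ind(\chi_1/\chi_2^\sigma).
\end{align*}
For $\Ad(\pi_1)$, the same formula gives $\pi_1\otimes\pi_1^\vee=\Ind(\chi_1/\chi_1^\sigma)\oplus\Ind(\mathbf 1)=\Ind(\chi_1/\chi_1^\sigma)\oplus\mathbf 1\oplus\omega_{K/F}$; removing the trivial summand gives the formula.

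Part (3) is the statement $\Ind\alpha\cong\Ind\alpha^\sigma$ applied to $\alpha=(\chi_1^\sigma/\chi_2)^{-1}=\chi_2/\chi_1^\sigma$, whose $\sigma$-conjugate is $\chi_2^\sigma/\chi_1$. Here one uses $\chi_2^\sigma/\chi_1=(\chi_1/\chi_2^\sigma)^{-1}$; this is not quite the target, so we combine it with $\Ind\alpha\cong\Ind\alpha^{-1}$, which holds whenever $\alpha|_{\A_F^\times}=1$ because then $\alpha^{-1}=\alpha^\sigma$. Here $\alpha|_{\A_F^\times}=\chi_2/\chi_1^\sigma|_{\A_F^\times}=\chi_2/\chi_1|_{\A_F^\times}=1$ by (1).

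For (4), each $\alpha_i$ restricts trivially to $\A_F^\times$:
\begin{itemize}
\item $\alpha_1$ does by (1);
\item $\alpha_2$ does by the computation just made;
\item $\alpha_3$ does since $\chi_1^\sigma=\chi_1$ on $\A_F^\times$.
\end{itemize}
The product is a direct computation: $\alpha_1\alpha_2\alpha_3=(\chi_1/\chi_2)(\chi_2/\chi_1^\sigma)(\chi_1^\sigma/\chi_1)=1$.

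For (5), define $\chi_1$ as a solution of $\chi_1^\sigma/\chi_1=\alpha_3$ and set $\chi_2=\chi_1/\alpha_1$. Then $\alpha_2$ is forced by $\alpha_1\alpha_2\alpha_3=1$, so the only real point is solving $\chi_1^{\sigma-1}=\alpha_3$. This is the main obstacle, a Hilbert 90 type statement for idele class characters. Since $\alpha_3\circ N=1$, the character $\alpha_3$ is trivial on $N_{K/F}(\A_K^\times)$'s preimage structure, i.e. it factors through the kernel of the norm on $K^\times\backslash\A_K^\times$. By Hilbert 90 for ideles (together with $H^1$ vanishing for $K^\times$), elements of norm one in $\A_K^\times$ modulo $K^\times$ are of the form $x^{\sigma-1}$ up to $K^\times$. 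One then defines $\chi_1$ on the closed subgroup of elements $x^{\sigma-1}$ (via the map $x\mapsto x^{\sigma-1}$, whose kernel is $\A_F^\times K^\times$, on which $\alpha_3$ is compatible) and extends to all of $K^\times\backslash\A_K^\times$ by Pontryagin duality (characters of closed subgroups of locally compact abelian groups extend). I expect checking well-definedness on the kernel to be the delicate bookkeeping.

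Finally, for (2), multiplicity-freeness can fail in two ways. The first is that some summand $\mathrm{AI}(\alpha)$ is not cuspidal, i.e. $\alpha^2=1$, which produces $\omega_{K/F}$ or duplicates. The second is that two of $\mathrm{AI}(\alpha_1),\mathrm{AI}(\alpha_2),\mathrm{AI}(\alpha_3)$ coincide, or that one of them contains $\omega_{K/F}$. Using $\Ind\alpha\cong\Ind\beta\iff\beta\in\{\alpha,\alpha^{-1}\}$ for such characters, each coincidence translates into one of the two listed equalities $\mathrm{AI}(\chi_2)=\mathrm{AI}(\chi_1)$ or $\mathrm{AI}(\chi_2)=\mathrm{AI}(\chi_1^2/\chi_1^\sigma)$; the translation is a case-by-case check using the relation $\alpha_1\alpha_2\alpha_3=1$.
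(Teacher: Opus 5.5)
Parts (1), (3) and (4) are fine. In fact (3) is an identity of characters: $\chi_1\chi_1^{\sigma}=\chi_1\circ N_{K/F}=\chi_2\circ N_{K/F}=\chi_2\chi_2^{\sigma}$ gives $\chi_2/\chi_1^{\sigma}=\chi_1/\chi_2^{\sigma}$. So the two-dimensional summands in (2) are exactly $\mathrm{AI}_{K/F}(\alpha_1)$, $\mathrm{AI}_{K/F}(\alpha_2)$ and $\mathrm{AI}_{K/F}(\alpha_3)$.

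The gap is in (2), and it has two parts.

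First, your first ``failure mode'' is not one. If $\alpha\restriction_{\A_F^\times}=\mathbf 1$ and $\alpha^2=\mathbf 1$, then $\alpha=\beta\circ N_{K/F}$ with $\beta$ quadratic, and $\mathrm{AI}_{K/F}(\alpha)=\beta\boxplus\beta\omega_{K/F}$. This contains $\omega_{K/F}$ only when $\alpha=\mathbf 1$. In Case (c) of Lemma~\ref{lemma:SO4-dihedral-case-fiber} all three $\alpha_i$ are quadratic, both listed conditions hold, and the sum consists of seven distinct quadratic characters. So an ``if'' direction that tries to exclude $\alpha_i^2=\mathbf 1$ would be proving something false. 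The correct list of failures is:
\begin{itemize}
\item $\alpha_i=\mathbf 1$ for some $i$;
\item $\alpha_j\in\{\alpha_i,\alpha_i^{-1}\}$ for some $i\neq j$.
\end{itemize}
Sharing a single constituent already forces isomorphism, also for reducible inductions.

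Second, these coincidences do not all translate into the two listed equalities via $\alpha_1\alpha_2\alpha_3=\mathbf 1$.
\begin{itemize}
\item The cases $\alpha_1=\mathbf 1$, $\alpha_2=\mathbf 1$, $\alpha_1=\alpha_3^{\pm1}$ and $\alpha_2=\alpha_3^{\pm1}$ do translate: they give $\chi_2\in\{\chi_1,\chi_1^{\sigma}\}$ or $\chi_2\in\{\chi_1^2/\chi_1^{\sigma},(\chi_1^2/\chi_1^{\sigma})^{\sigma}\}$.
\item The cases $\alpha_3=\mathbf 1$ and $\alpha_1=\alpha_2^{-1}$ both mean $\chi_1=\chi_1^{\sigma}$.
\item The case $\alpha_1=\alpha_2$ means $\chi_2^2=\chi_1\chi_1^{\sigma}=\chi_2\chi_2^{\sigma}$, i.e.\ $\chi_2=\chi_2^{\sigma}$.
\end{itemize}
None of the last three is one of the listed equalities. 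They are excluded only by the cuspidality of $\pi_1$ and $\pi_2$, which your argument never invokes and must add.

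In (5), the step you flag is not the delicate one. By Hilbert~90 for $K^\times$, the kernel of $\phi(x)=x^{\sigma-1}$ on $C_K=K^\times\backslash\A_K^\times$ is the image of $\A_F^\times$. The hypothesis makes $\alpha_3$ trivial there; note that $\alpha_3\circ N_{K/F}=\mathbf 1$, which you first wrote, would not suffice. What actually needs an argument is that $\phi(C_K)$ is closed and that $\phi(x)\mapsto\alpha_3(x)$ is continuous on it. ``Hilbert 90 for ideles'' does not give this, and identifying $\phi(C_K)$ with the norm-one subgroup of $C_K$ would require the Hasse norm theorem, which you do not need. Instead, write $C_K=C_K^1\times\R_{>0}$ with the $\R_{>0}$-factor taken inside the image of $\A_F^\times$. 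Then $\phi(C_K)=\phi(C_K^1)$ is compact, and $C_K^1/(\ker\phi\cap C_K^1)\to\phi(C_K^1)$ is a homeomorphism. Pontryagin duality then extends the character to all of $C_K$.
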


\subsection{Fibers in the dihedral case}

\begin{lemma}\label{lemma:SO4-dihedral-case-fiber}
  Retain the notation of Lemma~\ref{lemma:SO4-dihedral-case}, and suppose that $\mathrm{AI}_{K/F}(\chi_2) \neq \mathrm{AI}_{K/F}(\chi_1)$ and $\mathrm{AI}_{K/F}(\chi_2) \neq \mathrm{AI}_{K/F}(\chi_1^2/\chi_1^{\sigma})$.
  \begin{enumerate}
    \item
    There are only three possibilities.
      \begin{enumerate}
        \item
        $\alpha_i$ are not quadratic and $\mathrm{AI}_{K/F}(\alpha_i)$ are cuspidal for $i = 1, 2, 3$.
        \item
        Only one of $\alpha_i$ is quadratic, say $\alpha_1$, and $\mathrm{AI}_{K/F}(\alpha_i)$ is cuspidal for $i = 2, 3$.
        \item
        $\alpha_i$ are all quadratic and $\mathrm{AI}_{K/F}(\alpha_i)$ are all sums of two distinct quadratic characters for $i = 1, 2, 3$.
      \end{enumerate}
    \item
    In Case $(a)$, the fiber of the image of $(\pi_1 \boxtimes \pi_2^{\vee}, \Ad(\pi_1))$ by the map~\ref{equation:lifting-PGL4-PGL3-PGL7} above consists of 3 elements given by
    \begin{align*}
      (\mathrm{AI}_{K/F}(\alpha_i) \boxplus \mathrm{AI}_{K/F}(\alpha_j), \mathrm{AI}_{K/F}(\alpha_k) \boxplus \omega_{K/F})
    \end{align*}
    for $\{i, j, k\} = \{1, 2, 3\}$.
    \item
    In Case $(b)$, the fiber of the image of $(\pi_1 \boxtimes \pi_2^{\vee}, \Ad(\pi_1))$ by the map~\ref{equation:lifting-PGL4-PGL3-PGL7} above consists of 3 elements given similarly to those in Case $(a)$.
    \item
    In Case $(c)$, there exists a $3$-dimensional subspace $V$ in the space $\Hom(F^{\times}\backslash\A_F^{\times}, \mu_2(\C))$ such that the parameter
    \begin{align*}
      \pi_1 \boxtimes \pi_2^{\vee} \boxplus \Ad(\pi_1) \boxplus \mathbf{1}
    \end{align*}
    is the isobaric sum of the elements in $V$.
    The fiber of the image of $(\pi_1 \boxtimes \pi_2^{\vee}, \Ad(\pi_1))$ by the map~\ref{equation:lifting-PGL4-PGL3-PGL7} above is in bijection with the set of $2$-dimensional subspaces of $V$ and consists of 7 elements.
    The bijection is given by
    \begin{align*}
       W \mapsto (\boxplus_{\alpha \in V \setminus W} \alpha, \boxplus_{\beta \in W \setminus \{\mathbf{1}\}} \beta).
    \end{align*}
  \end{enumerate}
\end{lemma}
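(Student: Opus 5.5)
The plan is to reduce everything to bookkeeping of isobaric constituents: determine which $\alpha_i$ can be quadratic, and then use uniqueness of the isobaric decomposition to enumerate the splittings $\phi_4\boxplus\phi_3$ of the fixed $7$-dimensional parameter. Throughout, $\alpha_i^{\sigma}=\alpha_i^{-1}$ because $\alpha_i\restriction_{F^\times\backslash\A_F^\times}=1$ (Lemma~\ref{lemma:SO4-dihedral-case}(4)). Hence $\mathrm{AI}_{K/F}(\alpha_i)$ is cuspidal if and only if $\alpha_i^2\neq 1$.

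\textbf{Step 1: shape of the $\alpha_i$.} Suppose $\alpha_i^2=1$. Then $\alpha_i=\alpha_i^{\sigma}$, so $\alpha_i=\mu_i\circ N_{K/F}$ for a Hecke character $\mu_i$ of $F$. Since $\alpha_i$ is trivial on $\A_F^\times$, we get $\mu_i^2=1$, and $\mathrm{AI}_{K/F}(\alpha_i)=\mu_i\boxplus\mu_i\omega_{K/F}$, a sum of two distinct quadratic characters.
\begin{itemize}
\item The multiplicity-freeness hypotheses from Lemma~\ref{lemma:SO4-dihedral-case}(2) exclude $\alpha_i=\mathbf 1$. They also exclude $\mu_i\in\{\mathbf 1,\omega_{K/F}\}$, since otherwise $\omega_{K/F}$ would occur twice.
\item Since $\alpha_1\alpha_2\alpha_3=1$, if two of the $\alpha_i$ are quadratic then so is the third.
\end{itemize}
So exactly $0$, $1$ or $3$ of the $\alpha_i$ are quadratic, which gives cases (a), (b), (c) of part (1).

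\textbf{Step 2: what the fiber consists of.} An element of the fiber is a pair $(\phi_4,\phi_3)$ of elliptic orthogonal parameters with $\phi_4\boxplus\phi_3$ equal to our multiplicity-free sum. By uniqueness of isobaric decompositions, $\phi_3$ is a sub-sum of the given constituents of total dimension $3$. It must have trivial determinant (image in $\SO_3$), and $\phi_4$ is the complementary sum. Both pieces are then automatically multiplicity-free and self-dual, hence elliptic orthogonal. The determinant of $\phi_4$ is then trivial as well, since the total determinant is $1$.

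Two facts are used in the enumeration:
\begin{itemize}
\item $\det\mathrm{AI}_{K/F}(\alpha)=\omega_{K/F}\cdot\alpha\restriction_{\A_F^\times}=\omega_{K/F}$.
\item The one-dimensional constituents are known explicitly in each case.
\end{itemize}

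\textbf{Case (a).} The constituents are the three two-dimensional $\mathrm{AI}_{K/F}(\alpha_i)$ and $\omega_{K/F}$. So $\phi_3$ must be $\mathrm{AI}_{K/F}(\alpha_k)\boxplus\omega_{K/F}$, whose determinant is $\omega_{K/F}^2=1$. This gives exactly three elements.

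\textbf{Case (b).} The constituents are $\mathrm{AI}_{K/F}(\alpha_2)$, $\mathrm{AI}_{K/F}(\alpha_3)$, $\mu_1$, $\mu_1\omega_{K/F}$ and $\omega_{K/F}$. The candidates for $\phi_3$ are:
\begin{itemize}
\item $\mathrm{AI}_{K/F}(\alpha_k)\boxplus\omega_{K/F}$ for $k=2,3$;
\item $\mu_1\boxplus\mu_1\omega_{K/F}\boxplus\omega_{K/F}=\mathrm{AI}_{K/F}(\alpha_1)\boxplus\omega_{K/F}$;
\item $\mathrm{AI}_{K/F}(\alpha_k)\boxplus\mu_1$ or $\mathrm{AI}_{K/F}(\alpha_k)\boxplus\mu_1\omega_{K/F}$, with determinant $\omega_{K/F}\mu_1$ or $\mu_1$, which is nontrivial by Step 1.
\end{itemize}
This again gives three elements, of the same form as in (a).

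\textbf{Case (c).} Write $\alpha_i=\mu_i\circ N_{K/F}$. The seven constituents $\mu_i,\mu_i\omega_{K/F}$ ($i=1,2,3$) and $\omega_{K/F}$, together with $\mathbf 1$, are closed under products. This uses $\mu_3\in\mu_1\mu_2\{\mathbf 1,\omega_{K/F}\}$, which follows from $\alpha_1\alpha_2\alpha_3=1$. Multiplicity-freeness shows they are distinct, so they are the elements of an $\bF_2$-space $V=\langle\mu_1,\mu_2,\omega_{K/F}\rangle$ of dimension $3$. Now $\phi_3$ is a sum of three distinct nonzero elements of $V$ with product $1$, i.e.\ the nonzero vectors of a $2$-dimensional subspace $W$. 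The complement $V\setminus W$ gives $\phi_4$, which has trivial determinant. This yields the stated bijection with the $7$ planes of $V$.

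\textbf{Main obstacle.} I expect the delicate part to be Step 1 in case (c): establishing that the three $\mu_i$ and $\omega_{K/F}$ span a full $3$-dimensional space, i.e.\ that multiplicity-freeness really forces independence. The enumeration itself is routine once the determinant formula for automorphic induction is in hand.
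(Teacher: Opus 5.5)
Your argument is correct, but it is organized differently from the paper's.

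\textbf{The paper's route.} The paper proves (2)--(4) by \emph{realizability}.
\begin{itemize}
\item For (2) and (3) it cites Lemma~\ref{lemma:SO4-dihedral-case}(5). Any triple with $\alpha_i\restriction_{\A_F^\times}=1$ and $\alpha_1\alpha_2\alpha_3=1$ comes from some pair $(\chi_1,\chi_2)$. Hence each permuted splitting is again of the form $(\pi_1'\boxtimes(\pi_2')^{\vee},\Ad(\pi_1'))$.
\item For (4) it writes $\alpha_i=\beta_i\circ\mathrm{Nm}_{K/F}$. It then realizes the pair attached to a plane $W$ as $(\pi\boxtimes(\pi^\vee\otimes\gamma),\Ad(\pi))$, where $\Ad(\pi)\boxplus\mathbf 1$ is the sum of the characters in $W$ and $\gamma$ generates $V/W$.
\end{itemize}

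\textbf{Your route.} You prove \emph{exhaustion} instead. Uniqueness of isobaric decomposition, the trivial-determinant constraint ($\widehat{\PGL_3}=\SL_3$), and $\det\mathrm{AI}_{K/F}(\alpha)=\omega_{K/F}$ together cut the candidate splittings down to exactly the listed ones. This makes explicit several points the paper leaves implicit:
\begin{itemize}
\item why $\mathrm{AI}_{K/F}(\alpha_k)\boxplus\mu_1$ and $\mathrm{AI}_{K/F}(\alpha_k)\boxplus\mu_1\omega_{K/F}$ are excluded in case (b);
\item why only planes occur in case (c);
\item why $V$ is genuinely $3$-dimensional.
\end{itemize}
Your deduction from $\mu_1\mu_2\mu_3\in\{\mathbf 1,\omega_{K/F}\}$ and multiplicity-freeness already settles the point you flagged as the main obstacle.

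\textbf{What your route does not give.} It does not show that every fiber element lies in the image of $\Psi(\SO_4)$. This is not part of the lemma as stated. It is, however, how the paper uses the lemma: to identify fibers of $\Psi(\SO_4)\to\Psi(\PGL_7)$, for instance to get $\cS_{\psi_s}\cong A/\langle s\rangle$ in Theorem~\ref{theorem:Arthur-multiplicity-SO4}. For that you would add Lemma~\ref{lemma:SO4-dihedral-case}(5), or the explicit construction above in case (c).

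\textbf{Small wording fix.} Self-duality alone does not make the two pieces orthogonal. Say instead that each $\mathrm{AI}_{K/F}(\alpha_i)$ is orthogonal because it is self-dual of dimension two with determinant $\omega_{K/F}\neq\mathbf 1$, and that the characters are quadratic.
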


\begin{proof}
  \begin{enumerate}
    \item
    This follows from the fact that $\alpha_1 \alpha_2 \alpha_3 = 1$.
    \item
    This follows from Lemma~\ref{lemma:SO4-dihedral-case} (5).
    \item
    The proof is similar to that of the previous case.
    \item
    Choose quadratic characters $\beta_1,\beta_2$ of $F^{\times}\backslash\A_F^{\times}$ with $\alpha_i = \beta_i \circ \mathrm{Nm}_{K/F}$ for $i=1,2$. Then
    \begin{align*}
      \pi_1 \boxtimes \pi_2^{\vee} =  (\beta_1 \boxplus \beta_1 \otimes \omega_{K/F}) \boxplus (\beta_2 \boxplus \beta_2 \otimes \omega_{K/F}),
    \end{align*}
    and
    \begin{align*}
      \Ad(\pi_1) = \beta_1\beta_2 \boxplus \beta_1\beta_2 \otimes \omega_{K/F} \boxplus \omega_{K/F}.
    \end{align*}
    Thus $(\pi_1 \boxtimes \pi_2^{\vee}, \Ad(\pi_1))$ has the stated form for $W$ generated by $\beta_1\beta_2$ and $\omega_{K/F}$.
    If the characters in $W$ are given by
    \begin{align*}
      \Ad(\pi) \boxplus \mathbf{1},
    \end{align*}
    and $V/W$ is generated by $\gamma$, then the presented pair is equal to
    \begin{align*}
      (\pi \boxtimes (\pi^{\vee} \otimes \gamma), \Ad(\pi)).
    \end{align*}
    Hence we obtain the result.
  \end{enumerate}
\end{proof}

\begin{lemma}\label{lemma:SO4-not-multiplicity-free-case}
  Let us continue to use the notation in Lemma~\ref{lemma:SO4-dihedral-case}.
  \begin{enumerate}
    \item Assume that $\mathrm{AI}_{K/F}(\chi_2) = \mathrm{AI}_{K/F}(\chi_1)$.
    Then, the fiber of the image of $(\pi_1 \boxtimes \pi_2^{\vee} , \Ad(\pi_1))$ by the map~\ref{equation:lifting-PGL4-PGL3-PGL7} above is a singleton.
    \item Assume that $\mathrm{AI}_{K/F}(\chi_2) = \mathrm{AI}_{K/F}(\chi_1^2/\chi_1^{\sigma})$.
    Then, we must have
    \begin{align*}
      (\chi_1/\chi_1^{\sigma})^3 \neq \mathbf{1},
    \end{align*}
    and the fiber of the image of $(\pi_1 \boxtimes \pi_2^{\vee}, \Ad(\pi_1))$ by the map~\ref{equation:lifting-PGL4-PGL3-PGL7} above is a singleton.
  \end{enumerate}
\end{lemma}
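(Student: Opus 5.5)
The plan is to compute the isobaric decomposition of the $7$-dimensional parameter $\pi_1\boxtimes\pi_2^{\vee}\boxplus\Ad(\pi_1)$ in each case. Then, using Lemma~\ref{lemma:SO4-dihedral-case}, list all ways of splitting it as $\phi_4\boxplus\phi_3$ with $\phi_4$ an elliptic orthogonal parameter of $\PGL_4$ and $\phi_3$ an elliptic orthogonal parameter of $\PGL_3$. Ellipticity forces both pieces to be multiplicity-free. Orthogonality with trivial determinant forces $\phi_3$ to contain a one-dimensional summand, since an odd-dimensional orthogonal parameter has a self-dual character summand. The fiber computation is therefore a short combinatorial check of which summands can go into $\phi_3$.

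\textbf{Case (1).} Here $\pi_2\cong\pi_1$, so $\pi_1\boxtimes\pi_1^{\vee}=\Ad(\pi_1)\boxplus\mathbf 1$. The $7$-dimensional sum is
\begin{align*}
\mathbf 1\boxplus\mathrm{AI}_{K/F}(\alpha)^{\boxplus2}\boxplus\omega_{K/F}^{\boxplus2},
\quad \alpha=\chi_1/\chi_1^{\sigma}.
\end{align*}
If $\mathrm{AI}_{K/F}(\alpha)$ is cuspidal, then multiplicity-freeness of $\phi_3$ and $\phi_4$ forces each of them to take exactly one copy of $\mathrm{AI}_{K/F}(\alpha)$ and one copy of $\omega_{K/F}$, with $\mathbf 1$ going to $\phi_4$. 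This gives the unique pair $(\Ad(\pi_1)\boxplus\mathbf 1,\Ad(\pi_1))$.

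If instead $\alpha$ is quadratic, $\mathrm{AI}_{K/F}(\alpha)=\beta\boxplus\beta\omega_{K/F}$. Each of $\beta,\beta\omega_{K/F},\omega_{K/F}$ then appears twice, and multiplicity-freeness again forces one copy of each into $\phi_3$ and $\phi_4$, with $\mathbf 1$ in $\phi_4$. So the fiber is again a singleton.

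\textbf{Case (2).} Here $\mathrm{AI}_{K/F}(\chi_2)=\mathrm{AI}_{K/F}(\chi_1^2/\chi_1^{\sigma})$. Replacing $\chi_2$ by $\chi_1^2/\chi_1^{\sigma}$ (the other choice $\chi_2^{\sigma}$ gives the same induced representations), part (4) of Lemma~\ref{lemma:SO4-dihedral-case} gives $\alpha_1=\chi_1^{\sigma}/\chi_1=\alpha_3$ and $\alpha_2=(\chi_1^{\sigma}/\chi_1)^{-2}$. Using $\alpha^{\sigma}=\alpha^{-1}$, the summand $\mathrm{AI}_{K/F}(\alpha_3)=\mathrm{AI}_{K/F}(\chi_1/\chi_1^{\sigma})$ of $\Ad(\pi_1)$ coincides with $\mathrm{AI}_{K/F}(\alpha_1)$, which occurs in $\pi_1\boxtimes\pi_2^{\vee}$.

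To show $(\chi_1/\chi_1^{\sigma})^3\ne\mathbf 1$, suppose the contrary, so $\alpha_2=\alpha^{-2}=\alpha$ with $\alpha=\chi_1^{\sigma}/\chi_1$. Then $\chi_1^2/\chi_1^{\sigma}=\chi_1\alpha^{-1}=\chi_1\alpha^2$. I would use $\alpha^3=1$ to show that $\chi_1\alpha^2$ lies in $\{\chi_1,\chi_1^{\sigma}\}$, contradicting cuspidality of $\pi_2$ or the standing assumption of the statement. This is the step I expect to be the main obstacle: I would first test whether $\alpha^3=1$ forces $\mathrm{AI}_{K/F}(\chi_2)=\mathrm{AI}_{K/F}(\chi_1)$, which is Case (1), or forces $\alpha$ trivial, contradicting cuspidality of $\pi_1$. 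Once this is settled, $\alpha^2\neq\alpha^{\pm1}$ and $\alpha$ is not quadratic.

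The $7$-dimensional sum is then
\begin{align*}
\mathrm{AI}_{K/F}(\alpha)^{\boxplus2}\boxplus\mathrm{AI}_{K/F}(\alpha^2)\boxplus\omega_{K/F},
\end{align*}
with $\mathrm{AI}_{K/F}(\alpha^2)$ not isomorphic to $\mathrm{AI}_{K/F}(\alpha)$. It remains to check whether $\mathrm{AI}_{K/F}(\alpha^2)$ is cuspidal or splits as two quadratic characters; in either case I will track the components. The only odd-dimensional self-dual summand is $\omega_{K/F}$, so $\phi_3$ must contain $\omega_{K/F}$. Multiplicity-freeness then forces $\phi_3$ to contain one copy of $\mathrm{AI}_{K/F}(\alpha)$, and $\phi_4=\mathrm{AI}_{K/F}(\alpha)\boxplus\mathrm{AI}_{K/F}(\alpha^2)$. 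If $\alpha^2$ were quadratic, we would get $\alpha^4=1$ and additional splittings. I would check that this is excluded by $\alpha^3\neq1$ together with $\mathrm{AI}_{K/F}(\alpha)$ being cuspidal, or otherwise verify that the determinant condition still pins down the pair. Hence the fiber is a singleton.
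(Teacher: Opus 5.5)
Your overall strategy matches the paper's. You compute the seven-dimensional isobaric sum and then use multiplicity-freeness of $\phi_4,\phi_3$ to pin down the split. Case (1) is fine, and more careful than the paper's, since you treat quadratic $\alpha$ separately.

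The gap is in Case (2), at the assertion $(\chi_1/\chi_1^{\sigma})^3\neq\mathbf{1}$. The paper's proof does not spell this out, but you must prove it, and neither of your proposed routes works. Put $\alpha=\chi_1^{\sigma}/\chi_1$, so $\alpha\neq\mathbf{1}$, and suppose $\alpha^3=\mathbf{1}$. Then $\chi_1\alpha^2=\chi_1$ would need $\alpha^2=\mathbf{1}$, and $\chi_1\alpha^2=\chi_1^{\sigma}=\chi_1\alpha$ would need $\alpha=\mathbf{1}$. So $\chi_1\alpha^2\notin\{\chi_1,\chi_1^{\sigma}\}$, and you do not land in Case (1). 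Also, $\alpha^3=\mathbf{1}$ does not force $\alpha=\mathbf{1}$.

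The missing observation is that you have already written the contradiction down. Since $\alpha^{\sigma}=\alpha^{-1}$, we have $\chi_2^{\sigma}=(\chi_1\alpha^{-1})^{\sigma}=\chi_1^{\sigma}\alpha=\chi_1\alpha^2$. Equivalently, for $\chi_2=\chi_1^2/\chi_1^{\sigma}$,
\[ \chi_2/\chi_2^{\sigma}=(\chi_1/\chi_1^{\sigma})^3 . \]
So your identity $\chi_1\alpha^{-1}=\chi_1\alpha^2$ says exactly that $\chi_2=\chi_2^{\sigma}$. Then $\pi_2=\mathrm{AI}_{K/F}(\chi_2)$ is not cuspidal, which contradicts the standing hypothesis of Lemma~\ref{lemma:SO4-dihedral-case}.

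Two later steps also need repair.

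\emph{$\alpha$ quadratic.} The claim that $\alpha$ is not quadratic does not follow from $\alpha^3\neq\mathbf{1}$: if $\alpha^2=\mathbf{1}\neq\alpha$, then $\alpha^3=\alpha\neq\mathbf{1}$. In that case $\chi_2=\chi_1\alpha^{-1}=\chi_1^{\sigma}$, so you are in Case (1). The sum has one-dimensional summands $\mathbf{1},\beta^{\boxplus 2},(\beta\omega_{K/F})^{\boxplus 2},\omega_{K/F}^{\boxplus 2}$, and your claim that $\omega_{K/F}$ is the only odd-dimensional summand fails. Simply invoke Case (1) here.

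\emph{$\alpha^4=\mathbf{1}\neq\alpha^2$.} This is not excluded by $\alpha^3\neq\mathbf{1}$ together with cuspidality of $\mathrm{AI}_{K/F}(\alpha)$, since a character of exact order $4$ satisfies both. Then $\mathrm{AI}_{K/F}(\alpha^2)=\gamma\boxplus\gamma\omega_{K/F}$ for a quadratic $\gamma\notin\{\mathbf{1},\omega_{K/F}\}$. This gives three candidate characters for $\phi_3$, and multiplicity-freeness alone cannot decide among them. Your fallback is the right one. Since $\alpha$ is trivial on $\A_F^{\times}$, $\det\mathrm{AI}_{K/F}(\alpha)=\omega_{K/F}$. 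Hence trivial determinant of $\phi_3=\mathrm{AI}_{K/F}(\alpha)\boxplus\delta$ forces $\delta=\omega_{K/F}$.

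With these repairs the argument is complete. It also fills in points that the paper's ``As in (1)'' passes over.
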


\begin{proof}
  \begin{enumerate}
    \item
    If we have $\chi_2 = \chi_1$, then
    \begin{align*}
      \pi_1 \boxtimes \pi_2^{\vee} 
      \boxplus \Ad(\pi_1) &= \\ \mathrm{AI}_{K/F}(\chi_1/\chi_1^{\sigma}) 
      \boxplus &\mathrm{AI}_{K/F}(\chi_1/\chi_1^{\sigma}) 
      \boxplus \omega_{K/F} 
      \boxplus \omega_{K/F} \boxplus \mathbf{1}.
    \end{align*}
    If $(\phi_4, \phi_3)$ is a pair in the fiber, then $\phi_4, \phi_3$ must be multiplicity-free and they share a common isobaric summand.
    Thus $\phi_4$ must be equal to
    \begin{align*}
      \mathrm{AI}_{K/F}(\chi_1/\chi_1^{\sigma}) 
      \boxplus \omega_{K/F} 
      \boxplus \mathbf{1}.
    \end{align*}
    This implies that $\phi_3$ must be equal to
    \[
      \mathrm{AI}_{K/F}(\chi_1/\chi_1^{\sigma}) 
      \boxplus \omega_{K/F},
    \]
    proving this case. The case $\chi_2 = \chi_1^{\sigma}$ is similar.
    \item
    If we have $\chi_2 = \chi_1^2/\chi_1^{\sigma}$, then
    \begin{align*}
      \pi_1 \boxtimes \pi_2^{\vee} \boxplus \Ad(\pi_1) 
      =  
      \mathrm{AI}_{K/F}(\chi_1/\chi_1^{\sigma}) 
      \boxplus \mathrm{AI}_{K/F}(\chi_1/\chi_1^{\sigma}) 
      \boxplus \mathrm{AI}_{K/F}((\chi_1/\chi_1^{\sigma})^2) 
      \boxplus \omega_{K/F}.
    \end{align*}
    As in (1), this forces $\phi_4 = \mathrm{AI}_{K/F}(\chi_1/\chi_1^{\sigma}) \boxplus \mathrm{AI}_{K/F}((\chi_1/\chi_1^{\sigma})^2)$ and $\phi_3 = \mathrm{AI}_{K/F}(\chi_1/\chi_1^{\sigma}) \boxplus \omega_{K/F}$.
    The case $\chi_2 = (\chi_1^2/\chi_1^{\sigma})^{\sigma}$ is similar.
  \end{enumerate}
\end{proof}

The following results are the local analogues of Lemmas~\ref{lemma:SO4-dihedral-case-fiber} and~\ref{lemma:SO4-not-multiplicity-free-case}.

\begin{lemma}\label{lemma:SO4-dihedral-case-local}
  Let $F$ be a local field of characteristic zero and let $\pi_i = \mathrm{AI}_{K/F}(\chi_i)$ be irreducible discrete series representations of $\GL_2(F)$ with $\omega_{\pi_1} \omega_{\pi_2} = 1$, where $K/F$ is a common quadratic extension and $\chi_i$ are characters of $K^{\times}$.
  Let $\sigma$ be the nontrivial element of $\Gal(K/F)$.
  In the equal-central-character convention of Section~\ref{section:refined-LLC-SO4}, the pair used here corresponds to $(\pi_1,\pi_2^{\vee})$, and the standard representation of its $L$-parameter for $\SO_4$ is $\phi_{\pi_1}\otimes\phi_{\pi_2}$.

  \begin{enumerate}
  \item
  If $\pi_1 = \pi_2^{\vee}$, then we have
  \begin{align*}
    \phi_{\pi_1} \otimes \phi_{\pi_2} \oplus \Ad \phi_{\pi_1}
    = \mathrm{AI}_{K/F}(\chi_1/\chi_1^{\sigma})
    \oplus \mathrm{AI}_{K/F}(\chi_1/\chi_1^{\sigma})
    \oplus \omega_{K/F}
    \oplus \omega_{K/F} \oplus \mathbf{1}.
  \end{align*}
  The fiber of the map
  \begin{align*}
    \Phi_{\disc}(\SO_4) \to \Phi(\GL_7)
    \colon
    \phi
    \mapsto
    \mathrm{Std}_{\SO_4} \circ \phi
    \oplus \wedge^2_+ \circ \phi
  \end{align*}
  is a singleton $\phi_{(\pi_1, \pi_2^{\vee})}$ which is given by the class of $(\pi_1, \pi_2^{\vee})$ in the equal-central-character convention.

  If $\pi_1$ runs through the representations above, then the parameter $\phi_{\pi_1} \otimes \phi_{\pi_2} \oplus \Ad(\phi_{\pi_1})$ runs through the sums of the form
  \begin{align*}
     \phi_{\tau}
     \oplus \phi_{\tau}
     \oplus \omega_{K/F}
     \oplus \omega_{K/F}
     \oplus \mathbf{1},
  \end{align*}
  for $\tau$ dihedral with respect to $K/F$ and self-dual.

  \item
  If $\chi_2 = \chi_1^{\sigma}/\chi_1^2$, then we have
  \begin{align*}
    \phi_{\pi_1} \otimes \phi_{\pi_2} \oplus \Ad \phi_{\pi_1}
    =
    \mathrm{AI}_{K/F}(\chi_1^{\sigma}/\chi_1)
    \oplus \mathrm{AI}_{K/F}(\chi_1^{\sigma}/\chi_1)
    \oplus \mathrm{AI}_{K/F}(\chi_1^2/(\chi_1^{\sigma})^2)
    \oplus \omega_{K/F}.
  \end{align*}

  If $\pi_1, \pi_2$ run through the representations above, then the parameter $\phi_{\pi_1} \otimes \phi_{\pi_2} \oplus \Ad(\phi_{\pi_1})$ runs through the sums of the form
  \begin{align*}
     \phi_{\tau} \oplus \phi_{\tau} \oplus \Ad(\phi_{\tau})
  \end{align*}
  for $\tau = \mathrm{AI}_{K/F}(\chi)$ dihedral with respect to $K/F$ and self-dual with $\chi^3 \neq 1$.
  \end{enumerate}
\end{lemma}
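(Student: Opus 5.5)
The plan is to compute everything explicitly on the Galois side, using $\phi_{\pi_i}=\Ind_{W_K}^{W_F}\chi_i$, Mackey theory, and the injectivity of $(\Std_{\SO_4},\wedge^2_+)$. The three parts are: the two tensor-product identities, the singleton fiber in (1), and the descriptions of the images.

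\textbf{Step 1: the two identities.} Mackey's formula gives
\begin{align*}
  \Ind\chi_1\otimes\Ind\chi_2=\Ind(\chi_1\chi_2)\oplus\Ind(\chi_1\chi_2^{\sigma}),
  \qquad
  \Ad\,\Ind\chi_1=\Ind(\chi_1/\chi_1^{\sigma})\oplus\omega_{K/F}.
\end{align*}
The second formula comes from $\Ind\chi_1\otimes(\Ind\chi_1)^{\vee}=\Ind(\mathbf 1)\oplus\Ind(\chi_1/\chi_1^\sigma)$ by removing the trivial summand, using $\Ind(\mathbf 1)=\mathbf 1\oplus\omega_{K/F}$.

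For (1), $\pi_2^\vee=\mathrm{AI}_{K/F}(\chi_2^{-1})$, so $\pi_1=\pi_2^{\vee}$ means $\chi_2\in\{\chi_1^{-1},(\chi_1^{\sigma})^{-1}\}$. Both choices give the same induced representations, so we may take $\chi_2=\chi_1^{-1}$. Then $\Ind(\chi_1\chi_2)=\mathbf 1\oplus\omega_{K/F}$ and $\Ind(\chi_1\chi_2^\sigma)=\Ind(\chi_1/\chi_1^\sigma)$, which yields the displayed sum.

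For (2), put $\chi_2=\chi_1^\sigma/\chi_1^2$. Then $\chi_1\chi_2=\chi_1^\sigma/\chi_1$ and $\chi_1\chi_2^\sigma=\chi_1^2/(\chi_1^\sigma)^2$. Since $\Ind(\chi_1/\chi_1^\sigma)=\Ind(\chi_1^\sigma/\chi_1)$, this gives the second formula. In each case we also check that $\omega_{\pi_1}\omega_{\pi_2}=1$, since $\omega_{\pi_i}=\omega_{K/F}\cdot\chi_i|_{F^\times}$.

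\textbf{Step 2: the singleton fiber in (1).} Let $\phi$ be a discrete parameter in the fiber and put $\phi_4=\Std_{\SO_4}\circ\phi$ and $\phi_3=\wedge^2_+\circ\phi$. Both are multiplicity free and orthogonal. Argue as in Lemma~\ref{lemma:SO4-not-multiplicity-free-case}(1): the summands $\Ind(\chi_1/\chi_1^\sigma)$ and $\omega_{K/F}$ each occur twice in the $7$-dimensional sum, so each must occur once in $\phi_4$ and once in $\phi_3$. This forces
\begin{align*}
  \phi_4=\Ind(\chi_1/\chi_1^\sigma)\oplus\omega_{K/F}\oplus\mathbf 1,
  \qquad
  \phi_3=\Ind(\chi_1/\chi_1^\sigma)\oplus\omega_{K/F}.
\end{align*}
The pair $(\phi_4,\phi_3)$ then determines $\phi$, by the local form of the injectivity of $(\Std_{\SO_4},\wedge^2_+)$ in Appendix~\ref{section:refinement-SO4}. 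The parameter attached to $(\pi_1,\pi_2^\vee)$ lies in the fiber by Step 1, so it is the unique element.

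I expect the main obstacle to be the degenerate situation where $\beta=\chi_1/\chi_1^\sigma$ is quadratic, so that $\Ind\beta$ is reducible. There the multiplicity bookkeeping must be redone with characters, and I would use discreteness of $\pi_1$ to control or exclude it. I would also check that the local injectivity statement really applies to discrete parameters in this convention.

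\textbf{Step 3: the images.} For (1), $\beta=\chi_1/\chi_1^\sigma$ is trivial on $F^\times$. Hence $\tau=\mathrm{AI}_{K/F}(\beta)$ is self-dual and dihedral with respect to $K/F$, since $\beta^\sigma=\beta^{-1}$.

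Conversely, given $\tau=\mathrm{AI}_{K/F}(\chi)$ with $\chi|_{F^\times}=1$, the character $\chi$ factors through $K^\times/F^\times$. The map $z\mapsto z/\sigma(z)$ identifies $K^\times/F^\times$ with the norm-one subgroup $K^1$ (Hilbert 90). Extending the induced character of $K^1$ to a character $\chi_1$ of $K^\times$ gives $\chi=\chi_1/\chi_1^\sigma$. Then set $\chi_2=\chi_1^{-1}$.

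For (2), set $\chi=\chi_1^\sigma/\chi_1$. Then $\Ad\phi_\tau=\Ind(\chi^2)\oplus\omega_{K/F}$, which matches the formula of Step 1. The same Hilbert 90 construction realizes every self-dual dihedral $\tau$. The condition $\chi^3\neq1$ would be derived from the requirement that $\pi_1$ and $\pi_2$ be discrete series, i.e.\ that $\chi_1\ne\chi_1^\sigma$ and $\chi_2\ne\chi_2^\sigma$, together with the multiplicity analysis of Lemma~\ref{lemma:SO4-not-multiplicity-free-case}(2). I would match this against the $\chi^3\neq1$ condition in Lemma~\ref{G2-elliptic-GL2-Levi-representations}(2).
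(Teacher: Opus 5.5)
Your proposal is correct. It matches the paper's proof in two places: the decompositions come from Mackey's formula for quadratic induction, and the multiplicity count forces $\Std_{\SO_4}\circ\phi=\mathbf 1\oplus\omega_{K/F}\oplus A$ and $\wedge^2_+\circ\phi=\omega_{K/F}\oplus A$, where $A=\mathrm{AI}_{K/F}(\chi_1/\chi_1^{\sigma})$.

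The genuine difference is the final uniqueness step. The paper never uses $\wedge^2_+$ there. Since $\Std_{\SO_4}\circ\phi$ contains a trivial line, reflection in that line centralizes $\phi$ and has determinant $-1$. Hence the $\O_4(\C)$-class determined by the standard representation alone is a single $\SO_4(\C)$-class.

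You instead invoke a local injectivity of $(\Std_{\SO_4},\wedge^2_+)$. The paper states this only globally (Section~\ref{section:lifting-SO4-G2}); locally the argument is only implicit in the proof of Proposition~\ref{proposition:refined-packet-characterization}. So you would have to supply it, which is easy:
\begin{itemize}
\item The standard representation determines $(\phi_1,\phi_2)$ up to simultaneous twist and swap.
\item If a swap is needed, then $\Ad\phi_1\cong\Ad\phi_2$ gives $\phi_2\cong\phi_1\otimes\mu$ with $\mu^2=1$, by equality of determinants.
\item The swapped pair is then a simultaneous twist of the original.
\end{itemize}
Your route is more general, since it needs no trivial summand. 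The paper's is a one-line shortcut available only because case (1) has $\mathbf 1$ in the standard representation. Your Hilbert~90 construction of the converse image statements is more explicit than the paper, which only says they follow from the tensor-product formula.

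Two small corrections. First, over a $p$-adic field the degenerate case where $\beta=\chi_1/\chi_1^{\sigma}$ is quadratic cannot be excluded. It is exactly the case where $\pi_1$ is dihedral with respect to all three quadratic subextensions of a biquadratic extension. It causes no trouble, however. Irreducibility of $\pi_1$ gives $\beta\neq\mathbf 1$, so $A=a\oplus b$ with $a,b$ distinct nontrivial quadratic characters and $ab=\omega_{K/F}$. The same count then forces $\mathbf 1\oplus\omega_{K/F}\oplus a\oplus b$ and $\omega_{K/F}\oplus a\oplus b$, exactly as in the paper.

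Second, in (2) you do not need Lemma~\ref{lemma:SO4-not-multiplicity-free-case}. With $\chi=\chi_1^{\sigma}/\chi_1$ and $\chi_2=\chi_1^{\sigma}/\chi_1^2$, one computes directly that $\chi_2/\chi_2^{\sigma}=\chi^3$. So $\chi^3\neq\mathbf 1$ is precisely the irreducibility of $\pi_2$, and it implies $\chi\neq\mathbf 1$, which is the irreducibility of $\pi_1$. Likewise, the converse in (1) requires $\chi\neq\mathbf 1$, which again is irreducibility of $\pi_1$.
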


\begin{proof}
  The displayed decompositions follow from the tensor-product formula for quadratic induction. For the fiber assertion in (1), put $A=\mathrm{AI}_{K/F}(\chi_1/\chi_1^\sigma)$ and $\omega=\omega_{K/F}$.
  Discreteness of an $L$-parameter for $\SO_4$ implies that both its standard representation and its $\wedge^2_+$ representation are multiplicity-free.
  If $A$ is irreducible, the repeated constituents in $\mathbf1\oplus\omega^{\oplus2}\oplus A^{\oplus2}$ therefore force the standard representation to be $\mathbf1\oplus\omega\oplus A$ and the $\wedge^2_+$ representation to be $\omega\oplus A$.
  If $A=a\oplus b$ is reducible, then $a,b$ are distinct nontrivial quadratic characters with $ab=\omega$. The same argument forces the standard representation to be $\mathbf1\oplus a\oplus b\oplus ab$ and the $\wedge^2_+$ representation to be $a\oplus b\oplus ab$.
  In both cases the standard representation contains a trivial line. Reflection in that line centralizes the parameter and has determinant $-1$, so its $\O_4$-conjugacy class is a single $\SO_4$-conjugacy class. This proves the asserted uniqueness.
\end{proof}

\section{On the theta lifting from \texorpdfstring{$\PGL_3^{+}$}{PGL3+} to \texorpdfstring{$\rG_2$}{G2}}\label{section:theta-PGL3-to-G2}

\subsection{Notation for the theta lifts}\label{theta-common-notation}

The following conventions apply to this section, Section~\ref{section:theta-G2-to-A2}, and Appendix~\ref{section:theta-A2-real-local}.
Let $F$ be a number field. For an algebraic group $H$ over $F$, put $[H]=H(F)\backslash H(\A_F)$ and write $\cA(H)$ for the smooth automorphic forms with the usual finiteness conditions.
Fix a nontrivial unitary character $\psi:F\backslash\A_F\longrightarrow\C^\times$ and compatible Haar measures. At archimedean places use smooth moderate-growth globalizations. All local Fourier functionals are continuous.
For a unipotent subgroup $V$ and a character $\chi$ of $[V]$, put
\[
 f_{V,\chi}(g)=\int_{[V]}f(vg)\overline{\chi(v)}\rd v.
\]

We recall the notation of \cite{GanSavin2003-RealGlobalLiftsPGL3G2}*{Section 4}.
Let $\cH = G_{E_6}$ be the split adjoint group of type $E_6$, with extended Dynkin diagram
\[
\begin{tikzcd}
	&& {-\widetilde{\beta}} \\
	&& {\beta_2} \\
	{\beta_1} & {\beta_3} & {\beta_4} & {\beta_5} & {\beta_6}
	\arrow[no head, from=1-3, to=2-3]
	\arrow[no head, from=2-3, to=3-3]
	\arrow[no head, from=3-2, to=3-1]
	\arrow[no head, from=3-3, to=3-2]
	\arrow[no head, from=3-4, to=3-3]
	\arrow[no head, from=3-5, to=3-4]
\end{tikzcd}.
\]
The dual pair $\PGL_3 \times G_2$ is obtained by embedding $G_2$ as the triality-fixed subgroup of the standard $D_4$ Levi subgroup $\cH_M$ of $\cH$.
The relative roots for $\cH_M$ are the restrictions of $\{\pm\beta_1,\pm\beta_6,\pm\widetilde{\beta}\}$, giving three $8$-dimensional root subgroups that afford the $8$-dimensional representations of $\cH_M^{\der}\cong\Spin_8$.
The centralizer of $G_2$ is consequently $8$-dimensional and isomorphic to $\PGL_3$.
By symmetry, the outer automorphism of $E_6$ preserves both subgroups, acting trivially on $G_2$ and by the non-trivial pinning-preserving outer automorphism on $\PGL_3$.

The root system of the maximal torus $M \subset G_2$ in $\cH$ is the root system of type $G_2$.
The long root space is $1$-dimensional, and the short root space is $9$-dimensional and has the natural structure of the matrix algebra $J = M_{3}(F)$.
We also use its associated Jordan algebra $M_3(F)^+$, with product $x\circ y=(xy+yx)/2$, trace form $(x,y)\mapsto\tr(xy)$, norm $N_J(x)=\det x$, and adjugate $x^\#$. Put $x\times y=(x+y)^\#-x^\#-y^\#$, over $F$ and its completions.
For each such root $\delta$, we set
\begin{align*}
    \mathcal{N}_{\delta}
    =
    \begin{cases}
        \bG_{a} &\quad \text{$\delta$ is long}, \\
        J &\quad \text{$\delta$ is short}.
    \end{cases}
\end{align*}
The group $\mathcal{N}_{\delta} \cap G_2$ is the root group $N_{\delta}$ in $G_2$.

Let $\alpha$ (resp. $\beta$) be the short  simple root (resp. the long simple root) of $G_2$.
Corresponding to the maximal parabolic subgroups in $G_2$, we have the following two parabolic subgroups of $\cH$.
\begin{itemize}
    \item
    $\cP_1 = \cM_1 \cN_1$ containing $\cN_{\alpha}$.
    \item
    $\cP_2 = \cM_2 \cN_2$ containing $\cN_{\beta}$.
\end{itemize}
Both groups contain $\PGL_3$.

The hom functor $\cV_2 = \Hom(\cN_2,\bG_a)$ identifies with the abelianization of $\Lie(\overline{\cN_2})$.
\begin{align*}
    \cV_2
    =
    F_{-\beta} \oplus J_{-\alpha-\beta} \oplus J_{-2\alpha-\beta} \oplus F_{-3\alpha-\beta}.
\end{align*}
Use coordinates $(a,x,y,d)$ with $a,d\in\bG_a$ and $x,y\in J$.

The differential of the $\cN_{\alpha}$-action on $\cV_2$ is given, for $z\in\Lie(\cN_{\alpha})\cong J$, by
\begin{align*}
    z \cdot (a, x, y, d) = (\tr(xz), y \times z, dz, 0).
\end{align*}
Also, the inclusion $N_{2} \hookrightarrow \cN_2$ induces the projection $\cV_2 \twoheadrightarrow V_2$, which can be written as
\begin{align*}
    (a, x, y, d) \mapsto (a, \tr(x), \tr(y), d).
\end{align*}
The center $\cZ$ of $\cN_2$ equals the center of $N_2$.

\subsection{The proof of Theorem \ref{theorem:theta-PGL3-to-G2}}\label{theta-PGL3-to-G2-proof}

Let $\Omega^+$ be the extension of the global minimal representation to
$\cH^+(\A_F)$, where
\begin{align*}
  \cH^{+}=\cH\rtimes\Z/2\Z.
\end{align*}
The residual construction of \cite{GinzburgRallisSoudry1997-AutomorphicThetaSimplyLacedGroups}*{Theorem 4.3}, with the extension in \cite{GanSavin2022-TwistedCompositionAlgebrasArthurPacketsTrialitySpin8}*{Section 14.3}, gives an automorphic realization
\begin{align*}
  \vartheta \colon \Omega^+ \longrightarrow \cA(\cH)
\end{align*}
which is equivariant under $\cH^+(F)\cdot\cH(\A_F)$.
Let $E_{ij}$ denote the matrix units and take
\begin{align*}
  x_0=E_{12}-E_{23},\quad x_0^{\#}=-E_{13},\quad
  X=(1,x_0,x_0^{\#},0)\in\cV_2.
\end{align*}
The Jordan involution corresponding to $\iota$ fixes $x_0$ and $x_0^{\#}$, so the outer involution fixes $X$.
By \cite{GanSavin2003-RealGlobalLiftsPGL3G2}*{Proposition 5.2 and Section 7, p.2718}, the functional
$L_X(\phi)=\vartheta(\phi)_X(1)$ is nonzero and invariant under the adelic derived subgroup of the stabilizer of $X$ in $\cM_2$.
The local uniqueness results of \cite{Gan2008-SiegelWeilAutomorphicCharactersSnitz}*{Sections 3.1 and 3.5} therefore give $L_X=\bigotimes_v\ell_{X,v}$, as in \cite{Gan2008-SiegelWeilAutomorphicCharactersSnitz}*{Section 3.7}.
The local involutions preserve the lines $\C\ell_{X,v}$.
Normalize them to fix $\ell_{X,v}$.
The normalizing signs have product $1$, since $L_X$ is invariant under the rational involution.
Thus $\vartheta$ remains $\cH^+(F)\cdot\cH(\A_F)$-equivariant, and $L_X$ is invariant under the adelic component group.
At each non-archimedean place, this agrees with \cite{GanSavin2022-TwistedCompositionAlgebrasArthurPacketsTrialitySpin8}*{Section 8.2} and \cite{GanSavin2023-HoweDualityDichotomyExceptionalThetaCorrespondences}*{Lemma 8.1}.

Let $\Pi$ be a cuspidal automorphic representation of $\PGL_3(\A_F)$ and write $\Pi^+=\Pi^{+}((+1)_v)$ for the corresponding cuspidal automorphic representation of $\PGL_3^{+}(\A_F)$ with signs $(+1)_v$.
We use the convention of \cite{GanSavin2003-RealGlobalLiftsPGL3G2}*{Section 7}, which replaces $\Pi^{+}$ by ${\Pi^{+}}^{\vee}$ without affecting the result.
For $\phi\in\Omega^+$ and $f^+\in\Pi^+$, define
\begin{align*}
    \Theta(\phi, f^+)(g)
    =
    \int_{\PGL_3^{+}(F) \backslash \PGL_3^{+}(\A_F)}
    \vartheta(h\cdot\phi)(g)f^+(h)\,dh,
\end{align*}
for $g \in G_2(\A_F)$.

Let $U$ be the $\Z/2\Z$-stable maximal unipotent subgroup of $\PGL_3$ and $N$ the maximal unipotent subgroup of $G_2$.
Normalize the generic character $\psi_N$ as in \cite{GanSavin2003-RealGlobalLiftsPGL3G2}*{p.2717, line 18}.
With the above choice of $x_0$ and the root-group coordinates of \cite{GanSavin2003-RealGlobalLiftsPGL3G2}*{Lemma 6.1}, the corresponding generic character is $\psi_U(u)=\psi(u_{12}+u_{23})$ for $u\in U(\A_F)$.
It is invariant under the pinned involution $\iota$.
Set
\begin{align*}
  U^+=U\rtimes\Z/2\Z,\quad
  \psi_U^+(u\eta)=\psi_U(u),\quad
  C_F=\Delta\{\pm1\}\backslash\prod_v\{\pm1\},
\end{align*}
where $C_F$ is the quotient of the adelic component group by the diagonal
rational component, with probability Haar measure. Choose a measurable
section $\eta\mapsto\dot\eta$ in the adelic component group.
With compatible quotient measures, define
\begin{align*}
  W_{f^+}^+(h)
  &=\int_{U^+(F)\backslash U^+(\A_F)}
    f^+(xh)\overline{\psi_U^+(x)}\,dx \\
  &=\int_{C_F}f^+_{U,\psi_U}(\dot\eta h)\,d\eta.
\end{align*}
The quotient in the first integral is compact, and a change of variables gives
\begin{align*}
  W_{f^+}^+(a h)=\psi_U^+(a)W_{f^+}^+(h)
  \quad (a\in U^+(\A_F)).
\end{align*}

We first verify that this coefficient is non-zero on $\Pi^+$.
Choose a factorizable cusp form $f=\otimes_v f_v\in\Pi$ whose ordinary
Whittaker coefficient satisfies $W_f(1)\neq0$.
At every self-dual place, let $A_v$ be the Whittaker-normalized involution
on $\Pi_v$, and replace $f_v$ by $(f_v+A_vf_v)/2$.
This preserves its Whittaker value. Only finitely many of these projections
change the vector, since the spherical vector is fixed outside a finite set.
The construction in Proposition~\ref{proposition:cuspidal-PGL3-plus},
as in \cite{HarrisSoudryTaylor1993-AdicRepresentationsAssociatedModularFormsImaginary}*{Proposition 2},
then gives the cusp form
\begin{align*}
  F^+(g\eta)=f(g)+f(\iota(g))
  \quad
  (g\in\PGL_3(\A_F),\ \eta\in(\Z/2\Z)(\A_F))
\end{align*}
in $\Pi^+$.
Indeed, the sum is invariant under the rational involution, and the
local vectors belong to the positive extension at each self-dual place
and to the induced representation at each non-self-dual place.
Since $\iota$ preserves $U$ and $\psi_U$, we obtain
\begin{align*}
  W_{F^+}^+(1)=W_f(1)+W_{f\circ\iota}(1)=2W_f(1)\neq0.
\end{align*}
Moreover, local Whittaker uniqueness and Clifford theory give
\begin{align*}
  \dim\Hom_{U^+(F_v)}(\Pi_v^+,\psi_{U,v}^+)=1.
\end{align*}
At a non-self-dual place, this is the invariant line in the
two-dimensional space of ordinary Whittaker functionals.
Thus the non-zero global functional $f^+\mapsto W_{f^+}^+(1)$ factors
into these local functionals.

The computation of
\cite{GanSavin2003-RealGlobalLiftsPGL3G2}*{Section 7} works up to
\cite{GanSavin2003-RealGlobalLiftsPGL3G2}*{p.2718, line 7}, with the
additional integration over $C_F$.
Namely, we have
\begin{align*}
    \Theta(\phi, f^+)_{N, \psi_N}(1)
    =
    \int_{U(\A_F)\backslash \PGL_3(\A_F)} \int_{C_F}
    f^+_{U, \psi_U}(h\dot\eta)
    K_{\phi}(h\dot\eta)\,d\eta\,dh,
\end{align*}
where
\begin{align*}
  K_{\phi}(h\dot\eta)
  =
  \int_{N_{\alpha}(\A_F)}
  \psi(n)\vartheta(h\dot\eta\cdot\phi)_X(n)\,dn.
\end{align*}

The normalization of the rank-one Fourier functional on $\Omega^+$,
and the fact that the component group commutes with $N_\alpha$, give
\begin{align*}
  K_{\phi}(\dot\eta h)=K_{\phi}(h).
\end{align*}
For each $\eta$, make the change of variables
$h=\dot\eta h'\dot\eta^{-1}$ in the unfolded integral.
Conjugation by $\dot\eta$ preserves $U$, $\psi_U$, and the Haar measure.
Consequently,
\begin{align*}
  \Theta(\phi, f^+)_{N,\psi_N}(1)
  &=\int_{U(\A_F)\backslash\PGL_3(\A_F)}\int_{C_F}
    f^+_{U,\psi_U}(\dot\eta h)K_{\phi}(h)\,d\eta\,dh \\
  &=\int_{U(\A_F)\backslash\PGL_3(\A_F)}
    W_{f^+}^+(h)K_{\phi}(h)\,dh.
\end{align*}
Choose factorizable $f^+$ and $\phi$ such that
$W_{f^+}^+(1)\vartheta(\phi)_X(1)\neq0$.
The argument of \cite{GanSavin2003-RealGlobalLiftsPGL3G2}*{Section 7, pp.2718--2719}
applies to $W_{f^+}^+$ and makes the last integral non-zero after
replacing $\phi$ by a suitable Schwartz convolution.
More precisely, the unramified support calculation first reduces the
integral to a finite set of places containing the archimedean ones.
One then localizes near the point $x_0$ in the locally closed rank-two
orbit over that finite product of local fields.
Cuspidality follows from \cite{GanSavin2003-RealGlobalLiftsPGL3G2}*{Theorem 5.1(i)}.

\section{Theta lifting from \texorpdfstring{$\rG_2$}{G2} to \texorpdfstring{$\PGL_3^{+}$}{PGL3+} and \texorpdfstring{$\mathrm{PD}^{\times}$}{PDtimes}}\label{section:theta-G2-to-A2}

Retain the conventions of Section~\ref{theta-common-notation}, assume $F$ totally real, and put $G=\rG_2$, the split group over $F$.

For a central simple algebra $D$ of degree $3$, let $D^+$ be its associated Jordan algebra. Put $G'_J=\Aut(J)=\PGL_3^+=\PGL_3\rtimes\Z/2\Z$ for $J=M_3(F)^+$ and $G'_J=\Aut(J)^\circ=\mathrm{PD}^\times$ for $J=D^+$ with $D$ division.
Let $\Omega_J$ be the global minimal representation used for the exceptional dual pair $G\times\Aut(J)$ in the corresponding group of type $E_6$, with automorphic realization $\vartheta_J$. In the split case, choose $\Omega_J=\Omega^+$ and $\vartheta_J=\vartheta$ with the normalization of Section~\ref{theta-PGL3-to-G2-proof}. For $f\in\cA_{\cusp}(G)$, $\phi\in\Omega_J$, and $h\in G'_J(\A_F)$, set
\begin{equation}\label{theta-A2:eq:backward-lift}
 \theta_J(\phi,f)(h)=\int_{[G]}\vartheta_J(h\phi)(g)\overline{f(g)}\rd g.
\end{equation}
The integral converges by rapid decrease of cusp forms and moderate growth of the minimal automorphic forms. Let $\Theta_J(\sigma)$ be the span of these functions as $f$ varies in $\sigma$ and $\phi$ varies in $\Omega_J$.

\begin{theorem}\label{theorem:exceptional-theta-G2-to-PGL3}
Let $\sigma\subset\cA_{\cusp}(G)$ be an irreducible cuspidal automorphic representation. Suppose that $L^S(s,\sigma,\Std_G)$ has a pole at $s=1$ for all sufficiently large finite sets $S$ of places of $F$. Then $\Theta_J(\sigma)\ne0$ for $J=M_3(F)^+$ or for $J=D^+$ with $D$ a central division algebra of degree $3$.
\end{theorem}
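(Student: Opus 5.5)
The approach is a Rallis inner product / regularized Siegel--Weil argument, modeled on \cite{GanSavin2022-TwistedCompositionAlgebrasArthurPacketsTrialitySpin8} and \cite{Gan2005-MultiplicityFormulaCubicUnipotentArthurPackets}. The pole of the standard $L$-function is first converted into a nonvanishing period of $\sigma$ against a residue of a degenerate Eisenstein series. That period is then recognized, via an exceptional Siegel--Weil formula, as a sum over the forms $J$ of inner products of theta lifts $\theta_J(\phi,f)$.

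Step 1 is to realize $L^S(s,\sigma,\Std_G)$ by a doubling-type or Rankin--Selberg integral. I would use the Ginzburg--Rallis--Soudry type integral for the seven-dimensional standard $L$-function of $G_2$. The integral pairs $f\otimes\bar f$ against a degenerate Eisenstein series $E(s,\Phi)$ on a larger group containing $G\times G$; the candidates are the $\mathrm{Spin}_8$-, $E_6$- or $E_7$-type group from \cite{GanSavin2020-ExceptionalSiegelWeilFormulaPolesSpin}. By the unramified computation this gives
\begin{align*}
 Z(s,f,\Phi)=L^S(s,\sigma,\Std_G)\cdot\prod_{v\in S}Z_v(s,f_v,\Phi_v).
\end{align*}
The local zeta integrals at $v\in S$ can be made nonzero at $s=1$ by choosing the data appropriately. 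The hypothesis then gives a nonzero residue $\operatorname{Res}_{s=1}Z(s,f,\Phi)\neq0$ for suitable data, so the residual Eisenstein series $\operatorname{Res}_{s=1}E(s,\Phi)$ has a nonzero pairing with $f\otimes\bar f$.

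Step 2 is the regularized exceptional Siegel--Weil formula, which is the main obstacle. The residue at $s=1$ should be expressed as a sum, over the finitely many adelic forms of degree-$3$ Jordan algebras $J$ (namely $M_3(F)^+$ and $D^+$, together with the relevant adelic twists), of regularized theta integrals
\begin{align*}
 \operatorname{Res}_{s=1}E(s,\Phi)=\sum_J c_J\int^{\mathrm{reg}}_{[G'_J]}\vartheta_J(h\phi_J)\,dh.
\end{align*}
I would establish this in the style of \cite{GanSavin2020-ExceptionalSiegelWeilFormulaPolesSpin}. Both sides are automorphic forms in the same small space, namely $\cH$-spherical vectors in a degenerate principal series. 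One compares constant terms along the relevant maximal parabolic, using the Fourier expansion of $\vartheta_J$ along $\cV_2$ from Section~\ref{theta-common-notation}. The regularization is needed because $[\PGL_3^+]$ is noncompact and the theta integral diverges; it is handled by a Kudla--Rallis style differential or Hecke operator that kills the divergent constant term. Total realness of $F$ enters here, through the description of the archimedean local components of the minimal representations in Appendix~\ref{section:theta-A2-real-local}.

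Step 3 is to unfold. Substituting the formula from Step 2 into the period from Step 1 and exchanging integrals (justified by the rapid decay of $f$) gives
\begin{align*}
 0\neq\operatorname{Res}_{s=1}Z=\sum_J c_J\int^{\mathrm{reg}}_{[G'_J]}\theta_J(\phi,f)(h)\,\overline{\theta_J(\phi',f)(h)}\,dh.
\end{align*}
Hence some $\theta_J(\phi,f)\neq0$, which gives $\Theta_J(\sigma)\ne0$ for that $J$. Since only $J=M_3(F)^+$ or $J=D^+$ with $D$ a central division algebra of degree $3$ occur, this is exactly the dichotomy in the statement. A technical point is that the regularization operator must act on the $\cH$-side compatibly with the $G$-integral. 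I would use the local analysis of annihilator varieties in Appendix~\ref{section:theta-A2-real-local} to check that the operator acts by a nonzero scalar on $\sigma$'s contribution.
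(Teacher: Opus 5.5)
Your proposal has a genuine gap at Steps 2--3. It is a Rallis inner product argument, and that needs a doubling see-saw which is not available for exceptional minimal representations.

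Classically one uses that the Weil representation of the doubled space restricts to $\omega\otimes\overline{\omega}$ on the two copies. There is no ambient group carrying a dual pair $G'_J\times\mathcal G$ with $\mathcal G\supset G\times G$ whose minimal representation restricts to $\Omega_J\otimes\overline{\Omega_J}$. So nothing justifies unfolding a period of $f\otimes\overline f$ into $\int_{[G'_J]}\theta_J(\phi,f)\overline{\theta_J(\phi',f)}$.

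Your Step 2 identity is also posed on the wrong group. The integral $\int_{[G'_J]}\vartheta_J(h\phi)\,dh$ is a function on $G$, the centralizer of $G'_J$ in $\cH$; it is the theta lift of the trivial representation of $G'_J$ to $G$. It therefore cannot equal the residue of an Eisenstein series on a group containing $G\times G$.

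Step 1 is not available either:
\begin{itemize}
\item $\Spin_8$ cannot contain $G\times G$, since both have dimension $28$ and $\Spin_8$ is simple.
\item The Piatetski-Shapiro--Rallis unfolding for $G\subset\SO_7\subset\SO_{14}$ does not apply. The $G\times G$-orbit of the diagonal Lagrangian has stabilizer $\Delta G$ and dimension $14<21=\dim P\backslash\SO_{14}$ for the Siegel parabolic $P$, so it is not open.
\item Any such integral would in any case feed into a classical Siegel--Weil formula, not an exceptional one.
\end{itemize}

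The missing idea is a see-saw whose other member is tiny, so that nonvanishing comes from a period of $\theta_J(f)$ rather than from its norm. The paper proceeds in four steps.
\begin{enumerate}
\item It pairs a single cusp form against Segal's Eisenstein series on $G_E=\Spin_8^E\supset G$ (Proposition~\ref{theorem:integral-representation-standard-G2}). This is available once $\sigma$ has a nonzero $E$-th Fourier coefficient for some \'etale cubic algebra $E$. The pole then gives $\int_{[G]}R\,\overline f\ne0$ for some $R\in\cR_E$.
\item Proposition~\ref{theorem:exceptional-siegel-weil} identifies $\cR_E$ with theta lifts of the trivial representation of the small group $H_C=\Aut_E(C)$, under the dual pair $H_C\times G_E$ inside the group of type $E_6$. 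When $E$ is not a field these lifts are regularized by a Hecke operator $D$ at a split place. The associated-variety Lemma~\ref{theta-A2:lem:residual-AV} enters only through the multiplicity-one Proposition~\ref{theta-A2:prop:multiplicity}, not through any action on $\sigma$.
\item The see-saw $G\subset G_E$, $H_C\subset\Aut(J)$ of Proposition~\ref{theta-A2:prop:paired-seesaw} converts the nonzero pairing into $\int_{[H_C]}\theta_J(D\phi,f)(h)\,dh\ne0$. Hence $\Theta_J(\sigma)\ne0$.
\item In the division-algebra case one further descends from $H_C$ to the anisotropic torus $T_C$, to get nonvanishing on the connected group $\mathrm{PD}^\times$.
\end{enumerate}
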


Following \cite{GanSavin2022-TwistedCompositionAlgebrasArthurPacketsTrialitySpin8}, we compare Segal's Eisenstein residues with theta lifts of the trivial representation of a torus normalizer.

\subsection{The integral representation and its residue}\label{theta-A2:sec:residue}

In this section, $N$ denotes the Heisenberg radical of $G$. Its nondegenerate characters, up to conjugation by its Levi subgroup, are parametrized by \'{e}tale cubic algebras $E$ over $F$. Fix a corresponding character $\psi_E$ of $[N]$.
We say that $f$ has a nonzero $E$-th Fourier coefficient if $f_{N,\psi_E}\ne0$. Write $G_E=\Spin_8^E$ for the quasi-split group attached to $E$, and let $P_E=M_EN_E$ be its Heisenberg parabolic. Thus
\[
 M_E\cong\{g\in\Res_{E/F}\GL_2:\det(g)\in\mathbf G_m\},
 \quad M_E^{\der}\cong\Res_{E/F}\SL_2.
\]
The common determinant defines a character $\det:M_E\to\mathbf G_m$, and $\delta_{P_E}=|\det|^5$. Put $I_E(s)=I_{P_E(\A_F)}^{G_E(\A_F)}(\abs{\det}^s)$, with normalized induction, so that sections transform by $|\det|^{s+5/2}$. Let $\cE(s,f_s)$ denote Segal's normalized Eisenstein series, with the unramified normalization of \cite{Segal2017-NewWayIntegralsStandardLFunctionG2}*{Section~2}. Write
\[
 \cR_E=\operatorname{span}_{\C}\left\{\Res_{s=1/2}\cE(s,f_s):f_s\text{ a standard section}\right\}\subset\cA(G_E).
\]

\begin{proposition}\label{theorem:integral-representation-standard-G2}
Suppose that the $E$-th Fourier coefficient of $\sigma$ is nonzero. For a sufficiently large finite set $S$ and factorizable data, one has
\begin{equation}\label{theta-A2:eq:segal-integral}
 \int_{[G]}\cE(s,f_s)(g)\overline{f(g)}\rd g
 =L^S(s+\tfrac12,\sigma^\vee,\Std_G)\,d_S(s,f_s,f).
\end{equation}
The data at $S$ can be chosen so that $d_S(s,f_s,f)$ is holomorphic and nonzero at $s=1/2$.
\end{proposition}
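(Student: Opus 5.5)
We follow Segal's unfolding argument in \cite{Segal2017-NewWayIntegralsStandardLFunctionG2}, keeping track of the $E$-th Fourier coefficient. The plan has four steps: unfold the integral, factor it, do the unramified computation, and control the finitely many bad places.

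\textbf{Unfolding.} For $\Re s$ large, write $\cE(s,f_s)$ as a sum over $P_E(F)\backslash G_E(F)$ and decompose this coset space into $G(F)$-double cosets, using the embedding $G\hookrightarrow G_E$ as the fixed points of twisted triality. Only one orbit should contribute; its stabilizer in $G$ is contained in the Heisenberg parabolic and involves $N$ together with the stabilizer of $\psi_E$. All other orbits have stabilizers containing the unipotent radical of a parabolic of $G$, so they vanish by cuspidality of $f$. Unfolding the open orbit then gives
\[
  \int_{N_0(\A_F)\backslash G(\A_F)} f_s(\gamma_0 g)\,\overline{f_{N,\psi_E}(g)}\rd g ,
\]
where $N_0$ is the appropriate unipotent subgroup and $\gamma_0$ is a representative of the open orbit. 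Since $f_{N,\psi_E}\neq0$ by hypothesis, the integral is not identically zero. By uniqueness of the local $(N,\psi_E)$-twisted functionals, or after reducing to Whittaker-type models as Segal does, the integral factors into local zeta integrals $\prod_v Z_v(s,f_{s,v},f_v)$.

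\textbf{Unramified places.} At places outside $S$, the normalized unramified local integral should equal $L_v(s+\tfrac12,\sigma_v^\vee,\Std_G)$. This comes from Segal's computation with his normalization of the section. The shift by $\tfrac12$ reflects our normalization $I_E(s)$ with $\delta_{P_E}=|\det|^5$, and the dual $\sigma^\vee$ appears because of the complex conjugation on $f$. Setting $d_S(s,f_s,f)=\prod_{v\in S}Z_v(s,f_{s,v},f_v)$ gives \eqref{theta-A2:eq:segal-integral} by meromorphic continuation.

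\textbf{Places in $S$, the main obstacle.} We must choose data so that each local integral $Z_v$ is holomorphic and nonzero at $s=1/2$. At non-archimedean places, choose $f_{s,v}$ supported on the open cell $P_E\gamma_0 N_0'$ with a small compact support. The local integral then becomes a finite nonzero multiple of the local functional $\ell_v(f_v)$ and is entire in $s$. Choose $f_v$ with $\ell_v(f_v)\neq0$.

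The archimedean places are the hardest step. The plan there is to take a smooth section supported near the open orbit. One then uses a Dixmier--Malliavin convolution to show that the span of these integrals contains a function holomorphic and nonvanishing at $s=1/2$. This is an Ikeda/Jacquet-type argument, and it requires continuity of the $\psi_E$-functional on the smooth globalization, which was fixed in Section~\ref{theta-common-notation}.

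Finally, we enlarge $S$ to include all ramified and archimedean places. This keeps the unramified identity valid.
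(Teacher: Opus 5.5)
The paper does not reprove this statement. It cites \cite{Segal2017-NewWayIntegralsStandardLFunctionG2}*{Theorem 3.1} and adds only that $\Std_G$ is self-dual. Your reconstruction of Segal's argument has a genuine gap at the factorization step.

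You assert that, by uniqueness of local $(N,\psi_E)$-functionals (or by reduction to Whittaker models), the unfolded integral equals $\prod_v Z_v(s,f_{s,v},f_v)$. Neither route is available.
\begin{itemize}
\item \emph{No uniqueness.} The character $\psi_E$ of the Heisenberg radical is attached to the subregular orbit $G_2(a_1)$, and its stabilizer in the Levi $\GL_2$ is the finite group $\Aut(E)$. The space $\Hom_{N(F_v)}(\sigma_v,\psi_E)$ is in general not one-dimensional; for generic $\sigma_v$ this orbit is not even maximal in the wave front set. This non-uniqueness is exactly why Segal's integrals are new-way integrals in the sense of Piatetski-Shapiro and Rallis.
\item \emph{No Whittaker model.} The only hypothesis is that the $E$-th coefficient is nonzero. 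In the application to Theorem~\ref{theorem:exceptional-theta-G2-to-PGL3}, $\sigma$ is an arbitrary cuspidal representation and may be nongeneric.
\end{itemize}
Consequently the global functional $f\mapsto f_{N,\psi_E}(1)$ need not factor. The local integrals $Z_v$ and the functionals $\ell_v$ in your argument are not defined, and $d_S$ is not a product over $v\in S$.

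What is needed instead is the new-way unramified computation. Let $f^{\circ}_{s,v}$ be the normalized spherical section and $\xi^{\circ}_v$ the spherical vector of $\sigma_v^\vee$. For $v\notin S$ you must prove, for \emph{every} $(N(F_v),\psi_E^{-1})$-equivariant functional $\ell$ on $\sigma_v^\vee$,
\[
 \int_{N_0(F_v)\backslash G(F_v)}f^{\circ}_{s,v}(\gamma_0g)\,\ell(\sigma_v^\vee(g)\xi^{\circ}_v)\rd g
 =L_v(s+\tfrac12,\sigma_v^\vee,\Std_G)\,\ell(\xi^{\circ}_v).
\]
This is typically done by showing that the integral acts on the spherical vector through a formal spherical Hecke operator whose Satake transform is the local $L$-factor. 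Apply this successively to the functionals on $\sigma_v^\vee$ obtained from $\overline{f_{N,\psi_E}}$ by freezing the other components. This yields the identity with
\[
 d_S(s,f_s,f)=\int_{N_0(\A_S)\backslash G(\A_S)}f_{s,S}(\gamma_0g)\,\overline{f_{N,\psi_E}(g)}\rd g,
\]
which depends on $f$ globally.

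Your support argument at the bad places must then be run on this global function, in two steps.
\begin{itemize}
\item First enlarge $S$ so that $f_{N,\psi_E}(g_0)\neq0$ for some $g_0\in G(\A_S)$. This is possible because $f_{N,\psi_E}\neq0$ and it is right $K^S$-invariant.
\item Then choose $f_{s,S}$ supported in a small neighbourhood of $P_E(\A_S)\gamma_0g_0$ inside the open orbit, using an approximate identity at the archimedean places.
\end{itemize}
With this choice, $d_S$ is holomorphic near $s=1/2$. At $s=1/2$ it equals (exactly at finite places, approximately at archimedean ones) a nonzero multiple of $\overline{f_{N,\psi_E}(g_0)}$, hence is nonzero.
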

\begin{proof}
This is \cite{Segal2017-NewWayIntegralsStandardLFunctionG2}*{Theorem 3.1}. Since $\Std_G$ is self-dual, the partial $L$-function in \eqref{theta-A2:eq:segal-integral} equals $L^S(s+\tfrac12,\sigma,\Std_G)$.
\end{proof}

By \cite{Gan2005-MultiplicityFormulaCubicUnipotentArthurPackets}*{Theorem 3.1}, fix an \'{e}tale cubic algebra $E$ for which $\sigma$ has a nonzero $E$-th Fourier coefficient. The Eisenstein series in \eqref{theta-A2:eq:segal-integral} has at most a simple pole at $1/2$. See \cite{Segal2019-DegenerateResidualSpectrumQuasiSplitFormsSpin8} and the normalization in \cite{Segal2017-NewWayIntegralsStandardLFunctionG2}. Choosing $d_S(1/2,f_s,f)\ne0$, we obtain a simple pole of the $L$-function and
\begin{equation}\label{theta-A2:eq:residue-period}
 \int_{[G]} R(g)\overline{f(g)}\rd g\ne0,
 \quad R=\Res_{s=1/2}\cE(s,f_s)\in\cR_E.
\end{equation}
Passing the residue through the cuspidal integral is justified by the uniform moderate-growth estimates for a meromorphic Eisenstein family, after its pole is removed, and rapid decrease of $f$.

\begin{proposition}[Segal]\label{theta-A2:prop:residual-data}
The space $\cR_E$ is semisimple. If $E=F\times K$, where $K/F$ is quadratic \'{e}tale, then
\begin{equation}\label{theta-A2:eq:nonfield-residue}
 \cR_E\cong\pi_E:=\bigotimes_v'\pi_{1,v}.
\end{equation}
Here $\pi_{1,v}$ is the spherical Langlands quotient of $I_{E_v}(1/2)$.
\end{proposition}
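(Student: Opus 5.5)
The plan is to realize $\cR_E$ inside the discrete spectrum of $G_E=\Spin_8^E$ via the residues of the degenerate Eisenstein series $\cE(s,f_s)$. Semisimplicity will follow from square-integrability of these residues. Since the pole at $s=1/2$ is at most simple (by Segal's analysis cited above), $\Res_{s=1/2}\cE(s,f_s)$ is obtained from the leading term of the constant term along $P_E$ and the other standard parabolics.

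The first step is to compute the constant terms. The residue's exponents come from the terms $M(w,s)f_s$ whose intertwining operators have a pole at $s=1/2$. I would check that every surviving exponent satisfies Langlands' square-integrability criterion, i.e.\ has negative real part against the positive chamber. This is a finite Weyl-group computation in the $D_4$ root system twisted by $E$. Granting it, the residues lie in $L^2_{\disc}$, the $G_E(\A_F)$-action on $\cR_E$ is unitary and admissible, and $\cR_E$ is therefore a direct sum of irreducibles.

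Next comes the case $E=F\times K$. The map $f_s\mapsto\Res_{s=1/2}\cE(s,f_s)$ is $G_E(\A_F)$-equivariant from $I_E(1/2)=\bigotimes'_vI_{E_v}(1/2)$ onto $\cR_E$. Its image is a quotient of $I_E(1/2)$ that is semisimple by the first step. I would show that at each place $v$, $I_{E_v}(1/2)$ has a unique irreducible quotient, namely the spherical Langlands quotient $\pi_{1,v}$. For this I would use the standard module/Langlands classification: the exponent $|\det|^{1/2}$ lies in the positive chamber, so $I_{E_v}(1/2)$ is a standard module with a unique irreducible quotient. At places where $E_v$ is not split, the quotient is taken in the appropriate quasi-split form. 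Any semisimple quotient of a restricted tensor product of modules, each with a unique irreducible quotient, is then either $0$ or $\bigotimes'_v\pi_{1,v}$.

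Finally, nonvanishing follows because the residue of the spherical section is nonzero. In the nonsplit-field case $E=F\times K$, the normalizing factor of Segal has a genuine simple pole coming from $\zeta_F$ or $L(s,\omega_{K/F})$ through the rank-one constant term; I would verify this from the Gindikin--Karpelevich formula. This gives $\cR_E\cong\pi_E$.

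The main obstacle is the first step: identifying the poles of the constant-term operators at $s=1/2$ and checking the square-integrability exponents. The difficulty is that the cancellations between different $w$ depend on whether $K$ is a field, so the case analysis for $E=F\times K$ must be done carefully. I would first try to import these computations directly from Segal's description of the degenerate residual spectrum of quasi-split $\Spin_8$, rather than redoing them.
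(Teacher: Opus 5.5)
Your local step does not hold as argued. You claim that each $I_{E_v}(1/2)$ has a unique irreducible quotient because it is a standard module with exponent $|\det|^{1/2}$ in the positive chamber. It is not a standard module.

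\begin{itemize}
\item The inducing representation is the trivial character of $M_E$. This is not tempered, since $M_E^{\der}\cong\Res_{E/F}\SL_2$ is isotropic, so the Langlands quotient theorem does not apply.
\item Inducing in stages from the minimal parabolic does not rescue it either. The resulting exponent $\rho_{M_E}+\tfrac12\beta=(1,0,1,0)$ is not dominant for $D_4$.
\item Your argument never uses that $E_v$ is not a field, and for a cubic field $E_v$ the conclusion is false. In the field case, Segal's residual spectrum is a sum of representations $\pi_{\mathcal S}$ with non-spherical components at $v\in\mathcal S$; the paper uses this in Section~\ref{theta-A2:sec:conclusion}.
\item Each $\pi_{\mathcal S}$ is a summand of the image of the equivariant residue map, hence a quotient of $I_E(1/2)$. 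So $I_{E_v}(1/2)$ has at least two irreducible quotients at such places.
\end{itemize}

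For $E=F\times K$ you would need genuine local input instead. One option is Gan--Savin's description of the maximal semisimple quotient of $I_{E_v}(1/2)$ by the lifts $\theta_{C_v}(\mathbf 1)$. Combine it with the fact that $F_v\times K_v$ embeds into $M_3(F_v)^+$ uniquely up to conjugacy and into no cubic division algebra.

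The paper avoids the local question altogether. It cites Segal's Proposition 6.4 and (6.2), which realize this residue as the residue at the unitary point of an Eisenstein series induced from an $A_2$-type Levi. That identifies $\cR_E$ globally. Semisimplicity, which is your square-integrability step, is likewise quoted from Segal's Sections 5--6. The rest of your skeleton is fine once the local input is supplied: the equivariant residue map, its semisimple image, and the restricted tensor product argument.

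Your nonvanishing step is also misdirected. The normalizing factor has no pole at $s=1/2$: the paper records $j_E^S(1/2)=\zeta_F^S(3)\zeta_E^S(2)\neq0$, holomorphic there. That is exactly why normalization does not change the residue space. In addition, $L(s,\omega_{K/F})$ is entire when $K$ is a field. So the pole, and hence $\cR_E\neq0$, must come from the constant term of the Eisenstein series itself. There several Weyl terms are singular, and you must show they do not cancel for $E=F\times K$. That is part of Segal's computation, not a consequence of one rank-one Gindikin--Karpelevich factor.
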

\begin{proof}
The nonfield assertion follows from \cite{Segal2019-DegenerateResidualSpectrumQuasiSplitFormsSpin8}*{Proposition 6.4 and (6.2)}, which realizes the residue at the unitary point of an Eisenstein series induced from a Levi subgroup of type $A_2$. Semisimplicity, including the field case, is proved in \cite{Segal2019-DegenerateResidualSpectrumQuasiSplitFormsSpin8}*{Sections 5--6}. The normalizing factor in \cite{Segal2017-NewWayIntegralsStandardLFunctionG2}*{Section~2.8} satisfies $j_E^S(\tfrac12)=\zeta_F^S(3)\zeta_E^S(2)\ne0$ and is holomorphic there, also for $E=F^3$. Here $\zeta_E^S$ is the product of the partial zeta functions of the field factors of $E$. This normalization therefore does not change the residue space.
\end{proof}

\subsection{Twisted composition algebras and local theta lifts}\label{theta-A2:sec:local}

A rank-$2$ $E$-twisted composition algebra is a free $E$-module $C$ of rank $2$, equipped with a nondegenerate $E$-valued quadratic form $Q$ and a quadratic map $\beta_C:C\to C$ such that
\[
 \beta_C(ev)=e^\#\beta_C(v),\quad Q(\beta_C(v))=Q(v)^\#,
 \quad N_C(v):=b_Q(v,\beta_C(v))\in F.
\]
Here $b_Q(v,w)=Q(v+w)-Q(v)-Q(w)$, and $e^\#$ is the quadratic adjoint of $E$. An isomorphism preserves $Q$ and $\beta_C$. See \cite{GanSavin2022-TwistedCompositionAlgebrasArthurPacketsTrialitySpin8}*{Section 4.1}. We consider the algebras obtained from embeddings $E\hookrightarrow D^+$, where $D$ is a central simple algebra of degree $3$.

An embedding $E\hookrightarrow J$ determines a rank-$2$ $E$-twisted composition algebra $C$, a group $H_C=\Aut_E(C)$, and the dual pair
\[
 H_C\times G_E\longrightarrow\widetilde G_J,
\]
where $\widetilde G_J$ is of type $E_6$, with its component group when required. Put $T_C=H_C^\circ$. We have the see-saw inclusions $G\subset G_E$ and $H_C\subset\Aut(J)$. We write $\theta_{C_v}(\mathbf1)$ for the local theta lift of the trivial representation of $H_C(F_v)$.

We use the following local statements, as in \cite{GanSavin2022-TwistedCompositionAlgebrasArthurPacketsTrialitySpin8}*{Sections 12--13 and (16.7)}, with the real Fourier assertion proved in Appendix~\ref{section:theta-A2-real-local}.
\begin{enumerate}[label=(\roman*)]
\item The nonzero local lifts $\theta_{C_v}(\mathbf1)$ are irreducible and give the relevant constituents of the maximal semisimple quotient of $I_{E_v}(1/2)$.
\item For a nondegenerate $E_v$-twisted cube $\Sigma_v$, with character $\psi_{\Sigma_v}$ of $N_E(F_v)$, one has
\begin{equation}\label{theta-A2:eq:local-model}
 \Hom_{N_E(F_v)}\bigl(\theta_{C_v}(\mathbf1),\psi_{\Sigma_v}\bigr)
 \cong
 \begin{cases}
 \C,&C_{\Sigma_v}\cong C_v,\\
 0,&C_{\Sigma_v}\not\cong C_v.
 \end{cases}
\end{equation}
In the present case of inner forms of type $A_2$ the quadratic algebra denoted $K$ in \cite{GanSavin2022-TwistedCompositionAlgebrasArthurPacketsTrialitySpin8} is split. Thus the stabilizer twist $\mu_K$ in \cite{GanSavin2022-TwistedCompositionAlgebrasArthurPacketsTrialitySpin8}*{(16.7)} is trivial.
\end{enumerate}
The nonarchimedean assertion in (ii) is \cite{GanSavin2022-TwistedCompositionAlgebrasArthurPacketsTrialitySpin8}*{Proposition 12.3}. At a real place, the Harish-Chandra-module identification is given by \cite{GanLokeEtAl2025-FamilySpinEightDualPairsRealGroups}*{Theorem 3}, and the continuous Fourier-model formula is proved in Proposition~\ref{theta-A2:prop:real-fourier-model}. Every central simple algebra of degree $3$ over $\R$ is split, so this covers all real components used here.

If $E$ is not a field, its embedding into $M_3(F)^+$ is unique up to $\PGL_3(F)$-conjugacy. Denote the corresponding twisted composition algebra by $C_0$. Then
\begin{equation}\label{theta-A2:eq:pi-theta-local}
 \pi_{1,v}\cong\theta_{C_{0,v}}(\mathbf1),
 \quad T_{C_0}\cong\Res_{E/F}\mathbf G_m/\mathbf G_m.
\end{equation}
No cubic division algebra can contain a nonfield \'{e}tale cubic algebra, since its nontrivial idempotents would give zero divisors. By the Brauer--Hasse--Noether theorem, a central simple algebra of degree $3$ which is split at every place is split over $F$. Skolem--Noether then gives conjugacy of its embeddings of $E$.

\subsection{Nondegenerate Fourier coefficients and multiplicity one}\label{theta-A2:sec:detection}

Suppose that $E=F^3$ or $E=F\times K$, where $K/F$ is a quadratic field. Put $\fg=\Lie(G_E)$, $\fm_E=\Lie(M_E)$, and $\fn_E=\Lie(N_E)$.

We use the Bourbaki numbering of $D_4$, with $\alpha_2$ the central vertex, and use $\beta$ for the highest root.
\begin{equation}\label{theta-A2:eq:D4-roots}
 \begin{gathered}
 \alpha_1=e_1-e_2,\quad\alpha_2=e_2-e_3,\quad
 \alpha_3=e_3-e_4,\quad\alpha_4=e_3+e_4,\\
 \beta=e_1+e_2.
 \end{gathered}
\end{equation}
Choose compatible root vectors $e_\alpha$. Thus $Z_E=Z(N_E)=U_\beta$, and, over $\overline F$, $N_E/Z_E$ is the tensor product of the standard two-dimensional representations of the three $\SL_2$-factors corresponding to $\alpha_1,\alpha_3,\alpha_4$. If $E=F\times K$, choose the labeling so that Galois fixes $\alpha_1$ and interchanges $\alpha_3,\alpha_4$.

For a Harish-Chandra module $V$, let $\Ann V$ denote its annihilator in the universal enveloping algebra $\cU(\fg_\C)$. We write $\AV(\Ann V)\subset\fg_\C^*$ for the zero set of its associated graded ideal under the standard filtration. We identify $\fg_\C^*$ with $\fg_\C$ by an invariant bilinear form. Let $\cO_{A_2}$ be the nilpotent orbit of type $A_2$ in $\mathfrak{so}_8(\C)$, with Jordan partition $(3,3,1,1)$.

\begin{proposition}\label{theta-A2:prop:detection}
Suppose that $E=F^3$ or $F\times K$, with $K/F$ a quadratic field. Let $\Pi\ne\mathbf1$ be an irreducible automorphic subrepresentation of $G_E(\A_F)$. Suppose that, at a real place $w$,
\begin{equation}\label{theta-A2:eq:A2-annihilator}
 \overline{\cO_{A_2}}\subset\AV(\Ann\Pi_w).
\end{equation}
Then $\Pi$ has a nonzero Fourier coefficient along $N_E$ associated to a nondegenerate $E$-twisted cube.
\end{proposition}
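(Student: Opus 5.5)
We argue by descending through the Fourier--Jacobi / Heisenberg expansion of $\Pi$ along $N_E$, comparing the orbits that can support nonzero coefficients with the wave-front set forced at $w$ by the annihilator condition. Since $Z_E=U_\beta$ is the center of $N_E$, the first step is to show that the constant term along $Z_E$ does not exhaust $\Pi$. We also show that the nonzero Fourier coefficients along the abelian quotient $N_E/Z_E$ cannot all be degenerate.

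The first step concerns $\Pi$ as a representation of the Heisenberg group. We expand an automorphic form $\varphi\in\Pi$ along $[Z_E]$. The nontrivial central characters are all conjugate under $M_E(F)$ (the long root orbit). A nonzero coefficient along $Z_E$ with nontrivial character therefore places $\Pi$ in the Fourier--Jacobi setting attached to the minimal orbit. In that case the standard Heisenberg argument (Ginzburg--Rallis--Soudry style root exchange) produces a nonzero coefficient on $N_E$ whose character restricted to $N_E/Z_E$ is generic. We therefore reduce to the case where $\varphi$ is invariant under $[Z_E]$. Then $\varphi_{Z_E}$ has a Fourier expansion over $\Hom(N_E/Z_E,\bG_a)(F)$, which is the space of $E$-twisted cubes $\Sigma$ (the $2\times2\times2$ tensor space, Galois-twisted for $E=F\times K$). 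The $M_E(F)$-orbits on it are classified by the rank stratification of Bhargava cubes, equivalently by the nilpotent $M_E$-orbits in $\fn_E/\mathfrak z_E$. We show that if only degenerate cubes (those with vanishing discriminant/norm form) contribute, then every local component, in particular $\Pi_w$, has wave-front set contained in the closure of an orbit strictly smaller than $\cO_{A_2}$.

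For this we use the Kostant--Sekiguchi/Richardson correspondence for the Heisenberg parabolic. The nondegenerate cubes form the open $M_E$-orbit in $\fn_E/\mathfrak z_E$ over $\overline F$. Its saturation $G_E\cdot(\text{open orbit}+\mathfrak z_E)$ has closure $\overline{\cO_{A_2}}$: the Richardson orbit of $P_E$ has dimension $2\dim\fn_E=18$, which matches the $(3,3,1,1)$ orbit. Degenerate cubes instead saturate into the closures of the smaller orbits $(3,2^2,1)$, $(2^4)$, $(3,1^5)$ and the minimal one. The key input is the automorphic version of the principle that the Fourier coefficients determine the wave-front set, together with its compatibility with the annihilator variety at a real place. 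This is the theorem of Gomez--Gourevitch--Sahi on Whittaker pairs and of Matumoto on real local wave-front sets, combined with the statement $\AV(\Ann\Pi_w)=\overline{\mathrm{WF}(\Pi_w)}_\C$. If all contributing Fourier coefficients are supported on degenerate cubes, the global wave-front set is contained in a union of closures of smaller orbits. The global-to-local inclusion then yields $\AV(\Ann\Pi_w)\not\supset\overline{\cO_{A_2}}$, contradicting \eqref{theta-A2:eq:A2-annihilator}. The hypothesis $\Pi\neq\mathbf1$ is used to exclude the case where only the zero cube contributes: a $G_E(\A_F)$-invariant automorphic form is constant.

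The main obstacle is the passage from the hypothesis on $\AV(\Ann\Pi_w)$ at one real place to the existence of a global nonzero coefficient attached to that orbit. Local nonvanishing of a degenerate Whittaker model does not automatically globalize. I would first try the Gomez--Gourevitch--Sahi theorem that the maximal orbits in the global wave-front set are detected by nonvanishing Fourier coefficients. If that theorem does not directly apply, the fallback is to argue contrapositively: vanishing of all nondegenerate-cube coefficients makes $\varphi$ annihilated at every place by an operator from the ideal of $\overline{\cO_{A_2}}$ (via a Casimir-type element of $\cU(\fg_\C)$ of degree $\le 4$ corresponding to the cube discriminant). We then check this element at $w$. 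The twisted case $E=F\times K$ needs the extra remark that the discriminant quartic is $\Gal(K/F)$-invariant, so nondegeneracy is defined over $F$ and each orbit of cubes with a fixed discriminant class corresponds to a twisted composition algebra $C_\Sigma$.
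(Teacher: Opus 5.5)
Your proposal has a genuine gap, and the decisive step runs in the wrong direction.

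First, the opening reduction is wrong. Characters of $N_E$ are trivial on $Z_E=[N_E,N_E]$, so a coefficient with nontrivial central character is a Fourier--Jacobi coefficient and never a cube coefficient. Unfolding it produces only functions $\theta_\varphi(nm)a(m)$, with $a$ an automorphic function on $M_E^{\der}$, and nothing forces $a$ to carry a nondegenerate cube. In fact every $\Pi\ne\mathbf1$ has a nonzero central coefficient; this is what makes $\widetilde T$ in \eqref{theta-A2:eq:induced-FJ} injective. So your first step would prove the proposition with no hypothesis at $w$, and the minimal automorphic representation of split $\Spin_8$ contradicts that. The case you reduce to is therefore vacuous, and all the work lies in the Fourier--Jacobi part.

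Your wave-front-set argument then needs the implication \emph{large $\AV(\Ann\Pi_w)$ at one real place $\Rightarrow$ a nonvanishing global coefficient}, i.e.\ local wave-front set $\subset$ global wave-front set. Matumoto and Gomez--Gourevitch--Sahi give only the converse: a nonzero global coefficient for a rational orbit forces that orbit into every local wave-front set. The direction you need is false in general. For instance, Rogawski's classification for $\mathrm{U}(3)$ yields cuspidal representations that are globally non-generic but whose real component is a generic discrete series, with annihilator variety the whole nilpotent cone.

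So a mechanism specific to the Heisenberg parabolic is required, and your fallback does not supply it as stated. To contradict \eqref{theta-A2:eq:A2-annihilator} you need an element of $\Ann\Pi_w$ whose principal symbol is nonzero at some point of $\cO_{A_2}$, not an element of the ideal of $\overline{\cO_{A_2}}$. A central (Casimir-type) element is useless, since its symbol vanishes on the whole nilpotent cone. A discriminant element of $\cU(\fn_E)$ acts by $\Delta(\Sigma)$ only on the constant term $\varphi_{Z_E}$. On the Fourier--Jacobi part, $\fn_E$ acts through the oscillator representation, so that element does not annihilate $\Pi$.

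The paper's proof supplies three missing steps.
\begin{enumerate}
  \item The raising argument of Lemma~\ref{theta-A2:lem:two-factor} takes a neutral $3A_1$ coefficient to a nondegenerate cube coefficient via Jiang--Liu--Savin. Hence vanishing of all nondegenerate cube coefficients kills the simultaneous Whittaker coefficients of $a$ on two $\SL_2$-factors (resp.\ on $\SL_2(\A_K)$), which gives $R_XR_Ya=0$ (resp.\ $R_Xa=0$).
  \item The oscillator-corrected operators $p_X=zX-\tfrac12n_X$ satisfy $p_X(\theta_\varphi a)=d\psi(z)\theta_\varphi R_Xa$. So $p_Xp_Y$ (resp.\ $p_X$) annihilate the Jacobi space, hence $\Pi$ through the induced injection, hence $\Pi_w$.
  \item At $\xi=e_{-\beta}+e_{-\alpha_1}+e_{-\alpha_3}+e_{-\alpha_4}\in\cO_{A_2}$, the quadratic moment term of the symbol vanishes while the $zX$ term does not, which gives the contradiction.
\end{enumerate}
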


We prove the proposition in three steps, using Fourier--Jacobi coefficients and the oscillator operators of \cite{GanSavin2005-MinimalRepresentationsDefinitionsProperties}.

\subsubsection{Fourier--Jacobi coefficients and a raising calculation}

Assume, for a contradiction, that all nondegenerate cube coefficients of $\Pi$ vanish. For $f\in\Pi$, restrict its central Fourier coefficient $f_{Z_E,\psi}$ to $P_E^{\semi}=M_E^{\der}N_E$, and let $W$ be the closure of the resulting Jacobi functions in the smooth topology of uniform convergence, with derivatives, on compact sets. The metaplectic cover of $\Sp(N_E/Z_E)$ splits over $M_E^{\der}$. On each absolute $\SL_2$-factor its defining representation is a sum of four copies of the standard representation, so its metaplectic index is even. We use the splitting trivial on rational points.

Ikeda's Fourier--Jacobi decomposition, in the closed-span formulation recalled in \cite{Gan2005-MultiplicityFormulaCubicUnipotentArthurPackets}*{Section 3.7 and Remarks 3.5, 3.8}, shows that $W$ is the closed span of functions
\begin{equation}\label{theta-A2:eq:Jacobi-tensor}
 (nm)\longmapsto\theta_\varphi(nm)\,a(m),
 \quad m\in M_E^{\der}(\A_F),
\end{equation}
where $a$ is a smooth automorphic function on $M_E^{\der}$. Here $\theta_\varphi$ belongs to the oscillator representation with central character $\psi$. See also \cite{Ikeda1994-TheoryJacobiFormsFourierJacobiCoefficientsEisensteinSeries}*{Proposition 1.3}. All Fourier functionals used next are continuous, since their unipotent adelic quotients are compact.

\begin{lemma}\label{theta-A2:lem:two-factor}
For every auxiliary function $a$ in \eqref{theta-A2:eq:Jacobi-tensor}, the following hold.
\begin{enumerate}[label=(\roman*)]
\item If $E=F^3$, its simultaneous nontrivial Whittaker coefficient on any two distinct $\SL_2$-factors vanishes, after any right translation.
\item If $E=F\times K$, its nontrivial Whittaker coefficient along the upper unipotent of $\SL_2(\A_K)$ vanishes, after any right translation.
\end{enumerate}
\end{lemma}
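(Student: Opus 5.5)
The plan is a raising argument. A nonvanishing Whittaker coefficient of the auxiliary function $a$ on two factors would be assembled, together with the theta factor $\theta_\varphi$, into a nondegenerate $E$-twisted cube coefficient of $\Pi$. That coefficient vanishes by the standing contradiction hypothesis.

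First I unfold the relevant Fourier coefficient of a Jacobi function \eqref{theta-A2:eq:Jacobi-tensor}. Let $U_1,U_2$ be the upper unipotents of two distinct $\SL_2$-factors when $E=F^3$, or the upper unipotent of $\Res_{K/F}\SL_2$ when $E=F\times K$. Integrating $f_{Z_E,\psi}$ against a nontrivial character $\chi$ of $[U_1U_2]$ gives an integral of $\theta_\varphi(um)a(um)\overline{\chi(u)}$. The factors $U_i$ act on $N_E/Z_E$, the tensor product of three standard representations, by unipotent symplectic transformations. Choose the Schrödinger model of $\theta_\varphi$ attached to a polarization $N_E/Z_E=X\oplus Y$ with $X$ stable under $U_1U_2$. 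Concretely, take $X$ spanned by weight vectors whose weight is positive for both chosen factors, together with one further vector completing a Lagrangian. Then $U_1U_2$ acts in this model by multiplication by characters $\psi(\langle u\cdot y,y\rangle/2)$ together with translations. Expanding $\theta_\varphi$ as a sum over $Y(F)$, the integral becomes a sum over $y\in Y(F)$ of Fourier coefficients of $a$ along $U_1U_2$ with respect to the twisted characters $\chi\cdot\psi_y$.

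Next I recombine. Each term is, up to a unipotent integration over $X$, the coefficient of $f$ along the full unipotent group $Z_E\cdot X\cdot U_1U_2$ against a character. This group is conjugate, by a Weyl element of $G_E$ moving $U_1U_2$ into $N_E$, to a subgroup of $N_E$. Under this conjugation the character becomes a character $\psi_\Sigma$ of $N_E$. I then compute, in the coordinates \eqref{theta-A2:eq:D4-roots}, the cubic norm of the resulting cube $\Sigma$. It is nonzero exactly when $\chi$ is nontrivial on both factors, respectively on $\SL_2(\A_K)$. In the $F\times K$ case, Galois exchanging $\alpha_3,\alpha_4$ makes the two $K$-conjugate root groups play the role of the two factors, so the same argument applies. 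The terms then vanish by hypothesis, and root exchange (Ginzburg--Rallis--Soudry) lets me pass from the nonvanishing of the sum over $y$ to the nonvanishing of an individual term. Right translations by $M_E^{\der}(\A_F)$ only change $\varphi$ and $a$, so the conclusion holds after any right translation.

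The main obstacle is the root-exchange and conjugation step. I must identify the resulting character with a \emph{nondegenerate} cube, that is, check that its discriminant is nonzero and that the cube is of type $E$ rather than of some other étale algebra. I must also control convergence, since the $\theta$-sum over $Y(F)$ is not compactly supported. I would first try this on the explicit cube obtained from $(1,x_0,x_0^{\#},0)$-type coordinates, and verify the norm computation directly on root vectors $e_\alpha$. Continuity of the functionals, which holds because the relevant adelic quotients are compact, extends the conclusion from the spanning functions to the closure $W$.
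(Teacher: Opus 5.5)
There is a genuine gap, and it sits exactly at the step you flag as the main obstacle: the claim that, after Weyl conjugation, your character becomes a cube whose discriminant is nonzero ``exactly when $\chi$ is nontrivial on both factors.'' This is false.

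Take the pair $\alpha_3,\alpha_4$. Up to rational $N_E$-conjugation, which is all the fixed $y\neq0$ terms of your expansion amount to, a nonzero Whittaker coefficient of $a$ gives you one coefficient. It is the coefficient along $U=L\,U_\beta U_{\alpha_3}U_{\alpha_4}$, where $L$ is a $U_{\alpha_3}U_{\alpha_4}$-stable Lagrangian, with character trivial on $L$ and nontrivial exactly on $U_\beta$, $U_{\alpha_3}$, $U_{\alpha_4}$.

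The attached nilpotent element $c\,e_{-\beta}+b_3e_{-\alpha_3}+b_4e_{-\alpha_4}$ is a sum over three mutually orthogonal roots. It therefore lies in the orbit $3A_1$, with partition $(3,2,2,1)$ and dimension $16$. Nondegenerate cubes lie in the Richardson orbit $\cO_{A_2}$ of $P_E$, of dimension $18$, and conjugation cannot bridge this.

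Concretely, $s_{\alpha_2}$ carries $U$ into $N_E$ with codimension two. The missing root groups are the images of $U_{\alpha_1}$ and $U_{-\alpha_2}$, so your character only determines a two-parameter family of cubes. The member with both new coordinates zero has the form $(0,e,0,b)$ with the $\alpha_1$-coordinate of $e$ equal to zero, hence $\Delta=\pm4bN_{E/F}(e)=0$. The degenerate members form a whole curve in the parameter plane.

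Fourier expansion along the two missing directions writes your coefficient as a sum of $N_E$-coefficients. The standing hypothesis kills only the nondegenerate ones, and nothing in your argument prevents the degenerate ones from being nonzero. Case (ii) has the same defect: the $K$-factor supplies $\alpha_3,\alpha_4$, and the $F$-rational factor $\alpha_1$ is again untouched.

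The missing ingredient is a genuine raising step through the third factor. The $\SL_2$ attached to $\alpha_1$ centralizes the $3A_1$ triple with $h=\beta^\vee+\alpha_3^\vee+\alpha_4^\vee$. One must show that the $3A_1$ coefficient cannot be invariant along this factor, so that a nonzero $\alpha_1$-component appears.

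The paper first passes to a neutral $3A_1$ coefficient in three moves:
\begin{itemize}
\item it integrates the theta function over $[L]$, which isolates $\varphi(0)$ and separates it from $a$;
\item it extends the character over a rational Lagrangian $\ell_3\supset\ell_2$ of $\fg_h(1)$;
\item it conjugates by $\exp(\ell_2^\perp)(F)$.
\end{itemize}
It then applies Jiang--Liu--Savin, Corollary 6.6, with hypotheses $\fg_h(1)\cong 3V_2$, $\dim\fg(0,2)=1$, $\dim\fg(2,2)=0$, and weights of absolute value at most $2$. That corollary raises the coefficient by some $d\,e_{-\alpha_1}$ with $d\in F^\times$. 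Only after this does $s_{\alpha_2}$ produce a cube $(0,e,0,b)$ with $e\in E^\times$.

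Without an input of this kind, a nonzero two-factor Whittaker coefficient gives you only a $3A_1$ coefficient, which the hypothesis does not exclude.

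Two smaller points. Describing the $Y(F)$-unfolding as a sum of coefficients of $a$ against twisted characters is correct only for $U_1U_2$-fixed $y$. The appeal to root exchange is unnecessary once you integrate over $[L]$.
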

\begin{proof}
It suffices to consider the pair $\alpha_3,\alpha_4$. Rational triality gives the other pairs in the split case. In the nonsplit case a nontrivial character of $K\backslash\A_K$ has two nonzero conjugate absolute coordinates, so exactly this pair occurs.

In $N_E/Z_E$ take the Lagrangian spanned by the root spaces
\[
 L=\langle e_1+e_3,\ e_2+e_3,\ e_1-e_4,\ e_1+e_4\rangle.
\]
We also write $L$ for its corresponding unipotent subgroup. This subspace is Galois stable, and is stable under $U_{\alpha_3}U_{\alpha_4}$. Integration of the theta function along $[L]$ is evaluation of its Schwartz function at $0$. Therefore a nonzero simultaneous Whittaker coefficient of $a$, together with $\varphi(0)\ne0$, gives a nonzero Fourier coefficient on
\[
 U=L\,U_\beta U_{\alpha_3}U_{\alpha_4},
\]
with character nontrivial precisely on the three indicated root groups. By continuity, this functional is nonzero on an actual vector of $\Pi$.

To obtain a neutral coefficient, put
\[
 x=c\,e_{-\beta}+b_3e_{-\alpha_3}+b_4e_{-\alpha_4},\quad
 h=\beta^\vee+\alpha_3^\vee+\alpha_4^\vee=(1,1,2,0).
\]
Here $c,b_3,b_4$ are nonzero, and in the nonsplit case $b_3,b_4$ are conjugate. These elements extend to a rational $\mathfrak{sl}_2$ triple of type $3A_1$. For its grading, $\dim\fg_h(\ge2)=5$ and $\dim\fg_h(1)=6$. The Lie algebra of $U$ contains $\fg_h(\ge2)$, and its intersection with $\fg_h(1)$ is the isotropic plane
\[
 \ell_2=\langle e_1-e_4,e_1+e_4\rangle.
\]
Choose an $F$-rational Lagrangian $\ell_3\supset\ell_2$ for the symplectic form $(X,Y)\mapsto\langle x,[X,Y]\rangle$, and put $N_x=\exp(\fg_h(\ge2)+\ell_3)$. Fourier expansion along $N_x/U\cong\mathbf G_a$ gives a nonzero extension of the original character. Every such extension is conjugate to the neutral character by an element of $\exp(\ell_2^\perp)(F)$. This follows from the nondegenerate symplectic pairing on $\ell_2^\perp/\ell_2$. Thus a neutral $3A_1$ coefficient is nonzero.

The $\SL_2$-factor corresponding to $\alpha_1$ centralizes the triple. For its standard two-dimensional representation $V_2$, the root calculation is
\begin{equation}\label{theta-A2:eq:raising-dimensions}
 \fg_h(1)\cong3V_2,\quad
 \dim\fg(0,2)=1,\quad \dim\fg(2,2)=0.
\end{equation}
Its weights on $\fg$ have absolute value at most $2$. These are the hypotheses of \cite{JiangLiuSavin2016-RaisingNilpotentOrbitsWaveFrontSets}*{Section~6, Corollary 6.6}, with $m=3$ and the linear cover. That corollary raises the coefficient by a nonzero element $d\,e_{-\alpha_1}$, $d\in F^\times$.

The resulting $\mathfrak{sl}_2$-triple has semisimple element
\[
 h'=\beta^\vee+\alpha_1^\vee+\alpha_3^\vee+\alpha_4^\vee=(2,0,2,0).
\]
The $F$-rational reflection $s_{\alpha_2}$ sends $h'$ to $2\beta^\vee=(2,2,0,0)$. Its neutral unipotent is therefore $N_E$. In cube coordinates the resulting character has the form $(0,e,0,b)$ with $e\in E^\times$ and $b\in F^\times$. Its discriminant is $\pm4bN_{E/F}(e)\ne0$. The sign depends only on the chosen cube pairing. This contradicts the assumed vanishing of all nondegenerate coefficients.
\end{proof}

\subsubsection{Differential identities}

In the split case, Lemma~\ref{theta-A2:lem:two-factor} and Fourier expansion on each product $[U_{\alpha_i}]\times[U_{\alpha_j}]$ give vanishing mixed differences for distinct $\SL_2$-factors. Conjugating by rational Weyl elements and using upper and lower unipotents, which generate each $\SL_2(\A_F)$, we obtain
\[
 a(g_1,g_2,g_3)=a_1(g_1)+a_2(g_2)+a_3(g_3),
\]
where constants can be absorbed into any summand. Equivalently,
\begin{equation}\label{theta-A2:eq:mixed-derivatives}
 R_XR_Ya=0\quad\text{for }X\in\mathfrak{sl}_{2,i},\ Y\in\mathfrak{sl}_{2,j},\ i\ne j.
\end{equation}
Indeed, subtracting the value at the identity in each factor reduces this to the mixed-difference identity. In the nonsplit case, absence of all nontrivial $K$-Whittaker coefficients implies invariance under the upper unipotent of $\SL_2(\A_K)$. Rational Weyl conjugation gives invariance under the lower unipotent, hence under $\SL_2(\A_K)$. Thus
\begin{equation}\label{theta-A2:eq:single-derivatives}
 R_Xa=0\quad\text{for }X\in\Lie(\Res_{K/F}\SL_2).
\end{equation}

We use the oscillator-corrected operators of \cite{GanSavin2005-MinimalRepresentationsDefinitionsProperties}*{Section~2.6}. Let $z=e_\beta$. For $X\in\fm_E^{\der}(\C)$ there is a quadratic element $n_X\in\cU_2(\fn_E)$ such that
\begin{equation}\label{theta-A2:eq:oscillator-operator}
 p_X=zX-\tfrac12n_X
\end{equation}
annihilates the oscillator representation. Equivalently, $\tfrac12n_X$ is the symmetrized quadratic moment map for the action on $\fn_E/\C z$, multiplied by $z$ in the oscillator realization. On \eqref{theta-A2:eq:Jacobi-tensor},
\[
 p_X(\theta_\varphi a)=d\psi(z)\,\theta_\varphi R_Xa.
\]
Hence the products $p_Xp_Y$ for distinct $\SL_2$-factors annihilate $W$ in the split case, and the operators $p_X$ for the $K$-factor annihilate $W$ in the nonsplit case. Passing to the closure is valid because differential operators are continuous in the chosen topology.

We explain why these identities hold on the real component of $\Pi$. The central Fourier map induces a $P_E(\A_F)$-equivariant injection
\begin{equation}\label{theta-A2:eq:induced-FJ}
 \widetilde T:\Pi\hookrightarrow
 \Ind_{P_E^{\semi}(\A_F)}^{P_E(\A_F)}W.
\end{equation}
This is the injectivity argument of \cite{GanSavin2005-MinimalRepresentationsDefinitionsProperties}*{Lemma 5.6}.

The families of operators above are stable under the adjoint action of $P_E$. In fact $p_X$ commutes with $\cU(\fn_E)$, and for a Levi element $m$,
\[
 \Ad(m)p_X=\lambda(m)p_{\Ad(m)X},
\]
where $\lambda$ is its action on $z$. The Levi preserves each $\mathfrak{sl}_2$-factor, with the stated Galois descent. Evaluating a differentiated induced function at an arbitrary $p\in P_E(\A_F)$ therefore reduces to one of the same identities on $W$. The real-place argument at the end of \cite{GanSavin2005-MinimalRepresentationsDefinitionsProperties}*{Section~5} now shows that these operators belong to $\Ann\Pi_w$.

\subsubsection{Principal symbols}

Identify $\fg_\C$ with its dual using an invariant bilinear form, and put
\begin{equation}\label{theta-A2:eq:A2-point}
 \xi=e_{-\beta}+e_{-\alpha_1}+e_{-\alpha_3}+e_{-\alpha_4}.
\end{equation}
Then $\xi\in\cO_{A_2}$. For example, in the eight-dimensional orthogonal representation its Jordan partition is $(3,3,1,1)$. An explicit check is obtained in the ordered basis $e_1,\ldots,e_4,f_1,\ldots,f_4$, with the form pairing $e_i$ with $f_i$.
\[
 \xi=E_{61}-E_{52}+E_{21}-E_{56}+E_{43}-E_{78}+E_{83}-E_{74},
\]
\[
 \xi^2=-2E_{51}-2E_{73},\quad \xi^3=0,
 \quad \rank\xi=4,\quad\rank\xi^2=2.
\]
Since $\xi$ has zero component dual to $\fg_{\beta^\vee}(1)$, the quadratic moment term in the principal symbol of $p_X$ vanishes at $\xi$. The $zX$ term is nonzero for $X=e_{\alpha_i}$, for $i\in\{1,3,4\}$. Thus
\[
 \operatorname{symb}(p_{e_{\alpha_i}})(\xi)\ne0,
 \quad
 \operatorname{symb}(p_{e_{\alpha_i}}p_{e_{\alpha_j}})(\xi)\ne0.
\]
In the split case use a product with $i\ne j$. In the nonsplit case use a single operator from either complex $\mathfrak{sl}_2$-factor of $\Lie(\Res_{K/F}\SL_2)$. Each is in $\Ann\Pi_w$, so its symbol must vanish on $\AV(\Ann\Pi_w)$. This contradicts \eqref{theta-A2:eq:A2-annihilator}, and proves Proposition~\ref{theta-A2:prop:detection}.

\subsubsection{Multiplicity one}

\begin{proposition}\label{theta-A2:prop:multiplicity}
For $E=F^3$ or $F\times K$, we have
\[
 \dim\Hom_{G_E(\A_F)}(\pi_E,\cA(G_E))=1.
\]

\end{proposition}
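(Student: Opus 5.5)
Existence comes for free: the residue space $\cR_E$ is a nonzero automorphic realization of $\pi_E$ by Proposition~\ref{theta-A2:prop:residual-data}. So the content is the bound $\dim\Hom_{G_E(\A_F)}(\pi_E,\cA(G_E))\le1$. I would prove this by a Fourier-coefficient argument, following the strategy of \cite{GanSavin2022-TwistedCompositionAlgebrasArthurPacketsTrialitySpin8} for the twisted composition algebra lifts.

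Let $A\colon\pi_E\to\cA(G_E)$ be a nonzero embedding and put $\Pi=A(\pi_E)$. The first step is to show that $\Pi$ is determined by its nondegenerate Fourier coefficients along $N_E$. At a real place $w$, the local component $\pi_{1,w}\cong\theta_{C_{0,w}}(\mathbf1)$ has associated variety containing $\overline{\cO_{A_2}}$. I would check this from the real models of Appendix~\ref{section:theta-A2-real-local}, or from the fact that the minimal orbit of $E_6$ restricts to the $A_2$ orbit for the see-saw pair. Proposition~\ref{theta-A2:prop:detection} then shows that $\Pi$ has a nonzero coefficient along a nondegenerate $E$-twisted cube.

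To upgrade this to injectivity of the coefficient map, I would argue as follows. A vector of $\Pi$ all of whose nondegenerate coefficients vanish generates a subrepresentation, which is either zero or equal to $\Pi$ by irreducibility. It cannot equal $\Pi$, since $\Pi$ has a nonzero nondegenerate coefficient, so it is zero. The constant term along $Z_E$ and the degenerate characters of $N_E/Z_E$ are handled the same way: any orbit smaller than the nondegenerate cubes would force the trivial representation (excluded since $\Pi\ne\mathbf1$) or contradict Lemma~\ref{theta-A2:lem:two-factor}.

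The second step is to classify which nondegenerate cube coefficients can occur. By \eqref{theta-A2:eq:local-model} and \eqref{theta-A2:eq:pi-theta-local}, a rational cube $\Sigma$ contributes only if $C_{\Sigma,v}\cong C_{0,v}$ at every place $v$. Rational twisted composition algebras that are locally isomorphic to $C_0$ everywhere correspond to embeddings of $E$ into degree-$3$ algebras that are split everywhere. By the Brauer--Hasse--Noether and Skolem--Noether argument after \eqref{theta-A2:eq:pi-theta-local}, these algebras are all isomorphic to $C_0$. Consequently the relevant rational cubes form a single $M_E(F)$-orbit, and I would take $\Sigma_0$ as a base point. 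For each $v$ the local space $\Hom_{N_E(F_v)}(\pi_{1,v},\psi_{\Sigma_0,v})$ is one-dimensional. Hence the global functional $\phi\mapsto A(\phi)_{N_E,\psi_{\Sigma_0}}(1)$ equals $c_A\prod_v\ell_v$ for a scalar $c_A$.

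The third step finishes the argument. Given two embeddings $A,A'$, the difference $A'-(c_{A'}/c_A)A$ has vanishing $\psi_{\Sigma_0}$-coefficient. By $M_E(F)$-equivariance its coefficients along every cube in the orbit also vanish, so by the first step the difference is zero. The main obstacle is the first step, specifically the verification of \eqref{theta-A2:eq:A2-annihilator} for $\pi_{1,w}$ at a real place and the control of degenerate coefficients. I would try to settle both by computing the wave-front set through the local theta correspondence with the compact or anisotropic torus $T_{C_0}$.
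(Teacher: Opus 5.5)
Your outline is the paper's proof. It gets existence from Segal's residue \eqref{theta-A2:eq:nonfield-residue} and uses Proposition~\ref{theta-A2:prop:detection} to produce a nondegenerate cube coefficient on the image of any nonzero embedding. It then uses \eqref{theta-A2:eq:local-model} with the Brauer--Hasse--Noether/Skolem--Noether remark after \eqref{theta-A2:eq:pi-theta-local} to confine such cubes to the rational orbit of $C_0$. Finally, one-dimensionality of $\Hom_{N_E(\A_F)}(\pi_E,\psi_{C_0})$ makes $A\mapsto A(\cdot)_{N_E,\psi_{C_0}}(1)$ injective.

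The ``control of degenerate coefficients'' you list as an obstacle is not needed. If it means that those coefficients vanish on $\Pi$, it is false: the residual realization $\cR_E$ is not cuspidal, and $Z_E=U_\beta$ is central in every standard unipotent radical, so its constant term along $Z_E$ is nonzero. All you need is the following. If $B=A'-(c_{A'}/c_A)A\neq0$, then $B(\pi_E)\not\cong\mathbf1$ has all nondegenerate cube coefficients zero: on the $C_0$-orbit by rational conjugation, and off it by \eqref{theta-A2:eq:local-model}. This contradicts Proposition~\ref{theta-A2:prop:detection} directly.

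The genuine gap is the input \eqref{theta-A2:eq:A2-annihilator} for $\pi_{1,w}$. You leave it open, and the route you propose does not work as stated. For $E=F^3$ or $F\times K$ the torus $T_{C_0}\cong\Res_{E/F}\mathbf G_m/\mathbf G_m$ is isotropic; this is exactly why Section~\ref{theta-A2:sec:regularization} is needed. Moreover $T_{C_0}(\R)$ is noncompact at every real place, so there is no compact-torus simplification. Theta and moment-map arguments also tend to bound $\AV(\Ann\pi_{1,w})$ from above, whereas \eqref{theta-A2:eq:A2-annihilator} is a lower bound.

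The paper obtains the lower bound from the infinitesimal character in Lemma~\ref{theta-A2:lem:residual-AV}:
\begin{itemize}
  \item Since $\pi_{1,w}$ is a subquotient of $I_{E_w}(1/2)$, its infinitesimal character is $\rho_{M_E}+\tfrac12\beta\sim(1,1,0,0)=\lambda$, which is half the neutral element $2\beta^\vee$ of $\cO_{A_2}$.
  \item By Barbasch--Vogan, the maximal primitive ideal $J_{\max}(\lambda)$ has associated variety $\overline{\cO_{A_2}}$.
  \item The inclusion $\Ann\pi_{1,w}\subset J_{\max}(\lambda)$ reverses to $\overline{\cO_{A_2}}\subset\AV(\Ann\pi_{1,w})$.
\end{itemize}
Your other suggestion, via the real Fourier models of Appendix~\ref{section:theta-A2-real-local}, would need an additional theorem turning a nonzero generalized Whittaker functional for $\cO_{A_2}$ into such a lower bound.
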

\begin{proof}
We follow the Fourier-functional argument of \cite{GanSavin2022-TwistedCompositionAlgebrasArthurPacketsTrialitySpin8}*{Section 16.11}. For any nonzero automorphic embedding of $\pi_E$, Proposition~\ref{theta-A2:prop:detection} and Lemma~\ref{theta-A2:lem:residual-AV} give a nondegenerate cube coefficient. By \eqref{theta-A2:eq:local-model}, its twisted composition algebra $C$ is isomorphic to $C_0$ at every place. The local-to-global observation following \eqref{theta-A2:eq:pi-theta-local} implies $C\cong C_0$ globally. Rational Levi conjugation therefore reduces every such coefficient to one fixed character $\psi_{C_0}$.

Composition with this Fourier functional gives an injection
\[
 \Hom_{G_E(\A_F)}(\pi_E,\cA(G_E))
 \hookrightarrow\Hom_{N_E(\A_F)}(\pi_E,\psi_{C_0}).
\]
To check injectivity, apply the detection assertion to the image of any nonzero map in the source. The target is one-dimensional by the local formula and the restricted tensor product, with the unramified normalization outside a finite set. Thus the multiplicity is at most one. Segal's nonzero residue \eqref{theta-A2:eq:nonfield-residue} gives the opposite inequality.
\end{proof}

\subsection{Regularization of the isotropic torus integral}\label{theta-A2:sec:regularization}

Throughout this subsection suppose that $E$ is not a field, put $J=M_3(F)^+$ and $C=C_0$, and write $H=H_C$ and $T=H^\circ$. The ambient connected group of type $E_6$ is split.

\subsubsection{A regularizing operator}

We use the three cocharacters $\lambda_i$ of \cite{GanSavin2022-TwistedCompositionAlgebrasArthurPacketsTrialitySpin8}*{Section~12.5}, with
\[
 \lambda_1(t)\lambda_2(t)\lambda_3(t)=1.
\]
Each is a minuscule cocharacter of the ambient adjoint $E_6$, and its centralizer is a Levi subgroup of type $D_5$. Its unipotent radical is abelian of dimension $16$. For $E=F^3$, use all three cocharacters. For $E=F\times K$, use the Galois-fixed cocharacter. It spans the $F$-split part of $T$. Its two opposite parabolics are defined over $F$.

Choose a finite place $u$ splitting $E$, outside all places where the data are ramified, and let $q_u$ be the residue cardinality. Write $A_i=\Omega_{J,u}(\lambda_i(\varpi_u))$. By \cite{MagaardSavin1997-ExceptionalThetaCorrespondencesI}*{Theorem 1.1(2)}, the unnormalized Jacquet module along a $D_5$ parabolic is the sum of a twist of the minimal representation of its Levi subgroup and a one-dimensional representation. The two central exponents, measured by $\lambda_i$, are $2,4$ in one direction and $-2,-4$ in the opposite direction. Indeed, $\lambda_i(t)$ acts with weight one on the $16$-dimensional radical, and the determinant exponents in \cite{MagaardSavin1997-ExceptionalThetaCorrespondencesI}*{Theorem 1.1(2)} are $2/16$ and $4/16$. See also \cite{GanSavin2022-TwistedCompositionAlgebrasArthurPacketsTrialitySpin8}*{Section 12.5}.

Define the finite sum of translations
\begin{equation}\label{theta-A2:eq:regularizing-operator}
 D=\prod_i
 \bigl(A_i+A_i^{-1}-q_u^2-q_u^{-2}\bigr)
 \bigl(A_i+A_i^{-1}-q_u^4-q_u^{-4}\bigr),
\end{equation}
using the cocharacters specified above, and let
\[
 c_D=D(\mathbf1)=\prod_i(2-q_u^2-q_u^{-2})(2-q_u^4-q_u^{-4})\ne0.
\]
For a central covering group, take integral multiples of the cocharacters and scale the exponents accordingly. The scalar remains nonzero. By inversion invariance, $D$ commutes with $H(\A_F)$, and it also commutes with $G_E(\A_F)$.

Let $Q_i=L_iU_i$ be one of these $D_5$ parabolics. The constant term of $\vartheta_J(D\phi)$ is zero at every $gt$ with $g\in G_E(\A_F)$ and $t\in T(\A_F)$. Indeed, the constant-term functional factors through $(\Omega_{J,u})_{U_i(F_u)}$, and \eqref{theta-A2:eq:regularizing-operator} annihilates this Jacquet module. Evaluation at $gt$ is compatible with this assertion because $gt$ commutes with $\lambda_i$ and belongs to $L_i(\A_F)$. The same holds for the opposite parabolic.

\subsubsection{Decay in the torus direction}

\begin{lemma}\label{theta-A2:lem:torus-decay}
For every $N>0$, there are a continuous seminorm $p_N$ and an exponent $B_N$ such that
\begin{equation}\label{theta-A2:eq:torus-decay}
 |\vartheta_J(tD\phi)(g)|\le p_N(\phi)\,\|g\|^{B_N}e^{-N\|H_T(t)\|},
 \quad g\in G_E(\A_F),\ t\in[T].
\end{equation}
Here $H_T$ is the logarithmic adelic norm map on the split part of $T$, representatives for its norm-one quotient are chosen in a compact set, and $\|g\|$ is an adelic height. The same assertion holds after differentiation in $g$.
\end{lemma}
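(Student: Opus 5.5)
The plan is to deduce the decay from the vanishing of the constant terms of $\vartheta_J(D\phi)$ along the $D_5$ parabolics $Q_i$ and their opposites, proved just above, together with the standard reduction-theory estimate for automorphic forms whose relevant constant terms vanish.

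First I reduce to the split part of $T$. Using the chosen compact set of representatives for the norm-one quotient, I write $t=a\,c$ with $c$ in a compact set and $a$ in the image of the real points of the split torus spanned by the $\lambda_i$. For $E=F^3$ this is the two-dimensional torus spanned by the $\lambda_i$ (subject to $\prod\lambda_i=1$). For $E=F\times K$ it is the single Galois-fixed cocharacter. The translate by $c$ is absorbed into a continuous seminorm, because the action of a compact set on $\Omega_J$ is uniformly bounded for seminorms. So it suffices to bound $\vartheta_J(D\phi)(ga)$ for $a=\lambda(e^{y})$, where $\lambda$ ranges over the relevant cocharacters.

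Next, given a direction $y$, I pick the cocharacter $\lambda_i$ and the sign for which $y$ lies in the corresponding closed chamber. The constraint $\prod\lambda_i=1$ guarantees that every direction lies in the positive cone of some $\pm\lambda_i$. The point $ga$ then moves to infinity in the Siegel domain attached to $Q_i$ or its opposite, with $g$ fixed and $a$ central in $L_i$. The standard estimate (Moeglin--Waldspurger, I.2.10, or Langlands' lemma on $\phi-\phi_{Q}$) gives, for an automorphic form $\Phi$ of uniform moderate growth,
\[|\Phi(x)-\Phi_{Q_i}(x)|\le p_N(\Phi)\,\|x\|^{B}\,e^{-N\langle\varpi,H_{Q_i}(x)\rangle}\]
for every $N$. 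The bound is uniform in a continuous seminorm of $\phi$, because $\vartheta_J$ is continuous into the space of forms of uniform moderate growth. Since $\Phi_{Q_i}(ga)=0$ by the vanishing constant term above, and $H_{Q_i}(ga)$ grows linearly in $\|y\|$ up to $O(\log\|g\|)$, I obtain \eqref{theta-A2:eq:torus-decay}. The polynomial factor in $\|g\|$ absorbs the loss from $g$ not being in reduced position. Differentiation in $g$ commutes with $D$ and with constant terms, so the same argument applies to $R_X\vartheta_J(D\phi)$.

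The main obstacle is the Siegel-domain estimate itself. The point $ga$ need not lie in a standard Siegel set for $Q_i$, and the other maximal parabolics of $E_6$ also contribute constant terms in the reduction-theory expansion. To handle this, I would use the full expansion over $F$-parabolics $Q$. The terms with $Q\not\supset$ a conjugate of $Q_i$ are controlled by moderate growth times the truncation characteristic functions; these vanish or decay for $a$ deep in the $\lambda_i$-chamber. The only remaining dominant terms are constant terms along parabolics contained in $Q_i$ (or its opposite). Each such constant term factors through the $U_i$-constant term, which is killed by $D$. In the nonsplit case only the $F$-rational cocharacter is available, and there I would check that the $F$-split rank-one chamber structure suffices.
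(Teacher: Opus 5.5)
Your overall strategy is the paper's. You reduce to the split part of $T$ via the compact norm-one quotient, use that $D$ kills the constant terms along the $D_5$ parabolics $Q_i$ and their opposites, and apply the reduction-theoretic approximation by constant terms. The paper takes these estimates from Gan--Savin, Section~6.1, applied to right translates by $g$. However, the step where you choose the parabolic has a genuine gap.

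You assert that $a$ is central in $L_i$ and that it suffices for $H_{Q_i}(ga)$ to grow linearly in $\|y\|$. In the split case $T$ has split rank two, while $Z(L_i)$ has split rank one. So $a=\exp(y)$ is \emph{not} central in $L_i$. The component of $y$ transverse to $\lambda_i$ lies in $\mathfrak a_0^{L_i}$ and pushes the point into a cusp of the Levi $L_i$.

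The approximation $|\Phi(x)-\Phi_{Q_i}(x)|\ll e^{-N(\cdot)}$ holds on a Siegel set for a minimal parabolic $P_0\subset Q_i$. Its exponent is $\min_{\alpha\in\Delta_0\smallsetminus\Delta_0^{Q_i}}\alpha(H_0(x))$, which is the smallest restricted weight of $y$ on $U_i$. It is not $\langle\varpi_{Q_i},H_{Q_i}(x)\rangle$.

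Concretely, the restricted roots of the split part of $T$ on $\mathfrak e_6$ form an $A_2$ system. Let $a,b$ be the values of $y$ on the simple restricted roots, and normalize so that the radical of $\lambda_1$ carries the weights $a$ and $a+b$. On the closed cone where both weights are $\ge 0$, the quantity $\langle\varpi_{Q_1},y\rangle\propto 2a+b$ does grow linearly. Yet on the boundary ray $a=0$, which is the ray of another $\pm\lambda_j$, the relevant simple root has value $0$. Your estimate therefore gives no decay there. Along that ray the dominant term is the constant term along $Q_{\pm\lambda_j}$, not along $Q_1$. So picking any $\pm\lambda_i$ whose closed cone contains $y$ does not give a bound that is uniform in the direction of $y$. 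The phrase ``deep in the $\lambda_i$-chamber'' in your last paragraph names the requirement but does not secure it.

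The missing step is a selection rule that makes the \emph{minimal} radical weight at least $c\|y\|$. This is what the paper does. On a Weyl chamber with $a,b\ge 0$, use the radical with weights $a,a+b$ when $a\ge b$, and the radical with weights $b,a+b$ otherwise. Equivalently, use the ray $\pm\lambda_i$ nearest to $y$. The minimal weight is then $\max(a,b)\ge\frac12(a+b)$.

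With this choice your argument goes through. Take a Siegel set for a $P_0\subset Q_i$ for which $y$ is dominant, use the vanishing of $\Phi_{Q_i}$, and account for the loss $\|g\|^{B}$ from moving $g$ into the vector. In split rank one, $y$ is a multiple of the rational cocharacter and all weights on the appropriate radical are proportional to $\|y\|$, so no refinement is needed.
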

\begin{proof}
We argue as in the proof of \cite{GanSavin2020-ExceptionalSiegelWeilFormulaPolesSpin}*{Proposition 6.1}. By compactness of the norm-one quotient, it suffices to consider the split norm directions of $T$. In split rank two, these have six chambers bounded by the rays $\pm\lambda_i$. On a chamber with simple restricted-root values $a,b\ge0$, choose the $D_5$ radical with weights $a,a+b$ when $a\ge b$, and that with weights $b,a+b$ otherwise. In split rank one, use the appropriate one of the two opposite rational parabolics. The relevant weights are then bounded below by a positive multiple of $\|H_T(t)\|$, and the corresponding constant terms vanish by the preceding calculation. The estimates of \cite{GanSavin2020-ExceptionalSiegelWeilFormulaPolesSpin}*{Section 6.1}, applied to right translates by $g$ and their derivatives, give the assertion, including continuity of the seminorm.
\end{proof}

\begin{definition}\label{theta-A2:def:regularized-theta}
For $\phi\in\Omega_J$, define
\begin{equation}\label{theta-A2:eq:regularized-theta}
 \theta_C^{\reg}(\phi,\mathbf1)(g)
 =c_D^{-1}\int_{[H]}\vartheta_J(hD\phi)(g)\rd h.
\end{equation}
Lemma~\ref{theta-A2:lem:torus-decay} proves absolute convergence, smoothness, and moderate growth of \eqref{theta-A2:eq:regularized-theta}.
Let $\Theta_C^{\reg}(\mathbf1)$ denote the image of this map.
\end{definition}

It is easy to see that the definition is independent of the chosen regularization of this kind.

\subsubsection{Non-vanishing of the regularized lift}

\begin{proposition}\label{theta-A2:prop:regularized-theta}
The map \eqref{theta-A2:eq:regularized-theta} is nonzero, and its image is an automorphic realization of $\pi_E$.
\end{proposition}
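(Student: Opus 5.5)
The plan is to compare the regularized torus integral with Segal's residue via the Fourier coefficient along $N_E$, rather than computing the lift directly. First, I will check that the image $\Theta_C^{\reg}(\mathbf1)$ is a $G_E(\A_F)$-equivariant quotient of $\Omega_J$ on which $H(\A_F)$ acts trivially. The point is that $D$ commutes with both $H(\A_F)$ and $G_E(\A_F)$, and the integral over $[H]$ is absolutely convergent by Lemma~\ref{theta-A2:lem:torus-decay}. Consequently, at each place the map factors through the maximal $H(F_v)$-coinvariant quotient of $\Omega_{J,v}$. By item (i) of Section~\ref{theta-A2:sec:local} together with \eqref{theta-A2:eq:pi-theta-local}, that quotient is $\theta_{C_{0,v}}(\mathbf1)\cong\pi_{1,v}$, which is irreducible. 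The image is therefore either zero or an automorphic realization of $\pi_E=\bigotimes_v'\pi_{1,v}$, and the proposition reduces to non-vanishing.

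To prove non-vanishing, I will compute the $\psi_{C_0}$-Fourier coefficient of $\theta_C^{\reg}(\phi,\mathbf1)$ along $N_E$. Because $D$ commutes with $N_E(\A_F)$ and the convergence is uniform, I can exchange the coefficient integral with the $[H]$-integral. The coefficient $\vartheta_J(D\phi)_{N_E,\psi_{\Sigma}}$ is then expanded over the rational rank-one elements of the $N_E$-abelianization in $\Omega_J$, in the same way as the unfolding in Section~\ref{theta-PGL3-to-G2-proof}. The rational cubes $\Sigma$ whose twisted composition algebra is $C_0$ form a single $H(F)$-orbit, with stabilizer trivial up to the component group. Unfolding over this orbit should give
\begin{align*}
 \theta_C^{\reg}(\phi,\mathbf1)_{N_E,\psi_{C_0}}(g)
 = c_D^{-1}\int_{H(\A_F)} \vartheta_J(hD\phi)_{X_0}(g)\rd h,
\end{align*}
where $X_0$ is a rank-one point and $\vartheta_J(\cdot)_{X_0}$ is the corresponding factorizable Fourier functional on $\Omega_J$, as used for $L_X$ above. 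The resulting adelic integral factors as an Euler product of local integrals. At each place the local integral is a nonzero element of the one-dimensional space in \eqref{theta-A2:eq:local-model}. At almost all places it equals the spherical value, after normalizing by the local factor of $D$; at $u$ this factor is $c_D$, because $D$ acts on the relevant unramified vector through its value on the trivial character.

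The main obstacle is the unfolding itself. Two things need care. First, I must show that the contributions of the non-open $H(F)$-orbits vanish. For the degenerate Fourier terms I plan to use the fact that $D$ kills their Jacquet-module components, by the same Magaard--Savin exponent argument used for the constant terms. Second, I must justify convergence of the unfolded local integrals over $H(F_v)$, which are integrals over a torus. For the nonarchimedean local integrals I would use the rapid decay of rank-one Fourier coefficients of the minimal representation along the split torus directions, as in \cite{GanSavin2020-ExceptionalSiegelWeilFormulaPolesSpin}. At the real places I would instead use Proposition~\ref{theta-A2:prop:real-fourier-model}. Once these points are settled, choosing local data so that each local integral is nonzero shows that the regularized lift is nonzero. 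Proposition~\ref{theta-A2:prop:multiplicity} then identifies its image with the unique automorphic realization of $\pi_E$, which coincides with Segal's residue space $\cR_E$.
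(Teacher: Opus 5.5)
Your outline follows the paper's main line and reaches its formula. The reduction of the second assertion to non-vanishing via the $H(\A_F)$-coinvariants is fine, and more explicit than the paper's one-line appeal to Proposition~\ref{theta-A2:prop:multiplicity}; it does use that item (i) describes the full coinvariant quotient. Your unfolded integral is the first equality in \eqref{theta-A2:eq:regularized-Fourier}, and the cancellation of $c_D$ at $u$ is the paper's translation argument.

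Two parts, however, are misdirected. The first is the orbit bookkeeping. $H$ commutes with $G_E\supset N_E$, so it fixes every cube. The expansion must therefore be taken along the ambient $V_J=N_J/Z_J$, where the minimal representation only sees the minimal orbit and $0$. The characters there restricting to the nondegenerate $\psi_{C_0}$ form the fiber of the minimal orbit over $\psi_{C_0}$. This fiber is a \emph{closed} $H$-torsor not containing $0$, and its rational points form one $H(F)$-orbit with \emph{trivial} stabilizer. So the unfolding has exactly one term, and there are no degenerate or non-open contributions to remove. Your proposed remedy would not apply in any case. $D$ kills the Jacquet modules along the $D_5$ radicals $U_i$, which controls constant terms along $U_i$, not Fourier coefficients along $N_J$.

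The second point is where the real work lies: integrability of $W_{D\phi}(h)=\vartheta_J(hD\phi)_{N_J,\widetilde\psi_C}(1)$ over $H(\A_F)$, together with a choice of data giving a nonzero integral. The inputs you cite do not supply this. Lemma~\ref{theta-A2:lem:torus-decay} controls the automorphic function on $[H]$, not this function on $H(\A_F)$. Proposition~\ref{theta-A2:prop:real-fourier-model} is only a dimension count.

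What does work is, again, closedness of the torsor. At a finite place, if $\phi_v$ is fixed by an open compact $U\subset N_J(F_v)$, then $\ell_v(h\phi_v)=0$ unless $h^{-1}\widetilde\psi_C$ is trivial on $U$, which is a compact condition on the closed torsor. Hence $h\mapsto\ell_v(h\phi_v)$ is compactly supported. At a real place, smooth vectors are Schwartz near $\lambda=1$ in Frahm's model, so the restriction is rapidly decreasing.

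The paper then avoids the Euler product altogether. Outside $S$ the spherical vector restricts to the characteristic function of $H(\mathcal O_v)$. At $S$, convolution by a Schwartz function on $N_J(F_S)$ multiplies $W_\phi$ by a Fourier-transform cutoff. Together these give $W_\phi\in C_c^\infty(H(\A_F))$ with nonzero integral, and then absolute convergence of the unfolding and $\int W_{D\phi}=c_D\int W_\phi$ are immediate.

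Your local route also works, but it needs three further inputs:
\begin{itemize}
\item factorization of the global Fourier functional;
\item the unramified local value $1$;
\item the same cutoff at each place, to justify ``choose local data so that each local integral is nonzero''.
\end{itemize}
Finally, the factor $c_D$ at $u$ comes from the $H(F_u)$-invariance of the local integral, so that $D$ contributes $D(\mathbf1)$. It is not that $D$ acts on the spherical vector by a scalar; in general it does not.
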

\begin{proof}
We use the nondegenerate Fourier-coefficient calculation of \cite{GanSavin2022-TwistedCompositionAlgebrasArthurPacketsTrialitySpin8}*{Proposition 14.5}. Let $\psi_C$ be the cube character of $N_E$ attached to $C$. In the minimal orbit for the Heisenberg parabolic of the ambient $E_6$, the fiber over this character is a closed $H$-torsor. Choose an $F$-point $\widetilde\psi_C$ of this fiber. If $N_J$ denotes the ambient Heisenberg radical, put
\[
 W_\phi(h)=\vartheta_J(h\phi)_{N_J,\widetilde\psi_C}(1).
\]
Outside a sufficiently large finite set $S$, the normalized spherical vector restricts to the characteristic function of $H(\mathcal O_v)$ on the torsor. At the remaining places choose data with $W_\phi\not\equiv0$. Convolution by a Schwartz function on $N_J(F_S)$ multiplies $W_\phi$ by a Schwartz Fourier-transform cutoff. Since the torsor is closed, a compactly supported cutoff near a point where $W_\phi\ne0$ gives data satisfying
\begin{equation}\label{theta-A2:eq:localized-torsor}
 W_\phi\in C_c^\infty(H(\A_F)),\quad
 \int_{H(\A_F)}W_\phi(h)\rd h\ne0.
\end{equation}

Choose these data before applying $D$. Since $D$ is a finite sum of translations in $H(F_u)$, the function $W_{D\phi}$ is again compactly supported. The minimal-orbit Fourier expansion and the torsor identity of \cite{GanSavin2022-TwistedCompositionAlgebrasArthurPacketsTrialitySpin8}*{Proposition 14.5} now unfold absolutely to give
\begin{align}
 \theta_C^{\reg}(\phi,\mathbf1)_{N_E,\psi_C}(1)
 &=c_D^{-1}\int_{H(\A_F)}W_{D\phi}(h)\rd h\notag\\
 &=\int_{H(\A_F)}W_\phi(h)\rd h\ne0.\label{theta-A2:eq:regularized-Fourier}
\end{align}
The second equality follows by translation of the compactly supported integrand, with total scalar $c_D$.
This completes the proof of non-vanishing.
The image is an automorphic realization of $\pi_E$ by Proposition~\ref{theta-A2:prop:multiplicity}.
\end{proof}

\subsubsection{The see-saw identity}

\begin{proposition}\label{theta-A2:prop:paired-seesaw}
For every $f\in\cA_{\cusp}(G)$ and $\phi\in\Omega_J$, the identity
\begin{equation}\label{theta-A2:eq:paired-seesaw}
 \int_{[G]}\theta_C^{\reg}(\phi,\mathbf1)(g)\overline{f(g)}\rd g
 =c_D^{-1}\int_{[H]}\theta_J(D\phi,f)(h)\rd h
\end{equation}
holds with absolutely convergent integrals. In particular, if $\theta_J(\phi',f)=0$ for all $\phi'\in\Omega_J$, then every period on the left is zero. If $T_C$ is anisotropic, the same statement holds for the ordinary theta lift with $D=1$.
\end{proposition}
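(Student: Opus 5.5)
The identity should follow from Fubini once a single dominating function on $[G]\times[H]$ is available. I would first substitute Definition~\ref{theta-A2:def:regularized-theta} into the left-hand side. This gives the formal double integral
\begin{align*}
 c_D^{-1}\int_{[G]}\int_{[H]}\vartheta_J(hD\phi)(g)\,\overline{f(g)}\rd h\rd g .
\end{align*}
Interchanging the two integrations and recognizing the inner $[G]$-integral as $\theta_J(D\phi,f)(h)$, by \eqref{theta-A2:eq:backward-lift}, gives the right-hand side of \eqref{theta-A2:eq:paired-seesaw}. Here $D\phi\in\Omega_J$, since $D$ is a finite sum of translations by elements of $H(F_u)$. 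Everything therefore rests on absolute convergence of the double integral.

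For absolute convergence I would use Lemma~\ref{theta-A2:lem:torus-decay}, applied to $g\in G(\A_F)\subset G_E(\A_F)$. It gives
\begin{align*}
 |\vartheta_J(tD\phi)(g)|\le p_N(\phi)\,\|g\|^{B_N}e^{-N\|H_T(t)\|}.
\end{align*}
I would write $[H]$ as a finite union of translates of $[T]$, using that the component group $H/T$ is finite and that $D$ commutes with $H(\A_F)$ so that the bound is uniform over those finitely many translates. The torus factor is then integrable on $[T]$ once $N$ exceeds the split rank, because the norm-one quotient is compact. On the other factor, the cusp form $f$ is rapidly decreasing on a Siegel domain of $[G]$, so $\|g\|^{B_N}|f(g)|$ is integrable over $[G]$. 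One point needs care: the lemma bounds by heights on $G_E(\A_F)$ for $t$ in a set of representatives. I would choose those representatives in a compact set times the split part, and restrict the height on $G_E$ to $G$, where it is comparable to a height on $G$. With that, Fubini–Tonelli applies, and both sides are finite and equal.

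The consequences are then immediate. If $\theta_J(\phi',f)=0$ for all $\phi'$, apply this with $\phi'=D\phi$; the right side vanishes, hence so does every left-hand period. In the anisotropic case, $[H]$ is compact, and the moderate growth of $\vartheta_J$, uniform on compact sets of $h$, together with rapid decay of $f$, gives convergence with $D=1$ and $c_D=1$.

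The main obstacle is this uniformity: the bound must hold as $t$ varies over $[T]$ while $g$ runs over a Siegel domain of $G$ rather than of $G_E$. I would settle it by checking that the seminorm and the exponent $B_N$ in Lemma~\ref{theta-A2:lem:torus-decay} are independent of $t$, which is how that lemma is stated.
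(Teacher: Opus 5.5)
Your proposal is correct and follows essentially the same route as the paper. You dominate $|\vartheta_J(hD\phi)(g)f(g)|$ on $[G]\times[H]$ using Lemma~\ref{theta-A2:lem:torus-decay} and rapid decrease of the cusp form on a Siegel set, apply Fubini, take $\phi'=D\phi$ for the vanishing statement, and use compactness of $[H]$ with uniform moderate growth in the anisotropic case.

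One small correction. $[H]$ is not a finite union of translates of $[T]$, because $H(\A_F)/T(\A_F)$ is an infinite compact group (a restricted product of the local component groups). You should instead take translates $k$ in a compact set and bound $p_N(k\phi)$ uniformly by continuity of the action; your step of commuting $D$ past $H(\A_F)$ already supports this.
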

\begin{proof}
Fix an order of torus decay in \eqref{theta-A2:eq:torus-decay} for which its right-hand side is integrable over $[H]$. The corresponding power of $\|g\|$ is integrable against $|f(g)|$ on a Siegel set, by rapid decrease of cusp forms. Therefore
\[
 \int_{[G]}\int_{[H]}
 |\vartheta_J(hD\phi)(g)f(g)|\rd h\rd g<\infty.
\]
Fubini's theorem and the two commuting inclusions in the see-saw give \eqref{theta-A2:eq:paired-seesaw}. For an anisotropic torus $[H]$ is compact, and the same proof uses uniform moderate growth on this compact factor. The vanishing implication follows by taking $\phi'=D\phi$.
\end{proof}

\subsection{Comparison of realizations and non-vanishing}\label{theta-A2:sec:conclusion}

\begin{proposition}\label{theorem:exceptional-siegel-weil}
Every element of $\cR_E$ is a finite sum of theta lifts of the trivial representation of $H_C(\A_F)$. If $E$ is not a field, these lifts are regularized as in Section~\ref{theta-A2:sec:regularization}, and the only $C$ needed is $C_0$. If $E$ is a field, the ordinary theta lifts suffice.
\end{proposition}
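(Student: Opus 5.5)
The argument splits according to whether $E$ is a field.

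\textbf{$E$ not a field.} By Proposition~\ref{theta-A2:prop:residual-data}, $\cR_E\cong\pi_E$ is irreducible. Proposition~\ref{theta-A2:prop:regularized-theta} shows that the regularized lift $\Theta_{C_0}^{\reg}(\mathbf1)$ is a nonzero automorphic realization of $\pi_E$. Proposition~\ref{theta-A2:prop:multiplicity} says $\pi_E$ occurs with multiplicity one in $\cA(G_E)$. Hence the two images of $\pi_E$ coincide: $\cR_E=\Theta_{C_0}^{\reg}(\mathbf1)$. Only $C_0$ is needed, because by the discussion after \eqref{theta-A2:eq:pi-theta-local} every embedding of a nonfield $E$ into a degree-three central simple algebra is conjugate into $M_3(F)^+$ with a unique twisted composition algebra. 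Elements of $\cR_E$ are then single regularized lifts, in particular finite sums of them.

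\textbf{$E$ a field.} Here $T_C$ is anisotropic, so $[H_C]$ is compact and the ordinary theta integral converges, as noted in Proposition~\ref{theta-A2:prop:paired-seesaw}. I would argue constituent by constituent.
\begin{enumerate}
\item By Proposition~\ref{theta-A2:prop:residual-data}, $\cR_E$ is semisimple; write it as $\bigoplus\Pi$.
\item Each $\Pi$ has local components that are constituents of the maximal semisimple quotient of $I_{E_v}(1/2)$. By local statement (i), these are of the form $\theta_{C_v}(\mathbf1)$ for local twisted composition algebras $C_v$.
\item I expect the archimedean annihilator condition \eqref{theta-A2:eq:A2-annihilator} to hold for residual representations; this is the role of Lemma~\ref{theta-A2:lem:residual-AV}. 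Granting it, the detection argument of Proposition~\ref{theta-A2:prop:detection}, extended to field $E$, gives a nondegenerate cube coefficient of $\Pi$. By \eqref{theta-A2:eq:local-model}, the cube determines a global $C$ with $C_v\cong C_{\Sigma_v}$ for all $v$.
\item The Fourier computation of \cite{GanSavin2022-TwistedCompositionAlgebrasArthurPacketsTrialitySpin8}*{Proposition 14.5} shows that $\Theta_C(\mathbf1)$ has a nonzero $\psi_C$-coefficient, so $\Theta_C(\mathbf1)\ne0$ and it realizes the same abstract representation.
\item The multiplicity-one argument of Proposition~\ref{theta-A2:prop:multiplicity} transports to field $E$: the Fourier functional $\psi_C$ is locally unique by \eqref{theta-A2:eq:local-model}. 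This gives $\Pi=\Theta_C(\mathbf1)$ inside $\cA(G_E)$.
\end{enumerate}
Summing over the finitely many constituents expresses every element of $\cR_E$ as a finite sum of theta lifts.

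\textbf{Main obstacle.} The hard part is the field case. Proposition~\ref{theta-A2:prop:detection} and the multiplicity-one proposition were stated only for $E=F^3$ or $F\times K$. I would first try to cite \cite{GanSavin2022-TwistedCompositionAlgebrasArthurPacketsTrialitySpin8}*{Section 16} directly, where the Siegel--Weil identity for anisotropic tori is proved. If that is not available, I would redo the Fourier--Jacobi argument using Galois descent on the three $\SL_2$-factors, treating the cyclic and non-Galois cubic cases separately. A secondary check in the nonfield case is that the regularized image lies in $\cA(G_E)$ with the moderate growth needed to compare with $\cR_E$; Lemma~\ref{theta-A2:lem:torus-decay} supplies this.
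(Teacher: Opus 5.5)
Your non-field case is exactly the paper's argument. The gap is in step 5 of the field case. Multiplicity one does not transport to a cubic field $E$, and a constituent $\Pi\subset\cR_E$ need not equal any single $\Theta_C(\mathbf1)$.

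In Proposition~\ref{theta-A2:prop:multiplicity}, the map into $\Hom_{N_E(\A_F)}(\pi_E,\psi_{C_0})$ is injective only because every nondegenerate cube coefficient is pushed into the single rational orbit of $\psi_{C_0}$. That step is a Hasse principle for twisted composition algebras: a non-field $E$ embeds only into split $D$, and Brauer--Hasse--Noether plus Skolem--Noether then finish. For a cubic field $E$ (with $K$ split) this Hasse principle fails. The global $C$ are indexed by degree-three algebras $D$ split by $E$, modulo $D\sim D^{\mathrm{op}}$, because $D^+\cong(D^{\mathrm{op}})^+$. The localizations, however, only record the set $\mathcal S$ of places where $D_v$ is division. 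Count invariants in $\{1/3,2/3\}^{\mathcal S}$ with integral sum, modulo negation. For $n=|\mathcal S|\geq1$ this gives $(2^{n-1}+(-1)^n)/3$ non-isomorphic global $C$ with the same localizations. The count is zero for $n=1$, matching Segal's condition $|\mathcal S|\neq1$, and it is three for $n=4$.

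Each such $C$ gives a nonzero $\Theta_C(\mathbf1)$ abstractly isomorphic to $\pi_{\mathcal S}$. By the computation in \cite{GanSavin2022-TwistedCompositionAlgebrasArthurPacketsTrialitySpin8}*{Proposition 14.5}, its nondegenerate cube coefficients vanish unless $C_\Sigma\cong C$ globally, since the rational minimal-orbit fibre is empty otherwise. These realizations are therefore linearly independent. So for $|\mathcal S|\geq4$ the representation $\pi_{\mathcal S}$ has multiplicity greater than one, and nothing in your argument forces a constituent of $\cR_E$ to be one of them rather than a diagonal copy.

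What your detection step actually gives is an injection of $\Hom_{G_E(\A_F)}(\pi_{\mathcal S},\cA(G_E))$ into $\bigoplus_C\Hom_{N_E(\A_F)}(\pi_{\mathcal S},\psi_C)$, with the sum over all coherent global $C$. Combined with the independence above, this shows that the whole $\pi_{\mathcal S}$-isotypic subspace equals $\sum_C\Theta_C(\mathbf1)$. That is why the statement only claims \emph{finite sums}. The paper obtains this from three ingredients:
\begin{itemize}
\item Segal's decomposition of the residue into the $\pi_{\mathcal S}$ with $|\mathcal S|\neq1$;
\item the coherence and count in \cite{GanSavin2022-TwistedCompositionAlgebrasArthurPacketsTrialitySpin8}*{Lemma 15.5 and Section 16.10};
\item \cite{GanSavin2022-TwistedCompositionAlgebrasArthurPacketsTrialitySpin8}*{Theorems 16.6 and 16.8}.
\end{itemize}

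Separately, extending Proposition~\ref{theta-A2:prop:detection} to cubic fields is not a formality. The raising step in Lemma~\ref{theta-A2:lem:two-factor} uses the Galois-fixed factor $\alpha_1$, which has no analogue when Galois permutes all three $\SL_2$-factors, so a new Fourier--Jacobi argument would be needed. The paper avoids redoing this by citing the same source.
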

\begin{proof}
First suppose that $E$ is not a field. By Proposition~\ref{theta-A2:prop:regularized-theta}, the nonzero space $\Theta_{C_0}^{\reg}(\mathbf1)$ is an automorphic realization of the irreducible representation $\pi_E$. By Proposition~\ref{theta-A2:prop:multiplicity}, this representation has multiplicity one in $\cA(G_E)$. Thus \eqref{theta-A2:eq:nonfield-residue} gives equality of subspaces
\[
 \cR_E=\Theta_{C_0}^{\reg}(\mathbf1)\subset\cA(G_E).
\]

If $E$ is a field, all tori $T_C$ in question are anisotropic. The ordinary global theta lifts are nonzero by \cite{GanSavin2022-TwistedCompositionAlgebrasArthurPacketsTrialitySpin8}*{Proposition 14.5 and Corollary 14.6}. Segal's square-integrable residue is the sum of the representations $\pi_{\mathcal S}$ described in \cite{Segal2019-DegenerateResidualSpectrumQuasiSplitFormsSpin8}*{Theorem 5.4(3), (5.20)}. Here $\mathcal S$ is a finite set of places where $E_v$ is a field, and $|\mathcal S|\ne1$. For these constituents, the coherence assertion of \cite{GanSavin2022-TwistedCompositionAlgebrasArthurPacketsTrialitySpin8}*{Lemma 15.5(1)(ii)} and the count in \cite{GanSavin2022-TwistedCompositionAlgebrasArthurPacketsTrialitySpin8}*{Section~16.10} apply. By \cite{GanSavin2022-TwistedCompositionAlgebrasArthurPacketsTrialitySpin8}*{Theorems 16.6 and 16.8}, the sum of theta lifts over the coherent $C$ fills the corresponding isotypic subspace of $\cA_2(G_E)$. It therefore contains Segal's actual residual subspace.

\end{proof}

\begin{proof}[Proof of Theorem~\ref{theorem:exceptional-theta-G2-to-PGL3}]
Choose $E$, $f$, and $R$ as in \eqref{theta-A2:eq:residue-period}. By Proposition~\ref{theorem:exceptional-siegel-weil}, write
\[
 R=\sum_{j=1}^m\theta_{C_j}^{\reg}(\phi_j,\mathbf1),
\]
where the superscript is omitted when the torus is anisotropic. At least one summand has a nonzero pairing with $f$. If all theta lifts of $f$ to its corresponding $\Aut(J_j)$ vanished, the see-saw identity of Proposition~\ref{theta-A2:prop:paired-seesaw} would make that pairing zero. Hence one such theta lift is nonzero.

Suppose that $J_j=D^+$ with $D$ division. Then $E$ is a field and $T_C\cong\Res_{E/F}\mathbf G_m/\mathbf G_m$ is anisotropic. Also $H_C(F)=T_C(F)$, whereas $H_C(\A_F)/T_C(\A_F)$ is compact. With compatible measures,
\[
 \theta_C(\phi,\mathbf1)=
 \int_{H_C(\A_F)/T_C(\A_F)}
       \theta_{T_C}(h\phi,\mathbf1)\rd\bar h.
\]
A nonzero pairing of the left-hand side with $f$ implies a nonzero pairing for some $h\phi$ on the right. Since $[T_C]$ is compact, the connected see-saw identity gives
\[
 \int_{[T_C]}\theta_J(h\phi,f)(t)\rd t
 =\int_{[G]}\theta_{T_C}(h\phi,\mathbf1)(g)\overline{f(g)}\rd g\ne0.
\]
Thus the lift to the connected group $\mathrm{PD}^\times(\A_F)$ is nonzero, as asserted.
\end{proof}

\part{Global harmonic analysis}\label{part:global-harmonic-analysis}

\section{Stabilization of the trace formula}

\begin{theorem}\label{theorem:stabilization-twisted-PGSO8}
  We have the following stabilization of the trace formula for $\widetilde{\PGSO_8}$ and $\rG_2$ for each Hecke--Satake family $c$ for $\widetilde{\PGSO_8}$ and $\rG_2$.
  \begin{align*}
    I^{\widetilde{\PGSO_8}}_{\disc, c}(f^{\widetilde{\PGSO_8}})
    =
    S^{\rG_2}_{\disc, c}(f^{\widetilde{\PGSO_8}}_{\rG_2})
    +
    \frac{1}{2} S^{\SO_4}_{\disc, c}(f^{\widetilde{\PGSO_8}}_{\SO_4})
    +
    S^{\SL_3}_{\disc, c}(f^{\widetilde{\PGSO_8}}_{\SL_3}),
  \end{align*}
  and
  \begin{align*}
    I^{\rG_2}_{\disc, c}(f^{\rG_2})
    =
    S^{\rG_2}_{\disc, c}(f^{\rG_2})
    +
    \frac{1}{2} S^{\SO_4}_{\disc, c}(f^{\rG_2}_{\SO_4})
    +
    \frac{1}{3} S^{\PGL_3}_{\disc, c}(f^{\rG_2}_{\PGL_3}).
  \end{align*}
\end{theorem}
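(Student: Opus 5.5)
This is an instance of the general stabilization of the (twisted) trace formula of M{\oe}glin--Waldspurger \cites{MoeglinWaldspurger2016-StabilisationFormuleTracesTordue1,MoeglinWaldspurger2016-StabilisationFormuleTracesTordue2}. For the twisted case we invoke it under the twisted weighted fundamental lemma hypothesis of Remark~\ref{remark:weighted-fundamental-lemma-hypothesis}; for $\rG_2$ we use Arthur's stabilization. The task is therefore to specialize their formula
\begin{align*}
  I^{\widetilde G}_{\disc,c}(f)=\sum_{\fe} i(\widetilde G,G^{\fe})\,S^{G^{\fe}}_{\disc,c}(f_{G^{\fe}}),
\end{align*}
which runs over elliptic endoscopic data up to equivalence. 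We must identify the data and compute the coefficients.

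\textbf{Step 1: the elliptic endoscopic data.} Classify the semisimple elements $s\in\widehat{G}\rtimes\widehat\theta$ up to conjugacy whose centralizer has finite center modulo $Z(\widehat G)^{\widehat\theta}$. For $\widetilde{\PGSO_8}$ we have $\widehat G=\Spin_8(\C)$, and the $\widehat\theta$-twisted classes correspond to elliptic classes of elements of the coset $\Spin_8\rtimes\theta\subset\Spin_8\rtimes S_3$. One finds exactly three such classes:
\begin{itemize}
  \item $s=\widehat\theta$, with centralizer $\rG_2(\C)$;
  \item an element with centralizer $\SO_4$-type, dual to $\SO_4$;
  \item an order-$3$ twisted element whose centralizer is $\PGL_3(\C)$, which is dual to $\SL_3$.
\end{itemize}
This list is consistent with the $H$ appearing in Proposition~\ref{proposition:elliptic-inner-product-transfer}. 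For $\rG_2$ the elliptic endoscopic groups are $\rG_2$, $\SO_4$, and $\PGL_3$; these come from the order-$2$ and order-$3$ elements whose centralizers in $\rG_2(\C)$ are $\SO_4(\C)$ and $\SL_3(\C)$. The Hecke--Satake decomposition is compatible with these data because transfer of unramified Hecke algebras is given by the $L$-embeddings $\iota_H$, together with the fundamental lemma.

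\textbf{Step 2: the coefficients.} Here $i(\widetilde G,G^{\fe})=\abs{\pi_0(\mathrm{Out}(\fe))}^{-1}\,\abs{Z(\widehat G)^{\Gamma,\widehat\theta}}^{-1}\,\abs{Z(\widehat{G^{\fe}})^{\Gamma}}\,\tau$-ratio, computed via Tamagawa numbers as in Kottwitz--Shelstad. All groups are split, and $Z(\Spin_8)^{\theta}$ is trivial, so the essential input is the outer automorphism groups:
\begin{itemize}
  \item $\SO_4$ has an outer automorphism of order $2$, giving the factor $1/2$ in both formulas.
  \item $\PGL_3\subset\rG_2$ has $\mathrm{Out}$ equal to $N(\SL_3)/\SL_3$ modulo the inner part. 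This gives $|\mathrm{Out}|=2$ from the transpose-inverse, combined with the ratio $|Z(\SL_3(\C))|/\ldots$, for an overall $1/3$.
  \item For $\SL_3$ in the twisted case, the center $\mu_3$ of $\widehat{\SL_3}=\PGL_3$ is trivial, and the order-$3$ twisting is absorbed, giving coefficient $1$.
\end{itemize}
Our plan is to read these constants off from Proposition~\ref{proposition:elliptic-inner-product-transfer}. The coefficients $c(\widetilde G,H)$ there are exactly the $i(\widetilde G,H)$ of the local elliptic stabilization, and the global coefficients coincide with them because the relevant Tamagawa and center factors match.

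\textbf{Main obstacle.} The main obstacle is Step 2 for the $\PGL_3$ and $\SL_3$ data. Getting $1/3$ versus $1$ right requires care with $Z(\widehat{H})^\Gamma$, with $\ker^1$ terms, and with the twisted Tamagawa factor $\tau(G)\tau(G^{\fe})^{-1}$ for $\PGSO_8$ versus $\SL_3$. Our first move would be to check consistency with the local coefficients $c(\cdot,\cdot)$ already computed by M{\oe}glin--Waldspurger, and to compare against the $S_3$-orbit count of twisted elements.
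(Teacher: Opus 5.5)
Your route is the paper's: the theorem is stated there with no argument beyond invoking M{\oe}glin--Waldspurger's stabilization under the hypothesis of Remark~\ref{remark:weighted-fundamental-lemma-hypothesis}. The paper uses the same elliptic data and the constants of Proposition~\ref{proposition:elliptic-inner-product-transfer}. Your fallback of reading the constants off that proposition is legitimate here. All groups are split, and $\ker^1(F,Z(\widehat G))=1$ by Chebotarev, since Galois acts trivially on the finite center; so the local and global coefficient formulas agree.

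\textbf{The gap is Step~2.} It is the only real content, and as written it is wrong: followed literally it gives the wrong constants. With $\tau(G)=\abs{Z(\widehat G)}$, $\tau(\rG_2)=1$, $Z(\Spin_8(\C))^{\widehat\theta}=1$ and $\fa_{\PGSO_8}=0$, every coefficient equals $\abs{Z(\widehat H)}^{-1}\abs{\mathrm{Out}(\fe)}^{-1}$. In all three non-principal cases $\mathrm{Out}(\fe)$ is trivial. Hence $\tfrac12$ and $\tfrac13$ are $\abs{Z(\SO_4(\C))}^{-1}$ and $\abs{Z(\SL_3(\C))}^{-1}$, not contributions of outer automorphisms.

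\textbf{$\SO_4$.} The swap of the two $\SL_2$-factors of $\widehat{\SO_4}$ is not induced by the normalizer of the datum. In $\rG_2(\C)$ one factor is a long-root $\SL_2$ and the other a short-root $\SL_2$. Lemma~\ref{lemma:Weyl-group-normalizer} gives $N_W(W_1)=W_1$, since $\Phi_1^\perp\cap\Phi=\emptyset$ and $W$ preserves root lengths. In $\Spin_8(\C)$, the restriction $\mathbf1\oplus\Std_{\SO_4}\oplus\wedge^2_+$ of the standard representation is not symmetric in the two factors. Dividing also by your order-two outer automorphism would give $\tfrac14$.

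\textbf{$\PGL_3$.} The transpose-inverse element of $N_{\rG_2(\C)}(\SL_3(\C))$ sends $s$ to $s^{-1}\notin sZ(\rG_2(\C))=\{s\}$, so it is not in $\Aut(\fe)$. It only shows that $s$ and $s^{-1}$ give isomorphic data, so there is a single $\PGL_3$ term. Your $\abs{\mathrm{Out}}=2$ would give $\tfrac16$.

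\textbf{$\SL_3$.} Here $Z(\PGL_3(\C))=1$; the group $\mu_3$ is the center of $\SL_3$, not of its dual. So you must actually prove $\mathrm{Out}(\fe)=1$, and ``the twisting is absorbed'' does not do this. One argument: $\Std_{\PGSO_8}$ restricts to the irreducible adjoint representation of $\PGL_3(\C)$. By Schur's lemma, any element of $\SO_8(\C)$ inducing the outer automorphism acts on $\mathfrak{sl}_3$ as $\pm\Ad(h)\circ(X\mapsto-{}^tX)$. This has determinant $(-1)^5=-1$, so no element of $\Spin_8(\C)$ induces it.

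With these corrections Step~2 yields exactly $1,\tfrac12,1$ and $1,\tfrac12,\tfrac13$.
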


\begin{remark}\label{remark:killing-SL3-PGL3-contribution}
  The contribution $S^{\SL_3}_{\disc, c}$ comes from a trivial representation of $\SL_3(\A_F)$ or a cuspidal representation of $\SL_3(\A_F)$.
  The latter comes as a restriction of a cuspidal representation of $\GL_3(\A_F)$ and the lifting to $\PGSO_8(\A_F)$ is obtained from the adjoint lifting.
  Thus, if we have $S^{\SL_3}_{\disc, c} \neq 0$, then the lift of the Hecke--Satake family $c$ to $\SO_8$ via the standard representation $\Spin_8 \to \GL_8$ is associated to the $A$-parameter of the form
  \begin{align*}
    S_3^A \boxplus S_5^A
  \end{align*}
  for $\GL_8$, or an $A$-parameter $\psi$ for $\GL_8$ such that the standard $L$-function
  \begin{align*}
    L^S(s, \psi, \Std)
  \end{align*}
  is holomorphic at $s = 1$.
  We call such a Hecke--Satake family $c$ of $\SL_3$-type.

  Similarly, the contribution $S^{\PGL_3}_{\disc, c}$ is not zero only if the Hecke--Satake family $c$ is associated to the $A$-parameter of the form
  \begin{align*}
    \tau \oplus \tau^{\vee} \oplus \mathbf{1} \oplus \mathbf{1}
  \end{align*}
  for a discrete automorphic representation $\tau$ of $\GL_3(\A_F)$.
  We call such a Hecke--Satake family $c$ of $\PGL_3$-type.
\end{remark}

\section{Discrete part of the trace formula for \texorpdfstring{$\rG_2$}{G2}}

Let $F$ be a number field.
Let $G$ be the split semisimple group of type $G_2$ over $F$.
We fix a Borel pair $(B_0, T_0)$ of $G$ and let $\alpha$ and $\beta$ be the short root and the long root, respectively.

The standard Levi subgroups $T_0$, $\GL_{2, s}$, $\GL_{2, l}$ and $G$ represent the Levi conjugacy classes over $F$.
Let $M$ be a standard Levi subgroup of $G$ and $P$ be the standard parabolic subgroup of $G$ with Levi component $M$.
We set $W^M_0 = W(M, T_0)$.
For $t\geq0$, the contribution of $M$ to Arthur's invariant trace formula $I^{G}_{\disc, t}$ is
\begin{align*}
  \frac{1}{\abs{W(G, M)}}
  \sum_{w \in W(M)_{\reg}} \abs{\det(w-1)_{\restriction{\fa_M^{G}}}}^{-1} \tr(M_{P | wP,t}(0) \circ l(w) \circ I_{P, t}(f^G)),
\end{align*}
where
\begin{itemize}
  \item
  $I_{P,t}$ is the constituent of the global induced representation
  \[
    \cI_P(L^2_{\disc}(M(F) \backslash M(\A_F)^1))
  \]
  defined by $\pi$ with infinitesimal character $\mu_{\pi}$ such that $\lVert\operatorname{Im}(\mu_{\pi})\rVert = t$.
  \item
  $M_{P | wP,t}(0)$ is the global intertwining operator defined by the meromorphic continuation of the usual intertwining integral.
\end{itemize}
We write $M_{P | wP,t}(0) \circ l(w)$ as $M_P(w)$ for short.

For $M = G$, the contribution is $\tr(R_{\disc, t}(f^G))$ from the regular representation of $G(\A_F)^1$.

\begin{remark}\label{remark:spectral-contribution-different-parabolic}
  Replacing $P$ by a parabolic subgroup $P'$ with the same Levi component $M$ preserves its contribution. See \cite{FinisLapidMuller2011-SpectralSideArthurSTraceFormula}*{(5.8)--(5.9)}.
  The contribution is therefore also unchanged when $(M, \sigma, w)$ is conjugated by $W(M)$, or by $W(M, M')$ for another Levi subgroup $M'$.
\end{remark}

We give the contribution $I_{\disc, t, M}^G$ from each $M$ explicitly based on the description of the regular elements in $W(M)$ in Lemmas~\ref{lemma:regular-weyl-G2-non-toric-Levi} and~\ref{lemma:regular-weyl-G2-toric-Levi}.

\begin{lemma}\label{lemma:discrete-part-G2}
  Let $F$ be a number field.
  We set $G = \rG_2$ and let $M$ be a standard Levi subgroup of $G$.
  Let $P$ be the standard parabolic subgroup of $G$ with Levi component $M$.

  \begin{enumerate}
    \item
    We assume that $M = T_0$.
    Then, we have
    \begin{align*}
      W(T_0)_{\reg} = \{s_{\alpha} s_{\beta}, (s_{\alpha} s_{\beta})^2, s_{\beta} s_{\alpha}, (s_{\beta} s_{\alpha})^2, (s_{\alpha} s_{\beta})^3 \}.
    \end{align*}
    We have the following values of $\abs{\det(w-1)_{\restriction{\fa_M^{G}}}}$.
    \begin{enumerate}
      \item
      For $w = s_{\alpha} s_{\beta}$ and $s_{\beta} s_{\alpha}$, we have $\abs{\det(w-1)_{\restriction{\fa_M^{G}}}} = 1$.
      An automorphic character $\sigma$ of $T_0(\A_F)^1$ satisfies $w \sigma \cong \sigma$ if and only if $\sigma$ is trivial.

      Furthermore, we have
      \begin{align*}
        \Std_{\rG_2} \circ \iota^G_M c(\sigma) = \mathbf{1}^{\boxplus 7}.
      \end{align*}
      \item
      For $w = (s_{\alpha} s_{\beta})^2$ and $w = (s_{\beta} s_{\alpha})^2$, we have $\abs{\det(w-1)_{\restriction{\fa_M^{G}}}}= 3$.
      For any automorphic character $\sigma$ of $T_0(\A_F)^1$ such that $w \sigma \cong \sigma$, we can find a cubic character $\chi$ of $\A_F^{\times}$ such that
      \begin{align*}
        \sigma = \chi \circ \beta.
      \end{align*}

      Furthermore, we have
      \begin{align*}
        \Std_{\rG_2} \circ \iota^G_M c(\sigma) = \mathbf{1} \boxplus \chi^{\boxplus 3} \boxplus (\chi^{-1})^{\boxplus 3}.
      \end{align*}

      \item
      For $w = (s_{\alpha} s_{\beta})^3$, we have $\abs{\det(w-1)_{\restriction{\fa_M^{G}}}} = 4$.
      Any automorphic character $\sigma$ of $T_0(\A_F)^1$ such that
      $w\sigma\cong\sigma$ is of the form
      \begin{align*}
        \sigma = (\omega_1\circ\alpha)(\omega_2\circ\beta),
      \end{align*}
      where $\omega_1,\omega_2$ are quadratic Hecke characters,
      possibly trivial.  They determine a biquadratic extension
      precisely when they are independent.  We have
      \begin{align*}
        \Std_{\rG_2} \circ \iota^G_M c(\sigma) = \mathbf{1} \boxplus \omega_1^{\boxplus 2} \boxplus \omega_2^{\boxplus 2} \boxplus (\omega_1\omega_2)^{\boxplus 2}.
      \end{align*}
    \end{enumerate}

    \item
    We assume that $M = \GL_{2, s}$.
    Then, the unique element in $W(M)_{\reg}$ is represented by
    \begin{align*}
      w = w^G_0 w_0^M = s_{\beta} s_{\alpha} s_{\beta} s_{\alpha} s_{\beta}
    \end{align*}
    where $w^G_0$ is the longest element in $W(G, T_0)$ and $w_0^M$ is the longest element in $W(M, T_0)$.
    For any discrete automorphic representation $\sigma$ of $M(\A_F)^1$ such that $w \sigma \cong \sigma$, we have $\sigma = \sigma^{\vee}$.

    Furthermore, we have
    \begin{align*}
      \Std_{\rG_2} \circ \iota^G_M c(\sigma) = \sigma \boxplus \sigma^{\vee} \boxplus \omega_{\sigma} \boxplus \omega_{\sigma}^{-1} \boxplus \mathbf{1}.
    \end{align*}

    \item
    We assume that $M = \GL_{2, l}$.
    Then, the unique element in $W(M)_{\reg}$ is represented by
    \begin{align*}
      w = w^G_0 w_0^M = s_{\alpha} s_{\beta} s_{\alpha} s_{\beta} s_{\alpha}
    \end{align*}
    where $w^G_0$ is the longest element in $W(G, T_0)$ and $w_0^M$ is the longest element in $W(M, T_0)$.
    For any discrete automorphic representation $\sigma$ of $M(\A_F)^1$ such that $w \sigma \cong \sigma$, we have $\sigma = \sigma^{\vee}$.

    Furthermore, we have
    \begin{align*}
      \Std_{\rG_2} \circ \iota^G_M c(\sigma) = \sigma \boxplus \sigma^{\vee} \boxplus \Ad(\sigma).
    \end{align*}
  \end{enumerate}
\end{lemma}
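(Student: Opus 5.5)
The plan is to treat Lemma~\ref{lemma:discrete-part-G2} as the global analogue of the local computations in Lemmas~\ref{lemma:regular-weyl-G2-toric-Levi}, \ref{G2-elliptic-toral-representations} and \ref{lemma:G2-Levi-Weyl-invariance}. The argument is purely root-theoretic, together with the description of $\Std_{\rG_2}\circ\iota^G_M$ on each Levi. The lists of regular elements come directly from Lemmas~\ref{lemma:regular-weyl-G2-toric-Levi} and~\ref{lemma:regular-weyl-G2-non-toric-Levi}; the representative $w_0^Gw_0^M$ can be written out as a reduced word, giving $s_\beta s_\alpha s_\beta s_\alpha s_\beta$ for $M_s$ and its mirror image for $M_l$.

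For $M=T_0$, I will use the matrix $c=s_\alpha s_\beta=\begin{pmatrix}2&-3\\1&-1\end{pmatrix}$ on $\fa_{T_0}^*$. The determinants are characteristic polynomials evaluated at $1$: $x^2-x+1$ gives $1$, $x^2+x+1$ gives $3$, and $(x+1)^2$ gives $4$.

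For the fixed-point conditions, write $\sigma=(\chi_1\circ\alpha)(\chi_2\circ\beta)$ with $\chi_i$ Hecke characters, and reuse verbatim the formulas for $c\sigma$ and $c^2\sigma$ from the proof of Lemma~\ref{G2-elliptic-toral-representations}.
\begin{itemize}
\item The condition $c\sigma=\sigma$ forces $\chi_1=\chi_2=1$.
\item The condition $c^2\sigma=\sigma$ forces $\chi_1=1$ and $\chi_2^3=1$, so $\sigma=\chi\circ\beta$.
\item The condition $c^3=-1$ forces $\chi_i^2=1$.
\end{itemize}
The conjugate elements $s_\beta s_\alpha=c^{-1}$ and $c^{-2}$ give the same conditions. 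One subtlety: we work on $T_0(\A_F)^1$, so the characters are determined modulo the real split part. I will note that these identities hold as characters of $T_0(\A_F)^1$, so the $\chi_i$ may be taken unitary and of finite order.

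The Satake computations follow from the weights of $\Std_{\rG_2}$, which are $0$ together with the six short roots, viewed as cocharacters of $\widehat{T}$. Evaluating $\sigma\circ\gamma^\vee$ on the short coroots lists the characters $\chi_1,\chi_2$ combinations. In case (b) the three short roots on each side give $\chi$ three times and $\chi^{-1}$ three times. In case (c) we get $\omega_1,\omega_2,\omega_1\omega_2$ each twice. This is exactly the computation already performed in the proof of Lemma~\ref{G2-elliptic-toral-representations}; the one thing to check is that the short-root weights of $\Std$ pair with $\sigma$ as stated, after matching the coordinates $(\alpha,\beta)$ with the dual torus.

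For $M=\GL_{2,s}$ and $\GL_{2,l}$, the element $w_0^Gw_0^M$ acts on $M\cong\GL_2$ by inverse transpose up to inner automorphism, as in the proof of Lemma~\ref{lemma:G2-Levi-Weyl-invariance}. Hence $w\sigma\cong\sigma^\vee$ globally, and the fixed-point condition gives $\sigma\cong\sigma^\vee$ as representations of $M(\A_F)^1$. The standard parameters come from restricting $\Std_{\rG_2}$ to $\widehat M=\GL_2$:
\begin{itemize}
\item for the short-root Levi, $\Std\oplus\Std^\vee\oplus\det\oplus\det^{-1}\oplus\mathbf 1$;
\item for the long-root Levi, $\Std\oplus\Std^\vee\oplus\Ad$.
\end{itemize}
These are read off from Lemma~\ref{lemma:dual-nilradicals-G2} and the weights, with the dual roles of short and long taken into account.

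The main obstacle is keeping track of this short/long duality. The short-root parabolic of $G$ has a long-root Levi in $\widehat G$, so one must verify which of the two decompositions goes with which Levi. I would settle this by a direct weight check on the diagonal torus.
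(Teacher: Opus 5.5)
Your proposal is correct and matches the route the paper implicitly takes: the lemma is stated without a separate proof, resting on the regular-element lists of Lemmas~\ref{lemma:regular-weyl-G2-toric-Levi} and~\ref{lemma:regular-weyl-G2-non-toric-Levi}, the formulas for $c\sigma$ and $c^2\sigma$ from the proof of Lemma~\ref{G2-elliptic-toral-representations}, the inverse-transpose argument of Lemma~\ref{lemma:G2-Levi-Weyl-invariance}, and a weight computation for $\Std_{\rG_2}$. One small correction: the nonzero weights of $\Std_{\rG_2}$ are the coroots $\gamma^\vee$ of the long roots $\gamma$ of $\rG_2$ (characters of $\widehat T$, not cocharacters). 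In case (b) this restriction matters, because the long coroots $\alpha^\vee,(\alpha+\beta)^\vee,(2\alpha+\beta)^\vee$ pair trivially with $\chi\circ\beta$, so the Satake computation there is not literally the one carried out in Lemma~\ref{G2-elliptic-toral-representations}.
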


\section{Discrete part of the trace formula for \texorpdfstring{$\widetilde{\PGSO_8}$}{widetilde{PGSO8}}}\label{section:discrete-trace-formula-PGSO8}
Let $F$ be a number field.
Let $G = \PGSO_8$ be the split semisimple group of type $\mathrm{D}_4$ over $F$.
Let $\widetilde{G} = \widetilde{\PGSO_8} = \PGSO_8 \rtimes \theta$ be a twisted space defined by the triality automorphism $\theta$ of $\PGSO_8$.
We fix a Borel pair $(B_0, T_0)$ of $\PGSO_8$ and let $\Delta_0 = \{\alpha_1, \alpha_2, \alpha_3, \alpha_4\}$ be the set of simple roots such that $\alpha_2$ is the central root.

Proper Levi subgroups $M$ with $\Norm_{\widetilde{G}}(M)(F) = \emptyset$ contribute zero.
By Lemma~\ref{lemma:Levi-conjugacy-class-D_4}, the mutually nonconjugate Levi subgroups $M_S$ for $S=\emptyset$, $\{\alpha_2\}$, $\{\alpha_1,\alpha_2\}$, $\{\alpha_1,\alpha_3,\alpha_4\}$, $\Delta_0$ suffice.

The contribution associated with $M_S$ is
\begin{align*}
  \frac{1}{\abs{W(G, M_S)}}
  \sum_{\widetilde{w} \in W_{\widetilde{G}}(M_S)_{\reg}} \abs{\det(\widetilde{w}-1)_{\restriction{\fa_{M_S}^{G}}}}^{-1} \tr(M_{P_S | \widetilde{w}P_S}(\widetilde{w})I_{P_S, t, \widetilde{w}}(f^{\widetilde{G}})),
\end{align*}
where
\begin{itemize}
  \item
  For $\gamma \in \widetilde{G}(\A_F)$ and $f$ in the $t$-part of
  $\cI_{P_S}(L^2_{\disc}(M_S(F) \backslash M_S(\A_F)^1))$, we define
  \begin{align*}
    I_{P_S, t, \widetilde{w}}(\gamma)f(g) = f(\widetilde{w}^{-1} g \gamma).
  \end{align*}
  \item
  $M_{P_S | \widetilde{w}P_S}(\widetilde{w})$ is the global intertwining operator defined by the meromorphic continuation of the usual intertwining integral.
\end{itemize}

The regular elements in $W_{\widetilde{G}}(M_S)$ and the scalar $\abs{\det(\widetilde{w}-1)_{\restriction{\fa_{M_S}^{G}}}}^{-1}$ can be computed explicitly by using Corollary~\ref{corollary:regular-elements-theta-stable-cases} and Lemmas~\ref{lemma:regular-element-not-theta-stable-case},~\ref{lemma:classification-regular-conjugacy-classes-Weyl-elements},~\ref{lemma:description-of-centralizer-of-regular-elements-1}, and~\ref{lemma:description-of-centralizer-of-regular-elements-2}.
The calculation parallels the $\rG_2$ case and is omitted.

\part{Proof of the endoscopic character relations}\label{part:ECR}

\section{Tempered local intertwining relations for \texorpdfstring{$\rG_2$}{G2}}\label{section:local-LIR-G2}

We now formulate the essential case of local intertwining relations for $\rG_2$.
In this section, we assume that $F\cong\dot F_v$ for a place $v$ of a number field $\dot F$ and that $\psi_F$ is the local component of a global additive character $\psi$ of $\A_{\dot F}/\dot F$. Since $\rG_2$ is of adjoint type, the following theorem is independent of this choice of $\psi_F$.

\begin{theorem}\label{theorem:local-tempered-LIR-G2}
  Let $F$ be a local field of characteristic zero.
  We set $G = \rG_2$ and let $P$ be a proper standard parabolic subgroup of $G$ with the standard Levi component $M$.
  Let $w$ be a non-trivial element in the relative Weyl group $W(M)$.
  Let $\sigma$ be a discrete series representation of $M$ such that $w \sigma \cong \sigma$, the $R$-group $R^{G}(\sigma)$ is non-trivial and $W_0(M)_{\sigma}$ is trivial.
  Then, we have
  \begin{align*}
    \tr(R_P(w, \sigma, \psi_F)I_P(\sigma)(f^G))
    =
    \Theta_{\phi^{\SO_4}(\sigma)}(f^{G}_{\SO_4}).
  \end{align*}
\end{theorem}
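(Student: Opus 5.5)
We propose to prove the identity by a global argument: embed the local situation into a global one, compare the stabilized trace formulas of Theorem~\ref{theorem:stabilization-twisted-PGSO8}, and extract the local identity by linear independence of characters. Both sides are elliptic tempered distributions. The left side is the character of the elliptic triplet $(M,\sigma,w)$; this is elliptic because $W_0(M)_\sigma=1$ and $R^G(\sigma)\neq1$ (Remark~\ref{remark:LIR-and-ellipticity}). The right side is the transfer of the stable discrete series character of $\phi^{\SO_4}(\sigma)$, given by Proposition~\ref{proposition:elliptic-triplets-elliptic-transfer-G2}. By Theorem~\ref{theorem:elliptic-character-transfer}, the left side has a unique endoscopic decomposition
\begin{align*}
\tr(R_P(w,\sigma,\psi_F)I_P(\sigma)(f^G))=\sum_H S^H(f^G_H).
\end{align*}
Only $H=\SO_4$ can contribute. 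Indeed, the parameter $\iota^{\rG_2}_M\circ\phi_\sigma$ is not discrete for $\rG_2$. It factors through $\SO_4$ with finite centralizer, and it factors through $\PGL_3$ only in the cubic case, which we handle separately via the $\Std$ decompositions in Lemmas~\ref{G2-elliptic-GL2-Levi-representations} and~\ref{G2-elliptic-toral-representations}. Thus we must show $S^{\SO_4}=\Theta_{\phi^{\SO_4}(\sigma)}$ and that all other $S^H$ vanish.

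\textbf{Step 1 (globalization).} Choose a number field $\dot F$ with $\dot F_v\cong F$, together with a global Levi datum $(\dot M,\dot\sigma,w)$ with $\dot\sigma_v=\sigma$. In the dihedral cases we globalize the characters $\chi$ and the quadratic or biquadratic extension so that the global $\SO_4$-parameter remains elliptic; for the toral case we globalize $\omega_1,\omega_2$ to independent quadratic Hecke characters. We also impose auxiliary places where $\dot\sigma$ is supercuspidal, or Steinberg-like, so that $\dot\sigma$ is cuspidal and the global $\rG_2$ Hecke--Satake family $c$ has no contribution from $\PGL_3$ or from the discrete spectrum of $\rG_2$ other than what is forced. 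At one auxiliary place $u$ we arrange that the local packet is handled by a previously known case, for instance an unramified or real place, where LIR is known (Shelstad, or Arthur's unramified computation).

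\textbf{Step 2 (comparison).} We compare $I^{\rG_2}_{\disc,c}$ with its stabilization from Theorem~\ref{theorem:stabilization-twisted-PGSO8}. The spectral side consists of the $M$-contribution from Lemma~\ref{lemma:discrete-part-G2}, with global intertwining operator $M_P(w)$ factored as $\prod_v R_{P}(w,\dot\sigma_v,\psi_v)$ times a normalizing ratio of $L$-functions at $s=0$. We must show this ratio is $1$ by a self-duality argument paralleling the lemma on $\gamma_A/\gamma$. The spectral side also contains possibly a residual discrete contribution. The endoscopic side is $\tfrac12 S^{\SO_4}_{\disc,c}$, computed by the multiplicity-one and fiber computations of Section~\ref{section:lifting-SO4-G2}: Lemmas~\ref{lemma:SO4-dihedral-case-fiber} and~\ref{lemma:SO4-not-multiplicity-free-case} count the $\SO_4$ parameters with the given $c$, producing the factors $3$, $7$, or $1$ that must match $|W(G,M)|^{-1}|\det(w-1)|^{-1}$. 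After separating the identity term $w=1$ (handled by parabolic descent, Lemma~\ref{lemma:parabolic-descent-commutes-with-transfer}) and inducting on known places, we get
\begin{align*}
\prod_{v\in S}\tr(R_P(w,\dot\sigma_v)I_P(\dot\sigma_v)(f_v))=\epsilon\prod_{v\in S}\Theta_{\phi_v^{\SO_4}}(f_{v,\SO_4})+(\text{discrete }\rG_2\text{ terms}).
\end{align*}

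\textbf{Step 3 (isolating $v$).} Fixing the test functions at the other places, where the identity is known, and using linear independence, we deduce that the local difference $\tr(\cdots)-\epsilon\,\Theta(\cdots)$ at $v$ is a stable elliptic linear combination of tempered characters of $\rG_2(F)$. We would then need to show that it is a combination of discrete series that is orthogonal to everything, via Proposition~\ref{proposition:elliptic-inner-product-transfer}. The norm identity gives $(\Lambda,\Lambda)_{\mathrm{ell}}=\abs{R(\sigma)}$-type equals $2\,(\Theta_{\phi^{\SO_4}},\Theta_{\phi^{\SO_4}})_{\mathrm{ell}}$ plus nonnegative terms; this forces all other $S^H$ to vanish and fixes $\epsilon=\pm1$. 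The sign is then pinned down as $+1$ by the Whittaker normalization: both sides evaluated against a pseudo-coefficient of the generic constituent agree by the corollary that $R_P$ preserves the Whittaker functional.

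The main obstacle is Step 2: controlling the global normalizing factor of $M_P(w)$ and excluding spurious discrete $\rG_2$ or $\PGL_3$ contributions to $c$. We would first try to choose the globalization so that the relevant $L$-functions (for example $L(s,\mathrm{AI}(\chi^3))$ and $L(s,\omega_{K/F})$) are entire and nonvanishing at $s=0,1$, thereby trivializing the ratio.
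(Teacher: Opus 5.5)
Your Step~2 leaves out the term that is the actual difficulty. The ordinary stabilization in Theorem~\ref{theorem:stabilization-twisted-PGSO8} contains $S^{\rG_2}_{\disc,c}$ as well as $\frac12S^{\SO_4}_{\disc,c}$, and there is no stable multiplicity formula for $\rG_2$ with which to evaluate it. Nor can you choose auxiliary places so that the discrete spectrum $R^{\rG_2}_{\disc,c}$ at this non-elliptic Hecke--Satake family vanishes a priori; that vanishing is an output of the argument, not an input. So the $\rG_2$ trace formula alone gives
\[
  R^{\rG_2}_{\disc,c}(f)+\tfrac14J(f)=S^{\rG_2}_{\disc,c}(f)+\tfrac14\mathcal H(f_{\SO_4}),\qquad \mathcal H(f_{\SO_4})=\prod_u\Theta_{\dot\phi^{\SO_4}_u}(f_{u,\SO_4}),
\]
which has two unknowns that cannot be separated. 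At an auxiliary real place $u$ where $J_u=\mathcal H_u=\tr\pi_u^+-\tr\pi_u^-$ is known, a stable term may contribute $a(\tr\pi_u^++\tr\pi_u^-)$. Then $R^{\rG_2}_{\disc,c}$ may carry coefficients $a\pm X$ on $\pi_u^\pm$ with $a\ge\abs{X}$, and positivity does not force $X=0$. Your Step~3 claim that the discrepancy at $v$ is stable is therefore unjustified, since the $R^{\rG_2}_{\disc,c}$ part is not stable. If you granted $R^{\rG_2}_{\disc,c}=0$ and a global constant $\epsilon=1$, your norm step would indeed kill the stable remainder, because $\|\Lambda\|^2_{\mathrm{ell}}=2\|\Theta_{\phi^{\SO_4}(\sigma)}\|^2_{\mathrm{ell}}$. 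But the norm step does not fix $\epsilon$; it only gives $\abs{\epsilon}\le1$. Likewise, evaluating $\Theta_{\phi^{\SO_4}(\sigma)}(f_{\SO_4})$ on a pseudo-coefficient of the generic constituent presupposes the transfer you are computing.

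The missing idea is to run the twisted trace formula for $\widetilde{\PGSO_8}$ at the same Hecke--Satake family, where there is no discrete spectrum to contend with. With $\dot\sigma=\mathrm{AI}_{\dot E/\dot F}(\dot\chi)$ and $\omega=\omega_{\dot E/\dot F}$, induce $\Sigma=\dot\sigma\boxtimes\omega\boxtimes\omega\boxtimes\omega$ from $M_{\alpha_2}$ in the short-root case, and $\Sigma=\dot\sigma\boxtimes\dot\sigma\boxtimes\dot\sigma\boxtimes\omega$ from $M_{\alpha_1,\alpha_3,\alpha_4}$ in the long-root case. The repeated cuspidal summands of the eight-dimensional standard parameter exclude a discrete $\PGSO_8$ parameter, and the $\SL_3$ term vanishes. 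You then get $\frac14\widetilde J(\widetilde f)=S^{\rG_2}_{\disc,c}(\widetilde f_{\rG_2})+\frac14\mathcal H(\widetilde f_{\SO_4})$, with the same unknown $S^{\rG_2}_{\disc,c}$. Simultaneous cuspidal transfer at the real places (Theorem~\ref{theorem:cuspidal-surjective}) lets you prescribe the $\rG_2$- and $\SO_4$-components independently. Together with the real twisted relation, the separation argument of \cite{Arthur2013-EndoscopicClassificationRepresentations}*{Lemma 5.4.2} then gives $S^{\rG_2}_{\disc,c}=0$. Only then does the $\rG_2$ identity become $R^{\rG_2}_{\disc,c}+\frac14J=\frac14\mathcal H$. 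The real relation at $u$ and \cite{KalethaMinguezEtAl2014-EndoscopicClassificationRepresentationsInnerFormsUnitary}*{Lemma 3.2.2} then force $R^{\rG_2}_{\disc,c}=0$ and $J=\mathcal H$, with constant exactly $1$ because the global normalizing factors equal $1$.

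Smaller points:
\begin{itemize}
\item Steinberg auxiliary components are incompatible with $\dot\sigma$ being dihedral, so cuspidality should come from nonzero real weights over a CM extension.
\item The toral case is most easily reduced to the short-root construction with $\dot\sigma_v=I(\omega_1\boxtimes\omega_2)=\mathrm{AI}_{E/F}(\omega_1\circ N_{E/F})$ and induction in stages.
\item The fiber counts of Lemma~\ref{lemma:SO4-dihedral-case-fiber} play no role here; the coefficient match is $\frac14=c(\rG_2,\SO_4)\abs{\cS_{\dot\phi^{\SO_4}}}^{-1}$.
\end{itemize}
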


\begin{lemma}\label{lemma:globalization-unitary-character}
  Let $\dot E/\dot F$ be a quadratic extension of number fields, let $w$ be a place of $\dot F$, and let $S$ be a finite set of places disjoint from $\{w\}$ at which $\dot E/\dot F$ is nonsplit.
  For unitary characters $\chi_u$ of $\dot E_u^\times/\dot F_u^\times$, $u\in S$, there exists a unitary Hecke character $\dot\chi$ of $\dot E$ trivial on $\A_{\dot F}^\times$, with $\dot\chi_u=\chi_u$ for $u\in S$ and unramified at every finite $u\notin S\cup\{w\}$.
\end{lemma}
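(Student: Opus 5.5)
The plan is to pass to the norm-one torus and then argue by Pontryagin duality on the compact group $T(\dot F)\backslash T(\A_{\dot F})$.

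\textbf{Reduction to a torus.} Let $T=\ker(\mathrm{Nm}\colon\Res_{\dot E/\dot F}\bG_m\to\bG_m)$ be the norm-one torus. Hilbert's Theorem~90, locally and globally, identifies $\dot E_u^\times/\dot F_u^\times\cong T(\dot F_u)$ and $\A_{\dot E}^\times/\A_{\dot F}^\times\cong T(\A_{\dot F})$ via $z\mapsto z/z^\sigma$. It also identifies $\dot E^\times\A_{\dot F}^\times\backslash\A_{\dot E}^\times$ with $T(\dot F)\backslash T(\A_{\dot F})$. At a finite unramified place these maps carry $\mathcal O_{\dot E_u}^\times$ onto $T(\mathcal O_u)$, so ``unramified'' transfers in both directions. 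The lemma is therefore equivalent to the following statement. Given unitary characters $\chi_u$ of the compact groups $T(\dot F_u)$ for $u\in S$ (compact because $\dot E/\dot F$ is nonsplit there), there is an automorphic character of $T$ with these components that is trivial on $T(\mathcal O_u)$ for every finite $u\notin S\cup\{w\}$.

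\textbf{The extension argument.} Put
\begin{align*}
  Z=\prod_{u\in S}T(\dot F_u)\times\prod_{u\notin S\cup\{w\},\ u<\infty}T(\mathcal O_u)\times\{1\}_w\times\prod_{u\mid\infty,\ u\notin S}T(\dot F_u).
\end{align*}
Define $\lambda$ on $Z$ by $\chi_S=\prod_{u\in S}\chi_u$ on the first factor and trivially elsewhere. An element of $T(\dot F)\cap Z$ has trivial $w$-component, and $T(\dot F)\hookrightarrow T(\dot F_w)$ is injective, so $T(\dot F)\cap Z=\{1\}$. Hence $\lambda$ extends, trivially on $T(\dot F)$, to a character of the subgroup $T(\dot F)Z$. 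If this character is continuous on the closure $\overline{T(\dot F)Z}$, Pontryagin duality for the LCA group $T(\A_{\dot F})$ extends it to a unitary character of $T(\A_{\dot F})$ trivial on $T(\dot F)$, with the prescribed local components. Only the components at $w$ and at the archimedean places outside $S$ are left free.

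\textbf{Main obstacle: continuity.} The difficulty is that $T(\dot F)Z$ need not be closed. The relevant group is the unit group $\Gamma=T(\dot F)\cap(Z\cdot T(\mathcal O_w))$, which may be infinite when some archimedean places split. I will handle this as follows.
\begin{enumerate}
  \item Since $\chi_S$ is continuous on the compact group $\prod_{u\in S}T(\dot F_u)$, it is trivial on an open subgroup $V$. Thus $\Gamma_V=\{\gamma\in\Gamma:\gamma_S\in V\}$ has finite index in $\Gamma$.
  \item By Chevalley's theorem on congruence subgroups of unit groups of tori, $\Gamma_V$ contains $\Gamma\cap U$ for some open $U=\prod_{u<\infty}U_u$.
  \item The goal is to arrange that this congruence condition is imposed at $w$ alone, i.e.\ that $\Gamma_V\supset\Gamma\cap U_w$ for a small open $U_w\subset T(\mathcal O_w)$. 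Then I would replace $\{1\}_w$ in $Z$ by $U_w$, extending $\lambda$ trivially on $U_w$. The new group $T(\dot F)Z$ is open, hence closed, and the extension step applies.
\end{enumerate}
If imposing the congruence condition only at $w$ is not directly available, the fallback is to exploit the freedom at the archimedean places. Their components absorb the infinite part of $\Gamma$: mod the archimedean factor, $\Gamma$ maps to a finite group via its torsion and a finitely generated free part. One then chooses the archimedean component of the extension to compensate $\chi_S$ on generators of $\Gamma_V$, and uses $w$ only to kill the finite torsion obstruction.
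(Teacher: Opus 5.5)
\textbf{There is a genuine gap, and your choice of $Z$ creates it.} You put all of $T(\dot F_u)$ into $Z$ at every archimedean $u\notin S$ and take $\lambda$ trivial there. Any extension is then trivial at those places, so, contrary to your last sentence, they are not left free. If one of these places is split real or complex, $T(\dot F_u)$ is noncompact and $\Gamma$ is infinite. The extension you ask for then need not exist, so no continuity argument can rescue it, and step~3 is false in general.

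For example, take $\dot F=\Q$, $\dot E=\Q(\sqrt2)$, $S=\{3\}$ (an inert prime), $w=7$, and $\gamma=3+2\sqrt2\in T(\Z)$. Since $\gamma^4\equiv1+3\sqrt2\pmod 9$, the element $\gamma^4$ topologically generates the pro-$3$ factor of $T(\Q_3)\cong\mu_4\times\Z_3$. Hence there is a unitary $\chi_3$ with $\chi_3(\gamma)$ of order $9$. Every continuous character of $T(\Q_7)\cong\Q_7^\times$ takes on the unit $\gamma$ a value whose order divides $6\cdot7^k$ for some $k$. A character $\dot\chi$ trivial on $T(\Q)$, on $T(\R)$ and on $T(\Z_\ell)$ for $\ell\ne3,7$ would satisfy $\chi_3(\gamma)=\dot\chi_7(\gamma)^{-1}$, which is impossible. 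Equivalently, for every open $U_w$ the group $\Gamma\cap U_w$ contains some $\gamma^m$ with $9\nmid m$, and $\chi_3$ is nontrivial on it. The underlying reason is that $T(\mathcal{O}_w)$ is virtually pro-$7$, while $\chi_3$ sees the $3$-adic closure of the units. Congruence conditions at $w$ alone can never control the components at $S$.

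\textbf{How to close it.} Your fallback, letting the archimedean components absorb the free part of $\Gamma$, is the right idea, but it is not carried out. Doing it by hand would need the unit theorem for $T$ to produce an archimedean unitary character with prescribed values on a basis of the free part. The paper does it with no unit-group input. At the archimedean $u\notin S\cup\{w\}$, put only the \emph{maximal compact} subgroup of $T(\dot F_u)$ into $Z$. Keep $T(\dot F_u)$ for $u\in S$ (compact since $u$ is nonsplit) and $T(\mathcal{O}_u)$ at the other finite places, as you do. Then:
\begin{itemize}
  \item $Z$ is compact, and $Z\cap T(\dot F)=\{1\}$ by injectivity of $T(\dot F)\to T(\dot F_w)$.
  \item So $Z\to T(\dot F)\backslash T(\A_{\dot F})$ is a continuous injection of a compact space into a Hausdorff one, hence a homeomorphism onto a closed subgroup.
  \item Pontryagin duality then extends $\lambda$ directly, and Chevalley's theorem is not needed.
\end{itemize}
This version also covers archimedean $w$. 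Your step~3, phrased via $T(\mathcal{O}_w)$, does not, yet the lemma is applied with $w$ real in the proof of Theorem~\ref{theorem:local-tempered-LIR-G2}.
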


\begin{proof}
  Put $T=\operatorname{Res}_{\dot E/\dot F}\mathbf G_m/\mathbf G_m$.
  The product of $T(\dot F_u)$ for $u\in S$ and the maximal compact subgroups at the remaining places other than $w$ is compact.
  Its image in $T(\dot F)\backslash T(\A_{\dot F})$ is injective, since $T(\dot F)\to T(\dot F_w)$ is injective.
  Extend the prescribed character, trivial on the latter compact factors, from this closed subgroup by Pontryagin duality. Compare \cite{Takanashi2025-NoteSauvageotDensityPrinciple}*{Corollary 8.3 and its proof}.
\end{proof}

\begin{proof}[Proof of Theorem~\ref{theorem:local-tempered-LIR-G2}]
  For $F=\R$, the assertion follows from real tempered endoscopy with Whittaker normalization. See \cite{Shelstad2008-TemperedEndoscopyRealGroupsIII}*{Theorem 11.5, Corollary 11.6} and Remark~\ref{remark:Whittaker-normalization-Levi}.
  There are no triples satisfying the hypotheses over $\C$.
  We henceforth assume that $F$ is non-archimedean and first treat $M=M_s$ and $M=M_l$ together.
  By Lemma~\ref{G2-elliptic-GL2-Levi-representations}, write
  \begin{align*}
    \sigma=\mathrm{AI}_{E/F}(\chi),\quad
    \chi_{\restriction_{F^\times}}=1,
  \end{align*}
  where $E/F$ is quadratic.
  For $M=M_l$, we also have $\chi^3\neq1$.

  Choose a totally real field $\dot F$ with at least three real places and a totally imaginary quadratic extension $\dot E/\dot F$ with $(\dot F_v,\dot E_v)\cong(F,E)$.
  This is possible by approximation and Krasner's lemma.
  We apply Lemma~\ref{lemma:globalization-unitary-character}.

  Fix distinct real places $u_1,u_2,w$.
  Apply the lemma with $S$ containing $v$, every real place except $w$, and every finite place where $\dot E/\dot F$ ramifies.
  Prescribe $\dot\chi_v=\chi$, a nonzero weight at each real place in $S$, and $\dot\chi_u=1$ at every finite $u\in S\setminus\{v\}$.
  Then
  \begin{align*}
    \dot\sigma=\mathrm{AI}_{\dot E/\dot F}(\dot\chi),\quad
    \omega=\omega_{\dot E/\dot F},\quad
    \rho_j=\mathrm{AI}_{\dot E/\dot F}(\dot\chi^j)
  \end{align*}
  are self-dual, and $\rho_j$ is cuspidal for $j=1,2,3$ by the prescribed real weights.
  Every finite component $\dot\sigma_u$ with $u\neq v$ is a principal series. At nonsplit places the inducing character is trivial, and at split places it is unramified.

  Put $\widetilde G=\widetilde{\PGSO_8}$ and $H=\SO_4$.
  The global data for the two cases are
  \begin{alignat*}{2}
    M=M_s:&\quad M'=M_{\alpha_2},\quad
      \Sigma=\dot\sigma\boxtimes\omega\boxtimes\omega\boxtimes\omega,
      &\quad\Std_H\circ\dot\phi^H&=1\boxplus\omega\boxplus\rho_1,\\
    M=M_l:&\quad M'=M_{\alpha_1,\alpha_3,\alpha_4},\quad
      \Sigma=\dot\sigma\boxtimes\dot\sigma\boxtimes\dot\sigma\boxtimes\omega,
      &\quad\Std_H\circ\dot\phi^H&=\rho_1\boxplus\rho_2.
  \end{alignat*}
  The central-character conditions make $\Sigma$ descend to $M'$.
  In both cases choose the refinement with
  $\wedge^2_+\circ\dot\phi^H=\omega\boxplus\rho_1$.
  These are discrete generic global $A$-parameters for $H$ with $|\cS_{\dot\phi^H}|=2$, and their components at $u_1,u_2$ are discrete.
  The standard seven-dimensional parameters are, respectively,
  \begin{align*}
    1\boxplus\omega^{\boxplus2}\boxplus\rho_1^{\boxplus2},
    \quad
    \omega\boxplus\rho_1^{\boxplus2}\boxplus\rho_2.
  \end{align*}
  The standard eight-dimensional parameter is obtained by adding $1$.
  Strong multiplicity one for general linear groups therefore excludes the other proper-Levi contributions and the $\PGL_3$ endoscopic term.
  The repeated cuspidal summands exclude a discrete global $A$-parameter for $\PGSO_8$.
  The twisted $\SL_3$ term is also zero. Its adjoint standard $L$-function is regular at $1$, whereas the displayed eight-dimensional parameter contains $1$.

  Let $c$ be the resulting Hecke--Satake family, and denote the global normalized intertwining distributions for $(M,\dot\sigma)$ and $(M',\Sigma)$ by $J$ and $\widetilde J$.
  Their Weyl elements are
  \begin{align*}
    w_M=w_0^{\rG_2}w_0^M,\quad
    \widetilde w_{M'}=w_0^{\PGSO_8}w_0^{M'}\rtimes\theta.
  \end{align*}
  For either $\rG_2$ Levi, $|W(G,M)|=2$ and $|\det(w_M-1)|=2$, giving coefficient $1/4$.
  On the twisted side, $M_{\alpha_2}$ has relative Weyl group of order $8$. Its four Weyl-conjugate inducing data each contribute one regular term with determinant of absolute value $2$, giving coefficient $4/(8\cdot2)=1/4$.
  For $M_{\alpha_1,\alpha_3,\alpha_4}$ the relative Weyl group has order $2$ and the unique regular term has determinant of absolute value $2$, again giving $1/4$. See Corollary~\ref{corollary:regular-elements-theta-stable-cases} and Remark~\ref{remark:spectral-contribution-different-parabolic}.
  The representations on the dual nilradicals on the two sides are
  \begin{alignat*}{2}
    M=M_s:&\quad \rho_1^{\boxplus2}\boxplus\omega,
      &\quad&\rho_1^{\boxplus4}\boxplus\omega^{\boxplus3},\\
    M=M_l:&\quad \rho_3\boxplus\rho_1\boxplus\omega,
      &\quad&\rho_3\boxplus\rho_1^{\boxplus3}\boxplus\omega.
  \end{alignat*}
  Every summand is self-dual, and its completed $L$-function is finite and nonzero at $1$.
  Thus the global normalizing factors equal $1$ by the functional equations, as in \cite{Arthur2013-EndoscopicClassificationRepresentations}*{(4.2.10), (4.6.8)--(4.6.9)}.
  We evaluate the meromorphic factorization at the self-intertwining point.
  The inducing automorphisms factor into the local Whittaker extensions by \cite{Arthur2013-EndoscopicClassificationRepresentations}*{Lemma 4.2.3}, applied to the general linear factors.
  Hence all local operators have the normalization of Remark~\ref{remark:Whittaker-normalization-Levi}.

  Write
  $\mathcal H(f^H)=\prod_u\Theta_{\dot\phi^H_u}(f^H_u)$.
  The stable multiplicity formula gives $S^H_{\dot\phi^H}=\frac12\mathcal H$.
  The discrete expansion and stabilization in \cite{MoeglinWaldspurger2016-StabilisationFormuleTracesTordue2}*{X.5.1, X.8.1} consequently give
  \begin{align}
    R^{\rG_2}_{\disc,c}(f)+\tfrac14J(f)
      &=S^{\rG_2}_{\disc,c}(f)+\tfrac14\mathcal H(f_H),
      \label{equation:LIR-global-G2}\\
    \tfrac14\widetilde J(\widetilde f)
      &=S^{\rG_2}_{\disc,c}(\widetilde f_{\rG_2})
        +\tfrac14\mathcal H(\widetilde f_H).
      \label{equation:LIR-global-D4}
  \end{align}
  These identities also hold on the associated $K$-spaces.
  At the auxiliary real places, the ordinary relation follows from real tempered endoscopy as above. The twisted relation is the real case of Theorem~\ref{theorem:local-intertwining-relation-PGSO8-SO4}, proved below from the discrete expansion and stabilization just used.
  Since $\dot\phi^H_{u_i}$ is discrete, simultaneous cuspidal transfer on the real $K$-space allows independent $H$ and $\rG_2$ transfers by Theorem~\ref{theorem:cuspidal-surjective}.
  Applying this at $u_1$ and $u_2$ in \eqref{equation:LIR-global-D4}, the separation argument of \cite{Arthur2013-EndoscopicClassificationRepresentations}*{Lemma 5.4.2 and its proof} yields
  \begin{align}\label{equation:LIR-global-twisted-separated}
    S^{\rG_2}_{\disc,c}=0,\quad
    \widetilde J(\widetilde f)=\mathcal H(\widetilde f_H).
  \end{align}
  In \eqref{equation:LIR-global-G2}, the known real relation at $u_1$ expresses the right-hand difference as a linear combination of nonidentity elliptic $R$-group characters.
  Since $R^{\rG_2}_{\disc,c}$ has nonnegative spectral multiplicities, \cite{KalethaMinguezEtAl2014-EndoscopicClassificationRepresentationsInnerFormsUnitary}*{Lemma 3.2.2} gives
  \begin{align*}
    R^{\rG_2}_{\disc,c}=0,\quad J(f)=\mathcal H(f_H).
  \end{align*}
  At each finite $u\neq v$, the principal-series inducing data reduce the local comparison to general linear groups and tori by Lemma~\ref{lemma:parabolic-descent-commutes-with-transfer} and induction in stages.
  The real comparisons were fixed above, including parabolic descent if $\dot\chi_w=1$.
  Factorization and a choice of nonzero local factors away from $v$ now give both local identities at $v$.

  Finally, let $M=T$ and let $\omega_1,\omega_2$ be the independent quadratic characters defining $\sigma$ in Lemma~\ref{G2-elliptic-toral-representations}.
  Put $\omega_3=\omega_1\omega_2$, let $E/F$ correspond to $\omega_3$, and set
  \begin{align*}
    \chi=\omega_1\circ N_{E/F},\quad
    \mathrm{AI}_{E/F}(\chi)=\omega_1\oplus\omega_2.
  \end{align*}
  Then $\chi_{\restriction_{F^\times}}=1$.
  Apply the short-root construction with local component
  $\dot\sigma_v=I_{B_{\GL_2}}^{\GL_2}(\omega_1\boxtimes\omega_2)$.
  This principal series is irreducible, while $\dot\sigma$ remains cuspidal by its prescribed real components.
  The local $L$-parameter for $H$ has standard representation
  $1\oplus\omega_1\oplus\omega_2\oplus\omega_3$.
  Under induction in stages, the intertwining operator for representations of $\GL_2$ combines with the intertwining operator defined by
  \begin{align*}
    (w_0^{\rG_2}w_0^{M_s})w_0^{M_s}&=w_0^{\rG_2},\\
    (w_0^{\PGSO_8}s_{\alpha_2}\rtimes\theta)s_{\alpha_2}
      &=w_0^{\PGSO_8}\rtimes\theta.
  \end{align*}
  Remark~\ref{remark:Whittaker-normalization-Levi} identifies the normalized operators in these equalities.
  Thus the same global comparison proves the toral relation, and also its twisted counterpart associated with $H$.
\end{proof}

\section{Local intertwining relation between \texorpdfstring{$\widetilde{\PGSO_8}$}{twisted PGSO8} and \texorpdfstring{$\SO_4$}{SO4}}\label{section:local-LIR-PGSO8-SO4}

Let $F$ be a local field of characteristic zero.  Fix the
pinning of $G=\PGSO_8$ from Section~\ref{section:general-notation} and the
pinning-preserving triality automorphism $\theta$, and put
$\widetilde G=G\rtimes\theta$.  Fix a non-trivial additive character
$\psi_F$ and the corresponding Whittaker datum.
Let $\widehat\theta$ be the dual pinned automorphism of
$\widehat G=\Spin_8(\C)$, identify
$(\widehat G^{\widehat\theta})^\circ$ with $\rG_2(\C)$, and let $\alpha$
and $\beta$ be its short and long simple roots.  Choose $s_H$ in its
pinned torus by
\begin{align*}
  \alpha(s_H)=1,
  \quad
  \beta(s_H)=-1.
\end{align*}
Then
\begin{align*}
  \widehat H
  =\Cent_{\widehat G}(s_H\widehat\theta^{-1})^\circ
  \cong\SO_4(\C).
\end{align*}
Let
\begin{align*}
  \mathfrak e_H
  =
  (H=\SO_4,\mathcal H_H,
  s_H\rtimes\widehat\theta^{-1},\xi_H)
\end{align*}
be the resulting elliptic twisted endoscopic datum.

We have the following local intertwining relation.

\begin{theorem}\label{theorem:local-intertwining-relation-PGSO8-SO4}
  Let $F$ be a local field of characteristic zero.
  Let $(M, \sigma, \widetilde{r})$ be an elliptic tempered triple of $\widetilde{\PGSO_8}$ such that $M$ is the standard Levi subgroup of $\PGSO_8$ associated with one of the sets $\emptyset$, $\{\alpha_2\}$, or $\{\alpha_1, \alpha_3, \alpha_4\}$, and let $P$ be the standard parabolic subgroup with Levi component $M$.
  Assume that the associated elliptic endoscopic datum is $\mathfrak e_H$. For $M=T$, this is the class represented by $w_0^{\PGSO_8}\rtimes\theta$ in Remark~\ref{remark:explicit-description-regular-Weyl-elements}.
  Then, we have
  \begin{align*}
    \tr(R_{P}(\widetilde{r}, \sigma, \psi_F)I_{P}(\sigma)(f^{\widetilde{\PGSO_8}}))
    =
    \Theta_{\phi^{\SO_4}(\sigma)}(f^{\widetilde{\PGSO_8}}_{\SO_4}),
  \end{align*}
  for $f^{\widetilde{\PGSO_8}} \in \cI(\widetilde{\PGSO_8}(F))$.
\end{theorem}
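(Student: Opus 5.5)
The plan is to prove Theorem~\ref{theorem:local-intertwining-relation-PGSO8-SO4} by the same global method used in the proof of Theorem~\ref{theorem:local-tempered-LIR-G2}. Both sides of that argument were set up simultaneously: the identity \eqref{equation:LIR-global-twisted-separated}, $\widetilde J(\widetilde f)=\mathcal H(\widetilde f_H)$, is exactly a global form of the twisted relation. The work is to extract the local statement at $v$, taking care that no circularity arises at real places.

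\textbf{The real case.} I treat $F=\R$ first and independently. The distinguished $L$-packets are discrete by Lemma~\ref{lemma:lifting-discrete-parameter-archimedean-G2-twisted-PGSO8}, and their members are $\theta$-invariant by Lemma~\ref{lemma:invariance-lifting-archimedean}. I would then argue as follows.
\begin{enumerate}
\item By Remark~\ref{remark:Whittaker-normalization-Levi}, the Whittaker-normalized operator $R_P(\widetilde r,\sigma,\psi_F)$ agrees with the Kaletha--Mezo normalization of twisted characters.
\item Use the real twisted endoscopic character identities of \cite{KalethaMezo2026-RefinedLocalLanglandsConjectureDiscreteLParameters}, which were already used in Theorem~\ref{theorem:PGSO8-to-G2-surjective}.
\item Apply parabolic descent (Lemma~\ref{lemma:parabolic-descent-commutes-with-transfer}) to identify the induced twisted character with the transfer of the stable character of $\phi^{\SO_4}(\sigma)$.
\end{enumerate}
This removes the apparent circularity noted in the proof of Theorem~\ref{theorem:local-tempered-LIR-G2}. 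The real twisted relation is needed there at $u_1,u_2$, and here it is obtained without any global input.

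\textbf{The non-archimedean case.} Next let $F$ be non-archimedean. By Lemmas~\ref{lemma:weyl-fixed-representation-long-dual-root} and~\ref{lemma:weyl-fixed-representation-short-dual-root} and Remark~\ref{remark:torus-regular-invariant-rep}, the elliptic triples with datum $\mathfrak e_H$ are exactly the local components of the following inducing data in the proof of Theorem~\ref{theorem:local-tempered-LIR-G2}:
\begin{itemize}
\item $\Sigma=\dot\sigma\boxtimes\omega^{\boxtimes 3}$ on $M_{\alpha_2}$, with $\sigma'=\mathrm{AI}_{E/F}(\chi)$ and $\omega_{\sigma'}=\chi_1\neq 1$;
\item $\Sigma=\dot\sigma^{\boxtimes3}\boxtimes\omega$ on $M_{\alpha_1,\alpha_3,\alpha_4}$, with $\eta^3\ne1$;
\item for $M=T$, the toral datum $w_0\rtimes\theta$ of Lemma~\ref{lemma:description-of-centralizer-of-regular-elements-1}, realized through induction in stages from $M_{\alpha_2}$ with $\dot\sigma_v$ a principal series $I(\omega_1\boxtimes\omega_2)$.
\end{itemize}

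The concrete steps are as follows.
\begin{enumerate}
\item Globalize as in that proof, using Lemma~\ref{lemma:globalization-unitary-character}.
\item Use strong multiplicity one and Remark~\ref{remark:killing-SL3-PGL3-contribution} to kill the $\SL_3$ term, the other Levi terms, and the $\PGSO_8$ discrete term.
\item Stabilize by Theorem~\ref{theorem:stabilization-twisted-PGSO8}, checking that the coefficients are $\tfrac14$ on both sides.
\item Apply the real-place separation via Theorem~\ref{theorem:cuspidal-surjective}. This yields $\widetilde J(\widetilde f)=\mathcal H(\widetilde f_H)$ with $S^{\rG_2}_{\disc,c}=0$.
\end{enumerate}

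\textbf{Factorization and peeling off $v$.} Write $\widetilde J=\prod_u \tr(R_P(\widetilde r,\Sigma_u)I_P(\Sigma_u)(\widetilde f_u))$. This factorization requires the global normalizing factors to be $1$, which was checked via the self-dual dual-nilradical decompositions $\rho_1^{\boxplus4}\boxplus\omega^{\boxplus3}$ and $\rho_3\boxplus\rho_1^{\boxplus3}\boxplus\omega$. At finite $u\neq v$, $\Sigma_u$ is induced from a torus character. Using Lemma~\ref{lemma:parabolic-descent-commutes-with-transfer} and induction in stages, the local twisted identity there reduces to a twisted transfer for tori and $\GL_n$-type Levis. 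That identity holds because the Whittaker extensions of principal series are compatible with parabolic descent (cf.\ Lemma~\ref{lemma:ecr-for-steinberg}). Unramified places follow from the twisted fundamental lemma, and real places from the first step. Choosing test functions with nonzero local factors away from $v$ then isolates the identity at $v$.

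\textbf{Main obstacle.} I expect the hardest point to be the local factorization at finite $u\ne v$ in the twisted setting. One must show that the normalized twisted operator on a principal series, including the $\theta$-action on $\GL_1^3$ and on $\GL_2^3$ factors, matches the $\SO_4$-transfer with the correct Whittaker-normalized sign, and in particular that no stray $\Delta$-factor or $d(\theta)^{1/2}=\abs{3}^{1/2}$ constant survives. I would handle this by reducing through Remark~\ref{remark:Whittaker-normalization-Levi} to the twisted Levi $\widetilde{M}$. On $\widetilde M$, a twisted character of a $\theta$-permuted tensor product $\dot\sigma_u^{\boxtimes 3}$ is computed by the standard base-change-type trace identity, giving the character of $\dot\sigma_u$ at the norm, and this can then be matched directly with the torus transfer.
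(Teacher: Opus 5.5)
Your non-archimedean argument is essentially the paper's. You reuse the global data from the proof of Theorem~\ref{theorem:local-tempered-LIR-G2}, invoke \eqref{equation:LIR-global-twisted-separated}, factor, and isolate $v$, treating $M=T$ by induction in stages from $M_{\alpha_2}$. That argument, however, needs the twisted relation at the real places, both for the separation at $u_1,u_2$ and for the factorization. Your real case does not establish it.

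\textbf{Where your real case fails.} For $M=M_{\alpha_2}$ or $M_{\alpha_1,\alpha_3,\alpha_4}$, the parameter $\xi_H\circ\phi^{\SO_4}(\sigma)$ factors through $\widehat M$, and its standard representation contains $\phi_{\sigma'}$ twice. So it is not a discrete parameter of $\PGSO_8(\R)$, and its image in $\rG_2$ is not discrete either.
\begin{enumerate}
\item Lemmas~\ref{lemma:lifting-discrete-parameter-archimedean-G2-twisted-PGSO8} and~\ref{lemma:invariance-lifting-archimedean} concern lifts of discrete $\rG_2$-parameters, so they are beside the point. A theorem about discrete $L$-parameters gives nothing here.
\item Parabolic descent cannot bridge the gap. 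Descending to $\widetilde M=M\rtimes\theta$ computes the twisted trace attached to the non-regular element $\theta$ of $W^{\widetilde G}(\sigma)$. By contrast, $\widetilde r=w_0^{\PGSO_8}w_0^M\rtimes\theta$ is regular and does not normalize $P$, and the distribution $f\mapsto\tr(R_P(\widetilde r,\sigma,\psi_F)I_P(\sigma)(f))$ is elliptic. On the other side $\phi^{\SO_4}(\sigma)$ is discrete. So Lemma~\ref{lemma:parabolic-descent-commutes-with-transfer} says nothing about either side of the identity.
\item Remark~\ref{remark:Whittaker-normalization-Levi} only tells you how $R_P(\widetilde r,\sigma,\psi_F)$ acts on the generic constituent of $I_P(\sigma)$. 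The scalars on the remaining constituents are precisely the content of the local intertwining relation. Your step (1) therefore assumes what is to be proved.
\end{enumerate}

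\textbf{How the paper handles the real case.} It uses a separate globalization in which every finite component is a principal series, so no non-archimedean input (and no circularity) arises.
\begin{enumerate}
\item It globalizes $\sigma'=\mathrm{AI}_{\C/\R}(\chi_n)$ through a Hecke character of an imaginary quadratic field $K$. This is pulled back to $\dot F=\Q(\sqrt5)$ and to $\dot F=\Q(\zeta_7+\zeta_7^{-1})$, so that all $d=2$, resp.\ $d=3$, real places carry the same datum.
\item At finite places the induced representations are irreducible, by Steinberg's connectedness theorem and Keys. The toral twisted identity has coefficient $1$, because the powers of $\abs{3}$ coming from the norm map $(x,y)\mapsto(x^3,y)$ cancel.
\item The compact form in the real $K$-space contributes zero, by an infinitesimal-character argument.
\item Stabilization then gives
\[
 \prod_{u\mid\infty}\widetilde J_\R(f_u)=\prod_{u\mid\infty}\mathcal H_\R(f_u)+4S_d\Bigl(\bigotimes_{u\mid\infty}(f_u)_{\rG_2}\Bigr).
\]
\item Take a cuspidal $f_0$ with $(f_0)_{\rG_2}=0$ and $\mathcal H_\R(f_0)=1$, and put $c_\R=\widetilde J_\R(f_0)$. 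The case $d=2$ gives $c_\R^2=1$ and $c_\R\widetilde J_\R=\mathcal H_\R$. The case $d=3$ gives $c_\R^3=1$. Hence $c_\R=1$.
\end{enumerate}
Note that $d=2$ alone leaves the sign $c_\R=\pm1$ undetermined, which is why two base fields of different degrees are needed.

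What is missing is a replacement for your real step: either this argument, or a genuinely tempered twisted real intertwining relation with matched normalizations.
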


\begin{proof}
  Over $\C$ there are no triples satisfying the hypotheses. 

  We assume that $F=\R$.
  Since $\PGSO_8$ is adjoint, we may use the standard real additive
  character.
  By Lemmas~\ref{lemma:weyl-fixed-representation-long-dual-root}
  and~\ref{lemma:weyl-fixed-representation-short-dual-root}, after Weyl
  conjugacy the inducing representation belongs to one of the two families
  \begin{align*}
    \sigma'\boxtimes\epsilon\boxtimes\epsilon\boxtimes\epsilon,
    \quad
    \sigma'\boxtimes\sigma'\boxtimes\sigma'\boxtimes\epsilon,
  \end{align*}
  respectively, where
  \begin{align*}
    \sigma'=\mathrm{AI}_{\C/\R}(\chi_n),\quad
    \chi_n(z)=(z/\bar z)^n,\quad
    n\in\Z\setminus\{0\},\quad \epsilon=\operatorname{sgn}.
  \end{align*}
  The Weyl element is
  $w_0^{\PGSO_8}w_0^M\rtimes\theta$.
  There is no toral triple over $\R$ satisfying the hypotheses of the theorem. The characters of the roots
  $\alpha_1$, $\alpha_2$, and $\alpha_1+\alpha_2$
  in Lemma~\ref{lemma:description-of-centralizer-of-regular-elements-1}
  are $\zeta$, $\xi$, and $\zeta\xi$, so one is trivial and $W_0\ne1$.

  Choose an imaginary quadratic field $K/\Q$ and a split finite place $w$.
  Lemma~\ref{lemma:globalization-unitary-character} gives a unitary Hecke
  character $\chi_0$ of $K$, trivial on $\A_\Q^\times$, with infinity type
  $\chi_n$, trivial at every nonsplit finite place and unramified away
  from $w$ at split finite places.
  For $d=2,3$, respectively, take
  \[
    \dot F=\Q(\sqrt5),\quad
    \dot F=\Q(\zeta_7+\zeta_7^{-1}),\qquad
    \dot E=K\dot F,\quad
    \dot\chi=\chi_0\circ N_{\dot E/K}.
  \]
  Here $\zeta_7$ is a primitive seventh root of unity.
  Since $\dot F\cap K=\Q$, this gives a quadratic extension
  $\dot E/\dot F$ and a character trivial on $\A_{\dot F}^\times$.
  At every real place $u$ we have $\dot\chi_u=\chi_n$.
  At every nonsplit finite place it is trivial. Thus all finite components
  of $\dot\sigma=\mathrm{AI}_{\dot E/\dot F}(\dot\chi)$ are principal
  series. Set $\omega=\omega_{\dot E/\dot F}$ and
  $\rho_j=\mathrm{AI}_{\dot E/\dot F}(\dot\chi^j)$.
  The nonzero infinity type makes $\rho_1,\rho_2,\rho_3$ cuspidal.
  Use the corresponding $\Sigma$ and $\dot\phi^H$ in the two families
  preceding \eqref{equation:LIR-global-D4}.
  Taking $\psi_{\dot F}=\psi_\Q\circ\operatorname{Tr}_{\dot F/\Q}$
  makes the real additive characters identical in both globalizations.

  At a finite place $u$, the inducing torus parameter factors through a
  single character. Its centralizer in $\Spin_8(\C)$ is connected by
  \cite{Steinberg1968-EndomorphismsLinearAlgebraicGroups}*{Theorem 8.1}.
  Hence the induced representation of $\PGSO_8(\dot F_u)$ is irreducible
  by \cite{Keys1987-LIndistinguishabilityRGroupsQuasisplit}*{Theorem 2.4
  and Proposition 2.6}.
  Its normalized extension is the one fixing the Whittaker functional.
  Write the pinned torus in coordinates for which $\theta$ permutes the
  first three factors. The norm map is
  \begin{align*}
    (t_1,t_3,t_4,t_2)\longmapsto(t_1t_3t_4,t_2).
  \end{align*}
  Its restriction to the fixed torus is $(x,y)\mapsto(x^3,y)$.
  The compatible centralizer measure contributes $\abs{3}_{\dot F_u}^{-1}$,
  while the discriminant and transfer normalization each contribute
  $\abs{3}_{\dot F_u}^{1/2}$.
  The transfer factor on the related split tori is $1$ by
  \cite{KottwitzShelstad1999-FoundationsTwistedEndoscopy}*{Section 5.3}.
  Thus the toral character identity has coefficient $1$.
  Lemma~\ref{lemma:parabolic-descent-commutes-with-transfer} and the
  Whittaker normalization give the local comparison at $u$.

  Write $\widetilde J_\R$ for the real intertwining distribution and put
  $\mathcal H_\R(f)=\Theta_{\phi^{\SO_4}(\sigma)}(f_H)$.
  Their common $D_4$ infinitesimal character $\nu$ is represented by
  $(n,n,0,0)$ or $(2n,n,n,0)$, respectively, and is singular.
  The associated real $K$-space has only the split and compact forms. See \cite{KnusMerkurjevRostTignol1998-BookInvolutions}*{Section 31,
  p.\ 437, following (31.41)}.
  Extend $\widetilde J_\R$ by zero on the compact form, which has no
  proper real Levi. The distribution $\mathcal H_\R$ is also zero there.
  Compatibility with infinitesimal characters and tempered spectral
  transfer give its singular infinitesimal character $\nu$, whereas every
  compact irreducible has regular infinitesimal character. See
  \cite{MoeglinWaldspurger2016-StabilisationFormuleTracesTordue1}*{IV.2.1
  and IV.3.2}.

  Apply the calculation preceding \eqref{equation:LIR-global-D4} on the global
  $K$-space, with Hecke family $c$ and infinitesimal-character tuple
  $(\nu,\ldots,\nu)$ fixed.
  The same discrete expansion and stabilization in
  \cite{MoeglinWaldspurger2016-StabilisationFormuleTracesTordue2}*{X.5.1, X.8.1}
  give this identity for these data.
  The global inner components with a compact real place contribute zero
  to this infinitesimal-character component.
  The Weyl coefficients and global normalizing factor are those computed
  before \eqref{equation:LIR-global-D4}.
  Fixing matching nonzero factors at the finite places gives, for $d=2,3$,
  \begin{align}\label{equation:LIR-real-tensor-comparison}
    \prod_{u\mid\infty}\widetilde J_\R(f_u)
    =\prod_{u\mid\infty}\mathcal H_\R(f_u)
      +4S_d\left(\bigotimes_{u\mid\infty}(f_u)_{\rG_2}\right),
  \end{align}
  where $S_d$ denotes the resulting stable global distribution with the
  finite components fixed.

  Simultaneous cuspidal transfer on the real $K$-space,
  \cite{MoeglinWaldspurger2016-StabilisationFormuleTracesTordue1}*{I.4.11,
  Proposition (ii)}, supplies $f_0$ with
  \[
    (f_0)_{\rG_2}=0,\qquad \mathcal H_\R(f_0)=1.
  \]
  The automorphism average of the stable discrete packet character is a
  nonzero sum of discrete-series characters, so it is nonzero on the
  invariant stable cuspidal space.
  Put $c_\R=\widetilde J_\R(f_0)$.
  Taking $(f_0,f_0)$ and $(f_0,f)$ in the degree-two identity gives
  $c_\R^2=1$ and
  $c_\R\widetilde J_\R(f)=\mathcal H_\R(f)$.
  The degree-three identity at $(f_0,f_0,f_0)$ gives $c_\R^3=1$.
  Hence $c_\R=1$, proving the desired identity for every $f$.

  Finally, suppose that $F$ is non-archimedean.
  For $M=M_{\alpha_2}$ and $M=M_{\alpha_1,\alpha_3,\alpha_4}$, use the
  corresponding data in the proof of
  Theorem~\ref{theorem:local-tempered-LIR-G2}.
  Equation~\eqref{equation:LIR-global-twisted-separated} and the local
  comparisons away from the distinguished place give the asserted
  identity. For $M=T$, use the short-root globalization with
  principal-series local component and the final induction-in-stages
  calculation in that proof.
\end{proof}

\section{The endoscopic character relations for \texorpdfstring{$L$-parameters}{L-parameters} of \texorpdfstring{$\PGSp_6$-type}{PGSp6-type}}\label{section:ECR-PGSp6}

Let $\phi^{\rG_2} \in \Phi(\rG_2(F))$ be a discrete series parameter of $\rG_2(F)$.

If $F$ is non-archimedean, we have constructed a Xu $L$-packet $\Pi^{X}_{\iota_{\rG_2}^{\PGSO_8} \phi^{\rG_2}}$ whose elements are canonically extended to $\widetilde{\PGSO_8}(F)$.

For archimedean $F$, let $\Pi_{\iota_{\rG_2}^{\PGSO_8} \phi^{\rG_2}}$ be Langlands's $L$-packet. Equivariance under outer automorphisms likewise gives its canonical extension to $\widetilde{\PGSO_8}(F)$.

In either case write $\Pi_{\iota_{\rG_2}^{\PGSO_8} \phi^{\rG_2}}$ for this packet and $\widetilde{\pi}$ for the canonical extension of each member $\pi$ to $\widetilde{\PGSO_8}(F)$.

\begin{definition}
  We set
  \begin{align*}
    \Theta_{\iota_{\rG_2}^{\PGSO_8} \phi^{\rG_2}}^{\widetilde{\PGSO_8}}(f^{\widetilde{\PGSO_8}})
    =
    \sum_{\pi \in \Pi_{\iota_{\rG_2}^{\PGSO_8} \phi^{\rG_2}}} \tr(\widetilde{\pi})(f^{\widetilde{\PGSO_8}}),
  \end{align*}
  for $f^{\widetilde{\PGSO_8}} \in \cI(\widetilde{\PGSO_8}(F))$.
\end{definition}

\begin{proposition}\label{proposition:positive-integrality}
  The distribution $\Theta_{\iota_{\rG_2}^{\PGSO_8} \phi^{\rG_2}}^{\widetilde{\PGSO_8}}$ transfers to a stable distribution $\Sigma_{\phi^{\rG_2}}^{\rG_2}$ on $\rG_2(F)$ which is a non-negative integral linear combination of discrete series characters of $\rG_2(F)$.
\end{proposition}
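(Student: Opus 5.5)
We would first reduce to an elliptic statement and then obtain integrality and positivity from a global comparison. If $\phi^{\rG_2}$ is discrete, Lemma~\ref{lemma:lifting-discrete-parameter-archimedean-G2-twisted-PGSO8} (archimedean case) and Theorem~\ref{theorem:characterization-distinguished-L-packets} (non-archimedean case) show that $\iota_{\rG_2}^{\PGSO_8}\circ\phi^{\rG_2}$ is discrete. Hence $\Theta^{\widetilde{\PGSO_8}}_{\iota_{\rG_2}^{\PGSO_8}\phi^{\rG_2}}$ is an elliptic tempered distribution. For non-archimedean $F$, Theorem~\ref{theorem:elliptic-character-transfer} then decomposes it uniquely as a sum of stable elliptic distributions on $\rG_2$, $\SO_4$ and $\SL_3$. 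The first step is to kill the $\SO_4$ and $\SL_3$ components. By Theorem~\ref{theorem:local-intertwining-relation-PGSO8-SO4} and Proposition~\ref{proposition:elliptic-triplets-elliptic-transfer-twisted-PGSO8}, every stable discrete character of $\SO_4$ transfers to a twisted elliptic character $\tr(R_P(\widetilde r,\sigma,\psi_F)I_P(\sigma))$ with $M$ a proper Levi subgroup. The $\SL_3$ case goes the same way through Lemma~\ref{lemma:standard-parameters-of-theta-unstable-cases} and the local version of \cite{Gan2025-TrialityAdjointLiftingGL3}*{Theorem 8.2}. A packet of discrete series characters is orthogonal to such induced elliptic characters for the elliptic inner product, so Proposition~\ref{proposition:elliptic-inner-product-transfer} forces the $\SO_4$ and $\SL_3$ components to have norm zero. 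Thus the whole twisted character is the transfer of a single stable elliptic $\Sigma^{\rG_2}_{\phi^{\rG_2}}$. Corollary~\ref{corollary:discreteness-of-discrete-packet} then shows that $\Sigma^{\rG_2}_{\phi^{\rG_2}}$ is a linear combination of discrete series characters. For $F=\R$ we would instead quote Kaletha--Mezo, as in the proof of Theorem~\ref{theorem:PGSO8-to-G2-surjective}; there the transfer is the stable discrete series character itself, and positivity and integrality are clear.

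For non-negativity and integrality in the $p$-adic case we globalize. Using Lemma~\ref{corollary:globalization-generic-supercuspidal-representation}, we choose a globally generic cuspidal $\Sigma$ of $\rG_2(\A)$ with the generic member of $\Pi_{\phi^{\rG_2}}$ at $v$ and with $\St_{\rG_2}$ at an auxiliary place $w$. If needed, we add a further place where the lift to $\PGSp_6$ is supercuspidal, so that Proposition~\ref{proposition:invariance-of-global-generic-theta-lift-to-PGSO_8} and Corollary~\ref{corollary:summary-global-invariance-of-generic-theta-lift-to-PGSO_8} apply. These give $I^{\widetilde{\PGSO_8}}_{\disc,\phi}=\sum_{\pi\in\Pi_\phi}\tr\widetilde\pi$. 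The standard lift is $\mathbf1\boxplus\mathcal A_7$ with $\mathcal A_7$ cuspidal. Strong multiplicity one therefore kills the $\SO_4$ and $\SL_3$ terms, and also all proper Levi terms, in Theorem~\ref{theorem:stabilization-twisted-PGSO8}. Hence
\begin{align*}
  \prod_u \Theta^{\widetilde{\PGSO_8}}_{\phi_u}(f_u)=S^{\rG_2}_{\disc,c}\Bigl(\bigotimes_u (f_u)_{\rG_2}\Bigr).
\end{align*}
The same reasoning in the $\rG_2$ stabilization gives $S^{\rG_2}_{\disc,c}=I^{\rG_2}_{\disc,c}=\tr R^{\rG_2}_{\disc,c}$. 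This is a non-negative integral combination of global irreducible characters.

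The final step is separation of places. At the Steinberg place, Lemma~\ref{lemma:ecr-for-steinberg} identifies the local transfer as exactly $\St_{\rG_2}$. At unramified places the fundamental lemma gives the spherical character with coefficient $1$. At the remaining auxiliary places the local factors are fixed nonzero distributions, which we would choose to be of the same type, handled by the same argument or normalized by generic test functions. Comparing coefficients in the product formula then expresses $\Sigma^{\rG_2}_{\phi_v}$ as the multiplicity-weighted sum of the $v$-components of the $\rG_2$-constituents with prescribed components elsewhere, divided by the product of the auxiliary scalars. We expect the main obstacle to be showing that this scalar is $1$, rather than some positive rational number, uniformly in $v$. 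We would try to resolve it by letting the auxiliary data be Steinberg or unramified only, so that every auxiliary scalar is $1$. Integrality of the coefficients of $\Sigma^{\rG_2}_{\phi_v}$ would then follow from integrality of the discrete multiplicities, and non-negativity from their positivity.
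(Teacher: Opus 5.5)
Your global skeleton is the one the paper uses: a generic globalization with Steinberg and unramified auxiliary data, Corollary~\ref{corollary:summary-global-invariance-of-generic-theta-lift-to-PGSO_8}, and both stabilizations with the endoscopic and proper-Levi terms killed by the cuspidal $\mathcal A_7$, followed by comparison of coefficients. However, two steps fail as written.

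\textbf{The local elimination of the $\SO_4$ and $\SL_3$ components does not work.} Your first paragraph rests on false claims.

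\emph{Discreteness of the lift.} The lift $\iota^{\PGSO_8}_{\rG_2}\circ\phi^{\rG_2}$ is not discrete for $\PGL_3$-type parameters. Its standard parameter is $\mathbf1\oplus\mathbf1\oplus\phi_\tau\oplus\phi_\tau^\vee$, and $\Theta$ is the induced character $\tr\widetilde{I_P(\tau)}$ of Lemma~\ref{lemma:elliptic-norm-PGL3}. Theorem~\ref{theorem:characterization-distinguished-L-packets} says nothing about discreteness.

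\emph{Transfers from $\SO_4$.} Theorem~\ref{theorem:local-intertwining-relation-PGSO8-SO4} and Proposition~\ref{proposition:elliptic-triplets-elliptic-transfer-twisted-PGSO8} only go from elliptic triples with proper Levi to $\SO_4$-parameters, and the converse you need is false. Suppose $\mathbf1\oplus\Std_{\SO_4}\phi^{\SO_4}\oplus\wedge^2_+\phi^{\SO_4}$ is multiplicity-free, which is exactly the $\SO_4$-type situation of Remark~\ref{remark:isomorphic-S-group-PGSO8-for-parameters-of-SO4-type}. Then the transfer of the stable $\SO_4$ character lives on the discrete packet $\Pi_{\iota\circ\phi^{\rG_2}}$ itself. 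As the paper eventually shows, it equals $\sum_\pi\rho_\pi(s_H)\tr\widetilde\pi$. Its orthogonality to $\sum_\pi\tr\widetilde\pi$ amounts to $\sum_\pi\rho_\pi(s_H)=0$, which is part of the character identity being proved. No local orthogonality argument gives it for free.

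\emph{The fix.} The paper removes these components globally, and you can do the same once you have your product identity. Its right-hand side depends only on $\bigotimes_u(f_u)_{\rG_2}$. Choose $f_v$ with vanishing $\rG_2$-transfer but prescribed $\SO_4$ and $\SL_3$ transfers, using surjectivity of cuspidal transfer \cite{MoeglinWaldspurger2016-StabilisationFormuleTracesTordue1}*{I.4.11}. Choose nonvanishing factors at the other places. This shows $\Theta_v$ factors through $\Trans_{\rG_2}$.

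\textbf{Your proposed normalization of the auxiliary factors is unavailable.} You correctly flag this as the main obstacle, but you cannot make all auxiliary data Steinberg or unramified.

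\emph{A supercuspidal place is forced.} Lemma~\ref{corollary:globalization-generic-supercuspidal-representation} always puts a generic supercuspidal at some finite place. Proposition~\ref{proposition:invariance-of-global-generic-theta-lift-to-PGSO_8} needs a place where the $\PGSp_6$-lift is supercuspidal. Unless the generic member at $v$ has this property, there is an extra finite place $v_1$ that is neither Steinberg nor unramified. It never has the property when $\phi^{\rG_2}$ is of $\PGL_3$-type or its generic member is not supercuspidal.

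\emph{Why this blocks integrality.} At $v_1$ the coefficients of the transfer are a priori unknown. Running ``the same argument'' there only makes them non-negative integers, so dividing by them does not give integrality at $v$. The archimedean places also remain, with uncontrolled components. There you need a constituent of coefficient exactly $1$; the paper uses the multiplicity-free real transfer.

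\emph{The missing idea is the paper's bootstrap.}
\begin{enumerate}
\item First treat $\PGSp_6$-type $\phi$ whose generic member is supercuspidal with supercuspidal $\PGSp_6$-lift. Here only Steinberg, unramified and real auxiliary places occur, so coefficient comparison gives non-negative integrality directly.
\item Upgrade this to the exact identity: all coefficients equal $1$ on $\Pi_\phi$ and vanish outside it. This uses the elliptic norm $\abs{\cS_\phi}$ from Proposition~\ref{proposition:elliptic-inner-product-transfer}, positivity of every packet member via the global theta replacement argument, and Lemma~\ref{lemma:coefficient-exhaustion-PGSp6}.
\item In the general case, place such a supercuspidal at $v_1$. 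Its factor is now a multiplicity-free packet sum, which isolates $v$ and yields non-negative integrality there.
\end{enumerate}
Your remark about ``generic test functions'' could alternatively be developed into a regular-nilpotent-germ argument showing that the generic member has coefficient $1$ at each auxiliary place. As it stands, the proposal does not supply this.
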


\begin{remark}
  For $F = \R$, the main result of \cite{KalethaMezo2026-RefinedLocalLanglandsConjectureDiscreteLParameters} identifies $\Sigma_{\phi^{\rG_2}}^{\rG_2}$ with the stable character of $\phi^{\rG_2}$.
  Henceforth assume that $F$ is non-archimedean.
\end{remark}

\subsection{Proof of Proposition~\ref{proposition:positive-integrality} for \texorpdfstring{$\phi^{\rG_2} \in \Phi(\rG_2)_{\PGSp_6}$}{parameters of PGSp6-type}}

  We assume that $\phi^{\rG_2}$ is a discrete series parameter of $\rG_2$ such that $\Std_{\rG_2} \circ \phi^{\rG_2}$ does not contain the trivial representation of $L_F$.
  \begin{proposition}\label{proposition:ECR-PGSp_6}
    The distribution $\Theta_{\iota_{\rG_2}^{\PGSO_8} \phi^{\rG_2}}^{\widetilde{\PGSO_8}}$ transfers to a stable distribution
    \begin{align*}
      \sum_{\sigma \in \Pi_{\phi^{\rG_2}}} \tr{\sigma}.
    \end{align*}
    Here $\Pi_{\phi^{\rG_2}}$ is the $L$-packet of $\rG_2(F)$ associated with $\phi^{\rG_2}$.
  \end{proposition}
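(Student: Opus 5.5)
The plan is a global argument: embed the local parameter into a global generic cuspidal situation, compare the stabilized twisted trace formula for $\widetilde{\PGSO_8}$ with the one for $\rG_2$, and then isolate the distinguished local place.

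\textbf{Step 1 (globalization).} Let $\phi^{\rG_2}$ be of $\PGSp_6$-type. By Theorem~\ref{theorem:properties-exceptional-theta}, $\Pi_{\phi^{\rG_2}}\cong\cS_{\phi^{\rG_2}}^{\cD}$ is an elementary abelian $2$-group, and the generic member $\sigma_0$ lifts to a generic discrete series of $\PGSp_6(F)$. Reducing by parabolic descent, I first treat the case where $\theta_{\rG_2}^{\PGSp_6}(\sigma_0)$ is supercuspidal. In that case Lemma~\ref{corollary:globalization-generic-supercuspidal-representation} gives a globally generic cuspidal $\Sigma$ with:
\begin{itemize}
\item $\Sigma_{v_0}\cong\sigma_0$ at the distinguished place $v_0$;
\item Steinberg components at an auxiliary place $w$;
\item discrete components at two auxiliary real places $u_1,u_2$, obtained via a totally real choice of field.
\end{itemize}
Proposition~\ref{proposition:invariance-of-global-generic-theta-lift-to-PGSO_8} then produces a cuspidal, $\Out$-stable $\Pi$ on $\PGSO_8$ with $m_{\cusp}=1$. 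Corollary~\ref{corollary:summary-global-invariance-of-generic-theta-lift-to-PGSO_8} gives $I_{\disc,\phi}^{\widetilde{\PGSO_8}}=\sum_{\pi\in\Pi_\phi}\tr\widetilde\pi$, whose local packets are the distinguished Xu packets with canonical extensions (Corollary~\ref{corollary:canonical-extension-global}).

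\textbf{Step 2 (comparison of trace formulas).} For the Hecke--Satake family $c$ of $\Pi$, the standard parameter is $\mathbf 1\boxplus\mathcal A_7$ with $\mathcal A_7$ cuspidal. Hence the family is neither of $\SL_3$-type nor of $\PGL_3$-type (Remark~\ref{remark:killing-SL3-PGL3-contribution}). It also does not factor through $\SO_4$, since $\Std_{\SO_4}\boxplus\wedge^2_+$ would split $\mathcal A_7$, and no proper Levi contributes. Theorem~\ref{theorem:stabilization-twisted-PGSO8} therefore reduces to
\[
\sum_{\pi\in\Pi_\phi}\tr\widetilde\pi(\widetilde f)=S^{\rG_2}_{\disc,c}(\widetilde f_{\rG_2}),\qquad I^{\rG_2}_{\disc,c}(f)=S^{\rG_2}_{\disc,c}(f).
\]
Consequently the right-hand side is stable on $\rG_2$ and is a global sum of automorphic multiplicities.

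\textbf{Step 3 (localization at $v_0$).} I will show that the local distribution at $v_0$ is exactly $\sum_{\sigma\in\Pi_{\phi^{\rG_2}}}\tr\sigma$. By Proposition~\ref{theorem:PGSO8-to-G2-surjective} and the known real case (Kaletha--Mezo), the local factors away from $v_0$ can be fixed with nonzero values. This yields a local identity at $v_0$: $\Theta^{\widetilde{\PGSO_8}}$ transfers to a distribution $\Sigma$ with $c\,\Sigma=\sum_{\sigma}m(\sigma)\tr\sigma$, where $m(\sigma)\ge0$ are global multiplicities and $c$ is a nonzero constant. Three facts then pin down $\Sigma$:
\begin{itemize}
\item By Corollary~\ref{corollary:discreteness-of-discrete-packet}, $\Sigma$ is a combination of discrete series.
\item Compatibility of Gan--Savin parameters with standard $L$-parameters (Theorem~\ref{theorem:properties-exceptional-theta}) together with Lemma~\ref{lemma:injectivity-G_2-to-GL7} confines its support to $\Pi_{\phi^{\rG_2}}$.
\item Genericity of $\sigma_0$ and the Whittaker normalization of the generic member fix the coefficient of $\sigma_0$ as $1$, giving $c=1$.
\end{itemize}

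\textbf{Main obstacle.} It remains to show that every $\sigma\in\Pi_{\phi^{\rG_2}}$ occurs with coefficient exactly $1$. I will use Proposition~\ref{proposition:elliptic-inner-product-transfer}. The transfer is elliptic, so
\[
(\Theta,\Theta)_{\elliptic}=(\Sigma,\Sigma)_{\elliptic}.
\]
By Theorem~\ref{theorem:local-intertwining-relation-PGSO8-SO4}, the $\SO_4$- and $\SL_3$-components vanish for a $\PGSp_6$-type parameter. The left side is $\abs{\Pi^X_\phi}$ by orthogonality of canonical extensions of distinct $\theta$-stable discrete series. The right side is $\sum m_\sigma^2$. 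Comparing $\abs{\Pi^X}$ with $\abs{\cS_{\phi^{\rG_2}}}=\abs{\Pi_{\phi^{\rG_2}}}$, via the component group isomorphism $\cS_{\phi}\cong\cS_{\iota\circ\phi}$ modulo center (which must be checked on the $\PGSO_8$ side), the equality $\sum m_\sigma^2=\abs{\Pi_{\phi^{\rG_2}}}$ forces all $m_\sigma\in\{0,1\}$. Transferring to $\SO_4$-type elliptic endoscopy of $\rG_2$, as in Theorem~\ref{theorem:local-tempered-LIR-G2}, together with stability of $\Sigma$ then rules out zeros.

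The non-supercuspidal discrete case will be handled the same way, globalizing the supercuspidal support and using Theorem~\ref{theorem:elliptic-character-transfer} for uniqueness of the transferred distribution.
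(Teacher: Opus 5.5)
Your Steps 1--2 match the paper's setup, but the decisive step is missing. After the elliptic inner-product identity you need every $\sigma'\in\Pi_{\phi^{\rG_2}}$ to occur with coefficient $m(\sigma')\geq1$. Only the generic member is visibly present, via $\Sigma$ itself.

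Your proposed mechanism does not supply this, for three reasons. Theorem~\ref{theorem:local-tempered-LIR-G2} concerns elliptic triples induced from proper Levi subgroups, not discrete $L$-packets. The $\rG_2$--$\SO_4$ character relations for these packets, Theorem~\ref{theorem:ECR-G2-SO4}, are proved \emph{using} the present proposition, so invoking them is circular. And stability of $\Sigma$ alone only makes it orthogonal to as-yet-unknown endoscopic transfers, which gives no positivity.

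The paper obtains positivity globally, in four steps.
\begin{enumerate}
  \item Because the global $S$-group is trivial, replacing $\Pi_{v_0}$ by the lift $\pi'$ of $\sigma'$ gives another cuspidal member $\Pi'$ of the global packet.
  \item The pole of $L^T(s,\Pi',\Std)=\zeta^T_{\dot F}(s)L^T(s,\mathcal A_7)$ and Gan--Takeda give a nonzero theta lift to $\PGSp_6$. It is cuspidal by the Steinberg places and the tower property.
  \item The pole of its Spin $L$-function and Gan--Savin's exceptional Siegel--Weil theorem give a nonzero cuspidal backward lift to an inner form $G'$ of $\rG_2$, with component $\sigma'$ at $v_0$.
  \item A simple trace formula comparison between $G'$ and $\rG_2$, using $\abs{S}\geq2$ Euler--Poincar\'e places, produces an automorphic representation of $\rG_2$ with component $\sigma'$ on the spectral side.
\end{enumerate}
This local-component-replacement and theta-lifting argument is the idea you are missing.

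Several other steps are unsound as written.
\begin{itemize}
  \item The identity $\sum_\sigma m(\sigma)^2=\abs{\Pi_{\phi^{\rG_2}}}$ does not force $m(\sigma)\in\{0,1\}$. For example, with generic coefficient $1$ and $\abs{\cS_{\phi^{\rG_2}}}=8$, the vector $(1,2,1,1,1,0,0,0)$ is allowed. What works is the order in Lemma~\ref{lemma:coefficient-exhaustion-PGSp6}: first $m\geq1$ on the whole packet, then $\abs{\cS_\phi}=\sum m^2\geq\sum_{\Pi_\phi}m^2\geq\abs{\Pi_\phi}$ forces $m=1$ on the packet and $m=0$ off it.
  \item Your support confinement presupposes that every automorphic $\Sigma'$ of $\rG_2$ with Hecke family $c$ has local parameter $\phi^{\rG_2}$ at $v_0$. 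That is a local--global compatibility for $\rG_2$ which is not available; you would need each such $\Sigma'$ to have a nonzero cuspidal lift to $\PGSp_6$. It is also unnecessary, since the exhaustion argument already gives vanishing off the packet.
  \item The constant $c$ and its determination by Whittaker normalization are unproved and avoidable. Take twisted Euler--Poincar\'e functions at the Steinberg places (Lemma~\ref{lemma:ecr-for-steinberg}), spherical units at unramified places, and real test functions whose transfers isolate single members of the multiplicity-free real packets. This yields a non-negative integral combination directly.
  \item The vanishing of the $\SO_4$- and $\SL_3$-components of $\Theta_{v_0}$ comes from this global isolation together with cuspidal surjectivity. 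It does not come from Theorem~\ref{theorem:local-intertwining-relation-PGSO8-SO4}, which concerns induced triples.
  \item Lemma~\ref{corollary:globalization-generic-supercuspidal-representation} leaves the real components uncontrolled, so your discrete components at $u_1,u_2$ are not available from it; they are not needed.
  \item For non-supercuspidal $\phi^{\rG_2}$ the paper does not use parabolic descent. It places the generic discrete series at $v_0$ and an auxiliary generic supercuspidal, with supercuspidal $\PGSp_6$-lift, at $v_1$. The supercuspidal case at $v_1$ gives a coefficient-one packet sum that isolates $v_0$.
\end{itemize}
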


  \begin{lemma}\label{lemma:coefficient-exhaustion-PGSp6}
    Let $\phi \in \Phi_{\disc}(\rG_2(F))_{\PGSp_6}$, and let
    \begin{align*}
      I = \sum_{\tau\in\Pi_{\disc}(\rG_2(F))}m(\tau)\tr\tau
    \end{align*}
    be a distribution with $m(\tau)\in\Z_{\geq 0}$.  Suppose that
    \begin{align*}
      \sum_{\tau}m(\tau)^2=\abs{\cS_{\phi}}
      \quad\text{and}\quad
      m(\sigma) \neq 0 \quad\text{for every }\sigma\in\Pi_{\phi}.
    \end{align*}
    Then
    \begin{align*}
      m(\tau)=
      \begin{cases}
        1 & \text{if }\tau\in\Pi_{\phi},\\
        0 & \text{if }\tau\notin\Pi_{\phi}.
      \end{cases}
    \end{align*}
  \end{lemma}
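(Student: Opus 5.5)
This is a pure counting argument. By Theorem~\ref{theorem:properties-exceptional-theta}, for $\phi\in\Phi_{\disc}(\rG_2(F))_{\PGSp_6}$ the map $\Pi_\phi\hookrightarrow\cS_\phi^{\cD}$ is a bijection. Moreover $\cS_\phi$ is an elementary abelian $2$-group, so every irreducible representation of $\cS_\phi$ is a character and $\abs{\Pi_\phi}=\abs{\cS_\phi}$. These two facts, combined with the hypotheses $m(\tau)\in\Z_{\ge0}$ and $m(\sigma)\neq0$ on $\Pi_\phi$, should force the multiplicities completely.

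First, since each $m(\sigma)$ for $\sigma\in\Pi_\phi$ is a nonzero non-negative integer, we have $m(\sigma)\ge1$ and hence $m(\sigma)^2\ge1$. Second, we split the sum of squares into the part over $\Pi_\phi$ and the part over its complement in $\Pi_{\disc}(\rG_2(F))$:
\begin{align*}
  \abs{\cS_\phi}
  =\sum_{\tau}m(\tau)^2
  =\sum_{\sigma\in\Pi_\phi}m(\sigma)^2+\sum_{\tau\notin\Pi_\phi}m(\tau)^2
  \ge\abs{\Pi_\phi}+\sum_{\tau\notin\Pi_\phi}m(\tau)^2
  =\abs{\cS_\phi}+\sum_{\tau\notin\Pi_\phi}m(\tau)^2 .
\end{align*}
Third, because every term is non-negative, all the inequalities in this chain must be equalities. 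Hence $m(\sigma)^2=1$, so $m(\sigma)=1$, for every $\sigma\in\Pi_\phi$, and $m(\tau)=0$ for every $\tau\notin\Pi_\phi$. The sum over $\tau$ is finite because $I$ is a distribution given as a finite combination of characters, so these manipulations are legitimate.

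The only input that needs checking is the equality $\abs{\Pi_\phi}=\abs{\cS_\phi}$. It is exactly the last assertion of Theorem~\ref{theorem:properties-exceptional-theta}, which holds with no exception in residue characteristic $3$ because $\phi$ is of $\PGSp_6$-type, not of $\PGL_3$-type. There is no real obstacle: the substantive work lies in later proving the hypotheses (the elliptic inner product $\abs{\cS_\phi}$, via Proposition~\ref{proposition:elliptic-inner-product-transfer}, and the non-vanishing of each $m(\sigma)$), not in this lemma.
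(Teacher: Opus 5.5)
Your proof is correct and is essentially the paper's argument: you use $\abs{\Pi_\phi}=\abs{\cS_\phi}$ from Theorem~\ref{theorem:properties-exceptional-theta}, bound the sum of squares below by $\abs{\Pi_\phi}$ via $m(\sigma)\ge1$ on the packet, and conclude from equality throughout. Finiteness of the sum does not need to be assumed separately, since non-negative integer squares summing to $\abs{\cS_\phi}$ can have only finitely many nonzero terms.
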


  \begin{proof}
    By Theorem~\ref{theorem:properties-exceptional-theta},
    $\abs{\Pi_{\phi}}=\abs{\cS_{\phi}}$.  Hence
    \begin{align*}
      \abs{\cS_{\phi}}
      =\sum_{\tau}m(\tau)^2
      \geq\sum_{\sigma\in\Pi_{\phi}}m(\sigma)^2
      \geq\abs{\Pi_{\phi}}
      =\abs{\cS_{\phi}}.
    \end{align*}
    Equality throughout forces $m(\sigma)=1$ for every $\sigma\in\Pi_{\phi}$ and $m(\tau)=0$ for every $\tau\notin\Pi_{\phi}$, since the coefficients are non-negative integers.
  \end{proof}

  \begin{proof}[Proof of Proposition~\ref{proposition:ECR-PGSp_6} for supercuspidal parameters]
  We assume that $\Pi_{\phi^{\rG_2}}$ contains a generic supercuspidal representation $\sigma$ such that $\theta_{\rG_2}^{\PGSp_6}(\sigma)$ is supercuspidal.
  First, we take a totally real number field $\dot{F}$ and a finite place $v_0$ such that $\dot{F}_{v_0} = F$.
  Choose a finite set of finite places $S$ disjoint from $v_0$ with $\abs{S}\geq2$, and fix $w\in S$.
  Applying Lemma~\ref{corollary:globalization-generic-supercuspidal-representation} with auxiliary place $w$ and Steinberg data at $S\setminus\{w\}$, we obtain a globally generic cuspidal automorphic representation $\Sigma$ of $\rG_2(\A_{\dot{F}})$ such that $\Sigma_v$ is unramified for any finite place $v \not \in S \cup \{v_0\}$, $\Sigma_{v_0} = \sigma$ and $\Sigma_S = \otimes_{v \in S} \St_{\rG_2(\dot{F}_v)}$.
  By Corollary~\ref{corollary:summary-global-invariance-of-generic-theta-lift-to-PGSO_8}, the lift $\Pi$ to $\PGSO_8(\A_{\dot{F}})$ exists.
  Let $\phi$ be a global $L$-parameter corresponding to $\Pi$.
  Then we have
  \begin{align*}
    I_{\disc, \iota^{\PGSO_8}_{\rG_2} c(\Sigma)}^{\widetilde{\PGSO_8}} = \sum_{\pi \in \Pi_\phi} \tr \widetilde{\pi}.
  \end{align*}
  The assumption that $\Sigma_S = \otimes_{v \in S} \St_{\rG_2(\dot{F}_v)}$ implies that the global $S$-group of $\Pi$ is trivial.
  Thus, we obtain
  \begin{align*}
    \sum_{\pi \in \Pi_\phi} \tr \widetilde{\pi}
    =
    {\prod_{v}}' \Theta_{\iota_{\rG_2}^{\PGSO_8} \phi^{\rG_2}_v}^{\widetilde{\PGSO_8}},
  \end{align*}
  for some $\phi_v^{\rG_2} \in \Phi(\rG_2(\dot{F}_v))$.
  For finite places $v \notin S \cup \{v_0\}$, we take a spherical function $f_v^{\widetilde{\PGSO_8}}$ and for $v \in S$, we take a pseudo-coefficient $f_v^{\widetilde{\PGSO_8}}$ of the Steinberg representation of $\PGSO_8(\dot{F}_v)$ which is equal to the twisted Euler--Poincar\'e function of $\widetilde{\PGSO_8}(\dot{F}_v)$.
  Then, the transfer to $\rG_2(\dot{F}_v)$ is a spherical function for $v \notin S \cup \{v_0\}$ and a pseudo-coefficient of the Steinberg representation of $\rG_2(\dot{F}_v)$ for $v \in S$.
  Thus, by the simple stable trace formula, we have
  \begin{align*}
    I_{\disc, \iota^{\PGSO_8}_{\rG_2} c(\Sigma)}^{\widetilde{\PGSO_8}}(f^{\widetilde{\PGSO_8}})
    =
    \sum_{\Sigma', \Sigma'_S \cong \otimes_{v \in S} \St_{\rG_2(\dot{F}_v)}, c(\Sigma') = c(\Sigma)} m_{\cusp}(\Sigma') \tr \Sigma'(f^{\widetilde{\PGSO_8}}_{\rG_2}).
  \end{align*}
  The distribution $\Theta_{\iota_{\rG_2}^{\PGSO_8} \phi^{\rG_2}_{v_0}}^{\widetilde{\PGSO_8}}$ is elliptic, and the transfer map
  \begin{align*}
    \cI_{\cusp}(\widetilde{\PGSO_8}(\dot{F}_{v_0})) &\to
     \cS_{\cusp}(\rG_2(\dot{F}_{v_0})) \oplus \cS_{\cusp}(\SO_4(\dot{F}_{v_0})) \oplus \cS_{\cusp}(\SL_3(\dot{F}_{v_0})),
  \end{align*}
  is surjective.
  Thus $\Theta_{\iota_{\rG_2}^{\PGSO_8} \phi^{\rG_2}_{v_0}}^{\widetilde{\PGSO_8}}$ factors through its $\rG_2$-component, giving a stable elliptic distribution on $\rG_2(\dot{F}_{v_0})$.
  At infinite places, $\Theta_{\iota_{\rG_2}^{\PGSO_8} \phi^{\rG_2}_v}^{\widetilde{\PGSO_8}}$ transfers to a multiplicity-free sum of irreducible characters of $\rG_2(\dot{F}_v)$.
  At finite places $v \not \in S \cup \{v_0\}$, take $f_v^{\widetilde{\PGSO_8}}$ to be the spherical Hecke unit of $\widetilde{\PGSO_8}(\dot{F}_v)$. The fundamental lemma transfers it to the spherical Hecke unit of $\rG_2(\dot{F}_v)$.
  Thus at $v_0$, $\Theta_{\iota_{\rG_2}^{\PGSO_8} \phi^{\rG_2}_{v_0}}^{\widetilde{\PGSO_8}}$ transfers to a stable non-negative integral combination of irreducible characters of $\rG_2(\dot{F}_{v_0})$.
  The elliptic inner product formula~\ref{proposition:elliptic-inner-product-transfer} implies that, if we have
  \begin{align*}
    \Sigma^{\rG_2}_{\phi^{\rG_2}} = \sum_{\sigma' \in \Pi_{\disc}(\rG_2(F))} m(\sigma') \tr{\sigma'},
  \end{align*}
  then
  \begin{align*}
    \sum_{\sigma'} m(\sigma')^2 = \abs{\cS_{\iota_{\rG_2}^{\PGSO_8} \circ \phi^{\rG_2}}} = \abs{\cS_{\phi^{\rG_2}}}.
  \end{align*}
  We have $m(\sigma) \geq 1$ for the generic member $\sigma$ of $\Pi_{\phi^{\rG_2}}$ and claim that $m(\sigma') \geq 1$ for every $\sigma' \in \Pi_{\phi^{\rG_2}}$.

  Let $\pi'$ be the lift of $\sigma'$ to $\PGSO_8(F)$.
  Then, $\pi'$ is a member of $\Pi_{\iota_{\rG_2}^{\PGSO_8} \phi^{\rG_2}}$.
  Since the global component group is trivial, the multiplicity formula gives a cuspidal member $\Pi' \in \Pi_{\phi}$ such that $\Pi'_{v_0} = \pi'$ and $\Pi'_u = \Pi_u$ for every $u \neq v_0$.
  In particular, the Steinberg components at $S$ are unchanged.
  We apply the local-component replacement argument of \cite{GanSavin2023-LocalLanglandsConjectureG2}*{Section 8.4, proof of Lemma 8.3}.
  Let $\tau$ be the cuspidal theta lift of $\Sigma$ to $\PGSp_6$, and let $\mathcal A_7$ be the standard $\GL_7$-transfer of $\tau$.
  As in the proof of Proposition~\ref{proposition:invariance-of-global-generic-theta-lift-to-PGSO_8}, its Steinberg component makes $\mathcal A_7$ cuspidal, and the standard transfer of $\Pi'$ is $\mathbf1\boxplus\mathcal A_7$.
  Thus, for a sufficiently large finite set $T$ of places,
  \begin{align*}
    L^T(s,\Pi',\Std_{\PGSO_8})
    =\zeta_{\dot F}^T(s)L^T(s,\mathcal A_7)
  \end{align*}
  has a simple pole at $s=1$.
  The classical theta non-vanishing theorem \cite{GanTakeda2011-RegularizedSiegelWeilNonvanishingOrthogonal}*{Theorem 7.7(a)}, with $m=8$ and $r=3$, therefore gives a nonzero theta lift of $\Pi'$ to $\PGSp_6$.
  Here one applies the theorem to the restriction to $\SO_8$ and passes to similitudes.
  At any $u\in S$, the local Steinberg representation has first occurrence at $\Sp_6$ and has no theta lift to $\Sp_4$. See \cite{AtobeGan2017-LocalThetaCorrespondenceTemperedRepresentationsLanglands}*{Section 4}.
  The tower property consequently makes the nonzero global lift cuspidal. See \cite{GanTakeda2011-RegularizedSiegelWeilNonvanishingOrthogonal}*{proof of Theorem 7.7}.
  Denote this cuspidal representation of $\PGSp_6(\A_{\dot F})$ by $\tau'$.
  Local Howe duality gives $\tau'_{v_0}=\theta_{\rG_2}^{\PGSp_6}(\sigma')$ and $\tau'_u=\tau_u$ for $u\neq v_0$.
  As in \cite{GanSavin2023-LocalLanglandsConjectureG2}*{Section 8.4}, we have
  \begin{align*}
    L^T(s,\tau',\mathrm{Spin})
    =L^T(s,\tau,\mathrm{Spin})
    =\zeta_{\dot F}^T(s)L^T(s,\mathcal A_7),
  \end{align*}
  so this Spin $L$-function also has a pole at $s=1$.
  We may now apply \cite{GanSavin2020-ExceptionalSiegelWeilFormulaPolesSpin}*{Theorem 10.1} to $\tau'$ to find an inner form $G'$ of $\rG_2$ with a nonzero cuspidal backward theta lift.
  At $v_0$, the group $G'$ is split and the local component of this lift is $\sigma'$ by Howe duality.
  As $\abs{S} \geq 2$, we can apply the simple trace formula to compare the trace formula for $\rG_2$ and $G'$.
  We have
  \begin{align*}
    I_{\disc, c(\Sigma)}^{\rG_2}(f^{\rG_2})
    =
    I_{\disc, c(\Sigma)}^{G'}(f^{G'}),
  \end{align*}
  if we take the Euler--Poincar\'e function at places in $S$.
  This implies that there exists a discrete automorphic representation $\Sigma'$ of $\rG_2(\A_{\dot{F}})$ such that $\Sigma'_{v_0} = \sigma'$.
  This representation $\Sigma'$ must contribute to the sum
  \begin{align*}
     \sum_{\Sigma', \Sigma'_S \cong \otimes_{v \in S} \St_{\rG_2(\dot{F}_v)}, c(\Sigma') = c(\Sigma)} & m_{\cusp}(\Sigma') \tr \Sigma'(f^{\widetilde{\PGSO_8}}_{\rG_2}).
  \end{align*}
  Thus, we have $m(\sigma') \geq 1$ for any $\sigma' \in \Pi_{\phi^{\rG_2}}$.
  Lemma~\ref{lemma:coefficient-exhaustion-PGSp6}, applied to the elliptic
  inner-product identity above, now shows that $m(\sigma')=1$ on
  $\Pi_{\phi^{\rG_2}}$ and that every coefficient outside this packet is
  zero.
  \end{proof}

  \begin{proof}[Proof of Proposition~\ref{proposition:ECR-PGSp_6} in the general case]
    We take a totally real number field $\dot{F}$ and finite places $v_0, v_1$ such that $\dot{F}_{v_0} = F$.
    We keep the local factors at every real place fixed in the comparison, as in the supercuspidal case.
    We fix a generic supercuspidal representation $\sigma$ of $\rG_2(\dot{F}_{v_1})$ such that $\theta_{\rG_2}^{\PGSp_6}(\sigma)$ is supercuspidal.
    Let $\sigma_0$ be the generic discrete series representation of $\rG_2(F)$ corresponding to $\phi^{\rG_2}$.
    Choose a finite set of finite places $S$ disjoint from $\{v_0,v_1\}$ with $\abs{S}\geq2$, and fix $w\in S$.
    Applying Lemma~\ref{corollary:globalization-generic-supercuspidal-representation} with supercuspidal component $\sigma$ at $v_1$ and auxiliary place $w$, we obtain a globally generic cuspidal automorphic representation $\Sigma$ of $\rG_2(\A_{\dot{F}})$ such that $\Sigma_v$ is unramified for any finite place $v \not \in S \cup \{v_0, v_1\}$, $\Sigma_{v_0} = \sigma_0$, $\Sigma_{v_1} = \sigma$ and $\Sigma_S = \otimes_{v \in S} \St_{\rG_2(\dot{F}_v)}$.

    At $v_1$, the supercuspidal case gives the packet sum with coefficient $1$.
    This multiplicity-free factor isolates $v_0$, where the same argument shows that $\Theta_{\iota_{\rG_2}^{\PGSO_8} \phi^{\rG_2}_{v_0}}^{\widetilde{\PGSO_8}}$ transfers to a stable non-negative integral combination of irreducible characters of $\rG_2(\dot{F}_{v_0})$.
    We write the distribution as
    \begin{align*}
      \Sigma^{\rG_2}_{\phi^{\rG_2}} = \sum_{\sigma' \in \Pi_{\disc}(\rG_2(F))} m(\sigma') \tr{\sigma'}.
    \end{align*}
    The same argument gives $m(\sigma') \geq 1$ for every $\sigma' \in \Pi_{\phi^{\rG_2}}$.
    The elliptic inner-product formula gives, as in the supercuspidal case,
    \begin{align*}
      \sum_{\sigma'\in\Pi_{\disc}(\rG_2(F))}m(\sigma')^2
      =\abs{\cS_{\phi^{\rG_2}}}.
    \end{align*}
    Lemma~\ref{lemma:coefficient-exhaustion-PGSp6} therefore gives
    $m(\sigma')=1$ on $\Pi_{\phi^{\rG_2}}$ and $m(\sigma')=0$ outside
    this packet, as required.
  \end{proof}

  We finally take  $\phi^{\rG_2}$ as a general discrete series parameter of $\rG_2$.
  \begin{proof}[Proof of Proposition~\ref{proposition:positive-integrality}]
    The same argument, applying Lemma~\ref{corollary:globalization-generic-supercuspidal-representation} to the generic member of $\Pi_{\phi^{\rG_2}}$ at $v_0$, shows that $\Theta_{\iota_{\rG_2}^{\PGSO_8} \phi^{\rG_2}_{v_0}}^{\widetilde{\PGSO_8}}$ transfers to a stable non-negative integral combination of irreducible characters of $\rG_2(\dot{F}_{v_0})$.
  \end{proof}

\section{The endoscopic character relations for \texorpdfstring{$L$-parameters}{L-parameters} of \texorpdfstring{$\PGL_3$-type}{PGL3-type}}\label{section:ECR-PGL3}

\begin{theorem}[\cite{Takanashi2025-NoteSauvageotDensityPrinciple}]\label{theorem:globalization-PGL3}
  Let $F$ be a totally real number field.
  Let $S$ be a finite set of finite places of $F$ and for each $v \in S$, let $\tau_v$ be an irreducible discrete series representation of $\PGL_3(F_v)$.
  There is a cuspidal automorphic representation $\dot{\tau}$ of $\PGL_3(\A_F)$ that satisfies the following conditions.
  \begin{itemize}
    \item $\dot{\tau}_v = \tau_v$ for any $v \in S$.
    \item $\dot{\tau}_v$ is unramified for any finite place $v \notin S$.
    \item $\dot{\tau}_v$ is a spherical principal series representation with sufficiently regular infinitesimal character at every infinite place $v$, so that the lifts of $\dot{\tau}_v$ to $\rG_2(F_v)$ and $\PGSO_8(F_v)$ are irreducible spherical principal series representations.
    \item $\dot{\tau} \neq \dot{\tau}^{\vee}$.
  \end{itemize}
\end{theorem}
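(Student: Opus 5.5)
This is a globalization with prescribed local components. I would prove it with a simple trace formula for $\PGL_3$, or equivalently with the density principle of the cited note (Sauvageot's density principle). The plan has four steps.

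First, I fix a test function $f=\otimes_v f_v$ on $\PGL_3(\A_F)$ as follows:
\begin{itemize}
\item at each $v\in S$, $f_v$ is a pseudo-coefficient of $\tau_v$, which exists because $\tau_v$ is discrete series;
\item at every finite $v\notin S$, $f_v$ is the characteristic function of $\PGL_3(\mathcal O_v)$;
\item at the infinite places, $f_v$ is chosen by Sauvageot's density theorem to approximate the indicator of a small open set $U_v$ in the unitary dual.
\end{itemize}
Each $U_v$ is a neighbourhood of spherical unitary principal series with sufficiently regular infinitesimal character. Here ``sufficiently regular'' means: the infinitesimal character avoids the finitely many walls where the theta lifts to $\rG_2(F_v)$ or the lift to $\PGSO_8(F_v)$ reduce. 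I would get these walls from the compatibility of theta liftings with infinitesimal characters and spherical representations, using the Howe duality theorems stated in the preliminaries.

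Second, to make the spectral side cuspidal, I would add one auxiliary finite place $u\notin S$. At $u$ I place a supercuspidal (or Steinberg) pseudo-coefficient chosen so that the local component there is not self-dual: for instance, a supercuspidal $\tau_u$ with $\tau_u\not\cong\tau_u^\vee$. This is possible because non-self-dual supercuspidals of $\PGL_3(F_u)$ exist, e.g.\ inductions from cubic characters with the appropriate property. Two consequences follow:
\begin{itemize}
\item the discrete spectrum picks up only cuspidal representations, since a supercuspidal local component excludes residual ones;
\item any $\dot\tau$ obtained automatically satisfies $\dot\tau\neq\dot\tau^\vee$.
\end{itemize}
If $u$ is not allowed to be ramified, I would modify this step by taking $S\cup\{u\}$ as the set of prescribed places in the citation and then removing the ramification issue. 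Alternatively, and preferably, I would use a distinct mechanism for non-self-duality: self-dual cuspidal representations of $\PGL_3$ are adjoint lifts from $\SL_2$. By a counting argument (Weyl law with level aspect) they form a density-zero proportion, so a non-self-dual one exists among the many produced.

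Third, I apply the limit multiplicity and density principle of the cited note: as the level or weight tends to infinity, the Plancherel measure of the product of the $U_v$ together with the pseudo-coefficients is positive. This yields a cuspidal $\dot\tau$ with $\dot\tau_v\cong\tau_v$ for $v\in S$, unramified outside $S$, and with archimedean components in $U_v$. To stay unramified outside $S$, I would vary only the archimedean parameter, growing the infinitesimal character in the regular cone, instead of enlarging the level. Sauvageot's principle applies to that family as well.

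The main obstacle is ensuring non-self-duality while keeping everything unramified outside $S$. My first attempt is the counting argument: the contribution of self-dual representations (lifts from $\SL_2$ twisted as needed) with fixed ramification and archimedean parameters in $U_v$ grows like the Plancherel volume of a $\GL_2$-type family. That growth is of strictly lower order than the $\PGL_3$ count, so for sufficiently regular archimedean parameters some $\dot\tau$ must be non-self-dual.
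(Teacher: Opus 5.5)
The paper does not prove this statement; it quotes it from the note on Sauvageot's density principle. The way the paper invokes that note elsewhere (its spherical globalization, Theorem 8.7 and Remark 8.8, together with Eikemeier's Weyl law) indicates the mechanism your third step eventually settles on. That mechanism is: pseudo-coefficients of $\tau_v$ for $v\in S$, spherical units at all other finite places, and a limit in which only the archimedean spectral parameter grows, through regions $t\Omega$ with $t\to\infty$. With the finite data frozen, a fixed small $U_v$ at infinity is not an asymptotic family; the growth of $t\Omega$ is what replaces growth of the level.

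Your second step has to be discarded. A ramified auxiliary place $u\notin S$ contradicts the second condition, and ramification at $u$ cannot be ``removed'' afterwards. Dropping $u$ also removes your justification of cuspidality, but that is harmless. The residual spectrum of $\PGL_3$ consists of one-dimensional representations, and these are excluded by the tempered archimedean components.

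Where you genuinely diverge is non-self-duality, which you treat as the main obstacle and attack by counting adjoint lifts from $\GL_2$. That can be made to work, but it needs input you do not supply. You would need the descent of self-dual cuspidal representations of $\PGL_3$ to adjoint lifts, and a conductor bound for the $\GL_2$ preimages modulo twist, so that they form a bounded-level family whose Weyl-law count is of lower order. It is also unnecessary, because the fourth condition follows from the third.

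At a real place the tempered spherical parameter is $\nu=(\nu_1,\nu_2,\nu_3)\in(i\R)^3$ with $\nu_1+\nu_2+\nu_3=0$, and the contragredient has parameter $-\nu$. If the multiset $\{\nu_i\}$ is stable under negation, its nonzero entries pair off with their negatives. So the number of vanishing entries is odd, in particular nonzero.

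On the other hand, $\Std_{\rG_2}\circ\iota^{\rG_2}_{\PGL_3}$ has weights $0,\pm\nu_i$. Hence the roots of $\rG_2$ evaluated at $\nu$ are $\pm\nu_i$ and $\nu_i-\nu_j$. Those of $\PGSO_8$ are $\pm\nu_i$ and $\pm\nu_i\pm\nu_j$, where $\nu_i+\nu_j=-\nu_k$. So the regular locus for both lifts is $\nu_i\neq0$, $\nu_i\neq\nu_j$. Taking $\Omega$ inside this locus forces $\dot\tau_w\not\cong\dot\tau_w^\vee$ at every real place $w$, hence $\dot\tau\neq\dot\tau^\vee$, with no counting at all.
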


\begin{lemma}\label{lemma:description-induction-from-A2-Levi}
  Let $F$ be a non-archimedean local field of characteristic zero.
  We use the following notation.
  \begin{itemize}
    \item
    We fix a Borel pair $(B, T)$ of $\PGSO_8$ and let $\alpha_1, \alpha_2, \alpha_3, \alpha_4$ be the simple roots corresponding to $T$ following Bourbaki's numbering.
    We also fix a pinning of $\PGSO_8$ and let $\Aut_{\mathrm{pin}}(\PGSO_8)$ be the group of automorphisms of $\PGSO_8$ preserving the pinning.
    We set $G^{+} = \PGSO_8 \rtimes \Aut_{\mathrm{pin}}(\PGSO_8)$.
    \item
    Let $\tau$ be a discrete series representation of $\PGL_3(F)$.
    We see it as a representation of the standard Levi subgroup $M = (\GL_3(F) \times \GL_1(F) \times \GL_1(F))/\GL_1(F)$ of $\PGSO_8(F)$ with trivial central character.
    Let $P$ be the standard parabolic subgroup of $\PGSO_8$ with Levi subgroup $M$.
    Let $I_P(\tau)$ be the normalized parabolic induction of $\tau$ to $\PGSO_8(F)$.
  \end{itemize}

  Then, we have the following results.

  \begin{enumerate}
    \item
    We set $I = \{\alpha_1, \alpha_2\}$.
    Let $\mathrm{Fix}_{W(G^{+}, T)}(I)$ be the subgroup of $W(G^{+}, T)$ consisting of elements that fix $I$.
    The natural map $\mathrm{Fix}_{W(G^{+}, T)}(I) \to \Out(\PGSO_8)(F)$ is an isomorphism.
    \item
    For each $w^{+} \in \mathrm{Fix}_{W(G^{+}, T)}(I)$, we have the normalized intertwining operator $R_{P}(w^{+}, \tau, \psi_F): I_P(\tau) \to I_P(\tau)$.
    This extends the representation $I_P(\tau)$ to a representation of $G^{+}(F)$ and restricts to the canonical extension $\widetilde{I_P(\tau)}$ to $\widetilde{\PGSO_8}(F)$.
    \item
    The $R$-group $R^{\PGSO_8(F)}_{\tau}$ is isomorphic to $\Z/2\Z$ if $\tau$ is self-dual and to $1$ otherwise.
    Each irreducible component is preserved by the action of the group $\mathrm{Fix}_{W(G^{+}, T)}(I) \cong \Out(\PGSO_8)(F)$.
  \end{enumerate}
\end{lemma}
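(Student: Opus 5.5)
For (1) I would work inside the $D_4$ root system with simple roots $\Delta=\{\alpha_1,\alpha_2,\alpha_3,\alpha_4\}$, acted on by $W(G^{+},T)=W(D_4)\rtimes S_3=W(F_4)$. The pinned automorphisms $\Aut_{\mathrm{pin}}(\PGSO_8)\cong S_3$ permute $\alpha_1,\alpha_3,\alpha_4$ and fix $\alpha_2$. The map $\mathrm{Fix}_{W(G^{+},T)}(I)\to\Out(\PGSO_8)(F)=S_3$ will be the projection $W(D_4)\rtimes S_3\to S_3$.

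For injectivity, suppose $w\in W(D_4)$ fixes $\alpha_1$ and $\alpha_2$. By Lemma~\ref{lemma:Weyl-group-normalizer}(1), $w$ lies in the Weyl group of the roots orthogonal to $\{\alpha_1,\alpha_2\}$. That set is empty, because $W(G,M_I)$ has the regular element $w'_1$ of Lemma~\ref{lemma:regular-element-not-theta-stable-case} acting by $-1$, so the orthogonal complement has rank one with no roots; alternatively one checks directly that no positive root of $D_4$ is orthogonal to both $\alpha_1$ and $\alpha_2$. Hence $w=1$.

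For surjectivity, I need, for each pinned $s\in S_3$, some $w\in W(D_4)$ with $ws$ fixing $\alpha_1,\alpha_2$. If $s$ fixes $\alpha_1$, take $w=1$. For $\theta$, the element $w'_2$ of Lemma~\ref{lemma:regular-element-not-theta-stable-case} sends $\alpha_3\mapsto\alpha_1$ and $\alpha_2\mapsto\alpha_2$, so $w'_2\rtimes\theta$ works after matching the labelling. The remaining elements follow by composition. Thus the map is bijective.

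For (2), each $w^{+}\in\mathrm{Fix}(I)$ stabilizes $M$, $P$ and the pinning restricted to $M$. Its action on $\tau$ (viewed on $M$ with trivial central character) is either trivial or the outer involution of $\GL_3$; I must check this from the explicit formulas in the Remark following Lemma~\ref{lemma:regular-element-not-theta-stable-case}. The cleanest route is to choose Tits lifts, which preserve the pinning by Lemma~\ref{lemma:tits-lift-and-dual}, so that $w^{+}$ acts on $M$ by a pinned automorphism. The lift therefore acts on $M\cong(\GL_3\times\GL_1^2)/\GL_1$ through $\GL_1$ coordinates together with $g\mapsto g^\vee$. On the $\PGL_3$ factor it is then the identity or the pinned involution. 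I expect this to require $\tau\cong w^{+}\tau$, which should hold once the $\GL_1$ parts are trivial, although the involutive elements need $\tau$ self-dual. I would reread the statement here: it asserts that every $w^{+}$ gives a self-intertwining operator, so the elements fixing $I$ should act trivially on the $\GL_3$ factor, and I would verify this.

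Still on (2), the operators $R_P(w^{+},\tau,\psi_F)$ multiply by Proposition~\ref{proposition:normalized-intertwining-composition}, whose proof uses Arthur's Theorem 2.5.1 and applies equally to $\PGSO_8\rtimes\Aut$. Together with the isomorphism of (1), this gives a representation of $G^{+}(F)$. These operators preserve the induced Whittaker functional, as in Remark~\ref{remark:Whittaker-normalization-Levi}. The restriction to $\widetilde{\PGSO_8}$ is then the canonical extension, using Corollary~\ref{corollary:G2-canonical-extension} on the generic constituent and the uniqueness lemma for the $\theta$-part. When $I_P(\tau)$ is reducible, I would appeal to the definition summand by summand.

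For (3), I compute $W(M)_\tau$ inside $W(G,M)=\langle w'_1\rangle$. The element $w'_1$ sends $\tau\mapsto\tau^\vee$ (with trivial $\GL_1$-parts, by Lemma~\ref{lemma:standard-parameters-of-theta-unstable-cases}), so $W(M)_\tau$ is nontrivial exactly when $\tau$ is self-dual. The Plancherel criterion from the proof of Lemma~\ref{lemma:G2-Levi-Weyl-invariance} gives $W_0=1$, since the dual nilradical $\phi_\tau\oplus\phi_\tau^\vee\oplus\wedge^2$-type terms contain no trivial summand for discrete $\tau$ of $\PGL_3$. Hence the $R$-group is $\Z/2\Z$ when $\tau$ is self-dual and trivial otherwise. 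Invariance of the components follows because the $\mathrm{Fix}$-action commutes with $R_P(w'_1)$ by multiplicativity, and $w'_1$ commutes with $\mathrm{Fix}(I)$ up to $W(M)$. The main obstacle I expect is this commutation, together with the precise action of $w^{+}$ on the $\GL_3$ factor in (2).
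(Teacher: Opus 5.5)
Your overall route is the paper's. The paper's entire proof is the observation $\mathrm{Fix}_{W(\PGSO_8,T)}(I)=\{1\}$, after which (2) and (3) follow from multiplicativity of the Whittaker-normalized operators. Your surjectivity argument, using $w'_2\rtimes\theta$ and the transposition fixing $\alpha_1$, supplies a step the paper leaves implicit. Your first justification of injectivity, however, is wrong. The orthogonal complement of $\{\alpha_1,\alpha_2\}$ is the two-dimensional space $\fa_M$ (the characteristic polynomials in Lemma~\ref{lemma:regular-element-not-theta-stable-case} are quadratic), not a rank-one space. Moreover, an element acting by $-1$ on $\fa_M$ says nothing about roots orthogonal to $I$. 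For $M_{\alpha_2}$, the element $w_0w_0^{M_{\alpha_2}}$ acts by $-1$ on $\fa_{M_{\alpha_2}}$, yet $\alpha_1+\alpha_2+\alpha_3$, $\alpha_1+\alpha_2+\alpha_4$ and $\alpha_2+\alpha_3+\alpha_4$ are all orthogonal to $\alpha_2$. Only your direct check is valid. With $\alpha_1=e_1-e_2$ and $\alpha_2=e_2-e_3$, a root orthogonal to both would need equal $e_1,e_2,e_3$-coordinates, which no root $\pm e_i\pm e_j$ has. So Lemma~\ref{lemma:Weyl-group-normalizer}(1) gives injectivity.

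The substantive gap is in (2). You leave $w^{+}\tau\cong\tau$ unverified and suggest that the involutive elements require $\tau$ to be self-dual; if that were true, (2) would fail for non-self-dual $\tau$. In fact every $w^{+}=ws\in\mathrm{Fix}(I)$ fixes $\alpha_1$ and $\alpha_2$ individually. By Lemma~\ref{lemma:tits-lift-and-dual}, the automorphism $\Ad(\widetilde w)\circ s$ therefore sends $X_{\alpha_1},X_{\alpha_2}$ to themselves and acts trivially on $\Z\alpha_1\oplus\Z\alpha_2$, the character lattice of the maximal torus of $M^{\mathrm{ad}}\cong\PGL_3$. Hence it induces the identity on $M^{\mathrm{ad}}$, through which $\tau$ is inflated. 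So $w^{+}\tau=\tau$ for every $\tau$, and the Whittaker-normalized isomorphism is the identity; compare the formula $(g,a,b)\mapsto(g,\det(g)/ab,a)$ for $w'_2\rtimes\theta$. The outer involution $g\mapsto g^{\vee}$ swaps $\alpha_1$ and $\alpha_2$ and comes from $w'_1=w_0w_0^{M}$, which is not in $\mathrm{Fix}(I)$; the elements of order two in $\mathrm{Fix}(I)$ are $w\sigma$ with $\sigma$ a transposition. The commutation you flag in (3) is then exact, not merely up to $W(M)$. Indeed, $w_0=-1$ is central in $W(D_4)\rtimes S_3$, and $w^{+}s_{\alpha_i}(w^{+})^{-1}=s_{\alpha_i}$ shows that $w^{+}$ centralizes $W_I\ni w_0^{M}$. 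Two smaller points. First, $W_0(\tau)=1$ is automatic, since $W(G,M)=\{1,w'_1\}$ contains no reflection of the two-dimensional $\fa_M$. Second, your identification with the canonical extension for reducible $I_P(\tau)$ uses the $G^{+}(F)$-stability of each constituent, so the second half of (3) should be proved before the last assertion of (2).
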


\begin{proof}
  It suffices to show the first assertion, but this follows from
  \begin{align*}
    \mathrm{Fix}_{W(\PGSO_8, T)}(I) = \{1\}.
  \end{align*}
\end{proof}

\begin{lemma}\label{lemma:elliptic-norm-PGL3}
  Let $F$ be a non-archimedean local field of characteristic zero.
  Let $\phi^{\PGL_3}$ be a discrete series parameter of $\PGL_3(F)$.
  The squared elliptic norm of $\Theta_{\iota_{\PGL_3}^{\PGSO_8} \circ \phi^{\PGL_3}}^{\widetilde{\PGSO_8}}$ is $3$ if $\phi^{\PGL_3}$ is not self-dual and $6$ otherwise.
\end{lemma}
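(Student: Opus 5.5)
The plan is to write the packet sum as a single twisted induced character attached to an elliptic triplet, and then read off its norm from the Mœglin--Waldspurger formula for elliptic inner products of such characters.

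First, by Lemma~\ref{lemma:description-induction-from-A2-Levi}(2),(3), the distinguished packet $\Pi_{\iota_{\PGL_3}^{\PGSO_8}\circ\phi^{\PGL_3}}$ is the set of irreducible constituents of $I_P(\tau)$, where $\tau$ corresponds to $\phi^{\PGL_3}$ and $P$ has Levi subgroup $M=M_{\alpha_1,\alpha_2}$. Each constituent is preserved by $\Out(\PGSO_8)(F)$. The extension of $I_P(\tau)$ to $G^+(F)$ given by the normalized operators $R_P(w^+,\tau,\psi_F)$ restricts on each constituent to its canonical extension. Hence
\begin{align*}
  \Theta^{\widetilde{\PGSO_8}}_{\iota_{\PGL_3}^{\PGSO_8}\circ\phi^{\PGL_3}}
  =\tr\bigl(R_P(\widetilde w,\tau,\psi_F)\,I_P(\tau)\bigr),
\end{align*}
where $\widetilde w=w'_2\rtimes\theta$ is the representative from Lemma~\ref{lemma:regular-element-not-theta-stable-case}, up to the identification $\mathrm{Fix}_{W(G^+,T)}(I)\cong\Out(\PGSO_8)(F)$. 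By Lemma~\ref{lemma:standard-parameters-of-theta-unstable-cases} with $\chi_1=\chi_2=\mathbf1$, $\tau$ is fixed by $\widetilde w$ and $W_0(\tau)=\{1\}$. The characteristic polynomial of $\widetilde w$ on $\fa_M^*$ is $X^2+X+1$, so $\widetilde w$ is regular. Thus $(M,\tau,\widetilde w)$ is an elliptic triplet, and the distribution is elliptic.

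Next I would apply the elliptic inner product formula of \cite{MoeglinWaldspurger2018-FormuleTracesLocaleTordue}*{Section 7.3} for triplets. For an elliptic triplet $(M,\sigma,\widetilde r)$ it gives
\begin{align*}
  \bigl(\Theta_{(M,\sigma,\widetilde r)},\Theta_{(M,\sigma,\widetilde r)}\bigr)_{\mathrm{ell}}
  =\abs{\det(1-\widetilde r)_{\restriction\fa_M^G}}\cdot
  \abs{\{x\in R^G(\sigma): x\widetilde r x^{-1}=\widetilde r\}}.
\end{align*}
Equivalently, the elliptic norm is the size of the twisted centralizer times the determinant factor. From $X^2+X+1$ we get $\abs{\det(1-\widetilde w)}=3$, and the measure normalizations of Section~\ref{section:orbital-integrals-transfer-factors} should not change this.

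If $\phi^{\PGL_3}$ is not self-dual, then $R^{\PGSO_8}(\tau)=1$ by Lemma~\ref{lemma:description-induction-from-A2-Levi}(3), so the norm is $3$. If $\tau$ is self-dual, then $R^{\PGSO_8}(\tau)=\Z/2\Z$, generated by the class of $w'_1=w_0w_0^{M}$. I must check that $w'_1$ commutes with $\widetilde w$ in $R^{\widetilde G}(\tau)$. I would do this using the explicit actions on $M$ from the remark after Lemma~\ref{lemma:regular-element-not-theta-stable-case}: $\widetilde w\colon(g,a,b)\mapsto(g,\det g/ab,a)$ and $w'_1\widetilde w\colon(g,a,b)\mapsto(g^\vee,ab/\det g,a^{-1})$. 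From these, the actions on $\fa_M$ commute modulo $W_0(\tau)=1$, since $w'_1$ acts by $-1$ on $\fa_M$. Both then contribute, and the norm is $3\cdot2=6$.

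The main obstacle is pinning down the exact constant in the Mœglin--Waldspurger formula in the twisted setting, in particular whether an extra factor such as $d(\theta)$ or $\abs{W(G,M)}$ enters. To settle this I would test the formula on the toral Steinberg-type case via Lemma~\ref{lemma:twisted-character-formula-for-steinberg}. As a consistency check, I would verify that the answers $3$ and $6$ match the endoscopic side through Proposition~\ref{proposition:elliptic-inner-product-transfer}, with $c(\widetilde{\PGSO_8},\SL_3)=1$.
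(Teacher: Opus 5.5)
Your proposal is correct and is essentially the paper's proof. The paper likewise identifies the packet sum with $C_{\widetilde u}=\tr\widetilde{I_P(\tau)}$ for $\widetilde u=w'_2\rtimes\theta$. It uses $W_0(\tau)=1$ to get $R^G(\tau)=W(G,M)_\tau\subset\{1,w'_1\}$, which is centralized by $\widetilde u$, and then applies \cite{MoeglinWaldspurger2018-FormuleTracesLocaleTordue}*{7.3, Th\'eor\`eme} in exactly the form you wrote, with $\abs{\det(1-\widetilde u\mid\fa_M)}=3$. No extra $d(\theta)$ or $\abs{W(G,M)}$ factor appears, since the character inner product is independent of Haar measures.

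Your explicit commutation check for $w'_1$ and $\widetilde u$ is a detail the paper only asserts; the paper instead adds a Gram-matrix verification in the self-dual case. Your proposed consistency check should use $c(\widetilde{\PGSO_8},\rG_2)=1$ rather than the $\SL_3$ constant, since this parameter factors through $\rG_2$, but that check is not needed for the proof.
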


\begin{proof}
  Set $G=\PGSO_8$ and $M=M_{\{\alpha_1,\alpha_2\}}$, and let $\tau$ be the discrete series representation of $\PGL_3(F)$ corresponding to $\phi^{\PGL_3}$.
  Regard $\tau$ as a representation of $M(F)$ and let $I_P(\tau)$ be its normalized parabolic induction, as in Lemma~\ref{lemma:description-induction-from-A2-Levi}.
  In the notation of Lemma~\ref{lemma:regular-element-not-theta-stable-case}, put
  \begin{align*}
    \widetilde u=w'_2\rtimes\theta,\quad r=w'_1=w_0w_0^M.
  \end{align*}
  The normalized intertwining operators give the canonical extension $\widetilde{I_P(\tau)}$.
  Compatibility of the packet with parabolic induction, including the canonical extensions of its constituents, gives
  \begin{align*}
    C_{\widetilde u}:=\tr\widetilde{I_P(\tau)}
    =\Theta_{\iota_{\PGL_3}^{\PGSO_8}\circ\phi^{\PGL_3}}^{\widetilde{\PGSO_8}}.
  \end{align*}

  By Lemma~\ref{lemma:standard-parameters-of-theta-unstable-cases}, $W_0(\tau)=\{1\}$.
  Hence the $R$-group is
  \begin{align*}
    R^G(\tau)=W(G,M)_\tau
    =\begin{cases}
      \{1\},&\tau\not\cong\tau^\vee,\\
      \{1,r\},&\tau\cong\tau^\vee.
    \end{cases}
  \end{align*}
  In either case, $\widetilde u$ centralizes $R^G(\tau)$.
  Since $G$ is adjoint, $\fa_{\widetilde G}=0$.
  The characteristic polynomial of $\widetilde u$ on the two-dimensional space $\fa_M$ is $X^2+X+1$ by Lemma~\ref{lemma:regular-element-not-theta-stable-case}. Thus
  \begin{align*}
    \abs{\det(1-\widetilde u\mid\fa_M)}=1+1+1=3.
  \end{align*}
  The elliptic triple $(M,\tau,\widetilde u)$ therefore has squared norm
  \begin{align*}
    (C_{\widetilde u},C_{\widetilde u})_{\mathrm{ell}}
    &=\abs{\operatorname{Cent}_{R^G(\tau)}(\widetilde u)}
      \abs{\det(1-\widetilde u\mid\fa_M)}\\
    &=\begin{cases}
      1\cdot3=3,&\tau\not\cong\tau^\vee,\\
      2\cdot3=6,&\tau\cong\tau^\vee,
    \end{cases}
  \end{align*}
  by \cite{MoeglinWaldspurger2018-FormuleTracesLocaleTordue}*{7.3, Th\'eor\`eme}.

  More explicitly, in the self-dual case, write $I_P(\tau)=\pi_+\oplus\pi_-$, where the normalized self-intertwining operator attached to $r$ acts by $+1$ and $-1$, respectively.
  Then
  \begin{align*}
    C_{\widetilde u}&=\tr\widetilde\pi_++\tr\widetilde\pi_-,\\
    C_{r\widetilde u}&=\tr\widetilde\pi_+-\tr\widetilde\pi_-.
  \end{align*}
  The characteristic polynomial of $r\widetilde u$ on $\fa_M$ is $X^2-X+1$, so $\abs{\det(1-r\widetilde u\mid\fa_M)}=1$.
  The same theorem gives
  \begin{align*}
    (C_{r\widetilde u},C_{r\widetilde u})_{\mathrm{ell}}=2,\quad
    (C_{\widetilde u},C_{r\widetilde u})_{\mathrm{ell}}=0,
  \end{align*}
  where the second equality follows because $\widetilde u$ and $r\widetilde u$ represent distinct conjugacy classes under $R^G(\tau)$.
  Consequently, the Gram matrix of the two constituent characters is
  \begin{align*}
    \bigl((\tr\widetilde\pi_\epsilon,\tr\widetilde\pi_{\epsilon'})_{\mathrm{ell}}\bigr)_{\epsilon,\epsilon'\in\{+,-\}}
    =\begin{pmatrix}2&1\\1&2\end{pmatrix},
  \end{align*}
  and the squared norm of their sum is $2+2+2\cdot1=6$.
\end{proof}

\begin{corollary}\label{corollary:coefficient-square-sum-PGL3}
  The distribution $\Sigma_{\iota_{\PGL_3}^{\rG_2} \circ \phi^{\PGL_3}}^{\rG_2}$ is a non-negative integral linear combination of irreducible discrete series characters of the form
  \begin{align*}
    \sum_{\sigma' \in \Pi_{\disc}(\rG_2(F))} m(\sigma') \tr{\sigma'},
  \end{align*}
  with
  \begin{align*}
    \begin{cases}
      \sum_{\sigma' \in \Pi_{\disc}(\rG_2(F))} m(\sigma')^2 = 3 & \text{if $\phi^{\PGL_3}$ is not self-dual}, \\
      \sum_{\sigma' \in \Pi_{\disc}(\rG_2(F))} m(\sigma')^2 = 6 & \text{if $\phi^{\PGL_3}$ is self-dual}.
    \end{cases}
  \end{align*}
\end{corollary}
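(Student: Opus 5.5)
The proof has two parts: first show that $\Sigma^{\rG_2}_{\iota_{\PGL_3}^{\rG_2}\circ\phi^{\PGL_3}}$ is a non-negative integral combination of discrete series characters, then compute the sum of squares of its coefficients from Lemma~\ref{lemma:elliptic-norm-PGL3} via Proposition~\ref{proposition:elliptic-inner-product-transfer}. I will take Proposition~\ref{proposition:positive-integrality}, applied to the discrete parameter $\phi^{\rG_2}=\iota_{\PGL_3}^{\rG_2}\circ\phi^{\PGL_3}$, as the definition of $\Sigma^{\rG_2}$. That proposition already gives that the twisted packet character $\Theta^{\widetilde{\PGSO_8}}_{\iota_{\PGL_3}^{\PGSO_8}\circ\phi^{\PGL_3}}$ transfers to a stable distribution on $\rG_2(F)$ which is a non-negative integral combination of discrete series characters. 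The first part of the corollary is therefore immediate, and the remaining work is the sum-of-squares identity.

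For the norm, I will decompose $\Theta:=\Theta^{\widetilde{\PGSO_8}}_{\iota_{\PGL_3}^{\PGSO_8}\circ\phi^{\PGL_3}}$ spectrally via Theorem~\ref{theorem:elliptic-character-transfer}(1) as
\[
\Theta(f)=\Sigma^{\rG_2}(f_{\rG_2})+S^{\SO_4}(f_{\SO_4})+S^{\SL_3}(f_{\SL_3}),
\]
and then show that the $\SO_4$ and $\SL_3$ components vanish. Granting this, the second identity in Proposition~\ref{proposition:elliptic-inner-product-transfer}, with $c(\widetilde{\PGSO_8},\rG_2)=1$, gives
\[
(\Theta,\Theta)_{\mathrm{ell}}=(\Sigma^{\rG_2},\Sigma^{\rG_2})_{\mathrm{ell}}.
\]
Lemma~\ref{lemma:elliptic-norm-PGL3} evaluates the left-hand side as $3$ or $6$. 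On the right, distinct discrete series characters of $\rG_2(F)$ are orthonormal for the elliptic inner product, since $\fa_{\rG_2}=0$ and the elliptic pairing of discrete series is given by the orthogonality relations. The right-hand side therefore equals $\sum m(\sigma')^2$, which gives the corollary.

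The main obstacle is the vanishing of the $\SO_4$ and $\SL_3$ components. For $\SO_4$, my first approach is to use ellipticity and infinitesimal-parameter bookkeeping. Any stable elliptic distribution on $\SO_4$ appearing here is a combination of stable discrete series packet characters $S_{\phi^H}$. Its transfer to $\widetilde{\PGSO_8}$ is a twisted character whose constituents have $L$-parameters $\iota^{\PGSO_8}_{\SO_4}\circ\phi^H$. The 8-dimensional standard parameter of such a constituent is $\mathbf 1\oplus\Std\phi^H\oplus\wedge^2_+\phi^H$ up to the triality bookkeeping. On the other side, every constituent of $\Theta$ has standard parameter $\phi_\tau\oplus\phi_\tau^\vee\oplus\mathbf1\oplus\mathbf1$ with $\phi_\tau$ irreducible of dimension $3$. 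A discrete $\phi^H$ has multiplicity-free $\Std\phi^H$ of dimension $4$ with all summands orthogonal, and $\Std\phi^H$ cannot be made to match $\phi_\tau\oplus\phi_\tau^\vee\oplus\mathbf1\oplus\mathbf1$. Linear independence of characters with distinct $L$-parameters, applied to the distribution identity of Theorem~\ref{theorem:elliptic-character-transfer}, should then force $S^{\SO_4}=0$.

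The same parameter comparison handles $\SL_3$, but here the parameters can in principle coincide. I will instead compute $(\Theta,\Theta)_{\mathrm{ell}}$ against the twisted characters of the elliptic triples $(M,\tau,\widetilde u)$ and $(M,\tau,r\widetilde u)$ from the proof of Lemma~\ref{lemma:elliptic-norm-PGL3}. Lemma~\ref{lemma:regular-element-not-theta-stable-case} identifies the endoscopic group attached to $\widetilde u$, whose characteristic polynomial on $\fa_M$ is $X^2+X+1$, as $\rG_2$ via $\PGL_3\subset\rG_2$, and the group attached to $r\widetilde u$ as $\SL_3$. Since $\Theta=C_{\widetilde u}$ is orthogonal to $C_{r\widetilde u}$, its $\SL_3$ component should be zero. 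This identification of endoscopic groups with regular twisted Weyl classes, as in Proposition~\ref{proposition:elliptic-triplets-elliptic-transfer-twisted-PGSO8}, is the step I expect to require the most care.
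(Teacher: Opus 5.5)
Your core chain is exactly the paper's proof: non-negativity and integrality from Proposition~\ref{proposition:positive-integrality}, then the second identity of Proposition~\ref{proposition:elliptic-inner-product-transfer} with $c(\widetilde{\PGSO_8},\rG_2)=1$, elliptic orthonormality of discrete series characters of $\rG_2(F)$, and Lemma~\ref{lemma:elliptic-norm-PGL3}. The problem is the step you single out as the main obstacle. It is not an obstacle. In Proposition~\ref{proposition:positive-integrality}, ``$\Theta$ transfers to $\Sigma^{\rG_2}$'' means $\Theta(f)=\Sigma^{\rG_2}(f_{\rG_2})$ for every $f$. Its proof follows the argument of Proposition~\ref{proposition:ECR-PGSp_6}, where $\Theta$ is shown to factor through its $\rG_2$-component. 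By the uniqueness in Theorem~\ref{theorem:elliptic-character-transfer}(1), the $\SO_4$ and $\SL_3$ components are therefore already zero, and the paper needs nothing further.

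The arguments you sketch for that step would fail as written.

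For $\SO_4$, the parameter comparison breaks down in the self-dual case. There $\phi_\tau$ is orthogonal with trivial determinant, so $\phi^{\SO_4}(\tau)$ is a discrete parameter, with $\Std_{\SO_4}\circ\phi^{\SO_4}(\tau)=\phi_\tau\oplus\mathbf1$ and $\wedge^2_+\circ\phi^{\SO_4}(\tau)=\phi_\tau$. Its eight-dimensional lift is $\mathbf1\oplus\mathbf1\oplus\phi_\tau\oplus\phi_\tau$, which is exactly the standard parameter of the constituents of $\Theta$. So the comparison excludes nothing. Indeed, the proof of Theorem~\ref{theorem:ECR-S3} shows that $C_{r\widetilde u}$ is precisely the transfer of $S^{H}_{\phi^H(\tau)}$. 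The argument also presupposes that the transfer of $S^{\SO_4}_{\phi^H}$ to $\widetilde{\PGSO_8}$ has constituents with parameter $\iota\circ\phi^H$. That twisted character relation is not established at this point.

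For $\SL_3$, you have misidentified the endoscopic group. The class of $r\widetilde u$ is attached to $\SO_4$, not $\SL_3$. An $\SL_3$-lift has standard parameter $\Ad\circ\phi^{\SL_3}$. For this Levi, the $\SL_3$-type data are the $\widetilde u$-fixed $\sigma'\boxtimes\chi_1\boxtimes\chi_1^{-1}$ with $\chi_1$ a nontrivial cubic character, as in Lemma~\ref{lemma:standard-parameters-of-theta-unstable-cases}. Moreover, $C_{r\widetilde u}$ does not exist when $\tau\not\cong\tau^\vee$. Finally, orthogonality of $\Theta$ to a single distribution cannot force an entire endoscopic component to vanish.

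Drop your last two paragraphs and invoke the content of Proposition~\ref{proposition:positive-integrality} instead.
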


\begin{proof}
  The non-negativity and integrality follow from Proposition~\ref{proposition:positive-integrality}.
  Since $c(\widetilde{\PGSO_8},\rG_2)=1$, Proposition~\ref{proposition:elliptic-inner-product-transfer} identifies the squared elliptic norm of this stable distribution with that of $\Theta_{\iota_{\PGL_3}^{\PGSO_8}\circ\phi^{\PGL_3}}^{\widetilde{\PGSO_8}}$.
  The discrete series characters of $\rG_2(F)$ are orthonormal for the elliptic inner product, so this squared norm is $\sum_{\sigma'}m(\sigma')^2$.
  The result now follows from Lemma~\ref{lemma:elliptic-norm-PGL3}.
\end{proof}

\begin{proposition}
    For each elliptic endoscopic group $H$ of $\widetilde{\PGSO_8}$ (resp. $\rG_2$), every stable linear combination $\Sigma^H$ of irreducible elliptic tempered characters of $H(F)$ transfers to a linear combination of elliptic irreducible tempered characters of $\widetilde{\PGSO_8}(F)$ (resp. $\rG_2(F)$).
\end{proposition}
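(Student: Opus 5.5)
Let $\widetilde G$ be $\widetilde{\PGSO_8}$ or $\rG_2$, and let $\Sigma^H$ be a stable linear combination of irreducible elliptic tempered characters of $H(F)$. I would deduce the statement from the abstract elliptic spectral transfer in Theorem~\ref{theorem:elliptic-character-transfer}(2), which already gives a unique elliptic tempered distribution $I^{\widetilde G}$ on $\widetilde G(F)$ with $I^{\widetilde G}(f)=\Sigma^H(f_H)$. Two points remain. First, $\Sigma^H$ must be replaced by its $\Aut(\fe)$-average, which is harmless because the transfer $f_H$ is $\Aut(\fe)$-invariant. Second, and more substantively, $I^{\widetilde G}$ must be identified as a finite linear combination of characters of elliptic irreducible tempered representations, not merely an element of the elliptic quotient. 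For $F=\R$ I would instead invoke the real spectral--geometric correspondence \cite{MoeglinWaldspurger2016-StabilisationFormuleTracesTordue1}*{IV.2.2}, together with Shelstad's tempered transfer on the $K$-space. Over $\C$ the elliptic spaces vanish, so there is nothing to prove.

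In the non-archimedean case I would work in the space of elliptic tempered distributions, which has the basis given by the virtual characters $C_\tau$ of elliptic triplets $\tau=(M,\sigma,\widetilde r)$, as in \cite{MoeglinWaldspurger2018-FormuleTracesLocaleTordue}. The argument has three steps.
\begin{enumerate}
  \item For a triplet with $M=G$, $C_\tau$ is the character of a discrete series representation, twisted by its extension when $\widetilde G$ is twisted.
  \item For a proper $M$, Lemma~\ref{lemma:Levi-conjugacy-class-D_4} and the classifications of Sections 8--9 list the possible triplets. By Theorem~\ref{theorem:local-tempered-LIR-G2} and Theorem~\ref{theorem:local-intertwining-relation-PGSO8-SO4}, in the $\SO_4$ cases $C_\tau$ is exactly the transfer of the stable character $\Theta_{\phi^{\SO_4}(\sigma)}$. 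For the $\PGL_3$-type Levi $M_{\alpha_1,\alpha_2}$ of $\PGSO_8$, I would rely on Lemma~\ref{lemma:description-induction-from-A2-Levi} and Lemma~\ref{lemma:elliptic-norm-PGL3}.
  \item Each $C_\tau$ is $\tr(R_P(\widetilde r,\sigma)I_P(\sigma))$. Decomposing $I_P(\sigma)$ into irreducibles, each preserved by $\widetilde r$ (Lemma~\ref{lemma:description-induction-from-A2-Levi}(3) and the $R$-group computations), writes $C_\tau$ as a finite combination of irreducible tempered characters. These characters are elliptic because the triplet is elliptic.
\end{enumerate}
Expanding $I^{\widetilde G}$ in the basis $C_\tau$ then gives the claim.

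The main obstacle is showing that the expansion coefficients involve only finitely many triplets and really produce honest characters. To control this, I would use Proposition~\ref{proposition:elliptic-inner-product-transfer}: the elliptic norm of $I^{\widetilde G}$ equals $c(\widetilde G,H)^{-1}(\Sigma^H,\Sigma^H)_{\mathrm{ell}}$, which is finite. Pairing $I^{\widetilde G}$ with each pseudo-coefficient then isolates the coefficient of $C_\tau$, and only finitely many of these are nonzero, because $\Sigma^H$ has finite support and the triplets matching a fixed $H$-parameter are finite in number by Propositions~\ref{proposition:elliptic-triplets-elliptic-transfer-G2} and~\ref{proposition:elliptic-triplets-elliptic-transfer-twisted-PGSO8}.
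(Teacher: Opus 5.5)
Your first step is the entire proof, and it is how the paper handles this statement. The paper gives no separate argument, because the proposition is the specialization of Theorem~\ref{theorem:elliptic-character-transfer}(2) (M{\oe}glin--Waldspurger XI.4) to $\widetilde G=\widetilde{\PGSO_8}$ or $\rG_2$. As recorded there, that theorem is formulated on finite $\C$-linear combinations of tempered characters. So the distribution $I^{\widetilde G}$ it produces is already a finite combination of tempered characters in the elliptic space, and your $\Aut(\fe)$-averaging remark is the only bookkeeping needed. The statement also sits in the part of the paper where $F$ is non-archimedean, which is the only case in which that theorem is stated. Your archimedean detour is therefore not required; over $\R$ you would in any case have to argue on the $K$-space and then restrict.

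Your ``second, more substantive point'' is therefore already part of the theorem's conclusion. The argument you sketch for it should be dropped, because it would not work on its own:
\begin{enumerate}
  \item The identity of Proposition~\ref{proposition:elliptic-inner-product-transfer} presupposes that $I^{\widetilde G}$ is already an elliptic tempered distribution. In any case, a finite elliptic norm does not force finitely many nonzero coefficients in the $C_\tau$-expansion.
  \item More importantly, the claim that only triplets ``matching'' the $H$-parameters in the support of $\Sigma^H$ occur requires the endoscopic decomposition of each $C_\tau$. For proper $M$ this is partly supplied by Theorems~\ref{theorem:local-tempered-LIR-G2} and~\ref{theorem:local-intertwining-relation-PGSO8-SO4}. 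For $M=G$, i.e.\ the (extended) discrete series of $\widetilde G(F)$, it is exactly the endoscopic character relation the paper is working towards. Propositions~\ref{proposition:elliptic-triplets-elliptic-transfer-G2} and~\ref{proposition:elliptic-triplets-elliptic-transfer-twisted-PGSO8} say nothing about that case.
\end{enumerate}
So that finiteness argument is circular. Finiteness has to come, as in the paper, from the M{\oe}glin--Waldspurger theorem itself.
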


\begin{remark}
  For each discrete series representation $\tau$ of $\PGL_3(F)$, let $I_{\tau}^{\rG_2}$ be the transfer to $\rG_2(F)$, i.e.
  \begin{align*}
    I_{\tau}^{\rG_2}(f^{\rG_2}) = \tr\tau (f^{\rG_2}_{\PGL_3}),
  \end{align*}
  for $f^{\rG_2} \in \cI(\rG_2(F))$.
  Then, $I_{\tau}^{\rG_2}$ is orthogonal to any transfer of a stable discrete series character of $\SO_4(F)$ and thus orthogonal to any non-discrete-series elliptic character of $\rG_2(F)$.
  We have
  \begin{align*}
    (I_{\tau}^{\rG_2}, I_{\tau}^{\rG_2})_{\mathrm{ell}}
    =c(\rG_2,\PGL_3)^{-1}(\tr\tau,\tr\tau)_{\mathrm{ell}}
    =(1/3)^{-1}\cdot1=3,
  \end{align*}
  by Proposition~\ref{proposition:elliptic-inner-product-transfer} and the elliptic orthonormality of discrete series characters of $\PGL_3(F)$.
\end{remark}

We will describe the transfer from $\PGL_3(F)$ to $\rG_2(F)$.

\begin{theorem}\label{theorem:ECR-PGL3}
  Let $F$ be a non-archimedean local field of characteristic zero, and let $\phi^{\PGL_3}$ be a discrete series parameter of $\PGL_3(F)$.
  We fix an endoscopic datum $\mathfrak{e} = (\PGL_3, s, \iota^{\rG_2}_{\PGL_3})$.
  Let $\tau$ be the discrete series representation of $\PGL_3(F)$ corresponding to $\phi^{\PGL_3}$.
  \begin{enumerate}
    \item
    If $\tau$ is not self-dual, then the distribution $\Sigma_{\iota_{\PGL_3}^{\rG_2} \circ \phi^{\PGL_3}}^{\rG_2}$ is the sum of three discrete series characters of $\rG_2(F)$.
    The discrete series representations are parameterized by the three characters of $\cS_{\iota_{\PGL_3}^{\rG_2} \circ \phi^{\PGL_3}} = \mu_3(\C)$, and for each $\rho \in \Irr_{\C}(\cS_{\iota_{\PGL_3}^{\rG_2} \circ \phi^{\PGL_3}})$, let $\sigma(\rho)$ be the corresponding discrete series representation of $\rG_2(F)$.
    Then, we have
    \begin{align*}
      I_{\tau}^{\rG_2}(f^{\rG_2}) &= \sum_{\rho \in \Irr_{\C}(\cS_{\iota_{\PGL_3}^{\rG_2} \circ \phi^{\PGL_3}})} \rho(s) \tr{\sigma(\rho)}(f^{\rG_2}), \\
      I_{\tau^{\vee}}^{\rG_2}(f^{\rG_2}) &= \sum_{\rho \in \Irr_{\C}(\cS_{\iota_{\PGL_3}^{\rG_2} \circ \phi^{\PGL_3}})} \rho(s^{-1}) \tr{\sigma(\rho)}(f^{\rG_2}),
    \end{align*}
    for $f^{\rG_2} \in \cI(\rG_2(F))$.
    \item
    If $\tau$ is self-dual, then the distribution $\Sigma_{\iota_{\PGL_3}^{\rG_2} \circ \phi^{\PGL_3}}^{\rG_2}$ is the dimension-weighted sum of the three discrete series characters of $\rG_2(F)$.
    The discrete series representations are parameterized by the three irreducible representations of $\cS_{\iota_{\PGL_3}^{\rG_2} \circ \phi^{\PGL_3}} = S_3$, and for each $\rho \in \Irr_{\C}(\cS_{\iota_{\PGL_3}^{\rG_2} \circ \phi^{\PGL_3}})$, let $\sigma(\rho)$ be the corresponding discrete series representation of $\rG_2(F)$.
    Then, we have
    \begin{align*}
      I_{\tau}^{\rG_2}(f^{\rG_2}) &= \sum_{\rho \in \Irr_{\C}(\cS_{\iota_{\PGL_3}^{\rG_2} \circ \phi^{\PGL_3}})} \tr\rho(s) \tr{\sigma(\rho)}(f^{\rG_2}), \\
      I_{\tau^{\vee}}^{\rG_2}(f^{\rG_2}) &= \sum_{\rho \in \Irr_{\C}(\cS_{\iota_{\PGL_3}^{\rG_2} \circ \phi^{\PGL_3}})} \tr\rho(s^{-1}) \tr{\sigma(\rho)}(f^{\rG_2}),
    \end{align*}
    for $f^{\rG_2} \in \cI(\rG_2(F))$.
    \item
    The representations $\sigma(\rho)$ are obtained as theta lifts of $\tau$ from $\PGL_3^{+}(F)$ or of $JL(\tau)$ from an inner form $\mathrm{PD}^{\times}$ of $\PGL_3(F)$.
  \end{enumerate}
\end{theorem}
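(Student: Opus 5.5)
The plan is to determine the stable distribution $\Sigma:=\Sigma^{\rG_2}_{\iota^{\rG_2}_{\PGL_3}\circ\phi^{\PGL_3}}$ and the transfers $I^{\rG_2}_\tau$, $I^{\rG_2}_{\tau^\vee}$ together. The tools are elliptic inner products and a global input that fixes signs. By Corollary~\ref{corollary:coefficient-square-sum-PGL3}, $\Sigma=\sum m(\sigma')\tr\sigma'$ with $m(\sigma')\in\Z_{\ge0}$ and $\sum m(\sigma')^2$ equal to $3$ or $6$. By the preceding Remark, $I^{\rG_2}_\tau$ has squared elliptic norm $3$. It is orthogonal to $\Sigma$, to transfers from $\SO_4$, and to all non-discrete elliptic characters, so it is a combination of discrete series characters. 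Its orthogonality to $I^{\rG_2}_{\tau'}$ for $\tau'\ne\tau,\tau^\vee$ follows from Proposition~\ref{proposition:elliptic-inner-product-transfer} applied to the $\PGL_3$-component.

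\textbf{Step 1: pin down the coefficients.} In the non-self-dual case, $\sum m^2=3$ with nonnegative integers forces $\Sigma=\tr\sigma_1+\tr\sigma_2+\tr\sigma_3$ with distinct $\sigma_i$. In the self-dual case, $\sum m^2=6$ allows $1+1+4$ or six ones. I would exclude the second option by counting. The span of $\Sigma$, $I_\tau$ and $I_{\tau^\vee}$ must carry the $S_3$-structure given by the $\cS$-group, and the packet should have exactly three members. I expect to get this from Theorem~\ref{theorem:properties-exceptional-theta}: discrete members of $\PGL_3$-type lift to $\PGL_3^+$ or $\mathrm{PD}^\times$, and the number of such lifts with a given standard parameter is three (two extensions $\tau^+(\pm1)$ plus the $\mathrm{PD}^\times$ and $\mathrm{PD}^{\times,\mathrm{op}}$ lifts identified appropriately). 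This gives the shape $\tr\sigma_1+2\tr\sigma_2+\tr\sigma_3$, i.e. $\sum_\rho \dim\rho\,\tr\sigma(\rho)$.

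\textbf{Step 2: the global comparison.} I would globalize $\tau$ by Theorem~\ref{theorem:globalization-PGL3} to a non-self-dual cuspidal $\dot\tau$, with auxiliary discrete components at extra places and regular spherical archimedean components. I would then compare the stabilizations of Theorem~\ref{theorem:stabilization-twisted-PGSO8} for $\rG_2$ and $\widetilde{\PGSO_8}$ at the Hecke family $c=\iota^{\rG_2}_{\PGL_3}\circ c(\dot\tau)$. The twisted side is computed via the $M_{\alpha_1,\alpha_2}$ contributions (Lemma~\ref{lemma:description-induction-from-A2-Levi}), and the $\SO_4$ and $\SL_3$ terms vanish by strong multiplicity one, as in the proof of Theorem~\ref{theorem:local-tempered-LIR-G2}. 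This expresses $I^{\rG_2}_{\disc,c}$ as $S^{\rG_2}_{\disc,c}+\frac13(S^{\PGL_3}_{\dot\tau}+S^{\PGL_3}_{\dot\tau^\vee})$, with $S^{\rG_2}_{\disc,c}$ a product of the local $\Sigma$'s. On the other side, Theorem~\ref{theorem:theta-PGL3-to-G2} and Theorem~\ref{theorem:exceptional-theta-G2-to-PGL3} identify the discrete spectrum with this Hecke family as the sum of theta lifts from $\PGL_3^+$ and from the inner forms $\mathrm{PD}^\times$. Here Howe duality gives the local components, which yields part (3).

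\textbf{Step 3: extract the local identity.} Requiring nonnegative integral global multiplicities for every choice of local components, and separating places with the standard linear-independence argument (Lemma 3.2.2 of Kaletha--M\'inguez--Shin--White), should force $I^{\rG_2}_\tau=\sum_\rho\tr\rho(s)\tr\sigma(\rho)$ at the place $v$. In the non-self-dual case the only remaining ambiguity is the labeling of the cube roots of unity; I fix it by a choice of labels and then $\tau^\vee$ gives $s^{-1}$. In the self-dual case the values $\tr\rho(s)=(1,-1,2)$ on a $3$-cycle follow from orthogonality with $\Sigma$ and the norm $3$: the vector $(a,b,c)$ must satisfy $a+2b+c=0$ and $a^2+b^2+c^2=3$ with integer entries.

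The main obstacle is Step 2. Controlling the global discrete spectrum requires the theta non-vanishing results and a multiplicity-one statement for theta lifts in the $\PGL_3$-type family, and the archimedean factors must be made harmless. I would first try to keep the archimedean factors fixed and multiplicity-free through the chosen regular spherical components.
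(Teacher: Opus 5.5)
Your Steps 0--2 follow the paper's setup: the square-sum count from Corollary~\ref{corollary:coefficient-square-sum-PGL3}, the globalization by Theorem~\ref{theorem:globalization-PGL3}, and the identity $3I^{\rG_2}_{\disc,c}=\prod_v\Sigma_v+\prod_v I_{\dot\tau_v}+\prod_v I_{\dot\tau_v^\vee}$. Step~3, however, has a genuine gap: nothing you list determines the coefficients of $I_\tau$.

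Write $a,b$ for the coefficient vectors of $I_\tau,I_{\tau^\vee}$ on the three packet members, generic member first. Suppose $\tau$ sits at one place and you impose generic multiplicity one, orthogonality, the norms, and $(1+a_i+b_i)/3\in\Z_{\geq0}$. In the non-self-dual case the whole circle
\[
  a=\bigl(1,\,-\tfrac12+\tfrac{\sqrt3}{2}e^{i\varphi},\,-\tfrac12-\tfrac{\sqrt3}{2}e^{i\varphi}\bigr),\quad b=(2,-1,-1)-a
\]
still satisfies all of these. Only $\varphi=\pm\pi/2$ gives cube roots of unity, so ``fixing the labeling of the cube roots'' assumes the conclusion. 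Separation in the style of KMSW cannot close this, because it needs a known local factor to separate against, and every discrete-series factor of $I_\tau$ is unknown.

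The missing idea is to globalize with $n=\abs{S_F}$ places, all with $\dot F_v\cong F$ and $\dot\tau_v=\tau$, and $\dot\tau$ unramified at every other finite place. Then the coefficient of $\boxtimes_{v\in S_F}\sigma'$ is a power sum $(a^n+b^n+c^n)/3$, which must be a cuspidal multiplicity. Your auxiliary discrete components at extra places would break this, since a global division algebra could then ramify there. With this set-up the paper argues in three steps.
\begin{enumerate}
  \item Backward theta nonvanishing (Theorem~\ref{theorem:exceptional-theta-G2-to-PGL3}) and local compatibility kill multiplicities off $\Pi_\phi$. Then $p_1=p_2=p_3=0$ forces every coefficient there to vanish.
  \item Multiplicity one of $\Theta(\dot\tau^+)$ gives $p_n=3$ for all $n$ at $\sigma_{\gen}$, so all three coefficients there equal $1$.
  \item Brauer--Hasse--Noether excludes division sources when $3\nmid n$. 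This gives $1+b^n+c^n=0$ for $n=1,2$, so $\{b,c\}=\{\omega,\omega^{-1}\}$, and orthogonality of $I_\tau$ and $I_{\tau^\vee}$ fixes the assignment.
\end{enumerate}

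The self-dual case has further gaps. Excluding the ``six ones'' option needs the support of $\Sigma$ to lie in $\Pi_\phi$, which is step~(1) above, not Theorem~\ref{theorem:properties-exceptional-theta}. In residual characteristic~$3$ that theorem gives only an injection. It is this proof that shows the packet has three members, since the remaining squares sum to $2$ or $5$, neither of which is a square; this is the content of part~(3) and Remark~\ref{remark:completion-LLC}.

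In your Step~3, integrality of the entries is not justified. The overall sign of $\pm(1,-1,1)$ is not fixed; your $(1,-1,2)$ has norm $6$, not $3$. You also do not determine which nongeneric member carries $\dim\rho=2$. The paper gets $c_{I_\tau}(\sigma_{\gen})=1$ from step~(2). For $\sigma_1=\theta(JL_D(\tau))$ it uses $n=1$, where there is no division source by Brauer--Hasse--Noether and no split source by the local correspondence, to get $m_1+2c_1=0$. It then uses $n=2$ to get $m_1^2/2\in\Z_{\geq0}$. Hence $m_1=2$, $c_1=-1$, and $c_2=1$ by orthogonality to $\Sigma$.
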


\begin{proof}
  We combine the globalization argument of Proposition~\ref{proposition:ECR-PGSp_6} with exceptional theta lifting from $\PGL_3^{+}$ and inner forms of $\PGL_3$ to $\rG_2$, and its converse.

  Let $\dot{F}$ be a totally real number field.
  Let $S_{F}$ be a finite set of finite places such that for any $v \in S_{F}$, we have $\dot{F}_v = F$.
  Let $S_{\infty}$ denote the set of infinite places of $\dot{F}$.
  Then, by Theorem~\ref{theorem:globalization-PGL3}, we can find a cuspidal automorphic representation $\dot{\tau}$ of $\PGL_3(\A_{\dot{F}})$ such that $\dot{\tau}_v = \tau$ for any $v \in S_{F}$, $\dot{\tau}_v$ is unramified for any finite place $v \notin S_{F}$ and $\dot{\tau}_w$ is a spherical principal series representation with sufficiently regular infinitesimal character at every infinite place $w$.

  For $c = \iota_{\PGL_3}^{\PGSO_8} \circ c(\dot{\tau})$, compute the proper-Levi contributions as in Proposition~\ref{proposition:ECR-PGSp_6} to obtain
  \begin{align*}
    I_{\disc, c}^{\widetilde{\PGSO_8}}(f^{\widetilde{\PGSO_8}})
    =
    \frac{1}{3}
    {\prod_{v}}^{'} \tr(\widetilde{I_P(\dot{\tau}_v)})(f_v^{\widetilde{\PGSO_8}}),
  \end{align*}
  by Lemma~\ref{lemma:description-induction-from-A2-Levi} and the decomposition of global intertwining operators into local ones.
  By the stabilization of the trace formula, this is equal to
  \begin{align*}
     \sum_{H} \iota(\widetilde{\PGSO_8}, H) S^H_{\disc, c}(f^{\widetilde{\PGSO_8}}_{H}),
  \end{align*}
  with
  \begin{align*}
    S^{\SO_4}_{\disc, c} = 0, \quad S^{\SL_3}_{\disc, c} = 0.
  \end{align*}
  Thus, we obtain
  \begin{align*}
    \frac{1}{3}
    {\prod_{v}}^{'}
    \Sigma_{\iota_{\PGL_3}^{\rG_2} \circ \phi^{\PGL_3}_v}
    ((f^{\widetilde{\PGSO_8}}_{\rG_2})_v)
    =
    S^{\rG_2}_{\disc, c}(f^{\widetilde{\PGSO_8}}_{\rG_2}).
  \end{align*}
  Surjectivity of $\cI(\widetilde{\PGSO_8}) \to \cS(\rG_2)$ gives
  \begin{align*}
    S^{\rG_2}_{\disc, c}(f^{\rG_2})
    =
    \frac{1}{3}
    {\prod_{v}}^{'}
    \Sigma_{\iota_{\PGL_3}^{\rG_2} \circ \phi^{\PGL_3}_v}
    (f_v^{\rG_2}),
  \end{align*}
  for any $f^{\rG_2}_v \in \cS(\rG_2(\dot{F}_v))$.

  Substituting into the stable trace formula for $\rG_2$ and multiplying by $3$, we obtain
  \begin{align*}
    3 I_{\disc, c}^{\rG_2}(f^{\rG_2})
    =
    {\prod_{v}}^{'}
    \Sigma_{\iota_{\PGL_3}^{\rG_2} \circ \phi^{\PGL_3}_v}
    (f_v^{\rG_2})
    +
    {\prod_{v}}^{'}
    I_{\dot{\tau}_v}^{\rG_2}(f_v^{\rG_2})
    +
    {\prod_{v}}^{'}
    I_{\dot{\tau}_v^{\vee}}^{\rG_2}(f_v^{\rG_2}).
  \end{align*}

  First, we observe that the distribution at $w \in S_{\infty}$ is given by the characters of tempered (spherical) principal series representations of $\rG_2(\dot{F}_w)$.
  Let $\Sigma_w$ denote the representation of $\rG_2(\dot{F}_w)$ corresponding to the parameter $\iota_{\PGL_3}^{\rG_2} \circ \phi^{\PGL_3}_w$.
  For each finite place $v\notin S_F$, let $\Sigma_v$ be the spherical representation of $\rG_2(\dot{F}_v)$ with parameter $\iota_{\PGL_3}^{\rG_2}\circ\phi_v^{\PGL_3}$.
  Furthermore, the contributions from the proper Levi subgroups are equal to $0$ by considering the standard $L$-parameters.
  Since the left-hand side is tempered at the real places, we have
  \begin{align*}
    I_{\disc, c}^{\rG_2}(f^{\rG_2})
    =
    \sum_{\substack{\Sigma' \in \Pi_{\cusp}(\rG_2(\A_{\dot{F}}))\\
      c(\Sigma')=\iota_{\PGL_3}^{\rG_2}\circ c(\dot\tau)}}
      m_{\cusp}(\Sigma') \tr \Sigma' (f^{\rG_2}).
  \end{align*}
  Comparing the spherical Hecke characters at finite places outside $S_F$ and the irreducible characters at infinity, we obtain
  \begin{align*}
    3 \sum_{\substack{
      \Sigma' \in \Pi_{\cusp}(\rG_2(\A_{\dot{F}}))\\
      c(\Sigma')=\iota_{\PGL_3}^{\rG_2}\circ c(\dot\tau)\\
      \Sigma'_v\cong\Sigma_v\ (v\notin S_F)}}
      &m_{\cusp}(\Sigma') \prod_{v \in S_F} \tr \Sigma'_v (f_v^{\rG_2}) \\
    &=
    \prod_{v \in S_F}
    \Sigma_{\iota_{\PGL_3}^{\rG_2} \circ \phi^{\PGL_3}_v} (f_v^{\rG_2})
    +
    \prod_{v \in S_F}
    I_{\dot{\tau}_v}^{\rG_2}(f_v^{\rG_2})
    +
    \prod_{v \in S_F}
    I_{\dot{\tau}_v^{\vee}}^{\rG_2}(f_v^{\rG_2}).
  \end{align*}

  For each distribution $I \in \{\Sigma_{\iota_{\PGL_3}^{\rG_2} \circ \phi^{\PGL_3}}, I_{\tau}, I_{\tau^{\vee}}\}$ and each discrete series representation $\sigma' \in \Pi_{\disc}(\rG_2(F))$, let $c_I(\sigma')$ denote the coefficient of $\tr \sigma'$ in $I$.
  Put $\phi=\iota_{\PGL_3}^{\rG_2}\circ\phi^{\PGL_3}$.
  We first check that these coefficients vanish outside the Gan--Savin packet $\Pi_\phi$.
  Every cuspidal realization $\Sigma'$ on the fixed Satake component on the left hand side has a nonzero backward theta lift to $\PGL_3^+$ or an inner form $\mathrm{PD}^{\times}$ by Theorem~\ref{theorem:exceptional-theta-G2-to-PGL3}.
  Cuspidal-support decomposition and uniqueness of isobaric decomposition exclude noncuspidal split sources on this Satake component. Strong multiplicity one then identifies every source constituent with $\dot\tau$ or $\dot\tau^\vee$, up to extension to $\PGL_3^+$ or Jacquet--Langlands transfer.
  Local compatibility of the exceptional theta correspondence with parameters (Theorem~\ref{theorem:properties-exceptional-theta} and Lemma~\ref{lemma:injectivity-G_2-to-GL7}) therefore implies that $\Sigma'_v\in\Pi_\phi$ for every $v\in S_F$.

  If $\sigma'\notin\Pi_\phi$, the coefficient of $\boxtimes_{v\in S_F}\sigma'$ on the left hand side is consequently zero.
  By linear independence of irreducible characters, comparison with the right hand side gives
  \begin{align*}
    a_1^n+a_2^n+a_3^n=0,\quad n=\abs{S_F},
  \end{align*}
  where $(a_1,a_2,a_3)=(c_{\Sigma_\phi^{\rG_2}}(\sigma'),c_{I_\tau}(\sigma'),c_{I_{\tau^\vee}}(\sigma'))$.
  Varying the globalization allows $n=1,2,3$.
  Newton's identities then show that all three elementary symmetric functions of $a_1,a_2,a_3$ vanish, so
  \begin{align*}
    (X-a_1)(X-a_2)(X-a_3)=X^3.
  \end{align*}
  Thus
  \begin{align}\label{equation:packet-support-PGL3}
    c_{\Sigma_\phi^{\rG_2}}(\sigma')=c_{I_\tau}(\sigma')=c_{I_{\tau^\vee}}(\sigma')=0
    \quad(\sigma'\notin\Pi_\phi).
  \end{align}

  Let $\dot\tau^+$ be the generic extension of $\dot\tau$ and put $\cV=\Theta(\dot\tau^+)$.
  By Theorem~\ref{theorem:theta-PGL3-to-G2}, $\cV$ is cuspidal and has a nonzero globally generic projection.
  Since the cuspidal spectrum is semisimple, local Howe duality at finite places and spherical Howe duality at real places imply that $\cV$ is irreducible. See \cite{GanSavin2023-HoweDualityDichotomyExceptionalThetaCorrespondences}*{Theorem 8.5(ii)} and \cite{LokeSavin2019-DualitySphericalRepresentationsExceptionalThetaCorrespondences}*{Corollaries 4.5 and 4.8}.
  Thus $\cV$ coincides with its globally generic projection, and its local component at each $v\in S_F$ is the generic member of $\Pi_\phi$ by \cite{GanSavin2023-HoweDualityDichotomyExceptionalThetaCorrespondences}*{Theorem 8.5(iii)}.

  We claim that $\cV$ occurs with multiplicity one in the cuspidal spectrum.
  By local Howe duality and the source classification above, its only possible source representation on $\PGL_3^+$ is $\dot\tau^+$, where $\dot\tau$ and $\dot\tau^\vee$ give the same source. This source occurs with multiplicity one by strong multiplicity one for $\PGL_3$ and the classification of cuspidal representations of $\PGL_3^+$.
  A division-algebra source is impossible, since local dichotomy excludes the generic target at a nonsplit finite place and a degree-three algebra is split at every real place.
  If an additional cuspidal copy were orthogonal to $\cV$, its backward lift to $\dot\tau^+$ would be nonzero by Theorem~\ref{theorem:exceptional-theta-G2-to-PGL3}, but zero by orthogonality to $\cV$. The integrals converge absolutely by rapid decrease of both cusp forms and moderate growth of the theta kernel. This proves the claim, as in \cite{GanSavin2022-TwistedCompositionAlgebrasArthurPacketsTrialitySpin8}*{proof of Corollary 14.6(ii)}.

  Comparing the coefficients for the generic representation $\sigma_{\gen}$ in the packet $\Pi_{\iota_{\PGL_3}^{\rG_2} \circ \phi^{\PGL_3}_v}$, we have
  \begin{align*}
    \sum_{I \in \{\Sigma_{\iota_{\PGL_3}^{\rG_2} \circ \phi^{\PGL_3}}, I_{\tau}, I_{\tau^{\vee}}\}} c_I(\sigma_{\gen})^{\abs{S_F}} = 3.
  \end{align*}

  Varying $\dot{F}$ makes $\abs{S_F}$ arbitrary, so $c_{\Sigma_{\iota_{\PGL_3}^{\rG_2} \circ \phi^{\PGL_3}_v}}(\sigma_{\gen}) = 1$ and $c_{I_{\dot{\tau}_v}}(\sigma_{\gen}) = c_{I_{\dot{\tau}_v^{\vee}}}(\sigma_{\gen}) = 1$.
  The injection into the irreducible representations of the component group in Theorem~\ref{theorem:properties-exceptional-theta} bounds the packet size by three.
  By \eqref{equation:packet-support-PGL3} and Corollary~\ref{corollary:coefficient-square-sum-PGL3}, the squares of the remaining non-negative integral coefficients sum to $2$ or $5$.
  Neither number is a single integer square, so there are exactly two further packet members and both have nonzero coefficients.
  Let $\sigma_1, \sigma_2$ be the other two representations in the packet $\Pi_{\iota_{\PGL_3}^{\rG_2} \circ \phi^{\PGL_3}_v}$.
  In the non-self-dual case both remaining coefficients are $1$.
  By the packet description in \cite{GanSavin2023-LocalLanglandsConjectureG2}*{proof of Proposition 3.2}, we may assume that $\sigma_1 = \theta_{\mathrm{PD}^{\times}}^{\rG_2}(JL_{D}(\tau))$ and $\sigma_2 = \theta_{\mathrm{PD}^{\times}}^{\rG_2}(JL_{D}(\tau^{\vee}))$, where $JL_{D}(\tau)$ is the Jacquet--Langlands transfer of $\tau$ to an inner form $\mathrm{PD}^{\times}$ of $\PGL_3(F)$.

  By the Brauer--Hasse--Noether theorem, the global division algebra $\dot{D}$ unramified outside $S_F$ with $\dot{D}_v = D$ for all $v \in S_F$ does not exist if $\abs{S_F} \not\equiv 0 \pmod{3}$.
  This implies that
  \begin{align*}
    \sum_{I \in \{\Sigma_{\iota_{\PGL_3}^{\rG_2} \circ \phi^{\PGL_3}}, I_{\tau}, I_{\tau^{\vee}}\}} c_I(\sigma_1)^{\abs{S_F}} = 0, \\
    \sum_{I \in \{\Sigma_{\iota_{\PGL_3}^{\rG_2} \circ \phi^{\PGL_3}}, I_{\tau}, I_{\tau^{\vee}}\}} c_I(\sigma_2)^{\abs{S_F}} = 0,
  \end{align*}
  unless $\abs{S_F} \equiv 0 \pmod{3}$.
  Since $c_{\Sigma_{\iota_{\PGL_3}^{\rG_2} \circ \phi^{\PGL_3}}}(\sigma_1) = c_{\Sigma_{\iota_{\PGL_3}^{\rG_2} \circ \phi^{\PGL_3}}}(\sigma_2) = 1$, we have
  \begin{align*}
    \{ c_{I_{\dot{\tau}_v}}(\sigma_i), c_{I_{\dot{\tau}_v^{\vee}}}(\sigma_i) \} = \{ \omega, \omega^{-1}\},
  \end{align*}
  for $i = 1, 2$, where $\omega = \frac{-1 + \sqrt{3}i}{2}$ is a primitive third root of unity.
  Elliptic orthogonality of $I_{\dot{\tau}_v}$ and $I_{\dot{\tau}_v^{\vee}}$ gives, after switching $\sigma_1$ and $\sigma_2$ if necessary, $c_{I_{\dot{\tau}_v}}(\sigma_1) = \omega$ and $c_{I_{\dot{\tau}_v}}(\sigma_2) = \omega^{-1}$.

  Now suppose that $\tau$ is self-dual. As above, $c_{\Sigma_{\iota_{\PGL_3}^{\rG_2} \circ \phi^{\PGL_3}}}(\sigma_{\gen}) = 1$ and $c_{I_{\tau}}(\sigma_{\gen}) = c_{I_{\tau^{\vee}}}(\sigma_{\gen}) = 1$.
  Take $\sigma_1=\theta_{\mathrm{PD}^{\times}}^{\rG_2}(JL_D(\tau))$ and let $\sigma_2$ be the remaining nongeneric member of $\Pi_\phi$.
  The representation $\sigma_1$ is nonzero and nongeneric by \cite{GanSavin2023-HoweDualityDichotomyExceptionalThetaCorrespondences}*{Proposition 7.1 and Theorem 7.2}. In residual characteristic $3$, self-duality and discreteness force $\tau$ to be Steinberg, and the assertion follows from $JL_D(\tau)=\mathbf1$. See \cite{GanSavin2023-HoweDualityDichotomyExceptionalThetaCorrespondences}*{Theorem 8.2(iii) and Proposition 7.1(i)}.
  Put $m_i=c_{\Sigma_\phi^{\rG_2}}(\sigma_i)$ and $c_i=c_{I_\tau}(\sigma_i)$ for $i=1,2$. Since $\tau$ is self-dual, $I_\tau=I_{\tau^\vee}$.
  By \eqref{equation:packet-support-PGL3} and Corollary~\ref{corollary:coefficient-square-sum-PGL3}, we have
  \begin{align*}
    m_1^2+m_2^2=6-c_{\Sigma_\phi^{\rG_2}}(\sigma_{\gen})^2=6-1=5.
  \end{align*}
  Since $m_1,m_2$ are non-negative integers, $(m_1,m_2)$ is either $(2,1)$ or $(1,2)$.
  Set $n=\abs{S_F}$. For $n=1$, a division source would be nonsplit only at the place in $S_F$, since $\dot\tau_v$ is unramified at all other finite places. The Brauer--Hasse--Noether theorem excludes such an algebra, and the local correspondence excludes a split source. Comparing the coefficient of $\sigma_1$ therefore gives $m_1+2c_1=0$.
  For $n=2$, the coefficient of $\sigma_1\boxtimes\sigma_1$ is an actual cuspidal multiplicity, so
  \begin{align*}
    \frac{m_1^2+2c_1^2}{3}=\frac{m_1^2}{2}\in\Z_{\geq0}.
  \end{align*}
  Thus $m_1=2$, $m_2=1$, and $c_1=-1$. Elliptic orthogonality of $\Sigma_\phi^{\rG_2}$ and $I_\tau$ gives $1+2c_1+c_2=0$, hence $c_2=1$.
\end{proof}

\begin{remark}\label{remark:completion-LLC}
  In the proof of Theorem~\ref{theorem:ECR-PGL3}, we have shown that the three discrete series representations in the $L$-packet $\Pi_{\iota_{\PGL_3}^{\rG_2} \circ \phi^{\PGL_3}}(\rG_2(F))$ are obtained as theta lifts from $\PGL_3^{+}(F)$ or an inner form $\mathrm{PD}^{\times}$ of $\PGL_3(F)$.
  In particular, every irreducible representation of $\mathrm{PD}^{\times}$ has nonzero theta lift, as conjectured in \cite{Savin1999-SupercuspidalRepresentationsG2}*{Conjecture 4.1(1)}, including in residual characteristic $3$. This case was left open in \cite{GanSavin2023-HoweDualityDichotomyExceptionalThetaCorrespondences}*{p. 3, after Theorem 1.2}.
  Thus the injection in \cite{GanSavin2023-LocalLanglandsConjectureG2}*{Proposition 3.2} is a bijection also in residual characteristic $3$.
\end{remark}

\section{The endoscopic character relations between \texorpdfstring{$\rG_2$}{G2} and \texorpdfstring{$\SO_4$}{SO4}}\label{section:ECR-G2-SO4}

We fix an endoscopic datum $\mathfrak{e} = (G^{\mathfrak{e}} = \SO_4, s^{\mathfrak{e}}, \iota^{\rG_2}_{\SO_4})$ for $\rG_2$.
For example, we can take $s^{\mathfrak{e}}$ to be the unique element in $\widehat{T}_{\rG_2}$ such that $\alpha(s^{\mathfrak{e}}) = 1$ and $\beta(s^{\mathfrak{e}}) = -1$.
For a self-dual discrete series representation $\tau$ of $\PGL_3(F)$, let $\phi^{\SO_4}(\tau)$ denote the parameter for $\SO_4$ with standard parameter $\phi_\tau\oplus\mathbf1$.

\begin{definition}
  Let $F$ be a local field of characteristic $0$.
  Let $\phi^{\SO_4}$ be a discrete series $L$-parameter of $\SO_4(F)$.
  We assume that $\phi^{\SO_4}$ is not of the form $\phi^{\SO_4}(\tau)$ for any self-dual discrete series representation $\tau$ of $\PGL_3(F)$.
  We set $\phi^{\rG_2} = \iota^{\rG_2}_{\SO_4} \circ \phi^{\SO_4}$.
  We call an $L$-parameter for $\rG_2(F)$ of this type an $L$-parameter of $\SO_4$-type.
  Let $\Phi(\rG_2(F))_{\SO_4}$ denote the set of $L$-parameters for $\rG_2$ of $\SO_4$-type.
\end{definition}

\begin{lemma}
  Let $F$ be a local field of characteristic $0$.
  Let $\phi^{\SO_4}$ be a discrete series $L$-parameter of $\SO_4(F)$ whose standard $L$-parameter does not contain the trivial representation of $L_F$.

  Then, the $S$-group $S_{\phi^{\rG_2}} = \cS_{\phi^{\rG_2}}$ is isomorphic to $\cS_{\iota^{\PGSp_6}_{\rG_2} \circ \phi^{\rG_2}}$.
  Thus, it is an abelian $2$-group.
  It is also isomorphic to the group $S_{\phi^{\SO_4}} = \Cent_{\SO_4}(\phi^{\SO_4})$.
\end{lemma}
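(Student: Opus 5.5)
The plan is to compute all three groups directly from the decomposition of the standard representations, and compare them inside $\Spin_7(\C)$, $\rG_2(\C)$ and $\SO_4(\C)$.

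First, $\Std_{\rG_2}\circ\phi^{\rG_2}=\Std_{\SO_4}\circ\phi^{\SO_4}\oplus\wedge^2_+\circ\phi^{\SO_4}$. By hypothesis the first summand has no trivial constituent. The second summand $\wedge^2_+\circ\phi^{\SO_4}$ is the adjoint of a discrete $\PGL_2$-parameter, so it also has no trivial constituent: a trivial line there would give a nontrivial connected centralizer of the $\SL_{2,s}$-component, contradicting discreteness. Hence $\phi^{\rG_2}\in\Phi_{\disc}(\rG_2(F))_{\PGSp_6}$. Indeed, the centralizer of $\phi^{\rG_2}$ in $\rG_2(\C)$ is finite: a torus centralizing $\phi^{\rG_2}$ would produce a trivial summand of $\Std_{\rG_2}\circ\phi^{\rG_2}$ through the zero weight of a torus in $\Std_{\rG_2}$ restricted to its centralizer. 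Theorem~\ref{theorem:properties-exceptional-theta}(7) then gives $\cS_{\phi^{\rG_2}}\cong\cS_{\iota^{\PGSp_6}_{\rG_2}\circ\phi^{\rG_2}}$ (modulo $Z(\Spin_7)$), and the last clause of that theorem says this group is an elementary abelian $2$-group. Since $\rG_2$ has trivial center, $S_{\phi^{\rG_2}}=\cS_{\phi^{\rG_2}}$.

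For the comparison with $\SO_4$, I would use the inclusion $\Cent_{\SO_4(\C)}(\phi^{\SO_4})\subset\Cent_{\rG_2(\C)}(\phi^{\rG_2})$, which is injective because $\SO_4(\C)\to\rG_2(\C)$ is. The group $\Cent_{\SO_4}(\phi^{\SO_4})$ is the group of signs $\pm1$ on the orthogonal irreducible summands of $\Std_{\SO_4}\circ\phi^{\SO_4}$ with total determinant $1$, and all summands have even dimension or come in a determinant-one configuration. To get equality, I would count: $\cS_{\iota^{\PGSp_6}_{\rG_2}\circ\phi^{\rG_2}}$ is computed by signs on the orthogonal summands of the seven-dimensional $\Std_{\SO_4}\circ\phi^{\SO_4}\oplus\wedge^2_+\circ\phi^{\SO_4}$ modulo the spin center. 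I would then check that its order matches $2^{k-1}$, where $k$ is the number of summands of $\Std_{\SO_4}\circ\phi^{\SO_4}$. This works because the $\wedge^2_+$ summands' signs are determined by those on $\Std_{\SO_4}$, since $\wedge^2_+$ is a functor of $\phi^{\SO_4}$, and the $\rG_2$-centralizer element acts on $\wedge^2_+$ via $\SL_{2,s}$.

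The main obstacle is this last identification. Its key point is that every element of $\Cent_{\rG_2}(\phi^{\rG_2})$ already lies in the image of $\SO_4$, i.e. it commutes with the involution $s^{\mathfrak e}$ whose centralizer is $\SO_4$. I would try to prove this as follows: since $\cS_{\phi^{\rG_2}}$ is an elementary abelian $2$-group containing $s^{\mathfrak e}$, it lies in a maximal elementary abelian $2$-subgroup of $\rG_2(\C)$, which has rank $3$ and is unique up to conjugacy. Every such subgroup is contained in the centralizer of each of its involutions, and all involutions of $\rG_2$ are conjugate with centralizer $\SO_4$. Hence $\cS_{\phi^{\rG_2}}\subset\Cent(s^{\mathfrak e})=\SO_4(\C)$, which gives the isomorphism with $S_{\phi^{\SO_4}}$.
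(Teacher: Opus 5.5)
\textbf{The gap: discreteness of $\phi^{\rG_2}$.} Your first paragraph contains a false step. The hypotheses do not make $\phi^{\rG_2}$ discrete, and the torus argument you give for this fails. Take the $\GL_2$-Levi $\widehat M\subset\rG_2(\C)$ on which $\Std_{\rG_2}$ restricts to $\Std\oplus\Std^\vee\oplus\Ad$, the long-root case of Lemma~\ref{lemma:discrete-part-G2}(3). The zero-weight space of its central torus is the adjoint summand, and $\widehat M$ acts on it nontrivially, so no trivial constituent is produced.

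This situation does occur under your hypotheses. Let $\sigma=\mathrm{AI}_{K/F}(\chi)$ be as in Lemma~\ref{G2-elliptic-GL2-Levi-representations}(2), and let $\phi^{\SO_4}(\sigma)$ be the discrete parameter constructed there. Then $V=\phi_\sigma\oplus\phi_\sigma\oplus\Ad(\phi_\sigma)$, and $V$ has no $L_F$-invariants by Schur's lemma, since $\phi_\sigma$ is irreducible. Yet by Lemma~\ref{lemma:injectivity-G_2-to-GL7}, $\iota^{\rG_2}_{\SO_4}\circ\phi^{\SO_4}(\sigma)$ is conjugate to $\iota^{\rG_2}_{M_l}\circ\phi_\sigma$, whose centralizer contains the central torus of $\widehat M$. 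So $S_{\phi^{\rG_2}}$ is infinite, while $S_{\phi^{\SO_4}}$ is finite.

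The condition $V^{L_F}=0$ only identifies $S_{\phi^{\rG_2}}$ with $\Cent_{\Spin_7}(\iota^{\PGSp_6}_{\rG_2}\circ\phi^{\rG_2})/Z(\Spin_7)$. Finiteness additionally requires $V$ to be multiplicity-free, which fails here because $\phi_\sigma$ occurs twice. Hence discreteness of $\phi^{\rG_2}$ must be added as a hypothesis rather than derived. Without it, your uses of $S_{\phi^{\rG_2}}=\cS_{\phi^{\rG_2}}$ and of Theorem~\ref{theorem:properties-exceptional-theta}(7) are unjustified, and the conclusion itself fails. The paper's proof does not derive discreteness either. It tacitly treats $\phi^{\rG_2}$ as discrete, which is how the lemma is applied in Theorem~\ref{theorem:ECR-G2-SO4}; compare the explicit assumption in Lemma~\ref{lemma:globalization-SO4}.

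\textbf{The rest of the argument.} Once $\phi^{\rG_2}$ is assumed discrete, the rest of your proposal is correct and coincides with the paper's proof. The paper takes the identification with $\cS_{\iota^{\PGSp_6}_{\rG_2}\circ\phi^{\rG_2}}$ from Gross--Savin, Proposition~1.10, which is a dual-group statement valid for every local $F$. By contrast, Theorem~\ref{theorem:properties-exceptional-theta}(7), which you invoke, is stated only for $p$-adic $F$. The comparison with $\SO_4$ is then exactly your last step: $s^{\fe}\in S_{\phi^{\SO_4}}\subset S_{\phi^{\rG_2}}$ together with commutativity gives $S_{\phi^{\rG_2}}\subset\Cent_{\rG_2}(s^{\fe})=\SO_4(\C)$. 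The detour through maximal elementary abelian $2$-subgroups and the order count in your middle paragraph are unnecessary; commutativity alone suffices.
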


\begin{proof}
  The first assertion follows from \cite{GrossSavin1998-MotivesGaloisGroupTypeG2Exceptional}*{Proposition 1.10}.
  Since $s^{\mathfrak e}\in S_{\phi^{\SO_4}}\subset S_{\phi^{\rG_2}}$ and $S_{\phi^{\rG_2}}$ is abelian, we have $S_{\phi^{\rG_2}}\subset\Cent_{\rG_2}(s^{\mathfrak e})=\SO_4$.
  Hence $S_{\phi^{\rG_2}}=S_{\phi^{\SO_4}}$.
\end{proof}

\begin{remark}\label{remark:isomorphic-S-group-PGSO8-for-parameters-of-SO4-type}
  Put $V=\Std_{\rG_2}\circ\phi^{\rG_2}$. We have
  \begin{align*}
    V=(\Std_{\SO_4}\oplus\wedge^2_+)\circ\phi^{\SO_4}, \quad
    \Std_{\PGSO_8}\circ\iota^{\PGSO_8}_{\rG_2}\circ\phi^{\rG_2}=\mathbf1\oplus V.
  \end{align*}
  The hypothesis on $\Std_{\SO_4}\circ\phi^{\SO_4}$ and the discreteness of $\phi^{\SO_4}$ imply $V^{L_F}=0$, since $\wedge^2_+$ is a summand of the adjoint representation of $\SO_4$.
  Hence the centralizer in $\Spin_8(\C)$ preserves the one-dimensional fixed subspace in $\mathbf1\oplus V$ and acts on it by a sign.
  Multiplication by a central element mapping to $-1$ in $\SO_8(\C)$ makes this action trivial. The pointwise stabilizer of this line is $\Spin_7(\C)$, so
  \begin{align*}
    \Cent_{\Spin_8}(\iota^{\PGSO_8}_{\rG_2}\phi^{\rG_2})
    =Z(\Spin_8)\Cent_{\Spin_7}(\iota^{\PGSp_6}_{\rG_2}\phi^{\rG_2}).
  \end{align*}
  Since $Z(\Spin_8)\cap\Spin_7=Z(\Spin_7)$, quotienting by the centers and taking connected components gives
  \begin{align*}
    \cS_{\phi^{\rG_2}}
    \cong\cS_{\iota^{\PGSp_6}_{\rG_2}\circ\phi^{\rG_2}}
    \cong\cS_{\iota^{\PGSO_8}_{\rG_2}\circ\phi^{\rG_2}}.
  \end{align*}

  For non-archimedean $F$, similitude theta lifting from $\PGSp_6(F)$ to $\PGSO_8(F)$ gives a bijection between the distinguished Xu packets $\Pi^{X}_{\iota^{\PGSp_6}_{\rG_2} \circ \phi^{\rG_2}}$ and $\Pi^{X}_{\iota^{\PGSO_8}_{\rG_2} \circ \phi^{\rG_2}}$ by \cite{Takanashi2026-HiragaIchinoIkedaConjectureFormalDegrees}*{Proposition 3.58 and its proof}. Exceptional theta lifting from $\rG_2(F)$ to $\PGSp_6(F)$ identifies these with $\Pi_{\phi^{\rG_2}}$.
\end{remark}

We globalize each involution by two successive $\GL_2$ globalizations with the same central character, as in the argument following Lemma~\ref{lemma:globalization-unitary-character}.

\begin{lemma}[cf. \cite{Arthur2013-EndoscopicClassificationRepresentations}*{Proposition 6.3.1}]\label{lemma:globalization-SO4}
  Let $F$ be a non-archimedean local field of characteristic zero, and let $\phi^{\SO_4}$ be a discrete parameter whose image $\phi=\iota_{\SO_4}^{\rG_2}\circ\phi^{\SO_4}$ is discrete.
  Write $s\in S_\phi$ for the distinguished involution of this endoscopic datum.
  There exist a totally real number field $\dot F$, a finite place $v$ with $\dot F_v\cong F$, and unitary cuspidal automorphic representations $\Pi_1,\Pi_2$ of $\GL_2(\A_{\dot F})$ with the same central character, with the following properties.
  \begin{enumerate}
    \item
    The refined generic global $A$-parameter $\dot\phi^{\SO_4}$ for $\SO_4$ represented by $(\Pi_1,\Pi_2)$ satisfies $\dot\phi^{\SO_4}_v=\phi^{\SO_4}$ and
    \begin{alignat*}{2}
      \Std_{\SO_4}\circ\dot\phi^{\SO_4}&=\Pi_1\boxtimes\Pi_2^\vee, &
      \quad\wedge^2_+\circ\dot\phi^{\SO_4}&=\Ad(\Pi_1).
    \end{alignat*}
    Both representations on the right are cuspidal, of degrees four and three respectively.
    \item
    Put $\dot\phi=\iota_{\SO_4}^{\rG_2}\circ\dot\phi^{\SO_4}$.
    Then
    \begin{align*}
      A:=S_{\dot\phi}=\{1,\dot s\}\cong\Z/2\Z,
      \quad A\longrightarrow S_\phi:\dot s\longmapsto s
    \end{align*}
    identifies $A$ with $\langle s\rangle$.
    \item
    At every real place $w$, the parameter $\dot\phi_w$ is discrete and regular.
    Restriction of the packet pairing gives a surjection
    \begin{align*}
      \Pi_{\dot\phi_w}(\rG_2(\R))\longrightarrow A^{\cD}.
    \end{align*}
    \item
    At every finite place $w\neq v$, at least one of the two local parameters $\phi_{\Pi_{1,w}},\phi_{\Pi_{2,w}}$ factors through the diagonal torus of $\GL_2(\C)$.
    Consequently $\dot\phi^{\SO_4}_w$ factors through a proper Levi subgroup of $\SO_4$, and $\dot\phi_w$ factors through a proper Levi subgroup of $\rG_2$.
  \end{enumerate}
  There also exist such data with (3) replaced by the condition that both $\Pi_{i,w}$ are spherical tempered principal series at every real place $w$, and that $\dot\phi_w$ is regular with nonzero purely imaginary infinitesimal character.
\end{lemma}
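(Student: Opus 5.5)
The plan is to build the global data in three stages: first globalize the local pair of $\GL_2$ representations underlying $\phi^{\SO_4}$, then use the irreducibility of the resulting $\SO_4$-parameter to compute the global centralizer, and finally read off the local conditions at the auxiliary places.

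For the first stage, write $\phi^{\SO_4}$ in the equal-central-character convention as a pair $(\pi_1,\pi_2^{\vee})$ of discrete series representations of $\GL_2(F)$ with common central character $\omega$. The conditions in Lemma~\ref{G2-elliptic-GL2-Levi-representations} and Lemma~\ref{lemma:SO4-dihedral-case-local}, together with discreteness of $\phi$, exclude the degenerate local shapes.
\begin{enumerate}
\item Choose a totally real $\dot F$ with $\dot F_v\cong F$ and a unitary Hecke character $\dot\omega$ globalizing $\omega$. This uses Lemma~\ref{lemma:globalization-unitary-character}, or its split analogue for $\GL_1$.
\item Globalize $\pi_1$ to a cuspidal $\Pi_1$ with central character $\dot\omega$, using a Poincar\'e series / limit multiplicity argument with prescribed central character. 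At the real places impose discrete series of large, distinct weights. At one auxiliary finite place impose a supercuspidal or Steinberg component to force cuspidality of $\Ad(\Pi_1)$ and non-dihedrality. Impose unramified principal series at all other finite places.
\item Globalize $\pi_2$ the same way with the same central character. Choose its real weights generic relative to those of $\Pi_1$.
\end{enumerate}

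The second stage verifies properties (1) and (2). Since $\Pi_1$ is non-dihedral and its real weights differ from those of $\Pi_2$, the representation $\Pi_1\not\cong\Pi_2\otimes\chi$ for every $\chi$. Ramakrishnan's criterion then makes $\Pi_1\boxtimes\Pi_2^\vee$ cuspidal on $\GL_4$, and Gelbart--Jacquet makes $\Ad(\Pi_1)$ cuspidal. The global $\rG_2$ standard parameter is therefore the multiplicity-free sum of cuspidal constituents of degrees $4$ and $3$.

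Now let $x$ be an element of the global centralizer. It preserves the isotypic decomposition, and each isotypic piece is irreducible, so $x$ acts on each piece by a scalar. The orthogonality of each piece forces that scalar to be $\pm1$. The element $x$ lies in $\rG_2$ and preserves the decomposition $4+3$, so it lies in $\SO_4$; it must also be central there. Hence
\begin{align*}
  S_{\dot\phi}=\{1,\dot s\},\qquad \dot s\mapsto s.
\end{align*}

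The third stage handles properties (3) and (4). At the real places, the regular discrete components make $\dot\phi_w$ discrete and regular. Surjectivity onto $A^{\cD}$ comes from Shelstad's description: the packet is a torsor under the dual of the finite $2$-group $\widehat T[2]$-quotient, and the image of $\dot s$ is nontrivial. Property (4) holds automatically, because every unramified component (and the chosen auxiliary Steinberg/principal-series components) factors through the diagonal torus. To keep (4) literally true at the auxiliary place, I would put its cuspidality-forcing condition at a real place or use a ramified principal series there instead. For the variant statement, replace the real discrete series by spherical tempered principal series with generic imaginary parameters. Cuspidality is then secured by finite auxiliary conditions in $\Pi_1$ alone, again chosen as principal series where (4) requires it.

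The main obstacle is the tension in the variant between forcing cuspidality and non-dihedrality of $\Pi_1$ and keeping every finite place $w\neq v$ of principal-series type. I would resolve it with Sauvageot density, applied to open sets of tempered principal series avoiding the dihedral locus, as in \cite{Takanashi2025-NoteSauvageotDensityPrinciple}.
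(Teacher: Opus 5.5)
Your architecture matches the paper's. It uses two successive fixed-central-character globalizations, a Steinberg component to make $\Pi_1$ non-dihedral, Ramakrishnan's criterion and Gelbart--Jacquet for cuspidality of $\Pi_1\boxtimes\Pi_2^\vee$ and $\Ad(\Pi_1)$, and the centralizer read off from the multiplicity-free orthogonal decomposition of degrees $4+3$. However, your treatment of property (4) has a genuine gap.

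A Steinberg component does \emph{not} factor through the diagonal torus, because its parameter has nonzero monodromy. Neither of your proposed remedies can replace it.
\begin{itemize}
\item A form $\mathrm{AI}_{K/\dot F}(\chi)$, with $K/\dot F$ a CM extension and $\chi$ of nonzero weight at every archimedean place, has discrete series at every real place. So no real-place condition excludes dihedral forms.
\item A form dihedral with respect to an extension split at the auxiliary place can have any principal series there. So no single principal-series condition excludes dihedral forms either.
\item Your supercuspidal option fails in general for the same reason: in odd residual characteristic every supercuspidal of $\GL_2$ is dihedral and occurs in global dihedral forms.
\end{itemize}
The tension you single out as the main obstacle does not exist, because (4) only asks that \emph{one} of $\phi_{\Pi_{1,w}},\phi_{\Pi_{2,w}}$ be toral.

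What is missing is the asymmetric choice in the second globalization. Let $R$ be the set of finite places $w\neq v$ at which $\Pi_1$ is ramified. It contains the Steinberg place $u$ and every place where $\dot\omega$ ramifies. At the latter places your prescription of unramified principal series is impossible for either form. Now prescribe for $\Pi_2$ a positive-measure family of principal series with central character $\dot\omega_w$ at each $w\in R$. Outside $R\cup\{v\}$ the component $\Pi_{1,w}$ is unramified, hence toral, so (4) holds everywhere. The mismatch at $u$ (Steinberg against principal series) also gives $\Pi_2\not\cong\Pi_1\otimes\chi$ without appealing to the real weights. The same finite conditions serve verbatim in the spherical variant, so no density argument against a dihedral locus is needed.

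Two smaller corrections concern (3). First, the packet of $\dot\phi_w$ on split $\rG_2(\R)$ is not a torsor under $\cS_{\dot\phi_w}^{\cD}$. It has $12/4=3$ members, while $\cS_{\dot\phi_w}=\widehat T[2]\cong(\Z/2\Z)^2$ has four characters; the fourth belongs to compact $\rG_2$. Surjectivity onto $A^{\cD}$ still holds, but by counting, as in the paper: each fiber of the restriction $\cS_{\dot\phi_w}^{\cD}\to A^{\cD}$ has two elements, so three distinct characters cannot all lie in one fiber.

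Second, large distinct weights do not by themselves make $\dot\phi_w$ discrete. Take real components $I_{m_w},I_{n_w}$, where $m_w\equiv n_w\pmod 2$ by equality of central characters. The standard parameter is then $\operatorname{sgn}\oplus I_{m_w+n_w}\oplus I_{n_w-m_w}\oplus I_{2m_w}$, which has a repeated summand when $n_w=3m_w$. The paper imposes $n_w>3m_w>0$, and the analogous inequality $b_w>3a_w>0$ in the spherical variant.
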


\begin{proof}
  Choose a lift of the refined parameter $\phi^{\SO_4}$ to a pair of two-dimensional parameters $(\rho_1,\rho_2)$ with common determinant $\lambda_v$, so that
  \begin{align*}
    \Std_{\SO_4}\circ\phi^{\SO_4}=\rho_1\otimes\rho_2^\vee,
    \quad \wedge^2_+\circ\phi^{\SO_4}=\Ad(\rho_1).
  \end{align*}
  We use the ordered pair, as in Section~\ref{section:refined-LLC-SO4}, to retain the refinement.
  The lift can be taken unitary. Discreteness of $\phi^{\SO_4}$ implies that both $\rho_i$ are discrete parameters for $\GL_2(F)$.
  Let $\pi_i$ be the corresponding unitary discrete series representations.

  Choose $\dot F$ totally real with $\dot F_v\cong F$, and fix a real place $w_0$.
  By \cite{Takanashi2025-NoteSauvageotDensityPrinciple}*{Corollary 8.3}, applied to $\mathbf G_m$, extend $\lambda_v$ to a unitary Hecke character $\dot\lambda$ of $\dot F$.
  Choose a finite place $u\neq v$ where $\dot\lambda_u$ is unramified, and an unramified unitary character $\mu_u$ with $\mu_u^2=\dot\lambda_u$.
  Apply the fixed-central-character globalization theorem
  \cite{Takanashi2025-NoteSauvageotDensityPrinciple}*{Theorem 8.5 and its proof}
  to obtain $\Pi_1$ with central character $\dot\lambda$, with
  \begin{align*}
    \Pi_{1,v}=\pi_1,\quad
    \Pi_{1,u}=\St_{\GL_2}\otimes(\mu_u\circ\det),
  \end{align*}
  and with discrete series components at all real places.
  At $w_0$ we use the sufficiently regular discrete series supplied by that theorem.

  Let $R$ be the set of finite places $w\neq v$ where $\Pi_{1,w}$ is ramified.
  Apply the same theorem a second time, with central character $\dot\lambda$, to obtain $\Pi_2$ with $\Pi_{2,v}=\pi_2$.
  For each $w\in R$ prescribe a relatively compact family of irreducible unitary principal series with central character $\dot\lambda_w$, of positive Plancherel measure and with boundary of measure zero.

  At every real place, choose the discrete series weights for $\Pi_2$ sufficiently far from those of $\Pi_1$.
  More precisely, up to the common central unitary twist, write their parameters as
  \begin{align*}
    I_{m_w},\ I_{n_w},\quad
    I_k=\Ind_{W_\C}^{W_\R}(z/|z|)^k,
    \quad n_w>3m_w>0.
  \end{align*}
  Equality of the central characters requires $m_w\equiv n_w\pmod2$.
  We prescribe these weights at $w\neq w_0$ and take the weight at $w_0$ sufficiently large in the second application of the theorem.
  For a finite $w\neq v$ in $R$, the second parameter factors through a torus. For $w\notin R$, the first representation is unramified and its parameter factors through a torus.

  The Steinberg component at $u$ implies that $\Pi_1$ is not dihedral. Automorphic induction of a character has zero local monodromy, whereas this component has nonzero monodromy.
  Moreover, $\Pi_2$ is not a twist of $\Pi_1$, since its component at $u$ is a principal series.
  The cuspidality criterion for the Rankin--Selberg transfer
  \cite{Ramakrishnan2000-ModularityRankinSelbergSeriesMultiplicityOne}
  shows that $\Pi_1\boxtimes\Pi_2^\vee$ is cuspidal.
  Also $\Ad(\Pi_1)$ is cuspidal because $\Pi_1$ is not dihedral.
  Thus the standard seven-dimensional parameter is the multiplicity-free orthogonal sum
  \begin{align}\label{equation:successive-GL2-globalization}
    \Std_{\rG_2}\circ\dot\phi
    =(\Pi_1\boxtimes\Pi_2^\vee)\boxplus\Ad(\Pi_1).
  \end{align}
  Its centralizer in $\SO_7(\C)$ is $\{1,\operatorname{diag}(-I_4,I_3)\}$.
  The nonidentity element is the distinguished $\SO_4$ involution in $\rG_2(\C)$, so $S_{\dot\phi}$ is exactly this group and localizes to $\langle s\rangle$.

  At a real place the standard parameter is
  \begin{align*}
    \operatorname{sgn}\oplus I_{m_w+n_w}\oplus I_{n_w-m_w}\oplus I_{2m_w}.
  \end{align*}
  The inequalities on the weights make the $L$-parameter for $\rG_2(\R)$ discrete and regular, with component group $(\Z/2\Z)^2$.
  Its packet on split $\rG_2(\R)$ consists of three discrete series and hence gives three distinct characters of this group.
  Restriction to the nontrivial subgroup $A$ attains both characters of $A$, since each fiber of the restriction map on the full dual has only two elements.
  Compare
  \cite{Arthur2013-EndoscopicClassificationRepresentations}*{Lemma 6.1.2 and the proof of Proposition 6.3.1, pp. 327--328}.

  Finally, write $\rho_{i,w}=\phi_{\Pi_{i,w}}$.
  Suppose at a finite place that $\rho_{1,w}=a\oplus b$ and put $r=\rho_{2,w}\otimes a^{-1}$.
  Equality of determinants gives $\det r=b/a$, and
  \begin{align*}
    \Std_{\rG_2}\circ\dot\phi_w
    &=r\oplus r^\vee\oplus\det r\oplus(\det r)^{-1}\oplus1.
  \end{align*}
  This is the parameter obtained from the short-root $\GL_2$ Levi.
  If instead $\rho_{2,w}=a\oplus b$, put $r=\rho_{1,w}\otimes a^{-1}$. Then
  \begin{align*}
    \Std_{\rG_2}\circ\dot\phi_w
    &=r\oplus r^\vee\oplus\Ad(r),
  \end{align*}
  which comes from the long-root $\GL_2$ Levi.
  In each case the standard representation of the $L$-parameter for $\SO_4$ is $r\oplus r^\vee$, and the corresponding $\GL_2$ Levi maps onto the indicated Levi of $\rG_2$.

  For the spherical alternative, choose $\dot\lambda$ with spherical real components and apply the fixed-central-character version of the spherical globalization argument in \cite{Takanashi2025-NoteSauvageotDensityPrinciple}*{Theorem 8.7 and Remark 8.8} twice, with the same finite local conditions.
  After a common unitary norm twist, \cite{Eikemeier2022-AsymptoticsHeckeOperatorsQuasisplitSimpleGroups}*{Theorem 1.6} fixes the real central character.
  At a fixed finite level, averaging over the finite ray class quotient projects onto $\dot\lambda$ and gives the required fixed-central-character Plancherel limit.
  Up to the common central twist, choose the real parameters in the form
  \begin{align*}
    |\cdot|^{ia_w}\oplus|\cdot|^{-ia_w},
    \quad |\cdot|^{ib_w}\oplus|\cdot|^{-ib_w},
    \quad b_w>3a_w>0.
  \end{align*}
  The second Weyl-law limit allows these inequalities after the first globalization has been fixed.
  This implies the result.
\end{proof}

\begin{remark}\label{remark:successive-globalization-local-transfer}
  At every finite $w\neq v$ in this construction, $\dot s$ lies in the connected center of the indicated $\rG_2$ Levi.
  Its image in $\cS_{\dot\phi_w}$ is therefore trivial.
  The transfer of the stable character of $\dot\phi^{\SO_4}_w$ is determined by compatibility of parabolic induction with transfer, Lemma~\ref{lemma:parabolic-descent-commutes-with-transfer}. The induced endoscopic datum on the Levi is the identity datum.
  For tempered inducing data, this agrees with the previously established local intertwining relations.
  In the spherical alternative, the same statements hold at every real place, since both parameters factor through a torus.
\end{remark}

\begin{lemma}\label{lemma:SO4-global-PGSO8-packet}
  We continue to use the notation in the proof of Lemma~\ref{lemma:globalization-SO4}, with either choice of real components.
  Let $c = \iota_{\SO_4}^{\PGSO_8} \circ c(\dot{\phi}^{\SO_4})$ be the Satake parameter of $\PGSO_8$ obtained from $\dot{\phi}^{\SO_4}$.
  Then, we have
  \begin{align*}
    I_{\disc, c}^{\widetilde{\PGSO_8}}
    =
    \sum_{\pi \in \Pi_{\iota_{\SO_4}^{\PGSO_8} \circ \dot{\phi}^{\SO_4}}, \rho_\pi = \mathbf{1}} \tr \widetilde{\pi},
  \end{align*}
  where $\rho_{\pi}$ denotes the character of $\cS_{\iota_{\SO_4}^{\PGSO_8} \circ \dot{\phi}^{\SO_4}}$ corresponding to $\pi$.
\end{lemma}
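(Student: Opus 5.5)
We compute $I_{\disc,c}^{\widetilde{\PGSO_8}}$ directly from the spectral expansion in Section~\ref{section:discrete-trace-formula-PGSO8}. First we show that every proper-Levi term vanishes, so that only the discrete spectrum of $\PGSO_8$ contributes. By Lemma~\ref{lemma:globalization-SO4}, the eight-dimensional standard parameter attached to $c$ is
\begin{align*}
  \mathbf1\boxplus(\Pi_1\boxtimes\Pi_2^\vee)\boxplus\Ad(\Pi_1),
\end{align*}
which is multiplicity-free, with cuspidal summands of degrees $1,4,3$. A proper $\theta$-stable or $\theta$-regular Levi contribution would come from the data in Lemmas~\ref{lemma:weyl-fixed-representation-long-dual-root}, \ref{lemma:weyl-fixed-representation-short-dual-root}, \ref{lemma:standard-parameters-of-theta-unstable-cases} or the toral cases. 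Remarks~\ref{remark:standard-parameters-of-theta-stable-cases} and~\ref{remark:standard-parameters-of-theta-unstable-cases} show that each of these has a repeated summand or a decomposition incompatible with $1+4+3$. Strong multiplicity one and uniqueness of isobaric decomposition therefore kill them.

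Next we identify the discrete part. The parameter $\iota_{\SO_4}^{\PGSO_8}\circ\dot\phi^{\SO_4}$ is a discrete generic global parameter for $\PGSO_8$. We apply Xu's global multiplicity formula \cite{Xu2025-GlobalLPacketsQuasisplitGSp2nGO2n}*{Theorem 1.1} after inflating to $\GSO_8$. The discrete spectrum on $c$ is then $\bigoplus_\pi m(\pi)\pi$ over the global packet, with $m(\pi)=1$ exactly when $\rho_\pi$ is trivial on the global component group. By the computation of Remark~\ref{remark:isomorphic-S-group-PGSO8-for-parameters-of-SO4-type} together with Lemma~\ref{lemma:globalization-SO4}(2), this group is $\cS_{\iota_{\SO_4}^{\PGSO_8}\dot\phi^{\SO_4}}\cong A\cong\Z/2\Z$. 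We must also exclude triality quadratic self-twists, which would raise multiplicities. Lemma~\ref{lemma:triality-quadratic-twist-global} and the argument of Corollary~\ref{corollary:no-quadratic-self-twist-PGSO8} do this: the summand $\mathbf1$ occurs once and $\omega\neq\mathbf1$ would not be a summand. The characters $\rho_\pi$ are formed with the Whittaker-normalized local pairings, using the Xu/distinguished packets at finite places and Langlands packets at real places.

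Finally we compute the twisted trace. Each member of the global packet has distinguished local components (Theorem~\ref{theorem:invariance-lifting-non-archimedean}, Lemma~\ref{lemma:invariance-lifting-archimedean}), so it is $\theta$-stable. Since the multiplicity is one, the automorphic action of $\theta$ on $\pi$ is a well-defined operator, and the trace is $\tr\widetilde\pi$ for that action. By Corollary~\ref{corollary:canonical-extension-global}, this action is the restricted tensor product of local canonical extensions. To apply that corollary, we need the automorphic $\Out(\PGSO_8)(F)$-action on the $\pi$-isotypic line; multiplicity one and $H^2(S_3,\C^\times)=0$ extend it to $\PGSO_8\rtimes\Aut_{\mathrm{pin}}$.

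The main obstacle is that $\theta$ might act by a nontrivial cube root of unity relative to the canonical extension. That does not happen here, because the restriction to $A_3$ of any extension to $S_3$ is canonical, so no scalar appears. We expect the most care to be needed in checking that Xu's global packet pairing is compatible with the canonical $\Out$-action, which amounts to the Whittaker normalization at unramified and generic places.
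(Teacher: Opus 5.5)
\paragraph{The gap: no discrete automorphic representation on $c$.} Your proper-Levi vanishing, the self-twist check and the passage to canonical extensions via Corollary~\ref{corollary:canonical-extension-global} are consistent with the paper. The problem is the sentence ``the discrete spectrum on $c$ is then $\bigoplus_\pi m(\pi)\pi$ over the global packet''. Xu's Theorem~1.1 does not produce an automorphic global packet from the parameter alone. It describes the discrete spectrum as a union of twists $\Pi\otimes\chi_{\omega,1}$ of the global packet containing a \emph{given} discrete automorphic representation.

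This matters here because local packets with the same $\SO_8$-standard parameter $\Phi_w$ are only determined up to the twists $\chi_{\omega_w,1}$. These twists preserve the standard lift but destroy $\theta$-stability: $(\pi\otimes\chi_{\omega,1})^{\theta}\cong\pi\otimes\chi_{\omega,3}$. So you have not shown that the discrete spectrum on $c$ is nonempty. You have also not shown that the packet lying on $c$ has the distinguished packets as its local components. Your appeal to Theorem~\ref{theorem:invariance-lifting-non-archimedean} presupposes this, since that theorem concerns the packet containing a lift from $\rG_2$, not its $\chi_{\omega_w,1}$-twists. The step is essential: if the discrete spectrum on $c$ were empty, the left side would vanish while the right side would not.

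\paragraph{How the paper fills it.} The paper first proves $I^{\widetilde{\PGSO_8}}_{\disc,c}\neq0$.
\begin{itemize}
\item At $v$ it uses simultaneous cuspidal transfer to choose $f_v$ with vanishing $\rG_2$- and $\SL_3$-transfers and $S^H_{\phi^H_v}((f_v)_H)=1$.
\item At the real places it controls the $\SO_4$-transfer by Kaletha--Mezo (or by parabolic descent in the spherical case).
\item At the other finite places it takes nonzero factors by parabolic descent.
\item It then applies stabilization and the refined $\SO_4$ classification; the component group is trivial modulo the center because $\Pi_1\boxtimes\Pi_2^\vee$ is cuspidal.
\end{itemize}
This gives $I^{\widetilde{\PGSO_8}}_{\disc,c}(f)=\tfrac12\neq0$, and hence a $\theta$-invariant discrete $\pi$ on $c$. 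Only then is Xu's theorem applied to the packet containing $\pi$. The $\theta$-invariance of $\pi$ is what pins its local packets down as the distinguished ones, via Theorem~\ref{theorem:characterization-distinguished-L-packets}.

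Alternatively, you could obtain a coherent global packet from Xu's Theorem~1.2, as in the proof of Theorem~\ref{theorem:Arthur-multiplicity-SO4}. You would then still have to check that the product of the distinguished local packets lies on $c$.

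\paragraph{Two smaller omissions.} First, excluding \emph{global} self-twists does not by itself isolate the packet. You must also show that $\Pi\otimes\chi_{\omega,1}$ with $\omega\neq\mathbf1$ does not lie on $c$. That condition only gives local self-twists $\pi_w\otimes\chi_{\omega_w,1}\cong\pi_w$ at almost all $w$, and the unramified argument of Lemma~\ref{lemma:triality-quadratic-twist-global} is then applied to those. Second, you should record that Xu's constant $m_{\dot\phi'}$ in the multiplicity formula equals $1$, because $\Phi$ has an odd-dimensional summand.
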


\begin{proof}
  We first prove that $I_{\disc, c}^{\widetilde{\PGSO_8}}$ is nonzero by choosing a factorizable test function.
  Write $H=\SO_4$ and $\phi_w^H=\dot\phi_w^{\SO_4}$.
  At the distinguished finite place $v$, simultaneous cuspidal transfer is surjective onto the stable cuspidal spaces of the elliptic endoscopic groups. See \cite{MoeglinWaldspurger2016-StabilisationFormuleTracesTordue1}*{I.4.11}.
  Choose $f_v^{\widetilde{\PGSO_8}}$ with
  \begin{align*}
    (f_v^{\widetilde{\PGSO_8}})_{\rG_2}
    =(f_v^{\widetilde{\PGSO_8}})_{\SL_3}=0, \quad S^H_{\phi_v^H}((f_v^{\widetilde{\PGSO_8}})_H) =1.
  \end{align*}
  For the discrete-series choice at a real place $w$, the standard eight-dimensional parameter is
  \begin{align*}
    1\oplus\operatorname{sgn}\oplus I_{m_w+n_w}
      \oplus I_{n_w-m_w}\oplus I_{2m_w}.
  \end{align*}
  Its orthogonal irreducible summands are distinct by the inequalities in Lemma~\ref{lemma:globalization-SO4}, so the lifted parameter for $\PGSO_8(\R)$ is discrete.
  Theorem 6.1.1 of \cite{KalethaMezo2026-RefinedLocalLanglandsConjectureDiscreteLParameters} expresses the transfer
  defined by $D_w(f_w)=S^H_{\phi_w^H}((f_w)_H)$
  as an elliptic combination of twisted discrete-series characters.
  Its Whittaker-normalized generic coefficient is $1$, so $D_w$ is nonzero. The Whittaker extension of the generic member agrees with its canonical extension.
  The real pseudo-coefficient correspondence in \cite{MoeglinWaldspurger2016-StabilisationFormuleTracesTordue1}*{IV.2.2, p.~444} supplies a cuspidal function with $D_w(f_w^{\widetilde{\PGSO_8}})=\|D_w\|_{\mathrm{ell}}^2>0$.
  Rescale it to make this value $1$.
  For the spherical choice, parabolic descent from the torus gives a nonzero transferred character, and we again choose a test function on which it has value $1$.

  At every other finite place, compatibility of parabolic induction with transfer and the established local intertwining relations give a nonzero transferred character, by the Levi factorization in Lemma~\ref{lemma:globalization-SO4}.
  Choose a test function on which it has value $1$, using the spherical unit at almost every place.
  The standard transfer $\Pi_1\boxtimes\Pi_2^\vee$ of the refined generic global $A$-parameter for $H$ is cuspidal, so its Arthur component group modulo the center is trivial.
  Consequently its stable discrete contribution is the product of its local stable packet characters, by the refined classification in Theorem~\ref{theorem:refined-Arthur-classification-SO4}.
  The degree-four and degree-three cuspidal summands of the standard lift identify the chosen refinement at the Satake family $c$.
  For the chosen test function, stabilization therefore gives
  \begin{align*}
    I_{\disc,c}^{\widetilde{\PGSO_8}}(f^{\widetilde{\PGSO_8}})
    =\frac12
      S^H_{\disc,c}((f^{\widetilde{\PGSO_8}})_H)
    =\frac12\neq0.
  \end{align*}

  Let $\dot{\phi'}$ be the $L$-parameter of $\SO_8$ obtained from $\dot{\phi}^{\SO_4}$.
  Then, the standard $L$-parameter of $\dot{\phi'}$ is given by $\Std_{\SO_4} \circ \dot{\phi}^{\SO_4} \boxplus \wedge^2_{+} \circ \dot{\phi}^{\SO_4} \oplus \mathbf{1}$ and there exists a triality-invariant discrete automorphic representation $\pi$ of $\PGSO_8(\A_{\dot{F}})$ with the $L$-parameter $\dot{\phi'}$.
  By \cite{Xu2025-GlobalLPacketsQuasisplitGSp2nGO2n}*{Theorem 1.1}, every contribution belongs to a twist of the global $L$-packet containing $\pi$ by $\chi_{\omega,1}$, for a quadratic Hecke character $\omega$.
  If such a twisted packet contributes at $c$, then at almost all places $w$ its spherical member has the same Satake parameter as $\pi_w$. Thus
  \begin{align*}
    \pi_w\otimes\chi_{\omega_w,1}\cong\pi_w.
  \end{align*}
  Since $\pi_w$ is triality-invariant, it is also fixed by $\chi_{\omega_w,2}$.
  The unramified calculation in the proof of Lemma~\ref{lemma:triality-quadratic-twist-global} and strong multiplicity one give
  \begin{align*}
    \mathrm{Lift}(\pi)
    \cong
    \mathrm{Lift}(\pi)\otimes(\omega\circ\det).
  \end{align*}
  The isobaric summands of $\mathrm{Lift}(\pi)$ have degrees $4$, $3$, and $1$, with the degree-one summand equal to $\mathbf{1}$. Comparing this summand gives $\omega=\mathbf{1}$.
  The same argument excludes non-trivial quadratic self-twists of every member of this packet.
  Moreover, $m_{\dot{\phi'}}=1$ in Xu's formula since the standard parameter has an odd-dimensional summand. See \cite{Xu2025-GlobalLPacketsQuasisplitGSp2nGO2n}*{Lemma 2.2(4)}.
  Hence \cite{Xu2025-GlobalLPacketsQuasisplitGSp2nGO2n}*{Theorem 1.1} leaves only this global $L$-packet, with multiplicity one for the members with trivial global pairing.
  Their canonical extensions give the asserted twisted expansion by Corollary~\ref{corollary:canonical-extension-global}.
\end{proof}

\begin{theorem}\label{theorem:ECR-G2-SO4}
  Let $F$ be a non-archimedean local field of characteristic zero.
  Let $\phi^{\rG_2}$ be a discrete series $L$-parameter for $\rG_2(F)$ of $\SO_4$-type.

  Then, the following endoscopic character relation holds:
  \begin{align*}
    \sum_{\sigma \in \Pi_{\phi^{\rG_2}}} \tr\rho_{\sigma}(s) \tr\sigma(f^{\rG_2})
    =
    S^{\SO_4}_{\phi^{\SO_4}}(f^{\rG_2}_{\SO_4}),
  \end{align*}
  where $s$ is the image in $\cS_{\phi^{\rG_2}} = S_{\phi^{\rG_2}}$ of the distinguished element $s^{\mathfrak{e}}$ defining the fixed endoscopic datum.
\end{theorem}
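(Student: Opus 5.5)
The strategy is global: globalize $\phi^{\SO_4}$ by Lemma~\ref{lemma:globalization-SO4}, compare the stabilized trace formulas for $\widetilde{\PGSO_8}$ and $\rG_2$ at the Hecke--Satake family $c$ of $\dot\phi=\iota_{\SO_4}^{\rG_2}\circ\dot\phi^{\SO_4}$, and then isolate the place $v$.

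First I settle the twisted side. Lemma~\ref{lemma:SO4-global-PGSO8-packet} gives $I^{\widetilde{\PGSO_8}}_{\disc,c}$ as the sum over the members of the global packet with trivial global pairing. Since $A=\{1,\dot s\}$, this sum can be rewritten as
\[
\tfrac12\prod_w\Theta^{\widetilde{\PGSO_8}}_{\iota\dot\phi_w}+\tfrac12\prod_w\Theta^{\widetilde{\PGSO_8}}_{\iota\dot\phi_w,\dot s},
\]
where the second product is the $\dot s$-twisted packet sum. In Theorem~\ref{theorem:stabilization-twisted-PGSO8}, the $\SL_3$ and $\PGL_3$-type terms vanish, because the standard lift $(\Pi_1\boxtimes\Pi_2^\vee)\boxplus\Ad(\Pi_1)\boxplus\mathbf1$ has cuspidal summands of degrees $4,3,1$. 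The $\SO_4$ term is $\tfrac12\prod_w S^{\SO_4}_{\dot\phi^{\SO_4}_w}$ by Theorem~\ref{theorem:refined-Arthur-classification-SO4}. The $\rG_2$ term, by Proposition~\ref{proposition:ECR-PGSp_6} (valid since $\Std\circ\phi^{\rG_2}$ has no trivial summand) together with Remark~\ref{remark:isomorphic-S-group-PGSO8-for-parameters-of-SO4-type}, is built from the products of local stable characters. This comparison identifies the global $\rG_2$ stable term as $S^{\rG_2}_{\disc,c}=\tfrac12\prod_w\Sigma_{\dot\phi_w}$, where each $\Sigma_{\dot\phi_w}$ equals $\sum_{\sigma\in\Pi_{\dot\phi_w}}\tr\sigma$ at finite places and is the stable character at real places. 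It also shows that $\prod_w\Theta_{\iota\dot\phi_w,\dot s}$ matches the $\SO_4$ transfer.

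Next I insert this into the $\rG_2$ stabilization of Theorem~\ref{theorem:stabilization-twisted-PGSO8}:
\[
I^{\rG_2}_{\disc,c}(f)=\tfrac12\prod_w\Sigma_{\dot\phi_w}(f_w)+\tfrac14\prod_w S^{\SO_4}_{\dot\phi^{\SO_4}_w}\big((f_w)_{\SO_4}\big).
\]
The $\PGL_3$ term vanishes for the same degree reason. At places $w\ne v$ I know the local relations for $\dot s$ in advance. At finite $w\ne v$, Remark~\ref{remark:successive-globalization-local-transfer} shows that $\dot s$ maps to $1$ in $\cS_{\dot\phi_w}$, so the $\SO_4$ transfer equals the stable character $\Sigma_{\dot\phi_w}$ (the factor $\tfrac12$ vs $\tfrac14$ is reconciled by $c(\rG_2,\SO_4)=\tfrac12$ and automorphisms of the datum; I will track this constant carefully). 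At real places I use Shelstad's relations, which give $\sum\tr\rho_\sigma(\dot s)\tr\sigma$. Write $e_v(f_v)$ for the unknown $\SO_4$ transfer at $v$. The left side expands as $\sum_\Sigma m(\Sigma)\tr\Sigma$, and I expand it over members of $\prod_w\Pi_{\dot\phi_w}$. Choosing real members $\sigma_\infty$ with $\prod\rho(\dot s)=\pm1$, which is possible by the surjectivity in Lemma~\ref{lemma:globalization-SO4}(3), and fixing generic/spherical members elsewhere, I obtain two identities whose difference isolates $e_v$ in terms of the characters $\tr\sigma$ for $\sigma\in\Pi_{\phi^{\rG_2}}$. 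Integrality of multiplicities then forces $e_v=\sum_\sigma\epsilon_\sigma\tr\sigma$ with $\epsilon_\sigma\in\{\pm1\}$, where $\epsilon_\sigma$ is determined by whether $\sigma\boxtimes\sigma_\infty\boxtimes\cdots$ is automorphic.

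The main obstacle is to identify $\epsilon_\sigma=\rho_\sigma(s)$, rather than merely some sign character. For this I plan to use Remark~\ref{remark:isomorphic-S-group-PGSO8-for-parameters-of-SO4-type}: the Gan--Savin parametrization is transported through the theta lifts to Xu's $\PGSO_8$ packets, where the twisted sum $\Theta_{\iota\phi,s}$ carries the known signs $\rho_\pi(s)$. The identity already obtained on the twisted side, $\Theta_{\iota\dot\phi_v,\dot s}$ transferring to $S^{\SO_4}$ at $v$ (after isolating $v$ by the same choices), together with $\Sigma$-matching from Proposition~\ref{proposition:ECR-PGSp_6} member by member, then pins down $\epsilon_\sigma=\rho_\sigma(s)$. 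As a sanity check, the elliptic inner product of Proposition~\ref{proposition:elliptic-inner-product-transfer} gives $\sum\epsilon_\sigma^2=|\cS_\phi|$, consistent with $c(\rG_2,\SO_4)^{-1}\cdot\|S^{\SO_4}\|^2$. Finally, the generic member has $\epsilon=1$, by Whittaker normalization.
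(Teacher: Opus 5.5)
Your globalization via Lemma~\ref{lemma:globalization-SO4}, the comparison of the two stabilized trace formulas, and the real-place separation match the paper. In particular, the separation uses Lemma~\ref{lemma:globalization-SO4}(3) to force the coefficient $c(s,\sigma)$ of $\tr\sigma$ in $e_v$ to lie in $\{\pm1\}$ on $\Pi_{\phi^{\rG_2}}$ and to vanish off it. One correction first: the $\SO_4$ term on the $\rG_2$ side is $\tfrac12\prod_u I_u$, not $\tfrac14$, exactly as on your twisted side. The refined $\SO_4$ parameter has trivial $\cS$ because $\Pi_1\boxtimes\Pi_2^\vee$ is cuspidal, and it contributes once. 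Only with $\tfrac12$ does integrality give $(b(\sigma)\pm c(s,\sigma))/2\in\Z_{\ge0}$.

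The genuine gap is the step you call the main obstacle. The twisted identity $\sum_\pi\rho_\pi(s)\tr\widetilde\pi(\widetilde f)=S^{\SO_4}(\widetilde f_{\SO_4})$ concerns the twisted transfer $\widetilde{\PGSO_8}\to\SO_4$. What you need is the coefficient of $\tr\sigma$ in the ordinary transfer from $\SO_4$ to $\rG_2$, and nothing links the two:
\begin{itemize}
\item The only bridge from $\widetilde{\PGSO_8}$ to $\rG_2$ is the stable transfer. It sees only $\sum_\pi\tr\widetilde\pi$, and the signed sum has zero $\rG_2$-component.
\item Proposition~\ref{proposition:ECR-PGSp_6} gives no member-by-member matching of characters.
\item $\widetilde f_{\SO_4}$ is not obtained by transferring the stable class $\widetilde f_{\rG_2}$ further.
\item Globally, the twisted trace formula constrains only $S^{\rG_2}_{\disc,c}$, never the unknown local factor $e_v$ in the $\rG_2$ formula.
\end{itemize}
Your checks cannot decide the signs either. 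The identity $\sum_\sigma\epsilon_\sigma^2=|\cS_\phi|$ is automatic once $\epsilon_\sigma=\pm1$, and Whittaker normalization fixes only the generic member.

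As you observe, $\epsilon_\sigma$ is detected by automorphy of $\sigma\boxtimes\sigma_\infty\boxtimes\cdots$. The missing ingredient is an actual construction of such automorphic representations of $\rG_2$, which the paper supplies as follows.
\begin{enumerate}
\item It switches to the spherical alternative of Lemma~\ref{lemma:globalization-SO4}. Then $I_u=\Sigma_u$ at every $u\ne v$, and $R^{\rG_2}_{\disc,c}=\tfrac12(\Sigma_v+I_v)\prod_{u\ne v}\Sigma_u$.
\item For $\sigma$ with $\rho_\sigma(s)=1$, the pairing of $\theta^{\PGSO_8}_{\PGSp_6}\theta^{\PGSp_6}_{\rG_2}(\sigma)$ is trivial on $A$. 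Xu's multiplicity formula in Lemma~\ref{lemma:SO4-global-PGSO8-packet} then gives a cuspidal $\Pi$ on $\PGSO_8$ with this component at $v$.
\item The pole of $L^S(s,\Pi,\Std)$ at $s=1$ gives a nonzero cuspidal classical theta lift $\tau$ to $\PGSp_6$. This uses Gan--Takeda, with the tower property via real infinitesimal characters.
\item The pole of $L^S(s,\tau,\mathrm{Spin})$ gives a nonzero cuspidal exceptional theta lift to an inner form of $\rG_2$, by Gan--Savin. The real infinitesimal-character argument and the Hasse principle make this form split, and its component at $v$ is $\sigma$.
\item Positivity of its multiplicity forces $c(s,\sigma)=1$ for every such $\sigma$.
\end{enumerate}
The remaining signs then follow from orthogonality, not from a norm identity. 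The $\SO_4$ transfer is orthogonal to the stable packet sum, so $\sum_\sigma c(s,\sigma)=0$ by Proposition~\ref{proposition:elliptic-inner-product-transfer}. Exactly half of the characters of the elementary abelian $2$-group $\cS_\phi$ take the value $1$ at $s$, so all the remaining coefficients must be $-1$.
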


\begin{proof}
  Put $\phi=\phi^{\rG_2}$, fix the distinguished involution $s$ of the given endoscopic datum, and apply Lemma~\ref{lemma:globalization-SO4}.
  Write $A=S_{\dot\phi}=\{1,\dot s\}$ and $c=c(\dot\phi)$.
  Thus $A$ maps to $\langle s\rangle\subset S_\phi$.
  Put
  \begin{alignat*}{2}
    \Sigma_u&=\sum_{\sigma_u\in\Pi_{\dot\phi_u}}\tr\sigma_u, &
    \quad I_u&=\Trans_{\SO_4}^{\rG_2}
      \bigl(S^{\SO_4}_{\dot\phi^{\SO_4}_u}\bigr).
  \end{alignat*}
  At $u=v$ this stable packet sum is the transfer given by Proposition~\ref{proposition:ECR-PGSp_6}.
  At finite $u\neq v$ the corresponding identities follow by parabolic descent, as explained in Remark~\ref{remark:successive-globalization-local-transfer}. 
  At real places they follow from the real endoscopic character relations.
  In particular, for $u\neq v$,
  \begin{align*}
    I_u=\sum_{\sigma_u\in\Pi_{\dot\phi_u}}
      \rho_{\sigma_u}(\dot s_u)\tr\sigma_u.
  \end{align*}

  The preceding lemma and the twisted stabilization, together with
  Theorem~\ref{theorem:local-intertwining-relation-PGSO8-SO4}, give
  \begin{align*}
    S^{\rG_2}_{\disc,c}=\frac12{\prod_u}'\Sigma_u.
  \end{align*}
  There is exactly one nonidentity element of $A$, hence exactly one $\SO_4$ term for this global parameter.
  The invariant trace formula for $\rG_2$ therefore reads
  \begin{align}\label{equation:SO4-cyclic-global-separation}
    R^{\rG_2}_{\disc,c}
    =\frac12{\prod_u}'\Sigma_u+\frac12{\prod_u}'I_u.
  \end{align}
  There is no proper-Levi contribution at this Satake family, since its standard parameter is the cuspidal sum of degrees four and three in \eqref{equation:successive-GL2-globalization}.

  Expand the elliptic transfer at $v$ as
  \begin{align*}
    I_v=\sum_{\sigma\in\Pi_{\disc}(\rG_2(F))}
       c(s,\sigma)\tr\sigma,
    \quad
    b(\sigma)=\begin{cases}1&\sigma\in\Pi_\phi,\\0&\sigma\notin\Pi_\phi.\end{cases}
  \end{align*}
  The separation argument of
  \cite{Arthur2013-EndoscopicClassificationRepresentations}*{Proposition 6.6.5}
  can now be applied to $A$ alone.
  Choose packet members away from $v$ whose product character on $A$ is $\eta\in\{1,-1\}$, and isolate their characters in \eqref{equation:SO4-cyclic-global-separation}.
  Both choices of $\eta$ are possible by Lemma~\ref{lemma:globalization-SO4}(3).
  Nonnegative integral multiplicities on the left give
  \begin{align}\label{equation:SO4-cyclic-Fourier-coefficients}
    \frac{b(\sigma)+\eta c(s,\sigma)}2\in\Z_{\geq0}
    \quad(\eta=1,-1).
  \end{align}
  Adding these two integers shows that $c(s,\sigma)=0$ outside $\Pi_\phi$, and that $c(s,\sigma)\in\{1,-1\}$ on $\Pi_\phi$.

  To identify the signs, apply the spherical alternative of Lemma~\ref{lemma:globalization-SO4}, keeping the local parameter at $v$ fixed, and use the same notation for these new global data.
  At every $u\neq v$, the element $\dot s_u$ lies in the connected center of a Levi subgroup, so $I_u=\Sigma_u$ by Remark~\ref{remark:successive-globalization-local-transfer}.
  Lemma~\ref{lemma:SO4-global-PGSO8-packet} and stabilization therefore give
  \begin{align*}
    R^{\rG_2}_{\disc,c}
    =\frac12(\Sigma_v+I_v){\prod_{u\neq v}}'\Sigma_u.
  \end{align*}

  Fix $\sigma\in\Pi_\phi$ with $\rho_\sigma(s)=1$.
  The local packet at $v$ in Lemma~\ref{lemma:SO4-global-PGSO8-packet} is the distinguished Xu packet.
  Indeed, its triality-invariant member has a tempered standard parameter containing $\mathbf1$. Triality invariance gives both gamma-factor vanishings in Theorem~\ref{theorem:characterization-distinguished-L-packets}, and Lemma~\ref{lemma:injectivity-G_2-to-GL7} identifies the resulting parameter with $\phi$.
  By Remark~\ref{remark:isomorphic-S-group-PGSO8-for-parameters-of-SO4-type}, the lift
  \begin{align*}
    \pi_v=\theta_{\PGSp_6}^{\PGSO_8}\circ\theta_{\rG_2}^{\PGSp_6}(\sigma)
  \end{align*}
  belongs to this packet. Its pairing restricts trivially to $A$ by \cite{GanSavin2023-LocalLanglandsConjectureG2}*{Section 8.3} and \cite{AtobeGan2017-LocalThetaCorrespondenceTemperedRepresentationsLanglands}*{Theorem 4.3(2)}.
  Choose generic members at the other finite places and spherical members at the real places.
  The global pairing is trivial, so the multiplicity formula in Lemma~\ref{lemma:SO4-global-PGSO8-packet} supplies a discrete automorphic representation $\Pi$ of $\PGSO_8(\A_{\dot F})$ with these components.
  It is cuspidal since its real components are tempered. See \cite{Eikemeier2022-AsymptoticsHeckeOperatorsQuasisplitSimpleGroups}*{Remark 1.4}.

  For a sufficiently large finite set $S$ of places, its partial standard $L$-function is
  \begin{align*}
    L^S(s,\Pi,\Std)
    =\zeta_{\dot F}^S(s)
      L^S(s,\Pi_1\boxtimes\Pi_2^\vee)
      L^S(s,\Ad\Pi_1),
  \end{align*}
  and has a simple pole at $s=1$.
  By \cite{GanTakeda2011-RegularizedSiegelWeilNonvanishingOrthogonal}*{Corollary 7.9(a)}, with $m=8$ and $r=3$, the backward classical theta lift to $\PGSp_6$ is nonzero. The passage to similitudes is as in \cite{BakicGanSavin2023-SimilitudeExceptionalThetaCorrespondences}*{Section 8}.
  At a real place the infinitesimal character of $\Pi_w$ has purely imaginary coordinates.
  A lift from $\Sp_4(\R)$ would have coordinates of the form $(x_1,x_2,1,0)$, up to the Weyl group, by \cite{Przebinda1996-DualityCorrespondenceInfinitesimalCharacters}*{Theorems 1.13 and 1.19}.
  Thus the lift to $\Sp_4$ vanishes, and the tower property makes the lift to $\PGSp_6$ cuspidal.
  Denote it by $\tau$. Its local component at $v$ is $\theta_{\rG_2}^{\PGSp_6}(\sigma)$ by Howe duality.

  The Satake family $c$ is triality-invariant. The unramified classical theta correspondence therefore gives
  \begin{align*}
    L^S(s,\tau,\mathrm{Spin})=L^S(s,\Pi,\Std).
  \end{align*}
  Compare \cite{GanSavin2023-LocalLanglandsConjectureG2}*{Section 12.3}.
  Its pole at one gives a nonzero cuspidal backward theta lift to an inner form of $\rG_2$ by \cite{GanSavin2020-ExceptionalSiegelWeilFormulaPolesSpin}*{Theorem 10.1}.
  At every real place, the infinitesimal character of $\tau_w$ is regular and purely imaginary by the same classical correspondence.
  The exceptional correspondence of infinitesimal characters in \cite{Li1999-CorrespondencesInfinitesimalCharactersReductiveDualPairs}*{Lemma 2.1, Theorem 8.1 and Table 1} excludes compact $\rG_2(\R)$, whose irreducible representations have real infinitesimal characters.
  Hence the global inner form is split by the Hasse principle.
  At $v$ the backward lift has component $\sigma$ by Howe duality.
  Its positive multiplicity in $R^{\rG_2}_{\disc,c}$ gives
  \begin{align*}
    \frac{1+c(s,\sigma)}2>0,
  \end{align*}
  so $c(s,\sigma)=1$ whenever $\rho_\sigma(s)=1$.

  Finally, $S_\phi$ is an elementary abelian $2$-group and $\Pi_\phi$ is indexed by all of its characters, by Theorem~\ref{theorem:properties-exceptional-theta}.
  Since $s\neq1$, exactly half of them take the value $1$ at $s$.
  Orthogonality of the $\SO_4$ transfer to the stable packet sum, by Proposition~\ref{proposition:elliptic-inner-product-transfer}, gives
  \begin{align*}
    \sum_{\sigma\in\Pi_\phi}c(s,\sigma)=0.
  \end{align*}
  Since every remaining coefficient is $1$ or $-1$, they must all be $-1$.
  Thus $c(s,\sigma)=\rho_\sigma(s)$ for every $\sigma\in\Pi_\phi$, as required.
\end{proof}

We finally consider the endoscopic character relations between $\rG_2$ and $\SO_4$ for $S_3$-packets of $\rG_2(F)$.
Let $\tau$ be a discrete series representation of $\PGL_3(F)$ which is self-dual.
Then, the $L$-packet $\Pi_{\iota_{\PGL_3}^{\rG_2} \circ \phi_{\tau}}(\rG_2(F))$ consists of three discrete series representations $\sigma(\mathbf{1}), \sigma(\Std), \sigma(\epsilon)$ of $\rG_2(F)$.
The representation $\sigma(\mathbf{1})$ is generic and the representations $\sigma(\Std), \sigma(\epsilon)$ are non-generic corresponding to the standard representation and the sign representation of $S_3$ respectively.
The parameter $\phi^{\SO_4}(\tau)$ has standard parameter $\phi_{\tau}\oplus\mathbf1$.

\begin{remark}
  The set of elements of order $2$ in the group $\cS_{\iota^{\rG_2}_{\PGL_3} \circ \phi_{\tau}} \cong S_3$ forms one conjugacy class under the action of $\mu_3 \subset \cS_{\iota^{\rG_2}_{\PGL_3} \circ \phi_{\tau}}$.
  Thus, the resulting endoscopic datum and $L$-parameter for $s \in \cS_{\iota^{\rG_2}_{\PGL_3} \circ \phi_{\tau}}$ of order $2$ are all conjugate.
\end{remark}

\begin{theorem}\label{theorem:ECR-S3}
  Let $F$ be a non-archimedean local field of characteristic zero.
  Let $\tau$ be a discrete series representation of $\PGL_3(F)$ which is self-dual.
  Let $s \in \cS_{\iota^{\rG_2}_{\PGL_3} \circ \phi_{\tau}}$ be a non-trivial element of order $2$.
  The element $s$ defines an endoscopic group $H \cong \SO_4$.
  Let $\phi^{H}(\tau)$ be the $L$-parameter of $H$ given by $\phi_{\tau} \oplus \mathbf{1}$ where $\phi_{\tau}$ is the $L$-parameter of $\tau$.
  
  Then, the following endoscopic character relation holds:
  \begin{align*}
    \sum_{\sigma \in \Pi_{\iota_{\PGL_3}^{\rG_2} \circ \phi_{\tau}}} \tr\rho_{\sigma}(s) \tr\sigma(f^{\rG_2})
    =
    S^{H}_{\phi^{H}(\tau)}(f^{\rG_2}_{H}).
  \end{align*}
\end{theorem}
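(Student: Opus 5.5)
The plan is to determine the transfer $I_H:=\Trans_{H}^{\rG_2}\bigl(S^{H}_{\phi^H(\tau)}\bigr)$ by linear algebra in the elliptic inner product. Theorem~\ref{theorem:ECR-PGL3} already gives us two distributions supported on the packet. Put $\phi=\iota_{\PGL_3}^{\rG_2}\circ\phi_\tau$, and note that $\sigma(\Std)$ is the member $\theta_{\mathrm{PD}^\times}^{\rG_2}(JL_D(\tau))$ with $m_1=2$ in that proof. Then
\begin{align*}
  \Sigma^{\rG_2}_{\phi}&=\tr\sigma(\mathbf1)+2\tr\sigma(\Std)+\tr\sigma(\epsilon),\\
  I_\tau^{\rG_2}&=\tr\sigma(\mathbf1)-\tr\sigma(\Std)+\tr\sigma(\epsilon).
\end{align*}
The target identity reads $I_H=\tr\sigma(\mathbf1)-\tr\sigma(\epsilon)$, because $\tr\rho(s)$ equals $1,0,-1$ for $\rho=\mathbf1,\Std,\epsilon$.

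The first step is to compute the data entering Proposition~\ref{proposition:elliptic-inner-product-transfer}. The parameter $\phi^H(\tau)$ has standard representation $\phi_\tau\oplus\mathbf1$ with $\phi_\tau$ irreducible orthogonal of dimension three. Its centralizer in $\SO_4(\C)$ is therefore $\{\pm1\}=Z(\SO_4(\C))$, so the $\SO_4$-packet is a singleton and $(S^H_{\phi^H},S^H_{\phi^H})_{\mathrm{ell}}=1$. Since $c(\rG_2,\SO_4)=1/2$, this gives $(I_H,I_H)_{\mathrm{ell}}=2$. Transfers from different elliptic endoscopic groups are orthogonal, again by Proposition~\ref{proposition:elliptic-inner-product-transfer} applied to sums. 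Hence $I_H$ is orthogonal to $\Sigma^{\rG_2}_\phi$ (the $\rG_2$ term) and to $I_\tau^{\rG_2}$ (the $\PGL_3$ term), and, as in Corollary~\ref{corollary:discreteness-of-discrete-packet}, $I_H$ is a combination of discrete series characters. Write $I_H=a_1\tr\sigma(\mathbf1)+a_2\tr\sigma(\Std)+a_3\tr\sigma(\epsilon)+\sum_{\sigma'\notin\Pi_\phi}b(\sigma')\tr\sigma'$. Orthogonality gives $a_1+2a_2+a_3=0$ and $a_1-a_2+a_3=0$, hence $a_2=0$ and $a_3=-a_1$. The norm condition becomes $2a_1^2+\sum b(\sigma')^2=2$.

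It remains to show that the $b(\sigma')$ vanish and that $a_1=1$. For this I would globalize. Since $\phi_\tau$ is an irreducible orthogonal parameter into $\SO_3(\C)\cong\PGL_2(\C)$, we can write $\phi_\tau=\Ad(\rho)$ for a discrete two-dimensional parameter $\rho$. Globalizing $\rho$ by \cite{Takanashi2025-NoteSauvageotDensityPrinciple}*{Theorem 8.5} to a non-dihedral cuspidal $\Pi_1$ of $\GL_2(\A_{\dot F})$ (forced by a Steinberg component at an auxiliary place) yields a cuspidal self-dual $\dot\tau=\Ad(\Pi_1)$ with $\dot\tau_v=\tau$. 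We would take real components as in Lemma~\ref{lemma:globalization-SO4}, either discrete with surjective packet pairing or spherical.

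For the Hecke family $c=\iota^{\rG_2}_{\PGL_3}\circ c(\dot\tau)$, Theorem~\ref{theorem:stabilization-twisted-PGSO8} gives $I^{\rG_2}_{\disc,c}$ as the sum of the stable term, $\tfrac12$ times the $\SO_4$ term, and $\tfrac13$ times the $\PGL_3$ term. The stable term comes from the twisted $\widetilde{\PGSO_8}$ computation in the proof of Theorem~\ref{theorem:ECR-PGL3}. At places $u\ne v$ where the parameters factor through Levi subgroups, the local $\SO_4$ transfers are known by parabolic descent (Lemma~\ref{lemma:parabolic-descent-commutes-with-transfer}), and at the real places by real endoscopy. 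Isolating the component at $v$ with an Arthur-type separation, as in the proof of Theorem~\ref{theorem:ECR-G2-SO4}, turns nonnegative integral multiplicities into integrality constraints on the coefficients of $I_H$. These constraints should rule out $b\ne0$ with $a_1=0$: a norm-$2$ vector supported off the packet cannot satisfy them, because the multiplicity of any $\sigma'\notin\Pi_\phi$ is $0$ by the theta-source argument giving \eqref{equation:packet-support-PGL3}. So $a_1=\pm1$ and $b=0$.

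The main obstacle is fixing the sign $a_1=+1$. I would first use the generic member: realize $\sigma(\mathbf1)$ at $v$ inside the globally generic theta lift $\Theta(\dot\tau^+)$ of Theorem~\ref{theorem:theta-PGL3-to-G2}, which has multiplicity one. The coefficient of $\sigma(\mathbf1)\boxtimes(\text{fixed members elsewhere, with global pairing chosen appropriately})$ is then a nonnegative integer. Combining this with the known coefficients $1$ (stable term) and $1$ (both $\PGL_3$ terms) should force the $\SO_4$ contribution to enter with $+1$, exactly as the step $c(s,\sigma)=1$ for $\rho_\sigma(s)=1$ in the proof of Theorem~\ref{theorem:ECR-G2-SO4}. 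Failing that, Whittaker normalization of the transfer factor should directly give generic coefficient $1$, as in the real case of Lemma~\ref{lemma:SO4-global-PGSO8-packet}.
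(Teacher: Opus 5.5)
\textbf{There is a genuine gap, in the global step.} Your local linear algebra is sound and matches the end of the paper's argument. Indeed $\|I_H\|^2_{\mathrm{ell}}=c(\rG_2,\SO_4)^{-1}=2$, and orthogonality to $\Sigma^{\rG_2}_\phi$ and $I^{\rG_2}_\tau$ gives $a_2=0$ and $a_3=-a_1$. However, $2a_1^2+\sum b(\sigma')^2=2$ says nothing until you have integrality or support, so everything rests on the global step.

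The first problem is the stable term. The proof of Theorem~\ref{theorem:ECR-PGL3} uses a \emph{non}-self-dual $\dot\tau$, so its twisted side is the single term $\frac13\prod_v\tr\widetilde{I_P(\dot\tau_v)}$ and has no $\SO_4$-term. For your self-dual $\dot\tau=\Ad(\Pi_1)$ the twisted formula reads
\[
  \tfrac16\widetilde J_{\widetilde u}+\tfrac12\widetilde J_{r\widetilde u}
  =S^{\rG_2}_{\disc,c}(f_{\rG_2})+\tfrac12 S^{\SO_4}_{\disc,c}(f_H).
\]
You can extract $S^{\rG_2}_{\disc,c}=\frac16\prod_u\Sigma_u$ only if you know, at $v$, the twisted identity $C_{r\widetilde u}=D_v:=\Trans_H^{\widetilde{\PGSO_8}}(S^H_{\phi^H(\tau)})$ for the $A_2$-Levi $M_{\alpha_1,\alpha_2}$. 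Theorem~\ref{theorem:local-intertwining-relation-PGSO8-SO4} does not cover this Levi. This identity is the key new step in the paper. The global twisted comparison shows that $C_{r\widetilde u}-D_v$ factors through a stable distribution on $\rG_2(F)$. Then $\|C_{r\widetilde u}\|^2_{\mathrm{ell}}=2$ (Lemma~\ref{lemma:elliptic-norm-PGL3}), $\|D_v\|^2_{\mathrm{ell}}=c(\widetilde{\PGSO_8},H)^{-1}\|S^H_{\phi^H(\tau)}\|^2_{\mathrm{ell}}=2$, and orthogonality of the $\rG_2$- and $H$-components force $C_{r\widetilde u}=D_v$.

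The second problem is that your globalization creates places where nothing is known.
\begin{itemize}
\item With $\Pi_{1,u_0}=\St$ you get $\dot\tau_{u_0}=\St_{\PGL_3}$, again a self-dual discrete series. The $\SO_4$-identity at $u_0$ is therefore an instance of the theorem itself. The Steinberg place is not even needed for non-dihedrality, since $\Ad\rho=\phi_\tau$ is irreducible. Uncontrolled ramified places of $\Pi_1$ would cause the same problem.
\item With $\Pi_1=\Pi_2$ you cannot take real components as in Lemma~\ref{lemma:globalization-SO4}: its condition $n_w>3m_w$ requires $\Pi_2\not\cong\Pi_1$. For $\Pi_1=\Pi_2$ the real parameter of $\rG_2$ is not regular, so separation of the characters of the global $S$-group at the real places is not available.
\end{itemize}

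The paper avoids both problems by globalizing $\phi^H(\tau)$ through Lemma~\ref{lemma:globalization-SO4}. This gives a parameter of $\SO_4$-type with $\Pi_1\boxtimes\Pi_2^\vee$ cuspidal and $A=\Z/2\Z$. Every finite $u\neq v$ is then of Levi type, and the real places are discrete and regular with $A$-surjective pairing. Separation gives $\frac12(b(\sigma)+\eta c_\sigma)\in\Z_{\geq0}$, hence support on the packet. Orthogonality then gives the shape, and comparison of regular nilpotent terms fixes the constant.

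Once these repairs are made, your generic-member argument for the sign is a legitimate substitute for that last step. It requires that the generic theta lift contribute positively, $\frac16+\frac13+\frac12a_1\geq1$, not merely non-negatively, since $a_1=-1$ gives exactly $0$.
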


\begin{proof}
  Put $\phi=\iota_{\PGL_3}^{\rG_2}\circ\phi_\tau$ and apply Lemma~\ref{lemma:globalization-SO4} to $\phi^H(\tau)$, with discrete series at the real places.
  Write $A=\{1,\dot s\}$ for the global $S$-group.
  As in the proof of Theorem~\ref{theorem:ECR-G2-SO4}, the local Xu packet at $v$ is the distinguished packet by Theorem~\ref{theorem:characterization-distinguished-L-packets} and Lemma~\ref{lemma:injectivity-G_2-to-GL7}.
  At $v$, use $C_{\widetilde u}$ and $C_{r\widetilde u}$ from the proof of Lemma~\ref{lemma:elliptic-norm-PGL3}.
  The global involution $\dot s$ acts by opposite signs on the two copies of $\mathbf1$ and on the two copies of $\phi_\tau$ in the standard parameter for $\PGSO_8$.
  Thus it maps to the nonidentity element of the local $R$-group, and the two local terms in the Fourier expansion of Lemma~\ref{lemma:SO4-global-PGSO8-packet} are $C_{\widetilde u}$ and $C_{r\widetilde u}$.
  The corresponding endoscopic groups are $\rG_2$ and $H\cong\SO_4$, respectively.

  Let $D_v=\Trans_H^{\widetilde{\PGSO_8}}(S^H_{\phi^H(\tau)})$.
  The twisted stabilization and Theorem~\ref{theorem:ECR-PGL3}, with the known local identities away from $v$, show that $C_{r\widetilde u}-D_v$ factors through a stable distribution on $\rG_2(F)$.
  The $\rG_2$- and $\SO_4$-components are orthogonal by Proposition~\ref{proposition:elliptic-inner-product-transfer}.
  Since the $H$-packet is a singleton and $c(\widetilde{\PGSO_8},H)=1/2$, that proposition and Lemma~\ref{lemma:elliptic-norm-PGL3} give
  \begin{align*}
    2=\|C_{r\widetilde u}\|_{\mathrm{ell}}^2
      =\|D_v\|_{\mathrm{ell}}^2+\|C_{r\widetilde u}-D_v\|_{\mathrm{ell}}^2
      =2+\|C_{r\widetilde u}-D_v\|_{\mathrm{ell}}^2.
  \end{align*}
  Hence $C_{r\widetilde u}=D_v$.

  The comparison in Theorem~\ref{theorem:ECR-G2-SO4} now gives \eqref{equation:SO4-cyclic-global-separation}, with
  \begin{align*}
    \Sigma_v&=\sum_{\sigma\in\Pi_\phi}\dim\rho_\sigma\tr\sigma,\\
    I_v=I^{\rG_2}_{\phi^H(\tau)}&:=\Trans_H^{\rG_2}(S^H_{\phi^H(\tau)}).
  \end{align*}
  Write $I_v=\sum_\sigma c_\sigma\tr\sigma$ and put $b(\sigma)=\dim\rho_\sigma$ on $\Pi_\phi$ and $b(\sigma)=0$ otherwise.
  Separating the two characters of $A$ at the real places as in \eqref{equation:SO4-cyclic-Fourier-coefficients} gives
  \begin{align*}
    \frac{b(\sigma)+\eta c_\sigma}{2}\in\Z_{\geq0}
    \quad(\eta=1,-1).
  \end{align*}
  Consequently $c_\sigma=0$ for $\sigma\notin\Pi_\phi$.

  Elliptic orthogonality of $I^{\rG_2}_{\phi^{H}(\tau)}$ to both $\Sigma_{\iota_{\PGL_3}^{\rG_2} \circ \phi_{\tau}}$ and $I^{\rG_2}_{\tau}$ gives
  \begin{align*}
    c_{\sigma(\mathbf{1})} + 2 c_{\sigma(\Std)} + c_{\sigma(\epsilon)} = 0, \\
    c_{\sigma(\mathbf{1})} - c_{\sigma(\Std)} + c_{\sigma(\epsilon)} = 0.
  \end{align*}
  Thus, we have $c_{\sigma(\mathbf{1})} + c_{\sigma(\epsilon)} = 0$ and $c_{\sigma(\Std)} = 0$.
  This implies that there exists a constant $c$ such that $I^{\rG_2}_{\phi^{H}(\tau)} = c \cdot \sum_{\sigma \in \Pi_{\iota_{\PGL_3}^{\rG_2} \circ \phi_{\tau}}} \tr\rho_{\sigma}(s) \tr\sigma$.
  Comparing the regular nilpotent terms in the local character expansions gives $c = 1$ (cf. \cite{AtobeGanEtAl2024-LocalIntertwiningRelationsCoTemperedPackets}*{Section D.4}).
\end{proof}

\section{Summary of local consequences}\label{section:local-summary}

\begin{theorem}\label{theorem:summary}
  Let $F$ be a local field of characteristic zero.
  Let $\phi^{\rG_2}$ be a generic $L$-parameter for $\rG_2(F)$.
  Let $\Pi_{\phi^{\rG_2}}$ denote the $L$-packet for $\rG_2(F)$ associated to $\phi^{\rG_2}$.
  Let $\Pi_{\iota \circ \phi^{\rG_2}}$ denote the distinguished $L$-packet for $\PGSO_8(F)$ associated to $\iota \circ \phi^{\rG_2}$.

  \begin{enumerate}
  \item
  Let $\cS_{\phi^{\rG_2}} = \pi_0(\Cent_{\rG_2^{\vee}}(\phi^{\rG_2}))$ be the $S$-group associated to $\phi^{\rG_2}$.
  There is an injection $\sigma \mapsto \rho_{\sigma}$ from $\Pi_{\phi^{\rG_2}}$ to $\Irr(\cS_{\phi^{\rG_2}})$, bijective for non-archimedean $F$.
  For $s \in \cS_{\phi^{\rG_2}}$ and a corresponding endoscopic datum $\mathfrak{e} = (G^{\mathfrak{e}}, s^{\fe}, \iota^{\rG_2}_{G^{\mathfrak{e}}})$ for $\rG_2$, it satisfies
  \begin{align*}
    \sum_{\sigma \in \Pi_{\phi^{\rG_2}}} \tr\rho_{\sigma}(s) \tr\sigma(f^{\rG_2})
    =
    S^{G^{\mathfrak{e}}}_{\phi^{\mathfrak{e}}}(f^{\rG_2}_{G^{\mathfrak{e}}}).
  \end{align*}

  \item
  Every member of the distinguished packet $\Pi_{\iota \circ \phi^{\rG_2}}$ has a canonical extension to $\widetilde{\PGSO_8}(F)$ satisfying Kaletha's twisted endoscopic character relations \cite{Kaletha2022-LocalLanglandsConjecturesDisconnectedGroups}.
  Namely, for any $s \in \widetilde{\cS_{\iota \circ \phi^{\rG_2}}}$ and a corresponding endoscopic datum $\mathfrak{e} = (G^{\mathfrak{e}}, s^{\fe}, \iota^{\PGSO_8}_{G^{\mathfrak{e}}})$, we have
  \begin{align*}
    \sum_{\pi \in \Pi_{\iota \circ \phi^{\rG_2}}} \tr\rho_{\pi}(s) \tr\widetilde{\pi}(f^{\widetilde{\PGSO_8}})
    =
    S^{G^{\mathfrak{e}}}_{\phi^{\mathfrak{e}}}(f^{\widetilde{\PGSO_8}}_{G^{\mathfrak{e}}}).
  \end{align*}
  \end{enumerate}
\end{theorem}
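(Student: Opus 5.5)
The proof assembles the local results of Part~\ref{part:ECR}, after reducing everything to discrete parameters.

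\textbf{Reduction to discrete parameters.} For non-archimedean $F$, a non-discrete tempered $\phi^{\rG_2}$ factors minimally through a proper Levi $M$ of $\rG_2$, namely $\GL_2$ or a torus. The Gan--Savin packet is then the set of constituents of the induced representations, by compatibility with parabolic induction in Theorem~\ref{theorem:properties-exceptional-theta}. The same holds for the distinguished Xu packet of $\PGSO_8$ on a $\theta$-stable Levi. The character identities reduce to the elliptic case as follows:
\begin{itemize}
\item Lemma~\ref{lemma:parabolic-descent-commutes-with-transfer} lets transfer commute with parabolic descent.
\item Elements $s$ lying in the connected centre of the Levi give the induced identity directly.
\item The remaining $s$ come from nontrivial $R$-groups. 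These are handled by the local intertwining relations: Theorem~\ref{theorem:local-tempered-LIR-G2} for $\rG_2$ versus $\SO_4$, and Theorem~\ref{theorem:local-intertwining-relation-PGSO8-SO4} for $\widetilde{\PGSO_8}$ versus $\SO_4$. Their triples are classified by Proposition~\ref{proposition:elliptic-triplets-elliptic-transfer-G2} and Proposition~\ref{proposition:elliptic-triplets-elliptic-transfer-twisted-PGSO8}.
\end{itemize}
The Whittaker normalization of Remark~\ref{remark:Whittaker-normalization-Levi} identifies the pairing $\rho_\sigma$ with the action of the normalized intertwining operators. For archimedean $F$ the statement follows from Shelstad's real tempered endoscopy, with \cite{KalethaMezo2026-RefinedLocalLanglandsConjectureDiscreteLParameters} for the twisted part; over $\C$ everything is induced from the torus.

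\textbf{Discrete parameters, part (1).} We split into the three types and apply each $s\in\cS_{\phi^{\rG_2}}$ by cases.
\begin{itemize}
\item For $s=1$, the stable character is the packet sum. For $\PGSp_6$-type this is Proposition~\ref{proposition:ECR-PGSp_6}; for $\PGL_3$-type it is Theorem~\ref{theorem:ECR-PGL3}, giving $\dim\rho_\sigma$ weights.
\item For $s$ of order $3$ (endoscopic group $\PGL_3$), use Theorem~\ref{theorem:ECR-PGL3}(1),(2) with $s$ or $s^{-1}$.
\item For $s$ of order $2$ with endoscopic group $\SO_4$, use Theorem~\ref{theorem:ECR-G2-SO4} for $\SO_4$-type parameters and Theorem~\ref{theorem:ECR-S3} for the $S_3$-packets.
\end{itemize}
For an elementary abelian $2$-group $\cS_\phi$, every nontrivial $s$ defines an $\SO_4$ datum. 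The corresponding $\phi^{\SO_4}$ is discrete of $\SO_4$-type, which justifies applying Theorem~\ref{theorem:ECR-G2-SO4} to every such $s$, not only to the distinguished one. Bijectivity of $\Pi_\phi\to\cS_\phi^{\cD}$ comes from Theorem~\ref{theorem:properties-exceptional-theta} together with Remark~\ref{remark:completion-LLC}, which removes the residual characteristic $3$ exception.

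\textbf{Twisted identities, part (2).} Canonical extensions come from Corollary~\ref{corollary:G2-canonical-extension}. The elliptic endoscopic data of $\widetilde{\PGSO_8}$ are $\rG_2$, $\SO_4$ and $\SL_3$, and $\widetilde{\cS}$ is a coset whose elements correspond to these data.
\begin{itemize}
\item The $\rG_2$ identity is Proposition~\ref{proposition:ECR-PGSp_6} together with Theorem~\ref{theorem:ECR-PGL3}.
\item The $\SO_4$ identity comes from the proofs of Theorem~\ref{theorem:ECR-G2-SO4} and Theorem~\ref{theorem:ECR-S3}, where $C_{r\widetilde u}=D_v$ and the analogous stabilized comparison appear.
\item The $\SL_3$ term vanishes on distinguished packets, by the support argument of Remark~\ref{remark:killing-SL3-PGL3-contribution} applied locally through elliptic orthogonality.
\end{itemize}

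\textbf{Main obstacle.} The hardest step is matching, in the non-discrete case and for the twisted $\SO_4$ identity, the abstract element $s\in\widetilde{\cS}$ with the specific Weyl element $\widetilde r$ and the pairing $\rho_\pi$ on Xu's packet. My first attempt would be to track the image of $s$ under the isomorphism $\cS_{\phi^{\rG_2}}\cong\cS_{\iota\circ\phi^{\rG_2}}$ of Remark~\ref{remark:isomorphic-S-group-PGSO8-for-parameters-of-SO4-type}. I would then check signs against the theta compatibility in \cite{AtobeGan2017-LocalThetaCorrespondenceTemperedRepresentationsLanglands}.
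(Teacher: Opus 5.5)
Your proposal is correct and follows the paper's own route. Theorem~\ref{theorem:summary} has no separate proof; it is exactly the assembly, organized by parameter type as you do, of Corollary~\ref{corollary:G2-canonical-extension}, Theorems~\ref{theorem:local-tempered-LIR-G2} and~\ref{theorem:local-intertwining-relation-PGSO8-SO4}, Proposition~\ref{proposition:ECR-PGSp_6}, and Theorems~\ref{theorem:ECR-PGL3}, \ref{theorem:ECR-G2-SO4} and~\ref{theorem:ECR-S3}. The matching you flag as the main obstacle is settled in the paper by the theta-compatibility route you suggest, but only for discrete $\PGSp_6$-type parameters (proof of Theorem~\ref{theorem:ECR-G2-SO4}, where Remark~\ref{remark:isomorphic-S-group-PGSO8-for-parameters-of-SO4-type} requires $V^{L_F}=0$); in the non-discrete and $\PGL_3$-type cases it is instead built into the Whittaker-normalized operators of the two intertwining relations and into $C_{\widetilde u}$, $C_{r\widetilde u}$, and the real non-discrete twisted case comes from the real part of Theorem~\ref{theorem:local-intertwining-relation-PGSO8-SO4} rather than from Kaletha--Mezo, which treats only discrete parameters.
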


\part{A global application}\label{part:applications}

\section{Special cases of the Arthur conjecture for \texorpdfstring{$\rG_2$}{G2}}\label{section:Arthur-conjecture-G2}

Let $F$ be a number field.
Let $\rho$ be a cuspidal automorphic representation of $\PGL_3(\A_F)$.
We define the global $S$-group $\cS_{\iota \circ \rho}$ as
\begin{align*}
    \cS_{\iota \circ \rho}
    =
    \begin{cases}
      \mu_3 = Z(\SL_3) & \text{if $\rho$ is not self-dual}, \\
      S_3 = \mu_3 \rtimes \Z/2\Z = Z(\SL_3) \rtimes \Z/2\Z & \text{if $\rho$ is self-dual}.
    \end{cases}
\end{align*}

Then, for any place $v$ of $F$, we have the local $S$-group $\cS_{\iota \circ \rho_v}$ and the canonical map
\begin{align*}
    \cS_{\iota \circ \rho} \to \cS_{\iota \circ \rho_v}.
\end{align*}

Repeating the comparison of Section~\ref{section:ECR-PGL3} for a fixed global cuspidal representation, with the twisted proper-Levi terms retained, gives the following special case of the Arthur conjecture for $\rG_2$.

\begin{theorem}\label{theorem:Arthur-multiplicity-PGL3}
  Let $F$ be a number field.
  Let $\rho$ be a cuspidal automorphic representation of $\PGL_3(\A_F)$.
  Let $\Pi_{\iota \circ \rho}$ denote the global $L$-packet for $\rG_2(\A_F)$ associated to $\rho$.

  The discrete multiplicity formula is
  \begin{align*}
    m_{\disc}(\sigma)
    =
    \frac{1}{\abs{\cS_{\iota \circ \rho}}}
    \sum_{s \in \cS_{\iota \circ \rho}}
    \tr\rho_{\sigma}(s),
  \end{align*}
  where $\sigma=\otimes'_v\sigma_v$ is an irreducible representation in the $L$-packet $\Pi_{\iota \circ \rho}$. Here $\rho_{\sigma}$ is the representation of $\cS_{\iota \circ \rho}$ obtained by pulling back each local enhancement $\rho_{\sigma_v}$ along $\cS_{\iota \circ \rho}\to\cS_{\iota \circ \rho_v}$ and taking their tensor product.
\end{theorem}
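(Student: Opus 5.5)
We follow the global comparison used in the proof of Theorem~\ref{theorem:ECR-PGL3}, but now for an arbitrary number field and a fixed cuspidal $\rho$. The local inputs are the endoscopic character relations of Theorem~\ref{theorem:summary}. Put $c=\iota_{\PGL_3}^{\PGSO_8}\circ c(\rho)$.

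\textbf{Step 1: the twisted side.} Compute $I^{\widetilde{\PGSO_8}}_{\disc,c}$ through the proper-Levi term attached to $M_{\{\alpha_1,\alpha_2\}}$. By Lemma~\ref{lemma:regular-element-not-theta-stable-case}, Lemma~\ref{lemma:description-induction-from-A2-Levi} and the factorization of global intertwining operators, this term equals $\frac13\prod_v'\tr\widetilde{I_P(\rho_v)}$. Here $\rho$ is no longer assumed to be locally discrete, so the twisted proper-Levi terms must be kept rather than discarded, and the normalizing factors are checked to be $1$ as in Section~\ref{section:local-LIR-G2}.

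Remark~\ref{remark:killing-SL3-PGL3-contribution} and the standard parameter $\rho\boxplus\rho^\vee\boxplus\mathbf1\boxplus\mathbf1$ kill the $\SO_4$ and $\SL_3$ terms. Theorem~\ref{theorem:stabilization-twisted-PGSO8} then gives
\[
S^{\rG_2}_{\disc,c}=\tfrac13\textstyle\prod_v'\Sigma_v,
\]
where $\Sigma_v$ is the $\rG_2$-transfer of the twisted packet character at $v$. By the local theorems, $\Sigma_v=\sum_{\sigma_v}\dim\rho_{\sigma_v}\tr\sigma_v$; in the case of local proper-Levi parameters this follows by parabolic descent, Lemma~\ref{lemma:parabolic-descent-commutes-with-transfer}. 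Surjectivity of transfer (Theorem~\ref{theorem:PGSO8-to-G2-surjective} at real places, Theorem~\ref{theorem:elliptic-character-transfer} and its non-elliptic analogue elsewhere) lets us read this as an identity of stable distributions on $\rG_2$.

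\textbf{Step 2: the $\rG_2$ side.} Substitute into the stable trace formula for $\rG_2$:
\[
I^{\rG_2}_{\disc,c}=S^{\rG_2}_{\disc,c}+\tfrac12 S^{\SO_4}_{\disc,c}+\tfrac13S^{\PGL_3}_{\disc,c}.
\]
\begin{itemize}
\item The $\PGL_3$ term is supported on $\rho$ and $\rho^\vee$, which coincide when $\rho$ is self-dual. Using the local relations of Theorem~\ref{theorem:ECR-PGL3} for elements $s$ of order $3$, it becomes $\frac13\sum_{s\ne1,\ s\in\mu_3}\prod_v\sum\tr\rho_{\sigma_v}(s)\tr\sigma_v$.
\item The $\SO_4$ term vanishes unless $\rho$ is self-dual. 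In that case the global $\SO_4$ parameter with standard transfer $\rho\boxplus\mathbf1$ contributes. Its stable multiplicity, computed by Theorem~\ref{theorem:refined-Arthur-classification-SO4}, is $\frac12$, and Theorem~\ref{theorem:ECR-S3} turns it into the terms for the three involutions of $S_3$.
\end{itemize}
The proper-Levi terms in $I^{\rG_2}_{\disc,c}$ are compared with the $\rG_2$ proper-Levi terms of Lemma~\ref{lemma:discrete-part-G2}. By strong multiplicity one they vanish unless $\rho$ is an Eisenstein-type lift, which cuspidality of $\rho$ excludes. Hence $I^{\rG_2}_{\disc,c}=\sum m_{\disc}(\sigma)\tr\sigma$.

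\textbf{Step 3: assembly.} Collect the terms as a sum over conjugacy classes of $\cS_{\iota\circ\rho}$ weighted by class size over $|\cS_{\iota\circ\rho}|$. This yields
\[
\sum_\sigma\frac1{|\cS|}\sum_{s}\prod_v\tr\rho_{\sigma_v}(s_v)\,\tr\sigma.
\]
Linear independence of characters of $\rG_2(\A_F)$ then gives the formula.

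\textbf{Main obstacle.} The hardest step is matching all constants and signs: the $\frac13$ and $\frac12$ stabilization coefficients, the class sizes in $S_3$, and the fact that the global $\SO_4$ and $\PGL_3$ data localize to the correct endoscopic elements $s_v$. This requires the archimedean endoscopic character identities for possibly non-discrete parameters, and that the Whittaker normalizations of Remark~\ref{remark:Whittaker-normalization-Levi} agree with the canonical extensions at every place.
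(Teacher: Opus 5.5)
Your route matches the paper's. The twisted trace formula for $\widetilde{\PGSO_8}$ isolates $S^{\rG_2}_{\disc,c}$, and the stabilized trace formula for $\rG_2$ then adds the $\PGL_3$ and $\SO_4$ terms. However, Step~1 is correct only when $\rho\not\cong\rho^\vee$.

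\textbf{The self-dual twisted side.} If $\rho\cong\rho^\vee$, the inducing orbit $\{\rho,\rho^\vee\}$ on $M=M_{\{\alpha_1,\alpha_2\}}$ has one member, and both regular twisted Weyl elements fix it:
\begin{itemize}
\item $\widetilde u=w_2'\rtimes\theta$, with $\abs{\det(1-\widetilde u\mid\fa_M)}=3$;
\item $r\widetilde u=w_1'w_2'\rtimes\theta$, with determinant $1$ (Lemmas~\ref{lemma:regular-element-not-theta-stable-case} and~\ref{lemma:standard-parameters-of-theta-unstable-cases}).
\end{itemize}
Write $\widetilde J$ for the factorized global normalized intertwining distribution. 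The twisted discrete part is then $\frac16\widetilde J_{\widetilde u}+\frac12\widetilde J_{r\widetilde u}$, not $\frac13\widetilde J_{\widetilde u}$.

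Moreover, the twisted $\SO_4$ term does not vanish; Remark~\ref{remark:killing-SL3-PGL3-contribution} says nothing about $\SO_4$. The parameter $\mathbf1\boxplus\mathbf1\boxplus\rho\boxplus\rho$ is the lift of the refined $\SO_4$ parameter with $\Std_{\SO_4}=\mathbf1\boxplus\rho$ and $\wedge^2_+=\rho$. The distribution $\widetilde J_{r\widetilde u}$ is the transfer of its stable character product; locally this is the relation $C_{r\widetilde u}=D_v$ from the proof of Theorem~\ref{theorem:ECR-S3}. That parameter has trivial $\cS_\psi$, so its stable coefficient is $1$, and it accounts exactly for $\frac12 S^{\SO_4}_{\disc,c}$. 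Subtracting it leaves $S^{\rG_2}_{\disc,c}=\frac16\mathcal H_1$, where $\mathcal H_s={\prod_v}'\bigl(\sum_{\sigma_v}\tr\rho_{\sigma_v}(s_v)\tr\sigma_v\bigr)$. In both the self-dual and non-self-dual cases this equals $\abs{\cS_{\iota\circ\rho}}^{-1}\mathcal H_1$.

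\textbf{Why the error matters, and two further slips.} This is not a harmless constant. Let $a$ be a $3$-cycle and $t$ a transposition in $S_3$. With your $\frac13\mathcal H_1$ and the correct remaining terms, you get $R^{\rG_2}_{\disc,c}=\frac13\mathcal H_1+\frac13\mathcal H_a+\frac12\mathcal H_t$. The global member whose local components are all generic would then have multiplicity $\frac13+\frac13+\frac12=\frac76$. So Step~3 cannot produce the class-size weighting $\frac16\mathcal H_1+\frac13\mathcal H_a+\frac12\mathcal H_t$.

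Step~2 also has two slips in the self-dual case:
\begin{itemize}
\item Only one $\PGL_3$ representation contributes, so the $\PGL_3$ term is $\frac13\mathcal H_a$. Summing over both nontrivial elements of $\mu_3$, as you wrote, gives $\frac23\mathcal H_a$, because the two $3$-cycles are conjugate in $S_3$.
\item The stable multiplicity of the $\SO_4$ parameter is $1$ by Theorem~\ref{theorem:stable-multiplicity-SO4}, not $\frac12$. The $\frac12$ is the stabilization coefficient. So the $\SO_4$ contribution is $\frac12\mathcal H_t$, which Theorem~\ref{theorem:ECR-S3} turns into the three transposition terms.
\end{itemize}
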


\begin{proof}
  Put $A=\cS_{\iota\circ\rho}$ and $c=c(\iota\circ\rho)$.
  For $s\in A$ and a factorizable test function $f=\otimes'_v f_v$ on $\rG_2(\A_F)$, set
  \begin{align*}
    \mathcal H_s(f)=
    {\prod_v}'\left(
      \sum_{\sigma_v\in\Pi_{\iota\circ\rho_v}}
      \tr\rho_{\sigma_v}(s_v)\tr\sigma_v(f_v)
    \right),
  \end{align*}
  where $s_v$ is the image of $s$ in the local component group.
  In particular, $\mathcal H_1$ is the product of the dimension-weighted local stable packet sums.

  Consider first the twisted trace formula for $\widetilde{\PGSO_8}$.
  Its standard Satake parameter is
  $1\boxplus1\boxplus\rho\boxplus\rho^\vee$.
  The same cuspidal-support comparison as in Section~\ref{section:ECR-PGL3}, using uniqueness of isobaric decomposition, leaves the $A_2$-Levi
  $M=(\GL_3\times\GL_1\times\GL_1)/\GL_1$ with inducing data $\rho$ and $\rho^\vee$.
  Write $\widetilde u$ and $r$ for the Weyl elements used in Lemma~\ref{lemma:elliptic-norm-PGL3}.
  The relative Weyl group has order $2$, and
  \begin{align*}
    \abs{\det(1-\widetilde u\mid\fa_M)}=3,
    \quad
    \abs{\det(1-r\widetilde u\mid\fa_M)}=1.
  \end{align*}
  The global normalizing factors are products of local gamma-factors for $\rho$ and $\rho^\vee$. Here $\wedge^2\rho=\rho^\vee$ because the central character is trivial.
  Their global products are $1$ by the functional equations.
  Thus the global intertwining operators factor into the local operators with the Whittaker normalization used in Theorem~\ref{theorem:summary}.

  As in the proof of Theorem~\ref{theorem:local-tempered-LIR-G2}, write $\widetilde J_{\widetilde u}$ for the global normalized intertwining distribution, factored into local twisted induced characters.
  If $\rho\not\cong\rho^\vee$, the inducing orbit has two members, each contributing $(1/2)(1/3)$, and hence
  \begin{align*}
    I^{\widetilde{\PGSO_8}}_{\disc,c}=\frac13 \widetilde J_{\widetilde u}.
  \end{align*}
  If $\rho\cong\rho^\vee$, there is one inducing representation and two regular twisted Weyl elements, giving
  \begin{align*}
    I^{\widetilde{\PGSO_8}}_{\disc,c}
      =\frac16 \widetilde J_{\widetilde u}+\frac12 \widetilde J_{r\widetilde u}.
  \end{align*}
  The local character relations identify $\widetilde J_{\widetilde u}$ with the pullback of $\mathcal H_1$.
  In the self-dual case, $\widetilde J_{r\widetilde u}$ is the pullback of the stable character product for the refined generic global $A$-parameter for $\SO_4$ whose standard representation is $1\boxplus\rho$.
  This parameter has trivial Arthur component group modulo the center, so its stable discrete contribution has coefficient $1$, by Theorem~\ref{theorem:refined-Arthur-classification-SO4}.
  In the non-self-dual case there is no $\SO_4$ contribution.
  The $\SL_3$ contribution is zero in both cases. The standard $L$-function of our eight-dimensional parameter has a double pole at $1$, whereas that of a cuspidal adjoint lift is holomorphic there. The remaining exceptional $\SL_3$-type parameter in Remark~\ref{remark:killing-SL3-PGL3-contribution} also has different isobaric constituents.
  Subtracting the $\SO_4$ term in the twisted stabilization and using surjectivity of transfer therefore gives
  \begin{align}\label{equation:stable-coefficient-global-PGL3}
    S^{\rG_2}_{\disc,c}=\frac1{\abs A}\mathcal H_1.
  \end{align}

  We next apply the invariant trace formula for $\rG_2$.
  Its proper-Levi terms vanish at $c$.
  Indeed, its standard parameter is $1\boxplus\rho\boxplus\rho^\vee$, whereas a parameter from either proper $\GL_2$-Levi has standard constituents of dimensions at most two, apart from a possible single adjoint constituent of dimension three.
  It cannot have the two cuspidal degree-three constituents of our parameter, counted with multiplicity.
  Thus $I^{\rG_2}_{\disc,c}=R^{\rG_2}_{\disc,c}$.

  Let $a\in A$ have order three.
  If $A=\mu_3$, the two distinct $\PGL_3$ representations $\rho$ and $\rho^\vee$ contribute, and stabilization together with \eqref{equation:stable-coefficient-global-PGL3} gives
  \begin{align*}
    R^{\rG_2}_{\disc,c}=\frac13(\mathcal H_1+\mathcal H_a+\mathcal H_{a^{-1}}).
  \end{align*}
  If $A=S_3$, choose a transposition $t\in A$.
  The self-dual representation $\rho$ contributes once on the $\PGL_3$ side, and the refined generic global $A$-parameter for $\SO_4$ contributes once on the $\SO_4$ side.
  The local character relations therefore give
  \begin{align*}
    R^{\rG_2}_{\disc,c}
      =\frac16\mathcal H_1+\frac13\mathcal H_a+\frac12\mathcal H_t.
  \end{align*}
  These coefficients are the conjugacy-class sizes $1,2,3$ divided by $6$.
  In particular, no additional sign character occurs. The global normalization above and the $\SO_4$ stable multiplicity formula determine the positive coefficient of the transposition term.
  In either case we have
  \begin{align*}
    R^{\rG_2}_{\disc,c}=\frac1{\abs A}\sum_{s\in A}\mathcal H_s.
  \end{align*}
  Expanding the products and comparing irreducible characters, the coefficient of $\tr\sigma$ is
  \begin{align*}
    \frac1{\abs A}\sum_{s\in A}
       {\prod_v}'\tr\rho_{\sigma_v}(s_v)
    =\frac1{\abs A}\sum_{s\in A}\tr\rho_\sigma(s)
    =\dim\rho_\sigma^A.
  \end{align*}
  This is the asserted multiplicity.
\end{proof}

Let $\psi^{\SO_4}\in\Psi(\SO_4)$ be represented by a pair $(\Pi_1,\Pi_2)$ of cuspidal automorphic representations of $\GL_2(\A_F)$ with $\omega_{\Pi_1}=\omega_{\Pi_2}$.
Let $\psi$ be a formal symbol with Hecke--Satake family $c(\psi)=\iota_{\SO_4}^{\rG_2}\circ c(\psi^{\SO_4})$, and put $\psi_v=\iota_{\SO_4}^{\rG_2}\circ\psi_v^{\SO_4}$ at every place $v$.
Write
\begin{align*}
  T=\Pi_1\boxtimes\Pi_2^\vee=\boxplus_{i\in I}\tau_i,
\end{align*}
where the $\tau_i$ are distinct orthogonal cuspidal representations of $\GL_{d_i}(\A_F)$ with $\sum_{i\in I}d_i=4$.
Assume that either $\Pi_1\cong\Pi_2$ and $\rho=\Ad(\Pi_1)$ is cuspidal, or $T\boxplus\Ad(\Pi_1)$ is multiplicity-free.

In the first case, put $\cS_\psi=\cS_{\iota\circ\rho}=S_3$, with the localization maps defined above.
In the second case, define
\begin{align*}
  \cS_\psi
    =\left\{(\epsilon_i)\in\{\pm1\}^I:
       \prod_{i\in I}\epsilon_i^{d_i}=1\right\}.
\end{align*}
This is the central extension
\begin{align*}
  1\longrightarrow\langle(-1)_{i\in I}\rangle
  \longrightarrow\cS_\psi
  \longrightarrow\cS_{\psi^{\SO_4}}
  \longrightarrow1
\end{align*}
of the Arthur $S$-group for $\SO_4$.
For each $v$, use an orthogonal identification
\begin{align*}
  \Std_{\SO_4}\circ\psi_v^{\SO_4}
    \cong\bigoplus_{i\in I}\phi_{\tau_{i,v}}
\end{align*}
compatible with the refinement of $\psi_v^{\SO_4}$.
The transformation acting by $\epsilon_i$ on the $i$th summand has determinant one. Its image under $\iota_{\SO_4}^{\rG_2}$ centralizes $\psi_v$ and defines a map $\cS_\psi\to\cS_{\psi_v}$, well defined up to conjugacy.

\begin{theorem}\label{theorem:Arthur-multiplicity-SO4}
  With the notation and assumptions above, put $A=\cS_\psi$ and $\Pi_\psi=\bigotimes_v'\Pi_{\psi_v}$. Then
  \begin{align*}
    \cA^2(\rG_2)_\psi
      &=\sum_{\sigma\in\Pi_\psi}m_{\mathrm A}(\sigma)\sigma,\\
    m_{\mathrm A}(\sigma)
      &=\frac1{\abs A}\sum_{s\in A}\tr\rho_\sigma(s)
       =\dim\rho_\sigma^A,
  \end{align*}
  where $\rho_\sigma$ is the tensor product of the local enhancements pulled back along $A\to\cS_{\psi_v}$.
\end{theorem}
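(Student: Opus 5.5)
The first case, $\Pi_1\cong\Pi_2$ with $\rho=\Ad(\Pi_1)$ cuspidal, has Satake family $1\boxplus\rho\boxplus\rho^\vee$ with $\rho$ self-dual. It is therefore literally the self-dual case of Theorem~\ref{theorem:Arthur-multiplicity-PGL3}, and I would simply invoke it. The substance is the multiplicity-free case. There I would follow the proof of Theorem~\ref{theorem:Arthur-multiplicity-PGL3}: compute $S^{\rG_2}_{\disc,c}$ from the twisted trace formula for $\widetilde{\PGSO_8}$, then feed it into the stabilized invariant trace formula for $\rG_2$ (Theorem~\ref{theorem:stabilization-twisted-PGSO8}) and expand in local packets using Theorem~\ref{theorem:summary}.

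\emph{Step 1: the twisted side.} The $\PGSO_8$ standard parameter is $\mathbf1\boxplus T\boxplus\Ad(\Pi_1)$, which is multiplicity-free. By the argument of Lemma~\ref{lemma:SO4-global-PGSO8-packet}, namely Xu's multiplicity formula together with the exclusion of triality quadratic twists via the degree-one summand, we obtain
\[
I^{\widetilde{\PGSO_8}}_{\disc,c}=\sum_{\pi}\tr\widetilde\pi ,
\]
summed over members of the global packet with trivial pairing. Here the relevant group is the $\PGSO_8$ $S$-group, which modulo the centre is again $\cS_\psi$ (cf. Remark~\ref{remark:isomorphic-S-group-PGSO8-for-parameters-of-SO4-type}). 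Expanding by the local twisted relations of Theorem~\ref{theorem:summary}(2), each $s\in\cS_\psi$ produces a product of transfers from $\rG_2$ (for $s$ central) or from $\SO_4$ (for the other elements). The $\SO_4$-terms are the stable distributions of the refined $\SO_4$ parameter, identified by Theorem~\ref{theorem:refined-Arthur-classification-SO4}, and the $\SL_3$-term vanishes by Remark~\ref{remark:killing-SL3-PGL3-contribution}, since the standard $L$-function has a simple pole at $1$. Proper Levi terms vanish whenever $T$ has no degree-one constituents; when they occur, I would compute them via Lemma~\ref{lemma:discrete-part-G2} and its twisted analogue and check that they match the corresponding elements of $\cS_\psi$. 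Subtracting the $\SO_4$ term and using surjectivity of transfer (Theorems~\ref{theorem:PGSO8-to-G2-surjective} and~\ref{theorem:cuspidal-surjective}) should then give
\[
S^{\rG_2}_{\disc,c}=\abs{\cS_\psi}^{-1}\mathcal H_1 ,
\]
where $\mathcal H_s$ denotes the product of local sums $\sum_{\sigma_v}\tr\rho_{\sigma_v}(s_v)\tr\sigma_v$.

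\emph{Step 2: the $\rG_2$ side.} In the formula
\[
I^{\rG_2}_{\disc,c}=S^{\rG_2}_{\disc,c}+\tfrac12 S^{\SO_4}_{\disc,c}+\tfrac13S^{\PGL_3}_{\disc,c},
\]
the $\PGL_3$ term vanishes because no $\tau\boxplus\tau^\vee$ with $\tau$ of degree three occurs. The $\SO_4$ term is a sum over the refined global $\SO_4$ parameters in the fibre of $c$, as described in Lemmas~\ref{lemma:SO4-dihedral-case-fiber} and~\ref{lemma:SO4-not-multiplicity-free-case}. Each of these contributes $\abs{\cS_{\psi^{\SO_4}}}^{-1}\sum_{s}\mathcal H_{s}$ by Arthur's multiplicity formula for $\SO_4$. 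Under the local relations of Theorem~\ref{theorem:summary}(1), these sums are exactly the sums over non-central elements of $\cS_\psi$ lying in a given $\SO_4$ centralizer. I would then count, with the factor $\tfrac12$ and the sizes of the fibres (three or seven elements in the dihedral cases, one element generically), to show that the total equals
\[
\abs{\cS_\psi}^{-1}\sum_{s\in\cS_\psi}\mathcal H_s .
\]
Proper Levi terms vanish when $T$ is cuspidal. When $T$ has dihedral or character constituents, they must be shown to vanish as in the proof of Theorem~\ref{theorem:Arthur-multiplicity-PGL3}, or else to be absorbed.

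\emph{Step 3: conclusion.} Expanding the products and using linear independence of characters yields the multiplicity $\dim\rho_\sigma^{A}$, as in the last display of the proof of Theorem~\ref{theorem:Arthur-multiplicity-PGL3}. The main obstacle is the combinatorics of Step 2: matching the $\tfrac12$-weighted sum over the several refined $\SO_4$ parameters in a single fibre with the non-central conjugacy classes of $\cS_\psi$, including the central extension by $\langle(-1)_i\rangle$. I would first settle the case $\abs I=1$, where $\cS_\psi=\{\pm1\}$ and the fibre is a singleton, and then treat cases (a)–(c) of Lemma~\ref{lemma:SO4-dihedral-case-fiber} explicitly.
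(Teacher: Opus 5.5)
Your overall plan matches the paper's: reduce the first case to Theorem~\ref{theorem:Arthur-multiplicity-PGL3}, extract $S^{\rG_2}_{\disc,c}=\abs A^{-1}\mathcal H_1$ from the twisted side, then stabilize for $\rG_2$. However, Step~1 does not go through as stated.

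\textbf{Step 1: self-twists.} The argument of Lemma~\ref{lemma:SO4-global-PGSO8-packet} excludes triality quadratic self-twists only because there $\mathbf1$ is the \emph{unique} degree-one constituent of $\Phi=\mathbf1\boxplus T\boxplus\Ad(\Pi_1)$. Under the present hypotheses this happens only when $T$ is cuspidal and $\Pi_1$ is non-dihedral. In the other cases it fails:
\begin{enumerate}
\item If $\Pi_1$ is dihedral with respect to $K$, then $\omega_{K/F}\subset\Ad(\Pi_1)$ and $\Pi_1\otimes\omega_{K/F}\cong\Pi_1$, so $\Phi\otimes\omega_{K/F}\cong\Phi$.
\item If $\Pi_1\cong\Pi_2\otimes\chi$ with $\chi\neq\mathbf1$, then $\Phi=\mathbf1\boxplus\chi\boxplus\chi\Ad(\Pi_2)\boxplus\Ad(\Pi_2)$ is $\chi$-stable.
\item In case (c) of Lemma~\ref{lemma:SO4-dihedral-case-fiber}, $\Phi$ is the sum of all eight characters of a three-dimensional $\mathbb F_2$-space.
\end{enumerate}
So in every case with $\abs A>2$, and in some with $\abs A=2$, comparing degree-one summands rules out nothing.

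The missing idea is the paper's treatment of Xu's formula. Let $\alpha\colon\cS_\Phi\to Y$ be Xu's determinant map. Lemma~\ref{lemma:triality-quadratic-twist-global} shows that $\omega\circ\lambda\in Y(\pi)$ forces $\mathbf1,\omega\subset\Phi$. The sign element equal to $-1$ on these two lines maps to $\omega\circ\lambda$, hence $Y(\pi)=\alpha(\cS_\Phi)$. Only then does Xu's formula become multiplicity one under the trivial-pairing condition on $A=\Ker\alpha$. In these cases the $\SO_8$-type group $\cS_\Phi$ is strictly larger than $A$.

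Note also that Lemma~\ref{lemma:SO4-global-PGSO8-packet} only concerns the special globalization of Lemma~\ref{lemma:globalization-SO4}. For general $\psi$ you must still construct the global $\PGSO_8$ packet; the paper does this by transfer from $\GSO_4\times_{\GL_1}\GSO_4$. You must also check that its local packets are the distinguished ones, so that Theorem~\ref{theorem:summary}(2) applies at every place. The paper uses the Chenevier--Gan computation that $\pi$ and $s_1\pi$ both have standard lift $\Phi$, together with Theorem~\ref{theorem:characterization-distinguished-L-packets}.

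\textbf{Step 2: the endoscopic term.} Step~2 misidentifies this term. The stabilization for $\rG_2$ contains the \emph{stable} distribution $\tfrac12S^{\SO_4}_{\disc,c}(f_{\SO_4})$, not Arthur's multiplicity expansion of $I^{\SO_4}_{\disc,c}$. The non-identity terms of the latter are not stable and cannot be evaluated on $f_{\SO_4}$.

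By Theorem~\ref{theorem:stable-multiplicity-SO4}, each refined $\psi'$ in the fibre contributes $\abs{\cS_{\psi'}}^{-1}$ times the product of its local stable packet characters. Theorem~\ref{theorem:summary}(1) turns this into the \emph{single} term $\abs{\cS_{\psi'}}^{-1}\mathcal H_{s'}$. Here $s'\in A\setminus\{1\}$ is the involution whose $(-1)$-eigenspace on the standard representation is $\Std_{\SO_4}\circ\psi'$.

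The correct bookkeeping is as follows. The map $s\mapsto\psi_s$ is a bijection from $A\setminus\{1\}$ onto the fibre in Section~\ref{section:lifting-SO4-G2}, with $\cS_{\psi_s}\cong A/\langle s\rangle$. Hence every coefficient is $\tfrac12\abs{\cS_{\psi_s}}^{-1}=\abs A^{-1}$. Letting each $\psi'$ contribute several $\mathcal H_s$ over-counts: in case (a) of Lemma~\ref{lemma:SO4-dihedral-case-fiber} there are three parameters in the fibre and exactly three non-identity elements of $A\cong(\Z/2\Z)^2$. This same identity is what makes the subtraction in Step~1 yield $S^{\rG_2}_{\disc,c}=\abs A^{-1}\mathcal H_1$.

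Finally, there are no proper-Levi terms to compute or absorb on either side. The regular contributions of Lemma~\ref{lemma:discrete-part-G2} and its twisted analogue all have standard parameters incompatible with the multiplicity-free $\Phi$, so they vanish outright.
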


\begin{proof}
  The $\PGL_3$-type case is already covered by Theorem~\ref{theorem:Arthur-multiplicity-PGL3}.
  Otherwise put $\Phi=\mathbf1\boxplus T\boxplus\Ad(\Pi_1)$, a discrete generic parameter for $\SO_8$.
  The fiber product $\GSO_4\times_{\GL_1}\GSO_4$ is taken with respect to the similitude characters.
  Consider its endoscopic transfer to $\GSO_8$ with source represented by
  \begin{align*}
    (\Pi_1^\vee,\Pi_1^\vee)\boxtimes(\Pi_1,\Pi_2).
  \end{align*}
  The diagonal scalar center acts trivially, so the transfer descends to $\PGSO_8$.
  The parameter $\mathbf1\boxplus\Ad(\Pi_1)$ has no GL-type simple constituent in the sense of \cite{Xu2025-GlobalLPacketsQuasisplitGSp2nGO2n}*{Definition 4.4 and Remark 4.5}.
  Thus these local endoscopic transfers form a global Xu packet by \cite{Xu2025-GlobalLPacketsQuasisplitGSp2nGO2n}*{Theorem 1.2}.
  Its generic member $\pi$ has trivial global pairing and occurs in the discrete spectrum by \cite{Xu2025-GlobalLPacketsQuasisplitGSp2nGO2n}*{Theorem 1.1}.
  Let $s_1$ be a pinned outer automorphism exchanging the standard representation and a half-spin representation.
  By \cite{ChenevierGan2025-TrialityFunctoriality}*{(6.11) and Proposition 6.13}, the standard weak lifts of $\pi$ and $s_1\pi$ are both
  \begin{align*}
    (\Pi_1^\vee\boxtimes\Pi_1)\boxplus(\Pi_1^\vee\boxtimes\Pi_2)=\Phi.
  \end{align*}
  The global classification for $\SO_8$ therefore identifies their standard local parameters with $\Phi_v$ at every place $v$, by \cite{Xu2025-GlobalLPacketsQuasisplitGSp2nGO2n}*{(2.18)}.
  Both contain $\mathbf1$, so Theorem~\ref{theorem:characterization-distinguished-L-packets} identifies the local packet with the distinguished packet at tempered non-archimedean places.
  At archimedean places, use the usual local Langlands correspondence.
  The non-tempered case follows by normalized induction, since the trivial constituents remain in the tempered inducing data, as in \cite{Xu2018-LPacketsQuasisplitGSp2nGO2n}*{Proposition 3.11}.

  Let $Y$ be the group of quadratic Hecke characters composed with the similitude character $\lambda$, and let $\alpha\colon\cS_\Phi\to Y$ be Xu's determinant map.
  For an automorphic member $\pi$ of this packet, put $Y(\pi)=\{\chi\in Y:\pi\otimes\chi\cong\pi\}$.
  If $\omega\circ\lambda\in Y(\pi)$, Lemma~\ref{lemma:triality-quadratic-twist-global} gives $\Phi\otimes\omega\cong\Phi$.
  For $\omega\ne1$, the parameter $\Phi$ therefore contains the two one-dimensional constituents $\mathbf1,\omega$.
  The sign transformation equal to $-1$ on these two constituents and $1$ on the others maps to $\omega\circ\lambda$ under $\alpha$, by \cite{Xu2018-LPacketsQuasisplitGSp2nGO2n}*{Section 6.2 and Lemma 6.6(2)}.
  Hence $Y(\pi)=\alpha(\cS_\Phi)$.

  We recall that $A = \Ker \alpha$.
  Xu's multiplicity formula therefore gives
  \begin{align*}
    I_{\disc,c(\psi)}^{\widetilde{\PGSO_8}}(f)
    =\frac1{\abs A}\sum_{s\in A}{\prod_v}'
      \left(\sum_{\pi_v\in\Pi_{\iota\circ\psi_v}}
        \langle s_v,\pi_v\rangle\tr\widetilde\pi_v(f_v)\right).
  \end{align*}

  By Theorem~\ref{theorem:summary}, the term $s=1$ is the transfer of $\mathcal H_1$, defined as in the preceding proof.
  For each $s\ne1$, the corresponding refined generic parameter $\psi_s$ for $\SO_4$ has $\cS_{\psi_s}\cong A/\langle s\rangle$, by the fibers in Section~\ref{section:lifting-SO4-G2}.
  Its stable contribution has coefficient $\frac12\abs{\cS_{\psi_s}}^{-1}=\abs A^{-1}$ by Theorem~\ref{theorem:stable-multiplicity-SO4}.
  Subtracting these terms in the twisted stabilization gives the coefficient $\abs A^{-1}$ for $\mathcal H_1$.
  The proper-Levi terms for $\rG_2$ vanish by the multiplicity-free condition, and the remaining endoscopic terms vanish by Remark~\ref{remark:killing-SL3-PGL3-contribution}.
  The ordinary stabilization and the local character relations now give $\abs A^{-1}\sum_{s\in A}\mathcal H_s$.
  Comparison of irreducible characters proves the assertion.
\end{proof}

\part{Appendices}\label{part:appendices}

\appendix

\section{A refinement of Arthur's results on \texorpdfstring{$\SO_4$}{SO4}}\label{section:refinement-SO4}

\subsection{\texorpdfstring{$\GSO_4$}{GSO4} and \texorpdfstring{$\SO_4$}{SO4}}

For a split four-dimensional quadratic space $(V,q)$ over a field $F$, put $\GSO_4 = \GSO(V,q)$ and $\SO_4 = \SO(V,q)$.
The identities $\GL_2 = \GSp_2$ and $\SL_2 = \Sp_2$ give the accidental isomorphism
\begin{align*}
   \alpha \colon \GL_2 \times \GL_2 / \nabla \GL_1 \xrightarrow{\sim} \GSO_4 \colon (g_1, g_2) \mapsto g_1 \otimes g_2,
\end{align*}
which also induces the isomorphism
\begin{align*}
   \{ (g_1, g_2) \in \GL_2 \times \GL_2 \mid \det g_1 \det g_2 = 1\} / \nabla \GL_1 \xrightarrow{\sim} \SO_4.
\end{align*}
Let $\beta$ denote their inverse maps.

\begin{remark}
   Interchanging the two $\GL_2$ factors defines a non-trivial outer automorphism $\theta$ of $\SO_4$ extending to $\GSO_4$.
\end{remark}

For a $p$-adic field $F$, the dual isomorphism is
\begin{align*}
   \widehat{\alpha} \colon \widehat{\GSO_4} = \GSpin_4 \xrightarrow{\sim} \{ (g_1, g_2) \in \GL_2 \times \GL_2 \mid \det g_1 = \det g_2\}
\end{align*}
and we obtain the inverse of the isomorphism
\begin{align*}
   \{ (g_1, g_2) \in \GL_2(\C) \times \GL_2(\C) \mid \det g_1 = \det g_2\}/\Delta \GL_1  \xrightarrow{\sim} \SO_4(\C)
   \colon (g_1, g_2) \mapsto g_1 \otimes g_2^{\vee}.
\end{align*}

\begin{remark}
   Let $V_1,V_2$ be the two standard two-dimensional representations of the group of pairs with equal determinants above. We have an isomorphism of representations
   \begin{align*}
      V_1 \otimes V_2^{\vee} \simeq V_1^{\vee} \otimes V_2
   \end{align*}
   since $V_i^{\vee}\simeq V_i\otimes(\det V_i)^{-1}$ and $\det V_1=\det V_2$.
\end{remark}

\begin{definition}
   Let $\wedge^2_{+}$ denote the composition of the adjoint representation of $\PGL_2$ and
   \begin{align*}
      \SO_4 \xrightarrow{\sim} \SL_2 \times \SL_2 / \Delta \mu_2 \xrightarrow{\pr_1} \SL_2/\mu_2.
   \end{align*}
   Similarly, let $\wedge^2_{-}$ denote the composition of the adjoint representation of $\PGL_2$ and
   \begin{align*}
      \SO_4 \xrightarrow{\sim} \SL_2 \times \SL_2 / \Delta \mu_2 \xrightarrow{\pr_2} \SL_2/\mu_2.
   \end{align*}

   The maps above
   \begin{align*}
      \SO_4 \twoheadrightarrow \SO_3 = \PGL_2
   \end{align*}
   are surjective with kernels isomorphic to $\SL_2$, hence surjective on $F$-rational points for every field $F$ of characteristic $0$.
\end{definition}

\subsection{A refined local Langlands correspondence for \texorpdfstring{$\SO_4$}{SO4}}\label{section:refined-LLC-SO4}

Let $F$ be a $p$-adic field.
For any irreducible admissible representation $\pi$ of a reductive group $G$ over $F$, let $\omega_{\pi}$ denote the central character of $\pi$.
Then, we have a bijection
\begin{align*}
   \alpha^{*} \colon
   \Irr(\GSO_4(F)) \xrightarrow{\sim}
   \{ (\pi_1, \pi_2) \in \Irr(\GL_2(F)) \times \Irr(\GL_2(F)) \mid \omega_{\pi_1} = \omega_{\pi_2} \}.
\end{align*}
On the dual side, we have the corresponding bijection
\begin{align*}
   \widehat{\alpha}^{*} \colon
   \{ (\phi_1, \phi_2) \in \Phi(\GL_2(F)) \times \Phi(\GL_2(F)) \mid \det \phi_1 = \det \phi_2\}
   \xrightarrow{\sim} \Phi(\GSO_4(F))
\end{align*}
and
\begin{align*}
   \widehat{\alpha}^{*} \colon
   \{ (\phi_1, \phi_2) \in \Phi(\GL_2(F)) \times \Phi(\GL_2(F)) \mid \det \phi_1 = \det \phi_2\}
   / \sim_{\Delta \GL_1}
   \xrightarrow{\sim} \Phi(\SO_4(F)),
\end{align*}
where $\sim_{\Delta \GL_1}$ is the equivalence relation defined by twists by characters of $F^{\times} \cong W_F^{\mathrm{ab}}$.
Clifford theory gives the following lemma.
\begin{lemma}
   Let $\mu$ be the similitude character of $\GSO_4(F)$ and let $\sigma_1', \sigma_2'$ be irreducible admissible representations of $\GSO_4(F)$ whose restrictions to $\SO_4(F)$ contain a common irreducible admissible representation $\sigma$.
   Then $\sigma_1' \cong \sigma_2' \otimes \chi \circ \mu$ for some character $\chi$ of $F^{\times}$.
\end{lemma}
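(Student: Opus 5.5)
The plan is to use Clifford theory for the normal subgroup $\SO_4(F)\cdot Z_{\GSO_4}(F)$ of $\GSO_4(F)$. First I would record the structural facts. The similitude character $\mu\colon\GSO_4(F)\to F^\times$ has kernel $\SO_4(F)$. The center $Z(F)\cong F^\times$ maps to $F^{\times 2}$ under $\mu$. The subgroup $H=\SO_4(F)Z(F)=\mu^{-1}(F^{\times2})$ is therefore normal and open with quotient $\GSO_4(F)/H\cong F^\times/F^{\times2}$, which is finite abelian since $F$ is $p$-adic of characteristic zero. Under $\alpha$, surjectivity of $\mu$ is just surjectivity of the determinant on $\GL_2(F)$, so all of $F^\times/F^{\times 2}$ occurs.

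The first step is to reduce from $\SO_4(F)$ to $H$. Let $\sigma_i'$ have central characters $\omega_i$. Since $Z(F)\cap\SO_4(F)=\{\pm1\}$ acts on $\sigma$ by a single sign, $\omega_1$ and $\omega_2$ agree on $\pm1$. Hence $\omega_1\omega_2^{-1}$ is trivial on $\mu_2$ and factors through squaring, so it equals $\chi_0^2$ for some character $\chi_0$ of $F^\times$. Indeed, the map $z\mapsto z^2$ identifies $F^\times/\{\pm1\}$ with the closed subgroup $F^{\times2}$, and characters of a closed subgroup of a locally compact abelian group extend. Replacing $\sigma_2'$ by $\sigma_2'\otimes\chi_0\circ\mu$ multiplies its central character by $\chi_0^2$, so we may assume $\omega_1=\omega_2=:\omega$. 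Then $\sigma\otimes\omega$ is a well-defined irreducible representation $\sigma_H$ of $H=\SO_4(F)\times_{\pm1}Z(F)$, and $\sigma_H$ is contained in both restrictions $\sigma_i'|_H$.

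The second step is Clifford/Mackey theory for $H\subset\GSO_4(F)$ with finite abelian quotient. By Frobenius reciprocity, $\sigma_1'$ and $\sigma_2'$ are both irreducible quotients of $\Ind_H^{\GSO_4(F)}\sigma_H$. The standard result (Gelbart--Knapp type, valid for smooth representations and finite abelian quotients) says that any two irreducible representations whose restrictions to $H$ share a constituent differ by a character of $\GSO_4(F)/H$. Concretely, $\Hom_H(\sigma_2',\sigma_1')$ is a nonzero finite-dimensional representation of the finite abelian group $\GSO_4(F)/H$ under conjugation, so it contains an eigenline for some character $\eta$. A nonzero vector in that line is a $\GSO_4(F)$-intertwiner $\sigma_2'\otimes\eta\to\sigma_1'$, hence an isomorphism by irreducibility. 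Finally, any character $\eta$ of $\GSO_4(F)/H\cong F^\times/F^{\times2}$ is of the form $\chi'\circ\mu$ with $\chi'$ quadratic. Combining this with $\chi_0$ gives $\chi=\chi_0\chi'$.

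The main obstacle is the finiteness and nonvanishing of $\Hom_H(\sigma_2',\sigma_1')$ in the second step. Both restrictions to $H$ are finite direct sums of conjugates of $\sigma_H$ by admissibility and finite index, so the space is nonzero because $\sigma_H$ occurs in both, and it is finite-dimensional. Finite index also keeps everything admissible, which I would verify carefully.
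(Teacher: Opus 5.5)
Your proposal is correct and takes the same approach as the paper, which simply cites Clifford theory without details. You first twist by $\chi_0\circ\mu$ to equalize the central characters, then split $\Hom_{\SO_4(F)Z(F)}(\sigma_2',\sigma_1')$ under the finite abelian quotient $\GSO_4(F)/\SO_4(F)Z(F)\cong F^\times/F^{\times 2}$ to get a $\GSO_4(F)$-isomorphism $\sigma_2'\otimes\eta\cong\sigma_1'$. Two minor simplifications: you do not need finite-dimensionality of that Hom space, since the isotypic idempotents of a finite abelian group split any representation, and extending $\nu$ from the open finite-index subgroup $F^{\times2}$ follows from divisibility of $\C^\times$ even for non-unitary characters.
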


\begin{remark}
   The twist by $\chi \circ \mu$ corresponds to the twist $(\pi_1, \pi_2) \mapsto (\pi_1 \otimes \chi \circ \det, \pi_2 \otimes \chi \circ \det)$ via the bijection $\alpha^*$.
\end{remark}

\begin{definition}
   Define
      \begin{align*}
         \cL_{\SO_4}^{\mathrm{ref}} \colon \Irr(\SO_4(F)) \to \Phi(\SO_4(F)) \colon \sigma \mapsto \phi_{\sigma}
      \end{align*}
   by choosing an irreducible admissible representation $\sigma'$ of $\GSO_4(F)$ whose restriction contains $\sigma$ and projecting its $L$-parameter $\phi'$ under $\widehat{\GSO_4} \twoheadrightarrow \widehat{\SO_4}$.
   Such a $\sigma'$ exists, and the preceding lemma makes the image independent of its choice.
\end{definition}

It is compatible with Arthur's local Langlands correspondence.

\begin{theorem}[cf.\ \cite{GeeTaibi2019-ArthurSMultiplicityFormulaGSp4Restriction}*{Theorem 8.3.3}]\label{theorem:compatibility-with-Arthur-local}
   Let $\overline{\Irr}(\SO_4)$ (resp. $\overline{\Phi}(\SO_4)$) be the set of equivalence classes of irreducible admissible representations of $\SO_4(F)$ (resp. $L$-parameters for  $\SO_4(F)$) under the action of the outer automorphism group $\Out(\SO_4) \cong \Z/2\Z$.
   Then, we have the following commutative diagram.
   \begin{align*}
      \begin{CD}
         \Irr(\SO_4(F)) @>{\cL_{\SO_4}^{\mathrm{ref}}}>> \Phi(\SO_4(F)) \\
         @VVV @VVV \\
         \overline{\Irr}(\SO_4(F)) @>{\cL^{A}_{\SO_4}}>> \overline{\Phi}(\SO_4(F)).
      \end{CD}
   \end{align*}
   Here $\cL^A_{\SO_4}$ is Arthur's local Langlands correspondence.
\end{theorem}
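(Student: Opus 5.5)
This statement is the compatibility of the refined correspondence $\cL_{\SO_4}^{\mathrm{ref}}$ with Arthur's correspondence modulo $\Out(\SO_4)$. The plan is to reduce it to the corresponding compatibility for $\GSO_4$ and then restrict to $\SO_4$. The reduction runs through the accidental isomorphism $\alpha$ and the local Langlands correspondence for $\GL_2$, following Gee--Ta\"ibi.

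Fix $\sigma \in \Irr(\SO_4(F))$. By Clifford theory, choose $\sigma' \in \Irr(\GSO_4(F))$ whose restriction contains $\sigma$, and write $\alpha^*(\sigma') = (\pi_1,\pi_2)$ with $\omega_{\pi_1}=\omega_{\pi_2}$. By definition, $\cL_{\SO_4}^{\mathrm{ref}}(\sigma)$ is the image of $\widehat\alpha^*(\phi_{\pi_1},\phi_{\pi_2})$ in $\Phi(\SO_4(F))$, where $\phi_{\pi_i}$ are the $\GL_2$ parameters. Its standard representation is therefore $\phi_{\pi_1}\otimes\phi_{\pi_2}^\vee$.

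On the other side, Arthur's correspondence $\cL^A_{\SO_4}$ sends the $\Out$-class of $\sigma$ to the $\O_4(\C)$-class of a parameter. That class is determined by its standard $8$-dimensional image, that is, by the $4$-dimensional orthogonal representation $\Std\circ\phi$. This representation is characterized by Arthur's twisted endoscopic character identity for $\GL_4\rtimes\theta$: the $\Out$-orbit sum of characters of $\sigma$ transfers from the twisted trace of the $\GL_4$ representation with parameter $\Std\circ\phi$. The vertical maps identify exactly $\Out$-orbits and $\O_4$-classes, since $\Out(\SO_4)$ acts on parameters by conjugation by $\O_4(\C)\setminus\SO_4(\C)$. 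The claim therefore reduces to showing that the $\GL_4$-lift of $\sigma$ in Arthur's sense is the Rankin--Selberg product $\pi_1\boxtimes\pi_2^\vee$ of Ramakrishnan.

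To prove this, the step I expect to be the main obstacle is matching the local correspondences: Arthur's $\SO_4$ correspondence versus the $\GL_2\times\GL_2$ product. I would argue globally, as in Gee--Ta\"ibi's proof for $\GSp_4$. The first step is to globalize $(\pi_1,\pi_2)$ to cuspidal $(\Pi_1,\Pi_2)$ with equal central characters, unramified or controlled elsewhere. At unramified places the Satake parameters of both constructions agree by direct computation with $\widehat\alpha$. Strong multiplicity one for $\GL_4$ then forces the global Arthur lift of the corresponding automorphic representation of $\SO_4$ to equal $\Pi_1\boxtimes\Pi_2^\vee$. Local-global compatibility of Arthur's packets, which are characterized by local character identities at $v$, together with compatibility of Ramakrishnan's transfer with the local Langlands correspondence for $\GL_2\times\GL_2\to\GL_4$, then gives the local equality at $v$.

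Non-discrete parameters are handled separately, by compatibility of both correspondences with parabolic induction and the Langlands classification. This reduces them to tempered, and then to discrete, inducing data on Levi subgroups of type $\GL_1\times\GL_1$ or $\GL_2$. In those cases both correspondences are the $\GL$-correspondence, so the agreement holds directly.
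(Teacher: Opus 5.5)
Your proposal is correct in outline. It ends with the same step as the paper, but it reproves the tempered case where the paper simply cites it.

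The paper's proof is essentially a citation. For tempered representations the diagram is exactly Gee--Ta\"ibi's Theorem 8.3.3. The only argument the paper supplies is the extension to non-tempered representations: restriction from $\GSO_4(F)$ to $\SO_4(F)$ commutes with normalized parabolic induction and preserves positive Langlands data, and one then takes Langlands quotients. Your last paragraph is this same reduction.

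For the tempered case you reconstruct the cited result instead of invoking it:
\begin{enumerate}
\item For discrete series, you globalize $(\pi_1,\pi_2)$ with equal central characters and compare unramified Satake parameters. You then apply strong multiplicity one for isobaric representations of $\GL_4$, Arthur's local--global compatibility, and Ramakrishnan's compatibility of $\Pi_1\boxtimes\Pi_2^\vee$ with the local correspondence at $v$.
\item For tempered non-discrete representations, you use induction from $\GL$-type Levi subgroups.
\end{enumerate}
This makes the argument self-contained modulo Arthur's global classification and Ramakrishnan's transfer. The cost is a globalization with prescribed central character and an explicit Levi computation, both of which the citation avoids.

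Three points need tightening if you write it out.
\begin{enumerate}
\item You need an automorphic constituent of $\beta^*(\Pi_1,\Pi_2)|_{\SO_4(\A)}$ whose $v$-component is the prescribed $\sigma$, not merely some constituent of $\sigma'|_{\SO_4(F)}$. This follows from two facts. First, $\A^\times$ is generated by $F^\times$, $(\A^\times)^2$, and the ideles trivial at $v$. Second, the translating element in the usual restriction argument can therefore be chosen trivial at $v$. Alternatively, note that Arthur's packets are stable under $\GSO_4(F)$-conjugation, because stable distributions are.
\item What the twisted identity with $\GL_4$ characterizes is the sum over the whole packet, not the $\Out$-orbit sum for $\sigma$ alone.
\item The standard representation of $\SO_4$ is four-dimensional, so ``8-dimensional'' is a slip.
\end{enumerate}
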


\begin{proof}
   For tempered parameters this is the cited theorem.  In general,
   restriction from $\GSO_4$ to $\SO_4$ commutes with normalized
   parabolic induction and preserves the positive Langlands data.
   Applying the tempered case to the inducing representations and
   taking Langlands quotients proves the assertion. See also
   \cite{GeeTaibi2019-ArthurSMultiplicityFormulaGSp4Restriction}*{Section 8.1, proof of Theorem 8.1.2}.
\end{proof}

\begin{remark}
   Over archimedean local fields, the same construction is compatible with the Langlands--Shelstad correspondence as in Theorem~\ref{theorem:compatibility-with-Arthur-local}.
   For tempered representations, use the Harish-Chandra character formula and compatibility with parabolic induction, then take Langlands quotients for the general case.
\end{remark}

The construction also gives the following lemma.
\begin{lemma}
   Let $\sigma_1, \sigma_2$ be irreducible admissible representations of $\SO_4(F)$.
   They have a common $L$-parameter if and only if there exists an irreducible admissible representation $\sigma'$ of $\GSO_4(F)$ such that both $\sigma_1$ and $\sigma_2$ are contained in $\sigma'|_{\SO_4(F)}$.
\end{lemma}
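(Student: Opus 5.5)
The equivalence will be proved by unwinding the definition of $\cL_{\SO_4}^{\mathrm{ref}}$ through the bijections $\alpha^*$ and $\widehat\alpha^*$, together with the Clifford-theory lemma.

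For the \emph{if} direction, suppose $\sigma_1,\sigma_2\subset\sigma'|_{\SO_4(F)}$. By the definition, the parameter of each $\sigma_i$ may be computed from any lift, in particular from $\sigma'$. Hence both equal the image of $\phi_{\sigma'}$ under $\widehat{\GSO_4}\twoheadrightarrow\widehat{\SO_4}$, and this direction is immediate.

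For the \emph{only if} direction, choose lifts $\sigma_i'$ of $\sigma_i$ and write $\alpha^*(\sigma_i')=(\pi_{i,1},\pi_{i,2})$. Their $L$-parameters are $\widehat\alpha^*(\phi_{\pi_{i,1}},\phi_{\pi_{i,2}})$. The hypothesis $\phi_{\sigma_1}=\phi_{\sigma_2}$ says that these pairs agree in $\Phi(\SO_4(F))$, which is the quotient of the set of equal-determinant pairs by $\sim_{\Delta\GL_1}$. So there is a character $\chi$ of $F^\times\cong W_F^{\mathrm{ab}}$ with
\[
(\phi_{\pi_{2,1}},\phi_{\pi_{2,2}})\cong(\phi_{\pi_{1,1}}\otimes\chi,\ \phi_{\pi_{1,2}}\otimes\chi).
\]
The local Langlands correspondence for $\GL_2$ is injective and compatible with twisting. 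Therefore $\pi_{2,j}\cong\pi_{1,j}\otimes\chi\circ\det$, and by the remark above this gives $\sigma_2'\cong\sigma_1'\otimes\chi\circ\mu$. Since $\chi\circ\mu$ is trivial on $\SO_4(F)$, the restrictions of $\sigma_1'$ and $\sigma_2'$ coincide. Thus $\sigma':=\sigma_1'$ contains both $\sigma_1$ and $\sigma_2$.

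The main point to check is the description of $\Phi(\SO_4(F))$ through $\widehat\alpha^*$. Equality in $\Phi(\SO_4(F))$ means $\widehat{\SO_4}$-conjugacy of the projected parameters, and one must confirm that this forces equality of the $\GL_2$-pairs up to a common twist, rather than up to swapping the two factors. This holds because $\SO_4(\C)$-conjugacy, as opposed to $\O_4(\C)$-conjugacy, does not exchange the two $\GL_2$ factors. Lifting the homomorphism $W_F\to\widehat{\SO_4}$ to $\GSpin_4$ is unique up to a central character twist, and this is exactly the relation $\sim_{\Delta\GL_1}$.
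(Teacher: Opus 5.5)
Your proof is correct and is exactly the unwinding the paper has in mind when it says the lemma follows from the construction (it gives no separate proof): the ``if'' direction is the well-definedness of $\cL^{\mathrm{ref}}_{\SO_4}$, and the ``only if'' direction combines three ingredients — the bijection $\widehat{\alpha}^{*}$ onto $\Phi(\SO_4(F))$ modulo $\sim_{\Delta\GL_1}$, injectivity and twist-compatibility of the $\GL_2$ correspondence, and the remark identifying simultaneous twists of the pair with twists by $\chi\circ\mu$. Your added check that $\SO_4(\C)$-conjugacy cannot swap the two $\GL_2$ factors is a valid justification of the description of $\Phi(\SO_4(F))$, which the paper only asserts.
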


\begin{proposition}
   Let $F$ be a local field.
   Let $(V, q)$ be a non-degenerate quadratic space of dimension $4$ over $F$.
   Let $\sigma'$ be an irreducible admissible representation of $\GSO(V, q)$.
   Then, the restriction $\sigma'|_{\SO(V, q)}$ is multiplicity free.
\end{proposition}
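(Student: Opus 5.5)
The plan is to reduce multiplicity-freeness of the restriction to a statement about $\GL_2(F)\times\GL_2(F)$ through the accidental isomorphism $\alpha$. After a finite extension, a non-degenerate four-dimensional quadratic space is either split, quasi-split with discriminant field $K$, or an inner form. In each case $\GSO(V,q)$ is a quotient of a product of groups of the form $D^\times$ or $\Res_{K/F}$ of such groups. Concretely, the possibilities are $(\GL_2\times\GL_2)/\nabla\GL_1$, $(D^\times\times D^\times)/\nabla\GL_1$, or $(\Res_{K/F}\GL_2\times\GL_1)/\Res_{K/F}\GL_1$, and their $D$-versions. In every case $\SO(V,q)$ is the subgroup cut out by the similitude condition, and $\GSO(V,q)/Z\cdot\SO(V,q)$ is an abelian group (a quotient of $F^\times/F^{\times2}$, or of $\mu(\GSO)/\mu(Z)$).

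The key steps are as follows.

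\textbf{Step 1.} Write $G'=\GSO(V,q)(F)$ and $G=Z(F)\SO(V,q)(F)$. Restriction from $Z(F)\SO(V,q)(F)$ to $\SO(V,q)(F)$ clearly preserves multiplicity-freeness, since $Z$ acts by a scalar. So it suffices to show $\sigma'|_{G}$ is multiplicity free. Here $G$ is a normal subgroup with abelian quotient, finite in the $p$-adic case and finite or trivial in the archimedean case.

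\textbf{Step 2.} By Clifford theory (the Gelbart--Knapp framework), the multiplicity is controlled by the group
\[
X(\sigma')=\{\chi\in(G'/G)^{\cD}:\sigma'\otimes\chi\cong\sigma'\}.
\]
The restriction is multiplicity free if and only if the natural $2$-cocycle on $X(\sigma')$ (the commutator pairing of the self-twist intertwiners) is nondegenerate in the appropriate sense. Equivalently, the commuting algebra $\mathrm{End}_G(\sigma')$ is commutative. I would therefore show directly that the intertwiners $A_\chi\colon\sigma'\to\sigma'\otimes\chi$ commute up to the trivial cocycle, i.e.\ that $\mathrm{End}_G(\sigma')$ is abelian.

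\textbf{Step 3.} Via $\alpha^*$ (or its inner-form and quasi-split analogues), $\sigma'$ corresponds to a pair $(\pi_1,\pi_2)$, or to a representation of $\Res_{K/F}\GL_2$ together with a character. Twisting by $\chi\circ\mu$ corresponds to $(\pi_1\otimes\chi,\pi_2\otimes\chi)$. Since each $\pi_i$ is irreducible, the intertwiner $A_\chi$ factors as a tensor product $A^{(1)}_\chi\otimes A^{(2)}_\chi$ of intertwiners $\pi_i\to\pi_i\otimes\chi$. The commutator of $A_\chi$ and $A_{\chi'}$ is then the product of the commutators on the two factors.

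\textbf{Step 4.} Use the classical fact (Labesse--Langlands, Gelbart--Knapp) that restriction from $\GL_2(F)$ to $\SL_2(F)$ is multiplicity free. This means each factor commutator $c_i(\chi,\chi')$ is an alternating bicharacter on the self-twist group $X(\pi_i)$. It is nondegenerate only in the case $X(\pi_i)\cong(\Z/2)^2$, where $c_i$ is the unique nontrivial alternating form.

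\textbf{Step 5.} Compute $c_1c_2$ on $X(\sigma')=X(\pi_1)\cap X(\pi_2)$, which has order at most $4$. If $|X(\sigma')|\le2$, every alternating form on it is trivial, so $\mathrm{End}_G(\sigma')$ is commutative. If $X(\sigma')\cong(\Z/2)^2$, then $X(\pi_1)=X(\pi_2)=(\Z/2)^2$ and $c_1=c_2$ is the unique nontrivial form, so $c_1c_2=1$ and the algebra is again commutative. Over $\R$ and $\C$ the quotient $G'/G$ has order at most $2$, so Step 5 is immediate.

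The main obstacle is Step 5 in the $(\Z/2)^2$ case, together with the non-split forms. This step relies on the equality of the two commutator forms. It also requires that the tensor-product intertwiners realize the projective representation with cocycle $c_1c_2$ and not with some other cocycle. I would check this by normalizing $A^{(i)}_\chi$ on explicit models, for example dihedral $\mathrm{AI}_{K/F}$ models, where the intertwiners are given by induced-model operators. For the inner-form and quasi-split cases, I would use the analogous $D^\times\to\SL_1(D)$ multiplicity-one statement (Labesse--Langlands). For $\Res_{K/F}\GL_2$, the self-twist group is a subgroup of $(F^\times/F^{\times2})^{\cD}$, and the same counting argument applies.
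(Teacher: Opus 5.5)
Your overall Clifford-theoretic strategy is sound, but \textbf{Step 4 states the key input backwards}, and so does the criterion at the start of Step 2. Multiplicity one for $\GL_2(F)\to\SL_2(F)$ says precisely that $\mathrm{End}_{\SL_2(F)}(\pi_i)=\bigoplus_{\chi\in X(\pi_i)}\C A^{(i)}_\chi$ is commutative. That means the commutator pairing $c_i$ is \emph{trivial} on all of $X(\pi_i)$, including when $X(\pi_i)\cong(\Z/2\Z)^2$, in which case $\pi_i|_{\SL_2(F)}$ has four inequivalent constituents. A nondegenerate pairing on $(\Z/2\Z)^2$ would give $\mathrm{End}\cong M_2(\C)$, i.e.\ multiplicity two, contradicting the theorem you invoke. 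Likewise, multiplicity-freeness of $\sigma'|_G$ means a trivial pairing, not a nondegenerate one. Nondegeneracy is the phenomenon for $D^\times\to\SL_1(D)$: Labesse--Langlands show that $\pi|_{\SL_1(D)}$ is twice a single irreducible when $|X(\pi)|=4$.

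In the split case your conclusion survives, but only because $c_1=c_2=1$. In the quasi-split case there is a single factor and nothing to cancel, so the error matters. Take $\GSO(V,q)\cong(\Res_{K/F}\GL_2\times\GL_1)/\Res_{K/F}\GL_1$ with similitude $\mu(g,a)=a^2N_{K/F}(\det g)$. The map $\chi\mapsto\chi\circ N_{K/F}$ identifies $X(\sigma')$ with a subgroup of $X(\Pi)$. For residue characteristic $2$ this subgroup can be all of $(\Z/2\Z)^2$: take $\Pi$ to be the base change of a $\pi$ with $X(\pi)=\langle\chi_1,\chi_2\rangle$, where $\chi_1,\chi_2,\omega_{K/F}$ are independent. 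Under your Step 4 the pulled-back form would be nondegenerate and you would conclude multiplicity two. So the ``same counting argument'' does not close this case.

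The repair is to state Step 4 correctly:
\begin{itemize}
\item For $\GL_2$ over $F$ or over $K$ the pairing is trivial. This settles the split and quasi-split cases immediately.
\item For $D^\times$ the pairing is trivial when $|X(\pi_i)|\le 2$, and is the unique nondegenerate alternating form when $|X(\pi_i)|=4$.
\end{itemize}
Your Step 5 cancellation is then exactly what is needed in the anisotropic case $(D^\times\times D^\times)/\nabla\GL_1$. There $X(\sigma')\cong(\Z/2\Z)^2$ forces $X(\pi_1)=X(\pi_2)$ with equal nondegenerate forms, whose product is trivial, even though each factor alone restricts to $\SL_1(D)$ with multiplicity two.

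So corrected, your argument is a self-contained proof. The paper instead cites \cite{GeeTaibi2019-ArthurSMultiplicityFormulaGSp4Restriction} (Proposition 8.2.1) for non-archimedean $F$. For archimedean $F$ it uses, as you do, that $\GSO(V,q)/F^\times\SO(V,q)$ embeds in the cyclic group $F^\times/F^{\times 2}$, together with \cite{AdlerPrasad2019-MultiplicityUponRestrictionDerivedSubgroup}.
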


\begin{proof}
   For non-archimedean $F$, this is \cite{GeeTaibi2019-ArthurSMultiplicityFormulaGSp4Restriction}*{Proposition 8.2.1}.
   For archimedean $F$, the similitude character embeds $\GSO(V,q)/(F^\times\SO(V,q))$ into $F^\times/(F^\times)^2$, which is cyclic.
   The assertion follows from \cite{AdlerPrasad2019-MultiplicityUponRestrictionDerivedSubgroup}*{Section 6, p.~9}.
\end{proof}

\begin{remark}
   If $\phi^{\SO_4}$ is represented by $(\phi_1,\phi_2)$, then the lifts
   $\wedge^2_{\pm}\colon\Phi(\SO_4(F))\to\Phi(\GL_3(F))$ are given by
   \begin{align*}
      \wedge^2_+\phi^{\SO_4}&=\Ad_{\GL_2}\circ\phi_1,\\
      \wedge^2_-\phi^{\SO_4}&=\Ad_{\GL_2}\circ\phi_2.
   \end{align*}
   The local Langlands correspondence defines the corresponding lifts
   $\Irr(\SO_4(F))\to\Irr(\GL_3(F))$, denoted by $\sigma\mapsto\wedge^2_{\pm}\sigma$.
\end{remark}

\begin{proposition}\label{proposition:refined-packet-characterization}
   Let $\phi$ be an $L$-parameter for $\SO_4(F)$.
   Let $\overline{\Pi}_{\phi}$ be Arthur's $L$-packet associated to $\phi$ and let $\Pi_{\phi}$ be the refined packet associated to $\phi$.
   Then, the following are equivalent for an irreducible admissible representation $\sigma$ of $\SO_4(F)$.
   \begin{enumerate}
      \item $\sigma \in \Pi_{\phi}$.
      \item $\sigma \in \overline{\Pi}_{\phi}$ and $\wedge^2_{+}\phi = \wedge^2_{+} \sigma$.
   \end{enumerate}
\end{proposition}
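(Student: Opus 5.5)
The plan is to reduce both directions to the construction of the refined correspondence $\cL_{\SO_4}^{\mathrm{ref}}$ through $\GSO_4$, and then to compare with Arthur's packet by means of Theorem~\ref{theorem:compatibility-with-Arthur-local}. Write $\phi$ as the class of a pair $(\phi_1,\phi_2)$ with $\det\phi_1=\det\phi_2$, modulo simultaneous twist. The refined packet $\Pi_\phi$ is by definition the set of $\sigma$ with $\cL_{\SO_4}^{\mathrm{ref}}(\sigma)=\phi$. By the lemma characterizing common $L$-parameters, $\Pi_\phi$ is the set of constituents of $\sigma'|_{\SO_4(F)}$ for any $\sigma'$ in $\Irr(\GSO_4(F))$ corresponding under $\alpha^*$ to a pair $(\pi_1,\pi_2)$ with parameters $(\phi_1,\phi_2)$. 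Arthur's packet $\overline\Pi_\phi$ is the union $\Pi_\phi\cup\Pi_{\theta\phi}$, where $\theta$ swaps the two factors, so $\theta\phi$ is represented by $(\phi_2,\phi_1)$. This union description follows from Theorem~\ref{theorem:compatibility-with-Arthur-local}: $\sigma\in\overline\Pi_\phi$ if and only if $\cL^{\mathrm{ref}}(\sigma)\in\{\phi,\theta\phi\}$.

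For (1)$\Rightarrow$(2), take $\sigma\in\Pi_\phi$. Then $\sigma\in\overline\Pi_\phi$ by the diagram. Moreover $\wedge^2_+\sigma$ is defined through the LLC as $\wedge^2_+(\cL^{\mathrm{ref}}(\sigma))=\wedge^2_+\phi$, which is immediate from the definition in the remark preceding the proposition.

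For (2)$\Rightarrow$(1), take $\sigma\in\overline\Pi_\phi$ with $\wedge^2_+\sigma=\wedge^2_+\phi$. By the union description, either $\sigma\in\Pi_\phi$, which is what we want, or $\cL^{\mathrm{ref}}(\sigma)=\theta\phi$. In the latter case $\wedge^2_+\sigma=\Ad\circ\phi_2$, so the hypothesis gives $\Ad\circ\phi_1\cong\Ad\circ\phi_2$. We must then show that $\theta\phi=\phi$ in $\Phi(\SO_4(F))$, that is, that $(\phi_2,\phi_1)$ is a simultaneous twist of $(\phi_1,\phi_2)$. Once this holds, the two refined packets coincide and $\sigma\in\Pi_\phi$.

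The main obstacle is this step. Equality of adjoints $\Ad\phi_1\cong\Ad\phi_2$ only gives $\phi_2\cong\phi_1\otimes\chi$ for a character $\chi$; this is a projective-representation argument, lifting $\PGL_2(\C)$-conjugacy. The determinant condition then forces $\chi^2=1$. In that case $(\phi_2,\phi_1)=(\phi_1\chi,\phi_1)$, which is the twist by $\chi$ of $(\phi_1,\phi_1\chi^{-1})=(\phi_1,\phi_1\chi)=(\phi_1,\phi_2)$, using $\chi=\chi^{-1}$. Hence $\theta\phi=\phi$. I would first try to justify the passage from $\Ad\phi_1\cong\Ad\phi_2$ to a twist. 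One way is the Tate lifting, applied to $\PGL_2(\C)$-conjugate projectivizations. For the $\SL_2(\C)$ Deligne part, one can instead use that $\Ad$ determines the projective class. If this general lifting proves awkward, I would treat the case where the parameters are not of Galois type by explicit decomposition, using Lemma~\ref{lemma:SO4-dihedral-case-local}-type computations.
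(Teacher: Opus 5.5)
Your proposal is correct and is essentially the paper's argument. You reduce via Theorem~\ref{theorem:compatibility-with-Arthur-local} to the swapped case, deduce $\Ad\phi_1\cong\Ad\phi_2$, and hence $\phi_2\cong\phi_1\otimes\chi$ with $\chi^2=1$. The only difference is at the end: you conclude $\theta\phi=\phi$ in $\Phi(\SO_4(F))$, so the two fibers of $\cL_{\SO_4}^{\mathrm{ref}}$ coincide, whereas the paper concludes $\sigma^\theta\cong\sigma$ from the summand $\chi$ of $\Std_{\SO_4}\circ\phi$; your version avoids needing $\theta$-invariance of individual packet members.

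The step you flag as the main obstacle needs no Tate lifting. The isomorphic orthogonal representations $\Ad\phi_1,\Ad\phi_2$ are $\O_3(\C)$-conjugate, hence conjugate under $\SO_3(\C)\cong\PGL_2(\C)$ since $\O_3(\C)=\SO_3(\C)\times\{\pm1\}$. Two lifts to $\GL_2(\C)$ of one projective parameter then differ by a character of $L_F$, necessarily trivial on $\SL_2(\C)$.
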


\begin{proof}
   The implication $(1) \Rightarrow (2)$ is clear. For the converse, Theorem~\ref{theorem:compatibility-with-Arthur-local} gives $\sigma \in \Pi_{\phi}$ or $\sigma^{\theta} \in \Pi_{\phi}$.
   Let $(\pi_1, \pi_2)$ be the pair corresponding to $\sigma$, so $\wedge^2_{+} \sigma = \Ad(\pi_1)$ and $\wedge^2_{+} \sigma^{\theta} = \Ad(\pi_2)$.
   In the second case, $\Ad(\pi_2) = \wedge^2_{+} \phi = \Ad(\pi_1)$, hence $\pi_1 \cong \pi_2 \otimes \chi \circ \det$ for a quadratic character $\chi$ of $F^{\times}$. Therefore
   \begin{align*}
      \Std_{\SO_4} \circ \phi = \Ad \circ \phi_{\pi_1} \otimes \chi \oplus \chi.
   \end{align*}
   Every element in Arthur's packet is therefore $\theta$-invariant, so $\sigma^{\theta} = \sigma$.
\end{proof}

\subsection{A refined Arthur classification for \texorpdfstring{$\SO_4$}{SO4}}

Let $F$ be a number field.
For any reductive group $G$ over $F$, let $\Pi_{2}(G(\A_F))$ (resp. $\Pi_{\cusp}(G(\A_F))$) denote the set of equivalence classes of discrete (resp. cuspidal) automorphic representations of $G(\A_F)$.
For an automorphic representation $\pi$ of $G(\A_F)$, let $c(\pi)$ denote the Hecke--Satake parameter of $\pi$.
Similarly to the local case, we have a bijection
\begin{align*}
   \alpha^{*} \colon \Pi_{?}(\GSO_4(\A_F)) \xrightarrow{\sim}
   \{ (\pi_1, \pi_2) \in \Pi_{?}(\GL_2(\A_F)) \times \Pi_{?}(\GL_2(\A_F)) \mid \omega_{\pi_1} = \omega_{\pi_2} \}
\end{align*}
for $? \in \{2, \cusp\}$.

\begin{lemma}[\cite{GeeTaibi2019-ArthurSMultiplicityFormulaGSp4Restriction}*{Lemma 5.1.1}]
   The restriction from $\GSO_4(\A_F)$ to $\SO_4(\A_F)$ of discrete automorphic representations is compatible with the formation of Satake parameters.
\end{lemma}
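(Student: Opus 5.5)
The plan is to reduce everything to the corresponding statement for $\GL_2$ via the bijection $\alpha^{*}$, then use the fact that the restriction from $\GSO_4(\A_F)$ to $\SO_4(\A_F)$ is compatible with the map $\widehat{\GSO_4}=\GSpin_4\to\SO_4(\C)$.

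\textbf{Step 1: reduction to one constituent.} Let $\Pi'$ be a discrete automorphic representation of $\GSO_4(\A_F)$. Let $\sigma$ be an irreducible constituent of its restriction to $\SO_4(\A_F)$ occurring in the discrete spectrum. Choose a finite set $S$ of places outside of which $\Pi'_v$ is unramified. Enlarging $S$, we may also assume that $\sigma_v$ is the $\SO_4(\mathcal O_v)$-spherical constituent of $\Pi'_v|_{\SO_4(F_v)}$ for $v\notin S$.

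\textbf{Step 2: the local statement at unramified places.} Fix $v\notin S$. Write $\Pi'_v$ as the spherical subquotient of an unramified principal series $I(\chi')$ of $\GSO_4(F_v)$. Restriction to $\SO_4(F_v)$ commutes with parabolic induction, because the Borel subgroups correspond. Hence the spherical constituent of the restriction is the spherical subquotient of $I(\chi'|_{T})$, where $T$ is the maximal torus of $\SO_4$. The Satake parameter of the latter is the image of $c(\Pi'_v)\in\widehat{T'}$ under the dual map $\widehat{T'}\to\widehat T$. This map is the restriction of $\GSpin_4\to\SO_4(\C)$ to tori. This is exactly the projection used in the definition of $\cL_{\SO_4}^{\mathrm{ref}}$, so $c(\sigma_v)$ is the image of $c(\Pi'_v)$.

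\textbf{Step 3: independence of choices.} Any other discrete $\Pi''$ containing $\sigma$ differs from $\Pi'$ by a twist $\chi\circ\mu$ with $\chi$ a Hecke character; this is the global analogue of the Clifford lemma above, using that $\GSO_4(\A_F)/\SO_4(\A_F)Z(\A_F)$ is abelian. Such a twist changes $c(\Pi'_v)$ by a central element of $\GSpin_4$. That element dies in $\SO_4(\C)$, so the parameter is unchanged.

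The main obstacle is Step 1. We must check that every discrete $\sigma$ of $\SO_4(\A_F)$ actually occurs in the restriction of some discrete $\Pi'$, and that the spherical-constituent bookkeeping is uniform in $v$. For the first point, I would extend automorphic forms from $\SO_4$ to $\GSO_4$. This uses $\GSO_4(F)\backslash\GSO_4(\A_F)/Z(\A_F)\SO_4(\A_F)$ being compact abelian, as in Labesse--Langlands for $\SL_2\subset\GL_2$. Restriction along this compact quotient preserves square-integrability, so a discrete $\Pi'$ containing $\sigma$ exists. For the second point, multiplicity freeness of the local restrictions, from the preceding proposition, makes the spherical constituent well defined.
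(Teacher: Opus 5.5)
Your proof is correct, and Steps~1 and~2 alone already prove the statement. The paper gives no argument of its own and simply cites Lemma~5.1.1 of Gee--Ta\"{\i}bi, so your direct unramified computation is a self-contained replacement for that reference.

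The argument rests on two inputs:
\begin{itemize}
\item The similitude $\mu$ is already surjective on the diagonal torus $T'(F_v)$. Hence $\GSO_4(F_v)=B'(F_v)\SO_4(F_v)$, and $I_{B'}^{\GSO_4}(\chi')|_{\SO_4(F_v)}\cong I_{B}^{\SO_4}(\chi'|_{T})$, since the unipotent radicals and modulus characters agree.
\item The dual of $\SO_4\hookrightarrow\GSO_4$ is the projection $\GSpin_4(\C)\to\SO_4(\C)$.
\end{itemize}
Uniqueness of the $\SO_4(\mathcal O_v)$-spherical constituent should be justified by $\dim I_{B}^{\SO_4}(\chi'|_T)^{\SO_4(\mathcal O_v)}=1$, which follows from the Iwasawa decomposition. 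Multiplicity freeness of the restriction does not suffice, since by itself it does not exclude two non-isomorphic spherical constituents.

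Step~3 and the ``main obstacle'' paragraph are unnecessary. The Satake parameter $c(\sigma)$ is intrinsic to $\sigma$, and Step~2 applies to every $\Pi'$ whose restriction contains $\sigma$, so there is no choice whose independence needs checking. The existence of some discrete $\Pi'$ above a given $\sigma$ is a different statement. The paper records it separately as the next theorem, citing Gee--Ta\"{\i}bi, Theorem~5.1.2.

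The global twisting claim in Step~3 also does not follow from the abelianness of $\GSO_4(\A_F)/\SO_4(\A_F)Z(\A_F)$ alone. Without a Labesse--Langlands-type argument using the automorphic realizations, the local Clifford twists $\chi_v$ need not assemble into a Hecke character. Since nothing depends on this claim, you should drop it.
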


\begin{theorem}[\cite{GeeTaibi2019-ArthurSMultiplicityFormulaGSp4Restriction}*{Theorem 5.1.2}]
   Any discrete (resp. cuspidal) automorphic representation of $\SO_4(\A_F)$ is contained in a restriction of a discrete (resp. cuspidal) automorphic representation of $\GSO_4(\A_F)$.
\end{theorem}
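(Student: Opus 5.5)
The approach is to deduce this from the cuspidal case by a Clifford-theoretic argument, using the preceding lemma (Gee--Taïbi, Lemma 5.1.1) for compatibility with Satake parameters. Let $\sigma$ be a discrete automorphic representation of $\SO_4(\A_F)$ realized in $L^2_{\disc}(\SO_4(F)\backslash\SO_4(\A_F))$. I would embed $\SO_4(\A_F)$ as a normal subgroup of $\GSO_4(\A_F)$ with abelian quotient. The image $\SO_4(\A_F)\GSO_4(F)Z(\A_F)$ is closed and of finite covolume in $\GSO_4(\A_F)$, and the quotient $\GSO_4(\A_F)/\SO_4(\A_F)\GSO_4(F)Z(\A_F)$ is compact. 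It is identified via the similitude $\mu$ with a quotient of $\A_F^\times/F^\times(\A_F^\times)^2$.

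The key step is to realize $\sigma$ as a subrepresentation of the restriction of some discrete $\GSO_4$-representation. First, I would fix a central character. Then I would extend automorphic functions in the $\sigma$-isotypic space by the standard induction
\begin{align*}
   \Ind_{\SO_4(\A_F)\GSO_4(F)Z(\A_F)}^{\GSO_4(\A_F)}.
\end{align*}
Since the quotient is compact and abelian, this produces square-integrable automorphic forms on $\GSO_4$. Their spectral decomposition is discrete, so they decompose into discrete automorphic representations $\sigma'$. Restricting back to $\SO_4(\A_F)$ and using Frobenius reciprocity shows that some $\sigma'$ has $\sigma$ in its restriction. This is the argument of Labesse--Langlands and Gelbart--Knapp for $\SL_2\subset\GL_2$, transposed to $\SO_4\subset\GSO_4$.

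For the cuspidal refinement, restriction of functions preserves vanishing of constant terms. The unipotent radicals of parabolics of $\GSO_4$ and $\SO_4$ coincide, so each constant term along a parabolic of $\GSO_4$ is computed inside $\SO_4$. Hence a $\sigma'$ whose restriction contains a cuspidal $\sigma$ can be chosen in the cuspidal spectrum: the cuspidal projection of the induced functions still restricts nontrivially.

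I expect the main obstacle to be the technical extension step. One must show that the induced functions are genuinely square-integrable on $\GSO_4(F)Z(\A_F)\backslash\GSO_4(\A_F)$, with the correct central character chosen compatibly on $Z(\A_F)\cap\SO_4(\A_F)$. One must also check that the multiplicity of $\sigma$ in the restriction is nonzero for an actual irreducible $\sigma'$, not merely for a direct integral. I would handle the first point by extending the character of $Z(\A_F)\cap\SO_4(\A_F)=\mu_2(\A_F)$ to a Hecke character, and the second by compactness of the abelian quotient, which makes the relevant decomposition a discrete sum over characters of the quotient.
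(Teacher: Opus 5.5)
Your argument is correct. It is the classical Labesse--Langlands extension argument on which the cited theorem of Gee--Ta\"ibi rests; the paper itself gives no proof beyond that citation. When you write it out, make three points explicit:
\begin{itemize}
\item Extending $\phi$ by $\gamma z h\mapsto\omega(z)\phi(h)$ is well defined because $\SO_4(\A_F)\cap\GSO_4(F)Z(\A_F)=\SO_4(F)\mu_2(\A_F)$, which follows from $F^\times\cap(\A_F^\times)^2=(F^\times)^2$.
\item $\GSO_4(F)$ acts on $\SO_4$-automorphic functions by conjugation, so you must induce the span of the $\GSO_4(F)$-conjugates of the $\sigma$-isotypic space rather than that space itself.
\item When $\phi$ is cuspidal, the extended functions are already cuspidal, since their constant terms are constant terms of conjugates of $\phi$. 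So no cuspidal projection is needed.
\end{itemize}
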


Theorem~\ref{theorem:compatibility-with-Arthur-local} gives the following compatibility.
\begin{theorem}
   Let $\Sigma$ be a discrete (resp. cuspidal) automorphic representation of $\SO_4(\A_F)$.
   Let $\Sigma'$ be a discrete (resp. cuspidal) automorphic representation of $\GSO_4(\A_F)$ such that $\Sigma$ is contained in $\Sigma'|_{\SO_4(\A_F)}$.
   Let $\alpha^* \Sigma' = (\Pi_1, \Pi_2)$ be the pair of discrete (resp. cuspidal) automorphic representations of $\GL_2(\A_F)$ corresponding to $\Sigma'$.
   Then, the $A$-parameter of $\Sigma$ is given by the Rankin--Selberg product $\Pi_1 \times \Pi_2^{\vee} = \Pi_1^{\vee} \times \Pi_2$ of the $L$-parameters of $\Pi_1$ and $\Pi_2$.
\end{theorem}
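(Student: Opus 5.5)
The plan is to transfer the global statement through the restriction from $\GSO_4(\A_F)$ to $\SO_4(\A_F)$. Let $\Sigma \subset \Sigma'|_{\SO_4(\A_F)}$ with $\alpha^*\Sigma' = (\Pi_1,\Pi_2)$. I would compute the Hecke--Satake family of $\Sigma$ at almost all places and then invoke strong multiplicity one for isobaric representations of $\GL_4$ to identify the $A$-parameter.

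\emph{Step 1: the Hecke--Satake family.} By the lemma cited from Gee--Taïbi (Lemma 5.1.1), $c(\Sigma)$ is the image of $c(\Sigma')$ under $\widehat{\GSO_4} \twoheadrightarrow \widehat{\SO_4}$. At an unramified place $v$, $c(\Sigma'_v) = \widehat\alpha^*(c(\Pi_{1,v}),c(\Pi_{2,v}))$. The dual isomorphism $(g_1,g_2)\mapsto g_1\otimes g_2^\vee$ then gives
\[
\Std_{\SO_4}(c(\Sigma_v)) = c(\Pi_{1,v})\otimes c(\Pi_{2,v})^\vee .
\]
By the remark that $V_1\otimes V_2^\vee\simeq V_1^\vee\otimes V_2$ (since the determinants agree, i.e.\ $\omega_{\Pi_1}=\omega_{\Pi_2}$), this also equals $c(\Pi_{1,v})^\vee\otimes c(\Pi_{2,v})$.

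\emph{Step 2: automorphy of the product.} Ramakrishnan's Rankin--Selberg lift gives an isobaric automorphic representation $\Pi_1\boxtimes\Pi_2^\vee$ of $\GL_4(\A_F)$. For discrete non-cuspidal $\Pi_i$ (one-dimensional characters composed with $\det$, i.e.\ $\chi\boxtimes S_2^A$), I would write down the product explicitly, e.g.\ $\chi\boxtimes S^A_2\otimes\pi$ or $\chi\chi'^{-1}\boxtimes(S_3^A\boxplus S_1^A)$. Arthur's global classification for $\SO_4$ attaches to $\Sigma$ an $A$-parameter $\psi$ whose standard $\GL_4$-realization has Satake family $\Std_{\SO_4}(c(\Sigma_v))$ at almost all $v$.

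\emph{Step 3: identification.} Comparing with Step 1, strong multiplicity one (Jacquet--Shalika) for isobaric sums yields $\psi = \Pi_1\boxtimes\Pi_2^\vee$ as formal parameters. The local compatibility, Theorem~\ref{theorem:compatibility-with-Arthur-local}, upgrades this to all places: the localizations $\psi_v$ match $\widehat\alpha^*(\phi_{\Pi_{1,v}},\phi_{\Pi_{2,v}})$ modulo the outer automorphism. This is consistent with the refined correspondence defined through $\GSO_4$.

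\emph{Main obstacle.} I expect the main difficulty to be the non-generic discrete cases, where Ramakrishnan's theorem does not literally apply and the Arthur-$\SL_2$ factors must be tracked. I would handle these by direct computation of the tensor product of $\SL_2$-representations ($S_2\otimes S_2 = S_3\oplus S_1$). A secondary issue is independence of the choice of $\Sigma'$: twisting by $\chi\circ\mu$ twists both $\Pi_i$ by $\chi$ and leaves $\Pi_1\boxtimes\Pi_2^\vee$ unchanged, so the result does not depend on that choice.
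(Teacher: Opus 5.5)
Your proposal is correct and is essentially the paper's argument. The paper simply asserts that the statement follows from Theorem~\ref{theorem:compatibility-with-Arthur-local}, together with the Gee--Ta\"{\i}bi Satake-compatibility lemma stated just before it; your Steps 1--3 make explicit what that one-line deduction leaves implicit, namely the unramified comparison $\Std_{\SO_4}(c(\Sigma_v)) = c(\Pi_{1,v})\otimes c(\Pi_{2,v})^{\vee}$, the strong-multiplicity-one identification of the formal parameter, and the residual cases via $S_2^A\otimes S_2^A = S_3^A\oplus S_1^A$.
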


\begin{theorem}[cf.\ \cite{LapidMao2015-ConjectureWhittakerFourierCoefficientsCuspForms}*{Section 6.3}]\label{theorem:SO4-simultaneous-twist}
   Let $\Pi^a = (\pi^a_1, \pi^a_2),\Pi^b = (\pi^b_1, \pi^b_2)$ be two pairs of cuspidal automorphic representations of $\GL_2(\A_F)$ such that $\omega_{\pi^a_1} = \omega_{\pi^a_2}$ and $\omega_{\pi^b_1} = \omega_{\pi^b_2}$.
   Let $\Sigma'_a$ (resp. $\Sigma'_b$) denote the pullbacks $\beta^*(\Pi^a)$ (resp. $\beta^*(\Pi^b)$).
   If we have $\Ad(\pi^a_i) = \Ad(\pi^b_i)$ for $i = 1, 2$ and $\pi^a_1 \times {\pi^a_2}^{\vee} = \pi^b_1 \times {\pi^b_2}^{\vee}$, then there exists a character $\chi$ of $\A_F^{\times}/F^{\times}$ such that $\pi^a_i = \pi^b_i \otimes \chi \circ \det$.

   Furthermore, let $\Sigma$ be a discrete automorphic representation of $\SO_4(\A_F)$.
   Then, we have
   \begin{align*}
      m_{\disc}(\Sigma) = 1.
   \end{align*}
\end{theorem}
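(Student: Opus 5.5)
The plan is to split the statement into a rigidity assertion for pairs of $\GL_2$ representations, and then a Labesse--Langlands type restriction argument from $\GSO_4$ to $\SO_4$ for the multiplicity.

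\emph{Simultaneous twist.}
\begin{enumerate}
\item By Ramakrishnan's characterization of the adjoint lift (equivalently, multiplicity one for $\SL_2$), $\Ad(\pi^a_i)=\Ad(\pi^b_i)$ gives Hecke characters $\chi_1,\chi_2$ with $\pi^a_i\cong\pi^b_i\otimes\chi_i\circ\det$.
\item Comparing central characters, $\omega_{\pi^a_1}=\omega_{\pi^a_2}$ and $\omega_{\pi^b_1}=\omega_{\pi^b_2}$ force $\chi_1^2=\chi_2^2$. Hence $\mu=\chi_1\chi_2^{-1}$ is quadratic.
\item Put $T=\pi^b_1\times{\pi^b_2}^\vee$. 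The hypothesis on Rankin--Selberg products then reads $T\cong T\otimes\mu$.
\item If $\pi^b_1\otimes\mu\cong\pi^b_1$, replace $\chi_1$ by $\chi_1\mu=\chi_2$. If instead $\pi^b_2\otimes\mu\cong\pi^b_2$, replace $\chi_2$ by $\chi_2\mu$. In either case we get one common twist $\chi$, which is what we want.
\end{enumerate}

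It therefore suffices to show that $T\cong T\otimes\mu$ with $\mu\neq1$ forces one of the $\pi^b_i$ to be $\mu$-self-twisted. The partial $L$-function $L^S(s,T\times T^\vee\otimes\mu)$ must have a pole at $s=1$. Using
\[
T\times T^\vee=(\mathbf1\boxplus\Ad\pi^b_1)\boxtimes(\mathbf1\boxplus\Ad\pi^b_2),
\]
this $L$-function factors into four pieces: $L^S(s,\mu)$, $L^S(s,\Ad\pi^b_1\otimes\mu)$, $L^S(s,\Ad\pi^b_2\otimes\mu)$, and $L^S(s,\Ad\pi^b_1\times\Ad\pi^b_2\otimes\mu)$.

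\begin{itemize}
\item The first factor $L^S(s,\mu)$ is holomorphic at $s=1$, since $\mu\neq1$.
\item A pole of the second or third factor means exactly that $\pi^b_i\otimes\mu\cong\pi^b_i$, which is the good case.
\item In the non-dihedral case $\Ad\pi^b_i$ is cuspidal. A pole of the fourth factor would then force $\Ad\pi^b_1\cong\Ad\pi^b_2\otimes\mu$. Taking determinants gives $1=\mu^3=\mu$, a contradiction.
\end{itemize}

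The dihedral cases are the main obstacle. There $\Ad\pi^b_i=\mathrm{AI}_{K/F}(\chi/\chi^\sigma)\boxplus\omega_{K/F}$ is not cuspidal, and a pole only yields a common isobaric constituent. For these I would argue directly with the explicit decompositions of Lemma~\ref{lemma:SO4-dihedral-case}, comparing the constituents of $T$ and of $T\otimes\mu$ case by case (whether $\mu=\omega_{K/F}$ or not, and whether the two quadratic fields agree). The goal in each case is to show that $\mu$ fixes some $\pi^b_i$.

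\emph{Multiplicity one.} By \cite{GeeTaibi2019-ArthurSMultiplicityFormulaGSp4Restriction}*{Theorem 5.1.2}, $\Sigma$ occurs in the restriction of some discrete $\Sigma'$ of $\GSO_4(\A_F)$. Such a $\Sigma'$ has multiplicity one, by multiplicity one for $\GL_2$ via $\alpha^*$.

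Next suppose $\Sigma'_a,\Sigma'_b$ both contain $\Sigma$. At almost all places the local component $\Sigma_v$ determines both $\Std_{\SO_4}$ and $\wedge^2_+$ of its parameter, by Proposition~\ref{proposition:refined-packet-characterization}. Strong multiplicity one then gives equal Rankin--Selberg products and $\Ad(\pi^a_1)=\Ad(\pi^b_1)$. The equality for the second factor follows after applying $\theta$ if necessary. The first part now gives $\Sigma'_a\cong\Sigma'_b\otimes\chi\circ\mu$, so there is a single twist class of $\GSO_4$ representations containing $\Sigma$.

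Finally, the standard restriction argument, as in \cite{GeeTaibi2019-ArthurSMultiplicityFormulaGSp4Restriction}*{Section 5}, expresses $m_{\disc}(\Sigma)$ as a sum over this twist class. Local restrictions are multiplicity free by the preceding proposition, so this sum reduces to a single term and gives $m_{\disc}(\Sigma)=1$.
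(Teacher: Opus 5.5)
Your first part has a genuine gap. The reduction ``it suffices to show that $T\cong T\otimes\mu$ with $\mu\neq1$ forces one of the $\pi^b_i$ to be $\mu$-self-twisted'' is false, so the dihedral case analysis aimed at that goal cannot be completed.

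Put $X(\pi)=\{\nu:\pi\otimes\nu\cong\pi\}$; its elements are quadratic. Since $T\otimes\nu_1\nu_2\cong(\pi^b_1\otimes\nu_1)\times(\pi^b_2\otimes\nu_2)^\vee$, one always has $X(\pi^b_1)X(\pi^b_2)\subset X(T)$. This product is in general larger than the union $X(\pi^b_1)\cup X(\pi^b_2)$.

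For a concrete instance, let $K_1\neq K_2$ be quadratic extensions with characters $\omega_1,\omega_2$. Let $\pi_j=\mathrm{AI}_{K_j/F}(\chi_j)$ be cuspidal, dihedral with respect to $K_j$ only, with $\omega_{\pi_1}=\omega_{\pi_2}$. To arrange this, prescribe $\chi_1|_{\A_F^\times}=\omega_2$ and $\chi_2|_{\A_F^\times}=\omega_1$, and take $\chi_j/\chi_j^\sigma$ of order greater than $2$. Then $\Pi^a=(\pi_1\otimes\omega_1\omega_2,\pi_2)$ and $\Pi^b=(\pi_1,\pi_2)$ satisfy every hypothesis, and your $\mu$ is $\omega_1\omega_2$. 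Neither $\pi_j$ is fixed by $\mu$, so your step 4 produces nothing, although the conclusion holds with $\chi=\omega_2$. In your $L$-function analysis, this is exactly the case where the pole of the fourth factor comes from the one-dimensional constituents $\omega_1\subset\Ad\pi_1$ and $\omega_2\subset\Ad\pi_2$.

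The correct target is $\mu\in X(\pi^b_1)X(\pi^b_2)$. If $\mu=\nu_1\nu_2$ with $\nu_j\in X(\pi^b_j)$, then $\chi=\chi_1\nu_1$ works, because $\chi\chi_2^{-1}=\mu\nu_1=\nu_2$.

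With this target, your pole argument disposes at once of the first three factors and of any common one-dimensional constituent in the fourth. This is because the nontrivial elements of $X(\pi)$ are exactly the characters occurring in $\Ad\pi$. A common two-dimensional constituent forces $K_1=K_2$, by comparing central characters. Only in that case must you use the full isomorphism $T\cong T\otimes\mu$, since a nonzero homomorphism $T\to T\otimes\mu$ no longer suffices there. This comparison is what the paper imports from Lapid--Mao, Section 6.3, rather than proving it.

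Separately, your multiplicity argument applies the first part to an arbitrary discrete $\Sigma$, but that part concerns cuspidal pairs only. For residual $\Sigma$ the $\GSO_4$ lift has a factor $\eta\circ\det$. The paper twists the pair to $(\mathbf{1},\tau)$, notes that $\Sigma$ then factors through $\SO_4\to\PGL_2$ with kernel $\SL_2$, and concludes by multiplicity one for $\GL_2$.

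Also, no appeal to $\theta$ is needed when comparing two lifts of the same $\Sigma$. Local Clifford theory gives a single local similitude twist relating $\Sigma'_{a,v}$ and $\Sigma'_{b,v}$. That twist yields both adjoint identities and the Rankin--Selberg identity simultaneously.
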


\begin{proof}
   By \cite{Ramakrishnan2000-ModularityRankinSelbergSeriesMultiplicityOne}*{Theorem 4.1.2}, write $\pi_i^a=\pi_i^b\otimes\chi_i\circ\det$ with Hecke characters $\chi_i$.
   Equality of the central characters within each pair makes $\chi_1\chi_2^{-1}$ quadratic, and equality of the tensor transfers makes it a self-twist of $\pi_1^b\times{\pi_2^b}^{\vee}$.
   The comparison in \cite{LapidMao2015-ConjectureWhittakerFourierCoefficientsCuspForms}*{Section 6.3, pp.~487--488} then shows that the two twists can be chosen equal. The same section proves cuspidal multiplicity one for $\SO_4$.

   For a residual representation $\Sigma$, choose a discrete lift to $\GSO_4$ as above. The lift is not cuspidal, since restriction preserves cuspidality. Thus at least one of its $\GL_2$ factors is a character $\eta\circ\det$.
   After interchanging the factors and twisting both by $\eta^{-1}$, the pair is $(\mathbf{1},\tau)$ with $\omega_\tau=1$. Thus $\Sigma$ factors through the second projection $\SO_4\to\PGL_2$, whose kernel is $\SL_2$.
   This projection is surjective on rational and adelic points. Since $\SL_2(F)\backslash\SL_2(\A_F)$ has finite volume, the $\SL_2(\A_F)$-invariant subspace of the discrete automorphic spectrum identifies with the discrete spectrum of $\PGL_2$.
   Multiplicity one for $\GL_2$ therefore gives $m_{\disc}(\Sigma)=1$. For the classification and multiplicity one used here, see \cite{GeeTaibi2019-ArthurSMultiplicityFormulaGSp4Restriction}*{Remarks 2.5.1(1) and 4.1.7(1)}.
\end{proof}

\begin{remark}
   We continue to use the notation in the theorem above.
   If $\Sigma$ occurs in $\Sigma'_a|_{\SO_4(\A_F)}$, then
   \begin{align*}
      c(\Ad(\pi^a_1))&=\wedge^2_+c(\Sigma),\\
      c(\Ad(\pi^a_2))&=\wedge^2_-c(\Sigma).
   \end{align*}
\end{remark}

\begin{corollary}
   Let $\Sigma_1, \Sigma_2$ be discrete automorphic representations of $\SO_4(\A_F)$.
   Then, they have a common Satake parameter if and only if there exists a discrete automorphic representation $\Sigma'$ of $\GSO_4(\A_F)$ such that both $\Sigma_1$ and $\Sigma_2$ are contained in $\Sigma'|_{\SO_4(\A_F)}$.
\end{corollary}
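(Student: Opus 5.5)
The plan is to reduce to the local statements by building a global $\GSO_4$ lift. Let $\Sigma_1,\Sigma_2$ be discrete automorphic representations of $\SO_4(\A_F)$.

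For the ``if'' direction, suppose both $\Sigma_1$ and $\Sigma_2$ occur in $\Sigma'|_{\SO_4(\A_F)}$ for a discrete $\Sigma'$ of $\GSO_4(\A_F)$. The lemma attributed to \cite{GeeTaibi2019-ArthurSMultiplicityFormulaGSp4Restriction}*{Lemma 5.1.1} says that restriction from $\GSO_4$ to $\SO_4$ is compatible with Satake parameters. So at almost every place, $c(\Sigma_{1,v})$ and $c(\Sigma_{2,v})$ are both the projection of $c(\Sigma'_v)$ under $\GSpin_4\twoheadrightarrow\SO_4(\C)$. Hence the two Satake parameters agree.

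For the ``only if'' direction, I would argue in three steps.

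\textbf{Step 1: choose lifts.} By \cite{GeeTaibi2019-ArthurSMultiplicityFormulaGSp4Restriction}*{Theorem 5.1.2}, choose discrete $\Sigma'_a\supset\Sigma_1$ and $\Sigma'_b\supset\Sigma_2$. Write $\alpha^*\Sigma'_a=(\pi^a_1,\pi^a_2)$ and $\alpha^*\Sigma'_b=(\pi^b_1,\pi^b_2)$.

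\textbf{Step 2: show the pairs are simultaneous twists.} The common Satake parameter, together with the preceding remark, gives two equalities:
\begin{align*}
c(\Ad\pi^a_i)=c(\Ad\pi^b_i)\ (i=1,2),\quad c(\pi^a_1\times\pi^{a\vee}_2)=c(\pi^b_1\times\pi^{b\vee}_2).
\end{align*}
By strong multiplicity one for isobaric representations of $\GL_3$ and $\GL_4$, these become equalities of automorphic representations.
\begin{itemize}
\item In the cuspidal case, Theorem~\ref{theorem:SO4-simultaneous-twist} yields a Hecke character $\chi$ with $\pi^a_i\cong\pi^b_i\otimes\chi\circ\det$ for $i=1,2$. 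By the remark on twists, this says $\Sigma'_a\cong\Sigma'_b\otimes\chi\circ\mu$.
\item In the residual case, as in the proof of that theorem, one factor is a character $\eta\circ\det$. Equality of the $\Ad$'s forces the same factor on both sides to be one-dimensional. After twisting, both pairs have the form $(\mathbf1,\tau)$ and $(\mathbf1,\tau')$. Equality of the standard transfers then gives $\tau\cong\tau'$, up to a quadratic twist that can be absorbed into the one-dimensional factor.
\end{itemize}

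\textbf{Step 3: conclude.} Since $\Sigma'_a$ and $\Sigma'_b\otimes\chi\circ\mu$ have the same restriction to $\SO_4(\A_F)$, both $\Sigma_1$ and $\Sigma_2$ lie in $\Sigma'=\Sigma'_a$.

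The main obstacle is Step~2 in the cuspidal case. Equality of the $\Ad$'s only gives $\pi^a_i\cong\pi^b_i\otimes\chi_i\circ\det$ with possibly different twists $\chi_1\ne\chi_2$. The quadratic discrepancy $\chi_1\chi_2^{-1}$ must be absorbed using equality of the tensor transfers, which is exactly the Lapid--Mao argument already packaged in Theorem~\ref{theorem:SO4-simultaneous-twist}. So I would invoke that theorem directly rather than redo the argument. The residual and dihedral edge cases need the small separate check sketched above.
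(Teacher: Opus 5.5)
Your proposal is correct and is essentially the argument the paper intends, though the paper does not write it out. You lift via \cite{GeeTaibi2019-ArthurSMultiplicityFormulaGSp4Restriction}*{Theorem 5.1.2}, use the preceding remark and strong multiplicity one to get equality of the adjoint and tensor transfers, apply Theorem~\ref{theorem:SO4-simultaneous-twist}, and take the ``if'' direction from \cite{GeeTaibi2019-ArthurSMultiplicityFormulaGSp4Restriction}*{Lemma 5.1.1}. Your separate residual check is a useful addition, since that theorem is stated only for cuspidal pairs.

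In the residual check, drop the hedge ``up to a quadratic twist.'' Equality of $\tau^\vee\boxtimes S_2^A$ and $\tau'^\vee\boxtimes S_2^A$ already forces $\tau\cong\tau'$. Moreover, a genuine discrepancy $\tau'\cong\tau\otimes\mu$ with $\tau\not\cong\tau\otimes\mu$ could not be absorbed. The restrictions of $(\mathbf 1,\tau)$ and $(\mathbf 1,\tau')$ are pullbacks of two different representations along the surjection $\SO_4(\A_F)\to\PGL_2(\A_F)$.
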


\begin{definition}
   A refined $A$-packet for $\SO_4$ consists of the discrete automorphic representations of $\SO_4(\A_F)$ contained in the restriction of a fixed discrete automorphic representation of $\GSO_4(\A_F)$.
   Put
   \begin{align*}
      \Psi(\SO_4) =
       \{ (\Pi_1, \Pi_2) \in \Pi_{2}(\GL_2(\A_F)) \times \Pi_{2}(\GL_2(\A_F)) \mid \omega_{\Pi_1} = \omega_{\Pi_2}\}/\sim_{\Delta \GL_1},
   \end{align*}
   where $\sim_{\Delta \GL_1}$ denotes simultaneous twisting by a character of $\A_F^{\times}/F^{\times}$.
   An element of $\Psi(\SO_4)$ is called a refined $A$-parameter for $\SO_4$.

   For $\psi \in \Psi(\SO_4)$, the refined local $A$-packet $\Pi_{\psi_v}(\SO_4)$ consists of the irreducible admissible constituents of the restriction to $\SO_4(F_v)$ of the local component at $v$ of a discrete automorphic representation of $\GSO_4(\A_F)$ corresponding to $\psi$.
   Let $\Pi_{\psi}(\SO_4) = \otimes_v ' \Pi_{\psi_v}(\SO_4)$.
\end{definition}

\begin{remark}
   For a refined $A$-parameter $\psi$ for $\SO_4$, write its standard formal parameter as
   $\boxplus_{i\in I}(\tau_i\boxtimes S_{d_i}^A)$, with distinct orthogonal simple summands, and put
   \begin{align*}
      S_{\psi}
      &=\left\{(\epsilon_i)\in\{\pm1\}^I:
         \prod_{i\in I}\epsilon_i^{d_i\deg\tau_i}=1\right\},\\
      \cS_{\psi}
      &=S_{\psi}/\langle(-1)_{i\in I}\rangle.
   \end{align*}
   At each place $v$, define $S_{\psi_v}=\Cent(\Image(\psi_v),\widehat{\SO}_4)$ and
   $\cS_{\psi_v}=\pi_0(S_{\psi_v}/Z(\widehat{\SO}_4)^{\Gamma_{F_v}})$.
   Localization sends $(\epsilon_i)$ to the transformation acting by $\epsilon_i$ on the local parameter of $\tau_i\boxtimes S_{d_i}^A$.
   Let $\epsilon_{\psi}$ be Arthur's character of
   $\cS_{\psi}$.
   Let $\Delta \colon \cS_{\psi} \to \prod_v \cS_{\psi_v}$ denote the natural diagonal map.
\end{remark}

\begin{remark}
   Since the map
   \begin{align*}
      \SO_4 \xrightarrow{\pr_1} \SL_2/\mu_2 = \PGL_2
   \end{align*}
   is surjective on $F$-rational points for every field $F$ of characteristic $0$, local $A$-packets can be described using the local $L$-packets of $\PGL_2(F)$.
   For example, let $\phi$ be a local generic $L$-parameter for $\PGL_2(F)$.
   Then, the local $A$-packet corresponding to the local $A$-parameter
   \begin{align*}
      L_F \times \SL_2(\C) \xrightarrow{\phi \times S_2^A} \SL_2(\C) \times \SL_2(\C) \twoheadrightarrow \SO_4(\C)
   \end{align*}
   is given as a singleton consisting of the pullback of the unique irreducible admissible representation of $\PGL_2(F)$ corresponding to $\phi$ to $\SO_4(F)$.
\end{remark}

\begin{lemma}
   Let $\psi$ be a refined $A$-parameter for $\SO_4$.
   We have the following results.
   \begin{enumerate}
      \item
      Suppose that $\psi$ is represented by a pair
      $(\chi_1\boxtimes S_2^A,\chi_2\boxtimes S_2^A)$, and put
      $\delta=\chi_1\chi_2^{-1}$.  The equality of the central
      characters gives $\delta^2=1$, and
      \begin{align*}
         \Pi_1\times\Pi_2^{\vee}
         =
         (\delta\boxtimes S_3^A)
         \boxplus
         (\delta\boxtimes S_1^A).
      \end{align*}
      We also have $\cS_{\psi} = 1$ and $\epsilon_{\psi} = \mathbf{1}$.
      \item
      If $\psi$ is represented by a pair $(\Pi_1, \Pi_2)$ where $\Pi_1$ is the standard representation $S_2^A$ of Arthur's $\SL_2$ and $\Pi_2$ is a cuspidal automorphic representation of $\PGL_2(\A_F)$, then we have $\cS_{\psi} = 1$ and $\epsilon_{\psi} = \mathbf{1}$.
      \item
      Assume that $\psi$ is represented by a pair $(\Pi_1, \Pi_2)$ of cuspidal automorphic representations of $\GL_2(\A_F)$ with $\omega_{\Pi_1} = \omega_{\Pi_2}$.
      Then, $\Pi_1 \times \Pi_2^{\vee}$ is non-cuspidal precisely when one of the representations is non-dihedral and $\Pi_1 \cong \Pi_2 \otimes \chi \circ \det$ for an automorphic character $\chi$ of $\A_F^{\times}$, or both representations are dihedral and induced from the same quadratic extension of $F$.
      If the Rankin--Selberg product is cuspidal, or if the first
      non-cuspidal case occurs, then $\cS_{\psi}=1$.  In the common
      dihedral case, we have
      $\cS_{\psi}=\Z/2\Z$ or $(\Z/2\Z)^2$, according to its isobaric
      decomposition.
      In any case, we have $\epsilon_{\psi} = \mathbf{1}$.
   \end{enumerate}
\end{lemma}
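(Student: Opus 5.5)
We verify the three cases separately. In each one we first compute the Rankin--Selberg transfer $\Pi_1\times\Pi_2^{\vee}$, written as a formal isobaric sum $\boxplus_{i\in I}(\tau_i\boxtimes S_{d_i}^A)$. We then read off $S_\psi$ from the defining condition $\prod_i\epsilon_i^{d_i\deg\tau_i}=1$ in the preceding remark, and pass to $\cS_\psi=S_\psi/\langle(-1)_{i\in I}\rangle$. Arthur's character $\epsilon_\psi$ is handled at the end of each case. It is automatically trivial when $\cS_\psi=1$. When $\cS_\psi\neq1$, it is trivial because it is built from symplectic root numbers of pairs $(\tau_i\boxtimes S_{d_i}^A,\tau_j\boxtimes S_{d_j}^A)$ with $d_i+d_j$ odd, and no such pairs occur there.

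For (1), the central character of $\chi\boxtimes S_2^A$ is $\chi^2$, so $\omega_{\Pi_1}=\omega_{\Pi_2}$ gives $\delta^2=1$. The Clebsch--Gordan rule gives
\[
(\chi_1\boxtimes S_2^A)\otimes(\chi_2^{-1}\boxtimes S_2^A)=\delta\boxtimes(S_2^A\otimes S_2^A)=(\delta\boxtimes S_3^A)\boxplus(\delta\boxtimes S_1^A).
\]
Hence $I$ has two elements, with $d_i\deg\tau_i=3$ and $1$. The defining condition $\epsilon_1^3\epsilon_2=1$ forces $\epsilon_1=\epsilon_2$, so $S_\psi=\langle(-1,-1)\rangle$ and $\cS_\psi=1$.

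For (2), $\Pi_1\times\Pi_2^{\vee}=\Pi_2\boxtimes S_2^A$, using $\Pi_2^\vee\cong\Pi_2$ on $\PGL_2$. This is a single simple summand, so $S_\psi=\{\pm1\}$ is the center and $\cS_\psi=1$.

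For (3), I would invoke Ramakrishnan's cuspidality criterion \cite{Ramakrishnan2000-ModularityRankinSelbergSeriesMultiplicityOne} for $\Pi_1\boxtimes\Pi_2^\vee$. It says the product is non-cuspidal exactly in the two stated situations: a twist relation with one factor non-dihedral, or both factors dihedral from a common quadratic extension $K/F$.

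In the twist case, $\Pi_1\times\Pi_2^\vee=\chi\boxplus(\Ad(\Pi_2)\otimes\chi)$. Equality of central characters forces $\chi^2=1$, and $\Ad(\Pi_2)$ is cuspidal because $\Pi_2$ is non-dihedral. The degrees are $1$ and $3$, so as in (1) the condition forces $\epsilon_1=\epsilon_2$ and $\cS_\psi=1$. In the cuspidal case $|I|=1$, so again $\cS_\psi=1$.

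In the common dihedral case, Lemma~\ref{lemma:SO4-dihedral-case}(1) gives
\[
\Pi_1\times\Pi_2^{\vee}=\mathrm{AI}_{K/F}(\alpha)\boxplus\mathrm{AI}_{K/F}(\alpha'),
\]
and each summand is either cuspidal of degree $2$ or a sum of two quadratic characters. All $d_i=1$ here, so the condition on $S_\psi$ is just $\prod_i\epsilon_i^{\deg\tau_i}=1$, and I would count according to the isobaric decomposition. Two cuspidal summands give $\{\pm1\}^2$ with no condition, hence $\cS_\psi=\Z/2\Z$. A cuspidal summand plus two characters gives $\epsilon_2\epsilon_3=1$, again $\cS_\psi=\Z/2\Z$. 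Four distinct characters give $\{\epsilon:\prod\epsilon_i=1\}$ of order $8$, hence $\cS_\psi=(\Z/2\Z)^2$. In all of case (3) the parameter is generic (all $d_i=1$), so $\epsilon_\psi=\mathbf1$.

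The step I expect to be the main obstacle is the dihedral bookkeeping. One has to check that the summands are pairwise distinct, so that $I$ really indexes distinct simple constituents and the counts above are correct. When two of them coincide, the multiplicity changes the form of the centralizer, and I would handle this using the analysis of Lemmas~\ref{lemma:SO4-dihedral-case-fiber} and~\ref{lemma:SO4-not-multiplicity-free-case}. One also has to check that each summand is orthogonal; this follows from $\alpha|_{\A_F^\times}=1$ together with the explicit formula in Lemma~\ref{lemma:SO4-dihedral-case}.
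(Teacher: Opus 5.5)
Your route is the paper's: compute $\Pi_1\times\Pi_2^\vee$, read $S_\psi$ off the determinant condition, use Ramakrishnan's criterion and the dihedral decomposition, and observe that $\epsilon_\psi=\mathbf 1$ for generic parameters. The paper gets the last point from the orthogonal parity of the pairwise products, and you get it from the absence of pairs with $d_i+d_j$ odd; either works. Cases (1) and (2), the cuspidal case, and the twist case are correct as written.

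There is a gap at the step you flag as the main obstacle, and the lemmas you propose for it are the wrong ones. Lemmas~\ref{lemma:SO4-dihedral-case-fiber} and~\ref{lemma:SO4-not-multiplicity-free-case} concern coincidences between $T=\Pi_1\times\Pi_2^\vee$ and $\Ad(\Pi_1)$ inside the seven-dimensional $\rG_2$ parameter. They say nothing about repeated constituents of $T$ itself, and those are what determine $\cS_\psi$ for $\SO_4$.

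Such repetitions also cannot just be ``handled''. The formula for $S_\psi$ presupposes distinct constituents, and a repeated constituent would make the centralizer infinite. The asserted values $\Z/2\Z$ and $(\Z/2\Z)^2$ would then be false. What you need is that repetitions never occur. This is Lemma~\ref{lemma:computation-rankin-selberg-induced}(1), the input the paper cites for the dihedral case.

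Write $T=\mathrm{AI}_{K/F}(\chi_1/\chi_2)\boxplus\mathrm{AI}_{K/F}(\chi_1/\chi_2^\sigma)$.
\begin{itemize}
\item If both summands are cuspidal, they coincide only when $\chi_1/\chi_2^\sigma\in\{\chi_1/\chi_2,\ \chi_1^\sigma/\chi_2^\sigma\}$, i.e.\ $\chi_2=\chi_2^\sigma$ or $\chi_1=\chi_1^\sigma$.
\item If both are reducible, say $\eta_j\boxplus\eta_j\omega_{K/F}$, they share a character only when $\eta_1\circ N_{K/F}=\eta_2\circ N_{K/F}$. This again means $\chi_2=\chi_2^\sigma$.
\item A cuspidal summand and a reducible one share nothing.
\end{itemize}
The first two conclusions contradict cuspidality of $\Pi_1=\mathrm{AI}_{K/F}(\chi_1)$ and $\Pi_2=\mathrm{AI}_{K/F}(\chi_2)$. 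Inside a reducible summand, $\eta\neq\eta\omega_{K/F}$ because $\omega_{K/F}\neq\mathbf 1$. With this in place, your three counts give exactly the stated groups.
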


\begin{proof}
   First suppose that $\psi$ is non-generic.  In the
   residual--residual case, the signs on the $3$- and $1$-dimensional
   summands must agree because the centralizer lies in $\SO_4(\C)$.
   Hence
   \begin{align*}
      S_{\psi}=Z(\widehat{\SO}_4)
      \quad\text{and}\quad
      \cS_{\psi}=1.
   \end{align*}
   In the residual--cuspidal case, the standard parameter is the
   irreducible four-dimensional orthogonal representation
   $\Pi_2^{\vee}\boxtimes S_2^A$, so the same conclusion follows from
   Schur's lemma.  Thus $\epsilon_{\psi}=\mathbf{1}$ in both
   non-generic cases.

   Now suppose that $\psi$ is generic.  If the Rankin--Selberg product
   is cuspidal, then $\cS_{\psi}=1$.  In the non-dihedral
   twist-equivalent case, equality of the central characters implies
   $\chi^2=1$, and the standard parameter has the form
   \begin{align*}
      \chi
      \boxplus
      (\chi\otimes\Ad(\Pi_2)).
   \end{align*}
   The determinant-one condition again forces the signs on the
   $1$- and $3$-dimensional summands to agree, so
   $\cS_{\psi}=1$.  In the remaining common dihedral case, the asserted
   component groups follow from
   Lemma~\ref{lemma:computation-rankin-selberg-induced}.

   Finally, for a generic parameter, the simple self-dual summands have
   the same orthogonal parity.  Their pairwise global Rankin--Selberg
   root numbers are therefore $+1$ by
   \cite{Arthur2013-EndoscopicClassificationRepresentations}*{Theorem
   1.5.3}.  Arthur's definition of $\epsilon_{\psi}$ now gives
   $\epsilon_{\psi}=\mathbf{1}$ also in the common dihedral case.
\end{proof}

\begin{remark}
   Suppose that a refined $A$-parameter is represented by $(\Pi_1,S_2^A)$,
   where $\Pi_1$ is a cuspidal automorphic representation of
   $\PGL_2(\A_F)$.  Its packet consists of the pullback of $\Pi_1$
   under the projection
   \begin{align*}
      \SO_4(\A_F) \xrightarrow{\pr_1} \PGL_2(\A_F).
   \end{align*}
   This representation is irreducible since the projection is
   surjective.  For $(S_2^A,\Pi_2)$, the analogous statement holds with
   the second projection $\pr_2$.
\end{remark}

We obtain the refined Arthur classification.
\begin{theorem}\label{theorem:refined-Arthur-classification-SO4}
   Let $\psi$ be a refined $A$-parameter for $\SO_4$.
   Then, we have
   \begin{align*}
      L^2_{\disc, c(\psi)}(\SO_4(F) \backslash \SO_4(\A_F)) \cong \bigoplus_{\Sigma \in \Pi_{\psi}(\SO_4)} m_{\disc}(\Sigma) \Sigma,
   \end{align*}
   with
   \begin{align*}
      m_{\disc}(\Sigma) = \dim\Hom_{\cS_{\psi}}(\rho_{\Sigma} \circ \Delta, \mathbf{1}) \leq 1.
   \end{align*}
   Here $\rho_{\Sigma}=\bigotimes_v\rho_{\Sigma_v}$ is the tensor product of the local enhancements.
\end{theorem}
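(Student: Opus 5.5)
We would deduce Theorem~\ref{theorem:refined-Arthur-classification-SO4} from Arthur's classification for $\SO_4$, refined through the $\GSO_4$ description of the appendix. The proof has three steps.

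\textbf{Step 1: exhaustion of the spectrum.} Let $\Sigma$ occur in $L^2_{\disc, c(\psi)}$. By Gee--Ta\"ibi's Theorem 5.1.2, $\Sigma$ is contained in $\Sigma'|_{\SO_4(\A_F)}$ for some discrete $\Sigma'$ of $\GSO_4(\A_F)$. Write $\alpha^*\Sigma' = (\Pi_1,\Pi_2)$.
\begin{itemize}
\item The Satake compatibility lemma and the remark computing $\wedge^2_{\pm}c(\Sigma)$ show that $\Ad(\Pi_1)$ and $\Pi_1\times\Pi_2^\vee$ are determined by $c(\psi)$.
\item In the cuspidal case, Theorem~\ref{theorem:SO4-simultaneous-twist} then shows that $(\Pi_1,\Pi_2)$ represents $\psi$ up to a simultaneous twist.
\item In the non-generic cases, the explicit descriptions in the last lemma and remark give the same conclusion.
\end{itemize}
Hence $\Sigma$ lies in the restriction of the $\GSO_4$-representation attached to $\psi$. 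Since restriction is local, every local component $\Sigma_v$ lies in $\Pi_{\psi_v}(\SO_4)$, so $\Sigma\in\Pi_\psi(\SO_4)$. Proposition~\ref{proposition:refined-packet-characterization} identifies these refined local packets inside Arthur's packets as the members with prescribed $\wedge^2_+$.

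\textbf{Step 2: the multiplicity formula.} We would take Arthur's multiplicity formula for $\SO_4$, which holds up to the outer automorphism $\theta$, and refine it. Arthur gives
\[
\sum_{\Sigma^{\theta}\sim\Sigma} m_{\disc}(\Sigma^{\theta}) = \#\{\text{classes with } \langle\cdot,\Sigma\rangle = \epsilon_\psi\}.
\]
By the last lemma, $\epsilon_\psi=\mathbf{1}$ in every case, so the condition becomes $\rho_\Sigma\circ\Delta$ trivial. When $\theta$ acts on the global packet, the $\theta$-orbit of $\Sigma$ meets $\Pi_\psi$ in exactly one of the two refined packets, by the same argument as in Proposition~\ref{proposition:refined-packet-characterization}. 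Step 1 shows that only $\Pi_\psi$ contributes at $c(\psi)$ for the given refinement. Therefore Arthur's formula, which counts orbits, becomes a formula for the individual $\Sigma$:
\[
m_{\disc}(\Sigma) = \dim\Hom_{\cS_\psi}(\rho_\Sigma\circ\Delta,\mathbf 1).
\]
Here $\cS_\psi$ is the refined group, whose localization is compatible with Theorem~\ref{theorem:compatibility-with-Arthur-local}.

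\textbf{Step 3: the bound $m_{\disc}(\Sigma)\le 1$.} This is the last assertion of Theorem~\ref{theorem:SO4-simultaneous-twist}, and it is consistent with $\rho_\Sigma$ being a character of the abelian group $\cS_\psi$.

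\textbf{Main obstacle.} The difficult point is Step 2 when $\Sigma\cong\Sigma^\theta$ fails but $\theta$ preserves $c(\psi)$. Arthur's formula then only controls the sum $m(\Sigma)+m(\Sigma^\theta)$, and we must separate the two summands. Our first approach is to show that $\Sigma^\theta$ has $\wedge^2_+$-parameter $\Ad(\Pi_2)\ne\Ad(\Pi_1)$, so that it belongs to the other refined parameter; when $\Ad(\Pi_1)=\Ad(\Pi_2)$, the argument of Proposition~\ref{proposition:refined-packet-characterization} forces $\theta$-invariance instead. If that fails, the fallback is to compute the multiplicity directly on $\GSO_4$, using multiplicity one for pairs of $\GL_2$-representations, and descend via the multiplicity-free restriction and a Labesse--Langlands type count.
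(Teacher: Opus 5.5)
Your proposal is essentially the paper's argument: Arthur's classification modulo the outer automorphism, with $\epsilon_\psi=\mathbf{1}$ from the preceding lemma, refined by restriction from $\GSO_4$ and the local $\wedge^2_+$-characterization of Proposition~\ref{proposition:refined-packet-characterization}. The differences are minor. You get exhaustion directly from Theorem~\ref{theorem:SO4-simultaneous-twist}, where the paper uses Arthur's theorem plus strong multiplicity one for $\Ad(\Pi_1)$. The obstacle you single out cannot occur, since $c(\Sigma)=c(\Sigma^\theta)$ forces $\Ad(\Pi_1)=\Ad(\Pi_2)$ and hence $\Sigma=\Sigma^\theta$; the separation Arthur's formula actually needs is over independent local twists $\Sigma_v\mapsto\Sigma_v^{\theta}$, which Proposition~\ref{proposition:refined-packet-characterization} applied place by place supplies.
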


\begin{proof}
   For non-tempered $\psi$, the preceding lemma gives $\cS_{\psi}=1$, and the explicit descriptions of the two non-generic types give singleton refined local packets. The restriction construction and Theorem~\ref{theorem:SO4-simultaneous-twist} then imply the assertion.
   We assume that $\psi$ is represented by a pair $(\Pi_1, \Pi_2)$ of cuspidal automorphic representations of $\GL_2(\A_F)$ with $\omega_{\Pi_1} = \omega_{\Pi_2}$.
   Then, the theorem of Arthur \cite{Arthur2013-EndoscopicClassificationRepresentations} implies that if $\Sigma$ contributes to the left-hand side, then we have $\rho_{\Sigma} \circ \Delta = \mathbf{1}$ and $\Sigma \in \overline{\Pi}_{\psi}$.
   Lift $\Sigma$ to a discrete automorphic representation of $\GSO_4$
   corresponding to a pair $(\Pi'_1,\Pi'_2)$.
   Compatibility with Satake parameters gives
   $c(\Ad(\Pi'_1))=c(\Ad(\Pi_1))$.
   Strong multiplicity one for the isobaric transfers to $\GL_3$
   implies $\Ad(\Pi'_1)=\Ad(\Pi_1)$.
   Local compatibility of the adjoint lift and restriction therefore
   gives $\wedge^2_{+}\psi_v=\wedge^2_{+}\Sigma_v$ at every place $v$.
   Proposition~\ref{proposition:refined-packet-characterization} gives $\Sigma_v \in \Pi_{\psi_v}$ at every place, hence $\Sigma \in \Pi_{\psi}$.

   Conversely, if $\Sigma \in \Pi_{\psi}$ and $\rho_{\Sigma}\circ\Delta=\mathbf{1}$, the same proposition places every $\Sigma_v$ in the corresponding Arthur packet. Arthur's multiplicity formula gives its occurrence in the discrete spectrum with the asserted multiplicity.
\end{proof}

Outer-automorphism invariance has the following characterization.

\begin{lemma}
   Let $\Sigma$ be a discrete automorphic representation of $\SO_4(\A_F)$.
   Let $\Sigma'$ be a discrete automorphic representation of $\GSO_4(\A_F)$ such that $\Sigma$ is contained in $\Sigma'|_{\SO_4(\A_F)}$.
   Let $\Pi = (\pi_1, \pi_2)$ be the pair of discrete automorphic representations of $\GL_2(\A_F)$ corresponding to $\Sigma'$.
   Let $c(\Sigma)$ denote the Hecke--Satake parameter of $\Sigma$.
   Let $\theta$ be an element of $\O_4(F) \setminus \SO_4(F)$.
   Then, the following are equivalent.
   \begin{enumerate}
      \item
      We have
      $\Sigma = \Sigma^{\theta}$.
      \item
      We have
      $c(\Sigma) = c(\Sigma^{\theta})$.
      \item
      We have
      \begin{align*}
         \Ad(\pi_1) = \Ad(\pi_2).
      \end{align*}
      \item
      There exists an automorphic quadratic character $\chi$ of $\A_F^{\times}$ such that $\pi_1 \cong \pi_2 \otimes \chi \circ \det$.
   \end{enumerate}
\end{lemma}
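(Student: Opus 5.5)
The plan is to prove the implications (1)$\Rightarrow$(2)$\Rightarrow$(3)$\Rightarrow$(4)$\Rightarrow$(1).

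For (1)$\Rightarrow$(2) there is nothing to do: isomorphic representations have the same Hecke--Satake family.

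For (2)$\Rightarrow$(3) I would use the description of $\theta$ on $\GSO_4$ given in this appendix. There $\theta$ swaps the two $\GL_2$ factors, so if $\Sigma$ lies in $\Sigma'|_{\SO_4}$ with $\alpha^*\Sigma'=(\pi_1,\pi_2)$, then $\Sigma^\theta$ lies in the restriction of the representation corresponding to $(\pi_2,\pi_1)$. The remark after Theorem~\ref{theorem:SO4-simultaneous-twist} gives, at almost all places,
\begin{align*}
  \wedge^2_+c(\Sigma)=c(\Ad\pi_1),\quad \wedge^2_+c(\Sigma^\theta)=c(\Ad\pi_2).
\end{align*}
Hence (2) implies $c(\Ad\pi_1)=c(\Ad\pi_2)$. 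Strong multiplicity one for isobaric representations of $\GL_3$ then yields $\Ad(\pi_1)=\Ad(\pi_2)$.

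For (3)$\Rightarrow$(4) I would invoke Ramakrishnan's theorem (\cite{Ramakrishnan2000-ModularityRankinSelbergSeriesMultiplicityOne}*{Theorem 4.1.2}), which gives $\pi_1\cong\pi_2\otimes\chi\circ\det$ for some Hecke character $\chi$ when both $\pi_i$ are cuspidal. Equality of central characters then gives $\chi^2=\omega_{\pi_1}\omega_{\pi_2}^{-1}=1$, so $\chi$ is quadratic. The residual cases need separate handling. If some $\pi_i$ is a character twist $\eta\circ\det$, its adjoint is $\mathbf1^{\boxplus3}$, which is not the adjoint of a cuspidal representation. So both $\pi_i$ are characters, $\pi_i=\eta_i\circ\det$, and $\eta_1^2=\eta_2^2$ makes $\eta_1\eta_2^{-1}$ quadratic.

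For (4)$\Rightarrow$(1), the pair $(\pi_2,\pi_1)$ equals $(\pi_1\otimes\chi,\pi_2\otimes\chi)$ up to ordering, using $\chi^2=1$. So $\Sigma'^\theta\cong\Sigma'\otimes(\chi\circ\mu)$, and restricting to $\SO_4(\A_F)$ shows that $\Sigma'|_{\SO_4}$ is $\theta$-stable. It remains to show $\Sigma$ itself is fixed, not merely moved to another constituent of the same restriction. I expect this to be the main obstacle. I would argue locally. By Proposition~\ref{proposition:refined-packet-characterization}, both $\Sigma_v$ and $\Sigma_v^\theta$ lie in the refined packet of $\psi_v$ and satisfy $\wedge^2_+\Sigma_v=\Ad\pi_{1,v}=\Ad\pi_{2,v}=\wedge^2_+\Sigma_v^\theta$. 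The proof of that proposition shows that in exactly this situation, $\pi_{1,v}\cong\pi_{2,v}\otimes\chi_v$, every member of Arthur's packet is $\theta$-invariant, so $\Sigma_v^\theta\cong\Sigma_v$ at every place. Thus $\Sigma^\theta\cong\Sigma$ as abstract representations. Multiplicity one for $\SO_4$ (Theorem~\ref{theorem:SO4-simultaneous-twist}) then identifies them as automorphic subrepresentations. In the residual case the same works directly, since $\Sigma$ factors through $\PGL_2$ and the local packets are singletons.
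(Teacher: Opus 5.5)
Your proposal is correct and follows the same chain (1)$\Rightarrow$(2)$\Rightarrow$(3)$\Rightarrow$(4)$\Rightarrow$(1) as the paper, which calls the first two implications clear and cites Ramakrishnan for (3)$\Rightarrow$(4) and Arthur for (4)$\Rightarrow$(1). Your argument for (4)$\Rightarrow$(1), local $\theta$-invariance because $\Std_{\SO_4}\circ\phi_v$ has the one-dimensional summand $\chi_v$, followed by multiplicity one from Theorem~\ref{theorem:SO4-simultaneous-twist}, is an unpacking of that Arthur citation; note that Proposition~\ref{proposition:refined-packet-characterization} is stated only for $p$-adic fields, so at real places you need its archimedean analogue. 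One small slip, which does not affect your case analysis: for a residual $\pi_i=\eta\circ\det$ the adjoint lift is $\mathbf1\boxtimes S_3^A$, with Satake parameters $\diag(q_v,1,q_v^{-1})$, not $\mathbf1^{\boxplus3}$, but it is still not the adjoint of a cuspidal representation.
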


\begin{proof}
   The implications $(1) \Rightarrow (2) \Rightarrow (3)$ are clear.
   The implication $(3) \Rightarrow (4)$ is a result of Ramakrishnan \cite{Ramakrishnan2000-ModularityRankinSelbergSeriesMultiplicityOne}.
   The implication $(4) \Rightarrow (1)$ is a result of Arthur \cite{Arthur2013-EndoscopicClassificationRepresentations}.
\end{proof}

\subsection{Endoscopic character relations for \texorpdfstring{$\SO_4$}{SO4}}

\begin{lemma}
   Let $\pi_1$ and $\pi_2$ be cuspidal automorphic representations of $\GL_2(\A_F)$.
   Let $\Pi$ be the Rankin--Selberg product $\pi_1 \times \pi_2$.
   \begin{enumerate}
      \item
      If $\pi_1$ is not dihedral, then $\Pi$ is not cuspidal if and only if there exists an automorphic character $\chi$ of $\A_F^{\times}$ such that $\pi_1 \cong \pi_2 \otimes \chi \circ \det$.
      \item
      If $\pi_1$ is dihedral, then $\Pi$ is not cuspidal if and only if there exists a quadratic extension $E/F$ such that $\pi_1$ and $\pi_2$ are both induced from Hecke characters of $E^{\times} \backslash \A_E^{\times}$.
   \end{enumerate}
\end{lemma}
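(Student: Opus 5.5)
The plan is to compute $\pi_1\times\pi_2$ as the automorphic transfer of $\pi_1\boxtimes\pi_2$ to $\GL_4$, given by Ramakrishnan \cite{Ramakrishnan2000-ModularityRankinSelbergSeriesMultiplicityOne}. Non-cuspidality will be detected by poles of Rankin--Selberg $L$-functions: since the transfer is isobaric and unitary, $\Pi$ is non-cuspidal exactly when it has a summand of degree one, two or three. Equivalently, some cuspidal $\tau$ on $\GL_m$, $m\le 3$, makes $L^S(s,\Pi\times\tau^\vee)$ have a pole at $s=1$. The key identity is
\begin{align*}
  L^S(s,\Pi\times\Pi^\vee)=L^S(s,\pi_1\times\pi_1^\vee\times\pi_2\times\pi_2^\vee)
  =\zeta^S(s)\,L^S(s,\Ad\pi_1)\,L^S(s,\Ad\pi_2)\,L^S(s,\Ad\pi_1\times\Ad\pi_2).
\end{align*}
The order of the pole at $s=1$ equals $\sum m_i^2$ over the isobaric multiplicities, so $\Pi$ is cuspidal iff this order is $1$. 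Since each $L^S(s,\Ad\pi_i)$ is regular and nonvanishing at $1$, the order is $1+\operatorname{ord}_{s=1}L^S(s,\Ad\pi_1\times\Ad\pi_2)$.

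Case (1), $\pi_1$ not dihedral. By Gelbart--Jacquet $\Ad\pi_1$ is cuspidal on $\GL_3$. Then $L^S(s,\Ad\pi_1\times\Ad\pi_2)$ has a pole iff $\Ad\pi_1$ occurs in the self-dual $\Ad\pi_2$. If $\pi_2$ is dihedral, $\Ad\pi_2$ is a sum of a character and a degree $\le2$ piece, so this is impossible and $\Pi$ is cuspidal. Otherwise a pole means $\Ad\pi_1\cong\Ad\pi_2$, which by Ramakrishnan's theorem gives $\pi_1\cong\pi_2\otimes\chi\circ\det$. Conversely, if such a twist exists, then $\Pi\cong\chi\boxplus(\chi\otimes\Ad\pi_2)$ up to the central character normalization, which is non-cuspidal.

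Case (2), $\pi_1=\mathrm{AI}_{E/F}(\mu)$ dihedral. Then $\Ad\pi_1=\omega_{E/F}\boxplus\mathrm{AI}_{E/F}(\mu/\mu^\sigma)$, and $\Pi=\mathrm{AI}_{E/F}(\mu\cdot\mathrm{BC}_{E/F}\pi_2)$, obtained by automorphic induction from $\GL_2(\A_E)$. If $\pi_2=\mathrm{AI}_{E/F}(\nu)$, then $\mathrm{BC}\pi_2=\nu\boxplus\nu^\sigma$, and $\Pi=\mathrm{AI}(\mu\nu)\boxplus\mathrm{AI}(\mu\nu^\sigma)$ is non-cuspidal, as in Lemma~\ref{lemma:SO4-dihedral-case}. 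Conversely, suppose $\Pi$ is non-cuspidal. If $\pi_2$ is not dihedral, apply case (1) with the roles swapped: it gives $\pi_2\cong\pi_1\otimes\chi$, forcing $\pi_2$ to be dihedral, a contradiction. So $\pi_2$ is dihedral, say from $E'$, and we must show that $E'$ can be taken equal to $E$. Here I would use the criterion for cuspidality of automorphic induction (Arthur--Clozel): $\mathrm{AI}_{E/F}(\mu\cdot\mathrm{BC}\pi_2)$ fails to be cuspidal iff $\mu\cdot\mathrm{BC}\pi_2$ is non-cuspidal or Galois-conjugate-invariant. If $E'\neq E$, then $\mathrm{BC}_{E/F}\pi_2$ is cuspidal, being induced from the field $EE'$ over $E$. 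Conjugate invariance would force $\mu/\mu^\sigma$ to become a self-twist of $\mathrm{BC}\pi_2$, hence to equal $\omega_{EE'/E}$. In that situation $\pi_1$ is also induced from $E'$, so $\pi_1$ and $\pi_2$ are both induced from $E'$.

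The main obstacle is this last subcase, where the two representations are dihedral from different quadratic fields. One must check carefully that the self-twist condition makes $\pi_1$ dihedral with respect to $E'$ as well, with $\pi_1$ possibly dihedral for several fields, while handling the central-character and descent bookkeeping. Everything else is a pole-counting argument.
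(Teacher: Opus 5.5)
Your overall framework matches the paper's. You use the factorization of $L(s,\Pi\times\Pi^\vee)$ to reduce non-cuspidality to a pole of $L(s,\Ad\pi_1\times\Ad\pi_2)$ at $s=1$. Case (1) is handled as in the paper, via Gelbart--Jacquet and Ramakrishnan. Your easy direction of (2) is also fine. The gap is in the hard direction of (2), when $\pi_2$ is dihedral from $E'\neq E$. This is exactly the step you flag as the main obstacle, and it is not resolved.

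\textbf{What goes wrong.} First, the inference ``$\mathrm{BC}_{E/F}\pi_2$ is cuspidal, being induced from $EE'$ over $E$'' is invalid. In fact $\mathrm{BC}_{E/F}\pi_2$ is cuspidal if and only if $\pi_2\not\cong\pi_2\otimes\omega_{E/F}$. This is harmless, since if $\pi_2$ is also dihedral with respect to $E$ you are already done.

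Second, and more seriously, ``hence to equal $\omega_{EE'/E}$'' is false as stated. Suppose $\pi_2$ is dihedral with respect to three quadratic fields $E',E_2',E_3'$, none equal to $E$. Then the cuspidal $\mathrm{BC}_{E/F}\pi_2$ has the Klein four-group $\{1,\omega_{EE'/E},\omega_{EE_2'/E},\omega_{EE_3'/E}\}$ of self-twists. So $\mu/\mu^\sigma$ can be $\omega_{EE_2'/E}$ rather than $\omega_{EE'/E}$. The ``careful check'' you postpone is therefore the missing argument.

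\textbf{How to repair it on your route.} Put $\eta=\mu/\mu^\sigma$.
\begin{enumerate}
\item $\eta$ is a nontrivial self-twist of a cuspidal $\GL_2$ representation, so it is quadratic. Combined with $\eta^\sigma=\eta^{-1}$, this makes $\eta$ invariant under $\sigma$.
\item A $\sigma$-invariant Hecke character factors through the norm, so $\eta=\eta_0\circ N_{E/F}$. Moreover $\eta_0^2=\eta|_{\A_F^\times}=1$, so $\eta_0$ is quadratic.
\item Then $\pi_1\otimes\eta_0=\mathrm{AI}_{E/F}(\mu\eta)=\mathrm{AI}_{E/F}(\mu^\sigma)\cong\pi_1$. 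Hence both $\eta_0$ and $\eta_0\omega_{E/F}$ are nontrivial self-twists of $\pi_1$.
\item The isomorphism $\mathrm{BC}_{E/F}(\pi_2\otimes\eta_0)\cong\mathrm{BC}_{E/F}\pi_2$ gives $\pi_2\cong\pi_2\otimes\eta_0$ or $\pi_2\cong\pi_2\otimes\eta_0\omega_{E/F}$.
\end{enumerate}
Either way $\pi_1$ and $\pi_2$ share a nontrivial quadratic self-twist, so both are induced from the corresponding quadratic field.

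\textbf{How the paper does it.} The paper avoids base change entirely and stays with the pole of $L(s,\Ad\pi_1\times\Ad\pi_2)$. That pole forces $\Ad\pi_1=\omega_{E/F}\boxplus\mathrm{AI}_{E/F}(\mu/\mu^\sigma)$ and $\Ad\pi_2$ to share an isobaric summand.
\begin{itemize}
\item If the shared summand is a quadratic character $\omega'$, then $L(s,\pi_i\times\pi_i^\vee\otimes\omega')$ has a pole for both $i$. This gives $\pi_i\cong\pi_i\otimes\omega'$ for both $i$.
\item If the shared summand is the cuspidal degree-two piece, compare central characters. Since $\mu/\mu^\sigma$ is trivial on $\A_F^\times$, these are $\omega_{E/F}$ and $\omega_{E'/F}$, which forces $E=E'$.
\end{itemize}
This treats the ``dihedral for several fields'' situation automatically, which is exactly what your argument lacks.
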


\begin{proof}
   We have
   \begin{align*}
      L(s, \Pi \times \Pi^{\vee}) = \zeta(s) L(s, \Ad(\pi_1)) L(s, \Ad(\pi_2)) L(s, \Ad(\pi_1) \times \Ad(\pi_2)).
   \end{align*}
   Since $\Pi$ is cuspidal if and only if $L(s, \Pi \times \Pi^{\vee})$ has a simple pole at $s = 1$ and the $L$-functions $L(s, \Ad(\pi_i))$ do not have poles at $s = 1$, the $L$-function $L(s, \Ad(\pi_1) \times \Ad(\pi_2))$ has a pole at $s = 1$ if and only if $\Pi$ is not cuspidal.

   If $\pi_1$ is not dihedral, then $\Ad(\pi_1)$ is cuspidal and the $L$-function $L(s, \Ad(\pi_1) \times \Ad(\pi_2))$ has a pole at $s = 1$ if and only if $\Ad(\pi_1) \cong \Ad(\pi_2)$.
   This is equivalent to the existence of an automorphic character $\chi$ of $\A_F^{\times}$ such that $\pi_1 \cong \pi_2 \otimes \chi \circ \det$ by a result of Ramakrishnan \cite{Ramakrishnan2000-ModularityRankinSelbergSeriesMultiplicityOne}.

   Assume that $\pi_1$ is dihedral and induced from a Hecke character $\chi_1$ of a quadratic extension $E/F$.
   Then, we have
   \begin{align*}
      \Ad(\pi_1) = \mathrm{AI}_{E/F}(\chi_1/\chi_1^{\tau}) \boxplus \omega_{E/F},
   \end{align*}
   where $\tau$ is the non-trivial element of $\Gal(E/F)$.
   The first summand is cuspidal if and only if $\chi_1/\chi_1^{\tau}$ is not quadratic.
   Then, the $L$-function $L(s, \Ad(\pi_1) \times \Ad(\pi_2))$ has a pole at $s = 1$ only if $\pi_2$ is also dihedral.
   From now on, we assume that $\pi_2$ is also dihedral and induced from a Hecke character $\chi_2$ of a quadratic extension $E'/F$.
   Then, we have
   \begin{align*}
      \Ad(\pi_2) = \mathrm{AI}_{E'/F}(\chi_2/\chi_2^{\tau'}) \boxplus \omega_{E'/F},
   \end{align*}
   where $\tau'$ is the non-trivial element of $\Gal(E'/F)$.
   If the character $\chi_1/\chi_1^{\tau}$ is not quadratic and the representations $\Ad(\pi_1)$ and $\Ad(\pi_2)$ share a common cuspidal summand coming from the automorphic induction of a Hecke character, then, by considering the central characters, we have $\omega_{E/F} \cong \omega_{E'/F}$.
   Thus, we obtain $E = E'$.
   Otherwise, the two representations share a common quadratic character as an isobaric summand.
   If the quadratic character is given as $\omega_{E''/F}$, then, by considering $\pi_i \times \pi_i^{\vee} \otimes \omega_{E''/F}$, we have $\pi_i \cong \pi_i \otimes \omega_{E''/F}$.
   Thus, the two representations $\pi_1$ and $\pi_2$ are both induced from Hecke characters of $E''^{\times} \backslash \A_{E''}^{\times}$.
\end{proof}

\begin{lemma}\label{lemma:computation-rankin-selberg-induced}
   Let $E/F$ be a quadratic extension of number fields.
   Let $\pi_1$ and $\pi_2$ be two cuspidal automorphic representations of $\GL_2(\A_F)$ which are both induced from Hecke characters $\chi_1$ and $\chi_2$ of $E^{\times} \backslash \A_E^{\times}$.
   \begin{enumerate}
      \item
      We have
      \begin{align*}
         \pi_1 \times \pi_2^{\vee} = \mathrm{AI}_{E/F}(\chi_1/\chi_2) \boxplus \mathrm{AI}_{E/F}(\chi_1/\chi_2^{\tau}).
      \end{align*}
      Furthermore, the first and the second summands do not share a common isobaric summand.
      \item
      If we also assume that $\omega_{\pi_1} = \omega_{\pi_2}$, then we have
      \begin{align*}
         \chi_1|_{\A_F^{\times}} = \chi_2|_{\A_F^{\times}}.
      \end{align*}
      In this case, we have
      \begin{align*}
         (\chi_1/\chi_2)^{\tau} = (\chi_1/\chi_2)^{-1},
      \end{align*}
      and
      \begin{align*}
         (\chi_1/\chi_2^{\tau})^{\tau} = (\chi_1/\chi_2^{\tau})^{-1}.
      \end{align*}
      Thus, the summand $\mathrm{AI}_{E/F}(\chi_1/\chi_2)$ (resp. $\mathrm{AI}_{E/F}(\chi_1/\chi_2^{\tau})$) is a sum of quadratic characters of $\A_F^{\times}$ if and only if $\chi_1/\chi_2$ (resp. $\chi_1/\chi_2^{\tau}$) is quadratic.
   \end{enumerate}
\end{lemma}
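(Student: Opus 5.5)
The plan is a direct computation with automorphic induction: decompose tensor products of induced representations by Mackey theory, then compare restrictions to $\A_F^\times$ using the central characters.

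For (1), write $\pi_i=\mathrm{AI}_{E/F}(\chi_i)$, so that $\pi_2^\vee=\mathrm{AI}_{E/F}(\chi_2^{-1})$. At every place (or via Satake parameters at unramified places, followed by strong multiplicity one for isobaric sums) the tensor product of two induced representations is given by Mackey's formula:
\[
\mathrm{Ind}_{W_E}^{W_F}\chi_1\otimes\mathrm{Ind}_{W_E}^{W_F}\chi_2^{-1}
=\mathrm{Ind}_{W_E}^{W_F}\bigl(\chi_1\cdot\mathrm{Res}\,\mathrm{Ind}\,\chi_2^{-1}\bigr)
=\mathrm{Ind}(\chi_1\chi_2^{-1})\oplus\mathrm{Ind}(\chi_1(\chi_2^{\tau})^{-1}).
\]
Concretely, we check the identity of Satake parameters at split and inert unramified places and invoke strong multiplicity one for isobaric automorphic representations. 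This gives the displayed decomposition.

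For the disjointness claim, suppose the two summands share an isobaric constituent. If both summands are cuspidal, then $\chi_1/\chi_2\in\{\chi_1/\chi_2^\tau,\ (\chi_1/\chi_2^\tau)^\tau\}$. The first forces $\chi_2=\chi_2^\tau$, which contradicts cuspidality of $\pi_2$. The second forces $\chi_1=\chi_1^\tau$, which contradicts cuspidality of $\pi_1$. If one summand splits as a sum of Hecke characters $\mu\boxplus\mu\omega_{E/F}$, then a common constituent forces the other summand to split as well, sharing $\mu$ (or $\mu\omega_{E/F}$). Pulling back to $E$, we get $\chi_1/\chi_2=\mu\circ N$ and $\chi_1/\chi_2^\tau=\mu\circ N$ or $\mu\omega\circ N=\mu\circ N$. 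Either way $\chi_2=\chi_2^\tau$, again a contradiction.

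For (2), the central character of $\mathrm{AI}(\chi_i)$ is $\omega_{E/F}\cdot\chi_i|_{\A_F^\times}$, so $\omega_{\pi_1}=\omega_{\pi_2}$ gives $\chi_1|_{\A_F^\times}=\chi_2|_{\A_F^\times}$. Hence $\alpha=\chi_1/\chi_2$ is trivial on $\A_F^\times$, so $\alpha\alpha^\tau=\alpha\circ N_{E/F}=1$, that is, $\alpha^\tau=\alpha^{-1}$. The same argument applies to $\chi_1/\chi_2^\tau$, using $\chi_2^\tau|_{\A_F^\times}=\chi_2|_{\A_F^\times}$.

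Finally, $\mathrm{AI}_{E/F}(\alpha)$ is non-cuspidal, equivalently a sum of two characters of $\A_F^\times$, if and only if $\alpha=\alpha^\tau$. Given $\alpha^\tau=\alpha^{-1}$, this is equivalent to $\alpha^2=1$. In that case the characters are the two extensions of $\alpha$ (a character factoring through the norm, since it is $\tau$-invariant), and these are quadratic because $\alpha$ is trivial on $\A_F^\times$. The main, if modest, obstacle is the disjointness claim in (1), which needs the case split above on whether the summands are cuspidal.
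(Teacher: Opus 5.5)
Your proposal is correct and is essentially the paper's argument. The paper treats the decomposition and part (2) as standard and proves only the disjointness: a common isobaric summand forces $\chi_1/\chi_2=\chi_1/\chi_2^{\tau}$ or $\chi_1/\chi_2=\chi_1^{\tau}/\chi_2$, hence $\chi_2=\chi_2^{\tau}$ or $\chi_1=\chi_1^{\tau}$, contradicting cuspidality; your cuspidal/non-cuspidal case split makes explicit why that ``if and only if'' also holds when a summand splits into characters, and your checks of the decomposition and of part (2) supply what the paper leaves to the reader.
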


\begin{proof}
   Only the second claim in $(1)$ needs a proof.
   The two summands $\mathrm{AI}_{E/F}(\chi_1/\chi_2)$ and $\mathrm{AI}_{E/F}(\chi_1/\chi_2^{\tau})$ share a common isobaric summand if and only if we have
   \begin{align*}
      \chi_1/\chi_2 = \chi_1/\chi_2^{\tau},
   \end{align*}
   or
   \begin{align*}
      \chi_1/\chi_2 = (\chi_1/\chi_2^{\tau})^{\tau} = \chi_1^{\tau}/\chi_2.
   \end{align*}
   These identities give $\chi_2 = \chi_2^{\tau}$ or $\chi_1 = \chi_1^{\tau}$, contradicting the cuspidality of $\pi_2$ or $\pi_1$, respectively.
\end{proof}

\begin{corollary}\label{corollary:component-group-quadratic}
   We continue to use the notation in the lemma above and assume that $\omega_{\pi_1} = \omega_{\pi_2}$.
   Then, the component group $\cS_{\psi}$ of the refined $A$-parameter $\psi$ corresponding to the pair $(\pi_1, \pi_2)$ is given as follows.
   \begin{enumerate}
      \item
      If both $\chi_1/\chi_2$ and $\chi_1/\chi_2^{\tau}$ are not quadratic, then we have $\cS_{\psi} = \Z/2\Z$.
      \item
      If exactly one of $\chi_1/\chi_2$ and $\chi_1/\chi_2^{\tau}$ is quadratic, then we have $\cS_{\psi} = \Z/2\Z$.
      \item
      If both $\chi_1/\chi_2$ and $\chi_1/\chi_2^{\tau}$ are quadratic, then we have $\cS_{\psi} = (\Z/2\Z)^2$.
   \end{enumerate}
\end{corollary}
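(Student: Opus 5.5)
We need to compute $\cS_\psi$ from the isobaric decomposition of $\Std_{\SO_4}\circ\psi=\pi_1\times\pi_2^\vee$ given in Lemma~\ref{lemma:computation-rankin-selberg-induced}. The recipe is the definition in the preceding remark: write the standard parameter as $\boxplus_{i\in I}\tau_i$ with distinct orthogonal simple summands (all $d_i=1$ here). Then $S_\psi=\{(\epsilon_i)\in\{\pm1\}^I:\prod_i\epsilon_i^{\deg\tau_i}=1\}$ and $\cS_\psi=S_\psi/\langle(-1)_{i\in I}\rangle$. So the plan is to list the simple summands and their degrees in each of the three cases, and then count.

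First I would use Lemma~\ref{lemma:computation-rankin-selberg-induced}(2). Since $\chi_1|_{\A_F^\times}=\chi_2|_{\A_F^\times}$, each $\alpha\in\{\chi_1/\chi_2,\ \chi_1/\chi_2^{\tau}\}$ satisfies $\alpha^\tau=\alpha^{-1}$. If $\alpha$ is not quadratic, then $\alpha\neq\alpha^\tau$, so $\mathrm{AI}_{E/F}(\alpha)$ is cuspidal of degree $2$. It is also orthogonal: it is self-dual because $\alpha^\tau=\alpha^{-1}$, and its determinant is $\omega_{E/F}\cdot\alpha|_{\A_F^\times}=\omega_{E/F}$, which is nontrivial, so it is not symplectic. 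If $\alpha$ is quadratic, then $\alpha=\alpha^\tau$ and $\alpha$ factors through the norm, so $\mathrm{AI}_{E/F}(\alpha)=\beta\boxplus\beta\omega_{E/F}$ for a quadratic character $\beta$ of $\A_F^\times$. These two characters are distinct because $\omega_{E/F}\neq1$.

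Next I would check that the resulting simple summands are pairwise distinct. Within a single $\mathrm{AI}$ this was just verified. Across the two $\mathrm{AI}$'s it is exactly the disjointness statement in Lemma~\ref{lemma:computation-rankin-selberg-induced}(1).

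With distinctness in hand, the counting is as follows.
\begin{enumerate}
\item Both not quadratic: $I$ has two summands, each of degree $2$. The condition $\prod_i\epsilon_i^{\deg\tau_i}=1$ is automatic, so $S_\psi=\{\pm1\}^2$. Dividing by the diagonal gives $\cS_\psi\cong\Z/2\Z$.
\item Exactly one quadratic: $I$ has one degree-$2$ summand and two characters. The condition is $\epsilon_2\epsilon_3=1$, so $S_\psi\cong(\Z/2\Z)^2$, and modulo the diagonal $\cS_\psi\cong\Z/2\Z$.
\item Both quadratic: $I$ has four distinct characters and the condition is $\prod_i\epsilon_i=1$. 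Then $S_\psi$ has order $8$, and $\cS_\psi\cong(\Z/2\Z)^2$.
\end{enumerate}

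No step is a real obstacle. The only point needing care is the orthogonality (rather than symplecticity) of the degree-$2$ summands in case (1) and the distinctness of all simple constituents. Both are handled above, using Lemma~\ref{lemma:computation-rankin-selberg-induced} and $\omega_{E/F}\neq1$.
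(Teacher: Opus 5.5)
Your proof is correct and is the intended deduction. The paper states this corollary without a separate proof: it follows from the isobaric decomposition and disjointness in Lemma~\ref{lemma:computation-rankin-selberg-induced}, together with the definition $\cS_\psi=S_\psi/\langle(-1)_{i\in I}\rangle$, which is exactly your count of the simple summands and their degrees in the three cases. The only detail you left implicit is why $\beta$ is quadratic: it follows from $\beta^2=\alpha|_{\A_F^\times}=1$, which is also recorded in part (2) of that lemma.
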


\begin{remark}
   In Cases $(1), (2), (3)$ in the corollary above, the $A$-parameter $\psi$ factors through an endoscopic group $\SO_{E/F, 2} \times \SO_{E/F, 2}$.
\end{remark}

\begin{remark}
   Corollary~\ref{corollary:component-group-quadratic} also describes $\cS_{\psi}$ for local $A$-parameters $\psi$ for $\SO_4$.
\end{remark}

Let $F$ be a local field of characteristic $0$ and $E/F$ a quadratic extension.
To describe the component group explicitly, fix a four-dimensional split non-degenerate quadratic space $(V,q)$ over $\C$ with basis $\{e_1,e_2,e_3,e_4\}$ in which $q$ has matrix
\begin{align*}
   \begin{pmatrix}
      0 & 0 & 0 & 1 \\
      0 & 0 & 1 & 0 \\
      0 & 1 & 0 & 0 \\
      1 & 0 & 0 & 0
   \end{pmatrix}.
\end{align*}
We fix an embedding $\mathrm{S}(\O_2 \times \O_2) \hookrightarrow \SO_4$ by
\begin{align*}
   (
   \begin{pmatrix}
      a & b \\
      c & d
   \end{pmatrix},
   \begin{pmatrix}
      a' & b' \\
      c' & d'
   \end{pmatrix}
   )
   \mapsto
   \begin{pmatrix}
      a & 0 & 0 & b \\
      0 & a' & b' & 0 \\
      0 & c' & d' & 0 \\
      c & 0 & 0 & d
   \end{pmatrix}.
\end{align*}

We continue to use the notation in Corollary \ref{corollary:component-group-quadratic}.
In Cases $(1), (2)$, the group $S_{\psi}$ is generated by the image of the element
\begin{align*}
   s = (I_2,-I_2) \in \mathrm{S}(\O_2 \times \O_2)
\end{align*}
and the center of $\SO_4(\C)$.

For Case $(3)$, choose a basis of $V$ in which $q$ has Gram matrix $I_4$, and put
\begin{align*}
   s = s_1 = \diag(1,1,-1,-1).
\end{align*}
Let $\eta_1, \eta_2$ be quadratic characters of $F^{\times}$ such that $\chi_1/\chi_2 = \eta_1 \circ \mathrm{Nm}_{E/F}$ and $\chi_1/\chi_2^{\tau} = \eta_2 \circ \mathrm{Nm}_{E/F}$.
Then, the parameter $\psi$ is given by
\begin{align*}
   \eta_1  \oplus \eta_1 \omega_{E/F} \oplus \eta_2 \oplus \eta_2 \omega_{E/F}.
\end{align*}
The other endoscopic data are given by the semisimple elements
\begin{align*}
   s_2 = \diag(1,-1,1,-1),\quad
   s_3 = \diag(1,-1,-1,1).
\end{align*}
Let $E_2$ (resp. $E_3$) be the quadratic extension of $F$ corresponding to the quadratic character $\eta_1 \eta_2$ (resp. $\eta_1 \eta_2 \omega_{E/F}$).
Then, the endoscopic group $H_{s_2}$ (resp. $H_{s_3}$) corresponding to $s_2$ (resp. $s_3$) is given by $\SO_{E_2/F, 2} \times \SO_{E_2/F, 2}$ (resp. $\SO_{E_3/F, 2} \times \SO_{E_3/F, 2}$).
The character $\chi_{s_2}$ (resp. $\chi_{s_3}$) of the endoscopic group $H_{s_2}$ (resp. $H_{s_3}$) is given by $(\eta_1 \circ \mathrm{Nm}_{E_2/F}, \eta_1 \omega_{E/F} \circ \mathrm{Nm}_{E_2/F})$ (resp. $(\eta_1 \circ \mathrm{Nm}_{E_3 /F}, \eta_2 \omega_{E/F} \circ \mathrm{Nm}_{E_3/F})$).

\begin{theorem}\label{theorem:ECR}
   Let $\phi$ be a local $L$-parameter for $\SO_4$ which is given by a pair $(\pi_1, \pi_2)$ of irreducible discrete series representations of $\GL_2(F)$ with $\omega_{\pi_1} = \omega_{\pi_2}$.
   Assume that $\pi_1$ and $\pi_2$ are both induced from characters $\chi_1$ and $\chi_2$ of $E^{\times}$ for a quadratic extension $E/F$.
   Then, for any nontrivial $s \in \cS_{\phi}$, we have
   \begin{align*}
      \sum_{\sigma \in \Pi_{\phi}(\SO_4)} \langle s, \sigma \rangle \tr(\sigma(f^{\SO_4})) =
      \chi_s(f^{\SO_4}_{H_s}),
   \end{align*}
   where $H_s$ is the endoscopic group corresponding to $s$, $\chi_s$ is the character of $H_s$ corresponding to $s$, and $f^{\SO_4}$ is a test function on $\SO_4(F)$.
\end{theorem}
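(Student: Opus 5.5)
The plan is to reduce the endoscopic character relation to the transfer to a product of two one-dimensional tori $\SO_{E_s/F,2}\times\SO_{E_s/F,2}$, and then to known endoscopy for $\GL_2$ and its anisotropic tori. Let $(\pi_1,\pi_2)=(\mathrm{AI}_{E/F}(\chi_1),\mathrm{AI}_{E/F}(\chi_2))$. The local analogue of Lemma~\ref{lemma:computation-rankin-selberg-induced} and Corollary~\ref{corollary:component-group-quadratic} shows that $\phi$ factors through $\SO_{E/F,2}\times\SO_{E/F,2}$ via the characters $\chi_1/\chi_2$ and $\chi_1/\chi_2^{\tau}$, which are restricted to the norm-one tori. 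First fix the element $s$ and the datum $(H_s,\chi_s)$ given above, in each of Cases (1)--(3). Then pass to $\GSO_4$ through $\beta$. There the refined packet $\Pi_\phi(\SO_4)$ is the set of constituents of $\sigma'|_{\SO_4(F)}$, where $\sigma'=\beta^*(\pi_1,\pi_2)$. This restriction is multiplicity free by the preceding proposition. Its constituents are permuted by $\GSO_4(F)$, with stabilizer cut out by the characters $\chi\circ\mu$ satisfying $\pi_i\cong\pi_i\otimes\chi$ for both $i$, that is, $\chi\in\{1,\omega_{E/F}\}$, together with the extra quadratic characters $\eta_1,\eta_2$ in Case (3).

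The key step is the endoscopic identity for $\GL_2$ and $\SL_2$ (Labesse--Langlands). For $\pi=\mathrm{AI}_{E/F}(\chi)$ with $\chi$ regular, the twisted character of $\pi$ on $\GL_2(F)\rtimes(\omega_{E/F}\circ\det)$, normalized by Whittaker data, is the transfer of $\chi$ from $E^\times$. Restricted to $\SL_2$, this gives the $\pm$ difference of the two $L$-packet members as the transfer of $\chi|_{E^1}$ from $\SO_{E/F,2}$. I would realize $\SO_4=(\SL_2\times\SL_2)/\Delta\mu_2$ extended by the torus, and write the character of the $s$-weighted packet sum as the product of the two $\SL_2$ identities, twisted by $\omega_{E/F}$ on the factor of $\mathrm{S}(\O_2\times\O_2)$ where $s$ is $-I_2$. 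In Case (3) I would use $\eta_1,\eta_2$ and the fields $E_2,E_3$ in the same way. The factorization $H_s=\SO_{E_s/F,2}\times\SO_{E_s/F,2}\to\SO_4$ is compatible with $\SL_2\times\SL_2$, so the Kottwitz--Shelstad transfer factor for $(\SO_4,H_s)$ restricts to the product of the $\SL_2$ factors. Matching $\chi_s$ with the displayed character pair is a root-datum check.

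The main obstacle is normalization: the sign $\langle s,\sigma\rangle$, the Whittaker normalization of the transfer factor of Section~\ref{section:orbital-integrals-transfer-factors}, and the product of the two $\SL_2$ factors (including $\Delta_{\mathrm{IV}}$ and the $\mu_2$ quotient) must agree. First I would check that the generic member of $\Pi_\phi(\SO_4)$ gets coefficient $+1$, by comparing regular-nilpotent germs as in the final step of Theorem~\ref{theorem:ECR-S3}. With the sign fixed on one member, the $\GSO_4(F)$-action by twisting fixes the rest, because $\langle s,\cdot\rangle$ is a character compatible with that action. As a fallback I would note that Theorem~\ref{theorem:compatibility-with-Arthur-local} and Arthur's endoscopic character relations for $\SO_4$ already give the identity up to the $\Out(\SO_4)$-averaging. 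The refinement then only needs to separate $\sigma$ from $\sigma^\theta$, and Proposition~\ref{proposition:refined-packet-characterization} does this through $\wedge^2_+$.
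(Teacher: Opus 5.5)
Your argument takes a genuinely different route from the paper. The paper's proof is global. It runs the argument of Arthur's Proposition~6.6.5 (globalize, stabilize the trace formula for $\SO_4$, separate the local factor at the given place), using the refined multiplicity formula of Theorem~\ref{theorem:refined-Arthur-classification-SO4} in place of Arthur's $\Out(\SO_4)$-averaged one. The gain is that the Whittaker normalizations of the transfer factors and of $\langle s,\sigma\rangle$ are inherited from the global theory and never computed. The refined classification is exactly what upgrades Arthur's identities from $\Out(\SO_4)$-invariant test functions to all test functions. Your route is purely local: it reduces everything to Labesse--Langlands for $\GL_2$ through $\GSO_4=(\GL_2\times\GL_2)/\nabla\GL_1$. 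It needs no globalization and makes the packet structure explicit, but it moves all the content into a local transfer-factor comparison that you only assert.

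To turn it into a proof, three things must be repaired.

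First, products of $\SL_2$ identities only control test functions supported on the image $H_0$ of $\SL_2(F)\times\SL_2(F)$. This subgroup has index $\abs{F^\times/F^{\times2}}$ in $\SO_4(F)$, the quotient being given by the class of $\det g_1$ (essentially the spinor norm). On cosets whose spinor norm is a norm from the field $E_s$ attached to $s$ but not a square, the $s$-weighted packet character is in general nonzero, and the $\SL_2$ identities do not determine it. On the remaining cosets both sides vanish. Use the $\GL_2$ twisted identities directly instead. Let $A_i\colon\pi_i\to\pi_i\otimes(\omega_{E_s/F}\circ\det)$ be Whittaker-normalized. For $(g_1,g_2)$ with $\det g_1\det g_2=1$, the operator $A_1\otimes A_2$ commutes with $\SO_4(F)$. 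By multiplicity one it acts by a scalar on each constituent, and its trace is the product of the two twisted characters, so Labesse--Langlands applies factor by factor.

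Second, the Whittaker-normalized factor for $(\SO_4,H_s)$ must agree with the product of the two $\GL_2$-twisted factors as functions on $H_s(F)$. A mismatch can be a character of $H_s(F)$ coming from the choice of $L$-embeddings, not just a constant. Your regular-germ comparison at the identity only detects the constant, so $\chi_s$ has to be identified at the level of $L$-embeddings.

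Third, the fallback does not work unless $\phi$ is $\Out(\SO_4)$-stable. Arthur's relations are identities of $\Out(\SO_4)$-invariant distributions. Proposition~\ref{proposition:refined-packet-characterization} only tells you which representations lie in $\Pi_\phi$. It says nothing about $f\mapsto\chi_s(f_{H_s})$ for non-invariant $f$, that is, whether this transfer is supported on $\Pi_\phi$ rather than on $\Pi_{\theta\phi}$. That is precisely the extra input the paper gets from the refined global classification.
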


\begin{proof}
   Apply the proof of \cite{Arthur2013-EndoscopicClassificationRepresentations}*{Proposition 6.6.5} using Theorem~\ref{theorem:refined-Arthur-classification-SO4}.
\end{proof}

\subsection{Stable multiplicity formula for \texorpdfstring{$A$}{A}-packets of \texorpdfstring{$\SO_4$}{SO4}}

\begin{theorem}\label{theorem:stable-multiplicity-SO4}
   Let $\psi$ be a refined $A$-parameter for $\SO_4$.
   Then, we have the stable multiplicity formula
   \begin{align*}
      S_{\disc, c(\psi)}^{\SO_4}(f^{\SO_4}) = \frac{1}{\abs{\cS_{\psi}}} \sum_{\Sigma \in \Pi_{\psi}(\SO_4)} \tr(\Sigma(f^{\SO_4}))
   \end{align*}
   for any test function $f^{\SO_4} \in C_c^{\infty}(\SO_4(\A_F))$.
\end{theorem}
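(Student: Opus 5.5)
The plan is to derive the stable multiplicity formula from Theorem~\ref{theorem:refined-Arthur-classification-SO4}, combined with the stabilization of the trace formula for $\SO_4$ and the endoscopic character relations for its proper elliptic endoscopic groups. Since $\SO_4$ is split with no proper Levi contributions once $c(\psi)$ is fixed for an elliptic $\psi$, we have
\begin{align*}
  I^{\SO_4}_{\disc,c(\psi)}(f)=\sum_{H}\iota(\SO_4,H)\,S^{H}_{\disc,c(\psi)}(f_H).
\end{align*}
We argue by induction on the endoscopic groups. For $H\neq\SO_4$, the groups are products $\SO_{E/F,2}\times\SO_{E/F,2}$ (tori), or groups built from $\GL_2$ factors modulo the diagonal twist. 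For these groups the stable multiplicity formula is elementary, because tori have singleton packets and stable distributions equal characters. The refined parameters factoring through such $H$ are exactly those in Corollary~\ref{corollary:component-group-quadratic}.

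First treat the cases with $\cS_\psi=1$: cuspidal Rankin--Selberg products, non-dihedral twist-equivalent pairs, and the two non-generic types. Here no proper endoscopic group contributes at $c(\psi)$, because $\psi$ does not factor through any proper $H$. Hence $S^{\SO_4}_{\disc,c(\psi)}=I^{\SO_4}_{\disc,c(\psi)}$. Theorem~\ref{theorem:refined-Arthur-classification-SO4} together with $\epsilon_\psi=\mathbf1$ expresses this as $\sum_{\Sigma\in\Pi_\psi}\tr\Sigma$. One point needs care: different refinements $(\Pi_1,\Pi_2)$ and $(\Pi_2,\Pi_1)$ may share the same Hecke--Satake family. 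I will handle this by letting $c(\psi)$ mean the refined family (including $\wedge^2_\pm$), using Theorem~\ref{theorem:SO4-simultaneous-twist} to separate the refinements.

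In the common dihedral case, $\cS_\psi=\Z/2\Z$ or $(\Z/2\Z)^2$. Expand $I^{\SO_4}_{\disc,c(\psi)}$ via Theorem~\ref{theorem:refined-Arthur-classification-SO4} as
\begin{align*}
  \frac1{\abs{\cS_\psi}}\sum_{s\in\cS_\psi}\prod_v\Bigl(\sum_{\Sigma_v}\langle s_v,\Sigma_v\rangle\tr\Sigma_v\Bigr).
\end{align*}
For each $s\neq1$, Theorem~\ref{theorem:ECR} (extended to non-discrete places by parabolic descent, Lemma~\ref{lemma:parabolic-descent-commutes-with-transfer}) identifies the $s$-term with $\prod_v\chi_{s,v}((f_v)_{H_s})$. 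By the toral stable multiplicity formula this equals $\iota(\SO_4,H_s)\,S^{H_s}_{\disc,c(\psi)}(f_{H_s})$, after matching the constant $\iota(\SO_4,H_s)$ with $1/\abs{\cS_\psi}$ and accounting for the automorphisms of the endoscopic datum. Subtracting these terms from the stabilization leaves the $s=1$ term, which is $\abs{\cS_\psi}^{-1}\sum_{\Sigma\in\Pi_\psi}\tr\Sigma$.

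I expect the main obstacle to be the constant bookkeeping. One must check that $\iota(\SO_4,H_s)$, the outer automorphisms of $H_s$ (which swap the two $\SO_2$ factors or invert them), and the number of distinct torus characters mapping to $\psi$ combine to exactly $1/\abs{\cS_\psi}$. In particular, Theorem~\ref{theorem:ECR} is stated for discrete places only. I would first verify the constants in the simplest case $\cS_\psi=\Z/2\Z$ with an anisotropic $H_s$, and then treat $(\Z/2\Z)^2$ by the same count applied to $s_1,s_2,s_3$ separately.
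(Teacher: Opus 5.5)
Your proposal is essentially the paper's proof, which obtains the formula from Theorem~\ref{theorem:refined-Arthur-classification-SO4}, Theorem~\ref{theorem:ECR}, and the stabilization $I^{\SO_4}_{\disc,c(\psi)}=S^{\SO_4}_{\disc,c(\psi)}+\frac14\sum_{E/F}I^{\SO(2,E/F)^2}_{\disc,c(\psi)}$, in which the tori $\SO(2,E/F)^2$ are the only proper elliptic endoscopic groups (the $\GL_2$-type groups you mention do not occur). The paper does not carry out the constant check you flag; if the torus term sums over all automorphic characters (an orbit of four under swapping and inverting the two factors when $\cS_\psi\cong\Z/2\Z$, and of two when $\cS_\psi\cong(\Z/2\Z)^2$), the coefficient that cancels the $s\neq1$ terms is Kottwitz's $\iota(\SO_4,\SO(2,E/F)^2)=\frac18$ (Tamagawa numbers $2$ and $4$, outer automorphism group of order $4$), so the displayed $\frac14$ presupposes that these characters are counted only up to the swap.
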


\begin{proof}
   This follows from Theorems~\ref{theorem:refined-Arthur-classification-SO4} and~\ref{theorem:ECR}, and the stabilization of the trace formula for $\SO_4$ given as
   \begin{align*}
      I_{\disc, c(\psi)}^{\SO_4}(f^{\SO_4}) = S_{\disc, c(\psi)}^{\SO_4}(f^{\SO_4}) + \frac{1}{4} \sum_{E/F} I_{\disc, c(\psi)}^{\SO(2, E/F)^2}(f^{\SO_4}_{\SO(2, E/F)^2}).
   \end{align*}
\end{proof}

\section{On real exceptional theta correspondences}\label{section:theta-A2-real-local}

Retain the notation of Section~\ref{section:theta-G2-to-A2}.

\subsection{Fourier functionals on the spherical theta lift}\label{theta-A2:sec:real-models}

Throughout this subsection the ground field is $\R$, $E=\R^3$ or $\R\times\C$, and $J=M_3(\R)^+$. Normalize $\psi(x)=e^{ix}$ and put $C_0=E^\perp\subset J$, $H=\Aut_E(C_0)$. Let $\Omega^\infty$ be the smooth minimal representation of split $E_6$, and put $\pi_1=\theta_{C_0}(\mathbf1)$, with its Casselman--Wallach topology. Its irreducible spherical Harish-Chandra module is identified in \cite{GanLokeEtAl2025-FamilySpinEightDualPairsRealGroups}*{Theorem 3}.

\subsubsection{Preliminaries}\label{theta-A2:sec:real-preliminaries}

\emph{Cubic algebras and cubes.}
Write $\tr_E=\tr_{E/\R}$ for the trace and $N_{E/\R}$ for the norm of $E$. The quadratic adjoint $e\mapsto e^\#$ is the polynomial map given on $E^\times$ by $e^\#=N_{E/\R}(e)e^{-1}$. Put $e\times f=(e+f)^\#-e^\#-f^\#$. Thus
\[
 (e_1,e_2,e_3)^\#=(e_2e_3,e_3e_1,e_1e_2),\quad
 (r,z)^\#=(|z|^2,r\overline z)
\]
for $E=\R^3$ and $E=\R\times\C$, respectively.

Put $Z_E=[N_E,N_E]$. The root coordinates identify $V_E=N_E/Z_E$ with $\R\oplus E\oplus E\oplus\R$. Its elements $\Sigma=(a,e,f,b)$ are called \emph{$E$-twisted cubes}. Over $\C$, the action of $M_E$ is the tensor product of three standard two-dimensional representations, twisted by the inverse of their common determinant. Its quartic relative invariant is
\[
 \Delta(\Sigma)=(ab-\tr_E(ef))^2
      +4aN_{E/\R}(f)+4bN_{E/\R}(e)-4\tr_E(e^\#f^\#).
\]
A cube is \emph{nondegenerate} if $\Delta(\Sigma)\ne0$. We also use cubes as character coordinates. With the root-coordinate pairing between $V_E$ and its dual,
\[
 \psi_\Sigma(u)=\psi\bigl(\langle\Sigma,u\bmod Z_E\rangle\bigr),
 \quad u\in N_E(\R).
\]
In particular, $\psi_\Sigma$ is trivial on $Z_E$. A \emph{reduced cube} is one of the form $(1,0,f,b)$. Then $\Delta(\Sigma)=b^2+4N_{E/\R}(f)$. Every nondegenerate $M_E(\R)$-orbit contains a reduced cube. See \cite{GanSavin2022-TwistedCompositionAlgebrasArthurPacketsTrialitySpin8}*{Section~2.9, Section~5, and the proof of Proposition 12.3}.

\emph{The composition algebra and its fiber.}
Using the matrix Jordan operations of Section~\ref{theta-common-notation}, the fixed embedding $E\hookrightarrow J$ gives the orthogonal decomposition
\[
 J=E\oplus C_0,\quad C_0=E^\perp,
\]
with projections $\pr_E,\pr_{C_0}$. The $E$-module structure and tensors on $C_0$ are
\[
 \begin{gathered}
 e\cdot v=-e\times v,\quad
 Q(v)=-\pr_E(v^\#),\quad \beta_{C_0}(v)=\pr_{C_0}(v^\#),\\
 N_{C_0}(v)=\det v.
 \end{gathered}
\]
These formulas give the Springer construction. See \cite{GanSavin2022-TwistedCompositionAlgebrasArthurPacketsTrialitySpin8}*{Sections 4.1 and 4.5}. They give $v^\#=(-Q(v),\beta_{C_0}(v))$. The group $H=\Aut_E(C_0)$ identifies with the pointwise stabilizer of $E$ in $\Aut(J)$. Its identity component $E^\times/\R^\times$ acts by conjugation. An adjoint involution for the trace pairing on $E$ supplies the other component.

The algebra $C_\Sigma$ attached to a nondegenerate reduced cube $\Sigma=(1,0,f,b)$ is $E^2$ with
\[
 \begin{aligned}
 Q_\Sigma(u,v)&=-fu^2-buv+f^\#v^2,\\
 \beta_\Sigma(u,v)&=(-bv^\#-(fu)\times v,\ u^\#+fv^\#).
 \end{aligned}
\]
In particular, $\beta_\Sigma(1,0)=(0,1)$. Put $q=-f$ and $n=-b$. By \cite{GanSavin2022-TwistedCompositionAlgebrasArthurPacketsTrialitySpin8}*{Proposition 5.2 and Corollary 5.3}, we have
\begin{equation}\label{theta-A2:eq:real-fiber-isomorphisms}
 \begin{gathered}
 \operatorname{Isom}_E(C_\Sigma,C_0)\longrightarrow
 X_{q,n}:=\{v\in C_0:Q(v)=q,\ N_{C_0}(v)=n\},\\
 \phi\longmapsto\phi(1,0).
 \end{gathered}
\end{equation}
Its inverse sends $v$ to $(u_1,u_2)\mapsto u_1v+u_2\beta_{C_0}(v)$. Thus a nonempty real fiber is an $H(\R)$-torsor, meaning that $H(\R)$ acts freely and transitively on it. The fiber is empty precisely when $C_\Sigma\not\cong C_0$.

\emph{Heisenberg groups and the polarization.}
A real Heisenberg group is a connected simply connected two-step nilpotent group with one-dimensional center and nondegenerate commutator pairing on its quotient by the center. A \emph{polarization} decomposes this symplectic quotient into complementary Lagrangians $\Lambda\oplus\Lambda^*$. The Schr\"odinger model with fixed nontrivial central character is $L^2(\Lambda)$, where the Lagrangians act by translations and multiplication by characters. See \cite{Frahm2024-MinimalRepresentations}*{Sections 3.4--3.5}.

The Heisenberg radical $N_J$ in the ambient split group of type $E_6$ contains $N_E$, has center $Z_J=Z_E$, and satisfies
\[
 V_J=N_J/Z_J\cong\R\oplus J\oplus J\oplus\R.
\]
To specify the auxiliary radical $N_{\mathrm{aux}}$ used for the Schr\"odinger model, use the bigrading of \cite{Frahm2024-MinimalRepresentations}*{Sections 5.1--5.2}. Let $H_1,H_2$ denote his commuting grading elements $H_\alpha,H_\beta$, and put
\[
 \widetilde{\fg}_{(i,j)}
 =\{X:[H_1,X]=iX,\ [H_2,X]=jX\}.
\]
With his normalizations, put
\[
 \begin{gathered}
 \R F_0=\widetilde{\fg}_{(-1,-1)},\quad
 \R A_0=\widetilde{\fg}_{(1,-2)},\quad
 \R B_0=\widetilde{\fg}_{(-2,1)},\quad
 \R B_1=\widetilde{\fg}_{(-1,2)},\\
 J=\widetilde{\fg}_{(0,-1)},\quad J^*=\widetilde{\fg}_{(-1,0)}.
 \end{gathered}
\]
The symplectic form $\omega$ pairs $J$ with $J^*$, which is identified with $J$ by the trace form, and $\omega(A_0,B_0)=2$. The radicals and the chosen polarization are
\begin{equation}\label{theta-A2:eq:real-polarization}
 \begin{aligned}
 \Lambda&=\R A_0\oplus J,\quad \Lambda^*=\R B_0\oplus J^*,\\
 \fn_{\mathrm{aux}}&=\R F_0\oplus\Lambda\oplus\Lambda^*,\\
 \fn_J&=\R B_0\oplus\R F_0\oplus J^*
       \oplus\widetilde{\fg}_{(-1,1)}\oplus\R B_1.
 \end{aligned}
\end{equation}
Thus the auxiliary center is $\R F_0$, whereas the center of $\fn_J$ is $\R B_0$.

Frahm's model is $L^2(\R^\times_\lambda\times\Lambda)$. Write a point of $\Lambda$ as $tA_0+x$ with $x\in J$. The parameter $\lambda$ is the auxiliary central frequency. We have $d\Omega(F_0)=i\lambda$. The first scalar character coordinate $a$ of a cube corresponds to this root group, so reduction to $a=1$ fixes $\lambda=1$. The equation for $Z_J$ fixes $t=0$. Our $A_0,B_0,B_1,F_0$ are Frahm's $A,B,\overline B,F$, respectively. We use \cite{Frahm2024-MinimalRepresentations}*{Proposition 5.5.5, Lemma 7.1.3, and Theorem 7.1.4} throughout.

On the chart $a=1$, the minimal-orbit character coordinates in $V_J$ are $(1,x,x^\#,N_J(x))$, $x\in J$. Their restriction to $N_E$ is, in the root conventions of \cite{GanSavin2022-TwistedCompositionAlgebrasArthurPacketsTrialitySpin8}*{Proposition 8.1 and the proof of Proposition 12.3},
\begin{equation}\label{theta-A2:eq:real-character-restriction}
 (a,x,y,d)\longmapsto(a,-\pr_E x,\pr_E y,-d).
\end{equation}
Consequently $(1,0,f,b)$ forces $x\in C_0$, $Q(x)=-f$, and $N_{C_0}(x)=-b$.

Finally, $\mathcal S(W)$ denotes the Schwartz space of a real vector space $W$, and $\mathcal S'(W)$ its continuous dual, the tempered distributions. For a polynomial $p$ and a distribution $D$, $pD=0$ means $D(p\varphi)=0$ for every test function $\varphi$. All Hom spaces below consist of continuous functionals. Equivariance means $\ell(\Omega(u)\varphi)=\psi_\Sigma(u)\ell(\varphi)$.

\subsubsection{The real fibers}\label{theta-A2:sec:real-fibers}

Fix a nondegenerate reduced cube $\Sigma=(1,0,f,b)$, and use $q,n,X_{q,n}$ from \eqref{theta-A2:eq:real-fiber-isomorphisms}. The character restriction \eqref{theta-A2:eq:real-character-restriction} identifies $X_{q,n}$ with the fiber of minimal-orbit characters above $\Sigma$.

It suffices to use reduced representatives for which $q\in E^\times$. Indeed, the construction in \cite{GanSavin2022-TwistedCompositionAlgebrasArthurPacketsTrialitySpin8}*{Proposition 5.2(ii)} gives reduced representatives from vectors $v\in C_\Sigma$ with $N_{C_\Sigma}(v)^2-4N_{E/\R}(Q_\Sigma(v))\ne0$. Intersecting this nonempty Zariski-open set with $N_{E/\R}(Q_\Sigma(v))\ne0$ gives the required choice. The latter polynomial is nonzero because each absolute component of $Q_\Sigma$ is a nondegenerate quadratic form. Then $q=Q_\Sigma(v)\in E^\times$. Set
\[
 \Delta=n^2-4N_{E/\R}(q)\ne0.
\]

If $E=\R^3$, embed $E$ diagonally and write
\[
 v=\begin{pmatrix}0&a&b'\\c&0&d\\e'&f'&0\end{pmatrix}.
\]
An adjugate and determinant calculation gives
\[
 Q(v)=(df',b'e',ac),\quad N_{C_0}(v)=ade'+b'cf'.
\]
Put $r=ade'$ and $s=b'cf'$. On the fiber,
\[
 r+s=n,\quad rs=q_1q_2q_3,\quad \Delta=(r-s)^2.
\]
There are no real points if $\Delta<0$. If $\Delta>0$, the two roots $r$ of
$r^2-nr+q_1q_2q_3=0$ are nonzero, and each gives the parametrization
\[
 a=u,\quad d=w,\quad e'=\frac r{uw},\quad
 c=\frac{q_3}u,\quad f'=\frac{q_1}w,\quad
 b'=\frac{q_2uw}r,
 \quad (u,w)\in(\R^\times)^2.
\]
Thus $X_{q,n}$ consists of two copies of $(\R^\times)^2$. Diagonal conjugation is simply transitive on each copy, and transpose exchanges them. This realizes the torsor under
\[
 H(\R)=\bigl((\R^\times)^3/\R^\times\bigr)\rtimes\Z/2\Z.
\]
For example, the Jacobian minor of $(df',b'e',ac,N_{C_0})$ in the variables $(b',c,e',f')$ is $ad(s-r)$, which is nonzero on the fiber. Haar measure on each copy is a multiple of
$\rd u\,\rd w/|uw|$. Invariance under the components of the torus and the involution leaves one scalar.

If $E=\R\times\C$, diagonalize $E$ after complexification. Complex conjugation exchanges the last two basis vectors, so that the real points of $C_0$ have the form
\[
 v=\begin{pmatrix}0&a&\overline a\\b'&0&c\\
 \overline{b'}&\overline c&0\end{pmatrix},
 \quad a,b',c\in\C.
\]
Write $q=(q_0,\overline z,z)$. The equations are
\[
 |c|^2=q_0,\quad ab'=z,\quad
 2\operatorname{Re}(ac\overline{b'})=n.
\]
For $r=ac\overline{b'}$ they imply
\[
 |r|^2=q_0|z|^2,\quad 2\operatorname{Re}r=n,
 \quad \Delta=-4(\operatorname{Im}r)^2.
\]
Hence the fiber is empty if $\Delta>0$. If $\Delta<0$, its two components are indexed by
\[
 r=\frac{n\pm i\sqrt{4q_0|z|^2-n^2}}2,
\]
and each is parametrized by
\[
 a=u,\quad b'=\frac zu,\quad
 c=\frac{r\overline u}{u\overline z},\quad u\in\C^\times.
\]
The torus $(\R^\times\times\C^\times)/\R^\times\cong\C^\times$ acts simply transitively on each component. The adjoint involution exchanges $r$ and $\overline r$. Haar measure is a multiple of $\rd\operatorname{Re}u\,\rd\operatorname{Im}u/|u|^2$. The Jacobian of the four real equations has rank four, since the parametrization gives the fiber smoothly for all nearby $(q_0,z,n)$ with $\Delta<0$. In particular, again there is only one invariant distribution for the full torsor action.

\subsubsection{A distribution calculation in the minimal representation}\label{theta-A2:sec:real-distributions}

\begin{lemma}\label{theta-A2:lem:real-minimal-bound}
For every nondegenerate $E$-twisted cube $\Sigma$,
\[
 \dim\Hom_{H(\R)\times N_E(\R)}
       (\Omega^\infty,\mathbf1\boxtimes\psi_\Sigma)
 \leq
 \begin{cases}1,&C_\Sigma\cong C_0,\\0,&C_\Sigma\not\cong C_0.\end{cases}
\]
\end{lemma}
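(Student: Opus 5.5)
We bound the space of functionals by working in Frahm's Schrödinger model of $\Omega^\infty$ on $L^2(\R^\times_\lambda\times\Lambda)$, with the polarization \eqref{theta-A2:eq:real-polarization}. First we reduce to a reduced cube. The space in question depends only on the $M_E(\R)$-orbit of $\Sigma$, since $M_E(\R)$ normalizes $N_E$ and centralizes $H$ up to the automorphism of the data. So we may take $\Sigma=(1,0,f,b)$ with $q=-f\in E^\times$ and $\Delta\ne0$, as arranged in Section~\ref{theta-A2:sec:real-fibers}.

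A continuous functional $\ell$ on $\Omega^\infty$ restricts to a tempered distribution on $\R^\times_\lambda\times\Lambda$. This uses the description of the smooth vectors in \cite{Frahm2024-MinimalRepresentations}*{Theorem 7.1.4}, which embeds $\mathcal S$-type functions and lets $\ell$ be read as an element of $\mathcal S'$ on an open set containing $\lambda=1$. The elements of $N_E$ lying in $\exp(\R F_0\oplus\Lambda^*)$ and in $Z_J$ act by multiplication by characters in Frahm's model. Equivariance under them therefore gives polynomial identities of the form $pD=0$. Specifically, $F_0$ acts by $i\lambda$, and the character coordinate $a=1$ forces the support of $\ell$ into $\lambda=1$. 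The central $Z_J$ forces $t=0$. The $J^*$-directions restricted to $E$ force $\pr_E x=0$. The remaining part of $N_E$ acts through the differential operators of \cite{Frahm2024-MinimalRepresentations}*{Proposition 5.5.5 and Lemma 7.1.3}. Their principal parts reduce, on the slice, to multiplication by $\pr_E(x^\#)$ and $N_J(x)$. Combining these with the restriction \eqref{theta-A2:eq:real-character-restriction}, the support of $D$ is contained in $\{\lambda=1\}\times X_{q,n}$.

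The next step is to exclude transversal derivatives. The defining functions $Q$ and $N_{C_0}$ have independent differentials on the fiber, by the Jacobian computations in Section~\ref{theta-A2:sec:real-fibers}. Because of this, a distribution annihilated by the ideal of the fiber is a smooth-density-free sum of normal derivatives. We rule these out using the \emph{exact} equivariance equations, not only their principal symbols. Each equation has the form $(p-c)D=0$ with $p$ a defining polynomial, so a standard argument (Schwartz's structure theorem for distributions supported on a submanifold, plus induction on the transversal order) shows that $D=\delta_{\lambda=1}\otimes\delta_{t=0}\otimes\mu$, where $\mu$ is a distribution on the smooth manifold $X_{q,n}$. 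The first-order correction terms in Frahm's operators are the main obstacle here. I would handle them by checking that they map the leading transversal term to a nonzero multiple of itself, which makes the induction terminate.

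If $C_\Sigma\not\cong C_0$, then $X_{q,n}=\emptyset$ by \eqref{theta-A2:eq:real-fiber-isomorphisms}, so $D=0$ and hence $\ell=0$, since $\Omega^\infty$ is dense in the model. Otherwise $X_{q,n}$ is an $H(\R)$-torsor, and $H(\R)$ acts geometrically in the model. Invariance of $\ell$ under $H(\R)$ makes $\mu$ an $H(\R)$-invariant distribution on a homogeneous space with free transitive action. Such a distribution is a multiple of Haar measure, the density exhibited in Section~\ref{theta-A2:sec:real-fibers}, where the action of the involution and of the torus components leaves one scalar. This gives the bound $1$.
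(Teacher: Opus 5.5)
Your outline is the paper's proof: Frahm's Schr\"odinger model, localization at $\lambda=1$, the multiplication equations for $F_0$, $Z_J$ and $E\subset J^*$, the fiber $X_{q,n}$, and uniqueness of invariant distributions on the torsor. However, the step you yourself call the main obstacle is left open, and the mechanism you propose for it is neither justified nor needed.

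\textbf{The missing observation.} The derivative terms in Frahm's operators for $\widetilde{\fg}_{(-1,1)}\cap\fn_E$ and $B_1$ carry a factor of $t$. In his normalization, $d\Omega(T)=-t\partial_{TA_0}-\omega(Tx',x')/(2i\lambda)$ and $d\Omega(B_1)=-\omega(tA_0,B_0)\partial_\lambda+2i\lambda^{-2}n_J^{\mathrm{Fr}}(x')$. The $Z_J$-equation is the exact identity $\ell(t\varphi)=0$ for every smooth vector $\varphi$. After the $\lambda$-localization, $\partial\varphi$ is again a smooth vector, so $\ell$ kills both derivative terms outright. The remaining equivariance conditions are therefore exact multiplication equations, which at $\lambda=1$ read $(Q-q)D=0$ and $(N_{C_0}-n)D=0$. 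No induction on transversal order, and no argument that the correction terms ``map the leading transversal term to a nonzero multiple of itself'', enters. Without this, your assertion that each equation has the form $(p-c)D=0$ is unsupported. Your support statement $\operatorname{supp}D\subset\{\lambda=1\}\times X_{q,n}$ alone would still allow normal derivatives.

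\textbf{The distribution-theory step is stated backwards.} A distribution killed by every defining function of a submanifold with independent differentials has no normal derivatives at all. This is Hadamard's lemma: a test function vanishing on the submanifold lies in the ideal generated by the defining functions. Sums of normal derivatives arise only for distributions merely supported on the submanifold. The clean order, as in the paper, is as follows. First, $(\lambda-1)\ell=t\ell=e\ell=0$ gives a distribution $D$ on $C_0$ with $\ell(\varphi)=D(\varphi|_{\mathrm{slice}})$. Then the two fiber equations and Hadamard's lemma make $D$ a distribution on $X_{q,n}$.

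\textbf{Concluding $\ell=0$ from $D=0$.} Density of $\Omega^\infty$ in $L^2$ is not the relevant fact here. You need this implication both for the vanishing when $C_\Sigma\not\cong C_0$ and for injectivity of $\ell\mapsto\mu$ in the bound $1$. Use the central localization instead: convolution by the auxiliary center gives $\ell(\varphi)=\ell(\widehat\eta(\lambda)\varphi)$ for a cutoff with $\widehat\eta(1)=1$, so $\ell$ is determined by its values on vectors supported near $\lambda=1$.
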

\begin{proof}
By Section~\ref{theta-A2:sec:real-fibers}, we may choose a reduced representative $\Sigma=(1,0,f,b)$ with $q=-f\in E^\times$. We use the Schr\"odinger model of \cite{Frahm2024-MinimalRepresentations}*{Proposition 5.5.5, Lemma 7.1.3, and Theorem 7.1.4}. Its Hilbert model in the present case is
\[
 L^2(\R^\times_\lambda\times\R_t\times J_x).
\]
Use the polarization \eqref{theta-A2:eq:real-polarization}. The auxiliary center $\exp(\R F_0)$ acts by $e^{is\lambda}$. The center $\exp(\R B_0)$ of $N_J$ acts by multiplication by an exponential in $t$, and $J^*\subset\fn_J$ acts by multiplication by exponentials linear in $x$. The remaining $J$-coordinates act, at $t=0$, by multiplication by the quadratic adjoint $x^\# /\lambda$. The last scalar coordinate acts there by the cubic norm $N_J(x)/\lambda^2$.

To obtain these operators from \cite{Frahm2024-MinimalRepresentations}, write its variable $x$ as $tA_0+x'$ with $x'\in J$. For $T\in\widetilde{\fg}_{(-1,1)}$ and $B_1\in\widetilde{\fg}_{(-1,2)}$, the relevant operators, in its normalization, are
\[
 d\Omega(T)=-t\partial_{TA_0}-\frac{\omega(Tx',x')}{2i\lambda},
 \quad
 d\Omega(B_1)=-\omega(tA_0,B_0)\partial_\lambda+2i\lambda^{-2}n_J^{\mathrm{Fr}}(x').
\]
Here $\partial_{TA_0}$ is directional differentiation in $J$, and $n_J^{\mathrm{Fr}}$ is Frahm's cubic Jordan norm, denoted $n$ in \cite{Frahm2024-MinimalRepresentations}. It is distinct from the prescribed scalar $n=-b$. The quadratic and cubic polynomials are the adjoint and norm of the cubic Jordan algebra. With the root-coordinate normalization of the reduced cube, they give $x^\#$ and $N_J(x)$ as above. Both derivative terms vanish on restriction to $t=0$. These normalizations are determined by the orbit of the character at the origin. At $x=0$ the character is $(1,0,0,0)$, and the translation subgroup indexed by $J$ gives its orbit
$(1,x,x^\#,N_J(x))$, exactly as in \cite{GanSavin2022-TwistedCompositionAlgebrasArthurPacketsTrialitySpin8}*{Proposition 8.1}.

On a compact $\lambda$-interval disjoint from zero, the smooth-vector seminorms are equivalent to those of smooth functions of $\lambda$ with values in $\mathcal S(\R\times J)$. The auxiliary Heisenberg action gives all coordinate multiplications and derivatives in $(t,x)$, and its split Levi gives $\lambda\partial_\lambda$ modulo these operators. Conversely, the infinitesimal operators have polynomial coefficients in $(t,x)$ and Laurent-polynomial coefficients in $\lambda$, by \cite{Frahm2024-MinimalRepresentations}. Thus compactly supported smooth functions of $\lambda$ with Schwartz values in $(t,x)$ are smooth vectors, and the seminorms control one another. Cutoffs in $(t,x)$ converge in these seminorms.

Let $\ell$ be a continuous $(N_E,\psi_\Sigma)$-functional. A convolution in the auxiliary center localizes it to any sufficiently small neighborhood of $\lambda=1$. Multiplication by a smooth Fourier cutoff equal to one at $1$ leaves $\ell$ unchanged. There is therefore no contribution from $\lambda=0$. The equations for this center, for $Z_J=Z_E$, and for the first $E$-coordinate give, respectively,
\[
 (\lambda-1)\ell=0,\quad t\ell=0,\quad e\ell=0,
 \quad x=e+v\in E\oplus C_0.
\]
These are multiplication equations, so they exclude derivatives normal to the slice $\lambda=1$, $t=0$, $e=0$. Restriction to this slice is onto $\mathcal S(C_0)$. A continuous section is obtained by multiplying a Schwartz function of $v$ by fixed cutoffs in $\lambda,t,e$. Consequently $\ell$ is represented by a tempered distribution $D$ on $C_0$. The remaining character equations become
\begin{equation}\label{theta-A2:eq:real-distribution-equations}
 (Q-q)D=0,\quad (N_{C_0}-n)D=0.
\end{equation}
The $E$-valued equation gives three real equations and the scalar equation one.

Since $\Sigma$ is nondegenerate, the four defining equations of $X_{q,n}$ have independent differentials along it, by Section~\ref{theta-A2:sec:real-fibers}. By the implicit function theorem and Hadamard's lemma, \eqref{theta-A2:eq:real-distribution-equations} implies that $D$ is the pushforward of a distribution on $X_{q,n}$.

The group $H$ preserves the auxiliary polarization and acts through a homogeneous line bundle on $X_{q,n}$. If $X_{q,n}$ is empty then $D=0$. Otherwise it is an $H(\R)$-torsor, and trivializing this bundle identifies equivariant distributions with scalar multiples of Haar measure. This proves the upper bound.
\end{proof}

\Needspace{10\baselineskip}
\subsubsection{Passage to the spherical theta lift}

\begin{proposition}\label{theta-A2:prop:real-fourier-model}
For $E=\R^3$ or $\R\times\C$ and every nondegenerate cube $\Sigma$, one has
\[
 \Hom_{N_E(\R)}(\pi_1,\psi_\Sigma)
 \cong
 \begin{cases}\C,&C_\Sigma\cong C_0,\\0,&C_\Sigma\not\cong C_0,\end{cases}
\]
and the stabilizer $M_{E,\Sigma}(\R)$ acts trivially on the nonzero space.
\end{proposition}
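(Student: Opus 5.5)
We prove the upper bound first and the lower bound and stabilizer triviality afterwards. For the upper bound we use that $\pi_1=\theta_{C_0}(\mathbf1)$ is the big theta lift of the trivial representation of $H(\R)$, that is, the maximal $H(\R)$-coinvariant quotient of $\Omega^\infty$ on which $H(\R)$ acts trivially, in its Casselman--Wallach globalization. A continuous $(N_E,\psi_\Sigma)$-functional on $\pi_1$ pulls back along the quotient map to a continuous functional on $\Omega^\infty$ in
\[
 \Hom_{H(\R)\times N_E(\R)}(\Omega^\infty,\mathbf1\boxtimes\psi_\Sigma),
\]
and the pullback map is injective because the quotient map is surjective. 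Lemma~\ref{theta-A2:lem:real-minimal-bound} then gives dimension at most one when $C_\Sigma\cong C_0$ and zero otherwise. The one point to check is that the Casselman--Wallach quotient agrees with the closed Hausdorff quotient used to define $\pi_1$. We will verify this using the identification of the spherical Harish-Chandra module in \cite{GanLokeEtAl2025-FamilySpinEightDualPairsRealGroups}*{Theorem 3} and automatic continuity for Casselman--Wallach representations.

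\textbf{Existence.} Take $C_\Sigma\cong C_0$ and reduce to a representative $\Sigma=(1,0,f,b)$ with $q\in E^\times$, as in Section~\ref{theta-A2:sec:real-fibers}. The candidate functional is
\[
 \ell(\varphi)=\int_{X_{q,n}}\varphi|_{\lambda=1,t=0,e=0}\,\rd\mu,
\]
where $\mu$ is Haar measure on the torsor, and we will show it is nonzero and continuous. Convergence follows from the explicit parametrizations of the fibers: they are closed submanifolds of $C_0$, and the Schwartz restriction decays faster than the polynomial growth of the Haar density $\rd u\,\rd w/|uw|$ (respectively $\rd\operatorname{Re}u\,\rd\operatorname{Im}u/|u|^2$). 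Closedness is what makes this work; it holds because $\Delta\ne0$ keeps $r$, and hence the coordinates $u,w$, bounded away from $0$ along each branch. The equivariance equations are then read off from the operator formulas in the proof of Lemma~\ref{theta-A2:lem:real-minimal-bound}. $H(\R)$-invariance comes from invariance of Haar measure, once we check that the line bundle trivialization has trivial cocycle; this should follow because $H$ preserves the polarization and acts unimodularly. Choosing $\varphi$ supported near a point of $X_{q,n}$ shows $\ell\ne0$. Because $\ell$ is $H(\R)$-invariant, it factors through $\pi_1$, which gives the lower bound.

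\textbf{Stabilizer.} The stabilizer $M_{E,\Sigma}(\R)$ normalizes $(N_E,\psi_\Sigma)$ and commutes with $H(\R)$ inside the see-saw, so it acts on the one-dimensional space by a character. Since $C_\Sigma\cong C_0$, it acts on the torsor $X_{q,n}$ compatibly with \eqref{theta-A2:eq:real-fiber-isomorphisms}, through $\Aut(C_\Sigma)\cong H$. Under this action it preserves Haar measure, and the Levi action on $\Omega^\infty$ introduces only a modulus factor. We will show that this factor is trivial on the derived part and on the torus part, the latter because the determinant character is trivial on the stabilizer of a nondegenerate cube. The main obstacle is getting these normalizations exactly right (the Levi modulus factor and the line bundle cocycle). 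If a direct check fails, the fallback is that the character is continuous and of finite order, since $M_{E,\Sigma}(\R)/M_{E,\Sigma}(\R)^\circ$ is finite, and to compare it with the nonarchimedean statement \cite{GanSavin2022-TwistedCompositionAlgebrasArthurPacketsTrialitySpin8}*{(16.7)}, where $\mu_K$ is trivial.
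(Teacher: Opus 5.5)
\textbf{Where the gap is.} Your upper bound and your explicit functional can be made to work (see the end), but the last assertion, that $M_{E,\Sigma}(\R)$ acts trivially on the line, is not established.

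Your direct check assumes that $M_{E,\Sigma}(\R)$ acts on the Schr\"odinger model of Section~\ref{theta-A2:sec:real-preliminaries} by a point transformation of the slice $\lambda=1$, $t=0$, times a modulus factor. Only elements respecting the auxiliary polarization \eqref{theta-A2:eq:real-polarization} are known to act that way. $M_{E,\Sigma}(\R)$ lies in the Levi subgroup for $N_J$ (defined by $H_1$), not in the Levi subgroup for $N_{\mathrm{aux}}$ (defined by $H_1+H_2$), because its identity component does not commute with $H_2$. Indeed, $H_2$ has eigenvalues $-1,0,1,2$ on the summands of $V_E$ carrying $a,e,f,b$. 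For $E=\R^3$ the centralizer of $H_2$ in $M_E$ is the diagonal torus of $\GL_2^3$, and its elements fixing $(1,0,f,b)$ with $f\in E^\times$, $b\neq0$ form a finite group. By contrast, $M_{E,\Sigma}(\R)^\circ\cong(E^\times/\R^\times)^\circ$ is two-dimensional. So nothing in the proposal computes the character by which the stabilizer acts.

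The fallback fails too. A continuous character of $M_{E,\Sigma}(\R)$ need not have finite order: the identity component is a noncompact torus with characters such as $\abs{\cdot}^s$, and finiteness of $\pi_0$ helps only once triviality on the identity component is known. Nor does a $p$-adic statement like \cite{GanSavin2022-TwistedCompositionAlgebrasArthurPacketsTrialitySpin8}*{(16.7)} constrain a real character by itself; some global argument must link the places.

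\textbf{The missing global argument.} This is how the paper proceeds. Take a cubic field $L$ with $L\otimes_\Q\R\cong E$. Then $T_{C_L}$ is anisotropic and the global theta lift of $\mathbf1$ from $H_{C_L}$ is an integral over a compact quotient. It is nonzero with real component $\pi_1$, and it has a nonzero $C_L$-Fourier coefficient; see \cite{GanSavin2022-TwistedCompositionAlgebrasArthurPacketsTrialitySpin8}*{Proposition 14.5 and Corollary 14.6(i)} and \cite{GanLokeEtAl2025-FamilySpinEightDualPairsRealGroups}*{Theorems 1 and 3}.

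Factor this global functional into a real factor and finite-place factors. At finite places the stabilizer acts trivially by \cite{GanSavin2022-TwistedCompositionAlgebrasArthurPacketsTrialitySpin8}*{Proposition 12.3}, since the quadratic algebra is split. Left invariance of automorphic forms makes the global functional invariant under the rational stabilizer $H_{C_L}(\Q)$. Hence the real factor is $H_{C_L}(\Q)$-invariant, and density with continuity gives invariance under the real stabilizer. The same global coefficient also gives the lower bound directly.

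\textbf{On the rest of your route.} Your local lower bound is a legitimate alternative, with one repair. ``Automatic continuity'' presupposes that the maximal Hausdorff $H(\R)$-coinvariant quotient $Q$ of $\Omega^\infty$ is a Casselman--Wallach representation, which is exactly the point at issue. The paper's density argument, applied to $Q$ instead of $\Omega^\infty/\ker T$, supplies this. The image of the $\widetilde K$-finite vectors is a quotient of the irreducible module of \cite{GanLokeEtAl2025-FamilySpinEightDualPairsRealGroups}. Its $K_E$-isotypic parts are finite-dimensional and dense in, hence equal to, those of $Q$. So $Q\cong\pi_1^\infty$ as soon as $Q\neq0$, and your $\ell$ provides that.

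Two smaller points. The line-bundle cocycle worry is harmless, since any equivariant line bundle on an $H(\R)$-torsor has an invariant section. And $X_{q,n}$ is closed simply because it is algebraic, not because $u,w$ stay away from $0$; they do not. Integrability comes from the other coordinates blowing up as $u,w\to0$ or $\infty$.
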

\begin{proof}
We first construct a continuous theta map. Choose an auxiliary cubic field $L/\Q$ with $L\otimes_\Q\R\cong E$. For $E=\R^3$ one may take $L=\Q[t]/(t^3-3t+1)$, and for $E=\R\times\C$ one may take $L=\Q[t]/(t^3-t-1)$. These polynomials are irreducible with discriminants $81$ and $-23$, respectively. Embed $L$ in $M_3(\Q)$ by its regular representation and denote the corresponding composition algebra by $C_L$. Then $T_{C_L}\cong\Res_{L/\Q}\mathbf G_m/\mathbf G_m$ is anisotropic over $\Q$, so $[H_{C_L}]=H_{C_L}(\Q)\backslash H_{C_L}(\A_\Q)$ is compact.

The ordinary global theta lift of $\mathbf1$ is nonzero and square-integrable by \cite{GanSavin2022-TwistedCompositionAlgebrasArthurPacketsTrialitySpin8}*{Proposition 14.5 and Corollary 14.6(i)}. The real Harish-Chandra module of this lift is $\pi_1$ by \cite{GanLokeEtAl2025-FamilySpinEightDualPairsRealGroups}*{Theorems 1 and 3}.

Fix finite-adelic data for which the theta map is nonzero. Integration over the compact quotient $[H_{C_L}]$ gives an $H(\R)$-invariant continuous $G_E(\R)$-map $T$ from $\Omega^\infty$ into smooth automorphic functions, with their topology of locally uniform convergence of all derivatives. In particular, $\ker T$ is closed. Endow $W=\Omega^\infty/\ker T$ with its quotient topology. Restriction from the ambient real group and passage to a closed quotient preserve smoothness and moderate growth, so $W$ is a smooth Fr\'echet representation of moderate growth.

Choose the compatible maximal compact subgroups $K_E\subset\widetilde K$ of \cite{GanLokeEtAl2025-FamilySpinEightDualPairsRealGroups}, and put $V=(\Omega^\infty)_{\widetilde K\text{-finite}}$. This space is dense in $\Omega^\infty$ and stable under projection to each $K_E$-type. By $H(\R)$-invariance, $T|_V$ factors through the maximal trivial-$H$ quotient, identified with the irreducible module $(\pi_1)_{K_E\text{-finite}}$ in \cite{GanLokeEtAl2025-FamilySpinEightDualPairsRealGroups}. Density makes $T|_V$ nonzero, so its image in $W$ is isomorphic to that module.

For each irreducible $K_E$-type $\tau$, continuous $K_E$-projection shows that the finite-dimensional $\tau$-isotypic image of $V$ is dense in $W[\tau]$, hence equals it. Thus $W_{K_E\text{-finite}}\cong(\pi_1)_{K_E\text{-finite}}$.
The Casselman--Wallach globalization theorem now identifies $W$ with $\pi_1^\infty$. See \cite{BernsteinKrotz2014-SmoothGlobalizations}*{Section~1 and Theorem 1.1}. Composing the quotient map with this identification gives an $H(\R)$-invariant continuous surjection $A:\Omega^\infty\longrightarrow\pi_1^\infty$.

Pullback by $A$ therefore gives an injection
\[
 \Hom_{N_E(\R)}(\pi_1,\psi_\Sigma)
 \hookrightarrow
 \Hom_{H(\R)\times N_E(\R)}
       (\Omega^\infty,\mathbf1\boxtimes\psi_\Sigma).
\]
Lemma~\ref{theta-A2:lem:real-minimal-bound} proves vanishing outside the orbit of $C_0$ and the upper bound one on that orbit.

For the reverse inequality, we use the nonzero $C_L$-Fourier coefficient of the same global theta lift, supplied by \cite{GanSavin2022-TwistedCompositionAlgebrasArthurPacketsTrialitySpin8}*{Proposition 14.5}. Fixing its finite-adelic arguments produces a nonzero continuous Fourier functional at the real place. Its cube belongs to the orbit of $C_0$, and conjugation by $M_E(\R)$ gives nonvanishing for every cube in that orbit. This proves the dimension assertion.

Finally, at every finite place the stabilizer acts trivially on the corresponding one-dimensional Fourier space by \cite{GanSavin2022-TwistedCompositionAlgebrasArthurPacketsTrialitySpin8}*{Proposition 12.3}, since the quadratic algebra attached to $M_3(\Q)^+$ is split. Factor the nonzero global Fourier functional into its finite-place functionals and a real functional. Its invariance under the rational stabilizer therefore makes the real functional invariant under $H_{C_L}(\Q)$. Continuity gives invariance under the real stabilizer.
\end{proof}

\subsection{Associated varieties of local theta lifts}\label{theta-A2:sec:local-AV}

Retain the notation of Sections~\ref{theta-A2:sec:residue} and~\ref{theta-A2:sec:detection}.

\begin{lemma}\label{theta-A2:lem:residual-AV}
At every real place $w$, the local representation $\pi_{1,w}$ in \eqref{theta-A2:eq:nonfield-residue} satisfies
\[
 \AV(\Ann\pi_{1,w})=\overline{\cO_{A_2}}.
\]
\end{lemma}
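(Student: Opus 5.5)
We prove the two inclusions separately. For the inclusion $\AV(\Ann\pi_{1,w})\subset\overline{\cO_{A_2}}$, we use the realization $\pi_{1,w}\cong\theta_{C_{0,w}}(\mathbf1)$ from \eqref{theta-A2:eq:pi-theta-local}: $\pi_{1,w}$ is a quotient of the smooth minimal representation $\Omega^\infty$ of split $E_6(\R)$, as in the proof of Proposition~\ref{theta-A2:prop:real-fourier-model}, restricted to $G_E(\R)$. Its annihilator therefore contains the restriction to $\cU(\fg_\C)$ of the Joseph ideal of $E_6$. Hence its associated variety lies in the image of $\overline{\cO_{\min}(E_6)}$ under the projection $\fe_{6,\C}^*\to\fg_\C^*$.

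The projection is taken along $\fg^\perp$. For the see-saw $G_E\times T\subset E_6$ we may refine this: the relevant functionals vanish on $\Lie(H)$, so we only need the image of $\overline{\cO_{\min}}\cap\Lie(H)^\perp$. This is the standard moment-map bound for theta lifts of the trivial representation. We then compute this image explicitly in the Heisenberg grading used in Section~\ref{theta-A2:sec:real-fibers}. On the chart $a=1$, the minimal orbit is parametrized by $(1,x,x^\#,N_J(x))$. Imposing $x\in C_0$, which is the condition forced by $H$-invariance via \eqref{theta-A2:eq:real-character-restriction}, gives elements whose $N_E$-component is a cube of the form $(1,0,f,b)$, up to $M_E$. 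Such elements lie in the $D_4$ orbit of $e_{-\beta}+e_{-\alpha_1}+e_{-\alpha_3}+e_{-\alpha_4}$, namely $\xi$ of \eqref{theta-A2:eq:A2-point}. Since the image is $G_E$-stable, closed and irreducible, it lies in $\overline{\cO_{A_2}}$. Dimension bookkeeping supports this: $\dim\cO_{\min}(E_6)=22$, the $H$-reduction removes $2\dim H=4$, and $\dim\cO_{A_2}=18$ in $D_4$.

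For the reverse inclusion, we use the nonzero Fourier functionals of Proposition~\ref{theta-A2:prop:real-fourier-model}. Since $\pi_{1,w}$ admits a continuous nonzero $(N_E,\psi_\Sigma)$-functional for a nondegenerate cube $\Sigma$, the standard relation between generalized Whittaker models and wave-front sets places the corresponding nilpotent element in the associated variety of $\Ann\pi_{1,w}$. We can apply Matumoto's theorem on degenerate Whittaker models for the Heisenberg parabolic, or argue directly with principal symbols as in the proof of Proposition~\ref{theta-A2:prop:detection}. A nondegenerate cube character of $N_E$, viewed in $\fg_\C^*$ via the invariant form, lies in $\cO_{A_2}$, by the same reflection computation with $s_{\alpha_2}$ carried out in Lemma~\ref{theta-A2:lem:two-factor}. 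Since $\AV$ is closed and $G_E(\C)$-stable, we obtain $\overline{\cO_{A_2}}\subset\AV(\Ann\pi_{1,w})$.

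The main obstacle is the first inclusion: making the projection of the minimal orbit precise for $\pi_{1,w}$, since it is a quotient by $H$-coinvariants rather than a restriction. I would first try the argument through the annihilator. An element of $\cU(\fg_\C)$ that lies in the Joseph ideal plus $\cU(\fe_6)\Lie(H)$ annihilates the $H$-coinvariants. Taking graded versions then gives the ideal of the image of $\overline{\cO_{\min}}\cap\Lie(H)^\perp$, and this image is closed because $H$ is reductive. If that fails, a fallback is to use the known Harish-Chandra module from \cite{GanLokeEtAl2025-FamilySpinEightDualPairsRealGroups}*{Theorem 3}: identify $\pi_{1,w}$ as a unipotent representation attached to $\cO_{A_2}$ and compute its Gelfand--Kirillov dimension, which is $9=\tfrac12\dim\cO_{A_2}$. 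This excludes the larger orbits, because the only orbit of dimension $18$ whose closure contains $\cO_{A_2}$ is $\cO_{A_2}$ itself.
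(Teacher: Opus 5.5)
Your lower bound is correct, but it takes a different route from the paper. Proposition~\ref{theta-A2:prop:real-fourier-model} is proved without this lemma, so there is no circularity. Its nonzero continuous $(N_E,\psi_\Sigma)$-functionals, together with Matumoto's theorem on generalized Whittaker vectors, do place the nondegenerate cube characters, and hence $\cO_{A_2}$, in $\AV(\Ann\pi_{1,w})$. The paper gets this inclusion more cheaply from the infinitesimal character alone. Since $\lambda=\rho_{M_E}+\tfrac12\beta\sim(1,1,0,0)$ is half the neutral element $2\beta^\vee$, one has $\Ann\pi_{1,w}\subset J_{\max}(\lambda)$, and Barbasch--Vogan give $\AV(J_{\max}(\lambda))=\overline{\cO_{A_2}}$.

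The upper bound has a genuine gap. Let $I$ be the Joseph ideal, or $I$ enlarged by $\Lie(H)$. From $\Ann_{\cU(\fg_\C)}\pi_{1,w}\supseteq I\cap\cU(\fg_\C)$ you can only conclude $\AV(\Ann\pi_{1,w})\subseteq\AV(I\cap\cU(\fg_\C))$. The symbol of an element of $I\cap\cU(\fg_\C)$ lies in $S(\fg_\C)\cap\mathrm{gr}\,I$, so $\mathrm{gr}(I\cap\cU(\fg_\C))\subseteq S(\fg_\C)\cap\mathrm{gr}\,I$. This gives $\AV(I\cap\cU(\fg_\C))\supseteq\overline{\pr(\AV(I))}$, which is the wrong direction. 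So ``taking graded versions'' yields no upper bound.

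This is why associated-variety bounds for theta lifts need a genuine good-filtration argument on the coinvariant module, as Loke--Ma give for classical pairs. A compatible filtration that is not good can have graded support strictly smaller than the associated variety. Your chart computation with $(1,x,x^\#,N_J(x))$, $x\in C_0$, sees only the $V_J$-coordinates on one chart, so it does not compute the projection either. The fallback, identifying $\pi_{1,w}$ as a unipotent representation attached to $\cO_{A_2}$, presupposes the conclusion.

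The missing idea is that $\pi_{1,w}$ is by definition a subquotient of the degenerate principal series $I_{E_w}(1/2)$, which is induced from a character of $P_E$. By Borho--Brylinski, the annihilator of such a scalar induced module has associated variety contained in the closure of the Richardson orbit of $P_E$. That orbit is $\cO_{A_2}$, because $2\beta^\vee$ is an even neutral element whose nonnegative eigenspaces form $\Lie(P_E)_\C$. Passing to a subquotient only shrinks the variety.

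Equivalently, $\mathrm{GKdim}\,\pi_{1,w}\le\dim N_E=9$. For the irreducible module $\pi_{1,w}$, $\AV(\Ann)$ is the closure of a single orbit of dimension $2\,\mathrm{GKdim}$, so your lower bound then forces equality. With this replacing your first inclusion, the proof is complete.
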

\begin{proof}
The representation $\pi_{1,w}$ is a subquotient of $I_{E_w}(1/2)$. Its infinitesimal character is the Weyl orbit of
\[
 \rho_{M_E}+\tfrac12\beta
 =\tfrac12(\alpha_1+\alpha_3+\alpha_4)+\tfrac12\beta
 =(1,0,1,0)\sim(1,1,0,0)=:\lambda.
\]
The orbit $\cO_{A_2}$ has neutral element $2\beta^\vee=(2,2,0,0)$, so $\lambda$ is half this neutral element.

Let $J_{\max}(\lambda)$ be the unique maximal primitive ideal with infinitesimal character $\lambda$. By \cite{BarbaschVogan1985-UnipotentRepresentationsComplexSemisimpleGroups}*{Appendix, Definition A1 and Proposition A2(d)},
\[
 \AV(J_{\max}(\lambda))=\overline{\cO_{A_2}}.
\]
Since $\Ann\pi_{1,w}\subset J_{\max}(\lambda)$, this gives
$\overline{\cO_{A_2}}\subset\AV(\Ann\pi_{1,w})$.

For the opposite inclusion, the annihilator of the scalar parabolically induced representation has associated variety contained in the closure of the Richardson orbit for $P_E$. See \cite{BorhoBrylinski1989-RelativeEnvelopingAlgebras}*{Theorem 2}. The Richardson orbit is $\cO_{A_2}$, since its neutral element $2\beta^\vee$ defines the even grading with parabolic $P_E$. Passing to the subquotient $\pi_{1,w}$ can only decrease the annihilator variety. Thus
\[
 \AV(\Ann\pi_{1,w})\subset\overline{\cO_{A_2}},
\]
as required.
\end{proof}

\bibliographystyle{plain}
\bibliography{bibliography}

\end{document}